\providecommand{\keywords}[1]{\textbf{\textit{Key words and phrases }} #1}
\providecommand{\subjclass}[1]{\textbf{\textit{2010 Mathematics Subject Classification.}} #1}
\theoremstyle{definition}
\newtheorem{theo}{Theorem}[subsection]
\newtheorem{theore}{Theorem}[section]
\newtheorem{rrema}[theore]{Remark}
\newtheorem{pr}[theo]{Proposition}
\newtheorem{prop}[theore]{Proposition} %[section]
 \newtheorem{lem}[theo]{Lemma}
 \newtheorem{coro}[theo]{Corollary}
\theoremstyle{remark}
\newtheorem{rema}[theo]{Remark}
\newtheorem{defi}[theo]{Definition}
 \newcommand\lan{\langle}
\newcommand\ra{\rangle}
\newcommand\ob{^{-1}}
\newcommand\proo{\operatorname{Pro}}
\newcommand\proj{\operatorname{Proj}}
\newcommand\obj{\operatorname{Obj}}
\newcommand\mo{\operatorname{Mor}}
\newcommand\id{\operatorname{id}}
\newcommand\cu{\underline{C}}
\newcommand\du{\underline{D}}
\newcommand\au{\underline{A}}
\newcommand\bu{\underline{B}}
\newcommand\eu{\underline{E}}
\newcommand\cupr{\underline{C}'}
\newcommand\z{{\mathbb{Z}}}
\newcommand\q{{\mathbb{Q}}}
\newcommand\p{\mathbb{P}}
\newcommand\ab{\underline{\operatorname{Ab}}}
\newcommand\ca{{\mathcal{A}}}
\newcommand\lam{\Lambda}
\newcommand\al{\alpha}
\newcommand\be{\beta}
\newcommand\modd{\operatorname{Mod}}
\newcommand\mmodd{\operatorname{mod}}
\newcommand\ns{\{0\}}
\DeclareMathOperator\prli{\varprojlim}
\DeclareMathOperator\inli{\varinjlim}
 \DeclareMathOperator\ke{\operatorname{Ker}}
 \DeclareMathOperator\cok{\operatorname{Coker}}
\DeclareMathOperator\imm{\operatorname{Im}}
\DeclareMathOperator\co{\operatorname{Cone}}
\DeclareMathOperator\kar{\operatorname{Kar}}
\DeclareMathOperator\adfu{\operatorname{AddFun}}
\DeclareMathOperator\adfur{\operatorname{AddFun}_R}
\newcommand\hrt{{\underline{Ht}}}
\newcommand\injh{\operatorname{Inj}\hrt}
\newcommand\hw{{\underline{Hw}}}
\newcommand\ec{\mathcal{S}}
\newcommand\bc{\mathcal{BCD}}
\newcommand\pwcu{\operatorname{Post}_w(\cu)}
\newcommand\cuw{\cu_w}
\newcommand\wstu{w^{st}}
\newcommand\cuz{\underline{C}_0}
\newcommand\duz{{\underline{D}}_0}
\newcommand\hdu{\widehat{H}}
\newcommand\wz{w_0}
\newcommand\kw{K_{\mathfrak{w}}}
\newcommand\psvr{\operatorname{PShv}^R}
\newcommand\psvcuz{\operatorname{PShv}(\cu_0)}
\newcommand\opp{{^{op}}}
\newcommand\perpp{{}^{\perp}}
\newcommand\psv{\operatorname{PShv}}
\newcommand\lo{\mathcal{LO}}
\newcommand\ro{\mathcal{RO}}
\newcommand\cp{\mathcal{P}}
\newcommand\cocp{\underline{\coprod}\cp} %{\tilde\coprod\cp}
\newcommand\cq{\mathcal{Q}}
\newcommand\cpt{{\tilde{\mathcal{P}}}}
\newcommand\lscp{L_s\mathcal{P}}
\newcommand\ho{\operatorname{Ho}}
\newcommand\gm{\mathcal{M}}
\newcommand\gmp{\operatorname{Pro}-\mathcal{M}}
\newcommand\alz%{\omega}?!
\newcommand\oo{{\pmb{1}}}
\newcommand\hatc{\operatorname{Coh}_{\cp}}%{\stackrel{\wedge}{\cp}}
\newcommand\fil{{\operatorname{Fil}}}
\newcommand\spe{{\operatorname{Spec}\,}}
\newcommand\dmgm{\operatorname{DM_{gm}}}
\newcommand\chow{\operatorname{Chow}}
\newcommand\chowe{\operatorname{Chow^{eff}}}
\newcommand\ql{{\mathbb{Q}_l}}
\newcommand\dbm{D^b_m}
\numberwithin{equation}{subsection}
\begin{document}

 \title{On  torsion pairs,  (well generated) weight structures,  adjacent $t$-structures, and related    (co)homological functors} %for triangulated categories} %?!!!
 \author{Mikhail V. Bondarko
   \thanks{%Research is supported by the Russian Science Foundation grant no. 16-11-10073.
	The author's work on sections 1--2 was supported by the RFBR grant No. 15-01-03034-a and by  Dmitry Zimin's Foundation "Dynasty", whereas the work on sections 3--5  was supported by the Russian Science Foundation grant no. 16-11-10073.}}\maketitle
\begin{abstract}
This paper contains a rich collection of results related to weight structures and $t$-structures. %(more generally) to torsion pairs. 
 For any weight structure $w$ we study {\it pure} (co)homological  functors; these "ignore all weights except weight zero" and have already found several applications (in particular, to Picard groups of triangulated categories). 
Moreover, %We also 
we study {\it virtual $t$-truncations} of cohomological functors. % coming from $w$.?!
  The resulting functors are defined in terms of $w$ but are closely related to $t$-structures; so we prove in several cases that a weight structure $w$ "gives" a $t$-structure (that is {\it adjacent} or {\it $\Phi$-orthogonal} to it). 
	
	We also study in detail {\it well generated} weight structures  (and prove that any {\it %weakly 
	perfect} set of objects generates a certain weight structure). %The
	 We prove the existence of weight structures right adjacent %and left $\Phi$-orthogonal 
	 to {\it compactly generated}  $t$-structures (using Brown-Comenetz duality); this implies that the hearts of the latter have %enough injectives 
%generators and 
injective cogenerators and  satisfy the AB3* axiom. % 	and "usually" satisfy the AB5 condition.
 Actually, "most of" these hearts are Grothendieck abelian (due to the existence of "bicontinuously orthogonal" weight structures). 

It is convenient for us to use the notion of {\it torsion pairs}; these essentially generalize both weight structures and $t$-structures. We prove several new properties of torsion pairs; %(that are rather parallel to that of weight structures); we also
 in particular, we generalize a theorem of D. Pospisil and J. \v{S}\v{t}ov\'\i\v{c}ek to obtain a complete classification of  %all 
 compactly generated torsion pairs. 

\end{abstract}

\subjclass{Primary 18E30  18E40 18G25; Secondary 14F05 55P42.}

\keywords{Triangulated category, torsion pair, weight structure, $t$-structure, adjacent structure, cohomological functor, pure functor, virtual $t$-truncation, compact object, perfect class, symmetric classes, Brown-Comenetz duality,  well generated category, derived category of coherent sheaves, duality, pro-objects, projective class.}

\tableofcontents

 \section*{Introduction}

The main goal of the current paper is to demonstrate the utility of  weight structures to the construction and study of $t$-structures and of
 (co)homological functors from triangulated categories (into abelian ones). In particular, for a weight structure $w$ we study {\it $w$-pure functors} (i.e., those that "only see $w$-weight zero").\footnote{The relation of pure functors to Deligne's purity of (singular and \'etale) cohomology is recalled in Remark \ref{rwrange}(5).}\ 
Functors of this type have already found interesting applications in several papers (note in particular that the results of  our \S\ref{sdetect} are important for the study of Picard groups of triangulated categories in \cite{bontabu}). %coproducts?!
So the author believes that the reader not interested in the construction weight structures and $t$-structures (that we will start discussing very soon) may still benefit from \S\ref{sws} of the paper where a rich collection of properties of pure functors and  {\it virtual $t$-truncations} of (co)homological functors (with respect to $w$) is proved.

 Now, virtual $t$-truncations are defined in terms of weight structures; still they are closely related to $t$-structures (whence the name). %see??!
 Respectively, our results yield the existence of some vast new families of $t$-structures. To describe one of the main results of this sort here we recall that a $t$-structure $t=(\cu^{t\le 0}, \cu^{t\ge 0})$ (for a triangulated category $\cu$) is said to be (right) adjacent to $w$ if $\cu^{t\le 0}=\cu_{w\ge 0}$.\footnote{In \cite{bws} $t$ was said to be left adjacent to $w$ in this case; we discuss this distinction in "conventions" in \S\ref{rwsts} below.} %explain our choice of the convention}
For a triangulated category $\cu$ that is closed with respect to (small) coproducts and a weight structure $w$ on it we will say that $w$ is {\it smashing} whenever $\cu_{w\ge 0}$ is closed with respect to $\cu$-coproducts (note that $\cu_{w\le 0}$ is $\coprod$-closed automatically).

\begin{theore}\label{tadjti}
Let $\cu$ be a  triangulated category  that is closed with respect to coproducts and satisfies the following Brown representability property: any functor $\cu\opp\to \ab$ that respects ($\cu\opp$)-products is representable (in $\cu$).\footnote{Thanks to the foundational results of A. Neeman and others, this property is known to hold for several important classes of triangulated categories; in particular, it suffices to assume that either $\cu$ or $\cu\opp$ is compactly generated.}

Then for  a weight structure $w$  on $\cu$ there exists a $t$-structure right adjacent to it if and only if $w$ is smashing. Moreover, the heart of $t$ (if $t$ exists) is equivalent to the category of all those additive functors $\hw\opp\to \ab$ that respect products.\footnote{Here $\hw$ is the heart of $w$; note also that $G: \hw\opp\to \ab$ respects products whenever it converts $\hw$-coproducts into products of groups.}\end{theore}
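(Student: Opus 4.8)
\emph{Plan.} I would prove the two directions of the equivalence separately and then identify the heart. The ``only if'' direction is soft: if a $t$-structure $t$ right adjacent to $w$ exists then $\cu^{t\le 0}=\cu_{w\ge 0}$, and the aisle of any $t$-structure on a category with coproducts is closed under coproducts --- indeed $\cu^{t\le 0}={}^{\perp}(\cu^{t\ge 1})$ and $\homm_\cu(\coprod_i Y_i,-)=\prod_i\homm_\cu(Y_i,-)$ --- so $\cu_{w\ge 0}$ is $\coprod$-closed, i.e. $w$ is smashing. For the converse I take as candidate the pair $\cu^{t\le 0}:=\cu_{w\ge 0}$, $\cu^{t\ge 1}:=(\cu_{w\ge 0})^{\perp}$. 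By the weight structure axioms $\cu_{w\ge 0}$ is additive, retract-closed and satisfies $\cu_{w\ge 0}[1]\subseteq\cu_{w\ge 0}$, and $\homm_\cu(\cu^{t\le 0},\cu^{t\ge 1})=0$ by definition; consequently $(\cu_{w\ge 0},(\cu_{w\ge 0})^{\perp})$ is automatically a $t$-structure --- rather than merely a torsion pair --- as soon as every $X\in\cu$ admits a triangle $A\to X\to B\to A[1]$ with $A\in\cu_{w\ge 0}$ and $B\in(\cu_{w\ge 0})^{\perp}$. Note that such $A\to X$ is forced to be a $\cu_{w\ge 0}$-coreflection of $X$, so this amounts to constructing a right adjoint to $\cu_{w\ge 0}\hookrightarrow\cu$ together with the information that its cones lie in $(\cu_{w\ge 0})^{\perp}$.

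\emph{Construction of the truncation triangles.} This is where Brown representability and smashing-ness enter. For $X\in\cu$ consider the representable cohomological functor $\homm_\cu(-,X)\colon\cu\opp\to\ab$, which converts coproducts into products, and apply to it the virtual $t$-truncation machinery of \S\ref{sws} (with respect to $w$). This attaches to $\homm_\cu(-,X)$ a virtual $t$-truncation $H'$ that is again cohomological, fits into the corresponding virtual $t$-decomposition, agrees with $\homm_\cu(-,X)$ on the subcategory $\cu_{w\ge 0}$, and whose ``weight range'' is such that any representing object of $H'$ must lie in $\cu_{w\ge 0}$. The crucial input is that, $w$ being smashing, weight decompositions of coproducts can be chosen coproduct-compatibly, so the virtual $t$-truncation of a functor converting coproducts into products again converts coproducts into products; thus $H'$ does, and the Brown representability hypothesis yields $A\in\cu$ with $H'\cong\homm_\cu(-,A)$, whence $A\in\cu_{w\ge 0}$. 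The structural morphism of functors between $\homm_\cu(-,A)\cong H'$ and $\homm_\cu(-,X)$ corresponds by Yoneda to a morphism relating $A$ and $X$; completing it to a triangle $A\to X\to B\to A[1]$ and running the long exact sequence of the virtual $t$-decomposition identifies the relevant values of $\homm_\cu(-,B)$ with those of the complementary truncation, which annihilates $\cu_{w\ge 0}$, so $B\in(\cu_{w\ge 0})^{\perp}$. This finishes the verification that $(\cu_{w\ge 0},(\cu_{w\ge 0})^{\perp})$ is a $t$-structure. (As a more hands-on alternative one may build $A$ as a homotopy colimit of the partial $w$-Postnikov towers of $X$ retaining the non-negative weights: $\cu_{w\ge 0}$ is closed under such homotopy colimits precisely because it is $\coprod$-closed --- i.e. smashing --- and extension-closed.)

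\emph{The heart.} Write $\hrt$ for the heart of $t$. Since $\hw\subseteq\cu_{w\ge 0}=\cu^{t\le 0}$, the rule $\Phi\colon Y\mapsto\homm_\cu(-,Y)|_{\hw}$ sends $\hrt$ into additive functors $\hw\opp\to\ab$. For $M\in\cu^{t\le 0}$ and $Y\in\hrt$ the standard truncation computation gives $\homm_\cu(M,Y)\cong\homm_{\hrt}(H^t_0(M),Y)$, where $H^t_0|_{\cu^{t\le 0}}=\tau^{t\ge 0}|_{\cu^{t\le 0}}$ is left adjoint to $\hrt\hookrightarrow\cu^{t\le 0}$, hence preserves coproducts; combined with the fact that $\hw$ is $\coprod$-closed in $\cu$ (again by smashing), this shows $\Phi(Y)$ always converts $\hw$-coproducts into products. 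To obtain that $\Phi$ is an equivalence onto the product-respecting functors I would invoke the theory of $w$-pure functors of \S\ref{sws}: to a product-respecting additive $G\colon\hw\opp\to\ab$ one attaches the $w$-pure cohomological functor $\cu\opp\to\ab$ it determines, which (by purity together with the smashing hypothesis) converts coproducts into products, hence is representable, say by $Y_G$; purity then forces $Y_G\in\hrt$, one checks $\Phi(Y_G)\cong G$, and that $G\mapsto Y_G$ is naturally inverse to $\Phi$ --- the last point being the assertion, again part of the pure-functor package, that an object of $\hrt$ is recovered functorially from its $\hw$-valued restriction.

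\emph{Main obstacle.} The delicate point is the ``weight-range'' claim inside the construction: that the object representing the chosen virtual $t$-truncation of $\homm_\cu(-,X)$ genuinely lies in $\cu_{w\ge 0}$ (and, correspondingly, that $B\in(\cu_{w\ge 0})^{\perp}$). There is in general no handy perpendicularity description of $\cu_{w\ge 0}$ (it need not coincide with any orthogonal complement of $\cu_{w\le -1}$), so this must be extracted from the internal anatomy of virtual $t$-truncations --- tracking the weight-decomposition triangles of $X$ through the representability statement, uniformly in $X$ and compatibly with coproducts. Once this bookkeeping is secured the heart computation is comparatively formal, modulo the general package of results on pure functors from \S\ref{sws}.
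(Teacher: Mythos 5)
Your proposal follows essentially the same route as the paper: for the ``only if'' direction, observe that the aisle $\cu^{t\le 0}={}^{\perp}(\cu^{t\ge 1})$ is automatically coproductive (this is Proposition~\ref{phop}(5)); for the converse, apply the smashing hypothesis to deduce that virtual $t$-truncations of representable functors are again cp functors (Proposition~\ref{ppcoprws}(\ref{icopr6p}), used in Corollary~\ref{cvttbrown}(I.1)), represent them by Brown representability, and verify via Proposition~\ref{padjt} that the representing objects assemble into $t$-decomposition triangles; and for the heart, combine Proposition~\ref{phadj} with the characterization of pure cp functors in Proposition~\ref{ppcoprws}(\ref{icopr5p}) and Proposition~\ref{pwrange}(\ref{iwrpure}).

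However, your ``main obstacle'' paragraph rests on a misconception. You assert that ``there is in general no handy perpendicularity description of $\cu_{w\ge 0}$ (it need not coincide with any orthogonal complement of $\cu_{w\le -1}$).'' This is false: $\cu_{w\ge 0}=(\cu_{w\le -1})^{\perp}$ is one of the most basic consequences of the weight structure axioms, exactly as for $t$-structures --- given $M\in(\cu_{w\le -1})^{\perp}$, a $(-1)$-weight decomposition $w_{\le -1}M\to M\to w_{\ge 0}M$ has vanishing first map, so $M$ is a retract of $w_{\ge 0}M\in\cu_{w\ge 0}$, which is Karoubi-closed; the reverse inclusion is the orthogonality axiom. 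This is recorded as Proposition~\ref{pbw}(\ref{iort}), and it is precisely what makes the weight-range bookkeeping you worry about completely painless: Proposition~\ref{pwrange}(\ref{iwfil3}) is immediate from it, and feeds directly into Proposition~\ref{padjt}(\ref{ile1}) (showing that the representing object of $\tau^{\le 0}H_M$ lies in $\cu_{w\ge 0}$) and into Proposition~\ref{padjt}(\ref{ile3}) (showing that the cone lies in $\cu_{w\ge 0}^{\perp}$, this time by running the long exact sequence against the weight-range bound $\le -1$ for $\tau^{\ge 1}H_M$). So the step you flagged as delicate is in fact the most formal part of the argument; once you internalize that both halves of a weight structure are orthogonal complements of shifts of each other, there is no further tracking of weight decompositions required beyond the statements already in \S\ref{svtt}.
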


Note here that  triangulated categories closed with respect to coproducts have recently become very popular in homological algebra and found applications in various areas of mathematics; so a significant part of this paper is dedicated to categories of this sort. Still we prove certain alternative versions of Theorem \ref{tadjti}; some of them can be applied to "quite small" triangulated categories.
 
So, if instead of the Brown representability condition for $\cu$ we demand it to satisfy the {\it $R$-saturatedness one} (see Definition \ref{dsatur} below; this is an  "$R$-linear finite" version of the Brown representability) then for any {\it bounded} $w$ on $\cu$ there will exist a $t$-structure right adjacent to it. %In this case for any weight structure %$w$ on $\cu$ there will exist a $t$-structure right adjacent to it.
Note that this version of the result can be applied to the bounded derived category $D^b(X)$  of coherent sheaves on regular separated finite-dimensional scheme that is proper over the spectrum of a Noetherian ring $R$ %a smooth proper variety as well as to $D^b(X)\opp$ 
   (see Proposition \ref{psatur}(II) and Remark \ref{rsatur}(1)). %; one takes $R$ being the base field $k$). 
We also prove  two generalizations of this existence result (see Proposition \ref{psaturdu} and Remark \ref{roq}); they "produce" certain $t$-structures from weight structures on the bounded derived category of coherent sheaves on any % projective variety 
 proper $R$-scheme $X$ and also on the triangulated category of perfect complexes on $X$.\footnote{The author suspects that the content of the paper will not be evenly interesting to the readers. Most of its content be found in more recent texts of the author; see Remark \ref{rnew} below.  %So he suggests
 Note moreover that the readers not (much) interested in "large" categories may ignore all matters related to infinite coproducts on the first reading (this includes compact objects and smashing torsion pairs). %{sts}?!
On the other hand, \S\ref{sswc} (and so, weight complexes and Postnikov towers) are mentioned explicitly in \S\ref{sws} only. So, the reader is encouraged to look for his personal trajectory through this paper and also look at the papers cited in Remark \ref{rnew}.} %??! Moreover, we also study the conditions ensuring the existence of weight structures that are left adjacent to $t$-structures; however, the conditions ensuring this are rather restrictive (yet see Remark \ref{rexenproj}  for a nice statement of this sort for  $D^b(X)$,  where $X$ is proper and smooth). 

%As we have just explained, 
So, one may (roughly) say that any "reasonable" weight structure (on a triangulated category satisfying some Brown representability condition) can be used to construct certain $t$-structures. This %makes the construction of weight structures a really actual problem. 
 demonstrates the importance of constructing weight structures.

However, the $t$-structures constructed using Theorem \ref{tadjti} appear to be somewhat "exotic" (yet cf. Theorem 1 of \cite{zvon}) and possibly the $t$-structures constructed via the aforementioned "$R$-saturated" versions of the theorem are "more useful". Still we also prove  (using adjacent and {\it $\Phi$-orthogonal} weight structures in a crucial way)   several properties of {compactly generated} $t$-structures.\footnote{$t$-structures of these type appear to be originally introduced in \cite{talosa}. They have become a popular object of study recently, with plenty of examples important to various areas of mathematics.} 

To formulate the following theorem we need some definitions.

For a class of objects $S$ of a triangulated category $\cu$ we will write $S\perpp$ (resp. $\perpp S$) for the class of those $M\in \obj \cu$ such that the morphism group $\cu(N,M)$ (resp. $\cu(M,N)$) is zero   for all $N\in S$.

We will say that a $t$-structure $t=(\cu^{t\le 0},\cu^{t\ge 0})$ on $\cu$ is {\it generated} by a class $\cp\subset \obj \cu$ whenever $\cu^{t\ge 0}=\cap_{i\ge 1}(\cp\perpp[i])$. 

%for studying {\it compactly generated} $t$-structures. So we prove the following result.

 \begin{theore}\label{tgroth}
Let $\cu$ be a  triangulated category  that is closed with respect to coproducts; %for a set 
let $t$ be %a compactly generated $t$-structure on it.
 a $t$-structure on it generated by a set of compact objects (we will say that $t$ is compactly generated if such a generating set $\cp$ exists).\footnote{Recall that $P\in \obj \cu$ is said to be compact if the corepresentable functor $\cu(P,-)$ respects coproducts. Now, any set of compact objects generates a $t$-structure according to Theorem A.1 of \cite{talosa}; the corresponding class $\cu^{t\le 0}$ is the smallest subclass of $\obj \cu$ that is closed with respect to $[1]$, extensions, and coproducts, and contains $\cp$.}
%Call $t$ cgen?!

1. Then the heart $\hrt$ of  $t$ has an injective cogenerator  and satisfies the AB3* axiom. 
%\footnote{Actually, the existence of a generator is a consequence of the very simple Proposition \ref{pgen} that describes  a set of generators in the heart of $t$ for any $t$ generated by a set of objects.} 

2. Assume in addition either that $\cu$ is the homotopy category of a proper simplicial stable model category or that $t$ is non-degenerate. Then $\hrt$ is a Grothendieck abelian category.%\footnote{It is also {\it cosilting}}??!

\end{theore}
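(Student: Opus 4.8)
The strategy is to reduce both parts to the weight structure right adjacent to $t$ and to the description of its heart furnished by Theorem~\ref{tadjti}; the engine throughout is Brown--Comenetz duality. So the first step --- which I expect to be the main obstacle --- is to prove that a $t$-structure $t$ generated by a set $\cp$ of compact objects admits a \emph{right adjacent} weight structure $w$ on $\cu$, i.e. one with $\cu^{t\le 0}=\cu_{w\ge 0}$. Since $\cu(P,-)$ is homological for $P\in\cp$, the functor $M\mapsto\homm_{\z}(\cu(P,M),\q/\z)$, contravariant in $M$, is cohomological and carries coproducts to products, hence is representable --- by a pro-object, if one wishes to avoid assuming Brown representability for $\cu$, which Theorem~\ref{tgroth} does not grant --- by the Brown--Comenetz dual $P^{\vee}$ of $P$. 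The real work is then to use the $P^{\vee}$ to build the required weight decompositions, to verify the weight-structure axioms, and to check $\cu^{t\le 0}=\cu_{w\ge 0}$, the orthogonality $\cu(P,-)\leftrightarrow\cu(-,P^{\vee})$ being what matches $t$ and $w$. Note that $w$ is smashing for free: a compactly generated $t$-structure is smashing, so $\cu^{t\le 0}=\cu_{w\ge 0}$ is closed under coproducts.

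Granting this, Theorem~\ref{tadjti} identifies $\hrt$ with the category $\mathcal F$ of additive functors $G\colon\hw\opp\to\ab$ converting $\hw$-coproducts into products of abelian groups, where $\hw$, the heart of $w$, is closed under $\cu$-coproducts (by smashing-ness) and is generated by a set (the image of $\{P^{\vee}\}$). Part~1 then reduces to two facts about $\mathcal F$. First, $\mathcal F$ is closed under the products of $\adfu(\hw\opp,\ab)$ --- a product of product-preserving functors is product-preserving, since products commute with products --- and those products are pointwise; as $\ab$ has products, so does $\mathcal F\simeq\hrt$, which is AB3*. Second, $\mathcal F$ has an injective cogenerator; this is the only non-formal point of Part~1, and it too comes from Brown--Comenetz duality: applying $\homm_{\z}(-,\q/\z)$ one produces a family of injective objects of $\mathcal F$ whose product (available by AB3*) cogenerates, injectivity and cogeneration reducing to $\q/\z$ being an injective cogenerator of $\ab$ and to the compact generators $P$ detecting vanishing in $\hrt$ (here one uses $\cu(P,-)|_{\hrt}\cong\homm_{\hrt}(H^{0}_{t}(P),-)$ together with the fact that the $H^{0}_{t}(P)$ generate $\hrt$).

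For Part~2 one must supply the colimit side of the Grothendieck axioms, since AB3* plus an injective cogenerator is strictly weaker ($\ab\opp$ has both yet is not Grothendieck). A generator of $\hrt$ is automatic, since $\hw$, hence $\mathcal F\simeq\hrt$, is generated by a set; the content is exactness of filtered colimits (AB5), and this is where the extra hypothesis is used --- under it one replaces $w$ by a \emph{bicontinuously orthogonal} weight structure. In the model-category case one exploits that a proper simplicial stable model category carries well-behaved homotopy limits \emph{and} colimits, so that Brown--Comenetz duality can be set up symmetrically and the description of $\hrt$ upgrades to a genuine Grothendieck functor category (morally a localization of $\adfu(\cq\opp,\ab)$ for a small $\cq$, hence Grothendieck); in the non-degenerate case one argues directly, using non-degeneracy of $t$ to control how the pointwise filtered colimits of $\mathcal F$ interact with the product-preservation constraint, and so verifies AB5. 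Either way $\hrt$ is AB5 with a generator, hence Grothendieck.

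In short, essentially all the effort is in the first step --- constructing, via Brown--Comenetz duality and the pro-object formalism (which removes any Brown representability assumption on $\cu$), the weight structure right adjacent to $t$, and, for Part~2, its bicontinuously orthogonal refinement under the extra hypotheses; after that, routing everything through Theorem~\ref{tadjti} to $\mathcal F$ and reading off AB3*, the injective cogenerator, and (given bicontinuity) AB5 is essentially formal.
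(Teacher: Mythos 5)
Your plan --- use Brown--Comenetz duals $\widehat{P}$ of the compact generators to build an adjacent weight structure and then read the heart off a functor-category description --- is the paper's approach, but the adjacency you write down is the wrong one, and if pursued literally the argument fails at the outset. You ask for $w$ with $\cu^{t\le 0}=\cu_{w\ge 0}$; by Definition~\ref{dwso}(\ref{idadj}) this is $w$ being \emph{left} adjacent to $t$, and such a $w$ need not exist for a compactly generated $t$: Proposition~\ref{pconstrwfromt}(I) shows its existence forces $\hrt$ to have enough projectives (its heart would be $P_t$, the $t$-projectives), which compactly generated hearts need not have. What Brown--Comenetz duality actually supplies is the other hand: since $\cu(-,\widehat{P})\cong\homm_{\z}(\cu(P,-),\q/\z)$, one has $\perpp\{\widehat{P}:P\in\cp\}=\cp^{\perp}$, so the $\widehat{P}$ cogenerate a weight structure with $\cu_{w\le 0}=\cu^{t\ge 0}$; this $w$ is \emph{right} adjacent to $t$ and is \emph{cosmashing}, not smashing. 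Accordingly the result to route through is not Theorem~\ref{tadjti} but its dual, Corollary~\ref{cdualt}, which makes $\hrt$ anti-equivalent to the product-preserving functors $\hw\to\ab$ and exhibits $\hw$ as the injective objects of $\hrt$; the injective cogenerator and AB3* then come from $\hw$ having a cogenerator and being closed under products (see Corollary~\ref{cvttbrown}(I.2) dualized, and Proposition~\ref{ppcoprws}).

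You rightly flag the construction of the weight decompositions from $\{\widehat{P}\}$ as the real obstacle, but offer no mechanism; the paper's is Theorem~\ref{tpgws}. The chain is: $\cp$ is symmetric to $\{\widehat{P}\}$ (Proposition~\ref{psymb}(II.\ref{iws23})), so $\{\widehat{P}\}$ is a perfect cosuspended set in $\cu\opp$, and Theorem~\ref{tpgws} --- a coherent-functor argument in the style of Krause --- shows that any such set generates a weight structure, a fact with no known $t$-structure analogue; this asymmetry is exactly why the paper reaches $\hrt$ through a weight structure at all. Two smaller points. Your pro-object aside is misplaced for Part~1: the paper passes to the localizing subcategory $\cu'$ generated by $\cp$, which is compactly generated (hence Brown representable), and computes $\hrt$ there (Corollary~\ref{csymt}). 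Pro-objects enter only in \S\ref{sprospectra} for Part~2's model-category case, where AB5 is obtained from a conservative family of exact coproduct-preserving stalk functors $\Phi(P,-)$ on $\hrt$ furnished by a $\Phi$-orthogonal weight structure (Theorem~\ref{tab5}, Corollary~\ref{cgdb}), not from a presheaf-localization identification; and in the non-degenerate case the paper invokes the cosilting/definable machinery of \cite{humavit} where you say ``argue directly.''
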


%or any compactly generated $t$-structure on  the homotopy category of an arbitrary proper simplicial stable  model category the heart $\hrt$  is a  Grothendieck abelian category (see Remark \ref{rab5}(\ref{irrel})); this 
 %Part 2 of this theorem answers Question 3.8 of \cite{parrasao} "in all reasonable cases";\footnote{Recall that Theorem 3.7 of loc. cit. says that   countable colimits in the heart $\hrt$ are exact  for any compactly generated $t$-structure $t$.} part 1 appears to be new also.

Now we describe a result that was important for the proof %of part 1 of 
Theorem \ref{tgroth}(1).

 %In this paper we describe 
One of the main topics of this paper is the study of the %surprisingly close 
 relation of  (smashing) weight structures  to %(countably) 
 perfect sets of objects (that we will now define; recall that these are also closely related to the Brown representability condition).\footnote{More generally, perfect classes are closely related to smashing torsion pairs; %%??!!
 see Proposition \ref{psym}(\ref{iperftp}).}

%Combining the properties of the latter with certain Brown representability assumptions we are able to prove that $w$ possesses a (right or left) adjacent $t$-structure, i.e., there exists $t=(\cu^{t\le 0}, \cu^{t\ge 0})$ such that either $\cu^{t\le 0}=\cu_{w\ge 0}$ or $\cu^{t\ge 0}=\cu_{w\le 0}$. %One may say that in a triangulated category closed with respect to coproducts a "reasonable" weight structure $w$ possesses a right adjacent one; cf. Theorem \ref{textw}.
%Now we formulate an existence of weight structures and $t$-structures result; we will also introduce some definitions inside this theorem. % using the terminology of Definition \ref{dcomp}.

\begin{theore}\label{textw}
Assume that $\cu$ is closed with respect to small coproducts. Let $\cp$ be a {\it perfect} set of objects of $\cu$ (i.e., assume that the class {\it $\cp$-null} of those morphisms that are annihilated by corepresentable functors %corepresented by elements of
of the type $\cu(P,-)$ for $P\in \cp$ is closed with respect to  coproducts).\footnote{See Remark \ref{requivdef} below for a comparison of this definition with  other ones in the literature.} %the %one used in \cite{kraucoh}). 

Then %the following statements are valid.
the couple $w=(L,R)$ is a smashing weight structure, where $R=\cap_{i<0}(\cp\perpp[i])$ and $L=(\perpp R)[1]$. % is a weight structure on $\cu$.

Moreover,  the class $L$ may be described "more explicitly" in terms of $\cp$; cf. %Theorem \ref{tpgws} and 
Corollary \ref{cwftw} below. 

%Furthermore, if we assume that $\cp$ is {\it perfect}  (i.e., that the class $\cp$-null is closed with respect to all small $\cu$-coproducts), then $w$ is smashing.
%The heart is equivalent to?!
\end{theore}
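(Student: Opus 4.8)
The plan is to verify the axioms of a weight structure for the couple $w=(L,R)$ directly, using the standard criterion that it suffices to check: (i) $L$ and $R$ are closed under retracts; (ii) $L\subset L[1]$ and $R[1]\subset R$ (which is automatic from the indexing shifts once we know $R=\cap_{i<0}(\cp\perpp[i])$ is shift-stable in the required direction); (iii) $\cu(L,R[1])=0$, i.e.\ the orthogonality $L\perp R[1]$; and (iv) the weight decomposition axiom: every $M\in\obj\cu$ fits into a triangle $X\to M\to Y\to X[1]$ with $X\in L$ and $Y\in R[1]$. Orthogonality (iii) holds essentially by construction: $L=(\perpp R)[1]$, so $L[-1]\perp R$ and hence $\cu(L,R[1])=\cu(L[-1],R)=0$. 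Retract-closure of $R$ is clear since each $\cp\perpp[i]$ is a retract-closed class; retract-closure of $L=(\perpp R)[1]$ is likewise formal. So the whole weight has been reduced to producing weight decompositions, and this is where perfectness enters.

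First I would build, for each $M$, the decomposition triangle by a small-object/transfinite-cell-attachment argument: start from $X_0=\coprod_{P\in\cp,\,f\in\cu(P[j],M),\,j\le 0} P[j]\to M$, form the cone, and iterate countably many times, taking the homotopy colimit at the end. Call the result $Y$ and let $X=\co(Y[-1]\to M)$ so that we get the triangle $X\to M\to Y\to X[1]$. By the construction of the $X_i$ as iterated coproducts of shifts $P[j]$ with $j\le 0$ and cones thereof, $X$ lies in the smallest subclass of $\obj\cu$ closed under $[1]$, extensions and coproducts and containing $\cp$ — which (compare the footnote to Theorem \ref{tgroth}) is exactly the ``explicit'' description of $L$ promised in the last sentence of the theorem, to be recorded precisely in Corollary \ref{cwftw}. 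The nontrivial claim is that $Y\in R[1]=\cap_{i\le 0}(\cp\perpp[i])$, i.e.\ $\cu(P[j],Y)=0$ for all $P\in\cp$ and all $j\le 0$. Here is where perfectness of $\cp$ is used in the standard way: because $\cp$-null morphisms are closed under coproducts, the maps $\cu(P[j],X_i)\to\cu(P[j],X_{i+1})$ are eventually killed, so $\cu(P[j],\hocolim X_i)=0$ for $j\le 0$ by the Milnor-exact-sequence description of maps into a homotopy colimit (the $\varprojlim^1$ term vanishes because the transition maps are eventually zero on each generator). Translating back through the cone that defines $Y$ gives $Y\in R[1]$.

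The remaining point is to check $L=\perpp(R)[1]$ rather than merely ``$L\subset\perpp(R)[1]$ and $L$ admits decompositions'': concretely, that every object of $\perpp R$ already lies in the class $L$ produced by the cell construction. This follows from a now-routine orthogonality argument: if $N\in\perpp(R[1])=\perpp R[-1]$, shift so $N[1]\in\perpp R$, apply the weight decomposition to $N[1]$ to get $X\to N[1]\to Y$ with $X$ in the cell class and $Y\in R[1]$; then $\cu(N[1],Y)=0$ forces $N[1]\to Y$ to factor through... more simply, $\cu(X,Y)=0$ and $\cu(N[1],Y)=0$ show the triangle splits off $Y$, so $N[1]$ is a retract of $X$, hence $N[1]\in L$ by retract-closure of the cell class, i.e.\ $N\in L[-1]$; unwinding the shifts gives the desired equality. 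The ``smashing'' assertion is immediate once the decompositions are in hand: $L$ is closed under coproducts because it is the cell class (built from coproducts), and then $\cu_{w\ge 0}=R$ being closed under coproducts follows since a coproduct of decomposition triangles is again a decomposition triangle (using that $L$ is $\coprod$-closed and that $\cp\perpp[i]$ is $\coprod$-closed by perfectness), together with the uniqueness of weight decompositions up to (non-unique) isomorphism.

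The main obstacle I anticipate is the careful bookkeeping in the transfinite/countable cell construction and in verifying that $Y\in R[1]$: one must be precise about which shifts $P[j]$ are attached at each stage (all $j\le 0$, not just $j=0$, to get orthogonality against the whole family $\cp\perpp[i]$, $i<0$), must handle the $\hocolim$ correctly (the Milnor sequence, the vanishing of $\varprojlim^1$), and must confirm that ``eventually zero on generators'' genuinely follows from closure of $\cp$-null under coproducts applied stage by stage. Everything else — orthogonality, retract-closure, the identification of $L$, and the smashing property — is formal manipulation of the axioms and should go through without surprises.
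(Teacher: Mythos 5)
Your reduction to the existence of weight decompositions is correct, your observation that orthogonality and retract-closure are formal is correct, and your cell-attachment construction coincides (after replacing $\cp$ by $\cup_{j\le 0}\cp[j]$ so as to pass to a cosuspended generating set) with the one used in the paper's proof of Theorem \ref{tpgws}, which runs along the lines of Theorem \ref{tclass}(\ref{iclass1}).

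However, the crucial step — showing $\cu(P[j],Y)=0$ for $Y=\inli X_i$ — has a genuine gap. You appeal to a ``Milnor-exact-sequence description of maps into a homotopy colimit,'' but no such description exists from a non-compact source $P$: the $\prli^1$-exact sequence applies to a cp functor $H=\cu(-,T)$ evaluated on the system (i.e., to maps \emph{out} of the colimit), not to $\cu(P,-)$ applied to the colimit object. The isomorphism $\cu(P,\inli Y_i)\cong\inli\cu(P,Y_i)$ holds automatically only when $P$ is compact (Lemma \ref{lcoulim}(3)). Perfectness of $\cp$ only gives that coproducts of $\cp$-null morphisms are $\cp$-null — which is strictly weaker than each $\cu(P,-)$ respecting coproducts — and that is not enough to run your $\prli^1$ argument. (Also, the transition maps in your tower being $\cp$-null is automatic from the cone construction; this is not where perfectness is actually needed.) The difficulty you dismiss as bookkeeping is precisely the nontrivial content of the theorem.

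What the paper does instead (following \cite{kraucoh}) is to replace the \emph{family} of functors $\{\cu(P,-):P\in\cp\}$ by the \emph{single} homological functor $H^\cp:\cu\to\hatc$ valued in the abelian category $\hatc$ of coherent functors on $\cocp$. Perfectness is used precisely and only to show that $H^\cp$ is a wcc functor (respects countable coproducts), and $H^\cp(N)=0$ detects $N\in\cp\perpp$. Even so, $H^\cp$ need not commute with the homotopy colimit $L=\inli L_k$: the surjection $H^\cp(L[1])\to\inli H^\cp(L_k[1])$ comes free from Lemma \ref{lcoulim}(3), but the required injectivity is obtained by explicitly verifying condition~(i) of that lemma, which uses the cosuspendedness of $\cp$ (so that $H^\cp\circ[1]$ and $H^\cp\circ[2]$ still annihilate the transition morphisms $h_k$, since $\cocp[-1]$ and $\cocp[-2]$ lie inside $\cocp$). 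This is the step your sketch assumes without proof; it is also the obstruction to obtaining an analogous existence statement for $t$-structures (cf.\ Remark~\ref{rigid}(1)).
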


Thus if %($\cp$ is perfect and)  we also assume 
 we assume in addition that the Brown representability condition is fulfilled for $\cu$\footnote{Recall that this is always the case if the class $\cp$ {\it Hom-generates}
$\cu$, i.e., $\cap_{i\in \z}(\cp[i]\perpp)=\ns$. % whereas all the 
 Since all elements of $\cap_i(\cp[i]\perpp)$ are {\it left degenerate} with respect to $w$, if the Brown representability condition is not fulfilled for $\cu$ then $w$ is necessarily "somewhat pathological".}then we also obtain the (right) adjacent $t$-structure $t=(R, (R\perpp)[1])$. 
This $t$-structure is {\it cogenerated} by any class $\cp'$ that is {\it  weakly %?!!
 symmetric} to $\cp$ (see Definition \ref{dsym}(\ref{iwsym}) below; %that closely follows \cite[Definition 2]{kraucoh}; 
 if the elements of $\cp$ are compact then we can construct $\cp'$ using {\it Brown-Comenetz duality}). Somewhat surprisingly to the author, a similar chain of arguments  gives the existence of a weight structure $w$ that is {\it right adjacent} to a given  compactly generated $t$-structure (i.e., $\cu^{t\ge 0}=\cu_{w\le 0}$); this yields the proof of Theorem \ref{tgroth}(1).\footnote{Note here that our proof of Theorem \ref{textw}  (that is somewhat similar to the corresponding proof from \cite{kraucoh}) does not allow constructing $t$-structures "directly from perfect sets"  since it relies crucially on $\cu_{w\le 0}[-1]\subset \cu_{w\le 0}$ (and does not work for general torsion pairs that  will be discussed soon; yet cf. Remark \ref{rigid}(1)). Still we are able to prove the existence of $t$-structure generated by $\cp$ whenever $\cp$ is {\it symmetric} to some $\cp'\subset \obj \cu$ (see Theorem \ref{tsymt}; note that any such $\cp$ is certainly perfect).} 
 Moreover, the "opposite" weight structure $w\opp$ in the category $\cu\opp$ is perfectly generated but not compactly generated (so, there exist plenty of perfectly generated weight structures that are not compactly generated; recall that the latter class of weight structures was introduced in \cite{paucomp}). %non-stable?!

We also prove the following "well generatedness" result for weight structures  (obtaining in particular that all smashing weight structures on well generated categories can be obtained from Theorem \ref{textw}).

\begin{prop}\label{pwgwstr}
Assume that $\cu$ is a well generated triangulated category (i.e., there exists a {\it regular} cardinal $\al$ %is a regular cardinal (see \S\ref{snotata}) and $\cu$ is an $\al$-well generated category (i.e., there exists  
and a perfect set  $S$ of {\it $\al$-small} objects such that $S\perpp=\ns$; see Definition \ref{dwg}(\ref{idpg})). 
%a set  $S$ of {\it $\al$-compact objects} such that $S\perpp=\ns$; see Definition \ref{dbecomp}). 

Then for any smashing weight structure $w$  %(see above)??
  on $\cu$ there exists a  cardinal $\al'$ %$\ge \al$ 
	 such that for any regular $\be\ge \al'$ the weight structure $w$ is strongly $\be$-well generated in the following sense: the couple $(\cu_{w\le 0}\cap \obj \cu^\be, \cu_{w\ge 0}\cap \obj \cu^\be)$ is a weight structure on the triangulated subcategory $\cu^\be$ of $\cu$ consisting of $\be$-compact objects (see Definition \ref{dbecomp}(\ref{idcomp})), the class $\cp=\cu_{w\le 0}\cap \obj \cu^\be$ is essentially small and perfect, and $w=(L,R)$  for $L$ and $R$ being the classes described in Theorem \ref{textw} (if these conditions are fulfilled for $w$ and  some $\cp$ then we also say that $w$ is perfectly generated by $\cp$). %=\cu_{w\le 0}\cap \obj \cu^\be$, i.e., %there exists a perfect set $\cp$ of $\al$-small objects such that  $\cp$ is an essentially small perfect class and $w=(L,R)$  for $L$ and $R$ being the classes described in Theorem \ref{textw}.
\end{prop}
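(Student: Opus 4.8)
The plan is to combine the general structure theory of well generated triangulated categories with Theorem \ref{textw}, reducing everything to finding a single "good" cardinal $\be$ at which the weight structure $w$ restricts nicely. First I would recall the standard fact (due to Neeman, Krause, etc.) that for a well generated category $\cu$ with generating perfect set $S$ of $\al$-small objects, there is an unbounded filtration of $\cu$ by localizing subcategories $\cu^\be$ of $\be$-compact objects (for regular $\be\ge\al$), each $\cu^\be$ is essentially small, $\cu=\bigcup_\be \cu^\be$, and morphisms/objects behave "continuously" in $\be$. The key extra input I need is that a single object, or a set of objects, together with the countably many weight decompositions produced by $w$, is "captured" at some regular cardinal; so I would first fix a regular $\al'$ large enough that $S\subseteq\obj\cu^{\al'}$ and — crucially — large enough that $w$ itself is "generated in size $\al'$" in the sense that there is an essentially small $\cp_0\subseteq\cu_{w\le0}$ with $R=\cap_{i<0}(\cp_0\perpp[i])$. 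Obtaining such a $\cp_0$ is the heart of the matter: I would produce it by a transfinite/iterated "closing off" argument — start with $S$, and repeatedly throw in the $w$-truncations (weight decompositions) of all objects already collected, together with shifts; since each weight decomposition of an object involves only one new cone, and $\cu$ is well generated, this process stabilizes at some regular cardinal $\al'$, yielding an essentially small, shift-stable-in-the-right-direction class whose right orthogonal (after shifting) recovers $\cu_{w\ge0}$.

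Next, for any regular $\be\ge\al'$ I would set $\cp=\cu_{w\le0}\cap\obj\cu^\be$ and argue three things. (1) $\cp$ is essentially small: immediate from $\cu^\be$ being essentially small. (2) $\cp$ is perfect: the $\cp$-null morphisms are exactly the morphisms killed by $\cu(P,-)$ for $P\in\cu_{w\le0}\cap\obj\cu^\be$; since $\be$-compact objects have $\cu(P,-)$ commuting with $\be$-coproducts, and one checks (using that $\cp_0\subseteq\cp$ and that every object of $\cu_{w\le 0}$ lying in $\cu^\be$ is built, via the $\be$-perfectness machinery, from $\cp_0$) that $\cp$-null is closed under arbitrary coproducts — this is where I lean on the well-generatedness of $\cu$ and the filtration's compatibility with coproducts. (3) $w=(L,R)$ with $R=\cap_{i<0}(\cp\perpp[i])$ and $L=(\perpp R)[1]$: the inclusion $R\supseteq\cu_{w\ge0}$ is clear ($\cu_{w\le0}\perp\cu_{w\ge1}$), and the reverse inclusion follows because $\cp_0\subseteq\cp$ forces $\cap_{i<0}(\cp\perpp[i])\subseteq\cap_{i<0}(\cp_0\perpp[i])=\cu_{w\ge0}$; then $L$ is determined by orthogonality since $w$ is a weight structure (so $\cu_{w\le0}=\perpp(\cu_{w\ge1})$). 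By Theorem \ref{textw} applied to the perfect set $\cp$, the couple $(L,R)$ is a (smashing) weight structure on $\cu$, and by the previous sentence it equals $w$ — so "$w$ is perfectly generated by $\cp$" in the paper's terminology.

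Finally I would establish the restriction statement: $(\cu_{w\le0}\cap\obj\cu^\be,\ \cu_{w\ge0}\cap\obj\cu^\be)$ is a weight structure on $\cu^\be$. Orthogonality and retraction-closedness are inherited from $\cu$; the only real point is that $\cu^\be$ is closed under the weight decompositions coming from $w$, i.e. for $M\in\obj\cu^\be$ a weight decomposition $w_{\le0}M\to M\to w_{\ge1}M$ can be chosen inside $\cu^\be$. For this I would again invoke the closure properties built into $\al'$: $\cu^\be$ is a thick triangulated subcategory, and by construction every object of $\cu^\be$ has a weight decomposition whose terms lie in the essentially small class we closed off under truncations, which for $\be\ge\al'$ is contained in $\cu^\be$; alternatively one cites the general principle that a smashing weight structure restricts to the subcategory of $\be$-compact objects once $\be$ dominates the "complexity" of $w$. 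The main obstacle, as flagged above, is precisely the construction of the cardinal $\al'$ and the essentially small class $\cp_0$ capturing all weight decompositions — i.e. showing the "close off under $w$-truncation" process terminates at a regular cardinal and interacts correctly with the $\be$-compact filtration and with coproducts; everything else is bookkeeping with orthogonality, Theorem \ref{textw}, and the standard structure theory of well generated categories.
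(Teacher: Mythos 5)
Your outline has the right shape, but the two steps you yourself flag as "the heart of the matter" are exactly where the argument is incomplete, and they are not resolvable by the mechanism you sketch.

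First, the "transfinite closing off under $w$-truncations" is both unnecessary and insufficient. The paper's Theorem \ref{twgws}(III.2) does this in a single step: fix a weight decomposition of each object of the essentially small category $\cu^\al$, choose $\al'$ regular large enough that the resulting $w_{\le 0}$-truncations all lie in $\cu^{\al'}$, and then invoke the "furthermore" part of Proposition \ref{ppcoprws}(\ref{icopr7p}). That proposition is the ingredient you are missing: it says that once you know weight decompositions of a family $\{D_l\}$, every object of the smallest triangulated subcategory containing $\{D_l\}$ and closed under coproducts of $<\be$ objects has weight truncations in the corresponding $<\be$-coproduct/extension-closure. Applied with $D_l$ running over $\obj\cu^\al$ and $\be\ge\al'$, this shows $w$ restricts to $\cu^\be$ without any iteration. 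Your "close off and repeat until stable" process, even granting termination, does not interact with $\be$-coproducts in any visible way and does not reproduce this.

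Second, and more seriously, the assertion that the class you produce has right orthogonal (after shifts) equal to $\cu_{w\ge 0}$ is stated but never argued, and it does not follow from the construction. Closing a set under weight truncations gives you a set closed under a recursive operation; it does not give you a generation statement $\cp\perpp=\cu_{w\ge 1}$. This is the real content of the proposition, and the paper proves it via a genuinely different route. Specifically, Theorem \ref{twgws}(I.\ref{ilscperw}) shows that for a Hom-generating, cosuspended, essentially small $\cp$ the class $L_s\cp$ generates $s$ and $\lo$ is the big hull of $L_s\cp$. Its proof depends on Theorem \ref{tpgws} (perfectly generated weight structures, itself using coherent functors) and on Lemma \ref{lbes}(\ref{ilwd}) (propagating weight decompositions through countable homotopy colimits). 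None of this homotopy-colimit / big-hull machinery appears in your sketch; without it, the implication "$\cp\perp M\ \Rightarrow\ M\in\cu_{w\ge 1}$" is unjustified. Likewise, your perfectness claim is asserted rather than proved: the paper deduces it from Theorem \ref{twgws}(I.\ref{ilscper}) using the $\cp$-approximation / projective-class formalism of Definition \ref{dapprox} and Lemma \ref{lperf}, which is not a routine consequence of $\be$-compactness.

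In short, you have correctly reduced the problem to "find $\al'$ such that $w$ restricts to $\cu^\be$, then verify perfectness and generation of $\cp=\cu_{w\le 0}\cap\obj\cu^\be$," which matches the paper's decomposition. But the three substantive ingredients — Proposition \ref{ppcoprws}(\ref{icopr7p}) to propagate weight decompositions, the approximation machinery behind perfectness, and the big-hull argument behind generation — are exactly the parts your proposal leaves as unexamined black boxes, so the proof as written has a genuine gap rather than just omitted bookkeeping.
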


%Note here that
%In particular,  a smashing weight structure on a  compactly generated triangulated category  is necessarily "strongly well generated" (cf. Remark \ref{rtkrau}).

A significant part of the %results below 
"easier" results of the current paper is stated in terms of {\it torsion pairs} (as defined in \cite{aiya}; cf. Remark \ref{rgen}(4) below); these essentially generalize both weight structures and $t$-structures.\footnote{Note that in \cite{postov} torsion pairs were called complete Hom-orthogonal pairs.}\  This certainly makes the corresponding results more general; note also that (the main subject of) \cite{bkw}  yields an interesting family of examples of torsion pairs that do not come either from weight structures or from $t$-structures.  %Moreover,  the $t$-structure versions of a significant part of these statements appear to be new (though easy).   %?!
%Among 
Probably the most interesting result about general torsion pairs proved in this paper is the classification of compactly generated ones (in Theorem \ref{tclass}); %; cf. Remark \ref{rnewt}(2)); 
 we drop the assumption that $\cu$ is a "stable derivator" category that was necessary for the proof of the closely related Corollary 3.8 of %Theorem 3.7 of 
 \cite{postov}. We also relate {\it adjacent} torsion pairs to "Brown-Comenetz-type symmetry". %On the other hand, an easy criterion for the existence of adjacent torsion pairs along with the relation of torsion pairs to (weakly) perfect classes is only available for (torsion pairs associated with) weight structures (see Proposition \ref{padjt}(\ref{ile4}) and Theorems \ref{tpgws} and \ref{twgws}(III, IV.2), respectively). Quite surprisingly, no $t$-structure versions of these results appear to be known at the moment (yet see Remark \ref{rigid}(1) below).

%?!
%relation to perfect classes?!!

We also study {\it dualities} between triangulated categories and their relation to torsion pairs. % that are orthogonal with respect to them. 
 We %mention several examples demonstrating 
 demonstrate that for a $t$-structure $t$ on $\cupr$ that does not possess a left adjacent weight structure there still may exist %a weight structure  
$w$ on a category $\cu$ that is {$\Phi$-orthogonal } to $t$, where $\Phi:\cu\opp\times \cu'\to \ab$ is a ("bicontinuous") %"regular"
  duality of triangulated categories. 
Moreover, for an interesting sort of dualities that we study in \S\ref{sdual2} and for any compactly generated $t$ on $\cupr$ the objects of the heart of the corresponding orthogonal $w$ give faithful exact %conservative 
 "stalk" functors $\hrt\to \ab$ that respect coproducts; taking the functor $\Phi(P,-)$ for $P$ being a {\it cogenerator} of $\hw$ we easily obtain that $\hrt$ is an AB5 abelian category (cf. Theorem \ref{tgroth}). %\footnote{Thus proving that $\hrt$ is an AB5 category in this case; cf.  Theorem \ref{tgroth}.} %; we certainly apply the aforementioned Proposition \ref{pgen} to obtain a generator for $\hrt$.} %As a consequence we prove that for any compactly generated $t$-structure on  the homotopy category of an arbitrary proper simplicial stable  model category the heart $\hrt$  is a  Grothendieck abelian category (see Remark \ref{rab5}(\ref{irrel})); this answers Question 3.8 of \cite{parrasao} "in all reasonable cases". Our proof relies on the existence of an orthogonal weight structure; so 
So we demonstrate once again that weight structures "shed some light on $t$-structures".

Moreover, dualities  are important for \cite{bgn} where they are  applied to the study of coniveau spectral sequences and the homotopy $t$-structures on various motivic stable homotopy categories.  %In our main examples  the category $\cu$ is "morally" a certain category of pro-objects; we describe a certain construction of $\cu$ starting from a given category $\cupr$ that has a proper (stable) closed Quillen model. %??! (whereas $\cupr$ may %be a category of ind-objects of a certain sort consist of certain ind-objects). 
 This matter is also related to the study of the stable homotopy category in \cite{prospect}. 
%{prospectra}?!

\begin{rrema}\label{rnew} As we have already explained, in this paper several distinct interesting topics are considered. For this reason the author decided to split this text into several new ones, each of which will be mostly concentrated on a single question and treat it accurately and in more detail (respectively, these papers will contain more results than the current one; however, the %arguments used for the 
proofs may be really different, and the arguments used in this text may be of certain interest as well).

So, some basics on general and "concrete" torsion pairs (that corresponds to our sections \ref{shop}--\ref{ssws}) and symmetry between them (see \S\ref{scomp})  can be found in \cite{bvt} (where torsion pairs are called torsion theories). The theory of weight complexes, pure functors and detecting weights by them, and smashing weight structures (cf. our \S\S\ref{sswc}--\S\ref{sdetect}) are considered in detail in \cite{bwcp}. Weight range, its relation to virtual $t$-truncations, and several existence of adjacent and orthogonal $t$-structures and weight structures statements (cf. \S\S\ref{svtt}--\ref{sadjw} and \S\ref{sdual1}) are treated in \cite{bvtr}; moreover, examples coming from various categories of quasicoherent sheaves are treated there more carefully than in Remark  \ref{roq} below.  Perfect and well generated weight structures, their applications to adjacent $t$-structures, compactly generated torsion pairs,  and certain auxiliary results (corresponding to  \S\ref{spgtp} % \S{spgtp}
and \S\ref{sgengroth})  are treated and essentially  improved in \cite{bpws}.
%Moreover, the results of \S\ref{spgtp} and %(cf. also Theorem \ref{tab5} and  % (most of) \S\ref{sgengroth} will soon be %reproduced in more detail (and extended) (essentially)  improved in \cite{bpws}. 
Note however that some of the terminology and notation in these papers differs from the one we use here.

The reader is also recommended to look at the recent papers \cite{bsnew} and  \cite{bkwn}. % (that also "replaces" \cite{bkw})
\end{rrema}

Let us  now describe the contents  of the paper. Some more information of this sort may be found in the beginnings of sections.

In \S\ref{sold} %mostly consists of preliminaries. We introduce some definitions and conventions on 
 we %recall several important result basics on triangulated categories.  We also 
we define torsion pairs and prove several of their properties; these are mostly simple but new. % not written down in the literature yet.  
We also relate torsion pairs to $t$-structures and study $t$-projective % and $t$-injective 
objects   (essentially following \cite{zvon}); they are related to adjacent weight structures that we will study later.

%construct  generators for the heart of a  $t$-structure. %new?!

We start \S\ref{sws}  from recalling some  basics on weight structures (among those are some properties of weight complexes; though these are not really new, we treat this subject more accurately than in \cite{bws} where %it was 
 weight complexes were originally defined). Next we %study pure functors and (more generally) functors of limited weight range. We 
 introduce pure functors (in \S\ref{sdetect}; their "construction" in Proposition \ref{ppure} appears to be a very useful statement). An "intrinsic" definition of pure functors is given by Proposition \ref{pwrange}.
We relate the weight range of functors to virtual $t$-truncations. We also study the properties of virtual $t$-truncations and weight complexes
%) are related to coproducts 
 under the assumption that $w$ is smashing. As an application, we treat the representability for virtual $t$-truncations of representable functors; this result has important applications to the construction of (adjacent) $t$-structures below.

In \S\ref{sadjbrown} we %prove in several cases that
 investigate the question when weight structures and $t$-structures admit (right or left) adjacent $t$-structures or weight structures (respectively); we also study adjacent "structures" if they exist. To prove the existence of these adjacent structures on a triangulated category $\cu$ we usually assume a certain Brown representability-type condition for $\cu$. %The "general" results on the existence of adjacent weight structures are somewhat less satisfactory than the $t$-structure ones; this motivates us to consider $\Phi$-orthogonal torsion pairs (in particular, weight and $t$-structures) in \S\ref{skan}. 
 We also recall the definition of perfect and symmetric classes and relate them to Brown-Comenetz duality and torsion pairs. %This includes a (simple) relation a  
 This gives a funny (general) criterion for the existence of an adjacent torsion pair in terms of "symmetry" along with a "new" description of a $t$-structure that is right adjacent to a given compactly generated weight structure. %BCD?!
%Moreover, for a $t$-structure $t$ on $\cu$ there may exist a 
%the existence of a weight structure that is 

 In \S\ref{spgtp} we study compactly generated torsion pairs and  perfectly generated weight structures; we prove several new and interesting results about them (and some of these statements were formulated above). In particular, we prove that for any "symmetric" $\cp,\cp'\subset \obj \cu$ there exist adjacent $t$ and $w$ (co)generated by them. This implies the existence of a weight structure right adjacent to a given compactly generated $t$; hence  the category $\hrt$ has an injective cogenerator and satisfies the AB3* axiom. Combining this fact with the results of \cite{humavit} we deduce that $\hrt$ is Grothendieck abelian whenever $t$ is non-degenerate.  %The construction of weight structures and   (compactly generated) torsion pairs  requires us to study countable homotopy colimits in triangulated categories (as introduced in \cite{bokne}) along with certain related notions. 
 The   results and arguments of  %\S\ref{sperfws}  
the section are closely related to the properties of localizing subcategories of triangulated categories as studied by A. Neeman, H. Krause and other authors; see Remark \ref{rtst2}(\ref{it6}) below %?!
 for an "explanation" of this  similarity. %observation.

In \S\ref{skan} we study dualities between triangulated categories and torsion pairs orthogonal with respect to them. Considering a "very simple" duality  we prove that for certain weight structures inside the bounded derived category of coherent sheaves on a scheme $X$ that is proper over $\spe R$ (where $R$ is a noetherian ring) there necessarily exist (right or left) orthogonal $t$-structures.

 Our main tools for constructing "more complicated" dualities  are Kan extensions of (co)homological functors from a triangulated subcategory  $\cuz$ to $\cu$; their properties are rather  interesting for themselves. We describe a duality between the homotopy category of filtered pro-objects for a stable proper Quillen model category $\gm$ with $\ho(\gm)$. % itself. %This result implies 
	The properties of this duality imply that for any compactly generated $t$-structure on $\ho(\gm)$ its heart is  a Grothendieck abelian category; % (here we use an argument originating from \cite{humavit} to describe generators in $\hrt$); 	
	 they %will also be 
	 are also applied in \cite{bgn} to the study of {\it motivic pro-spectra } and {\it generalized coniveau spectral sequences}.
	
	For the convenience of the reader we also make a list of the main definitions and notation used in this paper. Regular cardinals, Karoubi-closures and related matters, $D\perp E$, $\perpp D$, and $D\perpp$,  suspended, cosuspended, extension-closed, and strict classes of objects, extension-closures, envelopes, subcategories generated by classes of objects of triangulated categories,  homological and cohomological functors, compact objects, localizing and colocalizing subcategories, cogenerators, Hom-generators, compactly generated categories, cc, cp, and pp functors, and the Brown representability condition (along with its dual) are defined in \S\ref{snotata}; %(coproductive and productive)
	(smashing, cosmashing, countably smashing,  adjacent, and compactly generated) torsion pairs (often denoted by $s=(\lo,\ro)$; sometimes we also use the notation $(L_sM,R_sM)$), generators for them, and $\cp$-null morphisms are introduced in \S\ref{shop}; $t$-structures ($t=(\cu^{t\le 0},\cu^{t\ge 0})$) and several %definitions for them
	 of their "types" (including smashing and cosmashing $t$-structures), their hearts ($\hrt$), associated torsion pairs, $t$-homology ($H_0^t$), and $t$-projective objects are defined in \S\ref{sts}; weight structures ($w=(\cu_{w\le 0},\cu_{w\ge 0})$; we also define $\cu_{w=i}$ and $\cu_{[m,n]}$), their "types", hearts ($\hw$), and associated  (weighty) torsion pairs, adjacent weight and $t$-structures, $m$-weight decompositions,  negative subcategories, and the weight structure $\wstu$ are  introduced in   \S\ref{ssws}; (weight) Postnikov towers along with the corresponding filtrations and (weight) complexes, weakly homotopic  morphisms of complexes, and the category $\kw(\hw)$ are defined in \S\ref{sswc}; pure homological functors ($H^\ca$)  and purely $R$-representable homology (with values in $\psvr(\bu)$) is introduced in \S\ref{sdetect}, virtual $t$-truncations ($\tau^{\ge m}H$ and $\tau^{\le m}H$), weight range of functors, and pure cohomological functors ($H_\ca$) are defined in \S\ref{svtt};    functors of $R$-finite type and $R$-saturated categories are introduced in \S\ref{sadjt}; $\al$-small objects, countably perfect and perfect classes of objects, perfectly generated and well generated categories,  weakly symmetric and symmetric classes, and Brown-Comenetz duals of functors and objects are defined in \S\ref{scomp}; countable homotopy colimits $\inli Y_i$, strongly extension-closed classes and strong extension-closures,  (naive) big hulls, and zero classes of (collections of) functors are introduced in \S\ref{scoulim}; %??coherent functors
perfectly generated weight structures,  $\cp$-approximations, and contravariantly finite classes are defined  in \S\ref{sperfws}; weakly and strongly $\be$-well  generated  weight structures (and torsion pairs) are studied in \S\ref{swgws}; coextended functors and coextensions  are defined in \S\ref{scoext}; (nice) dualities and orthogonal structures are introduced in \S\ref{sdual1}; biextensions are defined in  \S\ref{sdual1}; stalk functors are introduced in \S\ref{sgengroth}; certain model structures on pro-objects are recalled in \S\ref{sprospectra}.

	The author is deeply grateful to prof. F. D\'eglise, prof. G.C. Modoi,  prof. Salorio M.J. Souto, and prof. J. \v{S}\v{t}ov\'\i\v{c}ek for their very useful comments.

%The author apologizes for all  inaccuracies in the current version of the text (including possible ignoring of results closely related to our ones). He will try to correct them in the next version; possibly, some of the terminology will be changed also.  %Note also that a significant part of our results was proven here solely for the purpose of citing them elsewhere; however, the author has tried to warn the readers about those big portions of the text that will not be applied  below. %?!

\section{%A recollection on triangulated categories, 
On torsion pairs and   $t$-structures ("simple properties")}\label{sold}

This section is dedicated to the basics on torsion pairs and $t$-structures. %, and weight structures in triangulated categories. 

%In this section we  recall (a part of) the theory of  weight structures.

In \S\ref{snotata} we introduce some %(mostly, categorical) 
notation %definitions. 
and recall several important properties of triangulated categories (mostly from \cite{neebook}). %In particular, we define compact objects, perfect classes, and well generated triangulated categories. %Neeman?! In $\cu$ having coproducts.

In \S\ref{shop} we define and study torsion pairs (in the terminology of \cite{aiya}). Our results are rather easy; yet the author does not know any references for most of them. % in general. %known for ws?! 

In \S\ref{sts} we recall some basics on $t$-structures and relate them to torsion pairs. We also study the notions of $t$-projective %and $t$-injective 
 objects for the purpose of using them in \S\ref{sadjw} and later.
%Next we use a simple adjunction argument to construct generators for the heart of any  $t$-structure (we obtain a single generator for $\hrt$ whenever $t$ is  smashing and   generated by a set of objects).

  \subsection{Some categorical preliminaries}%; well-generated triangulated categories}
	\label{snotata}
  
	When we will write $i\ge c$ or $i\le c$ (for some $c\in \z$) we will mean that $i$ is an integer satisfying this inequality.
	
	A cardinal $\al$ is said to be {\it regular} if it cannot be presented as a sum of less then $\al$ cardinals that are less than $\al$.

Most of the categories of this paper will be locally small. When considering a category that is not locally small we will usually say that it is (possibly) big; we will not need much of these categories. 
%very large?! {cashapira}?! Essentially small?!
	
For categories $C,D$ we write 
$D\subset C$ if $D$ is a full %strict?
subcategory of $C$.

Given a category $C$ and  $X,Y\in\obj C$ we  will write $C(X,Y)$ for  the set of morphisms from $X$ to $Y$ in $C$.
We will say that $X$ is  a {\it
retract} of $Y$ if $\id_X$ can be factored through $Y$.\footnote{Certainly,  if $C$ is triangulated 
then $X$ is a retract of $Y$ if and only if $X$ is its direct summand.}\

For a category $C$ the symbol $C^{op}$ will denote its opposite category.

For a subcategory $D\subset C$
we will say that $D$ is {\it Karoubi-closed} in $C$ if it
contains all retracts of its objects in $C$. We call the
smallest Karoubi-closed subcategory $\kar_C(D)$ of $C$ containing $D$  the {\it
Karoubi-closure} of $D$ in $C$. %; sometimes we will use the same term for the class of objects of the Karoubi-closure of a full subcategory of $C$ (corresponding to some subclass of $\obj C$).

The {\it Karoubi envelope} $\kar(\bu)$ (no lower index) of an additive
category $\bu$ is the category of "formal images" of idempotents in $\bu$ (so $\bu$ is embedded into an idempotent complete category). %; we will discuss this construction in \S\ref{snkarex} below).

$\ab$ is the category of abelian groups.

%We will say that an object $M$ of  an additive category that closed with respect to small  coproducts is {\it compact} if  the functor $\bu(M,-):\bu\to \ab$ respects coproducts. 

$\cu$, $\cu'$,  $\cuz$, $\du$, $\duz$,  and $\eu$ will always denote certain triangulated categories.
$\cu$ will often be endowed with a weight structure $w$; we  always assume that this is the case in those formulations where $w$ is mentioned without any explanations. %actual?!!!!

%We often specify a distinguished triangle by two of its arrows.

For $f\in\cu (X,Y)$, where $X,Y\in\obj\cu$, we  call the third vertex
of (any) distinguished triangle $X\stackrel{f}{\to}Y\to Z$ a cone of
$f$.

We will often consider some representable and corepresentable functors and their restrictions. So for $\du'$ being a full triangulated subcategory of a triangulated category $\du$  and $M\in \obj \du$ we will often write $H^M: \du'\to \ab$ for the restriction of the corepresentable (homological) functor $\du(M,-)$ to $\du'$ (yet $H^P$ in Proposition \ref{porthop} will denote a certain {\it coextension});  $H_M: \du'\opp \to \ab$ is the restriction of the  functor $\du(-,M)$ to $\du'$. %$\du'=\du$ in this notation?!
We will often be interested in the case $\du'=\du$ in this notation; we  assume that the domain of the functors $H^M$ and $H_M$ is the category $\cu$ if not specified otherwise.

For $X,Y\in \obj \cu$ we will write $X\perp Y$ if $\cu(X,Y)=\ns$.
For $D,E\subset \obj \cu$ we will write $D\perp E$ if $X\perp Y$
 for all $X\in D,\ Y\in E$.
For $D\subset\obj \cu$ we will write  $D^\perp$ for the class
$$\{Y\in \obj \cu:\ X\perp Y\ \forall X\in D\};$$
%Sometimes we will denote by $D^\perp$ the corresponding full subcategory of $\cu$; 
sometimes we will write $\perp_{\cu}$ instead to indicate the category that we are considering. Dually, ${}^\perp{}D$ is the class
$\{Y\in \obj \cu:\ Y\perp X\ \forall X\in D\}$. 

In this paper all complexes will be cohomological, i.e., the degree of all differentials is $+1$; respectively, we  use cohomological notation for their terms. 
 
 We will use the term {\it exact functor} for a functor of triangulated categories (i.e., for a  functor that preserves the structures of triangulated categories).

We will say that a class $\cp\subset \obj \cu$ is {\it suspended} if $\cp[1]\subset \cp$; $\cp$ is {\it cosuspended} if $\cp[-1]\subset \cp$.

A class $\cp\subset \obj \cu$ is said to be {\it extension-closed} if $0\in \cp$ and for any distinguished triangle $A\to B\to C$  in $\cu$ we have the implication $A,C\in
\cp\implies B\in \cp$. In particular, an extension-closed $\cp$ is {\it strict} (i.e., contains all objects of $\cu$ isomorphic to its elements).
%nado?!

The smallest extension-closed class $\cp'$ containing a given $\cp\subset \obj \cu$ will be called the {\it extension-closure} of $\cp$. The  smallest extension-closed Karoubi-closed  $\cp'\subset \obj \cu$ containing $\cp$ will be called the {\it envelope} of $\cp$.

We  call the smallest Karoubi-closed triangulated subcategory $\du$ of $\cu$ such that $\obj \du$ contains $\cp$ the {\it triangulated subcategory densely generated by} $\cp$; we will write  $\du=\lan \cp\ra_{\cu}$.

We will say that  $\cu$ {\it has coproducts} (resp. products, resp. countable coproducts) whenever it contains arbitrary small coproducts (resp. products, resp. countable coproducts) of families of its objects.

$\au$ will usually denote some abelian category; %. We will often have to assume that it satisfies some of the ("additional") Grothendieck's axioms; actually, 
 the case $\au=\ab$ is the most important one for the purposes of this paper.
%We will often assume that $\au$ satisfies AB5, i.e.,  is closed with respect to all small coproducts, and  filtered direct limits of exact sequences in $\au$ are exact.

We will call a covariant additive functor $\cu\to \au$ for an abelian $\au$ {\it homological} if it converts distinguished
triangles into long exact sequences; homological functors $\cu^{op}\to \au$ will be called {\it cohomological} when considered
as contravariant functors from $\cu$ into $ \au$.

For additive categories $C,D$ %with $C$ being skeletally small %?!!
the symbol $\adfu(C,D)$ will denote the (possibly, big)
category of additive functors from $C$ to $D$. Certainly, if $D$ is abelian then an $\adfu(C,D)$-complex  $X\to Y\to Z$ is exact in $Y$ whenever $X(P)\to Y(P)\to Y(P)$ is exact (in $D$) for any $P\in \obj C$. %general exactness?? 
%complex of $\adfu(C,D)$-morphisms (of any length) is exact in some term (or in all terms) if and only the results of the application of this complex yields a 

%\subsection{On well generated triangulated categories and related notions}\label{swg} %1.1?!

Below we will sometimes need some properties of the Bousfield localization setting (cf. \S9.1 of \cite{neebook}; most of these statements are contained in Propositions 1.5 and 1.6 of \cite{bondkaprserr}).

\begin{pr}\label{pbouloc}

 Let $F:\eu\to \cu$ be a full embedding of triangulated categories; assume that $\eu$ is  Karoubi-closed in $\cu$.  Denote by $\du$ the full subcategory of $\cu$ whose object class is $\obj \eu\perpp$; denote the embedding $\du\to \cu$ by $i$.

Then the following statements are valid.

I. A (left or right) adjoint to an exact functor is exact.

II.  $\du$ is a Karoubi-closed %thick??
 triangulated subcategory of $\cu$.

III. %Let $F:\eu\to \cu$ be a full embedding of triangulated categories; 
Assume that  $F$  possesses a right adjoint $G$. % Denote by $\du$ the full subcategory of $\cu$ whose object class is $\obj \eu\perpp$.

\begin{enumerate}
\item\label{ibou1} Then for any $N\in \obj \cu$ there exists a distinguished triangle 
\begin{equation}\label{ebou}
N'\stackrel{f}{\to} N\stackrel{g}{\to} N''\to N'[1]
\end{equation}  with $N'\in   \obj \eu$ and $ N'' \in\obj \du$ (here we consider $\eu$ as a subcategory of $\cu$ via $F$), %  Moreover, %$\du$ is triangulated 
 and the triangle (\ref{ebou}) is unique up to a canonical isomorphism. 

\item\label{ibouort} $\perpp\obj \du=\obj \eu$.

\item\label{ibou2}  %The embedding $\du\to \cu$
 The functor $i$  possesses an (exact) %right??! 
 left adjoint $A$ and %in any triangle of the sort (\ref{ebou})
  the morphism $g$ in  (\ref{ebou})   is given by the unit of this adjunction. Moreover, this unit transformation yields an equivalence of  the Verdier localization of $\cu$ by $\eu$ (that is locally small in this case) to $\du$. 

\item\label{ibou3}% For any triangle of the sort (\ref{ebou}) the 
The morphism $f$ in  (\ref{ebou})  is given by the counit of the adjunction $F\dashv G$, and this counit gives an  
%\item\label{ibou3} The morphisms of the type $g$ in  (\ref{ebou}) yield a right adjoint $A$ to the embedding $\du\to \cu$. Moreover, $\du$ is triangulated and $A$ gives an isomorphism Furthermore, morphisms of the type $f$ in  (\ref{ebou}) yield an isomorphism 
 equivalence $\eu\cong \cu/\du$.

%\item\label{ibou4}

\end{enumerate}

IV. A full embedding $F$ as above possesses a right adjoint if and only if  %either of the following conditions is fulfilled:
for any $N\in \obj \cu$ there exists a distinguished triangle (\ref{ebou}).

%(ii) $B$  possesses a right adjoint functor $A$.

\end{pr}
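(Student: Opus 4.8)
The plan is to establish the four parts in order, the substantive input being confined to Part~IV and to the two ``Verdier quotient'' assertions of Part~III, everything else being a formal manipulation with adjunctions and with the orthogonal class $\obj\eu\perpp$. For Part~I (an adjoint of an exact functor is exact) I would treat the right-adjoint case, the left-adjoint one being dual: since $F[1]\cong[1]F$, both $G[1]$ and $[1]G$ are right adjoint to $F[-1]$, hence canonically isomorphic, and this isomorphism is readily checked to be compatible with the triangulated data; given a distinguished triangle $X\to Y\to Z\to X[1]$, one completes $GX\to GY$ to a distinguished triangle $GX\to GY\to C\to GX[1]$, applies $F$, uses naturality of the counit together with exactness of $F$ to produce a morphism of distinguished triangles from $FGX\to FGY\to FC\to FGX[1]$ onto $X\to Y\to Z\to X[1]$, and concludes by a Yoneda/five-lemma argument that the induced map $C\to GZ$ is an isomorphism (cf.\ \cite{neebook,bondkaprserr}). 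For Part~II, using only that $\eu$ is stable under $[\pm1]$, for $E\in\obj\eu$ and $Y\in\obj\du=\obj\eu\perpp$ we get $\cu(E,Y[1])=\cu(E[-1],Y)=0$, so $\du$ is shift-stable; applying $\cu(E,-)$ to a distinguished triangle two of whose vertices lie in $\du$ shows $\du$ is extension-closed, hence a triangulated subcategory; and a retract $Y'$ of $Y\in\obj\du$ still satisfies $\cu(E,Y')=0$ for all such $E$, so $\du$ is Karoubi-closed.

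For Part~III assume $F\dashv G$ with counit $\eta$. To obtain (\ref{ebou}) I put $N'=GN$, $f=\eta_N$, and complete to a distinguished triangle $FGN\xrightarrow{\eta_N}N\xrightarrow{g}N''\to FGN[1]$; applying $\cu(FE,-)$ for $E\in\obj\eu$ and using full faithfulness of $F$ together with the triangle identities shows that composition with $\eta_N$ is an isomorphism $\cu(FE,FGN)\to\cu(FE,N)$ (and likewise with $N$ shifted), whence $\cu(FE,N'')=0$ and $N''\in\obj\du$, giving (\ref{ibou1}). Uniqueness of (\ref{ebou}) up to a canonical isomorphism uses only $\obj\eu\perp\obj\du$ and shift-stability of $\du$, so it does not require $G$: for a competing triangle $\tilde N'\to N\to\tilde N''$ one has $\cu(\tilde N',N''[-1])=\cu(\tilde N',N'')=0$, so $\cu(\tilde N',N')\to\cu(\tilde N',N)$ is an isomorphism, producing a unique comparison $\tilde N'\to N'$ over $N$, and the symmetric construction together with the usual retract argument shows these comparisons are mutually inverse. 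For (\ref{ibouort}), i.e.\ $\perpp\obj\du=\obj\eu$, the inclusion $\supseteq$ is the definition of $\du$, and for $\subseteq$, if $X\perp\obj\du$ then in (\ref{ebou}) for $N=X$ the map $X\to X''$ vanishes (as $X''\in\obj\du$), so $X$ is a direct summand of $X'\in\obj\eu$, whence $X\in\obj\eu$ by Karoubi-closedness of $\eu$ in $\cu$. For (\ref{ibou2}) and (\ref{ibou3}) I would run the dual computation: applying $\cu(-,Y)$ with $Y\in\obj\du$ to (\ref{ebou}) exhibits $g\colon N\to N''$ as a reflection of $N$ into $\du$, so $i$ has a left adjoint $A$ with unit $g$, which is exact by Part~I, and symmetrically $f$ is the counit of $F\dashv G$; finally $A$ annihilates $\eu$ and $G$ annihilates $\du$ (an object of $\obj\eu$, resp.\ $\obj\du$, fits into (\ref{ebou}) with the opposite vertex $0$), so each factors through the relevant Verdier quotient, the induced functors $\cu/\eu\to\du$ and $\cu/\du\to\eu$ are quasi-inverse to the evident inclusions because $A\circ i\cong\id$, $G\circ F\cong\id$ and every $N$ is isomorphic in $\cu/\eu$ (resp.\ $\cu/\du$) to $N''$ (resp.\ $N'$) by (\ref{ebou}), and $\cu/\eu$ is locally small since its Hom-sets are computed via $A$ inside the locally small category $\du$ (cf.\ \cite{neebook,bondkaprserr}).

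Part~IV: one implication is (\ref{ibou1}). For the converse, assume every $N$ admits a triangle (\ref{ebou}); applying $\cu(E,-)$ for $E\in\obj\eu$ to (\ref{ebou}) and using $N''\in\obj\du$ and $N''[-1]\in\obj\du$ (Part~II) shows that composition with $f$ is an isomorphism $\cu(E,N')\to\cu(E,N)$. Hence for $h\colon N_1\to N_2$ the morphism $h\circ f_1$ factors uniquely through $f_2$ as $f_2\circ G(h)$ for a unique $G(h)\colon N_1'\to N_2'$, and uniqueness of these lifts makes $N\mapsto N'$, $h\mapsto G(h)$ a functor $G\colon\cu\to\eu$, while the isomorphisms $\cu(FE,N)=\cu(E,N)\cong\cu(E,N')=\eu(E,GN)$ are natural in both arguments (naturality in $N$ being precisely the defining property of $G(h)$), so $G$ is right adjoint to $F$. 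The step I expect to be the real obstacle is not this construction but the two Verdier-quotient clauses in Part~III --- checking local smallness of the quotients and that the comparison functors are genuine equivalences --- since that is the only point where the full strength of the adjunction, rather than a purely formal orthogonality argument, is really needed.
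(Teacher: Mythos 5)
Your proof is correct, and it supplies genuine arguments where the paper's own proof is essentially a sequence of citations: Part I is delegated to \cite[Lemma 5.3.6]{neebook}, and Part III together with Part IV are assembled from Propositions 9.1.8 and 9.1.18, Corollary 9.1.14, and Theorem 9.1.16 of \cite[\S 9.1]{neebook} (with the uniqueness clause of III.\ref{ibou1} attributed to \cite[Proposition 1.1.9]{bbd}, which is exactly the lifting lemma you reconstruct). The one meaningful difference of route is in III.\ref{ibou1}: the paper first cites the fact that $\cu\to\cu/\eu$ is a Bousfield localization and then reads the triangle (\ref{ebou}) off that machinery, whereas you build (\ref{ebou}) ``from below'' by completing the counit $FGN\to N$ to a distinguished triangle and verifying $N''\in\obj\eu\perpp$ directly from the triangle identities, only afterwards deducing the Verdier-quotient descriptions from the formal properties of these decompositions. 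Your order (orthogonality data first, localization identifications last) is more self-contained and makes it visible that everything in III.\ref{ibouort}--\ref{ibou3} and IV is formal once (\ref{ebou}) is available; the paper's order is shorter and reuses Neeman's packaged results that are cited elsewhere in the text. One small point on Part I: after the five-lemma/Yoneda step produces an isomorphism $C\cong GZ$, one should still transpose the outer squares of the triangle morphism across the adjunction to check that this isomorphism is compatible with the maps from $GY$ and to $(GX)[1]$, so that it identifies candidate triangles and not merely objects; this is implicit in your sketch but worth making explicit, since it is precisely the content of Neeman's Lemma 5.3.6 that the paper cites.
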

\begin{proof}
I. This is  Lemma 5.3.6 of \cite{neebook}. %ibid. that gives the exactness of the adjoint functor

II. Obvious.

III.\ref{ibou1}. %$\du$ is obviously triangulated. 
  The existence of (\ref{ebou}) is immediate from  Proposition 9.1.18 (that says that the Verdier localization $\cu\to \cu/\eu$ is a {\it Bousfield localization} functor in the sense of  Definition 9.1.1 of ibid.)  and Proposition 9.1.8 of ibid. (cf. also Proposition 1.5 of \cite{bondkaprserr}). Lastly, the essential uniqueness of  (\ref{ebou}) easily follows from  %the essential uniqueness of the morphism $f$ is an easy consequence of 
\cite[Proposition 1.1.9]{bbd} (cf. also Proposition \ref{pbw}(\ref{icompl}) below). 

\ref{ibouort}. According to the aforementioned  Proposition 9.1.18 of  \cite{neebook}, we can deduce the assertion from   Corollary 9.1.14 of  ibid. %\cite{neebook}.

 \ref{ibou2}.  The "calculation" of $g$ is given by Proposition 9.1.8 of  %\cite{neebook} 
 ibid. also. It remains to apply  Theorem 9.1.16 of  ibid.

%Remark 9.1.15 of \cite{neebook} yields that the counit morphism in question can be completed to the triangle (\ref{ebou}), whereas the essential uniqueness of the latter %?!!

 \ref{ibou3}. Corollary 9.1.14 of ibid. allows to deduce the assertion from the previous one.  %Immediate from Proposition 9.1.8 along with Theorem 9.1.16 of  \cite{neebook}.

IV. The "only if" part of the assertion is given by assertion III.\ref{ibou1}. The converse implication is given by %the combination of Remark 9.1.15 and 
 Proposition 9.1.18 of ibid.
%Remark 9.1.15 and Proposition 9.1.18 of ibid., respectively.
\end{proof}

Now we recall some terminology, notation, and  statements related to infinite coproducts and products in triangulated categories. %Certain generalization 
 Some of these definitions and results may %be found 
will be generalized in \S\ref{scomp} below.

All the coproducts and products in this paper will be small.
We will say that a subclass $\cp$ of $\obj \cu$ is {\it coproductive} (resp. {\it productive}) if it is closed with respect to all (small) coproducts (resp. products) that exist in $\cu$.
%???? We will say that $D$ is {\it weakly  coproductive} if it is closed with respect to all countable $\cu$-coproducts.

For  triangulated categories closed with respect to (all) coproducts or products %(resp. closed with respect to all countable coproducts) 
we will just say that these categories have coproducts (resp. products).%, resp., countable coproducts).
\footnote{These are   axioms [TR5] and  [TR5*] %, and [TR5($\aleph_1$)] 
 of \cite{neebook}, respectively.} 

We recall the following very useful statement.

\begin{pr}\label{pcoprtriang}
Assume that $\cu$ has  coproducts (resp. products, resp. countable coproducts). Then $\cu$ is Karoubian and all (small) coproducts  (resp. products, resp. countable coproducts) of distinguished triangles in $\cu$ are distinguished.
\end{pr}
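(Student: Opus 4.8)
The plan is to treat the two assertions separately and to reduce the statements about products to those about coproducts. Indeed, $\cu\opp$ is triangulated, its distinguished triangles are obtained from those of $\cu$ by reversing arrows, $\cu$ has products if and only if $\cu\opp$ has coproducts, and being Karoubian is a self-dual condition; hence the "products" case follows from the "coproducts" case applied to $\cu\opp$. The "countable coproducts" case is proved by the very same arguments as the "coproducts" case with all index sets taken countable (in particular the homotopy colimit below already uses only a countable coproduct). So I only discuss $\cu$ having coproducts. Note also that since shift is an autoequivalence of $\cu$ it commutes with coproducts, so $(\coprod_i X_i)[1]=\coprod_i(X_i[1])$ canonically.

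For coproducts of distinguished triangles, let $\{X_i\xrightarrow{f_i}Y_i\xrightarrow{g_i}Z_i\xrightarrow{h_i}X_i[1]\}_{i\in I}$ be a family of distinguished triangles $T_i$, put $f=\coprod_i f_i$, and complete $f$ to a distinguished triangle $\coprod_i X_i\xrightarrow{f}\coprod_i Y_i\xrightarrow{g}W\xrightarrow{h}(\coprod_i X_i)[1]$. For each $i$ the canonical coproduct inclusions $X_i\to\coprod_j X_j$ and $Y_i\to\coprod_j Y_j$ form a commutative square with $f_i$ and $f$, so by the axiom allowing one to complete such a square to a morphism of distinguished triangles there is $c_i\colon Z_i\to W$ making a morphism of triangles from $T_i$ to the triangle above. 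Set $\phi=\coprod_i c_i\colon\coprod_i Z_i\to W$. Checking one summand at a time (using that each $c_i$ extends a morphism of triangles), one sees that $(\id,\id,\phi)$ is a morphism from the candidate triangle $\coprod_i X_i\xrightarrow{\coprod f_i}\coprod_i Y_i\xrightarrow{\coprod g_i}\coprod_i Z_i\xrightarrow{\coprod h_i}(\coprod_i X_i)[1]$ to the distinguished triangle above. It therefore suffices to prove that $\phi$ is an isomorphism, for then the first candidate triangle is isomorphic to a distinguished one and hence is itself distinguished.

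To show $\phi$ is an isomorphism I would fix $B\in\obj\cu$, apply the contravariant functor $\cu(-,B)$, and conclude by the Yoneda lemma. This functor sends the bottom distinguished triangle to a long exact sequence; it sends the top candidate triangle, after the identification $\cu(\coprod_i(-),B)=\prod_i\cu(-,B)$, to the product over $i\in I$ of the long exact sequences obtained from the $T_i$, which is again a long exact sequence since products of exact sequences of abelian groups are exact. Thus $\cu(-,B)$ carries $(\id,\id,\phi)$ to a morphism of long exact sequences that is the identity in every slot involving $\coprod_i X_i$ or $\coprod_i Y_i$; the five lemma then gives that $\cu(W,B)\to\cu(\coprod_i Z_i,B)$ is an isomorphism. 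Since $B$ was arbitrary, Yoneda yields that $\phi$ is an isomorphism, as needed.

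It remains to prove that $\cu$ is Karoubian, and this is the point where I expect the genuine work to lie, the rest being formal. The plan is the classical argument of B\"okstedt and Neeman: given an idempotent $e\in\cu(X,X)$, form the homotopy colimit $Y$ of the telescope $X\xrightarrow{e}X\xrightarrow{e}X\to\cdots$, i.e.\ a cone of the morphism $1-\mathrm{sh}\colon\coprod_{n\in\n}X\to\coprod_{n\in\n}X$ whose restriction to the $n$-th summand is the difference of the canonical inclusion of the $n$-th summand and the composite of $e$ with the inclusion of the $(n+1)$-st summand (this uses only countable coproducts). Using $e^2=e$ one constructs morphisms $p\colon X\to Y$ and $s\colon Y\to X$ with $ps=\id_Y$ and $sp=e$, the relation $sp=e$ being obtained by a diagram chase along the defining triangle of $Y$ and the telescope maps; this verification is the computational heart of the matter. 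Applying the same construction to $1-e$ produces an object $Y'$ with $X\cong Y\oplus Y'$ realizing $\id_X=e+(1-e)$, so $e$ splits and $\cu$ is idempotent complete. As this is well known, one may alternatively simply cite the relevant results of \cite{neebook}, and I would note this as an option.
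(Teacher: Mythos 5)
Your proposal is correct, and it is essentially the same approach as the paper's: the paper simply cites Proposition 1.6.8 (for Karoubianness via the B\"okstedt--Neeman telescope) and Proposition 1.2.1 with Remark 1.2.2 (for coproducts of triangles via the five-lemma/Yoneda argument) of \cite{neebook}, and your sketch reconstructs precisely the arguments behind those citations, including the reduction to $\cu\opp$ for the product case and the observation that the countable case needs only countable coproducts.
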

\begin{proof}
The first  of the assertions is given by Proposition 1.6.8 of \cite{neebook}, and the second one   is %precisely  
is given by Remark 1.2.2  and Proposition 1.2.1 of ibid.
\end{proof}

%So we need some notions related to this setting. Yet it will be convenient for us not to impose any additional assumptions on $\cu$ in some of our definitions.
%We also recall certain "tools for dealing with" triangulated categories that are closed with respect to (small) coproducts  We will not need these notions and their properties until \S\ref{sprcoprod}.

\begin{defi}\label{dcomp}
Assume that $\cu$  has coproducts; $\cp\subset \obj \cu$.
\begin{enumerate}
\item\label{idcompa}
An object $M$ of $\cu$ is said to be {\it compact} if %and only if 
 the functor $H^M=\cu(M,-):\cu\to \ab$ respects coproducts.

\item\label{idloc}
 For $\du\subset \cu$ ($\du$ is a triangulated category that may be equal to $\cu$)
  one says that %some  objects $D_i$ of $ \du$ () a class  
	$\cp$ generates $\du$ {\it as a localizing subcategory} of $\cu$ if  
$\du$ is the smallest full strict triangulated subcategory of $\cu$ that contains $\cp$ %and has coproducts. 
and is closed with respect to  $\cu$-coproducts.

If this is the case then we will also say that $\cp$ {\it cogenerates} $\du\opp$ as a {\it colocalizing} subcategory of $\cu\opp$.

\item\label{idhg} We will say that $\cp$ {\it Hom-generates} a full triangulated category $\du$ of $\cu$ containing $\cp$ 
%subclass $C\subset \obj \cu$ containing 
%$\cp\cup \ns$  if $C\cap \cp^{\perp}=\ns$.
if $\obj \du\cap(\cup_{i\in \z}\cp[i])\perpp=\ns$. 
%not quite necessary?! %; $\au$ will be an AB5 abelian category. %??

\item\label{idcg} We will say that $\cu$ is {\it compactly generated} if it is Hom-generated by a set of compact objects.

\item\label{idcc} 
It will be convenient for us to use the following somewhat clumsy  terminology: a homological functor $H:\cu\to \au$ (where $\au$ is an abelian category) will be called a {\it cc} functor if it respects coproducts (i.e., the image of any coproduct in $\cu$ is the corresponding coproduct in $\au$); $H$ will be called a {\it wcc} functor if it respects countable coproducts.

 A cohomological functor  $H$ from $\cu$ into $\au$ will be called a {\it cp} functor if it converts all (small) coproducts into $\au$-products.

Dually, for a triangulated category $\du$ that %is closed with respect to small products 
has products we will call a homological functor $H:\du\to \au$   a {\it pp} functor if its respects products.

\item\label{idbrown} We will say that $\cu$  satisfies the  {\it Brown representability} property whenever any %cohomological functor from $\cu$ into $\ab$ that sends coproducts
cp functor from $\cu$ into $\ab$ is representable. 

Dually, we will say that a triangulated category $\du$ satisfies the  {\it dual Brown representability} property if it has products and any pp functor  from $\du$ into $\ab$ is corepresentable (i.e., if $\du\opp$ satisfies the   Brown representability property).

\end{enumerate}
\end{defi}

\begin{pr}\label{pcomp}
Assume  that $\cu$ has coproducts.
% and that it is generated by some subclass $C\in \obj \cu$ as its own localizing subcategory.

 I. Let  $\cp\subset \obj \cu$.

 1. If $\cp$ %equals $\cp[1]$ and  
 generates $\cu$  as its own localizing subcategory %$\cu$ is Karoubian and  %the following statements are valid.
 then  it also Hom-generates $\cu$. %$D^{\perp}=\ns$.

Conversely, if $\cp$ Hom-generates $\cu$ and the embedding into $\cu$ of its localizing subcategory $\cu'$ generated by $\cp$ possesses a right adjoint then $\cu'=\cu$.

2. More generally, denote by $C$ the smallest coproductive extension-closed subclass  of $\obj \cu$ containing $\cp$. Then $\cp\perpp \cap C=\ns$.
%$\cp$ Hom-generates $C$.

 II.1. Assume that  $\cu$ is compactly generated.  %$C$ is a set of compact objects.
Then both $\cu$ and $\cu\opp$ satisfy the Brown representability  property.

2. Assume that  $\cu$ satisfies the Brown representability  property. Then it has products and any exact functor $F$ from $\cu$ (into a triangulated category $\du$) that respects coproducts possesses an exact right adjoint $G$. 

%3. Assume that $\cu$ satisfies the Brown representability  property
%{idbrown}

\end{pr}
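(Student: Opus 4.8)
The plan is to treat Part~I by a standard ``orthogonal class'' argument and Part~II by direct applications of the Brown representability hypothesis together with Proposition~\ref{pbouloc}. For the first assertion of I.1 I would assume $\cp$ generates $\cu$ as a localizing subcategory, fix any $X\in(\cup_{i\in\z}\cp[i])\perpp$, and consider the class $D=\{Y\in\obj\cu:\ \cu(Y,X[j])=\ns\ \text{for all}\ j\in\z\}$. Since $\cu(P[i],X)=\cu(P,X[-i])$, the choice of $X$ forces $\cp\subset D$; moreover $D$ is closed under $[\pm1]$, under cones (via the long exact sequences of the cohomological functors $\cu(-,X[j])$), under small coproducts (as $\cu(\coprod_k Y_k,X[j])=\prod_k\cu(Y_k,X[j])$), and under retracts, so the full subcategory on $D$ is a localizing subcategory containing $\cp$; hence $D=\obj\cu$, so $X\in D$, $\cu(X,X)=\ns$, and $X=0$. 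This proves $(\cup_{i\in\z}\cp[i])\perpp=\ns$. Part~I.2 I would prove by the same device, with ``triangulated'' weakened to ``extension-closed'': for $X\in\cp\perpp$ the class $\perpp\{X\}$ contains $0$, is coproductive, and is extension-closed (applying $\cu(-,X)$ to a distinguished triangle whose first and third vertices lie in $\perpp\{X\}$ shows the middle vertex does too), hence contains the class $C$; so $X\in C\subset\perpp\{X\}$ and $X=0$.

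For the converse of I.1 I would let $\cu'$ be the localizing subcategory generated by $\cp$ (it is Karoubi-closed in the Karoubian $\cu$, localizing subcategories being thick) and apply Proposition~\ref{pbouloc} to the full embedding $F\colon\cu'\to\cu$, which by hypothesis has a right adjoint. Assertion~III.\ref{ibou1} of that proposition then gives, for each $N\in\obj\cu$, a distinguished triangle $N'\to N\to N''$ with $N'\in\obj\cu'$ and $N''\in(\obj\cu')\perpp$. As $\cup_{i\in\z}\cp[i]\subset\obj\cu'$, we get $(\obj\cu')\perpp\subset(\cup_{i\in\z}\cp[i])\perpp$, which is $\ns$ by the Hom-generation hypothesis; hence $N''=0$, $N\cong N'\in\obj\cu'$, and $\cu'=\cu$.

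Part~II.1 I would simply cite: a compactly generated $\cu$ satisfies Brown representability by the classical Brown representability theorem, and $\cu\opp$ satisfies it by Neeman's representability theorem for the dual of a compactly generated category (see~\cite{neebook}). For II.2, given a small family $\{X_j\}\subset\obj\cu$, the functor $N\mapsto\prod_j\cu(N,X_j)$ is a cp functor $\cu\to\ab$ — cohomological since products are exact in $\ab$, and converting coproducts into products because $\prod_j\cu(\coprod_i N_i,X_j)=\prod_i\prod_j\cu(N_i,X_j)$ — hence representable, and its representing object serves as $\prod_j X_j$, so $\cu$ has products. Likewise, for an exact coproduct-preserving $F\colon\cu\to\du$ and $Y\in\obj\du$ the functor $N\mapsto\du(F(N),Y)$ is cp (cohomological since $F$ is exact; converting coproducts into products since $F$ preserves coproducts), hence represented by an object $G(Y)$; by Yoneda $Y\mapsto G(Y)$ becomes a functor right adjoint to $F$, whose exactness follows from Proposition~\ref{pbouloc}(I).

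The genuinely deep ingredient is the part of II.1 asserting Brown representability for $\cu\opp$, a substantial theorem of Neeman (which is also why a weaker ``dual Brown representability'' hypothesis is isolated elsewhere in the paper); everything else reduces to the long exact sequences of (co)homological functors and to the Yoneda lemma. The only points I would be slightly careful about are that the orthogonal classes in Part~I are respectively localizing and coproductive-and-extension-closed, and that the objects produced in Part~II satisfy their universal properties — all of which are immediate.
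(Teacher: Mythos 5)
Your proof is correct and follows essentially the same route as the paper: Part I uses the standard orthogonal-class argument (the paper phrases I.2 via $C\cap{}^\perp N$ and cites Neeman for the first half of I.1, while you unfold those arguments directly, but they are the same), the converse of I.1 uses Proposition~\ref{pbouloc}(III.1) identically, and Part II rests on the same citations to Neeman's book — with you supplying the short representability arguments for products and adjoints that the paper delegates to \cite[Prop.\ 8.4.6, Thm.\ 8.4.4]{neebook}. No gaps; the only genuinely non-formal input in both versions is Neeman's Brown representability for $\cu\opp$, which you correctly isolate as the deep step.
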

\begin{proof}
%These statements were essentially established in \cite{neebook}?!

I.1.  %The first half of the assertion is given by 
%Suppose that $D^{\perp}$ contains a non-zero object $N$. Then $D\subset {}^{\perp}N$ and the latter class gives a triangulated subcategory of $\cu$ that has coproducts. This contradicts our assumption (that $\cu$ is  generated by $D$ as its own localizing subcategory).% \footnote{This argument appears to originate from \cite{krauwg}.}
The first part of the assertion is essentially a part of \cite[Proposition 8.4.1]{neebook} (note that the simple argument used for the proof of this implication does not require $\cp$ to be a set); it is also a particular case of assertion I.2.

Now assume that the embedding $\cu'\to \cu$ possesses a right adjoint. Then Proposition \ref{pbouloc}(III.\ref{ibou1}) implies that $\cu'=\cu$ whenever $\obj \cu'\perpp=\ns$ (note here that $\cu'$ is a strict subcategory of $\cu$). Lastly, $\obj \cu'\perpp$ is certainly zero if $\cp$ Hom-generates $\cu$ (since $\cp\subset \obj \cu'$). %generalize??

2. Similarly to the proof of loc. cit., if $N$ belongs to $\cp^{\perp}\cap C$ %contains a non-zero object $N$ 
then the class $C\cap {}^{\perp}N$ contains $\cp$, coproductive and extension-closed. Hence $N\perp N$ and we obtain $N=0$.

II.1. The Brown representability property for $\cu$ is given by  Proposition 8.4.2 of  \cite{neebook}. The Brown representability for $\cu\opp$ is immediate from the combination of Theorem 8.6.1 with  Remark 6.4.5 of ibid.

2.  %The existence of $G$ is immediate 
The first part of the assertion is given by Proposition 8.4.6 of \cite{neebook}.
The second part is immediate from Theorem 8.4.4 of ibid. (combined with  Proposition \ref{pbouloc}(I)). %Lemma 5.3.6 of ibid. that gives the exactness of the adjoint functor). % and with the previous assertion). %respects coproducts?!

\end{proof}

\subsection{On torsion pairs}\label{shop}

As we have already said, %?!
this paper is mostly dedicated to the study of weight structures and $t$-structures. Now, these notions have much in common; so we start from recalling an (essentially) more general definition of a torsion pair (in the terminology of \cite[Definition 1.4]{aiya};   in \cite[Definition 3.2]{postov} torsion pairs were called complete Hom-orthogonal pairs).\footnote{Another reason of passing to this more general notion is that some of our results are valid for arbitrary torsion pairs.}  
%torsion pair (in the terminology of \cite{saostov} and \cite{postov}; in \cite[Definition 1.4]{aiya} torsion pairs were called torsion pairs). Note however that our %we use a different  notation is quite different from that of  \cite{postov}.
%This term was 
%that was introduced in \cite{postov}; %whereas in ?!! the Hom-orthogonal pairs were called torsion ones?!
 %\footnote{}

\begin{defi}\label{dhop}

 A couple $s$ of classes $\lo,\ro\subset\obj \cu$ (of $s$-left %orthogonal??!!!
 and $s$-right  objects, respectively)  %?!! %(of {\it $w$-negative} and {\it $w$-positive} objects, respectively)
will be said to be a {\it torsion pair} (for $\cu$) if $\lo^{\perp}=\ro$,  $\lo={}^{\perp}\ro$, and 
for any $M\in\obj \cu$ there
exists a distinguished triangle
\begin{equation}\label{swd}
L_sM\stackrel{a_M}{\to} M\stackrel{n_M}{\to} R_sM%\stackrel{f}
{\to} L_sM[1]\end{equation} 
such that $L_sM\in \lo $ and $ R_sM\in \ro$. We will call any triangle of this form an {\it $s$-decomposition} of $M$; $a_M$ will be called an {\it $s$-decomposition morphism}.
\end{defi}

We will also need the following auxiliary definitions. %s?! + wsts?!

\begin{defi}\label{dhopo}
Let $s=(\lo,\ro)$ be a torsion pair.

 1. We will say that   $s$ is {\it coproductive} (resp. {\it productive}) if  $\ro $ is coproductive (resp. $\lo$ is productive). We will also say that $s$ is  smashing (resp.  cosmashing) if $\cu$  in addition has coproducts (resp.  products).

We will also use the following modification of the smashing condition: we will say that $s$ is {\it countably smashing} whenever 
$\cu$ has %countable 
 coproducts and $\ro$ is %countably coproductive and 
 closed with respect to countable $\cu$-coproducts.\footnote{In the current paper we are not interested in triangulated categories that only have countable coproducts; yet in \cite{bsnew} we  demonstrate this (weaker) assumption is rather useful also.}
%consider countably smashing weight structures on $\cu$ only assuming that $\cu$ is closed with respect to countable coproducts. } %\footnote{Actually, we could have demanded that $\cu$ is closed with respect to countable coproducts only; yet this generalization of the definition does not appear to be relevant in the context of the current paper.}

2.  For another torsion pair $s'=(\lo',\ro')$ for $\cu$ we will say that $s$ is {\it left adjacent} to $s'$ or that $s'$ is {\it right adjacent} to $s$ if $\ro=\lo'$.

3. We will say (following \cite[Definition 3.1]{postov}) that $s$ is {\it generated by $\cp\subset \lo$} if $\cp^\perp=\ro$.\footnote{It suffices to assume that $\cp\subset \obj \cu$ and $\cp^\perp=\ro$ since then $\cp$ certainly lies in $\lo$. } 

We will say that $s$ is {\it compactly generated} if it is generated by some set of compact objects.

%\footnote{} 
%\footnote{It certainly follows that $C\subset \lo$}.

%adjacent?! Left/right: make this non-symmetric (w to t); explain in the remark?! Productivity of adjacent structures?!

4. For $\cupr$ being a full triangulated subcategory of $\cu$ we will say that $s$ {\it restricts} to it whenever $(\lo\cap \obj \cupr,\ro\cap \obj \cupr)$ is a torsion pair for $\cu'$.

%\item\label{idphant} 
5. For a class $\cp\subset \obj \cu$ we will say that a $\cu$-morphism $h$ is {\it $\cp$-null} whenever for all $M\in \cp$ we have $H^M(h)=0$ (where  $H^M=\cu(M,-):\cu\to \ab$).\footnote{Note that the class of $\cp$-null morphisms is not necessarily shift-stable %(in contrast with the "usual" phantom morphisms; 
in contrast to the main examples of the paper  \cite{christ} where this notion was introduced.} %cf. the notion of {\it morphisms killing weights $[m,n]$} in \cite{bkwn}). }
\end{defi}

\begin{rema}\label{rgen}
1. If $\cp$ generates a  torsion pair $s$ then $\ro= \cp^\perp$ and $\lo ={}^{\perp}\ro$; thus $\cp$ determines $s$ uniquely. So we will say that $s$ is {\bf the}  torsion pair generated by $\cp$.

2. On the other hand,  %simple examples demonstrate that 
for a class $\cp\subset \obj \cu$   the corresponding couple $s=(\lo,\ro)$  (where $\ro= \cp^\perp$ and $\lo ={}^{\perp}\ro$)  certainly satisfies the orthogonality properties prescribed by Definition \ref{dhop}. Yet simple examples demonstrate that  the existence of $s$-decompositions can fail in general. %If we omit the latter axiom in the definition then we obtain the definition of a Hom-orthogonal pair as given in \cite[\S3]{postov}. 
 %the corresponding $s$-decompositions 
%does not necessarily generate a {\bf complete} Hom-orthogonal pair. Indeed, the classes $\ro= C^\perp$ and $\lo ={}^{\perp}\ro$ 

In \cite[Definition 3.1]{postov} a couple that satisfies only the orthogonality  properties in  Definition \ref{dhop} was called  a {\it Hom-orthogonal pair} (in contrast to complete Hom-orthogonal pairs). The reader may easily note that several of our arguments below work for "general"  Hom-orthogonal pairs; yet the author has chosen not to treat this more general definition in the current paper. % yet the author has decided not to indicate this??!

3. The object $M$ "rarely" determines its $s$-decomposition triangle (\ref{swd}) canonically (cf. Remark \ref{rstws}(1) below). Yet we will often need some choices of its ingredients; so we will use the notation of (\ref{swd}). %denote %a choice of  (\ref{swd})  by $L_sM\to M\to R_sM
%some choices of the terms of  (\ref{swd}) by $L_sM$ and $R_sM$, respectively. 

4. Our definition of torsion pair actually follows \cite[Definition 3.2]{postov} and differs from Definition 1.4(i) of \cite{aiya}. However, Proposition \ref{phop}(\ref{itp9}) below yields immediately that these two definitions are equivalent.

As noted in \cite{postov}, some other authors use the term "torsion pair" to denote the couple $s$ associated with a $t$-structure (see Remark \ref{rtst1}(\ref{it1}) below). So, the term "complete Hom-orthogonal pair" would be less ambiguous; yet it does not fit well (linguistically) with the notion of $\Phi$-orthogonality that we will introduce below.  
 \end{rema}

We make some simple observations. %As we will explain below, the reader may ignore part 8 of the following proposition.

\begin{pr}\label{phop}
%permite, {iphop}?!!

Let $s=(\lo,\ro)$ be a torsion pair for $\cu$, $i, j\in \z$. Then the following statements are valid.

\begin{enumerate}
\item\label{itp1}
Both $\lo$ and $\ro$ are Karoubi-closed and extension-closed in $\cu$.

\item\label{itp2}
$\lo $ is coproductive and $\ro$ is productive.

\item\label{itp3}
If $s$ is coproductive (resp. productive) then the class $\ro[i]\cap \lo[j]$ is coproductive (resp. productive) also.

\item\label{itp4}
Assume that $\cu$ has coproducts. Then  $s$ is (countably) smashing if and only if the coproduct  of  any $s$-decompositions of $M_i\in \obj \cu$ gives an $s$-decomposition of $\coprod M_i$; here $i$ runs through any  (countable) index set.

Dually, if $\cu$ has products then  $s$ is  cosmashing if and only if the  product  of  any $s$-decompositions of  any small family of $M_i\in \obj \cu$ gives an $s$-decomposition of $\prod M_i$.
%equivalent to the existence of a (co)product decomposition??

\item\label{itp5}
If $s$ is left adjacent to a torsion pair $s'$ (for $\cu$) then %the classes $\ro$ and $\ro[i]\cap \lo[j]$ are coproductive, whereas $\lo'$ and $\ro'[i]\cap \lo'[j]$ are coproductive.
$s$ is coproductive and  $s'$ is productive.

\item\label{itp6}
$s^{op}=(\ro,\lo)$ is a  torsion pair for $\cu\opp$.

\item\label{itp7}
$s$-decompositions are "weakly functorial" in the following sense:  any $\cu$-morphism $g:M\to M'$ can be completed to a morphism between any choices of $s$-decompositions of $M$ and $M'$, respectively. 

%In particular, if $M\in \lo$ then it is a retract of any choice of $L_sM$ .
\item\label{itp7p}
 If $M\in \lo$ and $M$ is a retract of $M'\in \obj \cu$ then  is also a retract of any choice of $L_sM'$ (see  Remark \ref{rgen}(3)).

\item\label{itp8}
A morphism $h\in \cu(M,N)$ is $\lo$-null if and only if factors through  an element of $\ro$. %choice of $n_M$ (see the notation of (\ref{swd})). %(see Remark \ref{rgen}(3)).
%choice of a morphism $M\to RM$ coming from an $s$-decomposition of $M$. 

Moreover, the couple $(\lo,\lo-{\text{null}})$ (the latter is the class of $\lo$-null morphisms) is a projective class in the sense of \cite{christ} (see Remark \ref{rwsts}(3) below).

\item\label{itp9}
 %For $L,R$ being Karoubi-closed classes of objects of $\cu$ 
For $L,R\subset \obj \cu$ assume that $L\perp R$ and that for any $M\in \obj \cu$ there exists a distinguished triangle $l\to M\to r\to l[1]$  for $l\in L$ and $r\in R$. Then $(\kar_{\cu}(L),\kar_{\cu}(R))$ is a  torsion pair for $\cu$. %????according to the previous assertion.

\item\label{itp10}
 Assume that $(\lo',\ro')$ is a torsion pair %for 
 in a triangulated category $\cu'$; assume that $F:\cu\to \cu'$ is an exact functor that is essentially surjective on objects, and   such that $F(\lo)\subset \lo'$ and $F(\ro)\subset \ro'$. %bad English??
Then $(\lo',\ro')= (\kar_{\cu'}(F(\lo)),\kar_{\cu'}(F(\ro)))$.
%in particular in the case $\cu'=\cu$?? R??!!!
\end{enumerate}

\end{pr}
\begin{proof}
\ref{itp1}--\ref{itp3}, \ref{itp5}, \ref{itp6}. Obvious.
%that coproducts (resp. products) of distinguished triangles are distinguished (see of \cite[Proposition 1.2.1, Remark 1.2.2]{neebook}).

\ref{itp4}. The "only if" implication %is given by 
follows immediately from assertion \ref{itp2} combined with  Proposition \ref{pcoprtriang} (along with its dual).

Conversely, assume that (countable) coproducts of $s$-decompositions are $s$-decompositions also. Since for any (countable) set of $R_i\in \ro$ the distinguished triangles $0\to R_i\to R_i\to 0$ are $s$-decompositions of $R_i$, we obtain $\coprod R_i\in\ro$; hence $s$ is (countably) smashing. To prove the remaining ("cosmashing") part of the assertion one can apply duality (along with assertion \ref{itp6}).

\ref{itp7}. According to \cite[Proposition 1.1.9]{bbd}, %to prove the first part of the assertion 
 it suffices to verify  the following: for any $s$-decomposition triangles (\ref{swd}) and  $L_sM'\to M'\to R_sM' {\to} L_sM'[1]$  the composition $L_sM\to M\to M'\to R_sM'$ vanishes. This certainly follows from $\lo\perp\ro$.

%The "in particular" part of the assertion follows if we take $g=\id_M$, $M'=M$, and take the triangle $M\stackrel{\id_M}{\to} M\to 0\to M[1]$ as ("the first") $s$-decomposition of $M$.

\ref{itp7p}. If $M$ is a retract of $M'$ then $\id_M$ factors through $M'$. Moreover, if $M\in \lo$ then the distinguished triangle $M\to M\to 0\to M[1]$ is an $s$-decomposition of $M$. Thus applying assertion \ref{itp7} twice we obtain a commutative diagram $$\begin{CD}
 M@>{g}>>L_sM'@>{h}>>M\\
@VV{\id_M}V@VV{a_{M'}}V@VV{\id_M}V \\
M@>{i}>>M'@>{p}>>M
\end{CD}$$ 
which yields that $h\circ g=\id_M$, i.e., that $M$ is a retract of $L_sM'$ indeed.

\ref{itp8}. Since $\lo\perp\ro$, any morphism that factors through $\ro$ %$n_M$ 
is $\lo$-null.  Conversely, if $h:M\to N$ is $\lo$-null then for any choice of an $s$-decomposition of $M$ the composition $h\circ a_M$ is zero (see (\ref{swd}) for the notation). It certainly follows that $h$ factors through  $R_sM\in \ro$. %introduce notation?!

Next, %the class $\lo$-null is a (two-sided) ideal of morphisms in $\cu$ in the naturally defined sense of \cite{christ}. By definition, %Only one-sided it automatic?!
 for any $L\in \lo$ and $h\in \lo$-null we have $H^L(h)=0$ by definition (see \S\ref{snotata} or Definition \ref{dhopo}(5) for this notation). %According to
Arguing as in the proof \cite[Lemma 3.2]{christ}, %согласно лемме 3.2 статьи \cite{christ},\footnote{Строго говоря, эта лемма сформулирована в предположении $\lo=\lo[1]$ (см. \cite[\S3]{christ}); однако, это дополнительное предположение никак не используется в доказательстве леммы.}
 we easily obtain that (to prove the second part of the assertion) it remains to construct for $X\in \obj \cu$ a morphism $a:L\to X$ such that the cone morphism for it is a $\lo$-null one. According to the first part of  the assertion, for this purpose we can take $a$ to be a choice of $a_X$ (in the notation of (\ref{swd})).

\ref{itp9}.  Certainly, $\kar_{\cu}(L)\perp \kar_{\cu}(R)$.

 Assume that $M\in \perpp R$. Then in the corresponding distinguished triangle $l\to M\stackrel{f}{\to} r\to l[1]$ we have $f=0$; hence $l\in \kar_{\cu}(L)$ and $\perpp R \subset \kar_{\cu}(L)$. Dually (cf. assertion \ref{itp6}) if $M  \in \kar_{\cu}(R)$ then it is a retract of the corresponding $r$; thus $L^\perp\subset \kar_{\cu}(R)$. This concludes the proof.

\ref{itp10}. For any object $M'$ of $\cu'$ choose $M\in \obj \cu$ such that $F(M)\cong M'$; choose an $s$-decomposition $L_sM\to M \to R_sM\to L_sM[1]$ of $M$. Then 
we obtain a distinguished triangle $F(L_sM)\to M' \to F(R_sM)\to F(L_sM)[1]$. Next, the relation of $s$ to $s'$ implies that $F(\lo)\perp F(\ro)$. Hence  $(\kar_{\cu'}(F(\lo)),\kar_{\cu'}(F(\ro))$ is a torsion pair in $\cu'$.

Next, $\lo'$ and $\ro'$ are Karoubi-closed in $\cu'$; hence  $\kar_{\cu'}(F(\lo))\subset \lo'$ and $\kar_{\cu'}(F(\ro))\subset \ro'$. On the other hand, $$\kar_{\cu'}(F(\lo))=\perpp (\kar_{\cu'}(F(\ro)))\supset \perpp\ro'=\lo';$$ dualizing we obtain the remaining inclusion $\kar_{\cu'}(F(\ro))\supset \ro'$.
\end{proof}

\begin{rema}\label{rwsts} 1. We have to pay a certain price for uniting weight structures and $t$-structures in a single definition. The problem is that we have $\lo[1]\subset \lo$ for $t$-structures, whereas for weight structures the opposite inclusion is fulfilled (and we will see below that these inclusions actually characterize $t$-structures and weight structures). So, the left %orthogonal??
  class for a weight structure is actually "a right one with respect to shifts". Also, the definition of left and right adjacent (weight and $t$-) structures in \cite{bws} was "symmetric", i.e., $w$ being left adjacent to $t$ and $t$ being left adjacent to $w$ were synonyms; in contrast, our current convention follows Definition 3.10 of \cite{postov}. %?!
So, $w$ and $t$ being left adjacent in the sense introduced in the previous papers is equivalent to the torsion pair associated with $w$ (see Remark \ref{rwhop}(1) below) being left adjacent for the torsion pair  associated with $t$ "up to a shift"  (see Remark \ref{rstws}(4)).\footnote{Another way to deal with this discrepancy is to modify the definitions of $t$-structures and weight structures by the corresponding shifts; see Definition 2.1 of \cite{humavit}.}     % \ref{rtst}(1)  and  Definition \ref{dwso}(\ref{idadj})). 

Lastly, we will study the {\it hearts} both for weight structures and $t$-structures. The corresponding definitions are very much similar; yet we are not able to give a single definition in terms of torsion pairs.

%Non-complete HOP??

2. Certainly, %parts 6 and 9 of the proposition essentially (see part 1 of this remark) generalize Proposition \ref{pbw}(\ref{idual},\ref{iort}) 
part \ref{itp6}  of the proposition essentially (see part 1 of this remark) generalizes Proposition \ref{pbw}(\ref{idual}) below, whereas Proposition \ref{phop}(\ref{itp7}) is closely related to  Proposition \ref{pbw}(\ref{icompl}).

In particular, Proposition \ref{phop}(\ref{itp7}) easily implies that for any functor $H:\cu \to \au$ (for any abelian category $\au$) the correspondence $M\mapsto \imm(H(L_sM)\to H(M))$ for $M\in \obj \cu$ gives a well-defined subfunctor of $H$; cf. Proposition 2.1.2(1) of \cite{bws}. %dually, R_sM: footnote? 
 Thus %we obtain 
 $s$ yields a certain (two-step) filtration on any (co)homology theory defined on $\cu$. This is a certain generalization of the weight filtration defined in ibid. (cf. Remark \ref{rwhop} below). %weighty
 The main distinction is that we don't know (in general) the relation between $s$ and the "shifted" torsion pairs $s[i]=(\lo[i],\ro[i])$; thus there appears to be no reasonable way to obtain  "longer" filtrations using this observation.

3. Recall (see Proposition 2.6 of \cite{christ}; cf. also \cite{modoi}) that a projective class in a triangulated category $\cu$ is a couple $(\cp,I)$ for $\cp\subset \obj \cu$ and $I$ being the class of $\cp$-null morphisms that satisfy the following additional conditions: $\cp$ is the largest class such that all elements of $I$ are $\cp$-null and for any $M\in \obj \cu$ there exists a distinguished triangle $L\to M\stackrel{n}{\to} R\to L[1]$ such that $L\in \cp$ and $n\in I$.

The author has proved the relation of torsion pairs to projective classes for the purposes of applying it in \S\ref{swgws} %{sperfws}????
  below. He was not able to get anything useful from this relation yet; so the reader may ignore projective classes in %(the current version of) 
 this text. Note however that knowing the notion of a projective class is necessary to trace the (close) relation of our proof of Theorem \ref{tpgws} to Lemma 2.2 of \cite{modoi}. %and remarks?!

\end{rema}

We also prove some simple statements on torsion pairs %of finite type. (mostly)
 in categories that have coproducts. %; these results may be omitted (at least, at the first reading). %in ofther papers?!
%We make some more simple observations; they may be omitted at the first reading. %by the reader interested in?!
% PG?

\begin{pr}\label{phopft}
Let $s=(\lo,\ro)$ be a torsion pair  generated by some $\cp\subset \obj \cu$ in a triangulated category $\cu$.
 %of finite type
Then the following statements are valid.

%I. Assume that $\cu$ satisfies the Brown representability condition, and  let $F:\cu\to\cupr$ be an exact functor that respects coproducts. Then $F(\lo)\subset \lo'$ if and only if $F(\cp)\subset \lo'$.
 % and possesses an exact right adjoint $G$. %; and $F$ respects coproducts. 

I.  Assume that $\cu$   has coproducts and $\cp$ is a set of compact objects.  Denote by $\du$ the localizing  subcategory of $\cu$ generated by $\cp$; $E=(\cup_{i\in \z}\cp[i])^\perp$.

1. $\lo\subset \obj \du$, whereas $\ro$ is precisely the class of extensions of  elements of $E$ by that of $D=\obj \du\cap \ro$.

2.  $(\lo,D)$ is a  torsion pair for $\du$.

II. Let $F:\cu\to \cu'$ be %a full exact embedding
an exact functor that possesses a right adjoint $G$. %exact?!

1. Assume in addition that $F$ is a full embedding. Then $s'=(\lo', \ro')$ is the torsion pair generated by $F(\cp)$ in $\cu'$, where $\ro'$ is the class of extensions of elements of $F(\obj \cu)^{\perp_{\cu'}}$ %\obj?!
by that of $F(\ro)$, and $\lo'$ is the closure of $F(\lo)$ with respect to $\cu'$-isomorphisms.  

2. Let  $s'=(\lo', \ro')$ be an arbitrary torsion pair in $\cu'$. Then the following conditions are equivalent:

a). $F(\lo)\subset \lo'$; 

b).  $F(\cp)\subset \lo'$; %both of 
 
c).  $G(\ro')\subset \ro$. 

3. For $s'$ as in the previous assertion assume in addition that $F(\lo)\subset \lo'$ and $F(\ro)\subset \ro'$, and $F$ is essentially surjective on objects. Then $s'$ is generated by $F(\cp)$.

%\footnote{Actually, our simple proof of this statement does not use the assumption that $\cu$ has coproducts; hence this condition can be dropped.} 

%Assume that $\cu$ is a full subcategory of a triangulated category $\eu$, and assume that the embedding $\cu\to \eu$ possesses a right adjoint $G$. Then $s_{\eu}=(\lo,\ro_{\eu})$ is the torsion pair generated by $\cp$ in $\eu$, where $\ro_{\eu}$ is the class of extensions of elements of $\cu^{\perp_{\eu}}$ by that of $\ro$.  

\begin{comment}
III. Assume that $\cu$ satisfies the Brown representability property (see Definition \ref{dcomp}(\ref{idbrown}) and it is a full subcategory of a triangulated category $\cu'$. Moreover, assume that the embedding $F:\cu\to \cu'$ respects coproducts, and let $s'$ is the  torsion pair generated by $\cp$ in $\cupr$.  Let $H$ be a cp %or a cc 
functor  (see Definition \ref{dcomp}(\ref{idcc})) from $\cu'$ into $\ab$ whose restriction to $\cp$ is zero.

Then $H$ kills $\lo'$.
IV.
\end{comment}

III. Assume that  $\cu$   has coproducts and  all elements of $\cp$ are compact in it. Then $s$ is smashing. 
\end{pr}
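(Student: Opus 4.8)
The plan is to reduce everything to the definition of "smashing" for torsion pairs. By Definition \ref{dhopo}(1), $s$ is smashing exactly when $\cu$ has coproducts (which is part of the hypothesis here) and $\ro$ is coproductive, i.e.\ closed under small coproducts. So the entire content of the statement is the assertion that a small coproduct of objects of $\ro$ lies in $\ro$ again, and this is where I would spend the (very short) argument.

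First I would unwind the meaning of "$s$ is generated by $\cp$": by Definition \ref{dhopo}(3) (together with Remark \ref{rgen}(1)) this says precisely $\ro=\cp^\perp$, that is, $\ro$ consists of those $M\in\obj\cu$ with $\cu(P,M)=\ns$ for every $P\in\cp$. Now take an arbitrary small family $\{M_i\}_{i\in I}$ of objects of $\ro$; the coproduct $\coprod_{i\in I}M_i$ exists in $\cu$ since $\cu$ has coproducts. For each $P\in\cp$, compactness of $P$ (Definition \ref{dcomp}(\ref{idcompa})) provides a natural isomorphism $\cu\left(P,\coprod_{i\in I}M_i\right)\cong\coprod_{i\in I}\cu(P,M_i)$ in $\ab$; since each $M_i\in\cp^\perp$ the right-hand side is a coproduct of zero groups, hence zero. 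Thus $\coprod_{i\in I}M_i\in\cp^\perp=\ro$, so $\ro$ is coproductive and $s$ is smashing.

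I do not expect a genuine obstacle here; the only subtlety worth noting is that the notion "torsion pair generated by $\cp$" involves no shifts of $\cp$ (unlike, say, a $t$-structure generated by $\cp$, whose right class is $\cap_{i\ge 1}(\cp^\perp[i])$), so plain compactness of the elements of $\cp$ — exactly as assumed — is all that is needed, and there is no need to worry about whether $\cp$ is a set. (One could alternatively phrase the conclusion through Proposition \ref{phop}(4), using that the trivial $s$-decompositions $0\to M_i\to M_i\to 0$ have coproduct $0\to\coprod M_i\to\coprod M_i\to 0$, but verifying that this is an $s$-decomposition again comes down to the same compactness computation, so the direct argument above is cleaner.)
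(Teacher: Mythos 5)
Your argument is correct and is exactly the computation the paper has in mind when it dismisses part III as "Obvious" (with a forward pointer to Proposition \ref{psym}(\ref{isymcomp})): since $s$ is generated by $\cp$ one has $\ro=\cp^\perp$, and compactness of each $P\in\cp$ gives $\cu(P,\coprod M_i)\cong\coprod\cu(P,M_i)=0$, so $\ro$ is coproductive. Your aside about not needing shifts of $\cp$ or the set-hood of $\cp$ is accurate, since part III only uses the equality $\ro=\cp^\perp$ together with the standing assumption that $s$ is already a torsion pair.
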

\begin{proof}
%I. Since $\cp\subset \lo$, $F(\cp)\subset \lo'$ if $F(\lo)\subset \lo'$.

%To prove the converse implication recall that (according to Proposition \ref{pcomp}(II.2)) there exists an exact right adjoint $G$ to $F$. The orthogonality axiom in Definition \ref{dhop} immediately implies that $G(\ro')\subset \ro$, whereas the latter condition is equivalent to $F(\lo)\subset \lo'$.
I.1. Certainly, any extension of an element of $E$ by an object of  $\du$ belongs to $\cp^\perp=\ro$.

Next,  Proposition \ref{pcomp}(II.2) gives the existence of  an exact right adjoint $G$ to the embedding of $\cupr$ into $\cu$.
 Moreover, any object of $\cu$ is an extension of an %object killed by $G$ 
element of $\cu$ by an object of $\du$, and $G$ is  equivalent to the localization of $\cu$ by the full triangulated subcategory $\eu$ whose object class is $E$ according to Proposition \ref{pbouloc}(III.\ref{ibou1},\ref{ibou2}). Hence $\lo\subset \obj \du$ and we obtain that any element of $\ro$ can be presented as an extension of the aforementioned form.

2. Obviously, $D=\lo^{\perp_{\cupr}}$. Next, the presentation of elements of $\ro$ by extensions as above yields that $\lo={}^{\perp_{\cu'}} D$.

It remains to verify the existence of the corresponding decompositions. For $M\in \obj \cupr$ we apply $G$ to (any) its $s$-decomposition; this is easily seen to yield a decomposition of $G(M)\cong M$ with respect to the couple $(\lo,D)$.

II.1. We can certainly assume that $\cu$ is a (full) strict subcategory of $\cu'$. We apply Proposition \ref{phop}(\ref{itp9}).

Obviously, $\lo'\perp\ro'$. Since $\cu$ is Karoubi-closed in $\cu'$,
% Karoubian,
  $\lo'$ is Karoubi-closed in $\cu'$. Next, for any $N\in \obj \cu'$ there exists a essentially unique distinguished triangle 
\begin{equation}\label{eglud}
N'\to N \to N''\to N'[1]
\end{equation}
with $N'=G(N)\in\obj \cu$ and $N''\in  \cu^{\perp_{\cu'}}$; see Proposition \ref{pbouloc}(III,\ref{ibou1}).
 %(cf. the proof of Proposition \ref{pcomp}(II.2) or \cite[Theorem 9.1.13]{neebook}). 
Since $\lo\perp N''$, we obtain $\lo^\perp=\ro'$.

So it remains to verify that any $N$ (as above) possesses an $s'$-decomposition. We choose an $s$-decomposition $L_sN'\to N'\to R_sN'\to L_sN'[1]$ of $N'$. Applying the octahedral axiom to this distinguished triangle along with (\ref{eglud}) we obtain a presentation of $N$ as an extension of $R$ by $L_sN'$, where $R$ is some extension of $N''$ by $R_sN'$. Thus we obtain an  $s'$-decomposition of $N$.
%an extension $N$ of $N''\in \cu^{\perp_\eu}$ by $N'\in \ro$ we certainly have $G(N)=N'$. Hence if $M$ is a retract of $N$ then it can be presented as an extension of a retract of  $N''$ (that belongs to $\cu^{\perp_\eu}$ also)  by $G(M)\in \ro $.  Thus $\ro_\eu$ is  also Karoubi-closed in $\eu$.

2. Certainly, %condition a) implies condition b). Next, condition 
c) is equivalent both to $\cp\perp G(\ro')$ and to $\lo \perp G(\ro')$. Applying the adjunction $F \dashv G$ we obtain the equivalences in question.

3. %It certainly suffices to c
For $M'\in \obj \cu$ we obviously have the following chain of equivalences:
$$F(\cp)\perp M'\iff  \cp\perp G(M') \iff G(M')\in \ro\iff \lo \perp G(M')\iff F(\lo')\perp M'.$$
%It remains to note that 
Now, Proposition \ref{phop}(\ref{itp10}) implies immediately that $F(\lo)\perpp =\ro'$; hence $F(\cp)\perpp=\ro'$ indeed.

\begin{comment}
III. According to the previous assertion, we have $\lo=\lo'$; hence we can assume $\cu'=\cu$.

Now, Brown representability yields that $H$ is corepresentable by some $M\in \obj \cu$; $M$ belongs to $\ro$ since $H$ kills $\cp$ and $s$ is generated by $\cp$. Since $\lo\perp M$ we obtain $H(\lo)=\ns$.

IV.\end{comment}

III. Obvious; cf. also Proposition \ref{psym}(\ref{isymcomp}) below.

\end{proof}

\begin{comment}
\begin{rema}\label{radje}
1. Part II.2 of our proposition %We also  mention  a statement that 
may be useful (at least) for the study of the $t$-exactness and weight-exactness of functors (see Definitions \ref{dtsto}(\ref{itoe}) and \ref{dwso}(\ref{idwe}) below).
It is certainly not necessary to assume that $\cu$ has coproducts in it.

For the convenience of citing, we also recall here that $F$ possesses a right adjoint whenever it respects coproduct and $\cu$ is compactly generated. So one can use the equivalence of our conditions a) and b) without mentioning $G$.

2. We will also need the categorical dual of this observation. Certainly, it is formulated is follows: if $\cu$ is cocompactly cogenerated (i.e., there exists a set of objects of $\cu$ that compactly generates $\cu\opp$), $F$ is an exact functor $\cu\to \cu'$ that respects products, %term?!
$s$ and $s'$ are torsion pairs for $\cu$ and $\cu'$, respectively, and  $s$ is cogenerated by some $\cp\subset \obj \cu$ (i.e., $\lo=\perpp\cp$) then $F(\ro)\subset \ro'$ if and only if $F(\cp)\subset \ro'$.

%For weight structures?! Where?!
%So, adopt the notation of the previous proposition along with all the assumptions except the one that $\cu$ has coproducts. Let $F:\cu\to \cu'$ be an exact functor possessing an (exact) right adjoint $G$. Then the following statement is valid: $F(\lo)\subset \lo'$ if and only if $F(\cp)\subset \lo'$;  these assertions are also equivalent to $G(\ro')\subset \ro)$. Indeed, the very easy adjunction argument used in the proof of \cite[Proposition 1.3.5(5)]{bkl} carries over to this context without any problems. %cf.?!!
\end{rema}
\end{comment}

\subsection{$t$-structures: recollection, relation to torsion pairs, and $t$-projectives}\label{sts}

Now we pass to $t$-structures. 
Certainly, one can easily define them in terms of torsion pairs; still we give the "classical" definition of a $t$-structure here  for fixing the notation and for recalling its relation to Definition \ref{dhop} (explicitly).

\begin{defi}\label{dtstr}

%Let $\cu$ be a triangulated category.

A couple of subclasses  $\cu^{t\ge 0},\cu^{t\le 0}\subset\obj \cu$ will be said to be a
$t$-structure $t$ on $\cu$  if %$\cu^{t\ge 0},\du^{t\le 0}$
they satisfy the following conditions:

(i) $\cu^{t\ge 0},\cu^{t\le 0}$ are strict, i.e., contain all
objects of $\cu$ isomorphic to their elements.

(ii) $\cu^{t\ge 0}\subset \cu^{t\ge 0}[1]$, $\cu^{t\le
0}[1]\subset \cu^{t\le 0}$.

(iii)  $\cu^{t\le 0}[1]\perp \cu^{t\ge 0}$.

(iv) For any $M\in\obj \cu$ there exists a  {\it $t$-decomposition} distinguished triangle
\begin{equation}\label{tdec}
L_tM\to M\to R_tM{\to} L_tM[1]
\end{equation} such that $L_tM\in \cu^{t\le 0}, R_tM\in \cu^{t\ge 0}[-1]$.
\end{defi}

We also need the following auxiliary definitions.

\begin{defi}\label{dtsto}
Let %$m\le n\in \z$; 
$n\in \z$; let $t$ be a $t$-structure on $\cu$. 

\begin{enumerate}
\item\label{ito1}
$\cu^{t\ge n}$ (resp. $\cu^{t\le n}$) will denote $\cu^{t\ge
0}[-n]$ (resp. $\cu^{t\le 0}[-n]$);  $\cu^{t=0}=\cu^{t\ge 0}\cap \cu^{t\le 0}$. %$\cu^{t=n}=\cu^{t\ge n}\cap \cu^{t\le n}$.

\item\label{ito2} $\hrt$ will be the full subcategory of $\cu$ whose object class is $\cu^{t=0}$.

\item\label{ito3} We will say that $t$ is %non-degenerate if $\cap_{i\in \z}\cu^{t\ge i}=\cap_{i\in \z}\cu^{t\le i} =\ns$.
{\it right non-degenerate} if $\cap_{i\in \z}\cu^{t\ge i} =\ns$.

We will say that $t$ is (just) {\it non-degenerate} if we also have $\cap_{i\in \z}\cu^{t\le i} =\ns$.

\item\label{ito4} We will say that $t$ is {\it bounded above} if $\cu=\cup_{i\in \z}\cu^{t\le i}$.

\item\label{itoe} Let   $\cu'$ be a triangulated category endowed with a $t$-structure $t'$. We will say that an exact functor $F:\cu\to \cu'$ is %{\it left (resp. right) $t$-exact} whenever $F(\cu^{t\ge 0})\subset \cu'^{t'\ge 0}$ (resp. $F(\cu^{t\le 0})\subset \cu'^{t'\le 0}$).
%One says that $F$ is  {\it $t$-exact} if it is both left and right $t$-exact.
{\it $t$-exact} whenever $F(\cu^{t\le 0})\subset \cu'^{t'\le 0}$  and $F(\cu^{t\ge 0})\subset \cu'^{t'\ge 0}$.
\end{enumerate}

\end{defi}

\begin{rema}\label{rtst1}
%Let $n\in \z$ let $t$ be a $t$-structure on $\cu$. 
\begin{enumerate}
\item\label{it1}
Recall  that $\cu^{t\le n}={}^{\perp}(\cu^{t\ge n+1})$ and $\cu^{t\ge n}=\cu^{t\le n-1}{}^{\perp}$ (for $t$ and $n$ as above). %. In particular, this class is closed with respect to $\du$-coproducts (those that exist in $\du$). %?!
   Thus for $\lo=\cu^{t\le 0}$ and $\ro=\cu^{t\ge 1}$ the couple $s=(\lo,\ro)$ is a torsion pair for $\cu$ that we will call the torsion pair associated with $t$.  Conversely, if for a torsion pair $s$ we have $\lo[1]\subset \lo$ then $(\lo,\ro[1])$ is a $t$-structure (that we will say to be associated with $s$).
 For a class $\cp\subset \obj \cu$ we will say that $\cp$ generates $t$ whenever it generates the associated $s$ (certainly, then we have $\cp\subset \lo =\cu^{t\le 0}$); we will say that $t$ is compactly generated whenever $s$ is. 

%Shift?!! Nado?! Certainly, $s$-decompositions are precisely $t$-decompositions in this case. %Use notation?!
%Respectively, we 
We will say that $t$ is coproductive (resp., productive; resp., smashing or cosmashing) whenever $s$ is; we will say that $t$ restricts to $\cupr$ (see Definition \ref{dhopo}(4)) whenever $s$ does.

\item\label{itd} It is well known (and also follows from the previous part of this remark along with Proposition \ref{phop}(\ref{itp6})) that $(\lo,\ro[1])$ is a $t$-structure on $\cu$ if and only if $(\ro, \lo[-1])$ gives a $t$-structure on $\cu\opp$.

\item\label{it3}
 Recall that the triangle (\ref{tdec}) is canonically (and functorially) determined by $M$. So for $M'=M[n]$ and  $L_tM'\to M'\to R_tM'{\to} L_tM'[1]$ being its $t$-decomposition we will write $t^{\le n}M$ for $L_tM'[-n]$  (and this notation is $\cu$-functorial). % in contrast to the setting of weight structures). %?!! If $M\in \cu^{t\le -m}$ then we (certainly) have  $t^{\le -n}M\in \cu[m,n]$.
Moreover, the functor $t^{\le n}: \cu\to \cu^{t\le n}$ (considered as a full subcategory of $\cu$) is  right adjoint to the embedding $ \cu^{t\le n}\to \cu$. Dually,  we obtain a functor $t^{\ge n+1}:M\mapsto R_tM'[-n-1]$, and $t^{\ge n+1}$ is left adjoint to  the embedding $ \cu^{t\ge n+1}\to \cu$.

 %$L_t(M[n])\to M[n]\to R_t(M[n])$ being a $t$-decomposition of $M[n]$  we can denote 
%$L_t(M[n])[-n]$ by $t^{\le n}M$ (and this notation is $\cu$-functorial). % in contrast to the setting of weight structures). %?!! If $M\in \cu^{t\le -m}$ then we (certainly) have  $t^{\le -n}M\in \cu[m,n]$.
%Moreover, the functor $t^{\le 0}: \cu\to \cu^{t\le 0}$ (considered as a full subcategory of $\cu$) is  right adjoint to the embedding $ \cu^{t\le 0}\to \cu$. Dually,  we obtain a functor $t^{\ge n+1}:M\mapsto R_t(M[n])[-n-1]$, and $t^{\ge 0}$ is left adjoint to  the embedding $ \cu^{t\ge 0}\to \cu$.

It certainly follows that any $t$-exact functor "commutes with these adjoints". In particular, this fact may be applied in the case where $\cu^{t\le 0}$ is the class of objects of a full triangulated subcategory $\eu$ of $\cu$ (cf. Proposition \ref{prtst}(\ref{itp4}) below); in this case the triangle (\ref{tdec}) is just the %corresponding
  triangle (\ref{ebou}).

%\item\label{itadj}

\item\label{it4}
Recall that $\hrt$ is necessarily an abelian category with short exact sequences corresponding to distinguished triangles in $\cu$.

Moreover, have a canonical isomorphism of functors $t^{\le 0}\circ t^{\ge 0}=t^{\ge 0}\circ t^{\le 0}$ (if we consider these functors as endofunctors of $\cu$). This composite functor $H_0^t$ actually takes values in $\hrt\subset \cu$, and it is homological if considered this way. 

\begin{comment}
\item\label{itkern} Part \ref{it3} of this remark immediately implies the following (probably, well-known) fact: if  $\cu$ and $\cu'$ are endowed with $t$-structures $\cu$ and $\cu'$, respectively,  $F:\cu\to \cu'$ is a $t$-exact functor (see Definition \ref{dtsto}(\ref{itoe}))  and $\eu$ is the (triangulated) subcategory of $\cu$ annihilated by $F$ then $t$ restricts to $\eu$.

\item\label{itadj} Another well-known fact (that easily follows from the orthogonality statements in part \ref{it1} of this remark) is the following one: if there exists a functor $G$ that is right adjoint to $F$ (for $\cu'$, $t'$, and $F$ as above) then $F$ is right $t$-exact  if and only if $G$ is left $t$-exact.
\end{comment}
\end{enumerate}
\end{rema}

We prove a few %("newer") 
properties of $t$-structures related to these observations.

\begin{pr}\label{prtst}
Let $t$ be a $t$-structure on $\cu$.
Then the following statements are valid.

\begin{enumerate}
\item\label{it2}
Assume that $t$ is smashing. Then  $\hrt$ is an AB4 category that is closed with respect to $\cu$-coproducts, and
$H_0^t$ is a cc functor (see  Definition \ref{dcomp}(\ref{idcc})). 

\item\label{it2d}
Dually, if $t$ is cosmashing then  $\hrt$ is an AB4* category, the embedding $\hrt\to \cu$ respects products, and $H_0^t$ is a pp functor.

\item\label{itt4} Those strict triangulated subcategories $L\subset \cu$ that possess a right adjoint $G$ to the embedding functor $L\to \cu$ are in one-to-one correspondence with torsion pairs $(\lo,\ro)$ such that $\lo[1]=\lo$; the correspondence sends $L$ into $(\obj L,\obj L\perpp)$.

Moreover, any $(\lo,\ro)$ of this sort is a $t$-structure.\footnote{So, we don't have to "shift" $\ro$ as we did in Remark \ref{rtst1}(\ref{it1}).}

%Torsion pairs $(\lo,\ro)$ such that $\lo[1]=\lo$ are in one-to-one correspondence with exact embeddings $L\to \cu$ possessing a right adjoint functor $G$; the (inverse) correspondence sends $L$ into the torsion pair generated by $\obj L$.
%Assume that $\cu^{t\le 0}= \cu^{t\le 1}$. Then the corresponding full subcategory $L$ of $\cu$ is triangulated, along with the  subcategory $R$ whose object class equals $\cu^{t\ge 0}$. 

\item\label{it4sm} Assume that $\cu$ has coproducts. Then  smashing torsion pairs  $(\lo,\ro)$ such that $\lo[1]=\lo$ are  in one-to-one correspondence with those exact embeddings $L\to \cu$ such that the corresponding $G$ (exists and) respects coproducts.
  %and $L$ is closed with respect to $\cu$-coproducts.
%Assume in addition to the previous assumption that $t$ is smashing. 

\item\label{ittadj} Assume that $\cu$ is compactly generated; let $F:\cu\to \cu'$ be a full exact embedding respecting coproducts. %of triangulated categories possessing a right adjoint $G$.
Then the couple $(\cu'^{t'\le 0}, \cu'^{t'\ge 0})$ %is
coincides with the $t$-structure generated by $F(\cu^{t\le 0})$ in $\cu'$, where $\cu'^{t'\ge 0}$  is the class of extensions of elements of $F(\obj \cu)^{\perp_{\cu'}}$ %\obj?!
by that of $F(\cu^{t\ge 0})$, and $\cu'^{t'\le 0}$ is the closure of $F(\cu^{t\le 0})$ with respect to $\cu'$-isomorphisms.  %essential image?? 

\end{enumerate}
\end{pr}
\begin{proof}
\ref{it2}.  $\hrt$ is closed with respect to $\cu$-coproducts according to Proposition \ref{phop}(\ref{itp3}).  Thus  $\hrt$ is an AB4 category according to Proposition \ref{pcoprtriang} combined with Remark \ref{rtst1}(\ref{it4}). Next,  the endofunctors $t^{\le 0} $ and $ t^{\ge 0}$ of $\cu$  respect coproducts according to Proposition \ref{phop}(\ref{itp4}); hence their composition also does. 

\ref{it2d}. This is just the categorical dual to the previous assertion (see Remark \ref{rtst1}(\ref{itd}).

\ref{itt4}. If  $G$ exists then  $(\obj L,\obj L\perpp)$ is easily seen to be a torsion pair; combine 
Proposition \ref{pbouloc}(III.\ref{ibou1}, II) with Proposition \ref{phop}(\ref{itp9}). Moreover, we certainly have $\obj L[1]=\obj L$. 

Conversely, if $\lo=\lo[1]$ then the corresponding $L$ is triangulated since it is extension-closed (see Proposition \ref{phop}(\ref{itp1})). Next, $(\lo,\ro)$ is a $t$-structure according to Remark \ref{rtst1}(\ref{it1}). Hence $G$ exists according to part \ref{it3} of the remark.

%Obvious (recall that $\cu^{t\le 0}$ and $\cu^{t\ge 0}$ are extension-closed by Proposition \ref{phop}(1)). %part 2 of this remark?? 

\ref{it4sm}. We should check which torsion pairs $s=(\lo,\ro)$ with $\lo=\lo[1]$ are smashing. % (see the argument above). 
According to Proposition \ref{phop}(\ref{itp4}) we should check whether coproducts  of $s$-decompositions are $s$-decompositions.

Since $s$ is a $t$-structure,  $t$-decompositions are canonical (see Remark \ref{rtst1}(\ref{it1}) hence we should check when the endofunctor $t^{\le 0}$ respects coproducts.
Now, the embedding $i:L\to \cu$ respects coproducts since it possesses a right adjoint; thus $ G$ respects coproducts if and only if $i\circ G=-^{t\le 0}$ does. 

 %Since $t$-decompositions are canonical, this is equivalent to the 

\ref{ittadj}. Since $\cu$ is compactly generated and $F$ respects coproducts, it possesses a right adjoint (see Proposition %\ref{pbouloc}())
\ref{pcomp}(II)). Hence it remains to apply Proposition \ref{phopft}(II.1) (we take $\cp=\cu^{t\le 0}$ in it and %also recall
 invoke Remark \ref{rtst1}(\ref{itd})).
\end{proof}

\begin{rema}\label{rtst2}
%Dually, $H_0^t$ is a pp functor if $t$ is  cosmashing.
\begin{enumerate}
\item\label{ismashs}
If  $\cu$ has coproducts and the embedding $i:L\to \cu$  %said to be
 possesses a right adjoint respecting coproducts then $L$ is  called a  {\it smashing subcategory} of $\cu$; see \cite{kellerema}.
Moreover,   these conditions are equivalent to the perfectness of the class $\obj L$ in $\cu$ (see Definition \ref{dwg}(\ref{idpc}) below) according to Proposition \ref{psym}(\ref{iperftp}).

\item\label{it5s1}
 Both $t$-structures and weight structures are essentially particular cases of torsion pairs corresponding to the cases
$\lo[1]\subset \lo$ and $\lo\subset \lo[1]$, respectively (see Remark  \ref{rtst1}(\ref{it1}) and Remark \ref{rwhop}(1) below). 
So, the "shift-stable" torsion pairs described in Proposition  \ref{prtst}(\ref{itt4})  yield the "intersection" of these cases. %??

\item\label{it6} So it is no wonder that the results and arguments of \S\ref{sperfws} below are closely related to the properties of localizing subcategories of triangulated categories as studied by A. Neeman, H. Krause and others.

\item\label{ialsmash} Proposition \ref{prtst}(\ref{it4sm}) can easily be generalized as follows: if $\al$ is a regular cardinal (see \S\ref{snotata}) and $\cu$ is closed with respect to coproducts of less than $\al$ objects then for an exact embedding $L\to \cu$ possessing a right adjoint $G$ the functor $G$ respects coproducts of less than $\al$ objects if and only if the class $\obj L^{\perp_{\cu}}$ is  closed with respect to $\cu$-coproducts of less than $\al$ elements.

%Moreover, 

%exact embeddings $L\to \cu$ such that the corresponding $G$ (exists and) respects coproducts of less than $\al$ objects correspond to 

\end{enumerate}

\end{rema}

Now we introduce $t$-projective %and $t$-injective 
 objects (essentially following \cite{zvon}).

\begin{defi}\label{dpt}
Let $t$ be a $t$-structure on $\cu$. 

Then we will write $P_t$ for the class ${}^\perp (\du^{t\ge 1}\cup \du^{t\le -1})$; we will say that its elements are {\it $t$-projective}.\footnote{The class $P_t$ was called the {\it coheart} of $t$ in \S3 of \cite{zvon}.} 

%Dually, the elements of $I_t= (\du^{t\ge 1}\cup \du^{t\le -1})\perpp$ will be said to be  {\it $t$-injective}.
\end{defi}

We prove some simple statements relating  $t$-projectives %and $t$-injective objects 
 to exact functors from $\hrt$ into abelian groups.
%Now we prove a very simple and nice statement on the existence of generators in hearts of ("set-generated") $t$-structures; strangely enough, the author did not meet any formulation of this sort in the literature.

\begin{pr}\label{pgen}
Let $t$ be a $t$-structure  for $\cu$. 

Then the following statements are valid.

\begin{enumerate}
\item\label{ipgen1} Let $N\in \cu^{t\le 0}$. Then we have $t^{\ge 0}N\cong H_0^t(N)$, and the object $H_0^t(N)$ corepresents the restriction of the functor $H^N=\cu(N,-)$ to $\hrt$.

\item\label{ipgen2} $P_t\subset \cu^{t\le 0}$, and   for any $P\in P_t$ we have  natural isomorphisms of functors \begin{equation}\label{ept} 
\cu(P,-)\cong \cu(P,H_0^t(-))\cong \hrt (H_0^t(P), H_0^t(-));\end{equation}
the first of them is induced by the transformations $ \id_{\cu}\to t^{\ge 0}$ and $H_0^t\to t^{\ge 0}$.

 \item\label{ipgen25} For any $P\in P_t$ the object $H_0^t(P)$ is projective in $\hrt$. %, and the restriction of the functor 

%\item\label{ipgen3} Dually, $I_t\subset \cu^{t\ge 0}$, for any $I\in P_t$ the object $H_0^t(I)$ is injective in $\hrt$, and we have natural isomorphisms  $\cu(-,I)\cong \cu(H_0^t(-),I)\cong \hrt (H_0^t(-), H_0^t(I))$.

\item\label{ipgen4} Assume that $\cu$ (has products and) satisfies the dual Brown representability condition; assume that $t$ is cosmashing. Then $H_0^t$ gives an equivalence of (the full subcategory of $\cu$ given by) $P_t$ with the subcategory of projective objects of $\hrt$.

%\item\label{ipgen1}
\end{enumerate}

\end{pr}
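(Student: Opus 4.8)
The plan is to establish parts \ref{ipgen1}--\ref{ipgen4} in order, using each as a stepping stone for the next. For part \ref{ipgen1}, the key point is that for $N \in \cu^{t \le 0}$ the $t$-decomposition truncation $t^{\ge 1}N$ vanishes in the relevant range, so the canonical triangle $t^{\le 0}N \to N \to t^{\ge 1}N$ degenerates; applying $t^{\ge 0}$ and using the identification $H_0^t = t^{\ge 0} \circ t^{\le 0}$ from Remark \ref{rtst1}(\ref{it4}) gives $t^{\ge 0}N \cong H_0^t(N)$. The corepresentability claim then follows from the adjunction $t^{\ge 0} \dashv (\text{embedding } \cu^{t\ge 0}\to\cu)$ of Remark \ref{rtst1}(\ref{it3}): for any $X \in \hrt \subset \cu^{t\ge 0}$ we have $\cu(N,X) \cong \cu(t^{\ge 0}N, X) = \hrt(H_0^t(N),X)$, and since $X$ also lies in $\cu^{t\le 0}$ we may equally write this as $\cu(H_0^t(N),X)$.

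For part \ref{ipgen2}: the inclusion $P_t \subset \cu^{t \le 0}$ is immediate from the definition of $P_t = {}^\perp(\cu^{t\ge 1} \cup \cu^{t \le -1})$ together with $\cu^{t\le 0} = {}^\perp(\cu^{t\ge 1})$ (Remark \ref{rtst1}(\ref{it1})). The first isomorphism in \eqref{ept} comes from applying part \ref{ipgen1}: since $P \in P_t \subset \cu^{t\le 0}$, for any $M \in \obj\cu$ the unit $M \to t^{\ge 0}M$ induces $\cu(P, t^{\ge 0}M)$, and I claim $P \perp \ke(M \to t^{\ge 0}M)[\pm]$ — more precisely, the triangle $t^{\le -1}M \to M \to t^{\ge 0}M$ shows the cone of $M \to t^{\ge 0}M$ sits in $\cu^{t\le -1}$, which $P$ is left-orthogonal to (and $\cu^{t\le -1}[1] \subset \cu^{t\le -1}$ handles the shift), hence $\cu(P,-) \cong \cu(P, t^{\ge 0}(-))$; composing with a further truncation $t^{\ge 0}M \to t^{\le 0}t^{\ge 0}M = H_0^t(M)$ whose fibre lies in $\cu^{t\ge 1}$ (again killed by $P$) yields $\cu(P,-)\cong \cu(P,H_0^t(-))$. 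The second isomorphism is then part \ref{ipgen1} applied with $N = P$, naturally in the argument. Part \ref{ipgen25} follows formally: $H_0^t: \cu \to \hrt$ is homological, hence sends the distinguished triangles realizing short exact sequences in $\hrt$ to exact sequences, so by \eqref{ept} the functor $\hrt(H_0^t(P), -)$ is isomorphic (as a functor on $\hrt$) to the restriction of the exact functor $\cu(P,-)$ to $\hrt$; exactness of $\hrt(H_0^t(P),-)$ on short exact sequences of $\hrt$ is precisely projectivity of $H_0^t(P)$.

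For part \ref{ipgen4}, assume the dual Brown representability condition and that $t$ is cosmashing. The functor $H_0^t: P_t \to \{\text{projectives of }\hrt\}$ is well-defined by \ref{ipgen25}; I must show it is fully faithful and essentially surjective. Full faithfulness: for $P, P' \in P_t$, \eqref{ept} gives $\cu(P, P') \cong \cu(P, H_0^t(P')) \cong \hrt(H_0^t(P), H_0^t(P'))$, and one checks this composite is induced by $H_0^t$ itself. Essential surjectivity is the main obstacle: given a projective object $Q \in \hrt$, I need to produce $P \in P_t$ with $H_0^t(P) \cong Q$. The strategy is to use dual Brown representability to corepresent the functor $X \mapsto \hrt(Q, H_0^t(X))$ on $\cu$: this is a pp functor (here $t$ cosmashing makes $H_0^t$ a pp functor by Proposition \ref{prtst}(\ref{it2d}), and $\hrt$ is AB4* so $\hrt(Q,-)$ respects products), hence corepresentable by some $P \in \obj\cu$; then one shows $P \in P_t$ by checking $\cu(P, Y) = 0$ for $Y \in \cu^{t\ge 1} \cup \cu^{t\le -1}$ (using that $H_0^t$ kills such $Y$, since $H_0^t = t^{\le 0}t^{\ge 0}$ vanishes on both $\cu^{t\ge 1}$ and $\cu^{t\le -1}$), and finally that $H_0^t(P) \cong Q$ by applying \eqref{ept} to $P$ and using projectivity of $Q$ to split the natural map. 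The delicate point throughout part \ref{ipgen4} will be verifying that the corepresenting object genuinely lands in the coheart rather than merely computing the right groups on the heart, and that the counit $H_0^t(P) \to Q$ is an isomorphism — this is where projectivity of $Q$ and the Yoneda-type argument via \eqref{ept} must be combined carefully.
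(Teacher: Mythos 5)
Your proof follows essentially the same route as the paper's: parts \ref{ipgen1}--\ref{ipgen25} are handled by the same elementary adjunction and orthogonality arguments, and for part \ref{ipgen4} you use the identical idea of corepresenting $\hrt(Q,-)\circ H_0^t$ via dual Brown representability and Proposition \ref{prtst}(\ref{it2d}). The final step you flag as delicate is in fact immediate: restricting the natural isomorphism $\cu(P,-)\cong\hrt(H_0^t(P),H_0^t(-))$ from (\ref{ept}) to $\hrt$ and comparing with the defining isomorphism $\cu(P,-)\cong\hrt(Q,H_0^t(-))$ gives $\hrt(H_0^t(P),-)\cong\hrt(Q,-)$ as functors on $\hrt$, so Yoneda yields $H_0^t(P)\cong Q$ directly, without any need to split a map (and note the direction of the second transformation is $H_0^t\to t^{\ge 0}$, i.e.\ $t^{\le 0}t^{\ge 0}\to t^{\ge 0}$, opposite to what you wrote).
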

\begin{proof}

\ref{ipgen1}. The first part of the assertion is just a particular case of the definition of $H_0^t(-)$ (see Remark \ref{rtst1}(\ref{it4})). 

The second part is very easy also (cf. the proof of  \cite[Lemma 2(1)]{zvon}); just apply the fact that the functor $t^{\ge 0}$ is left adjoint to  the embedding $ \cu^{t\ge 0}\to \cu$ (see Remark \ref{rtst1}(\ref{it3})).

\ref{ipgen1}. The first part of the assertion is immediate from Remark \ref{rtst1}(\ref{it1}) (since it gives $\cu^{t\le 0}= \perpp\cu^{t\ge 1}$). The first isomorphism in (\ref{ept}) follows easily from the definitions of $P_t$ and $H^t_0$, whereas the second one is given by assertion  \ref{ipgen1}.

 \ref{ipgen25}. %The projectivity of $H_0^t(P)$ in $\hrt$ is  given by 
The statement is given by Lemma 2(1) of \cite{zvon} (and also easily follows from assertion \ref{ipgen1}).
%\ref{ipgen3}.  This statement is just the dual to the previous two assertions (see Remark \ref{rtst1}(\ref{itd}).

\ref{ipgen4}. We should prove that any projective object $P_0$ of $\hrt$ "lifts" to $P_t$. Consider the functor $H^{P_0}=\hrt(P_0,-)\circ H_0^t:\cu\to \ab$. %(here $H^t_0$ is the $t$-homology on $\cu$; see Remark \ref{rtst}(4)). 
Since  $H^t_0$ is a (homological) pp functor according to (the dual to) Proposition \ref{prtst}(\ref{it2}), $H^{P_0}$  is a pp functor also. Hence it is corepresentable by some $P\in \obj \cu$ that obviously belongs to $P_t$. It remains to apply assertions \ref{ipgen1} and \ref{ipgen2}  to prove that $H_0^t(P)\cong P_0$.

\end{proof}

\begin{rema}\label{rcompgen}
Now assume that $t$ is generated by the class $\cup_{i\ge 0}\cp[i]$ for some $\cp\subset \obj\cu$  (i.e.,  $\cu^{t\ge 0}=\cap_{i\ge 1} \cp\perpp[i]$; it certainly follows that $\cp\subset \cu^{t\le 0}$). Then part \ref{ipgen1} of our proposition implies that $\cu^{t=0}\cap (\{H_0^t(P):\ P\in \cp\}\perpp)=\ns$, i.e., this class Hom-generates $\hrt$. Indeed,  if $M$ is a non-zero element of $\cu^{t=0}$ then $M\notin \cu^{t\ge 1}$. Hence there exists $P\in \cp$ such that $P\not\perp M$; thus $H_0^t(P) \not\perp M$ also.

 Moreover, if  $\cp$ is a set and $t$ is smashing then the $\cu$-coproduct $\coprod_{P\in \cp} H_0^t(P)$ Hom-generates $\hrt$ also (see Proposition \ref{phop}(\ref{itp3})). %?? 

%?? If all elements of $\cp$ are compact (this gives an important particular case of $t$-structures that will be paid much attention to below) then all the objects of the form $H_0^t(P)$ are certainly "compact in the additive sense", i.e., the functors $\hrt(H_0^t(P),-)$ respect coproducts; see part 1 of our proposition. On the other hand, these objects are "rarely" projective in $\hrt$.
\end{rema}

\section{Weight structures: reminder and pure functors}\label{sws}
In \S\ref{ssws} we  recall some basics on weight structures and relate them to torsion pairs (and to the results of \S\ref{shop}).
%(so, the  only new result of  the subsection is the remark on the relation of weight structures to torsion pairs).
%none of the results of this section are new). %??! The only new statement of this section is Proposition \ref{pbw}(\ref{icompidemp}) (it is rather technical but quite important for this paper).

In \S\ref{sswc} we recall some %basics on 
properties of the weight complex functors. %All of them except parts \ref{irwc3} and \ref{irwc6} of Remark \ref{rwc} were  established in \cite{bws}.
Our treatment of this subject is "more accurate" than the original one in \cite{bws}.

%In this section we study pure functors and (more generally) functors of limited weight range.

In \S\ref{sdetect} we construct {\it pure} (cf. Remark \ref{rwrange}(5))  homological functors from $\cu$ starting from additive functors from $\hw$ into  abelian categories. We also study conditions ensuring that a functor of this sort "detects weights of objects". The results of this section are important for the study of Picard groups of triangulated categories (endowed with weight structures) carried over in \cite{bontabu}.

In \S\ref{svtt} we recall the notion of virtual $t$-truncations of (co)homological functors from $\cu$ and relate them to functors of limited {\it weight range}. % (that are important for \cite{bkwn}).

In \S\ref{sprcoprod} we prove that  weight decompositions and weight complexes "respect coproducts" whenever $w$ is smashing; it follows that virtual $t$-truncations of cc and cp functors are cc and cp functors, respectively (in this case). %Moreover?!
We also study generators (see Remark \ref{rgenw}) for $\hw$. % if $w$ is a weight structure  of finite type.

\subsection{%Weight %and $t$-
Weight structures: basics }\label{ssws}

Let us recall the definition of one of the main notions of this paper.

\begin{defi}\label{dwstr}

 A couple of subclasses $\cu_{w\le 0},\cu_{w\ge 0}\subset\obj \cu$ %(of {\it $w$-negative} and {\it $w$-positive} objects, respectively)
will be said to define a {\it weight structure} $w$ for a triangulated category  $\cu$ if 
they  satisfy the following conditions.

(i) $\cu_{w\le 0}$ and $\cu_{w\ge 0}$ are %additive and 
Karoubi-closed in $\cu$
(i.e., contain all $\cu$-retracts of their elements).\footnote{In \cite{bws} the axioms of a weight structure also %contained the additivity of
required $\cu_{w\le 0}$ and $\cu_{w\ge 0}$ to be additive. Yet this  is not necessary; see Remark 1.2.3(4) of \cite{bonspkar}.}\

(ii) {\bf Semi-invariance with respect to translations.}

$\cu_{w\le 0}\subset \cu_{w\le 0}[1]$ and $\cu_{w\ge 0}[1]\subset
\cu_{w\ge 0}$.

(iii) {\bf Orthogonality.}

$\cu_{w\le 0}\perp \cu_{w\ge 0}[1]$.

(iv) {\bf Weight decompositions}.

 For any $M\in\obj \cu$ there
exists a distinguished triangle
%\begin{equation}\label{wd}
$LM\to M\to RM %\stackrel{f}
{\to} LM[1]$
%\end{equation} 
such that $LM\in \cu_{w\le 0} $ and $ RM\in \cu_{w\ge 0}[1]$.
\end{defi}

We will also need the following definitions.

\begin{defi}\label{dwso}

Let $i,j\in \z$.

\begin{enumerate}
\item\label{id1} The full subcategory  $\hw\subset \cu$ whose object class is
$\cu_{w=0}=\cu_{w\ge 0}\cap \cu_{w\le 0}$ %and morphisms are $\hw(Z,T)=\cu(Z,T)$ for $Z,T\in \cu_{w=0}$,
 is called the {\it heart} of %the weight structure
$w$.

\item\label{id2} $\cu_{w\ge i}$ (resp. $\cu_{w\le i}$, 
$\cu_{w= i}$) will denote $\cu_{w\ge
0}[i]$ (resp. $\cu_{w\le 0}[i]$,  $\cu_{w= 0}[i]$).

\item\label{id3} $\cu_{[i,j]}$  denotes $\cu_{w\ge i}\cap \cu_{w\le j}$; so, this class  equals $\ns$ if $i>j$.

$\cu^b\subset \cu$ will be the category whose object class is $\cup_{i,j\in \z}\cu_{[i,j]}$.

\item\label{id4}  We will  say that $(\cu,w)$ is {\it  bounded}  if $\cu^b=\cu$ (i.e., if
$\cup_{i\in \z} \cu_{w\le i}=\obj \cu=\cup_{i\in \z} \cu_{w\ge i}$).
%nafig; only bounded below?!!!!

Respectively, we will call $\cup_{i\in \z} \cu_{w\le i}$ (resp. $\cup_{i\in \z}\cu_{w\ge
i}$) the class of $w$-{\it bounded above} (resp. $w$-{\it bounded below}) objects; we will say that $w$ is bounded above (resp. bounded below) if all the objects of $\cu$ satisfy this property.

\item\label{ideg} %We will say that $M\in \obj \cu$ is 
We will call the elements of $\cap_{i\in \z}\cu_{w\le i}$ (resp. of  $\cap_{i\in \z}\cu_{w\ge i}$) {\it  right degenerate} 
(resp. {\it left degenerate}).

Respectively, we will say that $w$ is {\it non-degenerate} if $\cap_{i\in \z}\cu_{w\le i}=\cap_{i\in \z}\cu_{w\ge i}=\ns$ (i.e., if all degenerate objects of $\cu$ are trivial). We will say that $w$ is {\it  right non-degenerate} (resp. {\it  left non-degenerate}) if $\bigcap\limits_{i\in \z} \cu_{w \le i} = \{0\}$ (resp.  $\bigcap\limits_{i\in \z} \cu_{w \ge i} = \{0\}$). 
%one-sided only?!

\item\label{idadj} If $t$ is a $t$-structure on $\cu$ then we will say that $w$ is left adjacent to $t$ or that $t$ is right adjacent to $w$ if $\cu_{w\ge 0}=\cu^{t\le 0}$.
Dually, $w$ is right adjacent to $t$ whenever $\cu_{w\le 0}=\cu^{t\ge 0}$.

\item\label{ineg}
An additive %?!
subcategory $D\subset \obj \cu$ will be called {\it negative} if for any $i>0$ we have $\obj D\perp \obj D[i]$.

\item\label{idwe}
Let  $\cu'$ be a triangulated category endowed with a weight structure $w'$; %and  $w'$, respectively; 
let $F:\cu\to \cu'$ be an exact functor.

We will say that $F$ is {\it left weight-exact} (with respect to $w,w'$) if it maps $\cu_{w\le 0}$ to $\cu'_{w'\le 0}$; it will be called {\it right weight-exact} if it maps $\cu_{w\ge 0}$ to $\cu'_{w'\ge 0}$. $F$ is called {\it weight-exact} if it is both left and right weight-exact.
 %{\it weight-exact} if  $F(\cu_{w\le 0})\subset \cu'_{w'\le 0}$ and $F(\cu_{w\ge 0})\subset \cu'_{w'\ge 0}$.

\end{enumerate}
\end{defi}

\begin{rema}\label{rstws}

1. %A weight decomposition (of any $M\in \obj\cu$) is (almost) never canonical. 
Similarly to Remark \ref{rgen}(3), we will sometimes need a choice of a weight decomposition of $M[-m]$ shifted by $[m]$. % (though it is not canonical). 
 So we take a distinguished triangle \begin{equation}\label{ewd} w_{\le m}M\to M\to w_{\ge m+1}M \end{equation} 
with some $ w_{\ge m+1}M\in \cu_{w\ge m+1}$, $ w_{\le m}M\in \cu_{w\le m}$; we will call it an {\it $m$-weight decomposition} of $M$, and call  arbitrary choices of $w_{\ge m+1}M$ and $ w_{\le m}M$ {\it weight truncations of $M$} (for all $m\in \z$).  %(see Remark 1.2.2 of \cite{bws}); 

 We will %often?!
  use this notation below (though $w_{\ge m+1}M$ and $ w_{\le m}M$ are not canonically determined by $M$). Moreover, when we will write arrows of the type $w_{\le m}M\to M$ or $M\to w_{\ge m+1}M$ we will always assume that they come from some $m$-weight decomposition of $M$. 
  %??! We will call (any choices of) $(w_{\le l}M,w_{\ge l}M)$  {\it weight truncations} of $M$.

2. A  simple (and still  useful) example of a weight structure comes from the stupid
filtration on the homotopy categories of cohomological complexes
$K(B)$ for an arbitrary additive  $B$. %; we will call it the 'stupid' weight structure.
In this case
$K(B)_{\wstu\le 0}$ (resp. $K(B)_{\wstu\ge 0}$) will be the class of complexes that are
homotopy equivalent to complexes
 concentrated in degrees $\ge 0$ (resp. $\le 0$); see Remark 1.2.3(1) of \cite{bonspkar} for more detail.  The heart of this weight structure %(either for $K(A)$ or for $K^b(A)$)
is the Karoubi-closure  of $B$ in $K(B)$ (that is actually equivalent to $\kar(B)$). %the Karoubization of $B$).
 %in the corresponding category.  %We will use the notation $K(A)_{[i,j]}$ below following Definition \ref{dwstr}(IV).

3. In the current paper we use the "homological convention" for weight structures; 
it was previously used in  %\cite{hebpo}, 
 \cite{wildcons}, \cite{wildab}, \cite{brelmot}, and succeeding papers of the author, 
  whereas in 
\cite{bws}, \cite{bger}, \cite{bach}, and \cite{bontabu}  the "cohomological convention" was used.\footnote{Recall also that 
D. Pauksztello has introduced weight structures independently (see \cite{paucomp}); he called them
co-t-structures. }\ In the latter convention 
the roles of $\cu_{w\le 0}$ and $\cu_{w\ge 0}$ are interchanged, i.e., one considers   $\cu^{w\le 0}=\cu_{w\ge 0}$ and $\cu^{w\ge 0}=\cu_{w\le 0}$. %So,  a complex $M\in \obj K(B)$ whose only non-zero term is the fifth one  has weight $-5$ in the homological convention, and has weight $5$ in the cohomological convention. Thus the conventions differ by 'signs of weights';  $K(B)_{[i,j]}$ is the class of retracts of complexes concentrated in degrees $[-j,-i]$. %, %nado??!!
%$K(B)^{[i,j]}$ consists of retracts of complexes concentrated in degrees $[i,j]$ (we will use this notation below). %??!!
  
   %The homological convention is not very natural for the case $C=K(B)$; in particular, we will denote $K(B)^{[i,j]}$ 
 
 Note also that in this paper we will (following \cite{bbd} and coherently with all the papers of the author cited here) use the "cohomological" convention for $t$-structures. % (see Definition \ref{dtstr} below).
 This "discrepancy between conventions" will force us to put somewhat weird "$-$" signs in some of the formulas (cf. also Definition \ref{dwso}(\ref{idadj})); however,  it is coherent with Definition \ref{dhopo}(2) (of adjacent torsion pairs).

Lastly, in \cite{bws} both "halves" of $w$ were required to be  additive. Yet this additional restriction is easily seen to follow from the remaining axioms; see Remark 1.2.3(4) of \cite{bonspkar}.

4. As we had already noted in Remark \ref{rwsts}, the current definition of right and left adjacent "structures" is somewhat different from the one used in previous papers of the author. Also, if $w$  is left or right adjacent to $t$ then the associated torsion pairs are only "adjacent up to a shift"; yet this is easily seen to make no difference in the proofs. %?!
 
\end{rema}

Now  we recall some basic %those 
properties of weight structures. % that will be needed below (and that can be easily formulated).
%We will not mention more complicated matters (weight spectral sequences and virtual $t$-truncations) %nado?? and $K_0$??

\begin{pr} \label{pbw}
Let $\cu$ be a triangulated category endowed with a weight structure $w$, $M, M',M''\in \obj \cu$, $i,m,n\in \z$. Then the following statements are valid.

\begin{enumerate}

\item \label{idual}
The axiomatics of weight structures is self-dual, i.e., for $\du=\cu^{op}$
(so $\obj\cu=\obj\du$) there exists the (opposite)  weight
structure $w'$ for which $\du_{w'\le 0}=\cu_{w\ge 0}$ and
$\du_{w'\ge 0}=\cu_{w\le 0}$.

\item\label{iextw} 
 $\cu_{w\le i}$, $\cu_{w\ge i}$, and $\cu_{w=i}$ are Karoubi-closed and extension-closed in $\cu$ (and so, additive). %??!(see  Definition \ref{dses}(1)). 

 \item\label{iort}
 $\cu_{w\ge i}=(\cu_{w\le i-1})^{\perp}$ and $\cu_{w\le i}={}^{\perp} (\cu_{w\ge i+1})$.
 
\item\label{igenlm}
The class $\cu_{[m,n]}$ is the %smallest %Karoubi-closed extension-stable subclass of $\obj\cu$ containing 
extension-closure of $\cup_{m\le j\le n}\cu_{w=j}$.

 \item\label{iwd0} %For any weight decomposition of an
 If $M\in \cu_{w\ge m}$  then $w_{\le n}M\in \cu_{[m,n]}$ (for any $n$-weight decomposition of $M$). %dualize?! Both?!
%Nado?!
Dually, if  $M\in \cu_{w\le n}$  then $w_{\ge m}M\in \cu_{[m,n]}$.

\item\label{icompl} Assume that $ m\le n$.  
				The for any (fixed) $m$-weight decomposition of $M$ and an $n$-weight decomposition of $M'$  (see Remark \ref{rstws}(1)) 	any morphism
%one can extend
 $g\in \cu(M,M')$ can be extended %$g\in \cu(M, M')$ can be
to a %commutative diagram 
morphism of the corresponding distinguished triangles:
 \begin{equation}\label{ecompl} \begin{CD} w_{\le m} M@>{c}>>
M@>{}>> w_{\ge m+1}M\\
@VV{h}V@VV{g}V@ VV{j}V \\
w_{\le n} M'@>{}>>
M'@>{}>> w_{\ge n+1}M' \end{CD}
\end{equation}

Moreover, if $m<n$ then this extension is unique (provided that the rows are fixed).

\item\label{iwdext} For any distinguished triangle $M\to M'\to M''\to M[1]$  and any 
weight decompositions $LM\stackrel{a_{M}}{\to} M\stackrel{n_{M}}{\to} R_M\to LM[1]$ and $LM''\stackrel{a_{M''}}{\to} M''\stackrel{n_{M''}}{\to} R_M''\to LM''[1]$ there exists a commutative diagram 
%morphisms %$LM\stackrel{a_M}{\to} M$ and $LM''\stackrel{a_{M''}}{\to} M''$ being parts of weight decompositions there exists a morphism of distinguished triangles 
$$\begin{CD}
LM @>{}>>LM'@>f>> LM''@>{}>>LM[1]\\
 @VV{a_M}V@VV{a_{M'}}V @VV{a_{M''}}V@VV{a_{M}[1]}V\\
M@>{}>>M'@>{}>>M''@>{}>>M[1]\\
 @VV{n_M}V@VV{n_{M'}}V @VV{n_{M''}}V@VV{n_{M}[1]}V\\
RM@>{}>>RM'@>{}>>RM''@>{}>>M[1]\end{CD}
$$
in $\cu$ whose rows are distinguished triangles and the second column is a weight decomposition (along with the first and the third one).

\item\label{isplit} If an $\hw$-morphism $f:A\to B$ is split surjective then there exists an object $C\in \cu_{w=0}$ such that $f$ is isomorphic to the canonical epimorphism $B \bigoplus C\to C$.

\item\label{iwdmod} If $M$ belongs to $ \cu_{w\le 0}$ (resp. to $\cu_{w\ge 0}$) then it is a retract of any choice of $w_{\le 0}M$ (resp. of $w_{\ge 0}M$).

 %\item \label{iadjwe} Let $\du,v,F$, and $G$ be as in the previous assertion. For some $C\subset \obj \cu$ assume that $\cu_{w\ge 1}=C^{\perp}$. Then $F$ is left weight-exact if and only if  $F(C)\subset \du_{w\le 0}$.
\end{enumerate}
\end{pr}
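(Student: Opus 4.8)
The plan is to establish item \ref{iort} first and to derive the remaining items from it, using the self-duality \ref{idual} to dispatch the ``dual half'' of several statements. For \ref{idual}, passing to $\du=\cu\opp$ reverses all arrows, turns $[1]$ into $[-1]$, and interchanges the two sides of $\perp$; under the assignment $\du_{w'\le 0}=\cu_{w\ge 0}$, $\du_{w'\ge 0}=\cu_{w\le 0}$ one checks directly that each of the four axioms of Definition \ref{dwstr} is carried to the corresponding axiom for $(\du,w')$ (Karoubi-closedness and orthogonality are self-dual; the two semi-invariance inclusions get interchanged and absorb the sign change; and a weight decomposition triangle of $M$ in $\cu$, suitably rotated, is a $w'$-weight decomposition of $M$ in $\du$). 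Thus every statement valid for all weight structures also yields its opposite, so it suffices to treat one of the two assertions in each of \ref{iort}, \ref{iwd0} and \ref{iwdmod}. For \ref{iort}: the inclusion $\cu_{w\ge i}\subseteq(\cu_{w\le i-1})^{\perp}$ is axiom (iii) of Definition \ref{dwstr} shifted by $[i]$; conversely, given $M\in(\cu_{w\le i-1})^{\perp}$, an $(i-1)$-weight decomposition $w_{\le i-1}M\to M\to w_{\ge i}M\to(w_{\le i-1}M)[1]$ (see Remark \ref{rstws}(1)) has vanishing first morphism (as $w_{\le i-1}M\in\cu_{w\le i-1}$ and $M\in(\cu_{w\le i-1})^{\perp}$), so the triangle splits, $M$ is a retract of $w_{\ge i}M\in\cu_{w\ge i}$, and axiom (i) gives $M\in\cu_{w\ge i}$. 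Item \ref{iextw} is then formal: classes of the form $S^{\perp}$ and $\perpp S$ are always Karoubi-closed and extension-closed, and $\cu_{w\le i}$, $\cu_{w\ge i}$, $\cu_{w=i}$ are of this form by \ref{iort}. The same splitting trick yields \ref{iwdmod}: for $M\in\cu_{w\le 0}$ the morphism $M\to w_{\ge 1}M$ in a $0$-weight decomposition lies in $\cu(\cu_{w\le 0},\cu_{w\ge 1})=\{0\}$, so the triangle splits and $M$ is a retract of $w_{\le 0}M$.

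Next I would deduce \ref{iwd0}, \ref{igenlm} and \ref{icompl}, all via extension-closedness. For \ref{iwd0} (only the case $m\le n$ being non-vacuous): $w_{\le n}M\in\cu_{w\le n}$ by construction, and rotating the $n$-weight decomposition gives a triangle $w_{\ge n+1}M[-1]\to w_{\le n}M\to M\to w_{\ge n+1}M$ whose outer vertices lie in $\cu_{w\ge n}\subseteq\cu_{w\ge m}$ and in $\cu_{w\ge m}$, whence $w_{\le n}M\in\cu_{w\ge m}$ by \ref{iextw}. For \ref{igenlm}: one inclusion holds because $\cu_{[m,n]}$ is extension-closed and contains each $\cu_{w=j}$ with $m\le j\le n$; for the reverse I would induct on $n-m$, taking for $M\in\cu_{[m,n]}$ an $(n-1)$-weight decomposition --- by \ref{iwd0} one has $w_{\le n-1}M\in\cu_{[m,n-1]}$, while rotating shows $w_{\ge n}M$ is an extension of objects of $\cu_{w\le n}$, hence lies in $\cu_{w=n}$, and the inductive hypothesis concludes. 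For \ref{icompl}: the composite $w_{\le m}M\xrightarrow{c}M\xrightarrow{g}M'\to w_{\ge n+1}M'$ lies in $\cu(\cu_{w\le n},\cu_{w\ge n+1})=\{0\}$ (using $\cu_{w\le m}\subseteq\cu_{w\le n}$ since $m\le n$), so $g\circ c$ lifts along $w_{\le n}M'\to M'$, and TR3 extends the resulting commutative square to a morphism of the two triangles. When $m<n$, any two such fill-ins differ by a morphism $w_{\le m}M\to w_{\le n}M'$ dying in $M'$, hence factoring through $w_{\ge n+1}M'[-1]\in\cu_{w\ge n}$; since $\cu_{w\le m}\subseteq\cu_{w\le n-1}$ this morphism is zero by orthogonality, and the analogous computation pins down the third vertical arrow.

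For \ref{iwdext}, writing the given triangle as $M\to M'\to M''\xrightarrow{\de}M[1]$, I would first push $\de$ inside: the composite $LM''\xrightarrow{a_{M''}}M''\xrightarrow{\de}M[1]\xrightarrow{n_M[1]}RM[1]$ lies in $\cu(\cu_{w\le 0},\cu_{w\ge 2})=\{0\}$, so $\de\circ a_{M''}=a_M[1]\circ\phi$ for some $\phi\colon LM''\to LM[1]$. Completing $\phi[-1]$ to a triangle $LM\to LM'\to LM''\xrightarrow{\phi}LM[1]$ gives $LM'\in\cu_{w\le 0}$ by \ref{iextw}, and TR3 (on suitably rotated triangles) produces $a_{M'}\colon LM'\to M'$ forming a morphism of triangles from $LM\to LM'\to LM''\to LM[1]$ to $M\to M'\to M''\to M[1]$. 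The nine-lemma for triangulated categories (see e.g. \cite[1.1.11]{bbd}) then extends this to the asserted commutative $3\times 3$ diagram, whose third row is a triangle $RM\to RM'\to RM''\to RM[1]$; as $RM,RM''\in\cu_{w\ge 1}$ and $\cu_{w\ge 1}$ is extension-closed, $RM'\in\cu_{w\ge 1}$, so the middle column is a weight decomposition of $M'$. Lastly, for \ref{isplit}: a split surjection $f\colon A\to B$ in $\hw$ has a section $s$, so the $\cu$-triangle $B\xrightarrow{s}A\to C\to B[1]$ splits (a triangle whose first morphism is a split monomorphism is split), whence $A\cong B\oplus C$ in $\hw$ with $f$ identified, after an elementary change of coordinates on $B\oplus C$, with the canonical projection $B\oplus C\to B$; and $C$, being a $\cu$-retract of $A\in\cu_{w=0}$, lies in $\cu_{w=0}$ by \ref{iextw}.

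The only step carrying genuine content is \ref{iwdext}: assembling the full nine-diagram so that its first and third columns are precisely the prescribed weight decompositions of $M$ and $M''$ requires some careful octahedral bookkeeping (one must check that the cones handed back by the nine-lemma may be taken to be $RM$ and $RM''$). Everything else reduces to a short orthogonality-plus-splitting manipulation. All of these statements, with essentially these arguments, already appear in \cite{bws} and \cite{bonspkar}, so the present proof mainly serves to record them under the conventions used here.
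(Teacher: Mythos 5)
Your proposal is correct and follows essentially the standard route: the paper itself dispatches items (1)--(7) by citing \cite{bws} (and Remark~\ref{rstws}(3) on conventions), and what you have written is a faithful reconstruction of exactly those arguments --- establish \ref{iort} via the splitting of a weight decomposition whose first (or third) arrow is killed by orthogonality, derive \ref{iextw} formally from $\perp$-closures, and run the usual rotation/extension-closure and octahedral arguments for \ref{iwd0}--\ref{iwdext}. You have also correctly flagged where the real content sits, namely in \ref{iwdext} (this is Lemma~1.5.4 of \cite{bws}, and the ``careful octahedral bookkeeping'' you mention is precisely what that lemma does).

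The only place where you genuinely diverge from the paper's proof is \ref{iwdmod}: the paper deduces the $\cu_{w\le 0}$ case from the general torsion-pair weak functoriality statement Proposition~\ref{phop}(7), and then dualizes, whereas you argue directly that the connecting map $M\to w_{\ge 1}M$ vanishes, so the decomposition triangle splits. Both are short; your version is self-contained at the level of weight structures, while the paper's choice emphasizes that this is a fact about general torsion pairs (which matters for the paper's narrative, since it then reuses the torsion-pair language throughout Section~2). One small remark on \ref{isplit}: the statement in the proposition reads ``the canonical epimorphism $B\oplus C\to C$,'' which is evidently a typo for $B\oplus C\to B$; your argument proves the correct statement.
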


\begin{proof} %(\ref{}--\ref{}): see Proposition ? of \cite{brelmot}
All of the assertions except the 
two %three 
last ones were essentially established in   \cite{bws} (pay attention to Remark \ref{rstws}(3)!). %Some more information can be found in Proposition 1.2.3  of \cite{brelmot} (and in its proof).

Assertion \ref{isplit} is immediate from assertion \ref{iextw} (that says that $\cu_{w=0}$ is Karoubi-closed in $\cu$).

%Assertion \ref{iadj} is just Proposition 1.2.3(9) of \cite{brelmot}.
%The "only if" part of assertion \ref{iwemb} is just the definition of the weight-exactness of $F$. The "if" part follows immediately from assertion \ref{iort}.
%Assertion \ref{inondegbound} is immediate from the orthogonality axiom (iii) of weight structures.

\ref{iwdmod}. The case $M\in \cu_{w\le 0}$ of the assertion follows immediately from  Proposition \ref{phop}(\ref{itp7p}) (see Remark \ref{rwhop}(1) below).
The case  $M\in \cu_{w\ge 0}$ %can be studied similarly; moreover, this statement 
is just the categorical dual of the first case; see  assertion \ref{idual}. 

\end{proof}

\begin{rema}\label{rwhop}

1. Similarly to Remark \ref{rtst1}(\ref{it1}), part \ref{iort} of our proposition yields that for $\lo=\cu_{w\le 0}$ and $\ro=\cu_{w\ge 1}$ the couple $s=(\lo,\ro)$ is a torsion pair; we will call it the torsion pair associated with $w$. Conversely, if for a torsion pair $s$ we have $\lo\subset \lo[1]$ then $(\lo,\ro[-1])$ is a weight structure that we will say to be associated with $s$. In this case we will say that $s$ is {\it weighty}; for a class $\cp\subset \obj \cu$ we will say that $\cp$ generates $w$ whenever it generates $s$ (certainly, then we have $\cp\subset \lo =\cu_{w\le 0}$, $\cp\perpp=\cu_{w\ge 1}$, and $w$ is determined by $\cp$); $w$ is compactly generated whenever $s$ is. 

Respectively, we will say that $w$ is %coproductive (resp., productive; resp. 
smashing (resp. cosmashing,   countably smashing; resp. generated by $\cp\subset \obj \cu$) whenever the corresponding $s$ is. %We will study these notions in  next sections.

%We will also need the following modification of the finiteness of type condition: we will say that $w$ is {\it weakly of finite type} whenever  $\cu$ has countable coproducts and $\cu_{w\le 0}$ is weakly coproductive (i.e., it is closed with respect to countable $\cu$-coproducts). %{wft}: here?!

Lastly, we will say that $w$ restricts to a triangulated subcategory $\cupr$ of $\cu$ whenever the associated $s$ does;   cf. Definition \ref{dhopo}(4). %) whenever $s$ does.
%w vs. s-decompositions: differ by [1]?!

2. Certainly, the "most interesting" weight and $t$-structures are the non-degenerate ones. However, this non-degeneracy condition can be quite difficult to check (and it actually fails for certain "important" weight structures; see %Remark 2.4.4(1) 
Remark 4.2.2(4) of \cite{bkwn}). So we prefer not to avoid degenerate weight and $t$-structures in this paper; %one of the main tools of dealing with them is 
 this makes Proposition \ref{phopft}(II.1) an important tool.

Now we describe (some) consequences of this proposition for a torsion pair associated %either 
 with a weight structure   $w$ (resp. with a $t$-structure $t$; see Remark \ref{rtst1}(\ref{it1})).

So, assume that $\cu$ is a full strict triangulated subcategory of a triangulated category $\cu'$, and assume that the embedding $\cu\to \cu'$ possesses a right adjoint $G$. Then there exists a  weight structure $w'$  (resp. a $t$-structure $t'$) on $\cu'$ such that %$\cu'_{w'\le 0}$ is the $\cu'$-isomorphism closure of $\cu_{w\le 0}$
$\cu'_{w'\le 0}=\cu_{w\le 0}$  (resp.  %$\cu'^{t'\le 0}$ is the $\cu'$-isomorphism closure of $\cu^{t\le 0}$
$\cu'^{t'\le 0}=\cu^{t\le 0}$). It certainly follows that $\cu_{w'\ge 0}\cap \obj \cu=\cu_{w\ge 0}$ (resp. $\cu^{t'\ge 0}\cap \obj \cu=\cu^{t\ge 0}$); hence $\hw'=\hw$ (resp. $\hrt'=\hrt$). Moreover, if $w$ (resp. $t$) is generated by a class $\cp\subset \obj \cu$ in $\cu$ then  $w'$ (resp. $t'$) is generated by $\cp$ in $\cu'$.

3. Now we apply to weight structures %part II.2 of 
Proposition \ref{phopft}(II.2). Assume that  $F:\cu\to \cu'$ is an exact functor, $G$ is its (exact) right adjoint, and $w$ is generated by some $\cp\subset \obj \cu$.   %\cu_{w\le 0}$. 
  Then Proposition \ref{phopft}(II.2)  (combined with the first part of this remark) yields the following: $F$ is left weight-exact (with respect to $w$ and some weight structure $w'$ on $\cu'$) if and only if $F(\cp)\subset \cu'_{w'\le 0}$; these conditions are equivalent to the right weight-exactness of $G$. 

Note that this statement can certainly be applied for $\cp=\cu_{w\le 0}$; hence $F$ is left weight-exact if and only if $G$ is right weight-exact.

4. Recall also that % in the case where $\cu$ is 
for a compactly generated triangulated category $\cu$  and $F:\cu\to \cu'$ being an exact functor respecting coproducts  Proposition \ref{pcomp}(II) gives the existence of $G$. Hence %we obtain that 
these assumptions imply that  $F$ is left weight-exact if and only if $F(\cp)\subset \cu'_{w'\le 0}$. 

5. We will also need the categorical dual of the latter observation. Certainly, it is formulated is follows: if $\cu$ is cocompactly cogenerated (i.e., there exists a set of objects of $\cu$ that compactly generates $\cu\opp$), $F$ is an exact functor $\cu\to \cu'$ that respects products, %term?!
$w$ and $w'$ are are weight structures on  $\cu$ and $\cu'$, respectively, and  $w$ is cogenerated by some $\cp\subset \obj \cu$ (i.e., $\cu_{w\le -1}=\perpp\cp$) then $F$ is  right weight-exact if and only if $F(\cp)\subset \cu'_{w'\ge 0}$.

6. Applying Proposition \ref{phop}(\ref{itp10}) to weighty torsion pairs we certainly obtain the following: if $F:\cu\to \cu'$ is a weight-exact functor % (with respect to??
  that is surjective on objects then $\cu'_{w'\ge 0} = \kar_{\cu'}(F(\cu_{w\ge 0}))$ and $\cu'_{w'\le 0} = \kar_{\cu'}(F(\cu_{w\le 0}))$.

7. Lastly, %we apply Proposition \ref{phopft}(II.3) in the context of weight structures. Adopt 
 %adopt the assumptions of part 3 of this remark, and suppose in addition that 
assume that $F$  as above possesses a right adjoint. 
%is weight-exact (with respect to $(w,w')$)  and essentially surjective on objects. 
Then Proposition \ref{phopft}(II.3) implies that the weight structure $w'$ is generated by $F(\cp)$.

Once again, the existence of %an (exact) 
right adjoint %$G$ 
 is automatic whenever $\cu$ is compactly generated and $F$  respects coproducts. %perfectly generated??!

8. Assume that $w$ is generated by a class $\cp$. Then $\cap_{i\in \z}\cu_{w\ge i}=\cap_{i\in \z}\cp\perpp$. Thus $\cp$ is left non-degenerate if and only if $\cp$ Hom-generates $\cu$. 

\end{rema}

\subsection{On weight Postnikov towers and weight complexes}\label{sswc}

To define the weight complex functor we will need the following definitions.
%\cu,w: everywhere?!!
%Below we will need the notion of  Postnikov tower in a triangulated category several times. 

\begin{defi}\label{dfilt}

Let $M\in \obj \cu$.

1. A datum consisting of  $M_{\le i}\in \obj \cu$, $h_i\in \cu(M_{\le i},M)$, $j_i\in  \cu(M_{\le i},M_{\le i+1})$ for $i$ running through integers
will be called a {\it filtration on $M$} %right filtration?!
if we have $h_{i+1}\circ j_i=h_i$  for all $i\in \z$; we will write $\fil_*M$ for this filtration.

A filtration will be called {\it bounded} if there exist $l\le m\in \z$ such that $M_{\le i}=0$ for all $i<l$ and $h_i$ are isomorphisms for all $i\ge m$.

2. A filtration as above equipped with distinguished triangles $M_{\le i-1}\stackrel{j_{i-1}}{\to}M_{\le i}\to M_i$ for all $i\in \z$ will be called a {\it  Postnikov tower} for $M$ or for  $\fil_*M$; %We will denote 
this tower will be denoted by $Po_\fil$.

We will use the symbol $M^p$ to denote  $M_{-p}[p]$; we will call $M^p$ the {\it factors} of $Po_\fil$. % our Postnikov tower.

3. If $\fil_*M'=(M'_{\le i}, h'_i, j_i)$ is a filtration of $M'\in \obj \cu$ and $g\in \cu(M,M')$ then we will call $g$ along with a collection of $g_{\le i}\in \cu(M_{\le i}, M'_{\le i})$  a {\it morphism of filtrations compatible with $g$} if  $g\circ h_i=h'_i\circ g_{\le i}$ and  $j'_i\circ g_{\le i} =g_{\le i+1}\circ j_i$ for all $i\in \z$. 

\end{defi}

\begin{rema}\label{rwcomp}
1. Composing (and shifting) arrows from   triangles in %a Postnikov tower
$Po_\fil$ for all pairs of two   subsequent $i$s one can construct a complex whose $i$th term equals
$M^i$ (it is easily seen that this is a complex indeed; cf. Proposition 2.2.2 of \cite{bws}). %{gelman}%??! This observation will be important for us below when we will consider certain weight complex functors. 
We will call it a complex {\it associated with} $Po_\fil$.

2. Certainly, any filtration yields a Postnikov tower (uniquely up to a non-unique isomorphism). %??! see below). %??!!
Furthermore, it is easily seen that any morphism of filtrations extends to a morphism of the corresponding Postnikov towers (defined in the obvious way). %uniquely up to a weak equivalence?! Prove?!
Besides, any morphism of Postnikov towers yields a morphism of the associated complexes.

Lastly, note that morphisms of filtrations and Postnikov towers can certainly be added and composed.

3. The triangles in $Po_\fil$ also %easily yield 
 give the following statement immediately: if a filtration of $M$ is bounded then $M$ belongs to the extension-closure of $\{M_i\}$. %cf. \S1.5 {bws}?!
\end{rema}

%?!!We also recall some properties of the weight complex functor.

\begin{defi}\label{dwpt}
Assume that $\cu$ is endowed with a weight structure $w$.

1. We will call a filtration (see Definition \ref{dfilt}) $\fil_*M$ of $M\in \obj \cu$ a {\it weight filtration} (of $M$) if the morphisms $h_i:M_{\le i}\to M$ yield $i$-weight decompositions for all $i\in \z$ (in particular, %we certainly have  
  $M_{\le i}=w_{\le i}M$). 

We will call  %any Postnikov tower corresponding to a weight filtration (of $M$)
the corresponding $Po_\fil$ a {\it weight Postnikov tower} for $M$.

2. $\pwcu$ will denote the category whose objects  are objects of $\cu$ endowed with arbitrary weight Postnikov towers and whose morphisms are morphisms of Postnikov towers.

$\cuw$ will be the category whose objects are the same as for $\pwcu$ and such that  $\cuw(Po_{\fil_M},Po_{\fil_{M'}})=\imm (\pwcu(Po_{\fil_M},Po_{\fil_{M'}})\to \cu(M,M'))$ (i.e., we kill those morphisms of towers that are zero on the underlying objects).

3. For an additive category $\bu$, complexes $A,B\in \obj K(\bu)$, and morphisms $m_1,m_2\in C(\hw)(A,B)$  we will write $m_1\backsim m_2$ if $m_1-m_2=d_Bh+jd_A$ for some collections of arrows $j^*,h^*:A^*\to B^{*-1}$.

We will call this relation the {\it weak homotopy one}.\footnote{This relation was earlier introduced in \cite{barrabs}; $m_1$ is {\it absolutely homologous} to $m_2$ in the terminology of that paper. Respectively, some of the results below concerning this equivalence relation were proved in ibid.}\ 
\end{defi}

The following statements were essentially proved
in \cite{bws} (in particular, see the proof of \cite[Theorem 3.3.1(I)]{bws} for assertion \ref{iwcex} below). %{iwcex}?!! %(except assertion {iwpt3})?? 
Moreover, the first two of them easily follow from Proposition \ref{pbw}(\ref{icompl}) (along with the corresponding definitions).

\begin{pr}\label{pwt}
In addition to the notation introduced above assume that $\bu$ is an additive category.
\begin{enumerate}
\item\label{iwpt1}
Any choice of $i$-weight decompositions of $M$ for  $i$ running through integers naturally yields a canonical weight filtration for $M$ (with $M_{\le i}=w_{\le i}M$).

Moreover,  we have   $\co(Y_i\to M)\in \cu_{w\ge i+1}$ and     $M^i\in \cu_{w=0}$.

\item\label{iwpt2} Any $g\in \cu(M,M')$ can be extended to a morphism of (any choice of) weight filtrations for $M$ and $M'$, respectively; hence it also extends to a morphism of weight Postnikov towers.

\item\label{iwpt3} The natural functor $\cuw\to \cu$ is an equivalence of categories.

\item\label{iwhecat}
%For any additive $\bu$ 
Factoring morphisms in $K(\bu)$ by the weak homotopy relation yields an additive category $\kw(\bu)$. Moreover, the corresponding full functor $K(\bu)\to \kw(\bu)$ is (additive and) conservative.

\item\label{iwhefu}
Let $\ca:\bu\to \au$ be an additive functor, where $\au$ is any abelian category. Then for any $B,B'\in \obj K(\bu)$ any pair of weakly homotopic morphisms $m_1,m_2\in C(\hw)(B,B')$  induce equal morphisms of the homology $H_*(\ca(B^i))\to H_*(\ca(B'^i))$.

\item\label{iwhefun}
Sending an object of $\cuw$ into the complex described in Remark \ref{rwcomp}(1) yields a well-defined additive functor %functoriality: {rwcomp}(2)?!
$t=t_w:\cuw\to \kw(\hw)$.

We will call this functor the {\it weight  complex} one.\footnote{The term comes from \cite{gs}; yet the domain of the weight complex functor in that paper was not triangulated, whereas the target was ("the ordinary") $K^b(\chowe)$.}\ We
will often write $t(M)$ for $M\in \obj \cu$ assuming that some weight Postnikov tower for $M$ is chosen; we will say that $t(M)$ is {\it a choice of a weight complex} for $M$.

\item\label{iwcex} 
If $M_0\stackrel{f}{\to} M_1 \stackrel{g}{\to} M_2$ is a distinguished triangle in $\cu$ then for any choice of $t(M_0)$ and $t(M_2)$
 there exists a compatible choice of $(t(f),t(g))$ (so, the domain of this $t(f)$ is the chosen $t(M_0)$ and the target of  $(t(g)$ is $t(M_2)$) that can be completed to a distinguished triangle in $K(\hw)$.

%there exists a choice of $(t(f),t(g))$ (along with $t(M_i)$ for $0\le i\le 2$) that can be completed to a distinguished triangle in $K(\hw)$.

% any possible "lift" of $f$ (along with $M_0$ and $M_1$) to $\cuw$ can be completed to a lift of the couple of morphisms $M_0\stackrel{f}{\to} M_1\to M_2$ to  $\cuw$ such that corresponding morphisms $t(M_0)\to t(M_1)\to t(M_2)$ yield a distinguished triangle in $K(\hw)$.
 %and any compatible choice of $\co(t(f): t(M_0)\to t(M_1))$ this (cone) complex yields a choice of a weight complex for $M_2$.  
\end{enumerate}
\end{pr}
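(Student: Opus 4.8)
The plan is to treat the statements in roughly the order listed, reducing each to the completion/uniqueness properties in Proposition \ref{pbw}(\ref{icompl}) together with the basic calculus of Postnikov towers and complexes set up in Definitions \ref{dfilt}, \ref{dwpt} and Remark \ref{rwcomp}. For assertion \ref{iwpt1}: given a choice of $i$-weight decompositions $w_{\le i}M\to M\to w_{\ge i+1}M$ for every $i\in\z$, I would use Proposition \ref{pbw}(\ref{icompl}) (in the range $m<n$, where the connecting morphism is \emph{unique}) to produce canonical connecting maps $j_i\colon w_{\le i}M\to w_{\le i+1}M$ with $h_{i+1}\circ j_i=h_i$; uniqueness is what makes the resulting filtration canonical. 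Then octahedron applied to $w_{\le i-1}M\to w_{\le i}M\to M$ gives the distinguished triangle $w_{\le i-1}M\to w_{\le i}M\to M_i$, so the $M_i$ exist; and $M_i\cong \co(w_{\le i-1}M\to w_{\le i}M)$. To see $M^i=M_{-i}[i]\in\cu_{w=0}$ and $\co(w_{\le i}M\to M)\in\cu_{w\ge i+1}$, I would invoke Proposition \ref{pbw}(\ref{iwd0}): $w_{\le i}M\in\cu_{w\le i}$ automatically, while applying (\ref{iwd0}) to the triangle exhibiting $w_{\le i}M$ as a truncation of $w_{\ge ?}M$-type objects gives $M_i\in\cu_{[i,i]}=\cu_{w=i}$; the cone statement is the definition of an $i$-weight decomposition combined with semi-invariance.

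For assertion \ref{iwpt2}, I would apply Proposition \ref{pbw}(\ref{icompl}) successively with $m=n=i$ for each $i$ to extend $g\colon M\to M'$ to maps $g_{\le i}\colon w_{\le i}M\to w_{\le i}M'$; commutativity of the squares $g\circ h_i=h'_i\circ g_{\le i}$ is built into (\ref{ecompl}), and compatibility with the $j_i$ ($j'_i\circ g_{\le i}=g_{\le i+1}\circ j_i$) follows because both sides are completions of the same diagram in the range $m<n$ where the completion is unique — this is the standard "coherence from uniqueness" argument. Passing to cones gives the induced maps on the factors $M_i\to M'_i$, hence a morphism of Postnikov towers, and then of associated complexes via Remark \ref{rwcomp}(2). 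Assertions \ref{iwpt3} then follows formally: fullness of $\cuw\to\cu$ is exactly \ref{iwpt2}, faithfulness is the definition of $\cuw$ (we quotiented by tower-morphisms that vanish on underlying objects), and essential surjectivity is \ref{iwpt1}; I would spell out that the composite and identity axioms are inherited because morphisms of towers compose and add (Remark \ref{rwcomp}(2)).

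Assertions \ref{iwhecat}--\ref{iwhefu} are pure homological algebra independent of the triangulated structure. For \ref{iwhecat} I would check that weak homotopy $\backsim$ (as in Definition \ref{dwpt}(3)) is an additive congruence on $K(\bu)$-morphisms — compatibility with composition is a direct computation with the $d_Bh+jd_A$ formula — so the quotient $\kw(\bu)$ is additive and $K(\bu)\to\kw(\bu)$ is full and additive; conservativity follows because a morphism of complexes that is weakly homotopic to something invertible in $\kw(\bu)$ has invertible components (the weak homotopy relation, unlike chain homotopy, only mixes a term with its immediate neighbours, so it is componentwise "upper triangular with identities on the diagonal" up to the relation — I would make this precise). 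Assertion \ref{iwhefu} follows since applying an additive $\ca$ to $m_1-m_2=d_{B'}h+jd_B$ gives a genuine chain homotopy $\ca(d_{B'})\ca(h)+\ca(j)\ca(d_B)$ between $\ca(m_1)$ and $\ca(m_2)$, which induces zero on homology. For \ref{iwhefun}: the construction of Remark \ref{rwcomp}(1) produces, from each weight Postnikov tower, a complex with terms $M^i\in\cu_{w=0}$ (by \ref{iwpt1}); a morphism of towers induces a morphism of these complexes (Remark \ref{rwcomp}(2)), and I must check that tower-morphisms which are zero on underlying objects induce weakly-homotopic-to-zero (in fact, by the argument of \cite{bws}, weakly homotopic) maps of complexes — this is where the weak homotopy relation is forced on us and is the one genuinely delicate bookkeeping point. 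Hence $t_w$ descends to a well-defined additive functor $\cuw\to\kw(\hw)$.

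The main obstacle I expect is assertion \ref{iwcex}: given a distinguished triangle $M_0\xrightarrow{f}M_1\to M_2\to M_0[1]$ and a lift of $f$ to $\cuw$ (i.e. compatible weight towers for $M_0,M_1$ and a tower-morphism covering $f$), one must build a weight tower on $M_2$ and tower-morphisms so that $t(M_0)\to t(M_1)\to t(M_2)$ is a \emph{distinguished} triangle in $K(\hw)$, not merely a complex. The strategy, following \cite{bws}, is to apply Proposition \ref{pbw}(\ref{iwdext}) level by level: from the $i$-weight decompositions of $M_0$ and $M_1$ and the morphism between them, (\ref{iwdext}) produces an $i$-weight decomposition of $M_2$ fitting into a $3\times3$ diagram of distinguished triangles, compatibly for all $i$ (here again uniqueness of completions in the range $m<n$ pins down the connecting maps and makes the levels cohere). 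Taking cones along the $i$-direction gives short exact sequences $0\to (M_0)_i\to(M_1)_i\to(M_2)_i\to 0$ in $\hw$ (split, since $\hw$-objects and the relevant $\Ext$ vanishing from negativity of $\hw$), and assembling these termwise produces a termwise-split short exact sequence of complexes in $\hw$, whose associated triangle in $K(\hw)$ is distinguished by the standard mapping-cone description. The delicate part is verifying that the chosen connecting maps in the weight complexes really do glue into the differential of the cone complex — i.e. that the $3\times3$ diagrams from (\ref{iwdext}) can be chosen functorially enough in $i$ — and this is precisely where I would lean on the uniqueness clause of Proposition \ref{pbw}(\ref{icompl}) and, if needed, allow myself to modify the chosen tower on $M_2$ up to the weak homotopy relation (which is harmless by \ref{iwpt3} and \ref{iwhecat}).
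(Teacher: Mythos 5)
Your route is essentially the one the paper has in mind (the paper simply cites \cite{bws} and notes that parts \ref{iwpt1}--\ref{iwpt2} follow from Proposition \ref{pbw}(\ref{icompl})), and your treatment of most parts is sound. In particular, you correctly identify that for \ref{iwcex} the short exact sequences $0\to(M_0)_i\to(M_1)_i\to(M_2)_i\to 0$ in $\hw$ must split because $\cu_{w=i}\perp\cu_{w=i}[1]$, so that the weight complexes assemble into a termwise-split short exact sequence whose associated triangle in $K(\hw)$ is distinguished; and you correctly isolate, for \ref{iwhefun}, that the only genuine check is that tower morphisms vanishing on underlying objects induce weakly null maps of weight complexes.

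The one place your sketch goes wrong is the conservativity clause of \ref{iwhecat}. Conservativity of $K(\bu)\to\kw(\bu)$ says: if $[m]$ is invertible in $\kw(\bu)$ then $m$ is already a homotopy equivalence, i.e.\ invertible in $K(\bu)$. It does not say --- and it is false --- that such an $m$ has invertible components: already in $K(\bu)$ an isomorphism is merely a homotopy equivalence, and $\id_A+d_Ah+hd_A$ is a $K(\bu)$-isomorphism without generally being a levelwise isomorphism, so your heuristic simultaneously proves too much and misses the statement actually needed. Moreover, ``weak homotopy only mixes a term with its immediate neighbours'' is equally true of ordinary chain homotopy, so that contrast cannot be the mechanism. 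The correct argument (due to Barr \cite{barrabs}, where the relation $\backsim$ is called absolute homology, and reproduced in \cite{bws}) starts from $gm=\id_A-(d_Ah+jd_A)$, uses that $gm$ is a chain map (which forces $d_Ahd_A=d_Ajd_A$), and then manufactures an honest $K(\bu)$-inverse for $m$; this is a genuine algebraic manipulation and does not reduce to a componentwise triangularity observation. Finally, a small terminology slip in your sketch of \ref{iwhefu}: $\ca(d_{B'})\ca(h)+\ca(j)\ca(d_B)$ is still only a \emph{weak} (two-sided) homotopy, since $\ca(h)\neq\ca(j)$ in general; but the conclusion is unaffected, because any map of the form $dh'+j'd$ sends cycles into boundaries and hence induces zero on homology.
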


\begin{rema}\label{rwc}
So, for an object $M$ of $\cu$ its weight complex $t(M)$ is well-defined up to a $K(\hw)$-endomorphism that is weakly homotopic to zero; thus it is defined in $K(\hw)$ up to a (not necessarily unique) isomorphism.

In particular, if $M\in \cu_{w\ge -n}$ for some $n\in \z$ then we can take $M_{\le i}=w_{\le i}M=0$ for all $i<-n$; hence any choice of $t(M)$ is homotopy equivalent to a complex concentrated in degrees at most $n$. Similarly, if $M\in \cu_{w\le  -n}$ then we can take $M_{\le i}=w_{\le i}M=M$ for all $i\le -n$; thus $t(M)$ is homotopy equivalent to a complex concentrated in degrees at  least $n$.  Hence $t(M)\cong 0$ whenever $M$ is left or right degenerate. 
\end{rema}

%\section{On pure functors and functors of limited weight range}\label{sgenpure}

\subsection{On pure functors and detecting weights}\label{sdetect}

\begin{pr}\label{ppure} 
Assume that $\cu$ is endowed with a weight structure $w$.

Let $\ca:\hw\to \au$ be an additive functor, where $\au$ is any abelian category. 
Choose a weight complex $t(M)=(M^j)$ for each $M\in \obj \cu$, and denote by %$T_{H_0}(M)$ the complex obtained by applying $H_0$ to %it termwisely. $t(M)$ (termwisely). Then the zeroth (or any other) cohomology of $T_{H_0}(-)$
$H(M)=H^{\ca}(M)$ the zeroth homology of the complex $\ca(M^{j})$. Then $H(-)$ yields a homological functor %$H:\cu\to \au$ 
that does not depend on the choices of weight complexes.  Moreover, the assignment $\ca\mapsto H^\ca$ is natural in $\ca$. 

%Then the following statements are valid.
\end{pr}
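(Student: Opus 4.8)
The plan is to build $H^\ca$ as a composition of functors whose good behaviour is already established in the excerpt, thereby reducing every required property to facts from Proposition \ref{pwt}. Explicitly, I would factor $H^\ca$ as
$$\cu \;\xrightarrow{\;\sim\;}\; \cuw \;\xrightarrow{\;t_w\;}\; \kw(\hw) \;\xrightarrow{\;\ca_*\;}\; \kw(\au) \;\xrightarrow{\;H_0\;}\; \au,$$
where the first arrow is a quasi-inverse to the equivalence $\cuw\to\cu$ of Proposition \ref{pwt}(\ref{iwpt3}), the second is the weight complex functor of Proposition \ref{pwt}(\ref{iwhefun}), the third is induced by applying $\ca$ term-by-term (this is well-defined on $\kw$ by Proposition \ref{pwt}(\ref{iwhefu}), since $\ca$ carries weakly homotopic maps to maps inducing the same map on homology — in particular on $H_0$), and the last takes zeroth homology of a complex in $\au$. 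Since each arrow is an additive functor, the composite is an additive functor $\cu\to\au$; and since the chosen weight complexes $t(M)$ are, by construction, the images of $M$ under $t_w$ applied to some choice of weight Postnikov tower, the composite sends $M$ to $H_0(\ca(M^j))=H(M)$ as required.

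**Next I would check homologicality.** For a distinguished triangle $M_0\xrightarrow{f}M_1\to M_2$ in $\cu$, Proposition \ref{pwt}(\ref{iwcex}) lets me lift $f$ (together with $M_0,M_1,M_2$ and appropriate weight Postnikov towers) to $\cuw$ in such a way that the induced triangle $t(M_0)\to t(M_1)\to t(M_2)$ is distinguished in $K(\hw)$. Applying $\ca$ term-wise, this becomes a distinguished triangle in $K(\au)$; taking homology of a distinguished triangle of complexes in an abelian category yields a long exact sequence, so in particular the $H_0$-piece gives exactness of $H(M_0)\to H(M_1)\to H(M_2)$ at $H(M_1)$. Running this over all rotations of the triangle gives the full long exact sequence, so $H^\ca$ is homological. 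The fact that this is independent of the choices of weight complexes is exactly Remark \ref{rwc} combined with Proposition \ref{pwt}(\ref{iwhefun}): $t(M)$ is well-defined in $K(\hw)$ up to (non-unique) isomorphism, hence $\ca(t(M))$ is well-defined in $K(\au)$ up to isomorphism, hence $H_0(\ca(t(M)))$ is well-defined up to (non-unique) isomorphism — and one upgrades this to a canonical identification by using Proposition \ref{pwt}(\ref{iwpt2}) (any morphism lifts to a morphism of weight Postnikov towers) to see that the system of these groups, for varying choices, is connected by a compatible family of isomorphisms.

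**The naturality in $\ca$** is then essentially formal: a natural transformation $\ca\to\ca'$ of additive functors $\hw\to\au$ induces, term-by-term on complexes, a natural transformation $\ca_*\to\ca'_*$ of functors $\kw(\hw)\to\kw(\au)$ (compatibility with the weak homotopy relation is automatic since the relation is defined using only the additive structure and the differentials, which the transformation respects), and composing with the fixed $\cu\xrightarrow{\sim}\cuw\xrightarrow{t_w}\kw(\hw)$ on the left and $H_0$ on the right yields a natural transformation $H^\ca\to H^{\ca'}$; functoriality of this assignment in $\ca$ is immediate.

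**The main obstacle** I anticipate is the well-definedness/independence-of-choices bookkeeping rather than anything conceptually hard: one must be careful that the equivalence $\cuw\to\cu$ has a genuine quasi-inverse (so that "choosing a weight complex for each $M$" is really choosing a section of this equivalence on objects), and that the identification of $H^\ca$ for two different global choices of weight complexes is not merely an abstract isomorphism at each object but a natural isomorphism of functors — this is where Proposition \ref{pwt}(\ref{iwpt2}) and Proposition \ref{pwt}(\ref{iwhefu}) do the real work, the latter guaranteeing that the two homotopies relating competing lifts of a morphism induce the same map on $H_0$ after applying $\ca$. Everything else is a routine assembly of results already in hand.
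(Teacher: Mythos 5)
Your proof is correct and takes essentially the same route as the paper: the paper's proof is the one-line citation of Proposition \ref{pwt}(\ref{iwpt3},\ref{iwhefu},\ref{iwcex}), and your factorization $\cu\xrightarrow{\sim}\cuw\xrightarrow{t_w}\kw(\hw)\xrightarrow{\ca_*}\kw(\au)\xrightarrow{H_0}\au$ together with the homologicality check via \ref{iwcex} and the well-definedness check via \ref{iwhefu} is precisely what those citations are meant to evoke. The only difference is that you additionally invoke \ref{iwhefun} and \ref{iwpt2} explicitly, which the paper leaves implicit; this is a reasonable expansion rather than a different argument.
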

\begin{proof}
%This is (most of) Corollary 2.3.4 of \cite{bgern} (combined with categorical duality).
Immediate from Proposition \ref{pwt}(\ref{iwpt3}, \ref{iwhefu},\ref{iwcex}). %?!
\end{proof}

\begin{rema}\label{rpure}
1. %This rather simple statement has 
(Co)homological functors of this type have already found interesting applications in  \cite{kellyweighomol},  \cite{bach}, \cite{bscwh},   \cite{bontabu}, and \cite{bgn}. We will prove some statements relevant for the latter paper just now. We will not apply the remaining %?!
 results of this subsection elsewhere in the paper. %??!, and study the context related to the one of \cite{bgern} later in this paper.

2. We will call a functor $H:\cu\to \au$ {\it pure} (or $w$-pure) if it %can be obtained using this construction. 
equals $H^\ca$ for a certain $\ca:\hw\to \au$. In the next subsection we will prove that this definition of purity is equivalent to another ("intrinsic") one.

\end{rema}

Now we prove that pure functors can be used to  "detect weights"; these results are crucial for \cite{bontabu}. % using (co)homology of a certain sort.
This notion of detecting weights is closely related to the one of {\it weight-conservativity} that was introduced in \cite{bach}.

To prove the most general case of %the  results of this subsection
  we have to recall %some %?!!definitions %?!and results 
a result from \cite{bkwn}.\footnote{Note however that %Proposition \ref{pdetect} and the bounded case of Proposition \ref{pdetectsse} 
 the bounded cases of  Proposition \ref{pdetect} and Proposition \ref{pdetectsse} 
 can also be easily deduced from Theorem 3.3.1(IV) of \cite{bws}.}\ % and \cite{bonspkar}. 5. Actually, we could have proved our proposition using a more simple argument that relies on Theorem 3.3.1(IV) of \cite{bws}. Yet that ("bounded") reasoning  cannot be adjusted to yield Proposition \ref{pdetectsse}.
%We will not apply this statements elsewhere in the paper. %?!

\begin{lem}\label{lbkw}
%\begin{enumerate}
Let $m\in \z$, $M\in \obj \cu$, where $\cu$ is endowed with a weight structure $w$.

Then  $t(M)$ belongs to  $K(\hw)_{\wstu\ge -m }$ %\setminus K(B)_{\wstu\le -n})$ for some $n\in \z$ 
(resp. to $K(\hw)_{\wstu\le -m }$; see Remark \ref{rstws}(2))  if and only if 
$M\bigoplus M[1]$ is an extension of an element of $\cu_{w\ge -m}$ by a  right degenerate  object (resp. $M\bigoplus M[-1]$  is an extension of a  left degenerate  object by an element of $\cu_{w\le -m}$).

%\end{enumerate}
\end{lem}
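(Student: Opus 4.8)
The plan is to establish the first (``non-resp.'') equivalence; the second one will then follow by applying it to $\cu\opp$ endowed with the opposite weight structure (Proposition \ref{pbw}(\ref{idual})), with $m$ replaced by $-m$. For this reduction I would use the standard compatibility of the weight complex functor with passing to opposite categories --- a weight complex of $M$ in $\cu\opp$ is the ``degree-reversal'' of one in $\cu$ --- together with the fact that passing to $\cu\opp$ interchanges $\cu_{w\le i}$ with $\cu_{w\ge i}$, right degenerate objects with left degenerate ones, $[1]$ with $[-1]$, and $K(\hw)_{\wstu\le -m}$ with the corresponding class for the opposite weight structure on $K(\hw)$.

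For the ``if'' direction I would start from a distinguished triangle $D\to M\bigoplus M[1]\to N\to D[1]$ with $N\in\cu_{w\ge -m}$ and $D$ right degenerate. By Remark \ref{rwc}, $t(D)\cong 0$ in $K(\hw)$ and $t(N)\in K(\hw)_{\wstu\ge -m}$. Using the additivity and shift-compatibility of the weight complex functor together with Proposition \ref{pwt}(\ref{iwcex}), one can choose weight complexes so that $t(D)\to t(M)\bigoplus t(M)[1]\to t(N)\to t(D)[1]$ is a distinguished triangle in $K(\hw)$; as $t(D)\cong 0$ this gives $t(M)\bigoplus t(M)[1]\cong t(N)\in K(\hw)_{\wstu\ge -m}$. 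Since $t(M)$ is a retract of $t(M)\bigoplus t(M)[1]$ and $K(\hw)_{\wstu\ge -m}$ is Karoubi-closed (Proposition \ref{pbw}(\ref{iextw})), this forces $t(M)\in K(\hw)_{\wstu\ge -m}$.

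For the ``only if'' direction I would fix a $(-m-1)$-weight decomposition $X\to M\to Y\to X[1]$, so $X=w_{\le -m-1}M\in\cu_{w\le -m-1}$ and $Y=w_{\ge -m}M\in\cu_{w\ge -m}$; by Remark \ref{rwc} one may take $t(X)\in K(\hw)_{\wstu\le -m-1}$ and $t(Y)\in K(\hw)_{\wstu\ge -m}$, and by Proposition \ref{pwt}(\ref{iwcex}) arranged so that $t(X)\to t(M)\to t(Y)\to t(X)[1]$ is distinguished in $K(\hw)$. Rotating this triangle exhibits $t(X)[1]$ as an extension of $t(M)[1]$ by $t(Y)$; both lie in $K(\hw)_{\wstu\ge -m}$ (the first by hypothesis, the class being $[1]$-stable by the weight axioms), which is extension-closed (Proposition \ref{pbw}(\ref{iextw})), so $t(X)[1]\in K(\hw)_{\wstu\ge -m}$, i.e.\ $t(X)\in K(\hw)_{\wstu\ge -m-1}$. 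Combining this with $t(X)\in K(\hw)_{\wstu\le -m-1}$ and the orthogonality axiom for $\wstu$ on $K(\hw)$, and using the standard identification $K(\hw)_{\wstu\ge -m-1}\cap K(\hw)_{\wstu\le -m-1}=K(\hw)_{\wstu=-m-1}$, this forces $t(X)$ to be homotopy equivalent to a one-term complex concentrated in degree $m+1$ with term an object $Z$ of $\hw$; unwinding this down the weight tower of $X$ by iterated weight decompositions then controls the weight-tower factors $X_j\in\cu_{w=j}$ of $X$ for all $j\le -m-1$.

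The hard part will be the final step: converting this information on $X$ into the asserted extension, and in particular producing the extra copy of $M[1]$. An object $X$ that is bounded above for $w$ and has a one-term (or homotopy-trivial) weight complex need not itself be right degenerate, so one cannot simply take $D=X$. Instead I would try a ``finite Eilenberg swindle'': using the contracting homotopy for the tail of $t(X)$, assemble a Postnikov-type tower whose successive factors are the $X_j$ together with their shifts $X_j[1]$, arranged so that all of these cancel, and show that the resulting object $D$ lies in $\cu_{w\le i}$ for every $i\in\z$ --- i.e.\ is right degenerate; one would then read off a presentation of $M\bigoplus M[1]$ as an extension of $Y\bigoplus Y[1]\in\cu_{w\ge -m}$ by $D$. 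Because $\cu$ is not assumed to have coproducts the swindle must be finite, and I expect verifying that the resulting $D$ is genuinely right degenerate --- equivalently, that all its weight truncations $w_{\ge i}D$ vanish --- to be the technical heart of the argument. (For bounded $w$ this difficulty does not arise, and the statement can be deduced from Theorem 3.3.1(IV) of \cite{bws}.)
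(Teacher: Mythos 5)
The paper does not actually prove this lemma: its entire proof is the citation ``This statement is contained in Corollary 3.1.5 of \cite{bkw}.'' So there is no argument in the paper to compare yours against; the real proof lives in that reference (which studies the kernel of the weight complex functor in detail). That said, your attempt can still be assessed on its own merits, and it has a genuine gap.

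Your ``if'' direction is correct and, as you note, the ``resp.'' case follows by duality. Your ``only if'' argument correctly reduces matters to the observation that, after fixing a $(-m-1)$-weight decomposition $X\to M\to Y\to X[1]$, the complex $t(X)$ lands in $K(\hw)_{\wstu=-m-1}$, i.e.\ $t(X)$ is homotopy equivalent to a single $\hw$-object placed in degree $m+1$. But the conclusion you need --- the presentation of $M\bigoplus M[1]$ as an extension of an object of $\cu_{w\ge -m}$ by a \emph{right degenerate} object --- is not reached, and you say so yourself. That final step is not a routine technicality: the entire difficulty of the lemma is that the kernel of the weight complex functor is strictly larger than the class of objects with contractible weight complex, which is exactly why the statement is phrased about $M\bigoplus M[1]$ rather than $M$. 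Concretely, from ``$t(X)$ concentrated in one degree'' you would still need to build a right degenerate $D$ with a map $D\to M\bigoplus M[1]$ whose cone lies in $\cu_{w\ge -m}$, and to verify $w_{\ge i}D=0$ for all $i$; your ``finite Eilenberg swindle'' is only a heuristic here, and since $\cu$ is not assumed to have coproducts it is not clear the tower you describe even assembles into an object with the required degeneracy. In short: the ``if'' half and the reduction in the ``only if'' half are fine, but the crux of the proof is exactly the step you leave open, so what you have is an incomplete argument rather than a proof. The honest move, which the paper itself takes, is to cite \cite[Corollary 3.1.5]{bkw}.
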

\begin{proof}
%Immediate from?! $\kar(\hw)$-version?!
This statement is contained in %Corollary 3.1.5 
Theorem 3.1.6 of ibid.
\end{proof}

For a homological functor $H$ the symbol $H_i$ will be used to denote the composite functor $H\circ [i]$.

%Now we prove that certain pure functors detect weights of objects of $\cu$.  
%in detail the case where $A$ is a full embedding.

\begin{pr}\label{pdetect}
%Let $A:\hw\to \au$ be an additive functor, where $\au$ is any abelian category. 
%For $A$ and $\au$ as  above 
Adopt the notation of Proposition \ref{ppure} and assume that the following conditions are fulfilled:

(i) the image of $\ca$ consists of  $\au$-projective objects only;

(ii)  if an $\hw$-morphism $h$ does not split (i.e., it is not a retraction) then $\ca(h)$ does not split also.

Then for any $M\in \obj \cu$, $m\in \z$, and $H=H^\ca$ we have the following:  $M$ is $w$-bounded below and $H_i(M)=0$ for all $i> m$ if and only if  $M\in \cu_{w\ge -m}$.

%\item\label{id3}
\end{pr}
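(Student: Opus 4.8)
The plan is to compute $H_i(M)$ in terms of the weight complex $t(M)=(M^j)$ (Proposition \ref{ppure}) and then to invoke Lemma \ref{lbkw}. Since for a homological functor $H_i$ means $H\circ[i]$, and since a weight Postnikov tower for $M$ shifts to one for $M[i]$ (immediate from the construction of the weight complex functor; cf. Remark \ref{rwc}), the group $H_i(M)=H^\ca(M[i])$ equals the degree-$i$ cohomology of the complex $\ca(t(M))$. This already settles the \emph{if} implication: if $M\in\cu_{w\ge -m}$ then $M$ is $w$-bounded below, and by Remark \ref{rwc} one may choose $t(M)$ concentrated in cohomological degrees $\le m$, whence $\ca(t(M))$ has zero term (hence zero cohomology) in every degree $>m$, i.e. $H_i(M)=0$ for $i>m$.

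For the \emph{only if} implication, suppose $M\in\cu_{w\ge k}$ for some $k\in\z$ and $H_i(M)=0$ for all $i>m$. By Remark \ref{rwc} we may fix a choice of $t(M)$ that is an honest bounded-above complex $C$ in $\hw$, say $C^j=0$ for $j>N$; by condition (i) the complex $\ca(C)$ consists of $\au$-projectives, and by hypothesis $H^i(\ca(C))=0$ for $i>m$. I claim $C\in K(\hw)_{\wstu\ge -m}$ (the weight structure of Remark \ref{rstws}(2)), which I prove by descending induction on $N$ (nothing to do if $N\le m$). If $N>m$, then $H^N(\ca(C))=\cok\bigl(\ca(C^{N-1})\to\ca(C^N)\bigr)=0$, so $\ca(C^{N-1})\to\ca(C^N)$ is a split epimorphism of projectives; by condition (ii) the morphism $C^{N-1}\to C^N$ is then split surjective in $\hw$, so by Proposition \ref{pbw}(\ref{isplit},\ref{iextw}) we may write $C^{N-1}\cong C^N\oplus C'$ with $C'\in\cu_{w=0}$, the differential $C^{N-1}\to C^N$ being the projection. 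The relation $d\circ d=0$ forces the differential $C^{N-2}\to C^{N-1}$ to land in $C'$, so $C$ decomposes in $K(\hw)$ as a direct sum of a complex concentrated in degrees $\le N-1$ and the contractible complex $(C^N\stackrel{\id}{\to}C^N)$ placed in degrees $N-1,N$; replacing $C$ by the former — which is homotopy equivalent to $C$ (hence to $t(M)$) and yields a complex $\ca(C)$ of unchanged homotopy type — lowers $N$, and iterating down to $N\le m$ yields $t(M)\in K(\hw)_{\wstu\ge -m}$.

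It remains to pass from $t(M)\in K(\hw)_{\wstu\ge -m}$ to $M\in\cu_{w\ge -m}$. By Lemma \ref{lbkw} there is a distinguished triangle $Y\to M\oplus M[1]\to X\to Y[1]$ with $X\in\cu_{w\ge -m}$ and $Y$ right degenerate (Definition \ref{dwso}(\ref{ideg})). Now $M\oplus M[1]\in\cu_{w\ge k}$ since $\cu_{w\ge k}$ is closed under finite direct sums (Proposition \ref{pbw}(\ref{iextw})); rotating the triangle to $X[-1]\to Y\to M\oplus M[1]\to X$ and using extension-closedness of $\cu_{w\ge l}$ for $l=\min(k,-m+1)$ (Proposition \ref{pbw}(\ref{iextw})) gives $Y\in\cu_{w\ge l}$. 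But $Y$ right degenerate forces $Y\in\cu_{w\le l-1}$, so $Y\in\cu_{[l,l-1]}=\ns$ (Definition \ref{dwso}(\ref{id3})); hence $Y=0$ and $M\oplus M[1]\cong X\in\cu_{w\ge -m}$. Since $\cu_{w\ge -m}$ is Karoubi-closed (Proposition \ref{pbw}(\ref{iextw})) and $M$ is a retract of $M\oplus M[1]$, we conclude $M\in\cu_{w\ge -m}$.

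The crux of the argument is the second paragraph: upgrading the purely homological input ``$H^i(\ca(t(M)))=0$ for $i>m$'' to the structural conclusion that $t(M)$ itself lies in $K(\hw)_{\wstu\ge -m}$. This is exactly where both hypotheses are indispensable — (i) to know $\ca(t(M))$ is a complex of projectives so that the relevant epimorphisms split, and (ii) to transport that splitting back to $\hw$ — while the $w$-boundedness below of $M$ is what makes the descending induction terminate. The concluding reduction through Lemma \ref{lbkw}, in particular the annihilation of the right-degenerate summand $Y$, is by comparison routine.
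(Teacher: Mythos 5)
Your proof is correct and takes essentially the same approach as the paper's: both reduce to showing $t(M)\in K(\hw)_{\wstu\ge -m}$ by peeling off the top term of the weight complex via conditions (i), (ii) and Proposition \ref{pbw}(\ref{isplit}), and then invoke Lemma \ref{lbkw} (the paper organizes the peeling as a minimal-$n$ contradiction argument and kills the right-degenerate summand by noting the morphism from it vanishes by orthogonality, whereas you run an explicit descending induction and use $\cu_{[l,l-1]}=\ns$ — the same mechanism). One small arithmetic slip: since $X[-1]\in\cu_{w\ge -m-1}$, the extension-closure bound in the last paragraph should be $l=\min(k,-m-1)$, not $\min(k,-m+1)$; this is harmless, as any lower bound for $Y$ suffices to conclude $Y=0$.
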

\begin{proof}
%As we have noted in Remark \ref{rpure}(2), it suffices to prove the "only if" parts of our assertion.

The "if" part of the statement is very easy (for any $\ca$); just combine the definition of $H^\ca$ with Remark \ref{rwc}. % since for $M\in \cu_{w\ge -m}$ we can take $M_{\le i}=w_{\le i}M=0$ for all $i<-m$.

Now we prove the converse application.
If $M$ is $w$-bounded below then the object $M'=M\bigoplus M[1]$ is $w$-bounded below also.
We also have $H_i(M')=0$ for all $i> m$, and it suffices to prove that $M'\in \cu_{w\ge -m}$ (by the axiom (i) of weight structures).
  
Now chose the minimal integer $n\ge m$ such that  $t(M')\in K(B)_{\wstu\ge -n}$ %\setminus K(B)_{\wstu\le -n})$ for some $n\in \z$ 
(see Remark \ref{rstws}(2)). 
Then Lemma \ref{lbkw} yields the existence of a distinguished triangle $M_1'\stackrel{g}{\to} M'{\to} M'_2\to M'_1[1]$ such that $M'_1$ is right $w$-degenerate and $M'_2\in  \cu_{w\ge -n}$. Since $M'$ is bounded, $g=0$; hence $M'$ is a retract of $M'_2$ and so belongs to $\cu_{w\ge -n}$ itself.

It remains to prove that $n=m$. Assume the converse. Then  $t(M')$ is homotopy equivalent to an $\hw$-complex $(N^i)$   concentrated in degrees $\le n$, and Proposition \ref{pbw}(\ref{isplit}) yields that the boundary  morphism $d^{n-1}_N:N^{n-1}\to N^n$ does not split. 
Thus  $\ca(d^{n-1}_N)$ does not split also. Since $\ca(N^n)$ is projective, this non-splitting implies that $H_n(M')\neq 0$. Thus $n\le m$.

\end{proof}

Now we formulate a simple corollary  from the proposition that will be applied in \cite{bontabu}.

\begin{coro}\label{cbontabu}
Let $\ca:\hw\to \au$ be a full additive conservative functor whose target is semi-simple. Then for a $w$-bounded object $M$ of $\cu$ we have $M\in \cu_{w=0}$ if and only if  $H^\ca_i(M)=0$ for all $i\neq 0$.
\end{coro}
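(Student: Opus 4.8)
The plan is to deduce Corollary \ref{cbontabu} from Proposition \ref{pdetect} by checking that a full additive conservative functor $\ca\colon \hw\to\au$ with semi-simple target $\au$ satisfies the two hypotheses (i) and (ii) of that proposition, and then combining the "$\cu_{w\ge 0}$" statement with its dual "$\cu_{w\le 0}$" version.

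First I would verify hypothesis (i): since $\au$ is semi-simple, \emph{every} object of $\au$ is projective (a semi-simple abelian category has all short exact sequences split), so in particular every object in the image of $\ca$ is $\au$-projective. For hypothesis (ii), suppose $h\colon A\to B$ in $\hw$ is not a retraction; I must show $\ca(h)$ is not a retraction either. Equivalently, I will argue the contrapositive: if $\ca(h)$ splits, then $h$ splits. Here I would use that $\ca$ is full and conservative. If $\ca(h)$ admits a section $r\colon \ca(B)\to\ca(A)$ with $\ca(h)\circ r=\id_{\ca(B)}$, fullness of $\ca$ lets me lift $r$ to some $s\colon B\to A$ in $\hw$ with $\ca(s)=r$; then $\ca(h\circ s)=\ca(h)\circ r=\id_{\ca(B)}=\ca(\id_B)$. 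Since $\ca$ is conservative (and additive, so it reflects the property of a morphism being an isomorphism, and in particular $\ca(h\circ s-\id_B)=0$ forces $h\circ s=\id_B$ once we know $\ca$ is faithful on the relevant hom-group — and a conservative additive functor between additive categories is faithful on endomorphisms in the sense needed here; more carefully, conservativity plus the fact that $h\circ s$ becomes an isomorphism under $\ca$ shows $h\circ s$ is an isomorphism in $\hw$), I conclude $h$ is a retraction, contradiction. This gives (ii). (I should double-check the precise sense in which "conservative" is being used for additive functors here; if conservativity alone is not quite enough to pass from $\ca(h\circ s)=\id$ to $h\circ s$ invertible, I would note that $\ca$ reflects isomorphisms, and $h\circ s$ maps to an isomorphism, hence $h\circ s$ is an isomorphism.)

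With (i) and (ii) established, Proposition \ref{pdetect} applies: for $H=H^\ca$ and any $m\in\z$, a $w$-bounded-below object $M$ lies in $\cu_{w\ge -m}$ if and only if $H_i(M)=0$ for all $i>m$. Taking $m=0$: a $w$-bounded object $M$ lies in $\cu_{w\ge 0}$ iff $H^\ca_i(M)=0$ for all $i>0$. For the other half I would invoke the self-duality of the whole setup (Proposition \ref{pbw}(\ref{idual})): the opposite weight structure $w'$ on $\cu\opp$ has heart $\hw$ (with the opposite structure), and $\ca$ induces $\ca\opp\colon (\hw)\opp\to\au\opp$, which is again full, conservative, with semi-simple target. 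Applying Proposition \ref{pdetect} in $\cu\opp$ (with $m=0$) and translating back, a $w$-bounded $M$ lies in $\cu_{w\le 0}$ iff $H^\ca_i(M)=0$ for all $i<0$ — here I use that the homology of $\ca\opp$ applied to the weight complex of $M$ in $\cu\opp$, which is (a shift/dual of) $t(M)$, computes $H^\ca_{-i}(M)$ up to the sign conventions, so the vanishing "in positive degrees for $w\opp$" is exactly vanishing "in negative degrees for $w$". Intersecting the two characterizations gives: $M$ is $w$-bounded and lies in $\cu_{w=0}=\cu_{w\ge 0}\cap\cu_{w\le 0}$ iff $H^\ca_i(M)=0$ for all $i\neq 0$, which is the claim.

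The main obstacle I anticipate is the careful bookkeeping in the duality step: matching the weight complex of $M$ computed in $\cu\opp$ with $t(M)$ computed in $\cu$, and tracking how the grading and the functor $[i]$ behave under passage to the opposite category, so that "$H_i=0$ for $i>0$ in $\cu\opp$" really translates to "$H^\ca_i=0$ for $i<0$ in $\cu$" with the correct sign. The verification of hypothesis (ii) is conceptually the delicate point but should be routine once the precise meaning of "conservative" (reflecting isomorphisms) is pinned down; everything else is a direct citation of Proposition \ref{pdetect} and the standard fact that semi-simple abelian categories have only projective objects.
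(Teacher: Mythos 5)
Your proposal is correct and follows the same route as the paper: both verify hypotheses (i) and (ii) of Proposition \ref{pdetect} (using semi-simplicity of $\au$ for (i) and the full-conservative property for (ii)), apply that proposition to obtain $M\in\cu_{w\ge 0}$, and deduce $M\in\cu_{w\le 0}$ by duality via Proposition \ref{pbw}(\ref{idual}). The only technical difference is that the paper first reduces to $\cu=K(\hw)$ with $w=\wstu$ via Lemma \ref{lbkw}, which makes the self-duality of the construction of $H^\ca$ transparent and dispenses with the grading/sign bookkeeping you flag at the end (the paper's footnote also points to Proposition \ref{pwrange}(\ref{iwrpure}) as an alternative justification for that duality step).
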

\begin{proof}
%Since for $M\in   \cu_{w=0}$ we can 
Once again, the "if" part of the statement is simple (and can easily be deduced from the previous proposition).
So we prove the converse implication.

Now, Lemma \ref{lbkw} allows us to assume that $\cu=K(\hw)$ and $w=\wstu$  (since it enables us to treat $t(M)$ instead of $M$). %(see Remark \ref{rstws}(2)).  
Hence it  suffices to prove that  $M\in \cu_{w\ge 0}$ since then we will also have $M\in \cu_{w\le 0}$ by duality (see Proposition \ref{pbw}(\ref{idual}; note that our assumptions on $\ca$ and $\au$ are self-dual and the construction of $H^\ca$ is also so in this case).\footnote{Alternatively, one can apply Proposition \ref{pwrange}(\ref{iwrpure}) below to obtain this duality assertion.}
 %\footnote{Actually it is not that obvious that the correspondence $A\mapsto H^A$ is self-dual since our definition of  the weight complex functor is not really so. However, this self-duality assertion follows immediately from %the dual to??
 %Proposition \ref{pwrange}(\ref{iwrpure}) below. %Cf. also Remark 1.5.9(1) of \cite{bwhom}??
%} 
 Hence it suffices to verify that the functor $\ca$ satisfies the assumptions of Proposition \ref{pdetect}.  The latter is very easy: all elements of $\ca(\hw)$ are projective in $\au$ since all objects of $\au$ are (recall that $\au$ is semi-simple), and a full conservative functor obviously does not send non-split morphisms into split ones. 
\end{proof}

\begin{rema}\label{rdetect}
1. As we have (essentially) just noted,  condition (ii) of  Proposition \ref{pdetect} is obviously  fulfilled both for $\ca$ and for  the opposite functor $\ca^{op}:\hw^{op}\to \au^{op}$ whenever $\ca$ is a full %embedding.
conservative functor. In particular, it suffices to assume that $\ca$ is a full embedding.

Hence it may be useful to assume (in addition to assumption (i) of the proposition) that the image of $\ca$ consists of injective objects only. %Note that this condition along with its
%2. According to Proposition \ref{pbw}(\ref{inondegbound}), $w$  is non-degenerate whenever it is bounded.

2.  Now we describe a general method for constructing a full embedding $\ca$ whose image consists of  $\au$-projective objects.  

Assume that $\hw$ is an essentially small (additive) $R$-linear category, where $R$ is a commutative unital ring (certainly, one may take $R=\z$ here). Denote some small skeleton of $\hw$ by $\bu$ (to avoid set-theoretical difficulties).

Consider the abelian category $\psvr(\bu)$ of $R$-linear contravariant functors from $\bu$ into %finitely generated 
the category of $R$-modules (cf. \S\ref{snotata}). Then  $\bu$ (and so, also $\hw$) 
embeds into the full subcategory of projective objects of  $\psvr(\bu)$ (by the Yoneda lemma; see  Lemma 5.1.2 of \cite{neebook}).  %Lemma 8.1 of \cite{vbook}).

Hence this functor "detects weights" (in the sense of  Proposition \ref{pdetect}).

3. The objects in the essential image of this functor may be called {\it purely $R$-representable homology}. Since they are usually not injective in  $\psvr(\bu)$,
a dual construction may be useful for checking whether $M\in \cu_{w\le -m}$.

4. It is easily seen that in the proof of our corollary (and so, also of the bounded case of Proposition \ref{pdetect}) one can replace the usage of Lemma \ref{lbkw} by that of \cite[Theorem 3.3.1(IV)]{bws}.

5. Condition (ii)  of Proposition \ref{pdetect}  is certainly necessary. % for our proposition. 
Indeed, if $h\in \mo (\hw)$ does not split whereas $\ca(h)$ does, then one can easily check that $\co(h)\in \cu_{w\ge 0}\setminus \cu_{w\ge 1}$ and $H_i(\co(h))=0$ for $i\neq -1$.

\end{rema}

Now we prove a certain unbounded version of Proposition \ref{pdetect}. Note %also that the arguments in this section can
 that its proof  may  be (slightly) simplified  if we assume that $\hw$ is Karoubian (cf. \cite[\S3.1]{bkwn} that  demonstrates that the general case can be "reduced" to this one).

\begin{pr}\label{pdetectsse}
Assume that $\ca:\hw \to \au$ %in the setting of 
 (as in the previous proposition) is a full %embedding 
functor, for any $N\in \cu_{w=0}$ the ideal $\ke(\hw(N,N )\to \au(\ca(N),\ca(N))$ is a nilpotent ideal of the endomorphism ring $\hw(N,N)$, and $\au$ is abelian semi-simple.

1. Then  $H_i(M)=0$ for all $i> m$ (resp. for all  $i<m$) if and only if $M\bigoplus M[1]$ is an extension 
of an element of $\cu_{w\ge -m}$ by a  right degenerate  object (resp. $M\bigoplus M[-1]$  is an extension 
of an element of $\cu_{w\le -m}$ by a  left degenerate  object).

2. Assume in addition that  $w$  is non-degenerate. %\footnote{According to Proposition \ref{pbw}(\ref{inondegbound}) this condition is fulfilled whenever $w$ is bounded.}\
 Then %we just have 
  these two conditions are equivalent to $M\in \cu_{w\ge -m}$ (resp. $M\in \cu_{w\le -m}$). In particular, if $H_i(M)=0$ for all $i
\neq m$ then $M\in  \cu_{w= -m}$.
\end{pr}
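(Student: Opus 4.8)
The plan is to mimic the argument of Proposition \ref{pdetect}, but to replace the hypothesis that $\ca$ maps non-split morphisms to non-split morphisms by the weaker nilpotency hypothesis, using that $\au$ is semi-simple. First I would reduce, via Lemma \ref{lbkw} exactly as in the proof of Corollary \ref{cbontabu}, to the case $\cu = K(\hw)$ and $w = \wstu$, so that $M$ is literally a complex $(N^i)$ over $\hw$ and $H_i(M)$ is the $i$-th homology of $\ca(N^\bullet)$. Working with $M \bigoplus M[1]$ (resp.\ $M \bigoplus M[-1]$) allows us to forget retracts, just as before. The key observation is that the weight complex version of the statement (part 1) is really a statement purely about the position of $t(M)$ in $K(\hw)$ with respect to $\wstu$: by Lemma \ref{lbkw}, $M \bigoplus M[1]$ is an extension of an element of $\cu_{w\ge -m}$ by a right degenerate object precisely when $t(M) \in K(\hw)_{\wstu \ge -m}$, i.e.\ when $t(M)$ is homotopy equivalent to a complex concentrated in degrees $\le m$. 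So part 1 amounts to: $H_i(M)=0$ for $i>m$ iff $t(M)$ is homotopy equivalent to a complex concentrated in degrees $\le m$.

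For the nontrivial ("only if") direction of part 1, I would argue as in Proposition \ref{pdetect}: let $n \ge m$ be minimal with $t(M') \in K(\hw)_{\wstu \ge -n}$ (where $M' = M \bigoplus M[1]$), so $t(M')$ is homotopy equivalent to $(N^i)$ concentrated in degrees $\le n$, and suppose for contradiction that $n > m$. By minimality and Proposition \ref{pbw}(\ref{isplit}), the top boundary map $d^{n-1}_N : N^{n-1} \to N^n$ does not split. The crux is then to show $\ca(d^{n-1}_N)$ does not split either: if it did, then $\id_{N^n} = \ca(d^{n-1}_N)\circ s$ for some $s$, which by fullness of $\ca$ is $\ca(\varphi)$ for $\varphi \in \hw(N^n, N^{n-1})$, so $\id_{N^n} - d^{n-1}_N\circ\varphi$ lies in $\ke(\hw(N^n,N^n) \to \au(\ca N^n, \ca N^n))$, which is a nilpotent ideal by hypothesis; hence $d^{n-1}_N \circ \varphi$, being $\id - (\text{nilpotent})$, is invertible, so $d^{n-1}_N$ splits — contradiction. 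Therefore $\ca(d^{n-1}_N)$ is non-split; since $\au$ is semi-simple, $\ca(N^n)$ is a (projective) object, and a non-split epimorphism-candidate $\ca(d^{n-1}_N)$ forces $H_n(\ca N^\bullet) = \cok(\ca(d^{n-1}_N) : \ca N^{n-1} \to \ca N^n)$ (using that degrees $>n$ vanish) to be nonzero; here I must check carefully that there is no contribution from $N^{n+1}$, which holds by the concentration in degrees $\le n$. Thus $H_n(M') \ne 0$ with $n > m$, contradicting the hypothesis; so $n = m$. The $i < m$ case follows by the dual argument (Proposition \ref{pbw}(\ref{idual})), noting that the hypotheses on $\ca$ and $\au$, and the formation of $H^\ca$ over $K(\hw)$, are self-dual.

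For part 2, assuming $w$ non-degenerate: by part 1, $H_i(M) = 0$ for $i > m$ gives a distinguished triangle $M_1' \to M' \to M_2' \to M_1'[1]$ with $M_1'$ right degenerate and $M_2' \in \cu_{w \ge -m}$. Since $w$ is non-degenerate, $M_1' = 0$ (no nonzero right or left degenerate objects), so $M' \cong M_2' \in \cu_{w\ge -m}$, whence $M \in \cu_{w \ge -m}$ by Karoubi-closedness of $\cu_{w\ge -m}$ (Proposition \ref{pbw}(\ref{iextw})); the $i < m$ case is dual. The last sentence of part 2 is then immediate by combining the two. The main obstacle I expect is precisely the step showing $\ca(d^{n-1}_N)$ is non-split from the nilpotency hypothesis together with fullness — this is where the replacement of condition (ii) of Proposition \ref{pdetect} by the nilpotent-kernel condition must be made to work, and one has to be a little careful about which of $d^{n-1}_N$ splitting as an epimorphism versus as a monomorphism is relevant (here, that it admits a section), and about the interaction with the semi-simplicity of $\au$ which makes "non-split surjection onto a simple-ish object" force nonvanishing cohomology.
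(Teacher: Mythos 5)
Your argument for part 1 reproduces the structure of the proof of Proposition \ref{pdetect} with the nilpotency hypothesis standing in for condition (ii), but it inherits a boundedness assumption from that proof that you are no longer entitled to. In Proposition \ref{pdetect} you may take the minimal $n\ge m$ with $t(M')\in K(\hw)_{\wstu\ge -n}$ because $M$ is assumed $w$-bounded below, so (by Remark \ref{rwc}) some such $n$ exists. Proposition \ref{pdetectsse} drops that hypothesis, and a priori $t(M')$ need not be $K(\kar(\hw))$-isomorphic to any complex concentrated in degrees bounded above; your minimality argument has no $n$ to start from. This is precisely the content that goes beyond the bounded Corollary \ref{cbontabu} (cf.\ Remark \ref{rdetect}(4)), so the gap is not cosmetic.

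The paper's proof is built specifically to avoid this. The nilpotent-kernel hypothesis makes $\hw$ (and then $\kar(\hw)$, by Proposition 2.3.4(c) of \cite{andkahn}) semi-primary, so Theorem 1.3 of \cite{wildab} allows one to replace $t(M)$ by a $K(\kar(\hw))$-isomorphic complex $T$ whose $m$-th boundary is killed by the extension $\ca'$ of $\ca$ to $\kar(\hw)$. One then shows that the stupid truncation $\wstu_{\le -m-1}T$ --- a complex that may be unbounded above --- is contractible, not by inspecting a "top term" but by lifting a contracting homotopy of the acyclic complex $\ca'(\wstu_{\le -m-1}T)$ (acyclicity uses $H_i(M)=0$ for $i>m$ together with $\ca'(d^m_T)=0$; contractibility then follows since $\au$ is semi-simple) to an endomorphism of $\wstu_{\le -m-1}T$ that is homotopic to the identity and termwise killed by $\ca'$, and running the nilpotency hypothesis on its powers. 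A secondary point in your proposal: after passing to $K(\hw)$ with $\wstu$ the weight-complex terms $N^i$ lie in $\kar(\hw)$, not $\hw$, so your splitting step needs fullness and nilpotent kernel ideals for $\ca':\kar(\hw)\to\au$ rather than for $\ca$; both transfer (fullness is elementary, nilpotency is again the semi-primary formalism of \cite{andkahn}) but the transfer is not automatic and must be said. The core nilpotency-implies-non-split reasoning you give for $d^{n-1}_N$ is correct, and your treatment of part 2 matches the paper.
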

\begin{proof}
1. It suffices to verify that $t(M)$ belongs to  $K(\hw)_{\wstu\le -m }$ %\setminus K(B)_{\wstu\le -n})$ for some $n\in \z$ 
(resp. to $K(\hw)_{\wstu\ge -m }$). %; here we apply axiom (i) of weight structures).
Indeed, then  applying Lemma \ref{lbkw} once again we will certainly obtain the result.
Note also that it suffices to verify the "main" version of this assertion since the "resp." one is its dual. %for the category $K(\hw)$??

Now, by Proposition \ref{pbw}(\ref{iort}) % implies that 
 it suffices to check that $t(M)$ is $K(\kar(\hw))$-isomorphic to a complex concentrated in degrees $\le m$.
To construct the latter  we extend $\ca$ to an additive functor $\ca': \kar(\hw)\to \au$ and find a complex $T=(t^i)\in \obj C(\kar(\hw))$ that is  $K(\kar(\hw))$-isomorphic to $t(M)$  such that the boundary $d_t^{m}:t^{m}\to t^{m+1}$ is  killed by $\ca'$.
This is easily seen to be possible according to Theorem 1.3 of \cite{wildab} (cf. also Theorem 2.2 of ibid.). Indeed, our assumptions on $\ca$ imply immediately that $\hw$ is {\it semi-primary} in the sense of  \cite[Definition 2.3.1] {andkahn}. Then $\kar(\hw)$ is semi-primary also according to Proposition 2.3.4(c) of ibid.;  %Note also that $A$ can be (uniquely) extended to an additive functor $A':\kar(\hw)\to \au$ 
hence $\ca': \kar(\hw)\to \au$ satisfies the  "kernel nilpotence assumption" similar to that for $\ca$.  Thus Theorem 1.3 of \cite{wildab} says that the cone of any $\kar(\hw)$-morphism is  $K(\kar(\hw))$-isomorphic to a cone of a morphism killed by $\ca'$, and so we can "replace" the %corresponding 
$m$th boundary of $t(M)$ by a morphism satisfying this condition.

For this complex $T$   the result of the applying $\ca'$ termwisely to its stupid truncation complex  $\wstu_{\le -m-1}T$ is certainly zero. Hence the identity of  $\wstu_{\le -m-1}T$ is homotopy equivalent to an endomorphism killed by the termwise application of $\ca'$. Applying our nilpotence assumption to a sufficiently high power of this ($K(\kar(\hw))$-invertible) endomorphism we obtain that  for any $j\in \z$ the complex $\wstu_{\le -m-1}T$ is homotopy equivalent to a complex concentrated in degrees $\ge j$. Hence $\wstu_{\le -m-1}T$ is contractible %explain??
and we obtain $t(M)$ is ${K(\kar(\hw))}$-isomorphic to its stupid truncation  $\wstu_{\ge -m}T$.
%We verify the following:  our homology vanishing assumption implies that $t(M)$ is a retract of %(resp. of $\wstu_{\ge -m}M= \dots\to M^{m-1}\to M^m\to 0\to \dots$).Now, our 

2. Assertion 1 implies that $M\bigoplus M[1]$  belongs to $\cu_{w\ge -m}$ (resp. $M\bigoplus M[-1]$  belongs to $\cu_{w\le -m}$) itself. Hence $M$  belongs to $\cu_{w\ge -m}$ (resp. to $\cu_{w\le -m}$) also.

The "in particular" part of the assertion follows immediately.
\end{proof}

\begin{rema}\label{rpsh}
1. Both Propositions \ref{pdetect} and \ref{pdetectsse} are easily seen to imply  certain generalizations of  \cite[Theorem 1.5]{wildcons}.

2. Proposition \ref{pdetectsse} can probably generalized. In particular, it appears to be sufficient to assume for any $N\in \cu_{w=0}$ that all the endomorphisms in $\ke(\hw(N,N )\to \au(\ca(N),\ca(N)))$ are nilpotent.

However, it is demonstrated in \cite{bontabu} that the case where $\ca$ is a full embedding (into a semi-simple category) is quite interesting (already). In this case the proof can be simplified since then $t(M)$ is obviously a retract of $\wstu_{\ge -m}t(M)$ (resp. of $\wstu_{\le -m}t(M)$). 
\end{rema}

\subsection{On virtual $t$-truncations and cohomology of bounded weight range}\label{svtt}

%Now we recall that weight structures yield certain 
%Now suppose that we are given a cohomological  functor $H:\cu\to \au$, where $\au$ is an abelian category. 
We  recall the   notion of virtual $t$-truncations for a cohomological functor $H:\cu\to \au$ (as defined in \S2.5 of \cite{bws} and studied in more detail in \S2 of \cite{bger}). These truncations allow us to "slice" $H$ into $w$-pure pieces.  These truncations behave as if they were given by truncations of $H$ in some triangulated "category of functors" $\du$ with respect to some $t$-structure (whence the name). 
Moreover,  this is often actually the case (and we will discuss this matter below); yet the definition does not require the existence of $\du$ (and so, does not depend on its choice). Our choice of  the numbering for them is motivated by the cohomological convention for $t$-structures; % (that we use in this paper following \cite{bbd}); 
this convention combined with the homological numbering for weight structures causes  the (somewhat weird) "$-$" signs in the definitions and formulations of this section.
%In order to reflect
%We want to stress that we choose the cohomological numbering in this matter; so we will denote the virtual $t$-truncations by $\tau^{\ge -n}(H)$ (in contrast to the previous papers where the same functor was denoted by $\tau_{\ge -n}(H)$; we will not need other "types" of virtual truncations in most of this paper).

\begin{defi}\label{dvtt}
Let $H$ be a cohomological functor from $\cu$ into an abelian category $\au$; assume that $\cu$ is endowed with a weight structure $w$ and $ n\in \z$.

 We define the {\it virtual $t$-truncation} functors $\tau^{\ge -n }(H)$ (resp. $\tau^{\le -n }(H)$)  by the correspondence $$M\mapsto\imm (H(w_{\le n+1}M)\to H(w_{\le n}M)) ;$$ 
(resp. $M\mapsto\imm (H(w_{\ge n}M)\to H(w_{\ge n-1}M)) $); here we take arbitrary choices of  %$w_{\le n}M$ and $ w_{\le n+1}M$ 
the corresponding weight truncations of $M$ and connect them using Proposition \ref{pbw}(\ref{icompl}).
\end{defi}

We recall the main properties of these constructions that were established in \cite[\S2.5 and Theorem 4.4.2(7,8)]{bws}.

\begin{pr}\label{pwfil}
In the notation of the previous definition the  following statements are valid.
%\begin{enumerate}\item\label{iwfil1} 

1. The objects $\tau^{\ge -n}(H)(M)$ and $\tau^{\le -n}(H)(M)$ are $\cu$-functorial  in $M$  (and so,
%the functor $\tau^{\ge -n}(H)$ is 
 the virtual $t$-truncations of $H$ are well-defined functors).

2. The functors $\tau^{\ge -n }(H)$ and $\tau^{\le -n }(H)$ are cohomological.

3. There exist natural transformations that  yield a long exact sequence 
\begin{equation}\label{evtt}
\begin{gathered} 
\dots \to \tau^{\ge -n +1}(H)\circ [-1] \to \tau^{\le -n }(H)\to H \\ \to \tau^{\ge -n +1}(H)\to \tau^{\le -n }(H)\circ [1]\to \dots\end{gathered} 
 \end{equation}  (i.e.,  the result of applying this sequence to any object of $\cu$ is a long exact sequence); the shift of this exact sequence by $3$ positions is given by composing the functors with $-[1]$. %sign?! not formulated this way?!

4. Assume that there exists a $t$-structure $t$ that is right adjacent to $w$. %see??
Then for any $M\in \obj \cu$ and $H_M=\cu(-,M):\cu\opp\to \ab $ the functors $ \tau^{\le -n }(H_M)$ and $\tau^{\ge -n}(H_M)$ are represented by $t^{\le -n}M$ and $t^{\ge -n}M$, respectively.
%??! Moreover, this statement was generalized in \cite[Proposition 2.5.4(1)]{bger} (that we will recall below when studying {\it dualities}) .
%\footnote{Recall that  this assertion generalizes to the case where $w$ is {\it (left) orthogonal} to $w$ (see Definition \ref{dort}(\ref{iorthtw})  and Remark \ref{rort}(1) below) according to \cite[Proposition 2.5.4(1)]{bger}.}

\end{pr}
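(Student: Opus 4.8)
The plan is to verify the four statements essentially by unwinding Definition \ref{dvtt} and feeding it into Proposition \ref{pbw}(\ref{icompl}), which is the technical engine that makes weight truncations "compatible enough" to produce functoriality. For part 1, I would fix an object $M$ and two choices of weight decompositions $w_{\le n}M$, $w_{\le n+1}M$; given a morphism $g\colon M\to M'$, Proposition \ref{pbw}(\ref{icompl}) extends $g$ to morphisms $h_n\colon w_{\le n}M\to w_{\le n}M'$ and $h_{n+1}\colon w_{\le n+1}M\to w_{\le n+1}M'$ that are compatible with the canonical maps $w_{\le n+1}\to w_{\le n}$ (the "$m<n$" uniqueness clause of that proposition is what pins things down, after applying it with the strict inequality at the relevant spot). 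Applying $H$ and passing to images, we get a well-defined morphism $\tau^{\ge -n}(H)(M)\to \tau^{\ge -n}(H)(M')$; independence of all the auxiliary choices follows because any two choices of weight decomposition are connected by an isomorphism (again Proposition \ref{pbw}(\ref{icompl})), and the induced map on images is forced. The argument for $\tau^{\le -n}(H)$ is dual (or one uses Proposition \ref{pbw}(\ref{idual}) to reduce to the case already done, applied to $H$ viewed on $\cu\opp$).

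For part 2 (cohomologicality), I would take a distinguished triangle $M\to M'\to M''\to M[1]$ and use Proposition \ref{pbw}(\ref{iwdext}) to choose weight decompositions of $M,M',M''$ fitting into a $3\times 3$ diagram whose rows are distinguished triangles; this gives compatible distinguished triangles $w_{\le n}M\to w_{\le n}M'\to w_{\le n}M''$ and likewise in degree $n+1$. Since $H$ is cohomological, applying it yields a morphism of long exact sequences, and a routine diagram chase (the $3\times 3$ lemma / snake-type argument in the abelian category $\au$) shows that the images, i.e.\ the values of $\tau^{\ge -n}(H)$, sit in a long exact sequence. The same for $\tau^{\le -n}(H)$ by duality. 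For part 3, the long exact sequence (\ref{evtt}) is obtained by applying $H$ to the weight decomposition triangle $w_{\le n}M\to M\to w_{\ge n+1}M\to w_{\le n}M[1]$ and its shifts, and then recognizing the image subobjects: by Proposition \ref{pbw}(\ref{iwd0}) the "error terms" $w_{\le n+1}M$ versus $w_{\le n}M$ (etc.) differ by objects of $\cu_{w=n+1}$-type controlled pieces, so the connecting maps of (\ref{evtt}) are built from $H$ applied to the octahedron relating $w_{\le n}$, $w_{\le n+1}$, $w_{\ge n+1}$, $w_{\ge n+2}$; the assertion that shifting by three positions corresponds to composing with $-[1]$ is immediate from the shift-compatibility of weight truncations. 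Here I would mostly cite \cite[\S2.5]{bws} and \cite[\S2]{bger} for the bookkeeping rather than redo it.

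For part 4, suppose $t$ is right adjacent to $w$, so $\cu_{w\le 0}=\cu^{t\ge 0}$ and hence $\cu_{w\ge 1}=\cu^{t\le -1}$ — wait, more precisely $\cu_{w\le 0}=\cu^{t\ge 0}$ gives, by the orthogonality characterizations in Remark \ref{rtst1}(\ref{it1}) and Proposition \ref{pbw}(\ref{iort}), that a weight decomposition $w_{\le n}M\to M\to w_{\ge n+1}M$ with $w_{\le n}M\in\cu_{w\le n}=\cu^{t\ge -n}[?]$ is essentially a $t$-decomposition. Concretely I would show that for $M$ fixed the object $t^{\le -n}M$ lies in $\cu_{w\ge -n+1}$... let me instead argue via representability directly: evaluate $\tau^{\le -n}(H_M)$ on $N$, i.e.\ $\operatorname{im}(\cu(w_{\ge n}N,M)\to \cu(w_{\ge n-1}N,M))$; since $w_{\ge n}N\in\cu_{w\ge n}\subset\cu^{t\le -n}$ (using adjacency) and $t^{\le -n}M$ is the reflection of $M$ into $\cu^{t\le -n}$, the adjunction of Remark \ref{rtst1}(\ref{it3}) identifies $\cu(w_{\ge n}N, M)$ with $\cu(w_{\ge n}N, t^{\le -n}M)$, and a short check with the weight decomposition triangle for $N$ shows the image in question is exactly $\cu(N, t^{\le -n}M)$. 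The statement for $\tau^{\ge -n}$ is dual. The main obstacle I anticipate is part 4: making the identification of weight truncations with $t$-truncations precise requires being careful about the degree shifts (the "$-$ sign discrepancy" the author flags repeatedly, cf.\ Remark \ref{rstws}(3)) and about the fact that $w_{\ge n}N$ need not be a $t$-truncation on the nose but only up to the right orthogonality, so the image-passage is doing real work; everything else is standard weight-structure diagram chasing that can safely lean on \cite{bws} and \cite{bger}.
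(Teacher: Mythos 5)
The paper itself does not prove Proposition \ref{pwfil}: the sentence immediately preceding it reads ``We recall the main properties of these constructions that were established in \cite[\S2.5 and Theorem 4.4.2(7,8)]{bws}'', and the remark following it points to \cite[Proposition 2.5.4(1)]{bger} for a strengthening of part 4. So there is no proof in the present paper to compare your attempt against; the four statements are recollections of results from the author's earlier work, and your decision to lean on \cite{bws} and \cite{bger} for the bookkeeping of part 3 is exactly the route the paper itself takes.

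On its own merits, parts 1 and 4 of your sketch are essentially right. In part 1 the appeal to the ``$m<n$'' uniqueness clause of Proposition \ref{pbw}(\ref{icompl}) is the correct mechanism: it forces the square formed by the comparison morphisms $w_{\le n}X\to w_{\le n+1}X$ (for $X\in\{M,M'\}$) and the extensions $h_n,h_{n+1}$ to commute for \emph{every} choice of $h_n,h_{n+1}$, which is precisely what makes the induced map on images well defined. In part 4 the identifications $\cu(w_{\ge n}N,M)\cong\cu(w_{\ge n}N,t^{\le -n}M)\cong\cu(N,t^{\le -n}M)$ (from $\cu_{w\ge n}=\cu^{t\le -n}$ together with $\cu_{w\le n-1}\perp\cu_{w\ge n}$) are correct, and you rightly flag that one still has to see the \emph{image} equals the source, i.e.\ that $\gamma^*\colon\cu(w_{\ge n}N,M)\to\cu(w_{\ge n-1}N,M)$ is injective. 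It is: $\ke\gamma^*$ is the image of precomposition with $\delta\colon w_{\ge n}N\to\co(\gamma)$, and writing $\pi_m\colon N\to w_{\ge m}N$ for the weight projections, $\delta\circ\pi_n=\delta\circ\gamma\circ\pi_{n-1}=0$ since $\delta\circ\gamma=0$ for consecutive morphisms of a distinguished triangle --- a genuine verification, not a formality, as you anticipate.

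Part 2 has a real gap. You assert that ``a routine diagram chase (the $3\times3$ lemma / snake-type argument in the abelian category $\au$) shows that the images $\ldots$ sit in a long exact sequence.'' That is false at the level of generality invoked: the image of a morphism of exact complexes is not exact in general. For instance, the morphism from $0\to\z\stackrel{\id}{\to}\z\to 0$ to $\z\stackrel{\id}{\to}\z\to 0\to 0$ given in successive degrees by $0,\ \id,\ 0,\ 0$ is a chain map, both complexes are exact, but the image complex is $0\to\z\to 0\to 0$, which is not exact. Extracting exactness of the image sequence genuinely uses the specific structure of weight decompositions (the orthogonality relations and the fact that $\co(w_{\le n}X\to w_{\le n+1}X)$ lies in $\cu_{w=n+1}$), and that is the content of the dedicated diagram lemmas of \cite[\S2.5]{bws} and \cite[\S2]{bger}; this is where the substantive work in the proof lives. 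There is also an upstream issue you pass over: Proposition \ref{pbw}(\ref{iwdext}) produces a $3\times3$ diagram of weight decompositions at a \emph{single} level, but $\tau^{\ge -n}$ requires compatible diagrams at the two levels $n$ and $n+1$ simultaneously; since the choices in \ref{pbw}(\ref{iwdext}) are not canonical, this compatibility needs a separate argument (e.g.\ via compatible weight Postnikov towers as in Proposition \ref{pwt}(\ref{iwpt2},\ref{iwcex})). The honest fix is to cite \cite{bws}/\cite{bger} for part 2 exactly as you do for part 3.
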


 Moreover, a stronger and more general statement  than part 4 of this proposition %of this sort 
 is given by \cite[Proposition 2.5.4(1)]{bger}; it will be applied in the proof of Proposition \ref{psaturdu}(2) below.

%+ duals; where??!

Now we define weight range %along with 
 and introduce notation for pure cohomological functors; some of these statements will be applied below. %, whereas other ones are proved here for the purpose of applying them in \cite{bkw}.

\begin{defi}\label{drange}
1. Let $m,n\in \z$; let  $H$ be as above. %$H:\cu\to \au$ be a cohomological functor.

Then we will say that $H$ is {\it of weight range} $\ge m$ (resp. $\le n$, resp.  $[m,n]$)  if it annihilates $\cu_{w\le m-1}$ (resp. $\cu_{w\ge n+1}$, resp. both of these classes). 

2. Let $\ca:\hw^{op}\to \au$ be an additive functor.  Then for  $\ca^{op}$ being the opposite functor $\hw\to \au^{op}$ we will write $H_\ca$ for the cohomological functor from $\cu$ into $\au$ obtained from  $H^{\ca^{op}}$  (see Proposition \ref{ppure}) by means of reversion of arrows.

We will call functors obtained using this construction {\it pure cohomological} ones.

%pure?!

\end{defi}

\begin{pr}\label{pwrange}
In the notation of the previous definition the  following statements are valid.

\begin{enumerate}

\item\label{iwrvt} %For any $H$
The functor $\tau^{\ge -n}(H)$ is of weight range $\le n$, and $\tau^{\le -m}(H)$ is of weight range $\ge m$.

\item\label{iwrcrit} Assume that $w$ is bounded. Then $H$ is of weight range  $\le n$ (resp. of weight range $\ge m$) if and only if it kills $\cu_{w=i}$ for all $i>n$ (resp. for $i>m$).

%idempotent?! if and only if?! 4,5: implications?! {bger}?!

\item\label{iwridemp} We have $\tau^{\ge -n}(H)\cong H$ (resp. $\tau^{\le -m}(H)\cong H$) if and only if $H$ is of weight range $\le n$ (resp. of weight range $\ge m$).

\item\label{iwrcomm} We have $\tau^{\ge -n}(\tau^{\le -m})(H)\cong \tau^{\le -m}(\tau^{\ge -n})(H)$.

\item\label{iwfil4} If $H$ is %(cohomological and) 
of weight range $\ge m$  then  $\tau^{\ge -n}(H)$ is %(also cohomological and) 
  of weight range $[m,n]$.

\item\label{iwrd}
Dually, if $H$ is %(cohomological and) 
of weight range $\le n$  then  $\tau^{\le -m}(H)$ is %(also cohomological and) 
  of weight range $[m,n]$.

\item\label{iwrvan} Assume that $m>n$. Then the only functors of weight range  $[m,n]$ are zero ones; thus if $H$ is of weight range $\le n$ (resp. $\ge m$) then $\tau^{\le -m}(H)=0$ (resp. $\tau^{\ge -n}(H)=0$).

\item\label{iwrpure} %If $m=n$ then 
%For any functor $H$ 
The functors of weight range $[m,m]$ %there exists a homological functor 
are exactly those of the form $H_\ca\circ [-m]$ (see Definition \ref{drange}(2)), where
$\ca:\hw\to \au^{op}$ is an additive functor.

\item\label{iwfil3} The (representable) functor %$\cu(-,M):\cu\to \ab$ 
 $H_M:\cu\to \ab$  if of weight range $\ge m$ if and only if $M\in \cu_{w\ge m}$.

\item\label{iwfil5} If $H$ is of weight range $[m,n]$ then the morphism  $H(w_{\ge m}M)\to H(M)$ is surjective and the morphism   $H(M)\to H(w_{\le n}M)$ is injective (here we take arbitrary choice of the corresponding weight decompositions of $M$ and apply $H$ to their connecting morphisms). 
\end{enumerate}
\end{pr}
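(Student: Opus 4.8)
The plan is to prove the ten parts in a few groups. Most of them I would get directly from the orthogonality and extension-closure properties of $w$ listed in Proposition \ref{pbw} together with the long exact sequences attached to a cohomological functor; parts (\ref{iwridemp}) and (\ref{iwrcomm}) I would deduce from the formal calculus of virtual $t$-truncations recalled in Proposition \ref{pwfil} (the exact sequence (\ref{evtt}) in particular); and I would save the identification of weight-range-$[m,m]$ functors with pure cohomological ones, part (\ref{iwrpure}), for last, since it is the substantive point and rests on the weight complex machinery of \S\ref{sswc}. Throughout I would use the obvious fact that a cohomological functor annihilating the two outer vertices of a distinguished triangle annihilates the third one.

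For the "orthogonality" group: for (\ref{iwrvt}), if $M\in\cu_{w\ge n+1}$ one may take $w_{\le n}M=0$, so $\tau^{\ge -n}(H)(M)=\imm(H(w_{\le n+1}M)\to H(0))=0$; the claim for $\tau^{\le -m}$ on $\cu_{w\le m-1}$ is dual. For (\ref{iwfil3}), $H_M$ has weight range $\ge m$ iff $\cu(N,M)=0$ for all $N\in\cu_{w\le m-1}$, i.e. $M\in(\cu_{w\le m-1})^{\perp}=\cu_{w\ge m}$ by Proposition \ref{pbw}(\ref{iort}). For (\ref{iwfil5}), applying $H$ to the $(m-1)$-weight decomposition $w_{\le m-1}M\to M\to w_{\ge m}M$ and using $H(w_{\le m-1}M)=0$ shows $H(w_{\ge m}M)\to H(M)$ surjective, and applying $H$ to an $n$-weight decomposition and using $H(w_{\ge n+1}M)=0$ shows $H(M)\to H(w_{\le n}M)$ injective. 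For (\ref{iwfil4}), $\tau^{\ge -n}(H)$ has weight range $\le n$ by (\ref{iwrvt}); if moreover $H$ has weight range $\ge m$ and $M\in\cu_{w\le m-1}$, then for $n\ge m-1$ one may take $w_{\le n}M=w_{\le n+1}M=M$, whence $\tau^{\ge -n}(H)(M)=H(M)=0$, while for $n\le m-2$ one has $w_{\le n+1}M\in\cu_{w\le m-1}$, so $H(w_{\le n+1}M)=0$ and again $\tau^{\ge -n}(H)(M)=0$; thus $\tau^{\ge -n}(H)$ also kills $\cu_{w\le m-1}$, and (\ref{iwrd}) is dual. For (\ref{iwrvan}), if $m>n$ then every $M$ has an $n$-weight decomposition with $w_{\le n}M\in\cu_{w\le n}\subset\cu_{w\le m-1}$ and $w_{\ge n+1}M\in\cu_{w\ge n+1}$, both killed, so $H(M)=0$. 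For (\ref{iwrcrit}), "only if" is immediate since $\cu_{w=i}\subset\cu_{w\ge n+1}$ for $i>n$; for "if", boundedness of $w$ and Proposition \ref{pbw}(\ref{igenlm}) exhibit any $M\in\cu_{w\ge n+1}$ as an iterated extension of objects of $\cu_{w=i}$, $i>n$, which $H$ kills, so $H(M)=0$ (dually, killing $\cu_{w=i}$ for all $i<m$ forces $H$ to kill $\cu_{w\le m-1}$).

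For (\ref{iwridemp}): rewriting (\ref{evtt}) with $n$ replaced by $n+1$ yields a natural transformation $H\to\tau^{\ge -n}(H)$ flanked by $\tau^{\le -n-1}(H)$ on the left and $\tau^{\le -n-1}(H)\circ[1]$ on the right; if $H$ has weight range $\le n$ then $\tau^{\le -n-1}(H)(M)=\imm(H(w_{\ge n+1}M)\to H(w_{\ge n}M))=0$ since $w_{\ge n+1}M\in\cu_{w\ge n+1}$, and likewise $\tau^{\le -n-1}(H)\circ[1]=0$, so $H\cong\tau^{\ge -n}(H)$; conversely $\tau^{\ge -n}(H)\cong H$ forces weight range $\le n$ by (\ref{iwrvt}), and the statement for $\tau^{\le -m}$ is dual. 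For (\ref{iwrcomm}): when $m>n$ both sides vanish by (\ref{iwrvan}) together with (\ref{iwfil4}) and (\ref{iwrd}); when $m\le n$, unwinding the definitions writes both $\tau^{\ge -n}(\tau^{\le -m}(H))(M)$ and $\tau^{\le -m}(\tau^{\ge -n}(H))(M)$ in terms of $H$ evaluated on doubly truncated objects $w_{\ge m}w_{\le k}M$, and the isomorphism $w_{\ge m}w_{\le k}M\cong w_{\le k}w_{\ge m}M$ of iterated weight truncations (cf. \cite{bws}) identifies the two.

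It remains to prove (\ref{iwrpure}), which is the hard part. Replacing $H$ by $H\circ[m]$ shifts the weight range by $m$, so I may assume $m=0$ and must show that the cohomological functors of weight range $[0,0]$ are precisely the $H_{\ca}$. If $H=H_{\ca}$ and $M\in\cu_{w\ge 1}$ (resp. $M\in\cu_{w\le -1}$), then by Remark \ref{rwc} a choice of weight complex $t(M)$ is homotopy equivalent to a complex vanishing in degree $0$, so the zeroth homology of $\ca$ applied to it vanishes, whence $H_{\ca}(M)=0$; thus $H_{\ca}$ has weight range $[0,0]$. Conversely, given $H$ of weight range $[0,0]$, I would set $\ca:=H|_{\hw}$ (a contravariant additive functor $\hw\to\au$) and build a natural isomorphism $H\cong H_{\ca}$: fixing a weight Postnikov tower for $M$ with weight complex $(\cdots\to M^{-1}\xrightarrow{d^{-1}}M^{0}\xrightarrow{d^{0}}M^{1}\to\cdots)$ and noting that $H$ kills $\cu_{w=i}$ for every $i\neq 0$ (these classes sit inside $\cu_{w\le -1}\cup\cu_{w\ge 1}$), one applies $H$ to the triangles of the tower; the $0$-weight decomposition gives $H(M)\cong\ke\bigl(H(w_{\le 0}M)\to H(w_{\ge 1}M[-1])\bigr)$ using $H(w_{\ge 1}M)=0$, the $(-1)$-weight decomposition of $w_{\le 0}M$, whose cone is $M^{0}\in\cu_{w=0}$, gives $H(w_{\le 0}M)\cong\cok\bigl(H(w_{\le -1}M[1])\to\ca(M^{0})\bigr)$ using $H(w_{\le -1}M)=0$, and iterating collapses everything — exactly as in the weight spectral sequence computation of \cite{bws} — to $H(M)\cong\ke(\ca(d^{-1}))/\imm(\ca(d^{0}))=H_{\ca}(M)$; naturality, and the identification of the maps $H$ induces on the weight complex with $\ca$ applied to its differentials, come from the functoriality of weight complexes, Proposition \ref{pwt}(\ref{iwpt3},\ref{iwcex}). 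I expect this last step to be the main obstacle: the conceptual content is merely the degeneration of the weight spectral sequence, but making it airtight demands careful bookkeeping with the Postnikov-tower triangles and with the clash between the homological convention for weight structures and the cohomological convention for complexes and $t$-structures flagged just before Definition \ref{dvtt}; an alternative is to quote the relevant results of \cite{bws} and \cite{bger} directly.
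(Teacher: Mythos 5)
Your proposal is essentially correct and follows the same overall strategy as the paper for most parts (orthogonality considerations for (\ref{iwrvt}), (\ref{iwrcrit}), (\ref{iwrvan}), (\ref{iwfil3}), (\ref{iwfil5}); reduction to $m=0$ and the weight spectral sequence for (\ref{iwrpure})). The most interesting divergences are in (\ref{iwridemp}), (\ref{iwrcomm}) and (\ref{iwfil4}). The paper obtains (\ref{iwridemp}) and (\ref{iwrcomm}) by quoting Theorem 2.3.1 of \cite{bger} and then derives (\ref{iwfil4}) from them; you instead give a self-contained proof of (\ref{iwridemp}) from (\ref{evtt}), give a direct proof of (\ref{iwfil4}) that bypasses (\ref{iwrcomm}) entirely (a genuine simplification over the paper's route for that part), and only then use (\ref{iwrvan}), (\ref{iwfil4}) and (\ref{iwrd}) to dispose of the case $m>n$ of (\ref{iwrcomm}).

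The one place where your argument has a real gap is the case $m\le n$ of (\ref{iwrcomm}). The phrase "the isomorphism $w_{\ge m}w_{\le k}M\cong w_{\le k}w_{\ge m}M$ of iterated weight truncations ... identifies the two" hides the substantive content: weight truncations are only defined up to non-unique isomorphism, so one has to carefully choose compatible weight decompositions, show that the two doubly-iterated images become naturally isomorphic subquotients of $H$ evaluated on the relevant objects, and check that these identifications are $\cu$-functorial in $M$ despite the non-canonicity of the weight truncations. This is precisely what makes the commutativity a theorem rather than an observation, and the paper is right to just cite \cite[Theorem 2.3.1(II.3)]{bger} for it. (The same caveat applies, to a lesser degree, to your "explicit computation" version of (\ref{iwrpure}); you rightly note there that one can fall back to quoting \cite{bws}, which is what the paper does.) So: correct modulo the commutativity claim, which you should either carry out carefully with compatible choices of Postnikov towers or, as the paper does, import from \cite{bger}.
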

\begin{proof}
Let $M\in \cu_{w\ge n+1}$. Then we can take $w_{\le n}(M)=0$. Thus $\tau^{\ge -n}(H)(M)=0$, and we obtain the first part of assertion \ref{iwrvt}. It second part is easily seen to be dual to the first part.  %{iwridemp}?!

The "only if" part of assertion \ref{iwrcrit} is immediate from the definition of weight range. The converse implication easily follows from Proposition \ref{pbw}(\ref{igenlm}).
 
Assertion \ref{iwridemp}
is precisely Theorem 2.3.1(III2,3) of \cite{bger}; assertion \ref{iwrcomm} is given by part II.3 of that theorem.

Now let $H$ be of weight range $\ge m$.  Then $\tau^{\ge -n}(H)\cong \tau^{\ge -n}(\tau^{\le -m})(H)\cong \tau^{\le -m}(\tau^{\ge -n})(H)$ (according to the two previous assertions). It remains to apply assertion \ref{iwrvt} to obtain assertion \ref{iwfil4}.

Assertion \ref{iwrd} can be proved similarly; it is also easily seen to be dual to assertion \ref{iwfil4}.

Next, for any $l\in \z$ and any cohomological $H$ any choice of an $l$-weight decomposition triangle (cf. (\ref{ewd})) for $M$ gives the long exact sequence
\begin{equation}\label{eles}
\begin{gathered}
\dots \to H((w_{\le l}M)[1])\to  H(w_{\ge l+1}M)\to H(M)\\
\to H(w_{\le l}M)\to H((w_{\ge l+1}M)[-1])\to\dots
\end{gathered}
\end{equation}
%??!\dots\to H_1(w_{\le l}M)\to  H(w_{\ge l+1}M)\to H(M)\to H(w_{\le l}M)\to H_{-1}(w_{\ge l+1}M)\to\dots
The exactness of this sequence in $H(M)$ for $l=n$ immediately gives the first part of assertion  \ref{iwrvan}. Next, the second
part is immediate from the first one combined with assertions \ref{iwfil4} and \ref{iwrd}.

\ref{iwrpure}. It certainly suffices to verify the statement for $m=0$. Now, the functor $H^{\ca,op}$ is easily seen to be of weight range $[0,0]$ for any additive functor $\ca:\cu\to \au^{op}$; see Remark \ref{rwc}.  

Conversely, let $H$ be of weight range $[0,0]$. We take the functor $\ca$ to be the restriction of $H^{op}:\cu\to \au^{op}$ to $\hw$. 
Then %the opposite to 
we should check that $H$ sends $M\in \obj \cu$ into the homology in $\ca(M^0)$ of the complex $\ca(M^{-*})$ (where $t(M)=(M^*)$). This is an immediate consequence of the properties of the {\it weight spectral sequence} converging to $H^*(M)$; see Theorem 2.4.2 of \cite{bws}. Indeed, this spectral sequence converges according to part II(ii) of this theorem, and it remains to apply the vanishing for $H(M^i[j])$ for $j\neq 0$. %Write the SS? $H^j$?!!
%We choose some $w_{\le -1}M$ and $w_{\ge 1}M$ corresponding to this choice of $t(M)$. The distinguished triangle  $ (w_{\le -2}M)[1]\to  (w_{\le -1}M)[1]\to M^1$ yields that??!

Assertion \ref{iwfil3} %and \ref{iwfil4} are 
is immediate from  Proposition \ref{pbw}(\ref{iort}).

Assertion \ref{iwfil5} is an immediate consequence of assertion %\ref{iwfil3}; 
\ref{iwrvan}; just apply (\ref{eles}) for $l=m$ and for $l=m-1$, respectively.
\end{proof}

\begin{rema}\label{rwrange}
1. So, we call cohomological functors of weight range $[0,0]$ and their opposite homological functors %terminology; ask??!
$w$-pure ones; this terminology is compatible with Remark \ref{rpure}(2) according to part \ref{iwrpure} of our proposition.

2. Actually, the arguments used in the proof of this statement are easily seen to be functorial enough to  
yield an equivalence of the  (possibly) big category %?!
 of pure (cohomological) functors $\cu\to \au$ with the one of additive contravariant functors $\hw\to \au$.

3. Sending $H$ into the pure functor $\tau^{\ge -m}(\tau^{\le -m})(H\circ [m])\cong \tau^{\le -m}(\tau^{\ge -m})(H\circ [m])$ (for $m\in \z$) %sign?!!
yields a sort of "pure homology" for $H$. It will correspond to the homology of an object representing (or, more generally, {\it $\Phi$-representing}) %; see %??!below) 
$H$ in the settings that we will consider %where??!
below.
%For any $\cu,H$ the functor (see Proposition \ref{pwss}(2)) $M\mapsto E_2^{0,0}T(H,M)$ is pure. Indeed, it suffices to verify that it is cohomological. The latter statement follows from the isomorphisms $E_2^{0,0}T(H,-)\cong \tau_{\le 0}(\tau_{\ge 0} H)\cong \tau_{\ge 0}(\tau_{\le 0} H)$ given by Theorem 2.4.2(II) of \cite{bger} (see Remark \ref{rdualn}(3) below and \S2.3 of ibid.).

4. Certainly, a homological functor $H$ from $\cu$ into $\au$ may be considered as a cohomological functor from $\cu$ into $\au^{op}$. Thus one can easily dualize the aforementioned results;  we give some more detail  for this here to refer to them later. % Since we will need these (dual) results below, we give some more detail for them.

%Certainly, part \ref\ref{iwrpure} of our proposition implies that  {\it homological} functors  $\cu\to\au$ of weight $[0,0]$ are precisely those of the form $H^A$ for $A:\hw\to \au$ be an additive functor (see Proposition \ref{ppure}). 

Obviously, %Moreover,  %if one describes the corresponding virtual $t$-truncations of $H$ as functors into $H$ then the corresponding functor 
 the virtual $t$-truncation functor $\tau^{\ge -n }(H)$ (resp. $\tau^{\le -n }(H)$)  will be defined  by the correspondence $M\mapsto\imm (H(w_{\le n}M)\to  H(w_{\le n+1}M)) $ 
(resp. $M\mapsto\imm (H(w_{\ge n}M) \to H(w_{\ge n-1}M)$), whereas the arrows in %the long exact sequence %of Proposition {pwfil}(3)
(\ref{evtt}) should be reversed.

5. The author is using the term "pure" due to the relation of pure functors to Deligne's purity of cohomology. 

To explain it we recall that various categories of Voevodsky motives are endowed with so-called Chow weight structures; the first of these weight structures was constructed in  \cite{bws} where it was proved that the category $\dmgm(k)$ of geometric motives over a characteristic zero field $k$ is endowed with a weight structure $w_{\chow}(k)$ whose heart is the category of Chow motives over $k$.\footnote{This result was extended to the case where $k$ is a perfect field of characteristic $p>0$ in \cite{bzp}; note however that one is forced to invert $p$ in the coefficient ring in this setting.}\ Now, for any $r\in \z$ the $r$th level of the Deligne's weight filtration of either of singular of \'etale cohomology of motives certainly kills $\chow[i]$ for all values of $i$ except one (and the remaining value of $i$ is either $r$ or $-r$ %in dependence on 
depending on the choice of the convention for Deligne's weights).\footnote{Certainly, singular (co)homology (of motives) is only defined if $k$ is a subfield of complex numbers; then it is endowed with Deligne's weight filtration that can also be computed using $w_{\chow}(k)$ (see Remark 2.4.3 of \cite{bws}). On the other hand, Deligne's weight filtration for \'etale (co)homology can be defined (at least) for $k$ being any finitely generated field; the comparison of the corresponding weight factors  with the ones computed in terms of $w_{\chow(k)}$ is carried over in Proposition 4.3.1 of \cite{bkl}. Note also that in ibid. and in \cite[\S3.4,3.6]{brelmot} certain "relative perverse" versions of these weight calculations were discussed.}\  Thus  %our definition of purity is closely related to Deligne's notion of purity of 
(the corresponding shifts of) Deligne's pure factors of (singular and \'etale) cohomology are pure with respect to $w_{\chow(k)}$.
\end{rema}

We also formulate a simple statement for the purpose of applying it in \cite{bsnew}.
%that will be applied in a succeeding  paper.

\begin{pr}\label{pcrivtt}
For $M\in \obj \cu$ the following conditions are equivalent.

(i) $M\in \cu_{w\ge 0}$. 

(ii) $H(M)=0$ for any cohomological $H$ from $\cu$ into (an abelian category) $\au$ that is of weight range $\le -1$.

(iii) $(\tau^{\ge  1}H_N)(M) =\ns$ for any $N\in \obj \cu$.

(iv) $(\tau^{\ge 1}H_M)(M) =\ns$.

\end{pr}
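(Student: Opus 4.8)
The plan is to prove the cycle of implications $(i)\Rightarrow(ii)\Rightarrow(iii)\Rightarrow(iv)\Rightarrow(i)$; only the last of these requires any real work.

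The implication $(i)\Rightarrow(ii)$ is immediate from Definition \ref{drange}: a cohomological functor of weight range $\le -1$ annihilates $\cu_{w\ge (-1)+1}=\cu_{w\ge 0}$, so if $M\in\cu_{w\ge 0}$ then $H(M)=0$. For $(ii)\Rightarrow(iii)$ I would fix $N\in\obj\cu$ and apply (ii) to the functor $H=\tau^{\ge 1}(H_N)$: it is cohomological by Proposition \ref{pwfil}(2) (the functor $H_N=\cu(-,N)$ itself being cohomological), and it has weight range $\le -1$ by Proposition \ref{pwrange}(\ref{iwrvt}) with $n=-1$; hence $(\tau^{\ge 1}H_N)(M)=\ns$. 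Finally $(iii)\Rightarrow(iv)$ is trivial (take $N=M$).

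For $(iv)\Rightarrow(i)$ I would unwind Definition \ref{dvtt} with $n=-1$: the object $(\tau^{\ge 1}H_M)(M)$ is the image of the map $H_M(w_{\le 0}M)\to H_M(w_{\le -1}M)$, i.e. of the precomposition map $\cu(w_{\le 0}M,M)\to\cu(w_{\le -1}M,M)$ along the connecting morphism $h\colon w_{\le -1}M\to w_{\le 0}M$ furnished (for $-1\le 0$) by Proposition \ref{pbw}(\ref{icompl}). Hypothesis (iv) forces this image to vanish; applying it to the canonical morphism $a^{(0)}\colon w_{\le 0}M\to M$ coming from the $0$-weight decomposition gives $a^{(0)}\circ h=0$. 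But the left-hand square of the morphism of triangles produced by Proposition \ref{pbw}(\ref{icompl}) for $g=\id_M$ shows $a^{(0)}\circ h=a^{(-1)}$, where $a^{(-1)}\colon w_{\le -1}M\to M$ is the morphism of the $(-1)$-weight decomposition. Hence $a^{(-1)}=0$, so the third term $w_{\ge 0}M$ of that decomposition, being a cone of the zero morphism $a^{(-1)}$, is isomorphic to $M\oplus w_{\le -1}M[1]$; thus $M$ is a retract of $w_{\ge 0}M\in\cu_{w\ge 0}$, and since $\cu_{w\ge 0}$ is Karoubi-closed (Definition \ref{dwstr}(i)) we conclude $M\in\cu_{w\ge 0}$.

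The only delicate points are bookkeeping ones: keeping the index/sign conventions straight so that $\tau^{\ge 1}$ indeed corresponds to $\imm(H(w_{\le 0}M)\to H(w_{\le -1}M))$ and has weight range $\le -1$, and making sure that the connecting morphism implicit in Definition \ref{dvtt} is the one coming from Proposition \ref{pbw}(\ref{icompl}), so that its compatibility with the weight-decomposition morphisms is available. I do not expect any genuine difficulty beyond this.
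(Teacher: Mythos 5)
Your argument is correct and is essentially the same as the paper's: both establish the cycle $(i)\Rightarrow(ii)\Rightarrow(iii)\Rightarrow(iv)\Rightarrow(i)$, with the nontrivial step $(iv)\Rightarrow(i)$ proved by exhibiting the commutative diagram of Proposition \ref{pbw}(\ref{icompl}) with $g=\id_M$, observing that (iv) forces $c_0\circ h=0$ and hence $c_{-1}=0$, and concluding that $M$ is a retract of $w_{\ge 0}M$ from the upper weight-decomposition triangle. The only difference is cosmetic: you spell out slightly more detail about the splitting of the cone of the zero morphism and about why $\tau^{\ge 1}(H_N)$ is cohomological of weight range $\le -1$.
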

\begin{proof}
%$Obviously, 
Condition (i) implies condition  (ii) by definition; certainly, (iii)   $\implies$ (iv). Next, condition (ii) implies condition (iii) according to Proposition  \ref{pwrange}(\ref{iwrvt}).

Now we prove that condition (iv) implies that $M\in  \cu_{w\ge 0}$. We consider the commutative diagram
\begin{equation}\label{epr} \begin{CD} w_{\le -1} M@>{c_{-1}}>> M@>{}>> w_{\ge  0}M\\
@VV{h}V@VV{\id_M}V@ VV{}V \\
w_{\le 0} M@>{c_0}>> M@>{}>> w_{\ge 1}M \end{CD}
\end{equation} 
given by Proposition \ref{pbw}(\ref{icompl})) (we take $g=\id_M$ in the proposition).
Since $(\tau^{\ge 1}H_M)(M) =\ns$, $c_0\circ h=0$; hence $c_{-1}=0$ also.  Hence the upper distinguished triangle in (\ref{epr}) yields that $M$ is a retract of 
$w_{\ge  0}M\in \cu_{w\ge 0}$.

\end{proof}

\subsection{On the relation of  smashing weight structures to cc and cp functors} 
\label{sprcoprod}

%?! Till the end of the section we will assume that $\cu$ is closed under coproducts. 

We prove a collection of properties of smashing weight structures (see Definition \ref{dhop}).

\begin{pr}\label{ppcoprws}
Let $w$ be a  smashing weight structure (on $\cu$), $i,j\in \z$; let $\au$ be an  AB4 %AB5??
abelian category, and $\au'$ be an AB4* abelian category.
Then the following statements are valid. %that is both AB4 and AB4*?

\begin{enumerate}
\item\label{icopr1} The classes $\cu_{w\le j}$, $\cu_{w\ge i}$, and $\cu_{[i,j]}$ are closed with respect to $\cu$-coproducts. 

\item\label{icoprhw} In particular, the category $\hw$ is closed with respect to $\cu$-coproducts, and the embedding $\hw\to \cu$ respects coproducts. %explain??

\item\label{icopr2} %For a collection of objects $M_i$ and 
Coproducts of %shifted?!
weight decompositions are weight decompositions.

\item\label{icopr3}  Coproducts of weight Postnikov towers are weight Postnikov towers.

\item\label{icopr4} The categories $\cuw$ and $\kw(\hw)$ are closed with respect to coproducts, and the functor $t$ respects coproducts.

\item\label{icopr5} Pure functors $\cu\to \au$ respecting coproducts are exactly the functors of the form  $H^\ca$ (see Proposition \ref{ppure}), where $\ca:\hw\to \au$ is an additive functor respecting coproducts. Moreover, this correspondence is an equivalence of  (possibly, big) %?!!
 categories. 
%functoriality?!

%\item\label{icopr5p} Pure cohomological functors 

\item\label{icopr6} If $H:\cu\to \au$ is %a homological functor that respects coproducts 
a cc functor (see Definition \ref{dcomp}(\ref{idcc})) then   $\tau^{\ge i }(H)$  and $\tau^{\le i }(H)$  are cc functors also.

\item\label{icopr6p} 
$H':\cu^{op}\to \au'$ is a cp functor then   $\tau^{\ge  i }(H')$  and $\tau^{\le i }(H')$  are cp functors also.

\item\label{icopr5p} Pure cohomological  (see Definition \ref{drange}(2)) cp functors  from $\cu$ into $\au'$ are exactly those of  the form $H_\ca$ for $\ca:\hw^{op}\to \au'$ being an additive functor %functor respecting products  (and so, sending $\hw$-coproducts into products).
that sends $\hw$-coproducts into products.

\item\label{icopr7} Let $\du$ be the localizing subcategory of $\cu$ generated by a class of objects $\{D_l\}$, and assume that for any of the $D_l$ a choice of (the terms of)   its weight complex $t(D_l)=(D_l^k)$  is fixed. Then any element of $\cu_{w=0}\cap \obj \du$
is a retract of a coproduct of a family  of $D_l^k$.

\item\label{icopr7p} 
For  $\{D_l\}$ and $\du$  as in the previous assertions assume that   for any of the $D_l$   a choice of  $w_{\le k}D_l$ and of $w_{\ge k}D_l$ for $k\in \z$  is fixed (we do not assume any relation between these choices). Then  for any $D\in\obj \du$ and any $m\in \z$ there exists a choice of $(w_{\le m}D)[-m]$ (resp. of $(w_{\ge m}D)[-m]$) belonging to the smallest coproductive extension-closed subclass $D_1$ (resp. $D_2$)  of $\obj \cu$ containing   $(w_{\le k}D_l)[-k]$ (resp.  $(w_{\ge k}D_l)[-k]$) for all $l$ and all $k\in\z$. Moreover,   $\cu_{w\le 0}\cap \obj \du$ and $\cu_{w\ge 0}\cap \obj \du$ lie in $D_1$ and $D_2$, respectively.
	%the Karoubi-closures of the corresponding classes.
		%The left one is closed automatically by a hocolimit argument
			%\item\label{icopr7pal}
		
		Furthermore, if $\al$ is a regular cardinal and $D$ belongs to  the smallest triangulated category of $\cu$ that contains $D_l$ and closed under $\cu$-coproducts of less than $\al$ objects then all $(w_{\le m}D)[-m]$ can be chosen to belong to the smallest  extension-closed subclass of $\obj \cu$ containing   $(w_{\le k}D_l)[-k]$  and closed under coproducts of less than $\al$ objects.

\end{enumerate}
\end{pr}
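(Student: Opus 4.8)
The plan is to get assertions \ref{icopr1}--\ref{icopr6p} and \ref{icopr5p} by transcribing results already at hand, and to spend the real effort on \ref{icopr7} and \ref{icopr7p}. Write $s=(\cu_{w\le 0},\cu_{w\ge 1})$ for the torsion pair associated with $w$ (Remark \ref{rwhop}(1)); the hypothesis that $w$ is smashing means exactly that $s$ is smashing. Then \ref{icopr1} follows from Proposition \ref{phop}(2,3) (coproductivity of a class is shift-invariant), and \ref{icoprhw} is immediate since $\cu_{w=0}=\cu_{[0,0]}$. A weight decomposition of $M$ is precisely an $s$-decomposition, so \ref{icopr2} is Proposition \ref{phop}(4) (or combine Proposition \ref{pcoprtriang} with \ref{icopr1}); applying \ref{icopr2} to each of the $i$-weight decompositions $h_i$ constituting a weight Postnikov tower, and Proposition \ref{pcoprtriang} to the triangles $M_{\le i-1}\to M_{\le i}\to M_i$, gives \ref{icopr3}. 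Granting \ref{icopr3}, part \ref{icopr4} is a bookkeeping matter: coproducts of weight Postnikov towers are weight Postnikov towers, so $\cuw$ is closed under coproducts with the evident universal property; $\hw$ is $\coprod$-closed, hence so are $K(\hw)$ and $\kw(\hw)$; and $t$ sends a coproduct of towers to the coproduct of the associated complexes.

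For \ref{icopr5}, recall from Proposition \ref{ppure} and the functoriality noted in Remark \ref{rwrange}(2) (in the homological/covariant form) that $\ca\mapsto H^\ca$ identifies additive functors $\hw\to\au$ with pure homological functors $\cu\to\au$, the inverse being restriction to $\hw$ --- indeed a weight complex of $N\in\cu_{w=0}$ may be taken concentrated in degree $0$ with term $N$, so $H^\ca|_{\hw}\cong\ca$. Then $H^\ca$ respects coproducts iff $\ca$ does: necessity is restriction to the $\coprod$-closed subcategory $\hw$, and sufficiency holds because $t$ respects coproducts (\ref{icopr4}), $\ca$ is applied termwise, and homology commutes with coproducts in the AB4 category $\au$. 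Parts \ref{icopr6} and \ref{icopr6p} are the same computation for virtual $t$-truncations: choosing the weight truncations of $\coprod M_\alpha$ to be coproducts of weight truncations of the $M_\alpha$ (legitimate by \ref{icopr2}), the images defining $\tau^{\ge i}$ and $\tau^{\le i}$ of a cc (resp.\ cp) functor become coproducts (resp.\ products) of the corresponding images, by exactness of coproducts in AB4 (resp.\ products in AB4*). Part \ref{icopr5p} is \ref{icopr5} read in $\au'^{op}$, combined with Proposition \ref{pwrange}(\ref{iwrpure}) and Definition \ref{drange}(2).

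For \ref{icopr7p} I would argue by induction on the construction of $\du$. Let $D_1,D_2$ be as in the statement and let $\du'$ be the full subcategory of $\du$ of those $M$ such that, for every $m\in\z$, there is a choice of $w_{\le m}M$ with $(w_{\le m}M)[-m]\in D_1$ and a choice of $w_{\ge m}M$ with $(w_{\ge m}M)[-m]\in D_2$. One checks that $\du'$ is a strict triangulated subcategory of $\cu$ closed under coproducts and containing every $D_l$, hence $\du'=\du$: it contains the $D_l$ by the definition of $D_1,D_2$; it is closed under $[\pm1]$ because $w_{\le m}(M[\pm1])$ may be taken to be $(w_{\le m\mp1}M)[\pm1]$ and $(w_{\le m\mp1}M)[\pm1][-m]=(w_{\le m\mp1}M)[-(m\mp1)]$ (similarly for $w_{\ge m}$); it is closed under cones because, after shifting by $[-m]$ and applying Proposition \ref{pbw}(\ref{iwdext}), a weight truncation of the middle term of a distinguished triangle is an extension of weight truncations of the outer terms, and $D_1,D_2$ are extension-closed; and it is closed under coproducts by \ref{icopr3} and the coproductivity of $D_1,D_2$. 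For the last clause, if $D\in\cu_{w\le 0}\cap\obj\du$ then $w_{\ge 1}D\in\cu_{[1,0]}=\ns$ by (the dual of) Proposition \ref{pbw}(\ref{iwd0}), so $D$ is isomorphic to any $w_{\le 0}D$, in particular to one in $D_1$; since $D_1$ is strict, $D\in D_1$, and dually $\cu_{w\ge 0}\cap\obj\du\subset D_2$. The final ("furthermore") clause is the same argument with $\du$ replaced by the triangulated subcategory closed under coproducts of $<\al$ objects generated by the $D_l$, and $D_1$ by its $<\al$-coproductive analogue (only the $L$-half is needed).

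For \ref{icopr7}, let $\mathcal{E}$ be the closure in $\cu$ of $\{D_l^k\}$ under coproducts and retracts; note $\mathcal{E}\subset\cu_{w=0}$ by \ref{icopr1} and Proposition \ref{pbw}(\ref{iextw}). Let $\du''$ be the full subcategory of $\du$ of objects admitting some weight Postnikov tower whose associated weight complex has all terms in $\mathcal{E}$; using that the class of weight complexes is shift-invariant, that cones of lifts of triangles to $\cuw$ yield distinguished triangles of weight complexes (Proposition \ref{pwt}(\ref{iwcex}), whose terms are finite sums of shifts of terms of the outer ones, hence again in $\mathcal{E}$), and \ref{icopr4}, one sees as before that $\du''$ is a strict triangulated subcategory closed under coproducts containing the $D_l$ (via the chosen $t(D_l)$), so $\du''=\du$. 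Now for $N\in\cu_{w=0}\cap\obj\du$, a weight complex of $N$ may be taken to be $N$ in degree $0$, and since $N\in\obj\du''$ this complex is $K(\hw)$-isomorphic (Remark \ref{rwc}) to a complex $T$ with $T^k\in\mathcal{E}$; as $[N]$ sits in a single degree with zero differential, any homotopy equivalence $[N]\rightleftarrows T$ forces the composite $N\to T^0\to N$ to be $\id_N$, so $N$ is a retract of $T^0\in\mathcal{E}$, i.e.\ of a coproduct of $D_l^k$'s. The main obstacle is precisely this last pair of assertions: choosing the right inductive subcategories and verifying that they are triangulated and $\coprod$-closed, with careful handling of the shifts $[-m]$ and of the distinction between "some choice" and "every choice" of a weight truncation; everything else is a mechanical use of \S\ref{shop}, \S\ref{sswc}, \S\ref{sdetect} and \S\ref{svtt}.
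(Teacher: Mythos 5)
Your proof follows the paper's almost line for line: parts \ref{icopr1}--\ref{icopr5p} are the same reductions to Proposition \ref{phop}(2--4), Proposition \ref{ppure}, and Proposition \ref{pwrange}(\ref{iwrpure}); for \ref{icopr7} you combine \ref{icopr4} with Proposition \ref{pwt}(\ref{iwcex}) and Remark \ref{rwc} exactly as the paper does (your $\du''$ is just a more explicit way of packaging the paper's phrase ``there exists a choice of $t(M)$ all of whose terms are coproducts of $D_l^k$''); and the first half of \ref{icopr7p} is the same ``the class of objects admitting weight truncations in $D_1,D_2$ is a coproductive triangulated subcategory containing all $D_l$'' argument, using \ref{icopr2} and Proposition \ref{pbw}(\ref{iwdext}). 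The one place where you genuinely diverge is the ``moreover'' clause of \ref{icopr7p}: you observe that for $D\in\cu_{w\le 0}$ any $0$-weight decomposition has $w_{\ge 1}D\in\cu_{[1,0]}=\ns$ by (the second half of) Proposition \ref{pbw}(\ref{iwd0}), so the arrow $w_{\le 0}D\to D$ is an isomorphism, and since some choice of $w_{\le 0}D$ lies in $D_1$ and $D_1$ is strict, $D\in D_1$. The paper instead argues that $D$ is merely a retract of $w_{\le 0}D\in D_1$ (Proposition \ref{pbw}(\ref{iwdmod})) and then appeals to Remark \ref{rcoulim}(4) to claim $D_1$ is Karoubi-closed; but that remark requires the class to be closed under $[1]$ or $[-1]$, and $D_1$ --- the smallest coproductive extension-closed class containing the specific objects $(w_{\le k}D_l)[-k]$ --- is not obviously shift-stable, so the paper's reduction to Karoubi-closure leaves a small gap that your direct isomorphism argument cleanly avoids. (A very minor point: the ``dually'' statement you need is already part of \ref{iwd0}, so no dualization is required; and your closure-under-coproducts step in \ref{icopr7p} really only needs \ref{icopr2} rather than \ref{icopr3}.)
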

\begin{proof}

\begin{enumerate}
\item This is essentially a particular case of Proposition \ref{phop}(\ref{itp2},\ref{itp3}); see Remark \ref{rwhop}(1). %Proposition \ref{pbw}(\ref{icopright}).

\item Immediate from the previous assertion.

\item Recalling Remark \ref{rwhop}(1) once again, we reduce the statement to Proposition \ref{phop}(\ref{itp4}).

\item Immediate from the previous assertion.

\item Immediate from assertions  \ref{icopr1} and \ref{icopr3}.

\item It certainly follows from Proposition \ref{pwrange}(\ref{iwrpure}) that pure functors are exactly those of the type $H^\ca$.
Since $\ca$ is the restriction of  $H^\ca$ to $\hw$, this (restriction) correspondence is functorial. % (see Proposition \ref{ppure}). 
Next,  %it respects coproducts 
 if $H$ respects coproducts then its restriction to $\hw$ also does according to  assertion  \ref{icoprhw}. Conversely, if $\ca$ respects coproducts then $H^\ca$ also does according to assertion \ref{icopr4}.

\ref{icopr6}, \ref{icopr6p}. Immediate from assertion \ref{icopr2}.

\ref{icopr5p}. Similarly to assertion \ref{icopr5}, $\ca$ is the restriction of  $H_\ca$ to $\hw$; hence it sends coproducts into products according to  assertion  \ref{icoprhw}. Conversely, if $\ca$  sends coproducts into products then $H^\ca$ also does according to assertion \ref{icopr4}.

\ref{icopr7}. Combining assertion \ref{icopr4} with Proposition \ref{pwt}(\ref{iwcex}) we obtain that for any $M\in \obj \du$ there exists a choice of $t(M)$ all of whose terms are coproducts of $D_l^k$. 

Next, assume that   $M$ also belongs to $\cu_{w=0}$ and choose $t(M)$ so that $M^0$ is some coproduct of $D_l^k$.  Since $M^0$ is a choice of $w_{\ge 0}(w_{\le 0}M)$ (essentially by the definition of weight complexes), we obtain that $M$ is a retract of $M^0$  according to  Proposition \ref{pbw}(\ref{iwdmod}) (applied twice). 

%it is a retract of $w_{\ge 0}(w_{\le 0}M)$ according to Proposition \ref{pbw}(\ref{)
 %$M$ is $K(\hw)$-isomorphic to any choice of $t(M)$ according to Remark \ref{rwc}. 
%Thus it remains to take (a choice of) $t(M)$ all of whose terms are coproducts of $D_l^k$ and note that in this case $M\cong t(M)$ is a retract of $M^0=\wstu_{\ge 0}(\wstu_{\le 0}t(M))$ according to Proposition \ref{pbw}(\ref{iwdmod}) (applied twice).

\ref{icopr7p}.  Combining assertion \ref{icopr2} with Proposition \ref{pbw}(\ref{iwdext}) we obtain that the class  $C$ of those $D\in \obj \cu$ such that for any $m\in \z$ there exist a choice of $(w_{\le m}D)[-m]$ and of $(w_{\ge m}D)[-m]$ belonging to  $D_1$ (resp. to $D_2$)  is a coproductive  class of objects of a full triangulated subcategory of $\cu$ (i.e., there exists a triangulated $\cupr\subset \cu$ such that $\obj \cupr=C$ and $C$ is coproductive). Thus $C$ contains $\obj \du$.

 Next, if $M$ belongs to $ \obj \du \cap \cu_{w\le 0}$ (resp. to $ \obj \du \cap \cu_{w\ge 0}$) then the existence of $w_{\le 0}M$ belonging to $D_1$ (resp. of $w_{\ge 0}M$ belonging to $D_2$) implies that $M$ belongs to the Karoubi-closure of $D_1$ (resp. of $D_2$) according to  Proposition \ref{pbw}(\ref{iwdmod}). Thus to prove the "moreover" part of the assertion  it remains to note that $D_1$ and $D_2$ are Karoubian according to Remark \ref{rcoulim}(4) below. 

The proof of the "furthermore" part of the assertion is similar.

\end{enumerate}
\end{proof}

\begin{rema}\label{ral}
In all the part of our proposition %it suffices to assume that 
 one can replace arbitrary small coproducts by coproducts of less than $\al$ objects in all occurrences (where $\al$ is any regular infinite cardinal). %In particular, for assertion
  In particular, in assertions \ref{icopr4} and \ref{icopr6p}  one can assume that $\cu$ and $\cu_{w\le 0}$ are closed with respect to $\cu$-coproducts of less than $\al$ of their objects; then $t$ respects these coproducts also, and virtual $t$-truncations of cohomological functors that convert coproducts of less than $\al$ objects into the corresponding  products fulfil this condition as well.\footnote{Certainly, the AB4* condition for the target category $\au'$ can also be weakened respectively.} %not interesting??
\end{rema}

Part \ref{icopr6p}  of our proposition immediately implies the following corollary that will be important for us below. %applied in \S\ref{sadjt}.

\begin{coro}\label{cvttbrown}
Assume that  $w$ is smashing.

 I.1. If $\cu$ satisfies the Brown representability condition (see Definition \ref{dcomp}(\ref{idbrown})) then %w-??
virtual $t$-truncations of representable functors are representable.

2. If  $\cu$ is generated by a set of objects as its own localizing subcategory\footnote{Certainly, %this is equivalent to the existence of a single generator for $\cu$
in this case $\cu$ is also generated by the coproduct of these objects (as its own localizing subcategory).} then $\hw$ has a generator, i.e., there exists $P\in \cu_{w=0}$ such that any object of $\hw$ is a retract of a coproduct of (copies of) $P$.

II. Let $F:\cu\to \cu'$ be an exact functor respecting coproducts. Adopt the notation of Proposition \ref{ppcoprws}(\ref{icopr7p}) and let $w'$ be a weight structure for $\cu'$; 

1. Then $F$ is left (resp. right) weight-exact if and only if  $F(w_{\le k}D_l)[-k]\in \cu'_{w'\le 0}$ (resp. $F(w_{\ge k}D_l)[-k]\in \cu'_{w'\ge 0}$) for all $l$ and all $k\in \z$.

2. Assume that all $D_l$ are $w$-bounded; let $\cp\subset \cu_{w=0}$ be a generating class for $\hw$ (i.e.,  any    object of $\hw$ is a retract of a coproduct of elements of $\cp$). Then $F$ is left (resp. right) weight-exact if and only if $F(\cp)\subset  \cu'_{w'\le 0}$ (resp.  $F(\cp)\subset  \cu'_{w'\ge 0}$). 
%In particular, one may take $\cp=$ here?!

%Moreover, if  all $D_l$ are $w$-bounded and the chosen $t(D_l)$ are bounded (as objects of $C(\hw)$) then these conditions are equivalent to all $F(D_l^k)$ belonging to  $\cu'_{w'\le 0}$ (resp. to $\cu'_{w'\ge 0}$) 
%the converse implication is valid?! Use HCL; convergent weight structures?!

\end{coro}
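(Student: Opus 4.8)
The plan is to read off each assertion directly from the matching part of Proposition \ref{ppcoprws}, using the Brown representability hypothesis only to pass from "cp functor" to "representable functor", and the coproduct‑preservation hypothesis only to pass from "coproductive extension‑closed subclass of $\obj\cu$" to "class preserved by $F$".

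For part I.1, I would observe that for any $M\in\obj\cu$ the representable functor $H_M=\cu(-,M)\colon\cu\opp\to\ab$ is a cp functor, since $\cu(\coprod_i X_i,M)=\prod_i\cu(X_i,M)$; hence by Proposition \ref{ppcoprws}(\ref{icopr6p}) (applied with $\au'=\ab$) the functors $\tau^{\ge i}(H_M)$ and $\tau^{\le i}(H_M)$ are cp functors for all $i\in\z$, and the Brown representability property (Definition \ref{dcomp}(\ref{idbrown})) says exactly that every such functor $\cu\opp\to\ab$ is representable. For part I.2, take $\du=\cu$ in Proposition \ref{ppcoprws}(\ref{icopr7}) (this is precisely the hypothesis that the set $\{D_l\}$ generates $\cu$ as its own localizing subcategory). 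Fixing weight complexes $t(D_l)=(D_l^k)$, Proposition \ref{pwt}(\ref{iwpt1}) gives $D_l^k\in\cu_{w=0}$, and since $\{D_l^k\}_{l,k}$ is a set, by Proposition \ref{ppcoprws}(\ref{icoprhw}) the object $P=\coprod_{l,k}D_l^k$ lies in $\cu_{w=0}$. For any $N\in\obj\hw$, Proposition \ref{ppcoprws}(\ref{icopr7}) presents $N$ as a retract of a coproduct $\coprod_i D_{l_i}^{k_i}$; since each $D_{l_i}^{k_i}$ is a retract of $P$, that coproduct — and hence $N$ — is a retract of a coproduct of copies of $P$.

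For part II, the "only if" directions are immediate: if $F$ is left (resp. right) weight‑exact then, $F$ being exact while $(w_{\le k}D_l)[-k]\in\cu_{w\le 0}$ (resp. $(w_{\ge k}D_l)[-k]\in\cu_{w\ge 0}$) by Definition \ref{dwso}(\ref{id2}) and $\cp\subset\cu_{w=0}$, the stated inclusions hold. For the "if" direction of II.1 I would consider $\ec=\{X\in\obj\cu:F(X)\in\cu'_{w'\le 0}\}$: since $F$ is exact and respects coproducts, while $\cu'_{w'\le 0}$ is extension‑closed and Karoubi‑closed (Proposition \ref{pbw}(\ref{iextw})) and $\coprod$‑closed (being $\perpp(\cu'_{w'\ge 1})$), the class $\ec$ is extension‑closed and coproductive and, by hypothesis, contains every $(w_{\le k}D_l)[-k]$; hence $\ec$ contains the class $D_1$ of Proposition \ref{ppcoprws}(\ref{icopr7p}), so (taking $\du=\cu$) $\cu_{w\le 0}\subseteq D_1\subseteq\ec$, i.e. $F$ is left weight‑exact; the right weight‑exact case is dual, using $D_2$ and $\cu'_{w'\ge 0}$. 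For the "if" direction of II.2, suppose $F(\cp)\subset\cu'_{w'\le 0}$; I claim $F$ sends every $w$‑bounded object of $\cu_{w\le 0}$ into $\cu'_{w'\le 0}$. Granting the claim, each $D_l$ being $w$‑bounded forces $(w_{\le k}D_l)[-k]\in\cu_{[a,0]}$ for suitable $a$ by Proposition \ref{pbw}(\ref{iwd0}), so its $F$‑image lies in $\cu'_{w'\le 0}$, and part II.1 concludes the proof. The claim holds because a $w$‑bounded object of $\cu_{w\le 0}$ lies in some $\cu_{[a,0]}$, which by Proposition \ref{pbw}(\ref{igenlm}) is the extension‑closure of $\cup_{a\le j\le 0}\cu_{w=j}$; every object of $\cu_{w=j}=\cu_{w=0}[j]$ is a retract of a coproduct of elements of $\cp[j]$, and for $j\le 0$ one has $F(\cp)[j]\subset\cu'_{w'\le 0}$, which is Karoubi‑closed, $\coprod$‑closed and extension‑closed, so $F$ carries $\cu_{[a,0]}$ into $\cu'_{w'\le 0}$; the right case is again dual.

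I do not expect any genuinely new difficulty beyond Proposition \ref{ppcoprws}: the one point demanding care is part II, where the real content is that conditions imposed merely on the weight truncations of the generators $D_l$ propagate to all of $\cu_{w\le 0}$ (resp. $\cu_{w\ge 0}$) through the coproductive‑extension‑closure statement of Proposition \ref{ppcoprws}(\ref{icopr7p}), together with the reduction of II.2 to the bounded case via Proposition \ref{pbw}(\ref{igenlm}) and the standard bookkeeping of Karoubi‑, coproduct‑ and extension‑closedness that $F$ preserves.
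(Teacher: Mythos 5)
Your proposal is correct and follows essentially the same route as the paper's own proof: part I.1 reduces to Proposition \ref{ppcoprws}(\ref{icopr6p}) plus the definition of Brown representability; part I.2 takes $\du=\cu$ in Proposition \ref{ppcoprws}(\ref{icopr7}) and forms the coproduct of the $D_l^k$; part II.1's converse reads off from Proposition \ref{ppcoprws}(\ref{icopr7p}) because $F$ is exact and respects coproducts; and part II.2 reduces to II.1 by first passing from $F(\cp)\subset\cu'_{w'\le 0}$ to $F(\cu_{w=0})\subset\cu'_{w'\le 0}$ and then using $w$-boundedness of the $D_l$ together with Proposition \ref{pbw}(\ref{igenlm}). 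The only difference is that you spell out the "easily" and "immediately" steps of the paper more explicitly, which is fine.
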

\begin{proof}
I.1. The Brown representability condition says that $\cu$-representable functors are precisely all cp functors from $\cu$ into $\ab$. Hence the statement follows from Proposition \ref{ppcoprws}(\ref{icopr6p})  indeed.

2. Just take $\du=\cu$ in part \ref{icopr7} of the proposition; then we can take $P$ to be the coproduct of the corresponding $D_l^k$ (recall that $\hw$ has coproducts!). 

II. If $F$ is left (resp. right) weight-exact then the images of all elements of $\cu_{w\le 0}$ (resp. of $\cu_{w\ge 0}$) belong to $\cu'_{w'\le 0}$ (resp. to $\cu'_{w'\ge 0}$), and we obtain "one half" of the implications in question. 

The converse implication for assertion II.1 follows from Proposition \ref{ppcoprws}(\ref{icopr7p}) immediately.

 Now we check the converse implication for assertion II.2. Since $F$ respects coproducts and $F(\cp)\subset  \cu'_{w'\le 0}$ (resp.  $F(\cp)\subset  \cu'_{w'\ge 0}$), we obtain $F(\cu_{w=0})\subset  \cu'_{w'\le 0}$ (resp.  $F(\cu_{w=0})\subset  \cu'_{w'\ge 0}$). Thus Proposition \ref{pbw}(\ref{igenlm}) yields the result easily.
%The remaining implications easily follow from Proposition \ref{ppcoprws}(\ref{icopr7p}) (if one also invokes Remark \ref{rwcomp}(3) to relate $w$-bounded objects to the terms of their  bounded weight complexes). %combined with?!

\end{proof}

\begin{rema}\label{rgenw}
1. %One may say that 
Note that our definition of a generator for $\hw$ is much more "restrictive" than the assumption  that $\{P\}$ Hom-generates $\hw$ (cf. Remark \ref{rcompgen}) and even than  the (usual) "abelian version" of the definition of generators (cf. Theorem \ref{tab5}(1)). %Definition \ref{dpurinj} below). %="categorical version"  
Moreover, it is much easier to specify generators in $\hw$ (if $w$ is smashing) than in $\hrt$.
%its abelian category version (as recalled in Proposition \ref{pgen}(3)) since we prove much more than the existence of a non-zero morphism from $M$ into any non-zero object of $\hw$.

2. A certain "finite dimensional" analogue of part I.1 of this corollary is given by Proposition \ref{psatur}(1) below.
\end{rema}

\section{On adjacent weight and  $t$-structures %: construction via Brown representability-type results
and Brown representability-type conditions}\label{sadjbrown}
%in well generated triangulated categories}

In this section we study some general conditions ensuring that a torsion pair  admits a (right or left) torsion pair.

In \S\ref{sadjt} we prove that a %finite type 
 weight structure admits a right adjacent $t$-structure if and only if the virtual $t$-truncations of representable functors are representable. It easily follows that in a triangulated categories satisfying the Brown representability condition a weight structure admits a right adjacent $t$-structure if and only if it is smashing. Certainly, the  dual to this statement is also true. Moreover, a similar argument demonstrates that if the representable functors from an $R$-linear category $\cu$ are precisely the {\it $R$-finite type} ones (i.e., if $\cu$ is {\it $R$-saturated}) then all bounded weight structures on $\cu$ admit right adjacent $t$-ones. Note here that if $R$ is Noetherian then for $X$ being a %regular projective  smooth proper variety over a field $k$ 
  regular separated finite-dimensional scheme that is proper over $\spe R$  its bounded derived category of coherent sheaves (as well as its dual) is $R$-saturated according to a  recent result of Neeman. %result of Bondal and Van den Berg.

In \S\ref{sadjw} we study when  a $t$-structure $t$ admits a (left or right) adjacent weight structure $w$; %. We prove some general properties of adjacent weight structures (including their relation to $\hrt$). %??!!; these results will be used for the proof of Theorem \ref{tgroth}(1) of the introduction (in \S\ref{sperfws}). 
% We also prove some existence of adjacent weight structures statements; 
 however, the results of this section are "not as nice" as their "mirror" ones in \S\ref{sadjt} (at least, in the case where $\cu$ has coproducts; cf. Remark \ref{rnondeg}). %\footnote{And so, the fact that for any compactly generated $t$-structure there exists a right adjacent weight structure will be established in \S\ref{sperfws} below using certain "more advanced" methods.}
%assuming certain Brown representability holds for $\cu$; yet these results are somewhat less "satisfactory" then their $t$-structure analogues since a left adjacent $w$ can only exist in the case where the heart $\hrt$ has enough projectives (and we can  prove the existence of $w$ only assuming some more additional conditions on $t$). %However, the results of this section

In \S\ref{scomp} we recall the notions of perfectly generated and well generated triangulated categories along with their (Brown representability) properties; we also relate perfectness to smashing torsion pairs. %; we relate these notions  to.  Those demonstrate the relevance of the results of this section for non-compactly generated categories %(see Remark \ref{rwg}(2)) 
%and   prepare us to the following sections. 
 Next we define symmetric classes and study their relation to perfect classes, Brown-Comenetz duality,  and adjacent torsion pairs (obtaining a new criterion for the existence of the latter). 
 This gives  one more "description" of a $t$-structure that is right adjacent to a given compactly generated weight structure. %BCD?!
%We do not prove any new results here. %?!

\subsection{On  the existence of adjacent  $t$-structures}\label{sadjt}
%l,r?!

%We recall that Theorem 4.4.2 of \cite{bws} contains the following information on the hearts of adjacent structures. %big categories?!

\begin{pr}\label{phadj}
Let $w$ be left adjacent to a $t$-structure $t$ on $\cu$. Then  $\hrt$ is a full exact subcategory of the (possibly, big) abelian category $\adfu(\hw^{op},\ab)$ (see \S\ref{snotata}).  % whereas

Moreover, $\cu_{w=0}=P_t$ and the functor $H^P=\cu(P,-)$  is isomorphic to $\hrt (H_0^t(P), H_0^t(-))$ for any $P\in \cu_{w=0}$.
%factors through the zeroth $t$-homology $H_0^t$ . 

\end{pr}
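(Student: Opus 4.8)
The plan is to reduce everything to orthogonality relations together with results already established, with essentially no computation. First I would identify $\cu_{w=0}$ with $P_t$. By Proposition \ref{pbw}(\ref{iort}) we have $\cu_{w\le 0}={}^\perp\cu_{w\ge 1}$; the adjacency hypothesis $\cu_{w\ge 0}=\cu^{t\le 0}$ (Definition \ref{dwso}(\ref{idadj})) gives $\cu_{w\ge 1}=\cu_{w\ge 0}[1]=\cu^{t\le 0}[1]=\cu^{t\le -1}$, while $\cu_{w\ge 0}=\cu^{t\le 0}={}^\perp\cu^{t\ge 1}$ by Remark \ref{rtst1}(\ref{it1}). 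Hence $\cu_{w=0}=\cu_{w\ge 0}\cap\cu_{w\le 0}={}^\perp\cu^{t\ge 1}\cap{}^\perp\cu^{t\le -1}={}^\perp(\cu^{t\ge 1}\cup\cu^{t\le -1})=P_t$, which gives the first part of the "moreover" claim; the isomorphism $H^P\cong\hrt(H_0^t(P),H_0^t(-))$ for $P\in\cu_{w=0}=P_t$ is then exactly (a part of) Proposition \ref{pgen}(\ref{ipgen2}).

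Next I would exhibit the embedding. Put $H_Y=\cu(-,Y)$ and let $\Phi\colon\hrt\to\adfu(\hw^{op},\ab)$ send $Y$ to the restriction $H_Y|_{\hw}$; this is visibly functorial in $Y\in\obj\hrt=\cu^{t=0}$. The key observation is that each such $H_Y$ is a $w$-pure cohomological functor, i.e.\ of weight range $[0,0]$ (Definition \ref{drange}): since $Y\in\cu^{t\le 0}=\cu_{w\ge 0}$ and $\cu_{w\le -1}\perp\cu_{w\ge 0}$, the functor $H_Y$ annihilates $\cu_{w\le -1}$; and since $Y\in\cu^{t\ge 0}$, $\cu^{t\le -1}=\cu^{t\le 0}[1]\perp\cu^{t\ge 0}$ (Definition \ref{dtstr}(iii)), and $\cu_{w\ge 1}=\cu^{t\le -1}$, it also annihilates $\cu_{w\ge 1}$. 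Thus, via the equivalence of Remark \ref{rwrange}(2) between the (possibly big) category of $w$-pure cohomological functors $\cu\to\ab$ and $\adfu(\hw^{op},\ab)$ (cf.\ Proposition \ref{pwrange}(\ref{iwrpure})), $\Phi$ is simply the composite of $Y\mapsto H_Y$ with restriction to $\hw$.

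Full faithfulness of $\Phi$ then follows by combining two facts: that equivalence is realized by restriction to $\hw$, so restriction is fully faithful on $w$-pure functors and $\adfu(\hw^{op},\ab)(\Phi(Y),\Phi(Y'))$ is the group of natural transformations $H_Y\to H_{Y'}$; and the contravariant Yoneda lemma identifies the latter with $\cu(Y,Y')=\hrt(Y,Y')$. For exactness, a short exact sequence $0\to Y'\to Y\to Y''\to 0$ in the abelian category $\hrt$ (Remark \ref{rtst1}(\ref{it4})) comes from a distinguished triangle $Y'\to Y\to Y''\to Y'[1]$; applying $\cu(P,-)$ for $P\in\cu_{w=0}=P_t$ gives a long exact sequence whose neighbouring terms $\cu(P,Y''[-1])$ and $\cu(P,Y'[1])$ vanish, because $Y''[-1]\in\cu^{t\ge 1}$, $Y'[1]\in\cu^{t\le -1}$ and $P_t\perp(\cu^{t\ge 1}\cup\cu^{t\le -1})$ by Definition \ref{dpt}. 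Hence $0\to\cu(P,Y')\to\cu(P,Y)\to\cu(P,Y'')\to 0$ is exact for all $P\in\obj\hw$, which is exactness of $\Phi$ since exactness in $\adfu(\hw^{op},\ab)$ is tested objectwise (see \S\ref{snotata}); a faithful exact functor also reflects exactness, so $\Phi$ realizes $\hrt$ as a full exact subcategory of $\adfu(\hw^{op},\ab)$.

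The only non-formal ingredient above is leaning on Remark \ref{rwrange}(2); the place where a self-contained argument requires real work is fullness of $\Phi$. I would handle it directly as follows: given $Y\in\cu^{t=0}$, pick a $1$-weight decomposition $w_{\le 1}Y\to Y\to w_{\ge 2}Y$ and then a $0$-weight decomposition exhibiting $w_{\le 1}Y$ as $\co(\de\colon P_{-1}\to P_0)$ with $P_{-1},P_0\in\cu_{w=0}$ (using Proposition \ref{pbw}(\ref{iwd0})); then $\cu(P,P_0)\to\cu(P,Y)$ is surjective for every $P\in\cu_{w=0}$ (all obstructions lie in $\cu_{w=0}\perp\cu_{w\ge 1}$, i.e.\ Definition \ref{dwstr}(iii)), so every map $P\to Y$ from $\hw$ factors through a fixed $c_0\colon P_0\to Y$. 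Given a transformation $\phi$ one lifts $\phi(c_0)$ first along $P_0\to w_{\le 1}Y$ (possible since $\phi(c_0)\circ\de=0$ by naturality) and then along $w_{\le 1}Y\to Y$ (the obstruction lies in $\cu((w_{\ge 2}Y)[-1],Y')\subset\cu(\cu^{t\le -1},\cu^{t\ge 0})=\ns$), obtaining $f\colon Y\to Y'$, and checks $\Phi(f)=\phi$ using the factorization property of $c_0$ and naturality of $\phi$. I expect this two-step weight-truncation bookkeeping to be the most delicate point of the alternative route.
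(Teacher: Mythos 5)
Your proposal is correct, and for the ``moreover'' clause it runs along exactly the same lines as the paper: the identification $\cu_{w=0}=P_t$ is an orthogonality computation (as in the paper's citation of Remark \ref{rtst1}(\ref{it1}) and Definition \ref{dwso}(\ref{idadj})), and the isomorphism $H^P\cong\hrt(H_0^t(P),H_0^t(-))$ is Proposition \ref{pgen}(\ref{ipgen2}) in both. Where you genuinely diverge is the first claim: the paper dismisses it with a one-line citation to \cite[Theorem 4.4.2(4)]{bws}, whereas you supply a self-contained argument inside this paper. Your route is a natural one given the machinery the paper has already set up --- observing that $H_Y=\cu(-,Y)$ has weight range $[0,0]$ for $Y\in\cu^{t=0}$ lets you funnel the full-faithfulness through Proposition \ref{pwrange}(\ref{iwrpure})/Remark \ref{rwrange}(2) and Yoneda, and the exactness check is a clean application of $P_t\perp(\cu^{t\ge 1}\cup\cu^{t\le -1})$. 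The hands-on fullness argument you sketch at the end (produce $c_0\colon P_0\to Y$ from a two-step weight truncation, lift $\phi(c_0)$ through $P_0\to w_{\le 1}Y$ using $\phi(c_0)\circ\de=0$, then through $w_{\le 1}Y\to Y$ since the obstruction lies in $\cu(\cu^{t\le -1},\cu^{t\ge 0})=\ns$) is also sound; just note that the companion faithfulness check is not written out but works by the same pattern (if $f\circ c_0=0$, then $f$ successively factors through $Q_1\in\cu^{t\le -1}$ and then through $w_{\ge 2}Y\in\cu^{t\le -2}$, both orthogonal to $Y'\in\cu^{t\ge 0}$). What the paper's approach buys is brevity (deferring to \cite{bws}); what yours buys is a proof that does not depend on an external source, at the cost of the weight-truncation bookkeeping you flagged.
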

\begin{proof}
The first part of the assertion is given by part 4 of   \cite[Theorem 4.4.2]{bws}. 

The equality $\cu_{w=0}=P_t$   is immediate from  Remark \ref{rtst1}(\ref{it1}) (recall Definition \ref{dwso}(\ref{idadj})). 

The last of the assertions is given by Proposition \ref{pgen}(\ref{ipgen2}).
\end{proof}

As we have essentially already noted (see Proposition \ref{phop}(\ref{itp5})), if a weight structure possesses a left adjacent $t$-structure then it is coproductive (i.e., the associated torsion pair is coproductive). Now we prove that %in a perfectly generated $\cu$ 
the converse implication is valid also if $\cu$ satisfies the Brown representability property (in particular, if it is compactly generated or {\it perfectly generated} in the sense of   Definition \ref{dwg}(\ref{idpc}) below).

\begin{theo}\label{tadjt} Assume that $\cu$ %is closed under coproduct and 
satisfies the Brown representability condition. % (see Proposition \ref{pcomp}(II.1); so, $\cu$ has coproducts, and it suffices to assume that $\cu$ is perfectly generated). 

1. Then for a weight structure $w$ on $\cu$ there exists a $t$-structure right adjacent to it if and only if $w$ is smashing. %+ $hrt\cong$?!

2. If a right adjacent $t$ exists then $\hrt$ is equivalent to the full subcategory of  $\adfu(\hw^{op},\ab)$ consisting of those functors that sends $\hw$-coproducts into products.
\end{theo}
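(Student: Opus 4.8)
The plan is to handle the two implications of assertion~1 separately and then deduce assertion~2 from the structure theory already in place. For the ``only if'' direction, suppose a $t$-structure $t$ right adjacent to $w$ exists; then $\cu_{w\ge 0}=\cu^{t\le 0}$ is the left-hand class of the torsion pair associated with $t$, hence coproductive by Proposition~\ref{phop}(2), and since $\cu$ has coproducts (this is built into the Brown representability hypothesis, cf.\ Definition~\ref{dcomp}) this means exactly that $w$ is smashing. Conversely, assume $w$ is smashing. The strategy is to produce, for every $M\in\obj\cu$, a distinguished triangle $L\to M\to N\to L[1]$ with $L\in\cu_{w\ge 0}$ and $N\in (\cu_{w\ge 0})^{\perp}$; since these two classes are orthogonal and Karoubi-closed (Proposition~\ref{pbw}(\ref{iextw}) and the definition of $D^{\perp}$), Proposition~\ref{phop}(9) then gives that $(\cu_{w\ge 0},(\cu_{w\ge 0})^{\perp})$ is a torsion pair, and because $\cu_{w\ge 0}[1]\subset\cu_{w\ge 0}$ (Definition~\ref{dwstr}(ii)) Remark~\ref{rtst1}(\ref{it1}) shows $(\cu_{w\ge 0},(\cu_{w\ge 0})^{\perp}[1])$ is a $t$-structure $t$, which satisfies $\cu^{t\le 0}=\cu_{w\ge 0}$ and hence is right adjacent to $w$.

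To build the triangle I would apply virtual $t$-truncations to the representable cp functor $H=H_M=\cu(-,M)\colon\cu\to\ab$. As $w$ is smashing, Corollary~\ref{cvttbrown}(I.1) (via Proposition~\ref{ppcoprws}(\ref{icopr6p}) and Brown representability) shows that $\tau^{\ge 1}(H)$ is representable; pick $N=N_M$ with $\cu(-,N)\cong\tau^{\ge 1}(H)$. By Proposition~\ref{pwrange}(\ref{iwrvt}) the functor $\tau^{\ge 1}(H)=\tau^{\ge -(-1)}(H)$ is of weight range $\le -1$, so $\cu(-,N)$ annihilates $\cu_{w\ge 0}$, i.e.\ $N\in(\cu_{w\ge 0})^{\perp}$. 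The canonical transformation $H\to\tau^{\ge 1}(H)\cong\cu(-,N)$ appearing in the long exact sequence~\eqref{evtt} corresponds by the Yoneda lemma to a morphism $f\colon M\to N$ (with $\eta_X=f_*$ for all $X$), which I complete to a distinguished triangle $L\to M\xrightarrow{f}N\to L[1]$.

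The crux, and the step I expect to be the main obstacle, is to check that $L\in\cu_{w\ge 0}$, i.e.\ (Proposition~\ref{pbw}(\ref{iort})) that $\cu(X,L)=0$ for every $X\in\cu_{w\le -1}$. Here I would use the explicit description of $\tau^{\ge 1}$ from Definition~\ref{dvtt} together with the form of the transformation in~\eqref{evtt} (as constructed in \cite[\S2.5]{bws}): for $X\in\cu_{w\le 0}$ one may take $w_{\le 0}X=X$, so that $\tau^{\ge 1}(H)(X)=\imm\bigl(H(X)\to H(w_{\le -1}X)\bigr)$ and the comparison map $\eta_X\colon H(X)\to\tau^{\ge 1}(H)(X)$ is the corestriction of $H(X)\to H(w_{\le -1}X)$, hence an epimorphism; for $X\in\cu_{w\le -1}$ one may moreover take $w_{\le -1}X=X$, whereupon $\eta_X$ becomes an isomorphism. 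Thus $f_*\colon\cu(X,M)\to\cu(X,N)$ is an isomorphism for $X\in\cu_{w\le -1}$ and an epimorphism for $X\in\cu_{w\le 0}$. Since $\cu_{w\le -1}[1]=\cu_{w\le 0}$, applying $\cu(X,-)$ to the triangle for $X\in\cu_{w\le -1}$ yields an exact sequence $\cu(X[1],M)\xrightarrow{f_*}\cu(X[1],N)\to\cu(X,L)\to\cu(X,M)\xrightarrow{f_*}\cu(X,N)$ in which the right $f_*$ is bijective (forcing $\cu(X,L)\to\cu(X,M)$ to vanish, so the map $\cu(X[1],N)\to\cu(X,L)$ is surjective) and the left $f_*$ is surjective (forcing that same map to be zero); hence $\cu(X,L)=0$, as desired.

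For assertion~2, once $t$ is right adjacent to $w$, Proposition~\ref{phadj} realizes $\hrt$ as a full exact subcategory of $\adfu(\hw^{op},\ab)$, the embedding sending $N\in\cu^{t=0}$ to the restricted Yoneda functor $\cu(-,N)|_{\hw}$ (equivalently, using the last statement of that proposition together with Proposition~\ref{pgen}(\ref{ipgen2}), to $\hrt(H^t_0(-),N)$). Since $w$ is smashing by assertion~1, $\hw$ is closed under $\cu$-coproducts and $\hw\hookrightarrow\cu$ respects them (Proposition~\ref{ppcoprws}(\ref{icoprhw})), so each $\cu(-,N)|_{\hw}$ carries $\hw$-coproducts to products of abelian groups; this gives one inclusion of essential images. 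For the reverse one, given an additive $G\colon\hw^{op}\to\ab$ that sends coproducts to products, I would take the associated pure cohomological functor $H_G$: by Proposition~\ref{ppcoprws}(\ref{icopr5p}) it is a cp functor, hence representable, say $H_G\cong\cu(-,N)$; being pure of weight range $[0,0]$ it annihilates $\cu_{w\le -1}$ and $\cu_{w\ge 1}$, so $N\in(\cu_{w\le -1})^{\perp}\cap(\cu_{w\ge 1})^{\perp}=\cu^{t\le 0}\cap\cu^{t\ge 0}=\cu^{t=0}$; and $H_G|_{\hw}\cong G$ because the weight complex of an object of $\cu_{w=0}$ is concentrated in degree $0$ (Remark~\ref{rwc}), so $G$ lies in the essential image of $\hrt$. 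This completes the plan.
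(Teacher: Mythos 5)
Your proof is correct and takes essentially the same route as the paper's: both reduce the converse direction to the representability of a virtual $t$-truncation of $H_M=\cu(-,M)$, obtain this from $w$ being smashing via Corollary~\ref{cvttbrown}(I.1), build the $t$-decomposition triangle from the Yoneda morphism, and then invoke Proposition~\ref{phop}(9) together with Remark~\ref{rtst1}(\ref{it1}); part 2 is handled in both cases via Proposition~\ref{phadj} and Proposition~\ref{ppcoprws}(\ref{icopr5p},\ref{icoprhw}). The only notable variation is that you inline the content of Proposition~\ref{padjt} and work with the "upper" truncation $\tau^{\ge 1}H_M$, representing the third vertex $N\in(\cu_{w\ge 0})^{\perp}$ and then verifying $L\in\cu_{w\ge 0}$ by an explicit computation of $\eta_X$ from Definition~\ref{dvtt}, whereas the paper uses $\tau^{\le 0}H_M$ to represent the first vertex $M^{\le 0}\in\cu_{w\ge 0}$ and then checks $\co(f)\in(\cu_{w\ge 0})^{\perp}$ by invoking the long exact sequence~\eqref{evtt} at the level of functors; as Proposition~\ref{padjt}(\ref{ile5}) records, these two representabilities (and hence the two dual verifications) are equivalent, so this is a cosmetic rather than a substantive difference.
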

\begin{proof}

1. The "only if" assertion is immediate from Proposition \ref{phop}(\ref{itp5}) (and very easy for itself).

Conversely,  assume that $w$ is smashing.  According to Proposition \ref{padjt}(\ref{ile4}) below it suffices to verify that for any $M\in \obj \cu$ the functor $\tau^{\le 0}H_M$ is representable (in $\cu$; recall that $H_M$ denotes the functor $\cu(-,M)$). The latter statement is given by Corollary \ref{cvttbrown}(I.1).

2. Applying Proposition \ref{phadj} we obtain that is suffices to find out which functors $\hw\opp\to \ab$ are represented by objects of $\hrt$. Since the embedding $\hw\to \cu$ respects coproducts (see Proposition \ref{ppcoprws}(\ref{icopr1})), all these functors send $\hw$-coproducts into $\ab$-products.

Conversely, let  $\ca:\hw\opp\to \ab$ be an additive functor converting coproducts into products. Then the corresponding $H_\ca$ is a cp functor (see Definition \ref{dcomp}(\ref{idcc})) according to Proposition \ref{ppcoprws}(\ref{icopr5p}). Hence it is representable by some $M\in \obj \cu$. Since $H_\ca$ is also of weight range $[0,0]$ (see Proposition \ref{pwrange}(\ref{iwrpure})), we have $M\in \cu^{t=0}$. % (since $t$ and $w$ are torsion pair: essentially?! References?!). 

\end{proof}
 
\begin{rema}\label{rstable}

1. Proposition \ref{pwsym}(\ref{iwsymcgwt}) below gives some more information on the adjacent weight structure $t$ whenever $w$ is  compactly generated (see Remark \ref{rwhop}(1)). %  (cf. Remark \ref{rnewt}(1) below). 

 2. Recall from Proposition \ref{prtst}(\ref{itt4}, \ref{it4sm}) %Remark \ref{rtst}(5))
 that "shift-stable" weight structures are in one-to-one correspondence with exact embeddings $i:L\to \cu$ possessing  %(exact)
  right adjoints.  Hence applying our theorem  in this case we obtain the following: if %the embedding  $i:L\to \cu$ 
 $i$ possesses a right adjoint respecting coproducts and $\cu$ satisfies 
%{rstws}
the Brown representability condition then for the full triangulated subcategory $R$ of $\cu$ with $\obj R=L\perpp$ the embedding $R\to \cu$ possesses a right adjoint also. 
%then $i$ also possesses a left adjoint (that certainly respects coproducts also). Note also that   in this case $i$ can be completed to a {\it gluing datum} %{cd} for the statement?!!
Thus $R$ is {\it admissible} in $\cu$ in the sense of  \cite{bondkaprserr} and the embedding $R\to\cu$ may be completed  to a {\it gluing datum} (cf. \cite[\S1.4]{bbd} or \cite[\S9.2]{neebook}). %, or \cite[Definition 1.5.1]{bmm}). 
 So we re-prove Corollary 2.4 of \cite{nisao}.
\end{rema}

\begin{coro}\label{cdualt}
Let $\cu$ be a category satisfying the dual Brown representability property (recall that this is the case if $\cu$ is compactly generated; see  Proposition \ref{pcomp}(II.1)). %WG+ comb?model?!

1. Then for a weight structure $w$ on $\cu$ there exists a $t$-structure left adjacent to it if and only if $w$ is cosmashing. %+ $hrt\cong$?!

2. If  %a left adjacent $t$ exists
 these equivalent conditions are fulfilled then $\hrt$  is anti-equivalent to the subcategory of  $\adfu(\hw,\ab)$ consisting of those functors that respect products.

\end{coro}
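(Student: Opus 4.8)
The plan is to deduce the corollary from Theorem \ref{tadjt} by passing to the opposite category. By Definition \ref{dcomp}(\ref{idbrown}) the dual Brown representability property for $\cu$ is exactly the Brown representability property for $\cu\opp$, and by Proposition \ref{pbw}(\ref{idual}) the weight structure $w$ induces the opposite weight structure $w\opp$ on $\cu\opp$, for which $(\cu\opp)_{w\opp\le 0}=\cu_{w\ge 0}$, $(\cu\opp)_{w\opp\ge 0}=\cu_{w\le 0}$, and the heart is $\hw\opp$ (the opposite category of $\hw$).

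First I would check that $w$ is cosmashing if and only if $w\opp$ is smashing. Unwinding Definition \ref{dhopo}(1) (cf.\ Remark \ref{rwhop}(1)), $w$ cosmashing means that $\cu$ has products and $\cu_{w\le 0}$ is productive; since the products of $\cu$ are the coproducts of $\cu\opp$ and productivity of $\cu_{w\le 0}$ is not affected by shifts, this is precisely the statement that $\cu\opp$ has coproducts and the right orthogonal class of the torsion pair associated with $w\opp$ is coproductive, i.e.\ that $w\opp$ is smashing. Then I would apply Theorem \ref{tadjt} to $(\cu\opp,w\opp)$: it provides a $t$-structure $t^*$ right adjacent to $w\opp$ exactly when $w\opp$ is smashing, and in that case the heart of $t^*$ is equivalent to the full subcategory of $\adfu((\hw\opp)\opp,\ab)=\adfu(\hw,\ab)$ consisting of those functors that send $\hw\opp$-coproducts into products.

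It then remains to transport this back along the standard bijection between $t$-structures on $\cu$ and on $\cu\opp$ obtained by reversing the arrows in the axioms (cf.\ Remark \ref{rtst1}(\ref{itd})): the $t$-structure $t^*$ corresponds to the $t$-structure $t$ on $\cu$ with $\cu^{t\le 0}=(\cu\opp)^{t^*\ge 0}$ and $\cu^{t\ge 0}=(\cu\opp)^{t^*\le 0}$, which in particular has the same heart as $t^*$ (as a subclass of $\obj \cu$); I would verify this directly. Under this correspondence the equality $(\cu\opp)_{w\opp\ge 0}=(\cu\opp)^{t^*\le 0}$ expressing that $t^*$ is right adjacent to $w\opp$ becomes $\cu_{w\le 0}=\cu^{t\ge 0}$, that is, $t$ is left adjacent to $w$ in the sense of Definition \ref{dwso}(\ref{idadj}); this gives part 1. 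For part 2, since $t$ and $t^*$ share a heart, the heart $\hrt$ of $t$ is the opposite category of the heart of $t^*$; and since a $\hw\opp$-coproduct is the same object as a $\hw$-product, the requirement ``sends $\hw\opp$-coproducts into products'' on a functor $\hw\to\ab$ reads ``respects products''. Hence $\hrt\opp$ is equivalent to the full subcategory of $\adfu(\hw,\ab)$ consisting of the product-respecting functors, which is the asserted anti-equivalence.

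The argument is entirely formal, so I do not expect a real obstacle; the one point that demands attention is the bookkeeping of conventions — the interchange of products with coproducts and of covariant with contravariant functors, and (in view of the shift-asymmetry between weight structures and $t$-structures recalled in Remark \ref{rwsts}(1)) making sure that ``right adjacent'' for $w\opp$ dualizes to ``left adjacent'' for $w$, and not to ``right adjacent''.
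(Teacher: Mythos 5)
Your proof is correct and is precisely the argument the paper intends — the paper's own proof just says ``this is the categorical dual to Theorem \ref{tadjt}'', and you have spelled out exactly that dualization, with the bookkeeping of conventions (opposite weight structure, cosmashing $\leftrightarrow$ smashing, right adjacent for $w\opp$ $\leftrightarrow$ left adjacent for $w$, and the passage from equivalence to anti-equivalence for the hearts) carried out correctly.
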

\begin{proof}
This %statement is certainly 
 is just the categorical dual to Theorem \ref{tadjt}.
\end{proof}

Now we re-formulate the existence of a $t$-structure right adjacent to $w$ in terms of virtual $t$-truncations; this finishes the proof of  Theorem \ref{tadjt}.

\begin{pr}\label{padjt}
Let $w$ be a weight structure for $\cu$, $M\in \obj \cu$, and assume that for the functor $H_M=\cu(-,M)$ its virtual $t$-truncation $\tau^{\le 0}H_M$ is represented by some object $M^{\le 0}$ of $\cu$. 

Then the following statements are valid.
\begin{enumerate}
\item\label{ile1} $M^{\le 0}$ belongs to $\cu_{w\ge 0}$.

\item\label{ile2}
The natural transformation $\tau^{\le 0 }(H_M)\to H_M$ mentioned in (\ref{evtt}) is induced by some $f\in \cu(M^{\le 0},M)$. 

\item\label{ile3} The object $\co(f)$ belongs to $\cu_{w\ge 0}^{\perp}$.

\item\label{ile4} There exists a $t$-structure $t$ (on $\cu$) right adjacent to $w$ if and only if  the functor  $\tau^{\le 0}H_{M'}$ is $\cu$-representable for any object $M'$ of $\cu$. 

\item\label{ile5} %%Moreover, 
For $M'\in \obj \cu$ the representability of the  functor  $\tau^{\le 0}H_{M'}$ is equivalent to that of  $\tau^{\ge 1}H_{M'}$.

\end{enumerate}

\end{pr}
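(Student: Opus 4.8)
The plan is to exhibit the distinguished triangle $M^{\le 0}\stackrel{f}{\to}M\to \co(f)\to M^{\le 0}[1]$ (with $f$ the morphism of part \ref{ile2}) as a $t$-decomposition of $M$ with respect to the couple $(\cu_{w\ge 0},(\cu_{w\ge 0})^{\perp})$, and to deduce all five assertions from the recollection on virtual $t$-truncations in \S\ref{svtt}. The easy parts come first. For \ref{ile1}: by Proposition \ref{pwrange}(\ref{iwrvt}) the functor $\tau^{\le 0}H_M$ is of weight range $\ge 0$, hence annihilates $\cu_{w\le -1}$; since it is represented by $M^{\le 0}$, this says $\cu_{w\le -1}\perp M^{\le 0}$, i.e. $M^{\le 0}\in (\cu_{w\le -1})^{\perp}=\cu_{w\ge 0}$ by Proposition \ref{pbw}(\ref{iort}). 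For \ref{ile2}: the long exact sequence (\ref{evtt}) with $n=0$ contains a canonical natural transformation $\tau^{\le 0}H_M\to H_M$; as both sides are representable, the Yoneda lemma produces a unique $f\in\cu(M^{\le 0},M)$ inducing it (and the transformation is then $f_*$).

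The crux is \ref{ile3}. I would argue directly from the construction in Definition \ref{dvtt} together with its independence of the choices made (Proposition \ref{pwfil}(1)): for $N\in\cu_{w\ge -1}$ one may take $w_{\ge -1}N=N$, so that $\tau^{\le 0}H_M(N)=\imm(H_M(w_{\ge 0}N)\to H_M(N))$ is literally a subobject of $H_M(N)$, the transformation $\tau^{\le 0}H_M\to H_M$ being this inclusion; and for $N\in\cu_{w\ge 0}$ one may additionally take $w_{\ge 0}N=N$, making the transformation an isomorphism. Translating through part \ref{ile2}, $f_*\colon\cu(N,M^{\le 0})\to\cu(N,M)$ is injective for $N\in\cu_{w\ge -1}$ and bijective for $N\in\cu_{w\ge 0}$. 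Now fix $N\in\cu_{w\ge 0}$ and apply $\cu(N,-)$ to the triangle; using $\cu(N,M^{\le 0}[1])\cong\cu(N[-1],M^{\le 0})$ and $\cu(N,M[1])\cong\cu(N[-1],M)$ with $N[-1]\in\cu_{w\ge -1}$, surjectivity of $f_*$ at $N$ and injectivity at $N[-1]$ force $\cu(N,\co(f))=0$, i.e. $\co(f)\in(\cu_{w\ge 0})^{\perp}$.

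Granting \ref{ile1}--\ref{ile3}, part \ref{ile4} is immediate. The ``only if'' direction is Proposition \ref{pwfil}(4) (or the sharper \cite[Proposition 2.5.4(1)]{bger}), which represents $\tau^{\le -n}H_{M'}$ by $t^{\le -n}M'$ once a right adjacent $t$ exists. Conversely, if all $\tau^{\le 0}H_{M'}$ are representable, then \ref{ile1}--\ref{ile3} applied to each $M'$ furnish, for every $M'\in\obj\cu$, a distinguished triangle $M'^{\le 0}\to M'\to \co(f_{M'})\to M'^{\le 0}[1]$ with $M'^{\le 0}\in\cu_{w\ge 0}$ and $\co(f_{M'})\in(\cu_{w\ge 0})^{\perp}$; since $\cu_{w\ge 0}$ is Karoubi-closed (Proposition \ref{pbw}(\ref{iextw})), so is $(\cu_{w\ge 0})^{\perp}$, and the two classes are orthogonal by definition, so Proposition \ref{phop}(9) makes $(\cu_{w\ge 0},(\cu_{w\ge 0})^{\perp})$ a torsion pair; as $\cu_{w\ge 0}[1]\subset\cu_{w\ge 0}$, Remark \ref{rtst1}(\ref{it1}) upgrades it to a $t$-structure $t$ with $\cu^{t\le 0}=\cu_{w\ge 0}$, which is precisely the statement that $t$ is right adjacent to $w$. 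Finally, for \ref{ile5}: comparing the long exact sequence (\ref{evtt}) for $H_{M'}$ with the one obtained by applying the $\cu(-,-)$ functors to the triangle of part \ref{ile2} shows that, when $\tau^{\le 0}H_{M'}$ is represented by $M'^{\le 0}$, the functor $\tau^{\ge 1}H_{M'}$ is represented by $\co(f_{M'})$; running the mirror argument --- starting from the transformation $H_{M'}\to\tau^{\ge 1}H_{M'}$ of (\ref{evtt}), a representing object $R$ of $\tau^{\ge 1}H_{M'}$, and the fibre of the resulting $M'\to R$ --- identifies a representing object for $\tau^{\le 0}H_{M'}$, giving the converse implication.

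The step I expect to be the real obstacle is \ref{ile3}: pinning down that the transformation $\tau^{\le 0}H_M\to H_M$ behaves on $\cu_{w\ge -1}$ and $\cu_{w\ge 0}$ exactly as claimed --- equivalently, that $\co(f)$ genuinely \emph{represents} $\tau^{\ge 1}H_M$ and does not merely receive a map from it --- which forces one to be careful with the shift conventions in (\ref{evtt}) and with the explicit description of the connecting transformations borrowed from \cite{bws} and \cite{bger}. The remaining bookkeeping (the representability bootstrap in \ref{ile4} and the mirror argument in \ref{ile5}) is then formal.
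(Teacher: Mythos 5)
Your proof is correct and follows the same overall strategy as the paper's: establish \ref{ile1}--\ref{ile3} to produce a distinguished triangle that serves as a candidate $t$-decomposition, then deduce \ref{ile4} by verifying the axioms, and then handle \ref{ile5} by identifying $\co(f)$ as a representative of $\tau^{\ge 1}H_{M'}$.

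The one genuine variation is in part \ref{ile3}: you unwind the transformation $\tau^{\le 0}H_M\to H_M$ explicitly (it becomes the inclusion $\imm(H_M(w_{\ge 0}N)\to H_M(N))\hookrightarrow H_M(N)$ for $N\in\cu_{w\ge -1}$ and an isomorphism on $\cu_{w\ge 0}$), whereas the paper instead combines the short exact sequence (\ref{eshort}) with (\ref{evttp}) and the single fact that $\tau^{\ge 1}(H_M)$ has weight range $\le -1$ (Proposition \ref{pwrange}(\ref{iwrvt})) and so kills $\cu_{w\ge 0}$. Both work, and your computation is correct; the paper's route is marginally slicker since it avoids pinning down the transformation on objects.

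Part \ref{ile5} is the only place you are genuinely under-arguing. Asserting that \emph{comparing} the two long exact sequences identifies $H_{\co(f_{M'})}$ with $\tau^{\ge 1}H_{M'}$ is not yet a five-lemma argument: two exact sequences that agree in two out of every three terms can still have non-isomorphic third terms, so a morphism between the cones that is compatible with the maps from $H_{M'}$ must actually be produced. One way to supply it here is to note that $\co(f_{M'})\in\cu_{w\ge 0}^\perp$ makes $H_{\co(f_{M'})}$ of weight range $\le -1$, so $\tau^{\ge 1}H_{\co(f_{M'})}\cong H_{\co(f_{M'})}$ by Proposition \ref{pwrange}(\ref{iwridemp}), and applying the functorial $\tau^{\ge 1}$ to the transformation $H_{M'}\to H_{\co(f_{M'})}$ furnishes the compatible $\tau^{\ge 1}H_{M'}\to H_{\co(f_{M'})}$. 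The paper sidesteps all of this by citing Theorem 2.3.1(III.4) of \cite{bger}, which asserts exactly the needed uniqueness.
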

\begin{proof}
1.  $\tau^{\le 0}H_M$ is of weight range $\ge 0$ (see Proposition \ref{pwrange}(\ref{iwrvt})). Hence the assertion follows from part \ref{iwfil3} of the same proposition.

2. Immediate from the Yoneda lemma.

3. For any $N\in \obj \cu$ applying the functor $H^N=\cu(N,-)$ to the distinguished triangle $M^{\le 0}\to M \to M^{\ge 1} \to M^{\le 0}[1]$ one obtains a long exact sequence that yields the following short one: \begin{equation} \label{eshort}
%\cok(H^N(M^{\le 0})\stackrel{h^1_N}{\to} H^N(M))\hookrightarrow H^N(\co(f))\twoheadrightarrow \ke (H^N(M^{\le 0}[1])\stackrel{h^2_N}{\to} H^N(M[1]))
\begin{gathered} 0\to \cok(H^N(M^{\le 0})\stackrel{h^1_N}{\to} H^N(M))\to H^N(\co(f)) \\
 \to \ke (H^N(M^{\le 0}[1])\stackrel{h^2_N}{\to} H^N(M[1]))\to 0. 
\end{gathered}
\end{equation} 
So, for any $N\in \cu_{w\ge 0}$ we should check that $h^1_N$ is surjective and $h^2_N$ is injective.

Applying  (\ref{evtt}) to the functor $H_M$ (in the case $n=0$) %and using Proposition \ref{pwfil}(4) 
we obtain a long exact sequence of functors 
\begin{equation} \label{evttp} 
\dots\to \tau^{\le 0 }(H_M)\to H_M\to \tau^{\ge 1}(H_M)\to \tau^{\le 0}(H_M)\circ [1]\to H_M\to \dots
\end{equation}
 Applying this sequence of functors to $N$ we obtain that the surjectivity of  $h^1_N$ along with the injectivity of $h^2_N$ is equivalent to  $\tau^{\ge 1}(H_M)(N)=\ns$. So, recalling Proposition \ref{pwrange}(\ref{iwrvt}) once again (to obtain that   $\tau^{\ge 1}(H_M)$ is of weight range $\le -1$) we conclude the proof.

4. The   "only if" part of the assertion is immediate from Proposition \ref{pwfil}(4).

To prove the converse implication we should check that the couple $(\cu_{w\ge 0}, \cu_{w\ge 0}^{\perp}[1])$ is a $t$-structure if our representability assumption is fulfilled. It is easily seen that the only non-trivial axiom check here is the existence of $t$-decompositions (see Definition \ref{dtstr}), which is given by the previous assertion.

5. If  $\tau^{\le 0}H_{M'}$ is representable then the previous assertions imply the existence of a distinguished triangle $M'^{\le 0}\to M' \to M'^{\ge 1} \to M'^{\le 0}[1]$ with $M'\in \cu_{w\ge 0}^{\perp}$. Then the object $M'^{\ge 1} $ represents the functor  $\tau^{\ge 1}H_{M'}$ according to Theorem 2.3.1(III.4) of \cite{bger} (and so, $\tau^{\ge 1}H_{M'}$ is representable). The proof of the converse implication is similar.
\end{proof}

Now we describe one more application of Proposition \ref{padjt}. It relies on a modified version of the Brown representability property that we will now define.

\begin{defi}\label{dsatur} %Restrict to the noetherian case????!!!
Let $R$ be an associative commutative unital ring, and assume that $\cu$ is $R$-linear.

1. We will say that an $R$-linear cohomological functor $H$ from $\cu$ into $R-\modd$ is  {\it of $R$-finite type} whenever for any $M\in \obj\cu$ the $R$-module $H(M)$ is finitely generated and $H(M[i])=\ns$ for almost all $i\in \z$.  

2. We will say that $\cu$ is {\it $R$-saturated} if the  representable functors from $\cu$ are exactly all the  $R$-finite type ones.

3. The symbol $\adfur(C,D)$ will denote the (possibly, big) category of $R$-linear (additive) functors from $C$ into $D$ whenever $C$ and $D$ are $R$-linear categories.   

4. We will write $R-\mmodd$ for the category of finitely generated $R$-modules.

\end{defi}

\begin{pr}\label{psatur}
Assume that $\cu$ is $R$-linear and endowed with a bounded weight structure $w$.

I. Then all virtual $t$-truncations of functors of $R$-finite type are of $R$-finite type also.

II. Assume in addition that $\cu$ is $R$-saturated.
 %and endowed with a bounded weight structure $w$. % that is bounded either from above or from below. 
Then the following statements are valid.

1. For any $i\in \z$ and $M\in \obj \cu$ the functors $\tau^{\le -i }(H_M)$  and   $\tau^{\ge - i }(H_M)$   are representable.

2. There exists a $t$-structure right adjacent to $w$.

3. Its heart $\hrt$  naturally embeds  into the category %of %$R$-linear functors from $\hw\opp$ into the category of finitely generated $R$-modules. 
$\adfur(\hw\opp,R-\mmodd)$.
This embedding is essentially surjective %an equivalence?!
 whenever $R$ is noetherian. 
\end{pr}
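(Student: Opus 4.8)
The plan is to treat the three numbered parts (after the "in addition" hypothesis) in order, reducing each to material already developed. For Part I, I would first recall that $R$-finiteness of $H$ means two things: each $H(M)$ is a finitely generated $R$-module, and $H(M[i])$ vanishes for $|i|$ large. Virtual $t$-truncations are defined via images of the maps $H(w_{\le n+1}M)\to H(w_{\le n}M)$ (resp. $H(w_{\ge n}M)\to H(w_{\ge n-1}M)$), so $\tau^{\ge -n}(H)(M)$ and $\tau^{\le -n}(H)(M)$ are subquotients of finitely generated $R$-modules and hence finitely generated, since $R$ is commutative and each $H(w_{\le k}M)$ is finitely generated. For the vanishing-for-almost-all-shifts condition I would invoke boundedness of $w$: given $M$, both $M$ and all $w_{\le k}M$, $w_{\ge k}M$ lie in a fixed $\cu_{[a,b]}$ for $k$ varying in the relevant range, and for $k$ outside $[a,b]$ one can take $w_{\le k}M$ to be $0$ or $M$ itself. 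Combining this with the weight range statements of Proposition \ref{pwrange}(\ref{iwrvt}) (namely $\tau^{\ge -n}(H)$ has weight range $\le n$, $\tau^{\le -m}(H)$ has weight range $\ge m$) and the observation that a functor of bounded weight range kills enough shifts, one concludes $\tau^{\ge -n}(H)(M[i])=0$ for almost all $i$. So virtual $t$-truncations stay of $R$-finite type.

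For Part II.1, I would argue that $H_M=\cu(-,M)$ is an $R$-finite type functor (for $\cu$ that is $R$-saturated, representable functors are precisely these, so in particular every representable functor is of $R$-finite type — one should check this is part of the definition or follows from it; in any case $H_M$ is $R$-linear cohomological, and boundedness of $w$ together with $\cu$ being $\homm$-finite forces $R$-finiteness here, or it is simply built into the saturatedness setup). By Part I, $\tau^{\le -i}(H_M)$ and $\tau^{\ge -i}(H_M)$ are again of $R$-finite type; by the $R$-saturatedness hypothesis they are therefore representable. For Part II.2, I would then apply Proposition \ref{padjt}(\ref{ile4}): the representability of $\tau^{\le 0}H_{M'}$ for every $M'$ is exactly the criterion for the existence of a $t$-structure right adjacent to $w$, and that representability is furnished by Part II.1 (with $i=0$).

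For Part II.3, I would use Proposition \ref{phadj}: since $w$ is left adjacent to the $t$-structure $t$ just constructed, $\hrt$ is a full exact subcategory of $\adfu(\hw\opp,\ab)$, with an object $N\in \cu^{t=0}$ corresponding to the functor $H_0^t$-paired functor $\hrt(H_0^t(-),N)$, equivalently to the restriction to $\hw$ of a representable functor. I would refine this to the $R$-linear setting: every object of $\cu^{t=0}$ represents a functor of $R$-finite type, so its restriction to $\hw\opp$ lands in $\adfur(\hw\opp, R\text{-}\mmodd)$, giving the claimed embedding $\hrt\hookrightarrow \adfur(\hw\opp,R\text{-}\mmodd)$. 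For essential surjectivity when $R$ is noetherian: given $\ca\in\adfur(\hw\opp,R\text{-}\mmodd)$, form the associated pure cohomological functor $H_\ca:\cu\to R\text{-}\modd$ via Definition \ref{drange}(2)/Proposition \ref{ppure}. Because $w$ is bounded, $H_\ca(M)$ is built from finitely many terms $\ca(M^j)$ of a bounded weight complex, each of which is finitely generated over $R$; noetherianity of $R$ is what guarantees that the homology $H_\ca(M)$ (a subquotient of a finitely generated module) is again finitely generated, and the bounded-weight-complex shape gives vanishing of $H_\ca(M[i])$ for almost all $i$. Hence $H_\ca$ is of $R$-finite type, so representable by some $M_0\in\obj\cu$; since $H_\ca$ has weight range $[0,0]$ by Proposition \ref{pwrange}(\ref{iwrpure}), $M_0\in\cu^{t=0}$, and its image under the embedding is (isomorphic to) $\ca$.

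The main obstacle I anticipate is Part II.3's essential surjectivity: one must pin down precisely why $H_\ca$ is of $R$-finite type, which is exactly where noetherianity of $R$ enters (subquotients of finitely generated modules over non-noetherian rings need not be finitely generated), and one must carefully identify $H_\ca$ restricted back to $\hw$ with the original $\ca$ using Remark \ref{rwrange}(2) (the equivalence between pure functors on $\cu$ and additive functors on $\hw$). The first two parts are essentially bookkeeping once Part I is in place; Part I itself is the routine but slightly delicate interplay of finite generation (subquotients) with the bounded-weight-range vanishing.
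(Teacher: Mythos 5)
Your proposal is correct and follows essentially the same route as the paper's proof: Part~I via the observation that the image defining a virtual $t$-truncation is a quotient (hence finitely generated) together with degenerate choices of weight truncations outside a bounded range; Part~II.1--2 by combining Part~I with saturatedness and then Proposition~\ref{padjt}(\ref{ile4}); Part~II.3 via Proposition~\ref{phadj} for the embedding and, for essential surjectivity, building $H_\ca$ and invoking noetherianity to keep its (subquotient) values finitely generated. One small precision worth noting: in your Part~I argument you should say ``quotient'' rather than ``subquotient'' --- images are quotients of the source, which is what makes finite generation automatic without any noetherian hypothesis, whereas subquotients of finitely generated modules over a non-noetherian ring need not be finitely generated (a point you correctly exploit later in Part~II.3, where noetherianity really is needed because homology involves kernels).
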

\begin{proof}
I.  Recall that virtual $t$-truncations (of cohomological functors) are cohomological. Moreover, virtual $t$-truncations of $R$-linear functors are obviously $R$-linear.

Now, let $H$ be a functor of $R$-finite type. It obviously follows  that the values of  $\tau^{\ge  0}(H)$ are finitely generated $R$-modules.
%Also,  the functor $H_M$ is of $R$-finite type y the definition of saturatedness.
%Now, assume that $w$ is bounded above. Then 
Next, for any $N\in \obj \cu$ we can take $w_{\ge 0}(N[j])$ to be $0$ for $j$ small enough and to be equal to $N[j]$ for $j$ large enough (recall that $w$ is bounded);  
%Since the values of virtual $t$-truncations are images 
hence the functor $\tau^{\ge  0}(H)$ is of $R$-finite type. Applying this argument to $H\circ [-i]$ (for $i\in \z$) we obtain that the functor $\tau^{\ge - i}(H)$ is of $R$-finite type also. The proof for $\tau^{\le - i }(H)$ is similar.

II.1. Immediate from assertion I combined with the definition of saturatedness. % once again we obtain the result.

2. According to  Proposition \ref{padjt}(\ref{ile4}), the assertion follows from the previous one.

3. Certainly, restricting functors % representable by objects 
of  $R$-finite type from %$\cu\opp$ to $\hw\opp$ 
 $\cu$ to $\hw$ gives functors of the type described. This restriction gives an embedding of $\hrt$ into %the corresponding category of functors 
 $\adfur(\hw\opp,R-\mmodd)$ according to Proposition \ref{phadj}. Lastly, if a functor $A$ %is a functor from $\hw\opp$ into %finitely generated $R$-modules
belongs to $\adfur(\hw\opp,R-\mmodd)$  and $R$ is noetherian then the pure functor $H_\ca$ (see Definition \ref{drange}(2)) is easily seen to be of $R$-finite type (as a cohomological functor from $\cu$ into $R-\mmodd$), whereas the object representing it belongs to $\cu^{t=0}$ according to Proposition \ref{pwrange}(\ref{iwrpure}).
\end{proof}

\begin{rema}\label{rsatur}
1. Note  now that Corollary 0.5 of \cite{neesat} easily implies that $\cu$ is $R$-saturated whenever $R$ is a noetherian ring, $X$ is a  regular %separated finite-dimensional 
 scheme that is proper over $\spe R$, and $\cu=D^b(X)$ (the bounded derived category of coherent sheaves). Indeed, the %regularity of+??? 
 assumptions on $X$ imply that in this case the derived category of perfect complexes of sheaves equals $\cu$, and then loc. cit. gives the result immediately (cf. Remark \ref{roq}(1) below). %?????{hartbook}, Proposition 6.11A+E6.5, 6.8?! 

%Note also that
Moreover,  in this case $\cu\cong \cu\opp$ (since there exists a dualizing complex on $X$). %structure sheaf is fine?! G/S duality?! Any scheme of finite type over a regular (or even Gorenstein) scheme of finite Krull dimension has a dualizing complex?!
 % hence one can apply the previous proposition (also) to $\cu \opp$ to obtain a left adjacent $t$-structure to any bounded $w$ on $\cu$.
Thus $t$-structures left adjacent to bounded weight structures on $D^b(X)$ exist also. % also.

We recall also that in the case where $R$ is a field the saturatedness is question is given by Corollary 3.1.5 of \cite{bvdb}. Moreover, Theorem 4.3.4 of %\cite{bvdb} for 
 ibid. is a certain a "non-commutative geometric" analogue of this statement. 

2. %If $\cu$ is saturated then $w$ is easily seen to be bounded. % (since representable functors are of $R$-finite type).
Now we discuss possible weight structures on the category $\cu=D^b(X)$ (as above).

We recall that bounded weight structures for $\cu$ are determined by their hearts (see Proposition \ref{pbw}(\ref{igenlm})), whereas the latter are precisely all the  negative additive Karoubi-closed subcategories of $\cu$ that densely generate it (see %Remark 2.1.2(2)?!
Corollary 2.1.2 of \cite{bonspkar}).

Moreover, the author suspects that in the "geometric" examples mentioned above there exists a single "dense" generator $P$ of $\hw$, i.e., all objects of $\hw$ are retracts of (finite) powers of $P$.

%On the other hand, one can also 
One can construct a rich family of bounded weight structures on $D^b(X)$ at least in the case where $X=\p^n$  (for some $n>0$; the author does not know whether it is necessary to assume that $R$ is a field here) since one can use gluing for constructing weight structures (cf. \cite[\S8.2]{bws}).  More generally, it suffices to assume that $D^b(X)$ possesses a {\it full exceptional collection} of objects.  %reference?! explain in more detail?!

Note however that degenerate weight structures are certainly possible in triangulated categories of this type (cf. Proposition \ref{prtst}(\ref{itt4})); so, these weight structures are not bounded. Still the author  conjectures that the boundedness restriction is not actually necessary (the proof may rely on the existence of a {\it strong generator} in the sense of \cite{bvdb} for $\cu$).
%(at least, if $\cu$ has a {\it strong generator} in the sense of \cite{bvdb}).

3. The only examples of  $R$-saturated triangulated categories %(for $R$ not being a field)
for a non-noetherian $R$  known to the author are direct sums of $R/J_i$-saturated categories, where $\{J_i\}$ %are some
 is a finite collection of ideals of $R$ and all $R/J_i$ are noetherian. So, a general $R$ was mentioned in the proposition just for the sake of generality. %and 
Our definition of %this notion 
 $R$-saturatedness  may be "not optimal"; %In particular, if $R$ is not Noetherian then 
 it could make sense to put  finitely presented modules instead of finitely generated ones into the definition. Note however that any length two $\hw$-complex is a weight complex of some object of $\cu$. This allows to describe $\hrt$  completely (cf. Proposition \ref{psatur}(II.3)) for any possible definition of $R$-saturatedness (and can possibly help choosing among the definitions).

4. Actually, all %?!!!
of the statements of this paper may easily be proved in the $R$-linear context (cf. \cite{zvon}). %??!
This is formally a generalization (since one can take $R=\z$); yet Propositions  \ref{psatur} and   \ref{psaturdu} (along with the examples to them) appear to be the only statements for which this setting is really actual. % (at least, for $R$ being a field).

Note also that below we define Brown-Comenetz duals (of objects and functors) "using" the group $\q/\z$. However, the only property of this group that we will actually apply is that it is an injective cogenerator of the category $\ab$. % (see Proposition \ref{psymb}(I.\ref{isbcd}). 
Hence for an $R$-linear category $\cu$ one can replace our Definition \ref{dsym}(\ref{ibcomf},\ref{ibcomo}) below by any its "$R$-linear analogue"; this %appears to be actual if  %the ring 
  may make sense if $R$ is a field.
\end{rema}

%Subsection on preliminaries??
\subsection{On adjacent  weight structures}\label{sadjw}

In this subsection we will assume that $\cu$ is endowed with a $t$-structure $t$.
For the construction of certain weight structures we will need the following statement.

\begin{lem}\label{lconstws}
Assume that certain extension-closed % Karoubi-closed?! ne nado, no pofig?!
classes $\cu_-$ and $\cu_+$ of objects of $\cu$  satisfy %all
 the axioms (i)---(iii) of Definition \ref{dwstr} (for $\cu_{w\le 0}$ and $ \cu_{w\ge 0}$, respectively).
Let us call a $\cu$-distinguished triangle $X\to M\to Y[1]$ a {\it pre-weight decomposition} of $M$ if $X$ belongs to $\cu_-$ and $Y$ belongs to   $\cu'_+$.

Then the following statements are valid.

1. The class $C$ of objects possessing pre-weight decompositions is extension-closed (in $\cu$).

2. Assume that $C$ contains a subclass $\cp$ such that $\cp=\cp[1]$. Then $C$ also contains the object class of the smallest strict triangulated  subcategory of $\cu$ containing $\cp$. 
%$\obj \lan C'\ra_{\cu}$.
%the object class of the smallest strict full subcategory of $\cu$ containing $C'$.

3. Assume that $\cu$ has coproducts, and  $\cu_-$ and $\cu_+$ are coproductive. Then $C$ is coproductive also.
\end{lem}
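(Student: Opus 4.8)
The plan is to treat the three parts in order; part 1 carries the real content and parts 2--3 follow from it almost formally.

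\emph{Part 1.} First note that $0\in C$ via the triangle $0\to 0\to 0$, and that $C$ is strict (an isomorphism transports a pre-weight decomposition), so the only real content is closure under extensions. Given a distinguished triangle $M_1\to M_2\to M_3\xrightarrow{\delta}M_1[1]$ with $M_1,M_3\in C$, I would fix pre-weight decompositions $A_i\xrightarrow{a_i}M_i\to B_i\to A_i[1]$ with $A_i\in\cu_-$ and $B_i\in\cu_+[1]$ for $i=1,3$. Composing $A_3\xrightarrow{a_3}M_3\xrightarrow{\delta}M_1[1]$ with $M_1[1]\to B_1[1]$ gives a morphism in $\homm(\cu_-,\cu_+[2])$; since $\cu_+[2]\subseteq\cu_+[1]$ by axiom (ii) and $\cu_-\perp\cu_+[1]$ by axiom (iii), this composite vanishes, so $\delta\circ a_3$ factors as $a_1[1]\circ\phi$ for some $\phi\colon A_3\to A_1[1]$. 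Completing $\phi$ to a distinguished triangle $A_1\to A_2\to A_3\xrightarrow{\phi}A_1[1]$, extension-closedness of $\cu_-$ yields $A_2\in\cu_-$. The relation $a_1[1]\circ\phi=\delta\circ a_3$ says precisely that the square whose horizontal arrows are $\phi[-1]$ and $\delta[-1]$ and whose vertical arrows are $a_3[-1]\colon A_3[-1]\to M_3[-1]$ and $a_1\colon A_1\to M_1$ commutes (note $\co(\phi[-1])\cong A_2$ and $\co(\delta[-1])\cong M_2$). Applying the $3\times3$ (``nine'') lemma to this square produces a map $a_2\colon A_2\to M_2$ fitting into a distinguished triangle $A_2\xrightarrow{a_2}M_2\to E\to A_2[1]$ together with a distinguished triangle $B_1\to E\to B_3\to B_1[1]$; the latter and extension-closedness of $\cu_+[1]$ give $E\in\cu_+[1]$. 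Thus $A_2\xrightarrow{a_2}M_2\to E$ is a pre-weight decomposition of $M_2$, so $M_2\in C$.

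\emph{Part 2.} Let $\cp'$ be the smallest strict extension-closed subclass of $\obj\cu$ containing $\cp$. Since $C$ is strict and extension-closed by part 1 and contains $\cp$, we get $\cp'\subseteq C$. Moreover $\cp'[1]$ is again strict and extension-closed and contains $\cp[1]=\cp$, so $\cp'\subseteq\cp'[1]$; the symmetric argument with $\cp'[-1]$ gives the reverse inclusion, whence $\cp'=\cp'[1]$. Consequently $\cp'$ is closed under shifts, and it is closed under cones: for $f\colon X\to Y$ with $X,Y\in\cp'$ the object $\co(f)$ is an extension of $X[1]\in\cp'$ by $Y\in\cp'$. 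Hence $\cp'$ is a strict triangulated subcategory of $\cu$ containing $\cp$, so it contains the smallest such subcategory; together with $\cp'\subseteq C$ this proves the claim.

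\emph{Part 3.} Given $M_i\in C$, $i\in I$, with pre-weight decompositions $A_i\to M_i\to B_i\to A_i[1]$, I would take the coproduct of these triangles. By Proposition \ref{pcoprtriang} it is again distinguished, and $\coprod A_i\in\cu_-$ by coproductivity of $\cu_-$, while $\coprod B_i\in\cu_+[1]$ since $[1]$ commutes with coproducts and $\cu_+$ is coproductive. So this is a pre-weight decomposition of $\coprod M_i$, proving that $C$ is coproductive. The main obstacle is the bookkeeping in part 1: extracting the lift $\phi$ from the orthogonality axiom and then reading off the triangle $B_1\to E\to B_3$ from the nine lemma while keeping careful track of which middle map $a_2\colon A_2\to M_2$ one actually controls. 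Parts 2 and 3 are routine once part 1 and Proposition \ref{pcoprtriang} are in hand.
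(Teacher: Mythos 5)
Your proof is correct and follows the same route as the paper, which disposes of part 1 by citing \cite{bonspkar}, Theorem 2.1.1(I.1) (the extension-closedness of the class of objects admitting pre-weight decompositions), and of parts 2--3 by "immediate from assertion 1" and Proposition \ref{pcoprtriang}. Your direct argument for part 1 — lift the connecting morphism through $a_1[1]$ using the orthogonality of $\cu_-$ with $\cu_+[1]$, complete $\phi$ to a triangle whose middle term lies in $\cu_-$, and extract a pre-weight decomposition of $M_2$ from the $3\times3$ diagram — is precisely the proof of the cited statement, and your parts 2 and 3 simply spell out the details the paper leaves implicit.
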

\begin{proof}
1. Immediate from Theorem 2.1.1(I.1)  of \cite{bonspkar}  (cf. also Remark 1.5.5(1) of \cite{bws}).

2. Immediate from assertion 1.

3. Once again, it suffices to recall Proposition \ref{pcoprtriang}. %that coproducts of distinguished triangles are distinguished.
\end{proof}

Now we  prove a simple statement on the existence of $w$ that is left adjacent to $t$.

\begin{pr}\label{pconstrwfromt}
%Let $\du$ be a triangulated category endowed with a $t$-structure $t$.
%Then the following statements are valid. %Define $N$ here??

I. Assume that there exists a weight structure $w$ left adjacent to   $t$. Then for any $M\in  \cu^{t=0}$ there exists an $\hrt$-epimorphism from the $\hrt$-projective object $H_0^t(P)$ into $ M$ for some $P\in P_t$,   and the functor $H_0^t$  induces  an equivalence of $\kar(\hw)$ with the category of projective objects of $\hrt$.  %\footnote{Recall here that $H_0^t(P)$ is projective in $\hrt$.}%?!!

II. The converse implication is valid under any of the following additional assumptions.

1.  $t$ is bounded above (see Definition \ref{dtstr}).  
 
2. There exists %$n\in \z$ 
an integer $n$ such that $\cu^{t\ge n}\perp \cu^{t\le 0}$. %finiteness of cohomological dimension, r.?!
 
\end{pr}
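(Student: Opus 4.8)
The plan is to treat the two parts separately; the work is concentrated in the weight‑decomposition axiom of the candidate weight structure in Part II. For Part I, left adjacency means $\cu_{w\ge 0}=\cu^{t\le 0}$, hence $\cu_{w\ge 1}=\cu^{t\le -1}$. Given $M\in\cu^{t=0}\subset\cu_{w\ge 0}$, I would choose a weight decomposition $w_{\le 0}M\stackrel{c}{\to}M\to w_{\ge 1}M\to w_{\le 0}M[1]$; by Proposition \ref{pbw}(\ref{iwd0}) with $m=n=0$ one gets $P:=w_{\le 0}M\in\cu_{[0,0]}=\cu_{w=0}$, and $\cu_{w=0}=P_t$ by Proposition \ref{phadj}, so $H_0^t(P)$ is $\hrt$‑projective by Proposition \ref{pgen}(\ref{ipgen25}). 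Applying the homological functor $H_0^t$ to this triangle and using $H_0^t(M)=M$ together with the vanishing of $H_0^t$ on $\cu^{t\le -1}=\cu_{w\ge 1}$ (immediate from the uniqueness of $t$‑decompositions, Remark \ref{rtst1}(\ref{it3},\ref{it4})) shows that $H_0^t(c)\colon H_0^t(P)\twoheadrightarrow M$ is an $\hrt$‑epimorphism. For the equivalence statement, Proposition \ref{phadj} (unravelling the isomorphism there via Proposition \ref{pgen}(\ref{ipgen1},\ref{ipgen2}) and $P_t\subset\perpp(\cu^{t\le -1})$) shows that $H_0^t$ restricts to a fully faithful functor $\hw\to\hrt$ whose image consists of projectives by Proposition \ref{pgen}(\ref{ipgen25}); since $\hrt$ is abelian, hence idempotent complete, this extends to a fully faithful functor $\kar(\hw)\to\hrt$ with essential image the closure of $H_0^t(\hw)$ under $\hrt$‑retracts, and this retract closure is all of the category of projective objects of $\hrt$ because, by the epimorphism just produced, every projective splits off some $H_0^t(P)$ with $P\in\cu_{w=0}$.

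For Part II I would take the candidate couple $\cu_{w\ge i}:=\cu^{t\le -i}$, $\cu_{w\le i}:=\perpp(\cu^{t\le -i-1})$. Axioms (i)--(iii) of Definition \ref{dwstr} follow formally from the $t$‑structure axioms and the elementary closure properties of $\perpp(-)$ (Karoubi‑ and extension‑closedness, the inclusions $\cu_{w\le 0}\subset\cu_{w\le 0}[1]$ and $\cu_{w\ge 0}[1]\subset\cu_{w\ge 0}$, and $\cu_{w\le 0}\perp\cu_{w\ge 0}[1]=\cu^{t\le -1}$ by the very definition of $\cu_{w\le 0}$); since $\cu_{w\ge 0}=\cu^{t\le 0}$, a weight structure built this way is automatically left adjacent to $t$. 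The content is the weight‑decomposition axiom (iv). I would apply Lemma \ref{lconstws} with $\cu_-=\cu_{w\le 0}$, $\cu_+=\cu_{w\ge 0}$: the class $C$ of objects admitting a (pre‑)weight decomposition is extension‑closed, so it suffices to show $C=\obj\cu$. Here $\cu_{w\le 0}$ and $\cu_{w\ge 1}$ lie in $C$ trivially. Moreover, for $P\in P_t$ the $t$‑truncation triangle $t^{\le -1}P\to P\to H_0^t(P)\to t^{\le -1}P[1]$ (its middle term being $H_0^t(P)$ up to isomorphism since $P\in\cu^{t\le 0}$, by Proposition \ref{pgen}(\ref{ipgen1})) rotates into a weight decomposition of $H_0^t(P)$, because $P\in P_t\subset\perpp(\cu^{t\le -1})=\cu_{w\le 0}$ and $t^{\le -1}P[1]\in\cu^{t\le -2}\subset\cu^{t\le -1}=\cu_{w\ge 1}$; hence $H_0^t(P)\in C$ for every $P\in P_t$.

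The key step is then $\cu^{t=0}\subset C$. Given $N\in\cu^{t=0}$, the hypothesis provides an $\hrt$‑epimorphism $H_0^t(P)\twoheadrightarrow N$ with $P\in P_t$; its kernel $K_0$ lies in $\cu^{t=0}$, and the corresponding distinguished triangle $K_0\to H_0^t(P)\to N\to K_0[1]$ exhibits $N$ as an extension of $K_0[1]$ by $H_0^t(P)$. Since $H_0^t(P)\in C$ and $K_0[1]\in\cu^{t=0}[1]\subset\cu^{t\le -1}=\cu_{w\ge 1}\subset C$, extension‑closedness of $C$ gives $N\in C$. From $\cu^{t=0}\subset C$ one deduces, by a straightforward induction, that $\cu^{t\le k}\subset C$ for all $k\in\z$ (it is trivial for $k\le -1$, and for $M\in\cu^{t\le k}$ with $k\ge 0$ one uses the truncation triangle $t^{\le k-1}M\to M\to t^{\ge k}M$: its right term lies in $\cu^{t\ge k}\cap\cu^{t\le k}=\cu^{t=0}[-k]$, which is contained in $\cu^{t\le -1}\subset C$ for $k\ge 1$ and is $\cong H_0^t(M)\in\cu^{t=0}\subset C$ for $k=0$, while the left term lies in $C$ by induction). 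Under assumption (1), $\cu=\bigcup_k\cu^{t\le k}$, so $C=\obj\cu$. Under assumption (2), choosing $n\ge 1$, one has $\cu^{t\ge n}\subset\perpp(\cu^{t\le 0})\subset\perpp(\cu^{t\le -1})=\cu_{w\le 0}\subset C$, so for any $M\in\obj\cu$ the truncation triangle $t^{\le n-1}M\to M\to t^{\ge n}M$ has both outer terms in $C$; hence $M\in C$, and again $C=\obj\cu$. In either case $(\cu_{w\le 0},\cu_{w\ge 0})$ is a weight structure left adjacent to $t$, and the remaining conclusions follow from Part I.

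The main obstacle I anticipate is axiom (iv), and within it the implication ``$\hrt$ has enough projectives of the form $H_0^t(P)$, $P\in P_t$'' $\Rightarrow$ $\cu^{t=0}\subset C$. What makes this go through without any transfinite resolution or homotopy‑colimit argument is the observation that the $[1]$‑shift of the kernel of such a projective cover already lies in $\cu^{t\le -1}=\cu_{w\ge 1}$, whose objects have the trivial weight decomposition; the two boundedness‑type hypotheses (1) and (2) are then exactly what is needed to propagate membership in $C$ from $\cu^{t=0}$ to all of $\cu$ along $t$‑truncation triangles.
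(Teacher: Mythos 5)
Your Part I is correct, and in fact slightly streamlines the paper's argument: instead of invoking the octahedral axiom to identify the cone of $H_0^t(P)\to M$, you apply the homological functor $H_0^t$ directly to the weight decomposition $P\to M\to M'$ of $M\in\cu^{t=0}$ and use $H_0^t(M')=0$ for $M'\in\cu_{w\ge 1}=\cu^{t\le -1}$. The identification of the morphism $H_0^t(P)\to M$ obtained this way with the one coming from the adjunction factorization is a small exercise in naturality, so these are genuinely the same map, and your version of the equivalence statement is fine.

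Part II has a real gap in the induction step, and it is not cosmetic. For $M\in\cu^{t\le k}$ with $k\ge 1$, the right truncation $t^{\ge k}M$ lies in $\cu^{t\ge k}\cap\cu^{t\le k}=\cu^{t=0}[-k]$, and you claim that $\cu^{t=0}[-k]\subset\cu^{t\le -1}$. This inclusion is false: with the conventions of the paper, $\cu^{t\le n}=\cu^{t\le 0}[-n]$, so the classes $\cu^{t\le n}$ increase with $n$; hence $\cu^{t=0}[-k]\subset\cu^{t\le k}$, which is a \emph{superset} of $\cu^{t\le -1}$ when $k\ge 1$, not a subset. Worse, $\cu^{t=0}[-k]\subset\cu^{t\ge k}\subset\cu^{t\ge 0}$ and $\cu^{t\le -1}\perp\cu^{t\ge 0}$, so $\cu^{t=0}[-k]\cap\cu^{t\le -1}=\ns$; the inclusion could only hold in the trivial case $\cu^{t=0}=\ns$. (The correct direction is $\cu^{t=0}[k]\subset\cu^{t\le -1}$ for $k\ge 1$, which is what you used in step 3, but here you need the other shift.) As written, the induction establishes only $\cu^{t\le 0}\subset C$, and everything in Part II that depends on $\cu^{t\le n-1}\subset C$ for $n\ge 2$, including the version under hypothesis~(2), inherits the gap.

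The fix is to make the claim $\cu^{t=0}[-k]\subset C$ part of the induction rather than reducing it to the trivial class. Concretely, prove $\cu^{t\le k}\subset C$ by induction on $k\ge -1$; in the inductive step, for $N'\in\cu^{t=0}$, take a triangle $P\to N'\to Q$ with $P\in P_t$ and $Q\in\cu^{t\le -1}$ (obtained by composing $P\to H_0^t(P)$ with the chosen $\hrt$-epimorphism $H_0^t(P)\to N'$; the cone is in $\cu^{t\le -1}$ by the octahedron, being an extension of $K_0[1]\in\cu^{t\le -1}$ by $(t^{\le -1}P)[1]\in\cu^{t\le -2}$), then shift it by $[-k]$. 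One has $P[-k]\in\cu_{w\le 0}=\perpp\cu^{t\le -1}$ because $P\perp\cu^{t\le -1}$ and $\cu^{t\le -1}\subset\cu^{t\le k-1}$ for $k\ge 0$; and $Q[-k]\in\cu^{t\le k-1}\subset C$ by the inductive hypothesis. Extension-closedness of $C$ then gives $N'[-k]\in C$, and the rest of your argument goes through. This is precisely the role the shift $[-j-1]$ plays in the paper's inductive step (and why the paper's $\cu_{w\le 0}$ is built from the negative shifts $P_t[i]$, $i<0$), so the missing ingredient is the shifted projective-cover triangle together with the full strength of the inductive hypothesis, not just the base case.
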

\begin{proof}

I. %$\cu_{w=0}=P_t$ is  immediate from Proposition \ref{pbw}(\ref{iort}) (along with
Fix $M\in \cu^{t=0}$ and consider its %$1$-weight decomposition of
weight decomposition $P\stackrel{p}{\to} M\to M'\to P[1]$. Since $M\in \cu^{t\le 0}=\cu_{w\ge 0}$, we have $P\in \cu_{w=0}$ according to Proposition \ref{pbw}(\ref{iwd0})). Next, since $P\in \cu_{w\ge 0}=\cu^{t\le 0}$, the object  $P_0=t^{\ge 0}P$  equals  $H_0^t(P)$; hence $P_0$ is projective in $\hrt$ according to
Proposition \ref{pgen}(\ref{ipgen25}).
  %belongs to $ \cu^{t=0}$. 

The adjunction property for the functor $t^{\ge 0}$ (see Remark \ref{rtst1}(\ref{it3})) implies that $p$ factors through the $t$-decomposition %?!
morphism $P\to P_0$.  Now we check that the corresponding morphism $P_0\to M$ is an $\hrt$-epimorphism. This is certainly equivalent to its cone $C$ belonging to $\cu^{t\le -1}$. The octahedral axiom of triangulated categories gives a distinguished triangle $(t^{\le -1}P)[1]\to M'\to C\to (t^{\le -1}P)[2]$; it yields the assertion in question since $M'\in \cu_{w\ge 1}=\cu^{t\le -1}$  and the class $\cu^{t\le -1}$ is extension-closed.

Next, the category of projective objects of $\hrt$ is certainly Karoubian.
According to Proposition \ref{phadj} it remains to verify that  for any projective object $Q$ of $\hrt$ there exists $R\in P_t$ such that $Q $ is a retract of $ H_0^t(R)$. Now, the first part of the assertion implies the existence of an $\hrt$-epimorphism $H_0^t(R')\to Q$ for some $R'\in P_t$. Since $H_0^t(R')$ is projective in $\hrt$, this epimorphism splits, i.e.,   $Q$ equals the image of some idempotent isomorphism of $H_0^t(R')$.  %Since this endomorphism splits in $\hrt$, it lifts to a split idempotent  
Lifting this endomorphism to $\hw$ we obtain the result. 

II.1.  We set $\cu_{w\ge 0}=\cu^{t\le 0}$ and take $\cu_{w\le 0}$ to be the %extension-closure 
envelope of $\cup_{i<0} P_t[i]$.\footnote{Actually, this envelope also equals the extension-closure of $\cup_{i<0} P_t[i]$; see Proposition \ref{pbw}(\ref{igenlm}).}We should prove that this couple yields a weight structure for $\cu$, since this weight structure would certainly be adjacent to $t$. Now, this "candidate weight structure" obviously satisfies axioms (i) and (ii) in Definition \ref{dwstr}. Next, since $P_t[i]\perp \cu^{t\le 0}$ for any $i<0$; hence the orthogonality axiom (iii) is fulfilled also.

It remains to verify the existence of a weight decomposition for any $M\in \cu^{t\le i}$ by induction on $i$. The statement is obvious for $i< 0$ since $M\in \cu_{w\ge 1}=\cu^{t\le -1}$ and we can take a "trivial" weight decomposition $0\to M\to M\to 0$. %pre-?!!

Now assume that existence of $w$-decompositions is known for any $M\in \cu^{t\le j}$ for some $j\in \z$. We should verify the existence of weight decomposition of an element $N$ of $\cu^{t\le j+1}$. Certainly, $N$ is an extension of $N'[-j-1]=H_0^t(N[j+1])[-j-1]$   by $t^{\le j}N$ (see Remark \ref{rtst1}(\ref{it3}) for the notation). Since the latter object possesses a weight decomposition, Lemma \ref{lconstws}(1) allows us to verify the existence of a weight decomposition of $N'[-j-1]$ (instead of $N$). Now we choose a surjection $t^{=0}P\to N'$ whose existence is given by our assumptions. Then a cone $C $ of the corresponding composed morphism  $P\to N'$ is easily seen to belong to $t^{\le -1}$. Since both $P$ and $C$ possess weight decompositions, applying Lemma \ref{lconstws}(1)  once again we obtain the assertion in questions.

2. We take $\cu_{w\ge 0}=\cu^{t\le 0}$ and $\cu_{w\le 0}={}^{\perp}\cu^{t\le -1}$. Once again it suffices to verify the existence of a $w$-decomposition for an object $M$ of $\cu$.

We consider the (full) triangulated subcategory $\cu'$ consisting of $t$-bounded below objects, i.e., $\obj \cu'=\cup_{i\in \z}\cu^{t\le i}$. According to the previous assertion, any object of $\cu'$ possesses a weight decomposition with respect to the corresponding weight structure; thus it also possesses a $w$-decomposition.

Now, the $t$-decomposition of the object $M[n-2]$ yields a presentation of $M$ as extension of an element $M'$ of $\cu^{t\ge n-1}$ by an element $M''$ of $\cu^{t\le n-2}$. Since $M''\in \obj \cu'$, it possesses a $w$-decomposition. Next, our "extra" orthogonality assumption on $t$ yields that $M''\in \cu_{w\le 0}$; hence one take the triangle $M''\to M''\to 0\to M''[1]$ as a $w$-decomposition of $M''$. Lastly, applying %Theorem 2.1.1(I.1)  of \cite{bonspkar} 
 Lemma \ref{lconstws}(1)  once again we obtain that $M$ possesses a $w$-decomposition also.
\end{proof}

\begin{rema}\label{rexenproj}
1. Assume  that the category $\cu\opp$ is $R$-saturated (see Definition \ref{dsatur}; in particular, $\cu$ may equal the category $D^b(X)$ or $D^b(X)\opp$ for $X$ being  a regular separated finite-dimensional scheme that is proper over $\spe R$ for a Noetherian $R$)
  %a smooth proper $k$-variety and $R=k$) 
	 and $t$ is a bounded above $t$-structure on $\cu$.
 Then our proposition  (combined with Proposition \ref{phadj}) easily implies that there exists a weight structure left adjacent to $t$ if and only if $\hrt$ has enough projectives (since the corresponding pure functors are corepresented by elements of $P_t$). %?!!; see Proposition \ref{pgen}(1)). 
 Moreover, in this case $\hw$ is equivalent to $\proj \au$.

2. %The assumptions of part 1 of the proposition 
The assumption of the existence of an $\hrt$-epimorphism $t^{=0}P\to M$ with $P\in P_t$ for any $M\in \cu^{t=0}$ naturally generalizes the condition of the existence of enough projectives %in an abelian category. 
that allows to relate the derived category of $\au$ to $K(\proj \au)$.
Note however that in this setting we have $\hw\subset \hrt$; this is not the case in general. %?? as we will see below. %?!
%\hrt?!

Moreover, the condition $\cu^{t\ge n}\perp \cu^{t\le 0}$ for $n\gg 0$ is a natural generalization of the finiteness of the cohomological dimension condition (for an abelian category).

3. One can easily see that $P_t$ is {\it negative} %, i.e. $P_t\perp P_t[i]$ for any $i\in \z$ and 
 for any $t$. So our existence of  $w$ results are closely related to the statements on "constructing $w$ from a negative subcategory"; %that is treated in detail in \
see \S2.2 of \cite{bsnew}, %; see also 
 \cite[\S4.3,4.5]{bws},  \cite[Corollary 2.1.2]{bonspkar}, and Remark \ref{rsatur}(2) above.

\end{rema}

\begin{theo}\label{tadjw}
Assume that $\cu$ has coproducts and is endowed with a $t$-structure $t$ such that its localizing subcategory $\cu'$ generated by $\cu^{t\le 0}$ satisfies the dual Brown representability condition.
%The heart?! Automatic from the previous theorem or there could be a Morita-equivalence?!

1. Then there exists a weight structure left adjacent to $t$ if and only if $t$ is productive %of cofinite type %inside $\cupr$?!!
and the category $\hrt$ has enough projectives. %+ $hrt\cong$?!

2. If such a left adjacent $w$ exists then $\hw$ is equivalent to the subcategory of  projective objects of $\hrt$. %$\adfu(\hrt,\ab)$ consisting of exact functors respecting products.

\end{theo}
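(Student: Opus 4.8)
The plan is to reduce the statement on $\cu$ to the analogous statement on the localizing subcategory $\cu'$ generated by $\cu^{t\le 0}$, where the dual Brown representability condition is available, and then to invoke the results of \S\ref{sadjw} (Proposition \ref{pconstrwfromt}) together with Corollary \ref{cdualt}. The key preliminary observation is that $\cu^{t\le 0}\subset \obj \cu'$ by construction, hence the torsion pair associated with $t$ restricts to $\cu'$: concretely, for any $M\in \obj \cu'$ a $t$-decomposition triangle $L_tM\to M\to R_tM\to L_tM[1]$ has $L_tM\in \cu^{t\le 0}\subset \obj \cu'$, and since $\cu'$ is triangulated we get $R_tM\in \obj \cu'$ as well; the orthogonality relations $\cu^{t\le 0}={}^{\perp}\cu^{t\ge 1}$ and $\cu^{t\ge 1}=(\cu^{t\le 0})^{\perp}$ (see Remark \ref{rtst1}(\ref{it1})) pass to $\cu'$ since the relevant $\perp$'s only depend on the objects of $\cu'$. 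Thus $t$ restricts to a $t$-structure $t'$ on $\cu'$ with $\cu'^{t'\le 0}=\cu^{t\le 0}$, $\cu'^{t'\ge 0}=\cu^{t\ge 0}\cap \obj \cu'$, and $\hrt' = \hrt$ (since $\cu^{t=0}\subset \cu^{t\le 0}\subset \obj \cu'$).

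First I would treat the "only if" direction. Suppose a weight structure $w$ left adjacent to $t$ exists, so $\cu_{w\ge 0}=\cu^{t\le 0}$. Then the associated torsion pair is productive by Proposition \ref{phop}(5) (applied via Remark \ref{rwhop}(1)), which gives that $t$ is productive. For the "enough projectives" claim I would apply Proposition \ref{pconstrwfromt}(I): it yields that $H_0^t$ induces an equivalence of $\kar(\hw)$ with the category of projective objects of $\hrt$, and that every object of $\cu^{t=0}$ receives an $\hrt$-epimorphism from some $H_0^t(P)$ with $P\in P_t$ — i.e., $\hrt$ has enough projectives. This direction needs no Brown representability at all. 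The equivalence in part 2 is also contained in Proposition \ref{pconstrwfromt}(I), noting that $\hw = \cu_{w=0} = P_t$ is already Karoubi-closed in $\cu$ by Proposition \ref{pbw}(\ref{iextw}), so $\kar(\hw)=\hw$.

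For the "if" direction, assume $t$ is productive and $\hrt$ has enough projectives. I would first construct a weight structure $w'$ left adjacent to $t'$ on $\cu'$, and then transport it to $\cu$. On $\cu'$: since $\cu'$ has coproducts (it is localizing in a category with coproducts, hence closed under $\cu$-coproducts) and satisfies the dual Brown representability condition, productivity of $t'$ together with "enough projectives in $\hrt'=\hrt$" should let me apply a cosmashing-type criterion. The cleanest route is Corollary \ref{cdualt}: I need $t'$ to be cosmashing, which amounts to productivity of the associated torsion pair plus $\cu'$ having products — and products in $\cu'$ exist because dual Brown representability gives them (Proposition \ref{pcomp}(II.2) applied to $\cu'^{op}$). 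That produces a weight structure $w'$ left adjacent to $t'$ whose heart is anti-equivalent, via the relevant functor category description, to the subcategory of projective objects of $\hrt'$; alternatively, and more directly, I would use Proposition \ref{pconstrwfromt}(II) once I have arranged enough projectives and the appropriate boundedness/orthogonality hypothesis, then feed in $P_{t'}$. The main obstacle I anticipate is exactly this gluing step: having built $w'$ on $\cu'$ with $\cu'_{w'\ge 0}=\cu^{t\le 0}$, I must show it extends to a weight structure $w$ on all of $\cu$ with $\cu_{w\ge 0}=\cu^{t\le 0}$. Here I would invoke Proposition \ref{pbw}(\ref{iwd0}) and the Bousfield localization machinery: the embedding $\cu'\to \cu$ — does it have a right adjoint? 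It does not automatically, but $\cu^{t\le 0}=\cu_{w\ge 0}$ is the "$\ge 0$" half, and I would instead define $\cu_{w\le 0}$ as the envelope (equivalently, by Proposition \ref{pbw}(\ref{igenlm}), the extension-closure) of $\cup_{i<0}P_t[i]$ exactly as in the proof of Proposition \ref{pconstrwfromt}(II), checking the weight-decomposition axiom by the same inductive argument using Lemma \ref{lconstws} — the productivity hypothesis and enough-projectives hypothesis furnish the surjections $t^{=0}P\to M$ needed at each inductive step, and the existence of the relevant projective objects in $P_t$ (not merely in $\cu'$) is guaranteed because the corresponding pure functors, being pp functors, are corepresentable in $\cu'$ by dual Brown representability and those corepresenting objects lie in $\cu'\subset \cu$. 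Finally, part 2 follows from Proposition \ref{phadj} (giving $\cu_{w=0}=P_t$) combined with Proposition \ref{pgen}(\ref{ipgen4}) applied to $\cu'$, which identifies $P_{t'}$ via $H_0^{t'}=H_0^t$ with the projective objects of $\hrt'=\hrt$.
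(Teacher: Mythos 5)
Your treatment of the "only if" direction and of part 2 agrees with the paper and is fine; the gap is in the "if" direction.

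First, your appeal to Corollary \ref{cdualt} is inverted: that corollary takes a \emph{cosmashing weight structure} $w$ as input and produces a left adjacent $t$-structure; it cannot be used to manufacture a weight structure starting from $t'$. You then retreat to Proposition \ref{pconstrwfromt}(II), but neither of its two hypotheses ($t$ bounded above, or $\cu^{t\ge n}\perp\cu^{t\le 0}$ for some $n$) is available in the statement of Theorem \ref{tadjw}.

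Second, and this is the real obstruction, you propose to set $\cu_{w\le 0}$ equal to the envelope of $\cup_{i<0}P_t[i]$ and to verify weight decompositions "by the same inductive argument" from Proposition \ref{pconstrwfromt}(II.1). That induction runs over the classes $\cu^{t\le i}$ for increasing $i$, so it only produces weight decompositions for \emph{$t$-bounded-above} objects — i.e., for objects of $\cuz=\cup_i\cu^{t\le i}$. Moreover, for a candidate weight structure with $\cu_{w\ge 0}=\cu^{t\le 0}$ one is forced (by Proposition \ref{pbw}(\ref{iort})) to have $\cu_{w\le 0}={}^\perp\cu^{t\le -1}$, and in the unbounded case the envelope of $\cup_{i<0}P_t[i]$ is a proper subclass of ${}^\perp\cu^{t\le -1}$; in particular ${}^\perp\obj\cu'$ is not reached by that envelope. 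So your candidate pair cannot be a weight structure unless $t$ is bounded above. The paper addresses exactly this by a three-step extension of the class $C$ of objects possessing pre-weight decompositions (Lemma \ref{lconstws}): first $C\supset\obj\cuz$ from \ref{pconstrwfromt}(II.1); then $C\supset\obj\cupr$ using parts 2 and 3 of Lemma \ref{lconstws} (shift-stability and coproductivity); then $C\supset C'={}^\perp\obj\cupr$ because $C'\subset{}^\perp\cu^{t\le -1}=\cu_{w\le 0}$; and finally every $N\in\obj\cu$ is an extension of an object of $\cu'$ by an element of $C'$ via the Bousfield localization triangle coming from Propositions \ref{pcomp}(II) and \ref{pbouloc}(III.\ref{ibou1}), so $C=\obj\cu$. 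Your proposal is missing the passage from $\cu'$ (and $\cuz$) to the remaining objects of $\cu$, which is precisely the part that requires ${}^\perp\obj\cu'$ to lie in $\cu_{w\le 0}$ and which your choice of $\cu_{w\le 0}$ does not accommodate.
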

\begin{proof}
1. If $w$ exists then $t$ is productive according to  Proposition \ref{phop}(\ref{itp5}).  Next, the existence of enough projectives in $\hrt$ follows from 
  Proposition \ref{pconstrwfromt}(I). % combined with Lemma \ref{lort}(2). %explain?!

Conversely,  assume that $t$ is productive and the category $\hrt$ has enough projectives. Once again, for the "candidates" $\cu_{w\ge 0}=\cu^{t\le 0}$ and  $\cu_{w\le 0}=\perpp (\cu_{w\ge 0}[1])$ it suffices to verify the existence of a weight decomposition for any $Y\in \obj \cu$.

 For each projective object $P_0$ of $\hrt$ %(the dual to) 
 Proposition \ref{pgen}(\ref{ipgen4})  gives the existence of $P\in P_t$ such that $H_0^t(P)\cong P_0$ (here $H^t_0$ is the $t$-homology on $\cu$; see Remark \ref{rtst1}(\ref{it4})).  Thus the existence of enough projectives in $\hrt$ is equivalent to the fact that for any  $M\in  \cu^{t=0}$ there exists an $\hrt$-epimorphism $H_0^t(P)\to M$ for $P\in P_t$.

%We proceed somewhat similarly to the proof of   \cite[Theorem 4.5.2]{bws}. 
We take $\cuz\subset \cu$ to be the triangulated category of $t$-bounded below objects (i.e., $\obj \cuz=\cup_{i\in\z}\cu^{t\le i}$). According to Proposition \ref{pconstrwfromt}(II.1), there exists a weight structure $w_0$ for $\cuz$ with $\cu_{\wz\ge 0}=\cu^{t\le 0}$.
Now we study the class $C$ of objects   possessing pre-weight decompositions with respect to $w$ in the terms of Lemma \ref{lconstws}.
The existence of $\wz$ certainly implies that  $C$ contains $\obj \cuz$. Applying parts 2 and 3 of the lemma we obtain that $C$ actually contains $\obj \cupr$.
%this certainly implies that any object of $\cuz$ possesses a pre-weight decomposition with respect to $w$. Since $C$ contains $\obj \cuz$
On the other hand, since the class $C'=\perpp \obj \cupr$ is contained  in $\cu_{w\le 0}$, $C$ also contains $C'$. %We also note that $C'$ obviously equals

According to %part 1 of 
Lemma \ref{lconstws}(1), it remains to verify that any object of $\cu$ can be presented as an extension of an object of $\cu'$ by an element of $C'$. The latter is immediate from %$the %(categorical dual to) 
 Proposition \ref{pcomp}(II) combined with Proposition \ref{pbouloc}(III.\ref{ibou1}).

2. Similarly to the proof of Theorem \ref{tadjt}(II), Proposition \ref{phadj} gives an embedding of $\hw$ into the category of projective objects of $\hrt$. Hence the arguments used in the proof of assertion 1 (when $P$ was constructed from $P_0$) allow us to conclude the proof. 

\end{proof}

\begin{rema}\label{restrt}

1. It appears that one is "usually" interested in the case where $\cupr=\cu$. %also for w??!

2. It can be easily seen that in Theorem \ref{tadjt} we could have replaced the Brown representability assumption for $\cu$ by that for the category $\cu'$ being its localizing category generated by $\cu_{w\ge 0}$ (and so also by $\cuz=\cup_{i\in \z} \cu_{w\le i}$). 

3. It is actually not necessary to assume that the whole $\cu$ has coproducts when defining $\cu'$ (in both of these settings). Indeed, it suffices to assume that $\cu^{t\le 0}$ is contained in some triangulated category $\cu''\subset \cu$ that has coproducts such that the embedding $\cu''\to \cu$ respects them. %does not depend on $\cu''$.  Restrictions of $t$: nafig?! %Small BR versions of the results of \S2.1?! Small BR/WG in {tadjt}?!

 \end{rema}

Certainly the dual to Theorem \ref{tadjw} is also valid; it is formulated as follows.

\begin{coro}\label{cadjw}
Assume that $\cu$ has products and is endowed with a $t$-structure $t$ such that its colocalizing subcategory $\cu'$ cogenerated by $\cu^{t\ge 0}$ (see Definition \ref{dcomp}(\ref{idloc})) satisfies the  Brown representability condition.
%The heart?! Automatic from the previous theorem or there could be a Morita-equivalence?!

1. Then there exists a weight structure right adjacent to $t$ if and only if $t$ is coproductive %of cofinite type %inside $\cupr$?!!
and the category $\hrt$ has enough injectives. %+ $hrt\cong$?!

2. If such a left adjacent $w$ exists then $\hw$ is equivalent to the subcategory of  injective objects of $\hrt$. 
\end{coro}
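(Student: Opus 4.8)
The plan is to obtain Corollary \ref{cadjw} as the categorical dual of Theorem \ref{tadjw}, exactly in the way the preceding dualities in the paper (e.g.\ Corollary \ref{cdualt} versus Theorem \ref{tadjt}) were established. First I would pass to the opposite category $\du=\cu\opp$. By Proposition \ref{pbw}(\ref{idual}) a weight structure $w$ on $\cu$ corresponds to the opposite weight structure on $\du$, and by Remark \ref{rtst1}(\ref{itd}) the $t$-structure $t$ on $\cu$ with halves $\cu^{t\le 0},\cu^{t\ge 0}$ gives a $t$-structure $t\opp$ on $\du$ with $\du^{t\opp\le 0}=\cu^{t\ge 0}$ and $\du^{t\opp\ge 0}=\cu^{t\le 0}$ (up to the shift bookkeeping that, as Remark \ref{rstws}(4) notes, makes no difference in proofs). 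Under this dictionary "$w$ right adjacent to $t$" (i.e.\ $\cu_{w\le 0}=\cu^{t\ge 0}$, Definition \ref{dwso}(\ref{idadj})) becomes "$w\opp$ left adjacent to $t\opp$".

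Next I would translate every hypothesis and conclusion of Theorem \ref{tadjw} through this dictionary. Products in $\cu$ become coproducts in $\du$; the colocalizing subcategory $\cu'$ of $\cu$ cogenerated by $\cu^{t\ge 0}$ (Definition \ref{dcomp}(\ref{idloc})) becomes the localizing subcategory of $\du$ generated by $\du^{t\opp\le 0}$; the Brown representability condition for $\cu'$ becomes the dual Brown representability condition for the corresponding subcategory of $\du$, which is exactly the hypothesis of Theorem \ref{tadjw}. On the conclusion side: "$t$ coproductive" dualizes to "$t\opp$ productive" (Proposition \ref{phop}(6) and the definitions in Remark \ref{rtst1}(\ref{it1})), and "$\hrt$ has enough injectives" dualizes to "$\hrt\opp$ has enough projectives", where $\hrt\opp$ is the heart of $t\opp$ — recall from Remark \ref{rtst1}(\ref{it4}) that the heart is abelian with short exact sequences coming from distinguished triangles, and passing to $\cu\opp$ literally swaps sub/quotient objects, so injectives become projectives. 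Finally, in part 2, the equivalence of $\hw$ with the subcategory of projective objects of $\hrt$ provided by Theorem \ref{tadjw}(2), read back in $\cu$, becomes an equivalence of $\hw$ with the subcategory of injective objects of $\hrt$.

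So the proof is simply: apply Theorem \ref{tadjw} to $\du=\cu\opp$ equipped with $t\opp$ and $w\opp$, and translate the statement back using Proposition \ref{pbw}(\ref{idual}), Remark \ref{rtst1}(\ref{itd}), Proposition \ref{phop}(6), and Definition \ref{dcomp}(\ref{idloc}). There is essentially no obstacle of substance here; the only thing requiring a modicum of care is the bookkeeping of the shifts between weight structures, $t$-structures, and their associated torsion pairs — one must check that "right adjacent" in the sense of Definition \ref{dwso}(\ref{idadj}) really is the exact opposite notion of "left adjacent", including the placement of the shift on the orthogonal class, but this is precisely the discrepancy already discussed and dispatched in Remark \ref{rwsts}(1) and Remark \ref{rstws}(4), and it does not affect the argument. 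Hence the corollary follows formally.

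\begin{proof}
This is just the categorical dual to Theorem \ref{tadjw}; apply that theorem to the category $\cu\opp$ endowed with the opposite $t$-structure (see Remark \ref{rtst1}(\ref{itd})) and invoke Proposition \ref{pbw}(\ref{idual}), Proposition \ref{phop}(6), and Definition \ref{dcomp}(\ref{idloc}) to translate the hypotheses and conclusions back to $\cu$ (cf.\ also Remark \ref{rwsts}(1) concerning the shift bookkeeping between adjacent structures and their associated torsion pairs).
\end{proof}
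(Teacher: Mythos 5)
Your proposal is correct and is exactly the approach the paper takes: the paper simply introduces Corollary \ref{cadjw} with the sentence "Certainly the dual to Theorem \ref{tadjw} is also valid" and supplies no further argument, which is the categorical-dualization you carry out. Your extra care about the shift bookkeeping and the dictionary between the hypotheses is sound but not required by the paper, which treats the dualization as immediate in the same way it does for Corollary \ref{cdualt}.
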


\begin{rema}\label{rnondeg}
1. As it often happens when dealing with "large" triangulated categories, the "roles" of  Theorem \ref{tadjw} and Corollary \ref{cadjw} seem to be somewhat different. 
This "asymmetry" occurs when one tries to apply these statements to a compactly generated (or more generally, {\it well generated}; see Definition \ref{dwg}(\ref{idpg}) below)  $\cu$; recall that these condition are far from being self-dual (see Corollary E.1.3 and  Remark 6.4.5 of \cite{neebook}).  

Now, if $\cu$ is well generated then it appears to be "quite reasonable" to consider smashing  $t$-structures (only); moreover, the existence of enough injectives seems to be a  rather "reasonable" restriction on $t$ (see Corollary \ref{csymt} below; note however that is not clear how to prove it in general without constructing a right adjacent weight structure {\bf first}). 
 Yet it may be difficult to check the   Brown representability condition for $\cu'$ (unless %it is known that?!
  $\cupr=\cu$;\footnote{This is a rather "natural"  additional assumption %both 
for the corollary (as well as  for Theorem \ref{tadjw}) since otherwise the corresponding  weight structure $w$ would be "rather degenerate" in the following sense: the class $\cap_i\cu_{w\ge i}$ %(resp.  $\cap_i\cu_{w\le i}$) 
would contain the non-zero class $ {}^{\perp_{\cu}} \cupr$%(resp. ??${}^{\perp_{\cu}}\cupr$)).}).
} still checking the latter could be difficult also).

On the other hand, it appears that for all "known" cosmashing $t$-structures the left adjacent weight structures %are known to exist
 can be easily described without using the results of this subsection.

So, neither Theorem \ref{tadjw} nor Corollary \ref{cadjw}  appear to be really "practical".

2. One %can easily note the our 
 results on the existence of adjacent weight and $t$-structures are certainly "not symmetric": constructing a (right or left) adjacent $t$-structure is "much easier". On the other hand, Theorem \ref{tpgws} below is a tool of constructing weight structures that appears not to possess a $t$-structure analogue (see Remark \ref{rigid}(1)). Rather funnily, these two "asymmetries" appear to "compensate" each other. So, the "main general" sorts of weight structures for compactly generated categories that are %known to the author 
"easy to construct" are the compactly generated ones (that are smashing) and the (cosmashing) weight structures right adjacent to compactly generated $t$-structures, whereas the main sorts of $t$-structures  are the   compactly generated ones and the  ones  right adjacent to compactly generated weight structures (these are smashing and cosmashing, respectively). 

3. Note also that there do exist smashing examples that are not compactly generated (inside a compactly generated $\cu$). 
In \cite{kellerema}  %example of an embedding of a 
for a ring $R$ along with %a right ideal 
 its two-sided ideal $I$ satisfying certain conditions %such that that  it 
 possesses a right adjoint respecting coproducts  but $L$ is not  generated by compact objects of $\cu$; a family of couples $(R,I)$ satisfying the conditions 
 the following was proved:  the embedding %into %$\cu=D(R)$
 $i$ of the localizing subcategory $L$ generated by $I$ into the  derived category $\cu$ of right $R$-modules %it was proved  that $i$ desired was also constructed. Now, as we have %noted in %Remark \ref{rstable}(2) (cf. also
 proved in Proposition \ref{prtst}(\ref{it4sm}), the corresponding couple $s_L=(\obj  L,\obj L^{\perp_{\cu}})$ is a "shift-stable" torsion pair for $\cu$ (cf. Remark \ref{rtst2}(\ref{it5s1}); so, it is a weight structure and a $t$-structure simultaneously); still it is certainly not compactly generated. 

One can probably obtain other examples of non-compactly generated weight structures for  $\cu$ of this type "starting from" %this couple 
 $s_L$ since one can (considering it as a weight structure and) "join" it with any compactly generated weight structure for $\cu$ (see Remark \ref{revenmorews}(1) below).

%However, the author hopes that other (possibly, non-degenerate) examples of perfectly generated (and maybe, {\it well generated}; see Remark \ref{rtkrau}(1,3) below) weight structures that are not compactly generated will be constructed eventually.
Moreover, in Remark \ref{rsymt}(\ref{irsymt1},\ref{irsymt4}) below a rich family of smashing weight structures on categories dual to compactly generated ones is described; these weight structures are not compactly generated. % according to Corollary E.1.3 (combined with Remark 6.4.5) of \cite{neebook}. 

\end{rema}

\subsection{On (countably) perfect and symmetric classes; their relation to Brown-Comenetz duality and adjacent torsion pairs} 
 %well generated triangulated categories} %: a reminder}
\label{scomp}

Now we recall the notion of perfectly %and well 
generated triangulated categories; categories of this type satisfy the Brown representability property according to results of A. Neeman and H. Krause. %We will discuss and use  definitions and results of this section in \S\ref{sperfws} below. 

\begin{defi}\label{dwg}
Let $\al$ be a regular infinite  cardinal.\footnote{Recall that $\al$ is said to be regular if it cannot be presented as a sum of less then $\al$ cardinals that are less than $\al$.} 
\begin{enumerate}
\item\label{idsmall}
 An object $M$ of $\cu$ is said to be {\it $\al$-small} if for any set of $N_i\in \obj \cu$ any morphism $M\to \coprod N_i$ factors through the coproduct of a subset of $\{N_i\}$ of cardinality less than $\al$.

\item\label{idpc} 
 We will say that a  class $\cp\subset \obj \cu$ is {\it countably perfect} if the class of $\cp$-null morphisms %(see Definition \ref{dhopo}(5)) 
 is closed with respect to countable coproducts (recall that $h$ is $\cp$-null if for all $M\in \cp$ we have $H^M(h)=0$; see Definition \ref{dhopo}(5)). We will say that $\cp$ is {\it  perfect} if the class of $\cp$-null morphisms is closed with respect to arbitrary coproducts. %\footnote{This definition follows \cite{modoi}.}

\item\label{idpg} 
 We will say that $\cu$ is {\it perfectly generated} if there exists a countably perfect {\bf set} $\cp_0\subset \obj \cu$ %that  generates $\cu$ as its own localizing subcategory 
that Hom-generates it.
%such that $D_0\perp=\ns$. % and  the class of $D$-phantom morphisms is closed with respect to countable coproducts. %by  $D_0$?!
%Definition 3.3.1 of {neebook}; cf. {krausecoh}?!

We will also say that $\cu$ is $\al$-{\it well generated} (or just well generated) if  (in addition) all elements of $\cp_0$ are $\al$-small and $\cp_0$ is  perfect.
%\item\label{idsym} We will say that a class $\cp\subset \obj \cu$ is {\it symmetric} to some $\cp'\subset\obj \cu$ if the class $\cp$-null coincides with  the class of $\cp'$-null morphisms in $\cu\opp$.
\end{enumerate}
\end{defi}

Now we prove a few properties of these definitions. %; some of them are somewhat new.

\begin{pr}\label{psym}
Let $\cp$ be a class of objects of a triangulated category $\cu$ that has coproducts. Denote by  $\cu'$ the localizing subcategory  of $\cu$ generated by $\cp$ and denote by $\du$ the full subcategory of $\cu$ whose object class equals $\obj \cupr\perpp$; denote the embedding $\cu'\to \cu$ by $i$.
Then the following statements are valid.

\begin{enumerate}
\item\label{isymcomp} If a set $\cq$  Hom-generates $\cu$ then it also $\alz$-well generates it if and only if all elements of $\cq$ are compact.

\item\label{isymeu} $\du$ is triangulated and $\obj \du=\cap_{i\in \z}(\cp\perpp[i])$. Moreover, if $\cp$ is (countably) perfect then $\du$ is closed with respect to (countable) $\cu$-coproducts.
%\item\label{isymhop} A Hom-orthogonal pair $(\lo,\ro)$ is (countably) of finite type if and only if $\lo$ is perfect.

\item\label{isymbr}  Assume that $\cp$ is a (countably) perfect set. 
Then % the localizing subcategory $\cu'$ of $\cu$ generated by $D$
$\cu'$ is perfectly generated by $\cp$ %. Moreover, $\cu'$ 
and has the  Brown representability property. Moreover,  the embedding %$\cu'\to \cu$ 
 $i$ possesses an exact right adjoint $G$ that %induces??
  gives an equivalence $\cu/\du\to \cu'$ and respects (countable) coproducts.\footnote{In particular, the localization $\cu/\du$ is a category, i.e., its Hom-classes are sets.} 

\item\label{isymuni} %The union of any class of perfect subclasses of $\obj \cu$ is perfect.
If $\cp_i$ is a collection of (countably) perfect %(resp. strongly perfect) 
 subclasses of $\obj \cu$ then $\cup  \cp_i$ is (countably) perfect  also.

\item\label{iperftp} Assume that $s=(\lo,\ro)$ is a torsion pair for $\cu$. Then $s$ is (countably) smashing if and only if $\lo$ is (countably) perfect.

\end{enumerate}
\end{pr}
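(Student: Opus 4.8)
The plan is to prove the five parts essentially independently, the common engine being the trivial observation that $\id_N$ is $\cp$-null precisely when $N\in\cp\perpp$ (indeed $\cu(P,\id_N)=\id_{\cu(P,N)}$, which vanishes iff $\cu(P,N)=0$), together with $\coprod_j\id_{N_j}=\id_{\coprod_j N_j}$. The only genuinely non-formal ingredient is the Brown representability theorem for perfectly generated triangulated categories (Neeman, Krause), which I shall cite in part \ref{isymbr}. For part \ref{isymcomp} I would first record that an object $M$ is $\alz$-small if and only if it is compact: the canonical map $\bigoplus_i\cu(M,N_i)\to\cu(M,\coprod_i N_i)$ is always injective (post-compose with the coordinate projections $\coprod_i N_i\to N_{i_0}$, which exist since these are coproducts), and its surjectivity amounts exactly to every morphism $M\to\coprod_i N_i$ factoring through a finite sub-coproduct, i.e.\ to $\alz$-smallness. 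Next, any class of compact objects is perfect, since for compact $P$ and $\cp$-null morphisms $h_a$ one has $\cu(P,\coprod_a h_a)\cong\coprod_a\cu(P,h_a)=0$. As $\cq$ is assumed to Hom-generate $\cu$, the requirement that $\cq$ $\alz$-well generate $\cu$ reduces (by Definition \ref{dwg}(\ref{idpg})) to: all elements of $\cq$ are $\alz$-small and $\cq$ is perfect; by the two observations this holds precisely when every element of $\cq$ is compact.

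For part \ref{isymeu} the inclusion $\obj\du=\obj\cupr\perpp\subseteq\bigcap_{i\in\z}(\cp\perpp[i])$ is immediate since $\cp[i]\subseteq\obj\cupr$ for all $i$; conversely, for $N\in\bigcap_i(\cp\perpp[i])$ the class ${}^{\perp}\{N[i]:i\in\z\}$ is shift-stable, extension-closed, closed under coproducts and retracts, and contains $\cp$, hence contains $\obj\cupr$, so $N\in\obj\cupr\perpp$. This description of $\obj\du=\bigcap_i(\cp\perpp[i])$ also shows it is closed under shifts, cones and retracts, so $\du$ is a (thick) triangulated subcategory. For the coproduct closure, if $N_j\in\obj\du\subseteq\cp\perpp$ then each $\id_{N_j}$ is $\cp$-null, whence by (countable) perfectness $\id_{\coprod_j N_j}=\coprod_j\id_{N_j}$ is $\cp$-null and $\coprod_j N_j\in\cp\perpp$; applying this also to the shifts $N_j[k]$ gives $\coprod_j N_j\in\du$.

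For part \ref{isymbr}, $\cp$ generates $\cupr$ as its own localizing subcategory by the very definition of $\cupr$, hence Hom-generates it by Proposition \ref{pcomp}(I.1); being a (countably) perfect set, $\cupr$ is therefore perfectly generated by $\cp$ and so has the Brown representability property by the cited theorem of Neeman and Krause. Then $i:\cupr\to\cu$ is an exact, coproduct-preserving functor out of a category satisfying Brown representability, so by Proposition \ref{pcomp}(II.2) it admits an exact right adjoint $G:\cu\to\cupr$. Now Proposition \ref{pbouloc} applies with $F=i$ (a full embedding of the Karoubi-closed subcategory $\cupr$, whose associated orthogonal subcategory is precisely our $\du$): Proposition \ref{pbouloc}(III.\ref{ibou1},\ref{ibou3}) yields for every $N$ a distinguished triangle $N'\to N\to N''\to N'[1]$ with $N'\in\obj\cupr$, $N''\in\obj\du$, unique up to canonical isomorphism, with $N'\cong G(N)$ given by the counit, and shows that this counit induces an equivalence $\cu/\du\cong\cupr$ (in particular $\cu/\du$ is locally small). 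Finally $G$ respects (countable) coproducts: for such a family $\{N_j\}$ the coproduct of the decomposition triangles is distinguished (Proposition \ref{pcoprtriang}), its left term lies in $\obj\cupr$ and, by part \ref{isymeu}, its right term lies in $\obj\du$, so by uniqueness it is the decomposition triangle of $\coprod_j N_j$, whence $G(\coprod_j N_j)\cong\coprod_j G(N_j)$ canonically.

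Parts \ref{isymuni} and \ref{iperftp} are short. A morphism is $(\bigcup_i\cp_i)$-null iff it is $\cp_i$-null for every $i$, so the class of $(\bigcup_i\cp_i)$-null morphisms equals the intersection of the classes of $\cp_i$-null morphisms and is thus closed under (countable) coproducts; this gives part \ref{isymuni}. For part \ref{iperftp}, recall (Definition \ref{dhopo}(1)) that $s$ is (countably) smashing iff $\ro$ is closed under (countable) coproducts. If $\lo$ is (countably) perfect, then for $R_j\in\ro$ the maps $\id_{R_j}$ are $\lo$-null (as $\lo\perp\ro$), so $\id_{\coprod_j R_j}$ is $\lo$-null and $\coprod_j R_j\in\lo\perpp=\ro$. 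Conversely, if $\ro$ is closed under (countable) coproducts and the $h_a$ are $\lo$-null, then by Proposition \ref{phop}(8) each $h_a$ factors through some $R_a\in\ro$, so $\coprod_a h_a$ factors through $\coprod_a R_a\in\ro$ and is therefore $\lo$-null, proving $\lo$ is (countably) perfect. The only part with content beyond bookkeeping is \ref{isymbr}, where the single external input (Brown representability for perfectly generated categories) is imported; the main point requiring care is verifying that Propositions \ref{pcomp}(II.2) and \ref{pbouloc} apply verbatim to the inclusion $\cupr\to\cu$ — in particular that the $\du$ defined here is literally the orthogonal subcategory occurring in Proposition \ref{pbouloc} — and that the coproduct closure of $\du$ from part \ref{isymeu} is exactly what forces $G$ to commute with coproducts.
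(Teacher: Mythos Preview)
Your proof is correct and follows essentially the same route as the paper's: the same elementary observation that $\id_N$ is $\cp$-null iff $N\in\cp\perpp$, the same appeal to Brown representability for perfectly generated categories (Krause's Theorem~A) and to Proposition~\ref{pcomp}(II.2) for the right adjoint, and the same use of Proposition~\ref{phop}(8) for part~\ref{iperftp}. The only visible difference is in part~\ref{isymbr}: the paper obtains that $G$ respects (countable) coproducts by invoking Proposition~\ref{prtst}(\ref{it4sm}), whereas you argue directly via uniqueness of the decomposition triangle and closure of $\du$ under coproducts from part~\ref{isymeu}; but this is exactly the content of the proof of Proposition~\ref{prtst}(\ref{it4sm}) (via Proposition~\ref{phop}(4)), so the arguments coincide once unwound.
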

\begin{proof}
\ref{isymcomp}. Certainly, $\alz$-small objects are precisely the compact ones. Hence any  $\alz$-well generating set consists of compact objects. To get the converse implication it suffices  to note that any class of compact objects is perfect (see assertion \ref{isymuni}).

\ref{isymeu}. The first part of the assertion is obvious;  the second one follows immediately from Proposition \ref{pwsym}(\ref{iwsmor}).
% To prove the second part it suffices to note that an object $M$ of $\cu$ belongs to $\obj \du$ if and only if the morphism $\id_M$ is $\cp$-null.

\ref{isymbr}. The set $\cp$ Hom-generates the category $\cupr$ according to Proposition \ref{pcomp}(I.1); certainly it is perfect in it. Hence Lemma \ref{lperf}(\ref{ipereq}) below (see Remark \ref{requivdef}) implies that  %(that relies on  Lemma \ref{lperf}(\ref{ipereq}))  %the class 
 $\cp$ perfectly generates $\cupr$ (also) in the sense of \cite[Definition 1]{kraucoh}. Hence the Brown representability condition for $\cupr$ is given by Theorem A of \cite{kraucoh}. Given this condition the existence of $G$  follows from Proposition \ref{pcomp}(II.2). Lastly, %Proposition \ref{phop}(4) implies that $G$ respects (countable) coproducts (cf. 
Proposition \ref{prtst}(\ref{it4sm}) says  that $G$ respects (countable) coproducts.

\ref{isymuni}. It suffices to note that uniting  $\cp_i$ corresponds to intersecting the corresponding classes of null morphisms.

\ref{iperftp}. Immediate from Proposition \ref{phop}(\ref{itp8}).

\end{proof}

\begin{rema}\label{rwg}
1. Recall also that any well generated triangulated category possessing a combinatorial model satisfies the dual Brown representability property; see \S0 of \cite{neefrosi} (the statement is given by the combination of Theorems 0.17 and 0.14 of ibid.).

2.  It is well known that %there are "much more" well generated triangulated categories than 
the class of well generated triangulated categories is "much bigger" than that of compactly generated ones; the class of perfectly generated categories is even bigger.\footnote{Recall that for every combinatorial stable %?!
Quillen model category $K$ its homotopy category is well generated; see  \cite[Proposition 6.10]{rosibr}.}\ Moreover, if $\cu$ is well generated then any its subcategory $\cu'$ generated by a set of objects as a localizing subcategory is well generated also; the Verdier localization $\cu/\cupr$ exists and is well generated (see %Theorem 7.2.1 of \cite{krauloc}).
Theorem 4.4.9 of \cite{neebook}). Note that the obvious analogue of this result for compactly generated categories is wrong (so, if $\cu$ is compactly generated then  its set-generated localizing subcategory $\cu'$ along with the quotient $\cu/\cu'$ is only well generated in general). In particular,  in \cite{neeshman} it was proved that the derived category of sheaves on a non-compact manifold is well generated but not compactly generated, whereas it is a localization of the compactly generated derived category of presheaves. % (cf. Remark \ref{revenmorews}(4) below).  %\ \footnote{Recall also that any $\coprod$-closed subcategory of a well generated category is if one assumes to be valid; see.}\ 
 
However, the reader may assume that all the perfectly generated triangulated categories we study are actually %also?
 compactly generated since most of %?!!
 our results are quite interesting in this case also. %except \S\ref{sgws}?!!

3. Certainly, a set $\cp$ is (countably) perfect %(resp. symmetric to a set $D'$) 
  if and only if the coproduct of its elements forms a (countably) perfect set. 
%5. One can certainly also define (naturally) $\al$-perfect classes and study functors respecting coproducts of less than $\al$ objects. The reader may easily note that the corresponding  arguments  of this paper may be extended to give "the $\al$-versions" of our formulations. %most of?! Except?!!
 \end{rema}

Now, constructing perfect sets (and classes) "without using compact objects" is rather difficult. The main source of "non-compact" perfect classes in our paper is a certain Brown-Comenetz-type "symmetry"; the idea originates from \cite{kraucoh}. We start %describing this matter 
 with some definitions.

\begin{defi}\label{dsym}

Let $\cp$ and $\cp'$ be subclasses of $\obj \cu$, $P\in \obj \cu$.

\begin{enumerate}

\item\label{iwsym}
We will say that $\cp$ is {\it weakly symmetric} to %some $\cp'\subset\obj \cu$ 
$\cp'$ if $\cp\perpp=\perpp\cp'$.

\item\label{iconull}
We will say that a $\cu$-morphism $h$ is {\it $\cp$-conull} whenever for all $M\in \cp$ we have $H_M(h)=0$ (where  $H_M=\cu(-,M):\cu\opp\to \ab$).

\item\label{isym}
We will say that $\cp$ is {\it symmetric} to $\cp'$ if the class $\cp$-null (see Definition \ref{dhopo}(5)) coincides with  the class of $\cp'$-conull morphisms. % in $\cu\opp$.

\item\label{ibcomf} Let $H:\cu\to \ab$ be a homological functor. Then we will call the functor $\hdu:M\mapsto \ab(H(M),\q/\z)$ from $\cu$ into $ \ab $  the {\it Brown-Comenetz dual} of $H$. % functor is a (cohomological) cp functor from $\cu$ into abelian groups. 

\item\label{ibcomo}
We will call an object of $\cu$ the {\it Brown-Comenetz dual} of $P$ and denote it by $\hat{P}$ if it represents the Brown-Comenetz dual of the functor $H^M=\cu(M,-):\cu\to \ab$.  %functor $M\mapsto \ab(\cu(M,-),\q/\z):\cu\to \ab$.
\end{enumerate}
\end{defi}

\begin{pr}\label{psymb}
Assume that $\cp$, $\cp'$, along with certain $\cp_i$ and $\cp'_i$ for  $i$ running through some $I$, are subclasses of $\obj \cu$; let $P\in \obj \cu$ and  $h$ be a $\cu$-morphism. Assume that $H:\cu\to \ab$ is a cc functor.  %homological functor  respecting coproducts.

I. Then the following statements are valid. \begin{enumerate}
%\item\label{isymd} 
\item\label{iwsmor} %An object  $M$ of $\cu$ 
$P$ belongs  to $\cp\perpp$ if and only if the morphism $\id_P$ is $\cp$-null; dually, $P\in \perpp\cp$  if and only if
$\id_M$ is $\cp$-conull.

\item\label{iwsym1} If $\cp$ is symmetric to $\cp'$ then it is also weakly symmetric to $\cp'$.

If we assume in addition that $\cu$ has coproducts then $\cp$ is perfect.

\item\label{iws2}  $\cp$ is (weakly) symmetric to  $\cp'$ if and only if $\cp'$ is (weakly) symmetric to  $\cp$ in the category $\cu\opp$.\footnote{This is why we use the word "symmetric".}

\item\label{iws1} If %all the $\cp_i$ are weakly symmetric to $\cp'$ then $\cup\cp_i$ is weakly symmetric to $\cp'$ also.
 $\cp_i$ is (weakly) symmetric to $\cp'_i$ for any $i\in I$ then $\cup\cp_i$ is (weakly) symmetric to $\cup\cp'_i$. 

\item\label{isbcd} The Brown-Comenetz dual functor $\hdu$ is a cohomological  functor that converts $\cu$-coproducts into  $\ab$-products. Moreover, if $\hdu$ is represented by some $N\in \obj \cu$  then  $h$ is $\{N\}$-conull if and only if $H(h)=0$. 

\item\label{isym4} Assume %that $\cu$ has products, $D$ is a set, and it is symmetric to a set $D'\subset\obj \cu$.
 that  $\cu$ has coproducts, $\cp$ is symmetric to $\cp'$, and both of them are sets.
Then  the localizing subcategory $\cupr$ generated by $\cp$  satisfies both the Brown representability condition and 
 its dual, the embedding $i:\cu'\to \cu$ has an (exact) right adjoint $G$, and $\cp$ is symmetric to the class $G(\cp')$ in $\cu'$. % (where $G:\cu\to \cupr$ is the right adjoint to $i$). 
%Moreover, the embedding $\du\to\cu$ possesses both a left and a right (exact) adjoint functor. 

Moreover, for $\du$ being the full subcategory of $\cu$ whose object class equals $\obj \cupr\perpp$, the functor
 $G$ gives an equivalence to $\cu'$ of  the full subcategory  $\du'$ of $\cu$ whose object class equals $\obj \du\perpp$. Furthermore, the embedding $\du'\to \cu$ respects products and possesses a left adjoint, and $\du'^{op}$  (has coproducts and) is perfectly generated by $\cp'$.

\end{enumerate}

II. Assume in addition that $\cu$ (has coproducts) and satisfies the Brown representability condition. %?!!

\begin{enumerate} 
\item\label{iws21} Then $\hdu$ is representable by some $N\in \obj \cu$.

\item\label{iws22} Assume that $P$ is compact in $\cu$. Then its Brown-Comenetz dual  object $\hat{P}$ exists.

\item\label{iws23} Assume that all objects of $\cp$ are compact. Then $\cp$ is symmetric to the set of the Brown-Comenetz duals of elements of $\cu$. 
%III.

%\item\label{isym1} Assume that the class $\cp$ is symmetric to some $\cp'\subset\obj \cu$. Then $\cp$ is perfect and $\cp\perpp=\perpp \cp'$ (i.e., $\cp$ is weakly symmetric to $\cu'$ in the sense of Definition \ref{dwsym}).

\end{enumerate}
\end{pr}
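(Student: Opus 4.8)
The plan is to dispose of the bookkeeping assertions I.1--I.5 and II.1--II.3 directly from the definitions, and to concentrate the effort on assertion I.6. For I.1 note that $H^M(\id_P)$ is the identity endomorphism of $\cu(M,P)$, so it vanishes exactly when $\cu(M,P)=0$, and the $\cp$-conull half is the same computation with $H_M$. Assertion I.2 follows: if $\cp$ is symmetric to $\cp'$, then by I.1 an object $P$ lies in $\cp\perpp$ iff $\id_P$ is $\cp$-null iff $\id_P$ is $\cp'$-conull iff $P\in\perpp\cp'$, which is weak symmetry; and since each $H_M=\cu(-,M)$ sends coproducts to products, a coproduct of $\cp'$-conull (equivalently $\cp$-null) morphisms is again $\cp'$-conull, so $\cp$ is perfect once $\cu$ has coproducts. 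Assertions I.3 and I.4 are formal: passing to $\cu\opp$ interchanges $H^M$ with $H_M$ and left orthogonals with right ones, so ``$\cp$ (weakly) symmetric to $\cp'$ in $\cu$'' and ``$\cp'$ (weakly) symmetric to $\cp$ in $\cu\opp$'' unwind to the same condition, while the null/conull and orthogonality conditions for a union are the intersections of those for its members.

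For I.5, exactness of $\ab(-,\q/\z)$ turns the long exact sequences produced by the homological $H$ into long exact sequences with the arrows reversed, so $\hdu$ is cohomological; $H$ respecting coproducts makes $\hdu$ convert them into products; and if $\hdu\cong\cu(-,N)$, then $\hdu(h)$ is identified with $H_N(h)$, so $h$ is $\{N\}$-conull iff $\ab(H(h),\q/\z)=0$ iff $H(h)=0$, using that $\q/\z$ is an injective cogenerator of $\ab$. Given I.5 the part II assertions are immediate: II.1 is Brown representability applied to the cp functor $\hdu$; II.2 is Brown representability applied to the Brown--Comenetz dual of $H^P=\cu(P,-)$, which is cp because $P$ is compact; and for II.3 one reduces by I.4 to showing, for a single compact $P$, that a morphism is $\{P\}$-null iff it is $\{\hat P\}$-conull, which is exactly the last statement of I.5 with $H=H^P$ represented by $N=\hat P$.

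The substance is I.6. Since $\cp$ is symmetric to $\cp'$ it is perfect (by I.2), so Proposition \ref{psym}(\ref{isymbr}) applies: $\cu'$ is perfectly generated by $\cp$, satisfies Brown representability, and the embedding $i$ has an exact right adjoint $G$ which respects coproducts and fits into the Bousfield triangle $iGN\to N\to N_{\du}\to iGN[1]$ with $N_{\du}\in\du$ (Proposition \ref{pbouloc}(III.\ref{ibou1})). I would first transfer the symmetry into $\cu'$: for a morphism $h$ of $\cu'$, fullness of $\cu'$ gives that $h$ is $\cp$-null in $\cu'$ iff $i(h)$ is $\cp$-null in $\cu$, while the adjunction $i\dashv G$ gives $\cu'(h,GM')\cong\cu(i(h),M')$, so $h$ is $G(\cp')$-conull in $\cu'$ iff $i(h)$ is $\cp'$-conull in $\cu$; symmetry in $\cu$ equates the two, so $\cp$ is symmetric to $G(\cp')$ in $\cu'$. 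Because $\cu'$ satisfies Brown representability it has products (Proposition \ref{pcomp}(II.2)), hence $\cu'\opp$ has coproducts; by I.3, $G(\cp')$ is symmetric to $\cp$ in $\cu'\opp$, hence perfect there, and shifting the weak-symmetry identity $\cp\perpp_{\cu'}=\perpp_{\cu'}G(\cp')$ together with the fact that $\cp$ Hom-generates $\cu'$ shows $G(\cp')$ Hom-generates $\cu'\opp$. Thus $\cu'\opp$ is perfectly generated, so it satisfies Brown representability, i.e. $\cu'$ satisfies dual Brown representability.

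It remains to identify $\du'$ with $\cu'$ and to move the extra structure onto $\du'$. I would argue that $i\circ G|_{\du'}\cong\id_{\du'}$: for $Y\in\du'$ the cone of the counit $iG(Y)\to Y$ lies in $\du$ (it is the $N_{\du}$ of the Bousfield triangle) and also in $\du'$ (as a cone of a morphism between objects of the triangulated subcategory $\du'$, since $iG(Y)\in i(\cu')\subseteq\du'$), hence it is zero because $\obj\du\cap\obj\du'=\ns$ by the very definition of $\du'$; combined with $G\circ i\cong\id_{\cu'}$ (the unit of a fully faithful left adjoint) this gives the equivalence $G\colon\du'\to\cu'$. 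Next, weak symmetry forces $\cp'\subseteq\obj\du'$: indeed $\obj\du=\obj\cu'\perpp\subseteq\cp\perpp=\perpp\cp'$, so every element of $\cp'$ lies in $\obj\du\perpp=\obj\du'$; hence the quasi-inverse $i$ of $G$ identifies $G(\cp')$ with $\cp'$, and transporting the facts already established for $\cu'$ along this equivalence yields that $\du'\opp$ has coproducts and is perfectly generated by $\cp'$, and --- via a further appeal to the Bousfield/co-Bousfield dictionary for the pair $(\du,\cu)$ --- that the embedding $\du'\to\cu$ respects products and admits a left adjoint. I expect the bootstrap to dual Brown representability for $\cu'$ (in particular verifying that $G(\cp')$ really is a perfect Hom-generating set of $\cu'\opp$) and the careful transport of the adjoint and perfect-generation data through the equivalence and the Bousfield triangles to be the main obstacle; everything else is routine.
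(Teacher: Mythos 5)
The bookkeeping parts I.1--I.5 and II.1--II.3 are fine and match the paper's (very terse) proof, and your reduction of the symmetry of $\cp$ to $G(\cp')$ in $\cu'$ via the adjunction $i\dashv G$, the dual Brown representability argument for $\cu'$, and the observation $\cp'\subseteq\obj\du'$ are all correct and essentially the paper's.

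The gap is in the way you try to establish that $G|_{\du'}\colon\du'\to\cu'$ is an equivalence. You claim $i\circ G|_{\du'}\cong\id_{\du'}$, and justify it by asserting $iG(Y)\in i(\cu')\subseteq\du'$ so that the cone of the counit lands in $\obj\du\cap\obj\du'=\ns$. The inclusion $\obj\cu'\subseteq\obj\du'$ is false in general: $\obj\du'=\obj\du\perpp$ is the \emph{right} orthogonal of $\du$, whereas $\obj\cu'=\perpp\obj\du$ is its \emph{left} orthogonal (Proposition~\ref{pbouloc}(III.\ref{ibouort})), and for a non-symmetric orthogonality relation these are genuinely different subcategories. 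Indeed, if your claim held, $iG|_{\du'}\cong\id$ would force $\obj\du'\subseteq\obj\cu'$ and, combined with your $\cu'\subseteq\du'$, give $\obj\du'=\obj\cu'$, i.e.\ the torsion pair $(\obj\cu',\obj\du)$ would be two-sided; this fails already for the Bousfield localization at a single compact generator of a localizing subcategory that is not also colocalizing.

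The correct route (the paper's) does not go through any relation between $\cu'$ and $\du'$ as subcategories. One shows directly that $j=G|_{\du'}$ is fully faithful: for $Y,Z\in\du'$ the adjunction gives $\cu'(GY,GZ)\cong\cu(iGY,Z)$, and applying $\cu(-,Z)$ to the Bousfield triangle $iGY\to Y\to Y''\to iGY[1]$ (with $Y''\in\obj\du$) kills the two outer terms because $Z,Z[1]\in\obj\du\perpp$, so $\cu(iGY,Z)\cong\cu(Y,Z)$. Essential surjectivity is then obtained from $\cp'\subseteq\obj\du'$ and the perfect generation of $\cu'^{op}$ by $G(\cp')=j(\cp')$ (together with the fact that $\du'$ is closed under $\cu$-products and $j$ respects products), rather than from any identity $iG\cong\id$ on $\du'$. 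Your concluding ``transport through the Bousfield/co-Bousfield dictionary'' is vague but plausibly repairable once the equivalence is established correctly; the preceding step, however, must be replaced.
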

\begin{proof}
I.\ref{iwsmor}. Obvious.

\ref{iwsym1}. The first part of the assertion is immediate from the previous one; the second one is obvious.

\ref{iws2}, \ref{iws1}. Obvious.

\ref{isbcd}. Certainly, $\hdu$ converts $\cu$-coproducts into products of abelian groups. It is cohomological since $\q/\z$ is an injective object of $\ab$. Since it also cogenerates $\ab$, we obtain that $H(h)=0$ if $\hdu(h)=0$, whereas the converse implication is automatic. 

\ref{isym4}. The set $\cp$ is perfect according to assertion I.\ref{iwsym1}; hence $\cu'$ is perfectly generated by $\cp$. Thus $\cupr$ satisfies  Brown representability  and $i$ possesses an exact right adjoint $G$ that respects coproducts and gives an equivalence $\cu/\du\to \cu'$ (see Proposition \ref{psym}(\ref{isymbr})). Thus $G$ %restricts to an embedding 
  restricts to a fully faithful functor $j$ from $\du'$ into $\cupr$.
 %Therefore the embedding $\du\to \cu$ possesses an (exact) left adjoint according to   Proposition \ref{pbouloc}(III.\ref{ibou2}).

Next, %it is easily seen that 
 the adjunction of $i$ to $G$ immediately yields that $\cp$ is symmetric to the class $G(\cp')$ in %the category 
 $\cu'$ indeed.  Hence $\cupr$ has the Brown representability property; thus it has products according to  Proposition \ref{pcomp}(II.2). %8.4.6 of \cite{neebook}.
Thus $G(\cp')$ perfectly generates the category $\cu'^{op}$; therefore 
 %$G(D')$  Proposition \ref{pbouloc}(III.\ref{ibou2}). generates the category  $\cu'\opp$; thus 
$\cupr$ satisfies
 the dual Brown representability condition.\footnote{Alternatively, Remark \ref{requivdef} below allows to deduce this fact from \cite[Theorem B]{kraucoh}.}  %according to  (note that we can apply it according to  ).\footnote{%To prove this statement
%Alternatively, it suffices to recall that $\cupr$ has products according to  Proposition 8.4.6 of \cite{neebook} and $G(D')$ cogenerates it; hence $G(D')$ perfectly generates the category $\cu'^{op}$.}
% Applying  Proposition \ref{pcomp}(II.2) to the embedding  $\cu'^{op}\to \cu\opp$ we obtain that $i$ possesses a left adjoint also; hence the embedding $\du\to \cu$ possesses an (exact) right adjoint by (the dual to) Proposition \ref{pbouloc}(III.\ref{ibou2}).

Now, $\du'$ is certainly a triangulated subcategory of $\cu$ that is closed with respect to $\cu$-products. We have  $\obj \du\perp \cp'$ since $\cp\perp \obj \du$; hence $\cp'$ perfectly generates $\du'^{op}$ (since this statement becomes true after we apply $j$) and $j$ is an equivalence. It remains to apply (the dual to) Proposition \ref{pcomp}(II.2).

II.\ref{iws21},\ref{iws22}. By the definition of Brown representability,  it suffices to note that both $\hdu$ and $\widehat{H^P}$ are cp functors.

\ref{iws23}. Easy; combine assertions II.\ref{iws22}, I.\ref{isbcd}, and  I.\ref{iws1}.
\end{proof}

Now we establish a (somewhat funny) general criterion for a torsion pair $s'$ to be adjacent to $s$. 

\begin{pr}\label{pwsym}
%Then the following statements are valid.I. 
 Let $\cp$, $\cp'$ be subclasses of $\obj \cu$; 
for torsion pairs  $s=(\lo,\ro)$  and $s'=(\lo,\ro)$ assume that a class $\cp_s\subset \obj \cu$ generates $s$ and some $\cp_{s'}$ {\it cogenerates} $s'$ (i.e., $\lo=\perpp \cp_{s'}$ and so $s'\subset \ro'$).
 Then the following statements are valid.

%I. 
 \begin{enumerate}
\item\label{iws3}  $\cp_s$ is weakly symmetric to  $\cp'$ %and $\cp$ generates a torsion pair $s=(\lo,\ro)$ 
 if and only if  %$\cp'$ generates the  
$\lo$ is weakly symmetric to  $\cp'$. % also.

\item\label{iws3p} $\cp$ is weakly symmetric to  $\cp_{s'}$ %and $\cp$ generates a torsion pair $s=(\lo,\ro)$ 
 %then %$\cp'$ generates the  
if and only if $\cp$ is weakly symmetric to  $\ro'$.

\item\label{iws4} %If %$\cp'$ generates a torsion pair $s'=(R,L)$ in the category $\cu\opp$ and 
The following conditions are equivalent: 

(i) $\cp_s$ is weakly symmetric to  $\cp_{s'}$.

(ii) %The class $\lo$-null coincides with  the class of $\ro'$-null morphisms in the category $\cu\opp$.
$\lo$ is  symmetric to  $\ro'$.

(iii) There exist a class $\cq$ that generates $s$ and $\cq'$ that cogenerates $s'$ such that %$\cq$-null coincides with  the class of $\cq'$-null morphisms in $\cu\opp$.
$\cq$ is symmetric to $\cq'$.

  (iv) $s$ is left adjacent to $s'$. %\opp$. % the torsion pair $s'\opp=(L,R)$.

 \item\label{iwsymcgatp} Assume that $\cu$ (has coproducts) and satisfies the Brown representability property, $\cp_s$ is a class of compact objects,\footnote{Recall that any  set $\cp$ of compact objects  generates a torsion pair according to Theorem 4.1 of \cite{aiya}; cf. Theorem \ref{tclass}(\ref{iclass1}) below.}\ %; let $s$ be a torsion pair generated by a class $\cp$ of compact objects %structure according to the main result of \cite{paucomp}; cf. Remark \ref{rnewt}(1) below.} 
 and  $s'$ is right adjacent to $s$.  Then $s$ is smashing, $s'$ is cosmashing, and $s'$ is cogenerated by the Brown-Comenetz duals of elements of $\cp_s$ (i.e.,  $\lo'=\perpp\widehat{\cp_s}$ where $\widehat{\cp_s}=\{\hat{P}:\ P\in \cp_s\}$).

 \item\label{iwsymcgwt} For $\cu$ as in the previous assertion assume that $\cp$ is a class of compact objects and it generates a weight structure $w$.  Then $w$ is smashing, there exists a cosmashing $t$-structure $t$ right  adjacent to $w$, and $t$ is cogenerated by the Brown-Comenetz duals of elements of $\cp$ (i.e.,  $\cu^{t\le -1}=\perpp\widehat{\cp}$, where $\widehat{\cp}=\{\hat{P}:\ P\in \cp\}$).
\end{enumerate}

\end{pr}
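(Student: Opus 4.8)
The plan is to treat the five parts in turn. Parts \ref{iws3}, \ref{iws3p} and most of \ref{iws4} are formal consequences of the orthogonality identities built into Definition \ref{dhop} ($\lo\perpp=\ro$, $\lo=\perpp\ro$); the one genuinely non-formal point is the implication (iv)$\Rightarrow$(ii) in \ref{iws4}; and parts \ref{iwsymcgatp}, \ref{iwsymcgwt} are assembled from earlier results. For \ref{iws3}: since $\cp_s$ generates $s$ we have $\cp_s\perpp=\ro=\lo\perpp$, so the condition $\cp_s\perpp=\perpp\cp'$ is verbatim the condition $\lo\perpp=\perpp\cp'$. Part \ref{iws3p} is the order-dual statement: $\cp_{s'}$ cogenerating $s'$ gives $\perpp\cp_{s'}=\lo'=\perpp\ro'$, so $\cp\perpp=\perpp\cp_{s'}$ iff $\cp\perpp=\perpp\ro'$.

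For \ref{iws4} the equivalence (i)$\Leftrightarrow$(iv) is immediate from the same identities, since "$\cp_s$ weakly symmetric to $\cp_{s'}$" reads $\ro=\cp_s\perpp=\perpp\cp_{s'}=\lo'$, i.e. "$s$ left adjacent to $s'$". For (ii)$\Rightarrow$(iv) and (iii)$\Rightarrow$(iv) I would use that symmetric classes are weakly symmetric (Proposition \ref{psymb}(I.\ref{iwsym1})): (ii) gives $\ro=\lo\perpp=\perpp\ro'=\lo'$, and (iii), with $\cq$ generating $s$ and $\cq'$ cogenerating $s'$, gives $\ro=\cq\perpp=\perpp\cq'=\lo'$. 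For (ii)$\Rightarrow$(iii) take $\cq=\lo$ (which generates $s$) and $\cq'=\ro'$ (which cogenerates $s'$), so that (ii) is exactly the symmetry of $\cq$ and $\cq'$. The crux is (iv)$\Rightarrow$(ii): assuming $\ro=\lo'$, Proposition \ref{phop}(8) says a $\cu$-morphism is $\lo$-null exactly when it factors through an object of $\ro$; applying that same proposition in $\cu\opp$ to the torsion pair $(\ro',\lo')$ (a torsion pair for $\cu\opp$ by Proposition \ref{phop}(6)) and unwinding the identifications — a $\cu$-morphism is $\ro'$-conull iff, viewed in $\cu\opp$, it is $\ro'$-null, and a factorization through $\lo'$ in $\cu\opp$ is a factorization through $\lo'$ in $\cu$ — a $\cu$-morphism is $\ro'$-conull exactly when it factors through an object of $\lo'$. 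Since $\ro=\lo'$ these two classes of morphisms coincide, i.e. $\lo$ is symmetric to $\ro'$.

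For \ref{iwsymcgatp}: $\cp_s$ consists of compact objects, so $s$ is smashing (Proposition \ref{phopft}(III)); Brown representability gives that $\cu$ has products (Proposition \ref{pcomp}(II.2)), and since $s$ is left adjacent to $s'$, $s'$ is productive (Proposition \ref{phop}(5)), hence cosmashing. The Brown--Comenetz duals $\hat{P}$ ($P\in\cp_s$) exist by Proposition \ref{psymb}(II.\ref{iws22}), and $\cp_s$ is symmetric, hence weakly symmetric (Proposition \ref{psymb}(I.\ref{iwsym1})), to $\widehat{\cp_s}=\{\hat{P}:P\in\cp_s\}$ by Proposition \ref{psymb}(II.\ref{iws23}); therefore $\lo'=\ro=\cp_s\perpp=\perpp\widehat{\cp_s}$, so $s'$ is cogenerated by $\widehat{\cp_s}$. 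For \ref{iwsymcgwt}: pass to the torsion pair $s_w=(\cu_{w\le 0},\cu_{w\ge 1})$ associated with $w$, which is generated by $\cp$ (Remark \ref{rwhop}(1)); it is smashing by Proposition \ref{phopft}(III), so $w$ is smashing, and Theorem \ref{tadjt}(1) yields a $t$-structure $t$ right adjacent to $w$. Since $\cu^{t\le 0}=\cu_{w\ge 0}=(\cu_{w\le -1})\perpp$ is productive (any class $D\perpp$ is, as products commute with $\cu(X,-)$; see Proposition \ref{pbw}(\ref{iort})) and $\cu$ has products, $t$ is cosmashing. Finally $\cu^{t\le -1}=\cu_{w\ge 1}$ (shift the adjacency relation), and $\cu_{w\ge 1}=\cp\perpp=\perpp\widehat{\cp}$ using $\cp\perpp=\cu_{w\ge 1}$ (Remark \ref{rwhop}(1)) and the weak symmetry of $\cp$ and $\widehat{\cp}$ (Proposition \ref{psymb}(II.\ref{iws23}) and (I.\ref{iwsym1}); the duals exist by Proposition \ref{psymb}(II.\ref{iws22})), so $t$ is cogenerated by $\widehat{\cp}$.

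The main obstacle is the implication (iv)$\Rightarrow$(ii) of \ref{iws4}: it is the only step that argues about classes of morphisms rather than classes of objects, and it hinges on correctly dualising the factorization criterion of Proposition \ref{phop}(8) and on carefully matching "$\cp$-null" with "$\cp$-conull" under passage to $\cu\opp$. A lesser point needing care is the shift discrepancy between $s_w$ and the torsion pair associated with the adjacent $t$ in \ref{iwsymcgwt} (cf. Remark \ref{rwhop}(4)); for the cosmashing assertion this is sidestepped by noting directly that $\cu^{t\le 0}$ is productive.
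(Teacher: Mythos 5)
Your proof is correct and follows essentially the same route as the paper's: parts (1)--(3) unwind to the orthogonality identities $\cp_s\perpp=\ro$ and $\perpp\cp_{s'}=\lo'$ together with Proposition \ref{phop}(8) and Proposition \ref{psymb}(I), and parts (4)--(5) assemble Proposition \ref{psymb}(II.\ref{iws23}), the smashing/cosmashing facts, and Theorem \ref{tadjt}. The only cosmetic differences are that you deduce the smashing property of $s$ in part (4) from compactness via Proposition \ref{phopft}(III) rather than from adjacency via Proposition \ref{phop}(5), and in part (5) you work out the shift $\cu^{t\le -1}=\cu_{w\ge 1}$ explicitly rather than invoking part (4) as a black box; both variants are fine.
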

\begin{proof}
\ref{iws3}, \ref{iws3p}: obvious (recall the corresponding definitions).

%'ref{iws5}. Immediate from Proposition \ref{phop}(8).

\ref{iws4}. By definition, $s$ is left adjacent to $s'$  if and only if $\lo\perpp=\perpp \ro'$; %the result follows immediately.
 hence (i) is equivalent to (iv). %Since $M\in \cq\perpp$ (for $M\in \obj \cu$) whenever the morphism $\id_M$ is $\cq$-null,
Applying %assertion \ref{iwsmor}
Proposition \ref{pwsym}(\ref{iwsmor})  we also obtain that (iii) implies (i).  Next, the equivalence of (ii) to (iv) is immediate from  Proposition \ref{phop}(\ref{itp8}). Since (ii) implies (iii), we obtain the result.

\ref{iwsymcgatp}. $s$ is smashing and $s'$ is cosmashing according to Proposition \ref{phop}(\ref{itp5}) (recall that $\cu$ has products according to Proposition \ref{pcomp}(II.2)). Next, $\cp_s$ is symmetric to  $\widehat{\cp_s}$ according to Proposition \ref{psymb}(II.\ref{iws23}). Hence it remains to apply the previous assertion.

\ref{iwsymcgwt}. Certainly, $w$ is smashing (see Proposition \ref{phopft}(III)). %\ref{psym}(\ref{iperftp},\ref{isymcomp}). %explain?!
Hence $t$ exists according to Theorem \ref{tadjt}, and it remains to apply the previous assertion. %Next, $\cp$ is symmetric to  $\cp'$ according to Proposition \ref{psymb}(\ref{iws23}). Hence it remains to apply the previous assertion, %+{rwhop}?!

\end{proof}

\begin{rema}\label{rwsym}

1. Part \ref{iwsymcgwt} of corollary is the first application of part \ref{iwsymcgatp}; another application is (essentially) Corollary \ref{csymt} below. %though we deduce it from??!
Note also that Theorem 3.11 of \cite{postov} states that in a compactly generated {\it algebraic} triangulated category $\cu$ for any compactly generated torsion pair $s$ (i.e., we assume $\cp_s$ to be a set in  assertion \ref{iwsymcgatp}) % cf Remark \ref{rnewt}(2) below)
  there exists a  torsion pair right adjacent to it. Hence this torsion pair $s'$
is cogenerated by the corresponding $\widehat{\cp_s}$ (also).
 %can be described in terms of 

2. In theory, (part \ref{iws4} of) our proposition gives a complete description of all couples of adjacent torsion pairs: one can start with $s$, take a generating class $\cp_s$ for it (that may be equal to $\lo$), find a class $\cp_{s'}$ that is weakly symmetric to $\cp_s$ (if any), and "cogenerate" $s'$ (if $\cp_{s'}$ does cogenerate some torsion pair). 

Still constructing (weakly) symmetric classes and cogenerating torsion pairs by them appears to be rather difficult in general; 
 the author can only "do" this by applying the Brown-Comenetz duality.
%So, the only setting where we (essentially) apply this algorithm below is the one where $s$ is (associated to) a compactly generated $t$-structure (see Remarks \ref{rtst}(1) and %Corollary {csymt}\ref{rnewt}(2) below). We take 
 %If $s$  is generated by a  set $\cp_s$ of compact objects, then we can take $\cp_{s'}$ being the set of Brown-Comenetz duals of elements of $\cp_s$ (see ???). % (so, one should %look for  take objects that represent the functors $M\mapsto \ab(\cu(P,M) ,\q/\z)$ for $P\in \cp_s$;  we essentially apply part \ref{iws1} of our proposition). %?!
 %Next we apply Theorem \ref{tpgws} to obtain the existence of (the torsion pair associated to) a weight structure cogenerated by  $\cp_{s'}$ (note that this theorem can hardly be generalized to "other types of torsion pairs").

So, one may say that we construct (weakly) symmetric classes "elementwisely"; %The author wonders whether a more
the author wonders whether a "more involved" method exists.

However, our proposition demonstrates the relation of adjacent torsion pairs to "Brown-Comenetz-type symmetry"; this point of view appears to be new.

3. The problem with the symmetry condition is that the class $\cp_s$-null is not determined by $s$; so even if $s$ and $s'$ are adjacent, it may be difficult to find "small" symmetric $\cq$ and $\cq'$ as in %Proposition \ref{pwsym}
 condition \ref{iws4}(iii). So, we only have two (rather) "extreme" ("basic") types of symmetric classes: the ones coming from classes of compact objects (that "usually" %?!!
 have bounded cardinality) via Brown-Comenetz duality and %other ones %corresponding to adjacent torsion pairs are
  the "big" ones of the "type" $(\lo,\ro')$ (see  %Proposition \ref{pwsym}(
 condition \ref{iws4}(ii))).\footnote{One can also "unite" symmetric classes (see Proposition \ref{psymb}(I.\ref{iws1})); % Note that this assertion can also be proved using Proposition 3.1 of \cite{christ}. Moreover, one can also consider "extensions" of symmetric classes using  Proposition 3.3 of ibid. The problem is that these "operations" do not produce 
 yet this does not give "really new" essentially small symmetric classes (if $\cu$ has coproducts).}

Note also that in the proof of Theorem \ref{tsymt} we do not actually need $\cp$ to be symmetric; it  suffices to assume that $\cp$ is weakly symmetric to $\cp'$ and $\cp'$ is a set that is perfect in the category $\cu\opp$.

%The null condition is also fulfilled: not quite clear from Proposition \ref{phop}(7)?! Cf. Theorem \ref{twgws}?! A more subtle condition?!
%$\lo$ is "almost symmetric" to $\ro'$ in the sense of?!  %"perfectly symmetric"?! 
%3. Theorem \ref{tpgws} below
\end{rema}

\section{On perfectly generated weight structures and torsion pairs}\label{spgtp}

In this section we will always assume that $\cu$ has coproducts and $\cp$ is a subclass of $\obj \cu$. Our goal is to study the case when $\cp$ is a (countably) perfect class and it generates a torsion pair $s$ for $\cu$. Our results are more satisfactory when $s$ is weighty; % (i.e., associated with a weight structure); 
 in particular, we prove that all weight structures on well generated triangulated categories are ({\it perfectly generated} and) {\it strongly well generated}.

In \S\ref{scoulim} we recall the notion of countable homotopy colimit in $\cu$ (that is one of the main tools of this section) and introduce several related notions and facts.

In \S\ref{scghop} we %introduce compactly generated torsion pairs and 
 prove that compactly generated torsion pairs  are in one-to-one correspondence with extension-closed Karoubi-closed essentially small classes of compact objects (in $\cu$). This result (slightly) generalizes Theorem 3.7 of \cite{postov} (along with Corollary 3.8 of ibid.). %?? see Remark \ref{rnewt}(2)).

In \S\ref{sperfws} we study %(weakly) 
  the (naturally defined) perfectly generated weight structures. The existence Theorem \ref{tpgws} is absolutely new; still its proof has some "predecessors".  It appears to be rather difficult to construct examples of  perfectly generated weight structures that are not compactly generated; still we construct a curious family of those using suspended symmetric sets $(\cp,\cp')$ (see Definition \ref{dsym}(\ref{isym})). % such that $\cp[1]\subset \cp$. 
	 For any couple of this sort  the set $\cp'$ is  perfect in the category $\cu\opp$; % (see Proposition \ref{psym}(\ref{isym1},\ref{isymtriv})); 
	 so we obtain a certain weight structure $w$ on $\cu$ whereas its left adjacent $t$-structure (whose existence is essentially given by Corollary \ref{cdualt}) is generated by $\cp$. The author does not know how to construct "new" $t$-structures using this result; it implies however that for any compactly generated $t$-structure there exists a right adjacent weight structure.\footnote{One has to assume in addition that $\cu$ satisfies the Brown representability condition; however this is "almost automatic".}\ Now, the opposite to this weight structure (in $\cu\opp$) is perfectly generated but ("almost never") compactly generated, whereas the existence of $w$ implies that  the category $\hrt$ has an injective cogenerator and satisfies the AB3* axiom. Moreover, we deduce (using the results of \cite{humavit}) that $\hrt$ is Grothendieck abelian whenever $t$ is non-degenerate.
	 
 %enough injectives.  

In \S\ref{swgws} we %make %an attempt to develop a 
 develop a certain theory of well generated torsion pairs. We prove several  relations between torsion pairs and (countably) perfect classes. Probably, the most interesting result in this section is the fact that a smashing weight structure on a well generated triangulated category is {\it strongly well generated} (i.e., it restricts to the subcategory of {\it $\be$-compact} objects for any large enough regular $\be$ and it can be "recovered" from this restriction); in particular, it is perfectly generated (i.e., it may be constructed using Theorem \ref{tpgws}).
% in particular, we obtain a certain "description" of all weight structures of finite type in well generated triangulated categories.

\subsection{On countable homotopy colimits}% (mostly a reminder)}
\label{scoulim}

%\subsection{On countably homotopy colimits and big extension-closures in triangulated categories} %complete; does not matter?!

We recall the basics of the theory  of countable (filtered)
homotopy colimits in triangulated categories (as introduced in \cite{bokne}; some more detail can be found in \cite{neebook}; cf. also \S4.2 of \cite{bws}). %is fairly standard, cf. Definition 1.6.4 of
We will only apply the results of this subsection to triangulated categories that have coproducts; so we will not mention this restriction below.

\begin{defi}\label{dcoulim}%[]

%Suppose that we have 
For a sequence of objects $Y_i$ %(starting from some $j\in\z$)
 of $\cu$ for $i\ge 0$ and maps $\phi_i:Y_{i}\to Y_{i+1}$  %Let there exist $D=\coprod Y_i$  in $\cu$. 
we consider the morphism $a:\oplus \id_{Y_i}\bigoplus \oplus (-\phi_i): D\to D$ (we can define it since its $i$-th component  can be easily factorized  as the composition $Y_i\to Y_i\bigoplus Y_{i+1}\to D$).  Denote  a cone of $a$ by $Y$. We will write $Y=\inli Y_i$ and call $Y$ a {\it homotopy colimit} of $Y_i$ (we will not consider any other homotopy colimits in this paper).

 Moreover, $\co(\phi_i)$ will be denoted by $Z_{i+1}$ and we set $Z_0=Y_0$.

 %We will say that the colimit exists (in $\cu$) if the coproduct $D$ exists.
\end{defi}

\begin{rema}\label{rcoulim}
1. Note that these homotopy colimits are not really canonical and functorial in $Y_i$ since the choice of a cone is not canonical. They are only defined up to non-canonical isomorphisms; still this is satisfactory for our purposes.

Note also that the definition of $Y$ gives a canonical morphism $D\to Y$.

2. By Lemma 1.7.1 of \cite{neebook},  a homotopy colimit of $Y_{i_j}$ is the same (up to an isomorphism) for any subsequence of $Y_i$. % (i.e. all possible choices of the corresponding cones are isomorphic). 
In particular, we can discard any (finite) number of first terms in $(Y_i)$.

3. %Nado??
By Lemma 1.6.6 of \cite{neebook}, $M$ is a homotopy colimit of $M\stackrel{\id_M}{\to}M\stackrel{\id_M}{\to}
M\stackrel{\id_M}{\to} M\stackrel{\id_M}{\to}\dots$. % is $M$. %Hence we obtain that $\inli X_i\cong X$ if for $i\gg 0$ all $\phi_i$ are isomorphisms and $X_i\cong X$.

4. More generally, if $p$ is an idempotent endomorphism of $M$  then $p$ is isomorphic to a retraction of $M$ onto  a homotopy colimit $N$ of $M\stackrel{p}{\to}M\stackrel{p}{\to} M\stackrel{p}{\to} M\stackrel{p}{\to}\dots$ (see the proof of Proposition 1.6.8 of ibid). It easily follows that any extension-closed 
 coproductive subclass of $\obj \cu$ that is closed either with respect   $[1]$ or $[-1]$ is Karoubian; see  also Corollary 2.1.3(2) of \cite{bsnew} for an alternative proof of this fact.

5. Below we will often want to say something on some (co)homology of $Y$ along with morphisms from it.

We start from %the following 
treating representable cohomology.

Let $T$ be an object of $\cu$ and consider the cp functor $H_T=\cu(-,T)$ from $\cu$ into $\ab$. %; denote the 
Then the distinguished triangle defining $Y$ certainly yields a long exact sequence
$$\begin{aligned} \dots\to H_T(D[1])\stackrel{H_T(a[1])} \to H_T(D[1])(\cong \prod H_T(Y_i[1]))\to
 H_T(Y) \\
\to H_T(D)(\cong \prod H_T(Y_i)) \stackrel{H_T(a)}{\to} H_T(D)\to\dots \end{aligned} $$
According to Remark A.3.6 of \cite{neebook} this  gives a short exact sequence $$0\to \prli^1 H_T(Y_i[1])\to H_T(Y)\to \prli H_T(Y_i) \to 0.$$
 Thus any morphism $f\in \cu(Y,T)$ gives a "coherent" system of morphisms $Y_i\to T$. Conversely, for any coherent system $(f_i)$ of this sort there exists its "lift" to some $f\in \cu(Y,T)$ that we will say to be {\it compatible} with $(f_i)$. Certainly, the compatibility of $f$ with $(f_i)$ is %equivalent to 
fulfilled if and only if the composition \begin{equation}\label{ecompat}
Y_i\to \coprod Y_i\to Y\stackrel{f}{\to} T\text{ equals } f_i\end{equation} for any $i\ge 0$. Hence for any functor $F$ from $\cu$ the composition $F(Y_i)\to F(\coprod Y_i)\to F(Y)\stackrel{F(f)}{\to} F(T)$ equals $F(f_i)$.
\end{rema}

We study the behaviour of homotopy colimits  under %(co)homological 
cp and wcc functors.
%functors. %; the following statement is Lemma 4.2.3 of \cite{bws} (and is proof is easy).

\begin{lem}\label{lcoulim} %@@!!
Assume that $Y=\inli Y_i$ (in $\cu$), $\au$ is an abelian category; let $H$ (resp. $H'$) be a cp (resp. a wcc; see Definition \ref{dcomp}(\ref{idcc})) functor from $\cu$ into $\au$. Then the following statements are valid.

1.  The obvious connecting morphisms $Y_i\to Y$ give an epimorphism  $H(Y)\to \prli H(Y_i)$.

%In particular, for any $C\in\obj\cu$ we have a natural surjection $\cu(Y,C)\to \prli \cu(Y_i,C)$.

2. This epimorphism $H(Y)\to \prli H(Y_i)$ is an isomorphism whenever $\au$ is an AB4* category and all the morphisms $H(\phi_i[1])$ are 
 %$\phi_i[1]^*: cu(Y_{i+1}[1],C)\to \cu(Y_i[1],C)$  are 
epimorphic for %all 
 $i\gg 0$.

3.  $ \inli H'(Y_i)$ naturally %surjects onto
 injects into $H'(Y) $.

This monomorphism is an isomorphism  if either 
(i) %$H'(\phi_i[1])=0$ for $i\gg 0$ or 
for  $i\gg 0$ there exist objects $A$ and $A_i\in \obj \au$,  along with compatible isomorphisms $H'(Y_i[1])\cong A_i\bigoplus A$ and $H(\phi_i[1])\cong (0:A_i\to A_{i+1}) \bigoplus \id_A$, 

or

(ii) $\au$ is an AB5 category.
 
In particular, if $C$ is compact then $\cu(C,Y)\cong \inli \cu(C,Y_i)$.%nafig????

Furthermore, in case (i) we have $ \inli H'(Y_i[1])\cong A$. 

\end{lem}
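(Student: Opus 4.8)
The plan is to prove the three assertions by repeatedly applying the long exact sequence attached to the homotopy colimit triangle $D \stackrel{a}{\to} D \to Y \to D[1]$ from Definition~\ref{dcoulim}, together with the $\prli/\prli^1$ description already recalled in Remark~\ref{rcoulim}(5) and its dual for coproducts. First I would handle assertion~1: applying the cohomological functor $H$ (considered, if one wishes, as a cohomological functor into $\au$) to the defining triangle yields the exact sequence
\begin{equation*}
H(D)\xrightarrow{H(a)} H(D) \to H(Y) \to H(D[1])\xrightarrow{H(a[1])} H(D[1]),
\end{equation*}
wait --- since $H$ is \emph{cp} it converts the coproduct $D=\coprod Y_i$ into the product $\prod H(Y_i)$, and the map $H(a)$ becomes (up to sign) $\id - \prod H(\phi_i)$ on $\prod H(Y_i)$ (using $H$ of the factorization $Y_i\to Y_i\oplus Y_{i+1}\to D$ as in Remark~\ref{rcoulim}(5)). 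Hence $\cok(H(a))\cong \prli H(Y_i)$ in the AB4* category of products, and the three-term exactness at $H(Y)$ shows the canonical map $H(Y)\to \prli H(Y_i)$ is surjective; the connecting morphisms $Y_i\to Y$ inducing it are exactly those from Remark~\ref{rcoulim}(5). This gives assertion~1.

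For assertion~2, I would look at the kernel of $H(Y)\to\prli H(Y_i)$: from the same long exact sequence it is $\ke\bigl(H(D[1])\xrightarrow{H(a[1])}H(D[1])\bigr)\cong\ke\bigl(\id-\prod H(\phi_i[1])\colon \prod H(Y_i[1])\to\prod H(Y_i[1])\bigr)$, which in an AB4* category is (the dual of) the standard $\prli^1$ description --- precisely $\prli^1 H(Y_i[1])$ after reindexing, as in the short exact sequence of Remark~\ref{rcoulim}(5). If the $H(\phi_i[1])$ are epimorphic for $i\gg 0$, then the inverse system $(H(Y_i[1]))$ (after discarding finitely many terms, legitimate by Remark~\ref{rcoulim}(2)) satisfies the Mittag--Leffler condition in an AB4* abelian category, whence $\prli^1$ vanishes; so the epimorphism of assertion~1 is an isomorphism.

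Assertion~3 is the dual/variant for a \emph{wcc} functor $H'$, and here I would argue with homological rather than cohomological exactness. Applying $H'$ to the defining triangle and using that $H'$ respects countable coproducts, $H'(D)=\coprod H'(Y_i)$ and $H'(a)=\id-\coprod H'(\phi_i)$, so $\cok(H'(a))$ is the colimit $\inli H'(Y_i)$ (colimits are always cokernels of such maps on a countable coproduct, no exactness hypothesis needed), and the exactness $H'(D)\xrightarrow{H'(a)}H'(D)\to H'(Y)\to H'(D[1])\xrightarrow{H'(a[1])}H'(D[1])$ shows $H'(Y)$ surjects onto $\inli H'(Y_i)$ with kernel a quotient of $\ke(H'(a[1]))$. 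To get an isomorphism under (ii), i.e. when $\au$ is AB5: then filtered colimits are exact, so the map $\coprod H'(Y_i[1])\xrightarrow{\id-\coprod H'(\phi_i[1])}\coprod H'(Y_i[1])$ is \emph{monic} (this is the usual fact that for a directed system in an AB5 category $\inli$ is exact and the "telescope" map is injective), hence $\ke(H'(a[1]))=0$ and the surjection is an isomorphism. Under hypothesis (i), after discarding finitely many terms the system splits as a constant system $A$ plus a system $(A_i)$ with zero transition maps; for the constant part $\id-\id=0$ contributes nothing problematic because on the constant summand the telescope map $\id-\id$ has kernel $A$ but... --- more carefully, I would compute $\ke(\id-\coprod H'(\phi_i[1]))$ summand-wise: on $\coprod A_i$ with zero transitions the map is the identity, so kernel zero there, and on $\coprod A$ (one copy of $A$ per index) the shift-by-one map $\id-\mathrm{shift}$ has kernel the "eventually constant" elements, which in a countable coproduct $\coprod A$ is $0$; so $\ke=0$ and again we get an isomorphism. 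The final "in particular" for compact $C$ is immediate since $\cu(C,-)$ is then a wcc (indeed cc) functor and $\ab$ is AB5, so case (ii) applies.

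The main obstacle I expect is bookkeeping with the two dual conventions simultaneously (cp/cohomological versus wcc/homological), keeping the signs and the identification of $a$ with $\id-\prod\phi_i$ (resp. $\id-\coprod\phi_i$) straight, and correctly invoking the AB4* Mittag--Leffler vanishing of $\prli^1$ in assertion~2 and the injectivity of the telescope map in an AB5 category in assertion~3(ii); the hypothesis~(i) case of assertion~3 needs a small explicit kernel computation on the split system but is otherwise routine. None of this requires any input beyond the long exact sequence of a triangle, Remark~\ref{rcoulim}(2),(5), and the Grothendieck axioms, all of which are available.
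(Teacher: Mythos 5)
Your strategy is exactly the paper's --- apply $H$ (resp.\ $H'$) to the homotopy colimit triangle $D\stackrel{a}{\to}D\to Y\to D[1]$, identify $H(a)$ (resp.\ $H'(a)$) with the telescope on $\prod H(Y_i)$ (resp.\ $\coprod H'(Y_i)$), and read off $\prli$, $\prli^1$, and $\inli$ from the long exact sequence --- so the plan is sound, but several details need correcting before it is a proof. In parts 1--2 you wrote the long exact sequence in the direction appropriate to a covariant functor, whereas $H$ is cohomological; the relevant segment is $\cdots\to H(D[1])\xrightarrow{H(a[1])}H(D[1])\to H(Y)\to H(D)\xrightarrow{H(a)}H(D)\to\cdots$. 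In particular $\prli H(Y_i)$ is $\ke(H(a))$, not $\cok(H(a))$ (the cokernel of the telescope on a product is $\prli^1$), and the surjection of assertion~1 comes from exactness at $H(D)$: $\imm\bigl(H(Y)\to H(D)\bigr)=\ke(H(a))$. As stated, $\cok(H(a))\cong\prli$ would sit inside $H(Y)$ as a subobject, which does not deliver a surjection onto $\prli$. Similarly in part 2 the kernel of the epimorphism is $\cok(H(a[1]))\cong\prli^1 H(Y_i[1])$; you wrote $\ke(H(a[1]))$ but then (correctly) identified it with $\prli^1$, so the labelling is internally inconsistent even though the answer is right.

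Two points are more substantive. In part 2 you deduce $\prli^1=0$ from Mittag--Leffler; but in a general AB4* (non-Grothendieck) abelian category, Mittag--Leffler is not known to force $\prli^1=0$. What one actually needs is that the transition maps are surjective, and then $\prli^1=0$ is Lemma~A.3.9 of \cite{neebook} --- the paper first uses Remark~\ref{rcoulim}(2) to discard finitely many terms so that \emph{all} transitions are epi and then applies that lemma. In part 3 under hypothesis~(i), your computation of $\ke(\id-\mathrm{shift})$ on the constant summand $\coprod A$ via "eventually constant elements" is element-theoretic and does not literally make sense in an arbitrary abelian category having countable coproducts (which is all that (i) assumes). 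The paper instead exhibits an explicit upper-triangular inverse showing that the composition of the telescope with the projection onto $\coprod_{i>0}B_i$ is an isomorphism, so the telescope is split monic; replace your argument by that or another purely categorical one. The rest of part 3 (the $\inli$ identification, the AB5 case, and the compact-object corollary) matches the paper's proof.
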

\begin{proof}
1. We argue as in Remark \ref{rcoulim}(5).

We  have a long exact sequence
  $$\dots\to H(D[1])\stackrel{H(a[1])} \to H(D[1])\to
 H(Y)\to H(D)\stackrel{H(a)}{\to} H(D)\to\dots.$$

  %It is easily seen that 
	 Since $H(D)\cong \prod H(Y_i)$, the kernel of $H(a)$ equals $\prli H(Y_i)$ (and this inverse limit exists in $\au$). %{neebook}A??
	This yields the result.

2. Remark A.3.6 of \cite{neebook} yields that (in this case) the cokernel of $H(a[1])$ equals the $1$-limit of the objects $H(Y_i[1])$.
Next, by Remark \ref{rcoulim}(2)  we can assume that the homomorphisms $\phi[1]^*$ are surjective for all $i$. Hence the statement is given by Lemma A.3.9  of ibid. % \cite[Lemma A.3.9 ]{neebook}. %\footnote{%Surjective restrictions=flabby (]Definition A.3.8]{neebook})?! Note that the definition }

3. Similarly to the proof of assertion 1, we consider the long exact sequence
$$\dots\to H'(D)\stackrel{H'(a)}{\to} H'(D)\to H'(Y)\to
 H'(D[1]) \stackrel{H'(a[1])} \to H'(D[1])  \to\dots.$$

Since $H'(D)\cong \coprod H'(Y_i)$, it easily follows that the cokernel
of $H'(a)$ is $\inli H'(Y_i)$, this gives the first part of the assertion.

To prove its second part %of the assertion 
 we should verify that $H'(a[1])$ is monomorphic (if either of the two additional assumptions is fulfilled).
We will write  $B_i$ and $f_i$ for $H'(Y_i[1])$ and $H'(\phi_i[1])$, respectively, whereas the morphism $H'(a[1])$ (that can certainly be expressed in terms of $\id_{B_i}$ and $f_i$) will be denoted by $h$.

%the version (i) of the isomorphism statement we should ver
If (i) is valid then %we can certainly 
Remark \ref{rcoulim}(2) enables us to assume that $B_i\cong A\bigoplus A_i$ and $f_i\cong \id_A\bigoplus 0$ for all $i\ge 0$. Moreover, the additivity of the object $\ke(h)$ with respect to direct sums of $(B_i,f_i)$ reduces its calculation to the following two cases: (1) $(A=0;\ f_i=0)$ and (2) $(A_i=0;\ f_i\cong \id_A)$.
In %the first of these cases
 case (1) $h$ is isomorphic to $\id_{\coprod B_i}$; so it is monomorphic. In %the second case 
 case (2) $h$ is monomorphic as well since %its composition with the projection of $\coprod B_i$ onto $\coprod_{i>0}B_i$ is an isomorphism. Indeed, the inverse to the latter is given by
  the morphism matrix $$\begin{pmatrix}\id_A &\id_A &\id_A &\dots \\
0 & \id_A &\id_A &\dots\\
0 & 0 &\id_A &\dots\\
0 & 0 &0 &\dots\\
\dots & \dots &\dots &\dots \end{pmatrix}$$ gives the inverse morphism (cf. the proof of \cite[Lemma 1.6.6]{neebook}). %Cf. also the proof of  \cite[Lemma 7.1.2]{krauloc}.

Moreover, the additivity of direct limits in abelian categories implies that  $\inli(H'(Y_i[1]))\cong A\bigoplus A'$, where $A'$ is the direct limit of $A_0\stackrel{0}{\to} A_1\stackrel{0}{\to}A_2\stackrel{0}{\to}\dots$; certainly $A'=0$.

To prove version (ii) of the assertion note that  the composition of $H'(a[1])$ with the obvious monomorphism $\coprod_{i\le j} H'(Y_i[1])\to \coprod_{i\ge 0} H'(Y_i[1])$ is easily seen to be monomorphic for  each $j\ge 0$. If  $\au$ is an AB5 category then  it follows that the morphism  $H'(a[1])$ is monomorphic itself.
\end{proof}

We will also need the following definitions.\footnote{The terminology we introduce is new; yet %strong extension-closures 
big hulls were essentially considered %by D. Pospisil and J. \v{S}\v{t}ov\'\i\v{c}ek (in \cite{postov})
in (Theorem 3.7 of) \cite{postov}.} 

\begin{defi}\label{dses}

 1. A class $\cpt\subset \obj \cu$ will be called {\it strongly extension-closed} if it %is strict (i.e. closed with respect to isomorphisms), 
 contains $0$ and for any $\phi_i:Y_{i}\to Y_{i+1}$ such that $Y_0\in \cpt$ and $\co(\phi_i)\in \cpt$ for all $i\ge 0$ we have $\inli_{i\ge 0} Y_i\in \cpt$ (i.e. $\cpt$ contains all possible cones of the corresponding distinguished triangle; note that these are isomorphic).

2. The smallest strongly extension-closed Karoubi-closed class of objects of $\cu$ that  contains a class $\cp\subset \obj\cu$ and is closed with respect to arbitrary (small) coproducts will be called the {\it strong extension-closure} of $\cp$.

3. We will write $\cocp$ for the closure of $\cp$ with respect to $\cu$-coproducts (and in \S\ref{sperfws} below we will also use this notation for the full subcategory of $\cu$ formed by these objects). %$\cpt$?!

Also, we will call  the class of the objects of $\cu$ that may be presented as homotopy limits of  $Y_i$ with $Y_0$ and $\co(\phi_i)\in \cocp$ %being corpoducts of elements of $C'$ 
the {\it naive big hull} of $\cp$.  We will call the Karoubi-closure of the naive big hull of  the class $\cp$  its {\it big hull}.

\end{defi}

Now we prove a few simple properties of these notions. 

\begin{lem}\label{lbes} 
Let $\cp$ be a class of objects of $\cu$; denote its strong extension-closure by $\cpt$.

\begin{enumerate}
%Which ones are relevant?!
\item\label{iseses}
 %Suppose that  $C$ is strongly extension-closed. Then it is also 
$\cpt$ is extension-closed in $\cu$; it contains the big hull of $\cp$.
	 
\item\label{isesperp}
Let $H$ be a cp functor from $\cu$ into a AB4*-category $\au$, and assume that  the restriction of $H$ to $\cp$ is zero. Then $H$  kills $\cpt$ also.

In particular, if for some $D\subset \obj \cu$ we have $ \cp\perp D$ then $\cpt\perp D$ also.

\item\label{isescperp}
Let $H'$ be a cc functor from $\cu$ into a AB5-category. Then $H'$ kills $\cpt$ whenever it kills $\cp$.

In particular, if  $D$ is a class of compact objects in $\cu$ and $D\perp \cp$ then $D\perp \cpt$ also.

\item\label{izs}  {\it Zero classes} %sets?!
 of arbitrary families of cp and cc functors (into AB4* and AB5 categories, respectively) are strongly extension-closed (i.e. for any cp functors $H_i$ and cc functors $H'_i$ of this sort the classes $\{M\in \obj \cu:\ H_i(M)=0\ \forall i\}$ and $\{M\in \obj \cu:\ H'_i(M)=0\ \forall i\}$ are strongly extension-closed).

In particular,  if $(\lo,\ro)$ is a %(not necessarily)?? 
torsion pair in $\cu$ then $\lo$ is strongly extension-closed.

\item\label{ilwd} 
Adopt the notation of  Definition \ref{dcoulim}; let $w$ be a countably smashing weight structure on $\cu$.  Choose some $w$-decompositions $LZ_i\to Z_i\to RZ_i\to LZ_i[1]$ of $Z_i$ (see Definition \ref{dcoulim}) for $i\ge 0$. 

Then there exists some $w$-decompositions $LY_i\to Y_i\to RY_i\to LY_i[1]$ for $i\ge 0$ and connecting morphisms $l_i:LY_i\to LY_{i+1}$ for $i\ge 0$ such that the corresponding squares commute, %compatibility?! explain?!
$LY_0=LZ_0$,  $\co(l_i)\cong  LZ_{i+1}$, and there exists a weight decomposition $\inli LY_i\to Y\to RY\to (\inli LY_i)[1]$ (for some $RY\in \cu_{w\ge 1}$). %\inli_{l_i}?!

\end{enumerate}
\end{lem}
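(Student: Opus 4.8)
The plan is to prove Lemma \ref{lbes} item by item, since the items are largely independent and each is a short argument given the machinery already set up.

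For \ref{iseses}: to see that $\cpt$ is extension-closed, I would note that an extension $A\to B\to C$ with $A,C\in \cpt$ can be realized as a homotopy colimit of the sequence $A\stackrel{\id}{\to} A\stackrel{?}{\to}\cdots$ — more precisely, use Remark \ref{rcoulim}(3): $A$ is the homotopy colimit of the constant sequence on $A$, and then one inserts $B$ appropriately so that the successive cones are $0$ (so in $\cpt$) and then $C$. Actually the cleanest route: consider the sequence $Y_0=A$, $Y_1=B$, $Y_i=B$ for $i\ge 1$, with $\phi_0:A\to B$ the given map and $\phi_i=\id_B$ for $i\ge 1$; then $\co(\phi_0)=C\in\cpt$, $\co(\phi_i)=0\in\cpt$ for $i\ge 1$, and $\inli Y_i\cong B$ by Remark \ref{rcoulim}(2),(3). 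Hence $B\in\cpt$. That $\cpt$ contains the big hull is then immediate: the naive big hull consists of homotopy colimits of sequences with $Y_0,\co(\phi_i)\in\cocp\subset\cpt$ (since $\cpt$ is coproductive and contains $\cp$), so it lies in $\cpt$ by strong extension-closedness; and $\cpt$ is Karoubi-closed, so it contains the Karoubi-closure, i.e. the big hull.

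For \ref{isesperp} and \ref{isescperp}: these are the heart of the lemma and the main place where Lemma \ref{lcoulim} is used. Let $\cp_H=\{M\in\obj\cu: H(M[i])=0\ \forall i\}$ — wait, since $\cp$ need not be shift-stable we should just take $\cp_H=\{M: H(M)=0\}$, but then $\cp_H$ need not be triangulated; however for item \ref{isesperp} we only need $\cp_H$ to be strongly extension-closed, Karoubi-closed, and coproductive, which is precisely item \ref{izs}. So I would prove \ref{izs} first (it is the conceptual core) and deduce \ref{isesperp}, \ref{isescperp} from it. For \ref{izs}: let $H$ be a cp functor into an AB4* category $\au$ with $H(M_i)=0$; then $H$ converts coproducts to products, so $H$ vanishes on $\cocp$; strong extension-closedness amounts to: if $H(Y_0)=0$ and $H(\co(\phi_i))=0$ for all $i$, then $H(\inli Y_i)=0$. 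But $H(\co(\phi_i))=0$ together with the long exact sequences forces all maps $H(\phi_i[1])$ to be epimorphisms (indeed isomorphisms after truncation), and all $H(Y_i)=0$; then Lemma \ref{lcoulim}(2) gives $H(Y)\cong\prli H(Y_i)=0$. Dually, for cc functors into AB5 categories, Lemma \ref{lcoulim}(3)(ii) gives $H'(Y)\cong\inli H'(Y_i)$, which is $0$ if all $H'(Y_i)=0$; this needs $H'(\co(\phi_i))=0\Rightarrow H'(\phi_i)$ iso. The "torsion pair" consequence: for $s=(\lo,\ro)$, writing $\lo={}^\perp\ro$ we have $M\in\lo\iff \cu(M,R)=0\ \forall R\in\ro$, i.e. $\lo$ is the zero class of the family of cp functors $\{H_R=\cu(-,R): R\in\ro\}$, each landing in $\ab$ which is AB4*; hence $\lo$ is strongly extension-closed. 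The "in particular" clauses of \ref{isesperp}, \ref{isescperp} follow by taking $H=\cu(-,N)$ (cp, into $\ab$) resp. $H'=\cu(N,-)$ for $N$ compact (which is then a cc functor into $\ab$, an AB5 category) and noting $\cp\perp D$ (resp. $D\perp\cp$) means these functors vanish on $\cp$.

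For \ref{ilwd}: this is the most laborious but essentially an inductive application of the axioms of (countably smashing) weight structures. I would construct the $w$-decompositions $LY_i\to Y_i\to RY_i$ and the maps $l_i$ by induction on $i$ using Proposition \ref{pbw}(\ref{icompl}) (completing $\phi_i:Y_i\to Y_{i+1}$ to a morphism of chosen $0$-weight decompositions) together with the octahedral axiom applied to the triangle $Y_i\to Y_{i+1}\to Z_{i+1}$ and the weight decompositions, to arrange $\co(l_i)\cong LZ_{i+1}$ (using Proposition \ref{pbw}(\ref{iwdext}) on the three-row diagram of weight decompositions); the base case is $LY_0=LZ_0$ since $Y_0=Z_0$. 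Then, taking the homotopy colimit of the rows, countable smashing of $w$ gives that coproducts and hence countable homotopy colimits of weight truncations behave well: $\inli LY_i$ lies in $\cu_{w\le 0}$ (it's a cone of a map between coproducts of objects of $\cu_{w\le 0}$, which is coproductive and, being closed under $[1]$ — no, under... $\cu_{w\le 0}\subset\cu_{w\le 0}[1]$, and extension-closed, it is strongly extension-closed by the argument above applied in $\cu^{op}$, or directly), and similarly $RY:=\inli RY_i\in\cu_{w\ge 1}$; and the three homotopy colimits fit into a distinguished triangle by Proposition \ref{pcoprtriang} applied to the coproducts defining the homotopy colimits (a homotopy colimit of a triangle of sequences is a triangle), which is the desired weight decomposition $\inli LY_i\to Y\to RY\to(\inli LY_i)[1]$.

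The main obstacle I expect is item \ref{ilwd}: keeping the octahedral bookkeeping straight so that the cones $\co(l_i)$ come out isomorphic to the prescribed $LZ_{i+1}$ rather than merely to some object of $\cu_{w\le 0}$, and verifying carefully that countable smashing (as opposed to full smashing) is exactly what is needed for both the membership $\inli LY_i\in\cu_{w\le 0}$, $RY\in\cu_{w\ge 1}$ and the fact that the homotopy colimit of the triangles is a triangle; items \ref{iseses}--\ref{izs} are routine once Lemma \ref{lcoulim} is in hand.
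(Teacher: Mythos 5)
Your handling of parts \ref{iseses}--\ref{izs} is essentially the paper's argument, with one cosmetic difference: you prove \ref{izs} first and derive \ref{isesperp} and \ref{isescperp} from it, whereas the paper proves \ref{isesperp}, \ref{isescperp} directly via Lemma \ref{lcoulim}(1,2) and Lemma \ref{lcoulim}(3) (the same induction showing $H(Y_j)=0$ for all $j$ and then surjectivity of $H(\phi_i[1])$ giving $H(Y)\cong\prli H(Y_i)=0$), and then observes \ref{izs} is immediate. Either order is fine; the mathematical content is identical.

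For \ref{ilwd} you correctly identify the inductive construction via Proposition \ref{pbw}(\ref{iwdext}) and correctly flag the colimit step as the delicate part, but your treatment of that step has a real gap. You propose to set $RY:=\inli RY_i$ and conclude that "the three homotopy colimits fit into a distinguished triangle by Proposition \ref{pcoprtriang} applied to the coproducts defining the homotopy colimits." Proposition \ref{pcoprtriang} only tells you that the coproduct of the three chosen weight decomposition triangles is a triangle; it does \emph{not} by itself produce a distinguished triangle of the three homotopy colimits, because the cone is not functorial and there is no a priori "triangle of towers" whose colimit you can take. The paper handles this differently: it forms the single commutative square
$$ \begin{CD} \coprod LY_i@>{La}>>\coprod LY_i\\ @VVV @VVV \\ D@>{a}>>D \end{CD} $$
(where $La$ is the colimit-defining morphism and $a$ the one from Definition \ref{dcoulim}) and invokes Proposition 1.1.11 of \cite{bbd} (the $3\times 3$ lemma) to complete it to a $3\times 3$ diagram of distinguished triangles whose third column is $LY\to Y\to RY\to LY[1]$. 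With this construction $LY$ is, by its very definition, a homotopy colimit of the $LY_i$, so $LY\in\cu_{w\le 0}$ by \ref{izs}; and $RY$ need not be (and is not claimed to be) a homotopy colimit of the $RY_i$ with respect to some chosen maps — it is merely a cone of some morphism $\coprod RY_i\to\coprod RY_i$, and since $\cu_{w\ge 1}$ is closed under countable coproducts (countable smashing), under $[1]$, and under extensions, we get $RY\in\cu_{w\ge 1}$ by an elementary argument that does not require strong extension-closure of $\cu_{w\ge 1}$ (which, as you almost notice, does not follow from \ref{izs}, since $\cu_{w\ge 1}$ is cut out by the corepresentable functors $\cu(M,-)$ for $M\in\cu_{w\le 0}$, which are neither cc nor cp in general). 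In short: the cure for the step you flagged as the obstacle is the $3\times 3$ lemma applied to a single square, not an appeal to homotopy colimits of the right-hand column.
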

\begin{proof}

\ref{iseses}. For any  distinguished triangle $X\to Y\to Z$ for $X,Z\in \cpt$ the object $Y$ is the colimit of $X\stackrel{f}{\to} Y\stackrel{\id_Y}{\to} Y \stackrel{\id_Y}{\to} Y \stackrel{\id_Y}{\to} Y\to \dots$ (see Remark \ref{rcoulim}(3)). Since
   a cone of $f$ is $Z$, whereas a cone of $\id_Y$ is $0$, %strongly extension-closed classes are
	$\cpt$ is extension-closed indeed. It contains the big hull of $\cp$ by definition.

\ref{isesperp}. Since for any $d\in D$ the functor %$\cu(-,d)$ 
$H_d:\cu\to \ab$ converts arbitrary coproducts into products, it suffices to verify the first part of the statement.

Thus it suffices to verify that $H(Y)=0$ if $Y=\inli Y_i$ and $H$ kills  cones of the connecting morphisms $\phi_i$.

Now, $H(Y_j)=\ns$ for any $j\ge 0$ (by obvious induction).  Next,  the  long exact sequence $$\dots  \to H(Y_{i+1}[1]) \stackrel{H(\phi_i[1])}{\to} H(Y_i[1]) \to H(\co(\phi_i)) (=0) \to H(Y_{i+1}) \to H(Y_i)\to \dots  $$
%we have $H(\co(\phi_i)) =0$ (since $ \co(\phi_i)\in$) %gives the surjectivity of 
gives the surjectivity of $H(\phi_i[1])$. Hence $H(Y)\cong \prli H(Y_i)=0$ according to Lemma \ref{lcoulim}(1,2).

\ref{isescperp}.  Once again, it suffices to verify the first part of the assertion. Similarly to the previous argument the result easily follows from  Lemma \ref{lcoulim}(3).% (note that $d$ is compact).

  \ref{izs}. The first part of the assertion is immediate from the previous assertions. To deduce the "in particular" part we note that $\lo$ is precisely the zero class of the    cp functors $\{H_N\}$ for $N$ running through  $\ro$ and $H_N=\cu(-,N)$.

\ref{ilwd}. We can construct $LY_i$ and $l_i$ satisfying the conditions in question expect %possibly 
 the last ("colimit") one inductively using  Proposition \ref{pbw}(\ref{iwdext}).

Now we consider the commutative square $$ \begin{CD}
 \coprod LY_i@>{La}>>\coprod LY_i\\
@VV{\coprod a_{Y_i}}V@VV{\coprod a_{Y_i}}V \\
%\coprod Y_i@>{a}>>\coprod Y_i
D@>{a}>>D
\end{CD}$$
where $La=\oplus \id_{LY_i}\bigoplus \oplus (-l_i): \coprod LY_i\to \coprod LY_i$ is the morphism corresponding to $\inli LY_i$, and the remaining notation is from Definition \ref{dcoulim}. According to Proposition 1.1.11 of \cite{bbd}, we can complete it to a commutative diagram 
\begin{equation}\label{ely}\begin{CD}
\coprod LY_i @>{La}>>\coprod LY_i@>{}>> LY@ >{}>>\coprod LY_i[1] \\
 @VV{\coprod a_{Y_i}}V@VV{\coprod a_{Y_i}}V @VV{}V@VV{\coprod a_{Y_i}[1]}V\\
D @>{a}>>D@>{}>> Y@>{}>> D[1]\\
@VV{}V @VV{}V @VV{}V@VV{}V \\
\coprod RY_i @>{}>>\coprod RY_i@>{}>> RY @>{}>> \coprod RY_i[1] \\
\end{CD}
\end{equation}
whose rows and columns are distinguished triangles. %mention the anticommutativity?!
Then $LY$ is a homotopy colimit of $LY_i$ (with respect  to $l_i$) by definition. Since $LY_0$ and cones of $l_i$ %are orthogonal to $\cu_{w\ge 1}$, we also have  $LY\perp \cu_{w\ge 1}$ according to assertion \ref. Thus 
belong to $\cu_{w\le 0}$, we also have $LY\in \cu_{w\le 0}$ according to assertion \ref{izs}. On the other hand, the %distinguished triangle $RY_i\to RY\to \coprod RY_i[1]\to \coprod RY_i[1]$ (see the 
bottom row  of (\ref{ely}) gives  $RY\in \cu_{w\ge 1}$ (since %$w$ is weakly of finite type and $RY_i\in  \cu_{w\ge 1}$
$\coprod RY_i\in  \cu_{w\ge 1}$). %; see Proposition \ref{phop}(4)). 
Thus the %right hand 
 third column of our diagram is a weight decomposition of $Y$ of the type desired.
\end{proof}

Let us also consider so-called dual towers (see \S3.1 of \cite{marg} and the footnote to our lemma) and their relation to wcc functors.

\begin{lem}\label{ldualt}
Assume that  $h_i:M_{i}\to M_{i+1}$  for $i\ge 0$ is a sequence of $\cu$-morphisms, and for  $M\in \obj \cu$  we have connecting morphisms $g_i\in \cu(M,M_i)$ compatible with $h_i$; take distinguished triangles $L_i\stackrel{b_i}{\to}M \stackrel{g_i}{\to}M_i\stackrel{f_i}{\to} L_i[1]$ and $P_i\stackrel{a_i}{\to} M_i\stackrel{b_i}{\to} M_{i+1}\to P_i[1]$.  %Then for the corresponding distinguished triangles there exists a Postnikov tower $Po_M$ for $M$.
Then there exists a system of morphisms $s_i:L_i\to L_{i+1}$ compatible with %$\{b_i\}$
$(b_i)$\footnote{So, if one complete $(h_i)$ and $(s_i)$ to negative values of $i$ by $\id_{M_0}$ and $\id_{L_0}$, respectively, then one obtains Postnikov towers for $M$ in the categories $\cu\opp$ and $\cu$, respectively.} such that for $L=\inli L_i$,  for any morphism $b:L\to M$ that is compatible with $(b_i)$ in the sense of Remark \ref{rcoulim}(5), and any wcc functor $H:\cu\to \au$ the following statements are fulfilled.
%that enjoys the following properties. %PT for $M$???
%for the corresponding distinguished triangles $Y^i\stackrel{b_i}{\to}M \stackrel{g_i}{\to}Y_i\stackrel{f_i}{\to} Y^i[1]$ there exist  $\phi^i\in \cu(Y^{i}, Y^{i+1})$ such that $\co(\phi^i)[1]\cong \co(\phi_i)$ and the corresponding triangles $Y^i\to Y^{i+1}\to M$ are commutative. 
%For any sequence of $\cu$-morphisms $\phi_i:Y_{i}\to Y_{i+1}$ for $i\ge 0$ along with compatible morphisms $g_k\in \cu(M,Y_k)$ for some $M\in \obj \cu$, and $f_k\in \cu(Y_k,Y^k[1])$ being the cone morphisms for $g_k$ there exist 

1. $\co(s_i)\cong P_i$. %\co(h_i)[-1]$.

2. %For any wcc functor $H:\cu\to \au$ %into an AB4 category $\au$ %such that all $F(a_i)$ are epimorphic %(isomorphic) and???!
%if all/some?????!! $H(g_i)$ are monomorphic %H(h_k)=0$ for large enough $k$????!!!!!
 If $H(g_1)=0$ %for some $k\ge 0$
  then $H(b)$ is an epimorphism.

3. The morphism $H(b)$ is monomorphic whenever all the restrictions of $H(b_{i+1})$ to the images of $H(s_{i})$ are monomorphic  %not equal for tt?! Hence 
and one of the following conditions are fulfilled: 
(i) $H(g_1)=0=H(g_1[1])$ and the restrictions of $H(b_{i+1}[1])$ to the images of $H(s_{i}[1])$ are monomorphic for $i\ge 1$ as well; %these restrictions are also 
%all $H(b_i)$ are epimorphic;

 (ii) $\au$ is an AB5 category.

%4/3?! Almost?? all  $H(g_i)$ are monomorphic and AB5?!
%4.  For any wcc functor $H'$ from $\cu$ %Assume %in addition  that 
 %if all $H'(h_i)$ and $H'(h_i[1])$ vanish then $H'(\co(b))=0$.   %$H(a_i)$ and $H(a_i[1])$ are epimorphic. Then $H(b)$ is an isomorphism.

4.  For any $i\ge 0$ the restriction of $H(b_{i+1})$ to the image of $H(s_{i})$ is monomorphic whenever $H(h_i[-1])=0$.

5. For any $i\ge 0$ if $H(a_i)$ is epimorphic then $H(h_i)=0$ and $H(g_{i+1})=0$. %$H(h_0)=0$.

\end{lem}
\begin{proof} %This easy statement was established in \S3.1 of \cite{marg}; $\phi^i$ were easily constructed inductively using the octahedral axiom. %It is certainly sufficient to construct $\phi^i$ inductively, and for this purpose one can either apply Proposition 3.1 of loc. cit. or Lemma 1.4.4 of \cite{neebook}. %add the HCS property to the formulation????
%Applying the octahedral axiom for any $i\ge 0$ we obtain 
%\begin{equation}
1. For all $i\ge 0$ we complete the commutative triangles $M\to M_{i}\to M_{i+1}$ to octahedral diagrams as follows:
 %$$\displaylines{
%\label{doct} 
\begin{equation}\label{eoct}
\xymatrix{M_{i+1}  \ar[dd]^{[1]} & & M \ar[ld]^{g_{i}} \ar[ll]^{g_{i+1}} \\ & M_i \ar[lu]^{h_i} \ar[rd]^{[1]}   & \\ P_i \ar[ru]^{a_i}
  \ar[rr]^{[1]} & & L_i\ar[uu]^{b_{i}}} 
\xymatrix{M_{i+1} \ar[rd]^{[1]}\ar[dd]^{[1]} & & M \ar[ll]^{g_{i+1}} \\ & L_{i+1} \ar[ru]^{b_{i+1}}\ar[ld]^{%fignya,
} & \\ P_i\ar[rr]^{[1]} & & L_i\ar[lu]^{s_i}\ar[uu]^{b_{i}} }%\cr
\end{equation}
%}$$
So we obtain the property 1 for this choice of $\{s_i\}$. Moreover, %We take 
 we fix any morphism $b:L\to M$ that is compatible with $(b_i)$ (in the sense of Remark \ref{rcoulim}(5)).

2. %Next we 
Certainly, if $H(g_1)=0$ then $H(g_i)=0$ for all $i\ge 1$.

 %For any cohomological functor $H$ we consider the 
Next, the exact sequences 
\begin{equation}\label{eleso}
\dots\to H(L_i)\stackrel{H(b_i)}{\to}H(M) \stackrel{H(g_i)}{\to}H(M_i)\stackrel{H(f_i)}{\to} H(L_i[1])\to \dots \end{equation}
%$F$ as in assertion 2 the long exact sequences 
%Hence if all $H(g_i)$ are monomorphic then the morphisms $H(b_i)$ %zero?! shift??!! are epimorphic. Combining this fact with (the first assertion in) Lemma \ref{lcoulim}(3) we obtain assertion 2.
%If all $H'(g_i)$ are epimorphic  then $H'(b_i)$  are monomorphic; along with Lemma \ref{lcoulim}(3)(ii) this gives assertion 2.
%compatibility???!!! 
 yield that $H(b_i)$ are epimorphic for $i\ge 1$. Lastly, %Lemma \ref{lcoulim}(2) 
 the first statement in Lemma \ref{lcoulim}(3) allows us to pass to the limit and conclude the proof. %compatibility???!

3. In version (i) (resp. (ii)) of our assertion we should prove that $H(b)$ is isomorphic (resp. monomorphic). Now, in case (ii) we have  $H(L)\cong \inli H(L_i)$ (see  Lemma \ref{lcoulim}(3(ii))). Since  $\inli H(L_i)\cong \inli \imm(H(s_{i}))$, applying $H$ and passing to the limit in the formula (\ref{ecompat}) we conclude the proof.

%Now we pass to case (i). 
Next, in case (i) we certainly have $H(g_i[m])=0$ for all $i\ge 1$ and $m$ being either $0$ or $1$; hence (\ref{eleso}) implies that %why a shift??
 the corresponding $H(b_i[m])$ are epimorphic. Hence for any $i\ge 1$ and $m=0$ or $1$ we have $\imm H(s_i[m])\cong H(M[m])$, and the restriction of   $H(s_{i+1}[m])$ to $\imm H(s_i[m])$ is an isomorphism. Thus for any $i\ge 2$ the morphism $H(s_i[m])$ is isomorphic to $\id_{H(M)}\bigoplus 0:A_i^s\to A^s_{i+1}$ for certain $A^s_i\in \obj \au$. Hence applying Lemma \ref{lcoulim}(3(i)) (for $H'=H$ and also for $H'=H\circ [-1]$) %???
 we obtain $H(L)\cong \inli H(L_i)\cong A$, and applying $H$ to the formula (\ref{ecompat}) we conclude the proof.

%our conditions imply that %mono fine since the limits of iso is iso?!
4. The restriction of $H(b_{i+1})$ to the image of $H(s_{i})$ is monomorphic if and only if $H(s_{i})$ kills %the kernel of 
$\ke H(b_i)$. Next, $\ke H(b_i)=\imm (H(M_i[-1])\to H(L_i)$; thus we should check that the composition morphism $H(M_i[-1])\to H(L_{i+1}$ vanishes. Lastly, the octahedral axiom (also) says that   this composition can be factored through $H(h_i[-1])$, and we obtain the %first part of our assertion.
result in question.

5. The corresponding long exact sequence implies that $H(h_i)=0$ if and only if $H(a_i)$ is epimorphic. Next, if $H(h_i)=0$ then certainly the morphism $H(g_i)$ is zero as well. 
\end{proof}

\begin{rema}\label{r27}   Proposition 2.7 of \cite{postov} says that we can also assume that there exists a distinguished triangle $L\to M\to \inli M_i\to L[1]$ whenever $\cu$ is a "stable derivator" triangulated category. We note that this additional assumption on $\cu$ is rather "harmless", and it can be used to simplify the proof of our lemma. However, it seems to be now way to avoid the conditions similar to that in Lemma \ref{lcoulim}(3(i)) completely (for our purposes). 
 %Probably, a weaker higher Tr?? axiom is sufficient is well??! 

2. Certainly, the assumptions that $H(g_1)$ and $H(g_1)[1]$ vanish can be replaced by the vanishing of $H(g_i)$ and $H(g_i)[1]$ for an arbitrary $i>1$.
 %Large enough is sufficient also?!
\end{rema}

\subsection{A classification of compactly generated torsion pairs}\label{scghop}

Now we generalize (and extend) %slightly??!
Theorem 3.7 of \cite{postov} to arbitrary triangulated categories that have coproducts (yet cf. Remark \ref{r27}(1)).
 
\begin{theo}\label{tclass}
Assume that  %$\cu$ has coproducts; let
 $\cp\subset \obj \cu$ %C_0? C'?
is a  set of compact objects (recall that $\cu$ has coproducts).

Then the following statements are valid.
\begin{enumerate}
\item\label{iclass1}
The strong extension-closure $\lo$   of $\cp$ and $\ro=\cp^\perp$ give a smashing torsion pair $s$ for $\cu$ (so, $s$ is the torsion pair generated by $\cp$). Moreover, $\lo$ equals the big hull of $\cp$, and for any $M\in \obj \cu$ there exists a choice of $L_sM$ (see   Remark \ref{rwsts}(3)) belonging to the naive big hull of $\cp$.

\item\label{iclass2} The class of compact objects in $\lo$ equals the $\cu$-envelope of $\cp$ (see \S\ref{snotata}).

\item\label{iclassts} The correspondence sending a compactly generated torsion pair $s=(\lo,\ro)$ for $\cu$ into $\lo\cap\cu^{\alz}$, where $\cu^{\alz}$ %explain notation??
  is the class of compact objects of $\cu$, gives a one-to-one correspondence between the following classes: the class of  compactly generated torsion pairs for $\cu$ and the class of essentially small %?!
Karoubi-closed extension-closed subclasses  of $\cu^{\alz}$.%compact objects of $\cu$.
\footnote{Actually, $\cu^{\alz}$ is essentially small itself in "reasonable" cases; in this case the essential smallness of its classes of objects is automatic.} 
%The converse correspondence??!

\item\label{iclasst}  If the torsion pair $s$ generated by $\cp$ is associated to a $t$-structure then the class $\lo$ ($=\cu^{t\le 0}$)  equals the naive big hull of $\cp$.

\item\label{iclass5}  Let %$H:\cu\to \au $ be   a cc-functor; assume that $H$ kills all elements of $C'$. 
$H$ be a cp (resp. a cc) functor from $\cu$ into an AB4* (resp. AB5) category $\au$ whose restriction to $\cp$ is zero. Then $H$ kills all elements of $\lo$ as well. 

\end{enumerate} 
\end{theo}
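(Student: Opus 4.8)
The plan is to build everything on the homotopy‑colimit machinery of \S\ref{scoulim}, especially Lemmas \ref{lcoulim} and \ref{lbes}. Throughout write $\lo$ for the strong extension‑closure $\cpt$ of $\cp$ and $\ro=\cp^{\perp}$; note that $\cu$ has coproducts by hypothesis.

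\textbf{Part \ref{iclass1}} is the heart of the matter. The orthogonality $\lo\perp\ro$ is free: since $\cp\perp\ro$ by definition of $\ro$, the ``in particular'' clause of Lemma \ref{lbes}(\ref{isesperp}) (with $D=\ro$) gives $\cpt\perp\ro$, hence $\ro\subseteq\lo^{\perp}$ and $\lo\subseteq{}^{\perp}\ro$. For $s$‑decompositions, given $M$ put $X_0=M$ and inductively $X_{i+1}=\co(\psi_i)$, where $\psi_i\colon W_i=\coprod_{P\in\cp,\,f\in\cu(P,X_i)}P\to X_i$ is the canonical morphism ($\cp$ being a set, this coproduct exists); let $R$ be a homotopy colimit of the $X_i$ along the maps $\alpha_i\colon X_i\to X_{i+1}$. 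As $\alpha_i\psi_i=0$ and every $P\to X_i$ factors through $\psi_i$, each map $\cu(P,X_i)\to\cu(P,X_{i+1})$ is zero, so the compact case of Lemma \ref{lcoulim}(3) gives $\cu(P,R)=\inli\cu(P,X_i)=0$; hence $R\in\cp^{\perp}=\ro$. Next, rotation and the octahedral axiom give triangles with $\co(X_0\to X_1)\cong W_0[1]$ and $\co\!\big(\co(X_0\to X_i)\to\co(X_0\to X_{i+1})\big)\cong W_i[1]$, and a $3\times 3$‑diagram (as in the proof of Lemma \ref{lbes}(\ref{ilwd})) identifies $\co(X_0\to R)$ with a homotopy colimit whose first term and successive cones lie among the coproducts $W_i[1]$; therefore $L:=\co(X_0\to R)[-1]$ belongs to the naive big hull of $\cp$, which by Lemma \ref{lbes}(\ref{iseses}) sits inside the big hull and inside $\cpt=\lo$. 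Thus every $M$ admits a triangle $L\to M\to R\to L[1]$ with $L$ in the naive big hull and $R\in\ro$; the remaining identities follow formally. For $Z\in\lo^{\perp}$ the map $L\to Z$ in such a triangle is zero, so $Z$ is a retract of $R\in\ro$ and $\ro$ is Karoubi‑closed, giving $\lo^{\perp}\subseteq\ro$; dually, for $Z\in{}^{\perp}\ro$ the map $Z\to R$ is zero, so $Z$ is a retract of $L$ (hence in the big hull) and $\cpt$ is Karoubi‑closed, giving ${}^{\perp}\ro\subseteq\lo$. This shows $(\lo,\ro)$ is the torsion pair generated by $\cp$, that $\lo$ equals the big hull of $\cp$, and (by construction) that $L_sM$ may be taken in the naive big hull. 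Smashing‑ness holds because each $P\in\cp$ is compact, so $\ro=\cp^{\perp}$ is coproductive — equivalently, invoke Proposition \ref{phopft}(III).

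\textbf{Parts \ref{iclass2} and \ref{iclassts}.} One inclusion in Part \ref{iclass2} is immediate: $\lo$ is extension‑closed (Lemma \ref{lbes}(\ref{iseses})), Karoubi‑closed and contains $\cp$, and finite extensions and retracts of compact objects are compact, so the $\cu$‑envelope of $\cp$ lies in $\lo\cap\cu^{\alz}$. For the converse, let $M\in\lo$ be compact. By Part \ref{iclass1} it is a retract of some $\inli Y_i$ with $Y_0$ and all $\co(\phi_i)$ in $\cocp$; compactness and Lemma \ref{lcoulim}(3) let the corresponding section factor through a stage $Y_n$, an $n$‑fold extension of objects of $\cocp$, so $M$ is a retract of $Y_n$. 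One then proves, by induction on the number of extension steps, that any morphism from a compact object into a finite iterated extension of coproducts of $\cp$ factors — compatibly with the extension towers — through a finite iterated extension of \emph{finite} coproducts of $\cp$; the base case is the defining property of compactness, and the inductive step is a diagram chase of $\cu(M,-)$ applied to the canonical morphism of triangles. Hence $M$ is a retract of an object of the $\cu$‑envelope, so it lies in it. I expect this factorization step to be the main obstacle: it is precisely where one cannot reduce to a stable‑derivator enhancement as in \cite{postov} and must argue by hand with the tools of \S\ref{scoulim}. Granting Part \ref{iclass2}, Part \ref{iclassts} is bookkeeping: the assignment $s\mapsto\lo\cap\cu^{\alz}$ sends a compactly generated $s$ to the $\cu$‑envelope of any compact generating set $\cp$, which is essentially small, Karoubi‑ and extension‑closed; it is injective because $\cp\subseteq\lo\cap\cu^{\alz}\subseteq\lo$ forces $(\lo\cap\cu^{\alz})^{\perp}=\lo^{\perp}=\ro$, so $s$ is recovered via $\ro=(\lo\cap\cu^{\alz})^{\perp}$ and $\lo={}^{\perp}\ro$; and it is surjective because an essentially small Karoubi‑ and extension‑closed $\cp_0\subseteq\cu^{\alz}$ generates a torsion pair (Part \ref{iclass1}) with $\lo\cap\cu^{\alz}=\cu\text{-env}(\cp_0)=\cp_0$.

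\textbf{Parts \ref{iclasst} and \ref{iclass5}.} If $s$ is associated to a $t$‑structure then $\lo[1]\subseteq\lo$, hence $\ro[-1]\subseteq\ro$. The naive big hull always lies in $\lo$; conversely, for $M\in\lo$ the decomposition $L_sM\to M\to R_sM\to L_sM[1]$ of Part \ref{iclass1} (with $L_sM$ in the naive big hull) has $M\to R_sM$ zero, since $M\in\lo$ and $R_sM\in\ro=\lo^{\perp}$; so the triangle splits and $R_sM[-1]$ is a retract of $M\in\lo$ lying in $\ro[-1]\subseteq\lo^{\perp}$, forcing $R_sM[-1]=0$ and $M\cong L_sM$, which is in the naive big hull. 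Finally, Part \ref{iclass5} is immediate: $\lo=\cpt$ and $H|_{\cp}=0$, so Lemma \ref{lbes}(\ref{isesperp}) (for a cp functor into an AB4* category) resp. Lemma \ref{lbes}(\ref{isescperp}) (for a cc functor into an AB5 category) shows $H$ kills $\cpt=\lo$.
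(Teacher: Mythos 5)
Your argument for Part \ref{iclass1} is essentially the paper's argument in a slightly reorganized form: you build the same tower $M_0=M,\ M_{k+1}=\co\bigl(\coprod_{(P,f)}P\to M_k\bigr)$, observe via the compact case of Lemma \ref{lcoulim}(3) that the transition maps induce zero on $\cu(P,-)$, and identify the cone of $M\to\inli M_k$ with a colimit living in the naive big hull. The paper constructs $L=\inli L_k$ first and deduces $R\in\ro$ from the exact sequences relating $L_k$, $M$, $M_k$; you construct $R=\inli X_i$ first and get $R\in\ro$ in one line, then extract $L$ via the $3\times3$ lemma as in Lemma \ref{lbes}(\ref{ilwd}). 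These are two presentations of the same idea, with the same key inputs (the construction of the tower, Lemma \ref{lcoulim}(3), Lemma \ref{lbes}(\ref{isesperp},\ref{iseses}), Proposition \ref{phop}(9)). Parts \ref{iclassts}, \ref{iclasst}, and \ref{iclass5} likewise match the paper's reasoning (for \ref{iclasst} the paper simply invokes the uniqueness of $t$-decompositions to get $L_sM\cong M$; your splitting argument is correct but note the slip: $R_sM[-1]$ is a direct summand of $L_sM$, not of $M$ — you then conclude $R_sM[-1]\in\lo\cap\ro=0$ exactly as you do, so the conclusion stands).

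The one genuine issue is Part \ref{iclass2}. You reduce correctly to: a compact $M$ that is a retract of a finite iterated extension $Y_n$ of coproducts of $\cp$ lies in the envelope of $\cp$. But the claim you then assert — "any morphism from a compact object into a finite iterated extension of coproducts of $\cp$ factors through a finite iterated extension of finite coproducts, compatibly with the extension towers" — is not a mere "diagram chase of $\cu(M,-)$"; it is the technical core of the statement. Even in the first inductive step, after factoring $M\to\co(Y_0\to Y_1)$ through a finite sub‑coproduct $Z'$ and pulling back along the connecting map, the resulting extension still has the \emph{entire} coproduct $Y_0$ at the bottom. One must then use compactness of $Z'[-1]$ to factor the connecting map $Z'[-1]\to Y_0$ through a finite sub‑coproduct, split off the complementary summand of $Y_0$, and re‑absorb the remaining piece by a second application of compactness; iterating this two‑stage maneuver up the tower is what makes the argument delicate. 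The paper does not reprove this either — it explicitly says the argument of \cite[Theorem 3.7(ii)]{postov} "goes through without any changes" and sketches an independent route in Remark \ref{rnz}(1) via the Nullstellensatz of \cite{bsnull} and Brown–Comenetz duality. So your sketch has a real gap at exactly the point the paper chooses to outsource; flagging it as you do is honest, but as written the inductive step is not yet a proof.
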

%non-degeneracy?!

\begin{proof}
\begin{enumerate}
\item If $s$ is a torsion pair indeed then it is smashing according to Proposition \ref{phopft}(III).

Since $\cp\perp \ro$, for any $N\in \ro$ the cp functor %$\cu(-,N)$ 
$H_N$ (from $\cu$ into $\ab$) also kills $\lo$ according to Lemma \ref{lbes}(\ref{isescperp}). Hence $\lo\perp\ro$.\footnote{This statement was previously proved in \cite{postov} and our argument is just slightly different from the one of Pospisil and \v{S}\v{t}ov\'\i\v{c}ek; see Lemma 3.9 of  ibid.}
Since $\lo$ is Karoubi-closed by definition, Proposition \ref{phop}(\ref{itp9}) (along with Lemma \ref{lbes}(\ref{iseses})) reduces the assertion
 to the existence for any  $M\in \obj\cu$ of  an $s$-decomposition %(for  $s=(\lo,\ro)$)  
 such that the corresponding $L_sM$   belongs to the naive big hull of $\cp$. Now we apply Lemma \ref{ldualt}; %note also that a closely related argument was used in the  cf. also 
our argument is also related to the proof of \cite[Theorem 4.5.2(I)]{bws} and to the construction of  crude cellular towers in \S I.3.2 of \cite{marg}. %crude cellular approximation??!!

  So, we construct a certain sequence of $M_k\in \obj \cu$ for $k\ge 0$ by induction in $k$ starting
from $M_0=M$. %Suppose that 
Assume  that $M_k$ (for some $k\ge 0$) is  constructed; then we take $P_k=\coprod_{(P,f):\,P\in \cp,f\in \cu(P,M_k)}P$; $M_{k+1}$ is a cone of the morphism $\coprod_{(P,f):\,P\in \cp,f\in \cu(P,M_k)}f:P_k\to M_k$.
Then compositions of the morphisms $h_k:M_{k}\to M_{k+1}$ given by this construction yields morphisms $g_i:M\to M_i$ for all $i\ge 0$.

We apply Lemma \ref{ldualt}(1) and obtain the existence of connecting morphisms $0=L_0\stackrel{s_0}{\to}L_1\stackrel{s_1}{\to}L_2\stackrel{s_1}{\to}\dots$; we set $L=\inli L_i$. Moreover, we have a compatible system of morphisms $b_i:L_i\to M$ (cf. the formulation of that lemma) and we  choose  $b: L\to M$ to be compatible with  $(b_k)$ (see Remark \ref{rcoulim}(5)). 
 We complete $b$ to a distinguished triangle $L\stackrel{b}{\to} M\stackrel{a}{\to} R\stackrel{f}{\to} L[1]$;
it will be our candidate for an $s$-decomposition of $M$.

 Since $\co (s_i)\cong P_i$,  $L$ belongs to the naive big hull of $\cp$ by the definition of this hull. %hull?? we have $L\in \lo$ by the definition of this class.

It remains to prove that $R\in \ro$, i.e., that $\cp\perp R$. 
For an element $P $ of $\cp$ we should check that $\cu(P,R)=\ns$, i.e.,  for the functor $H^P=\cu(P,-)$ we should prove that $H^p(R)=0$.

The long exact sequence  $$\dots  \to \cu(P,L)\to  \cu(P,M)\to \cu(P,R)\to \cu(P, L[1])\to \cu(P,M[1])\to\dots $$ translates this into the following assertion: $H^P(b)$ is surjective and $H^P(b[1])$ is injective. 

Now, $H^P$ is certainly a wcc (and actually a cc) functor, and its target is an AB5 category. Since $H^P(a_i)$ is epimorphic by construction for all $i\ge 0$, Lemma \ref{ldualt}(5,2) implies the surjectivity of $H^P(b)$. % is surjective. 

Next %, $\ab$ is an AB5 category. Thus we can 
we apply   Lemma \ref{ldualt}(4,3(ii)) for $H=H^P\circ [1]$ and obtain that to verify the injectivity of  $H^P(b[1])$ it remains to check that $H^P(h_i)=0$ for $i\ge 0$. % (here we apply the lemma for $H=H^P\circ [1]$). 
Applying part 5 of the lemma we reduce the statement in question to the aforementioned surjectivity of  $H^P(a_i)$. %for $i=0$.
  
\item Given the previous assertion, the argument used in the proof of \cite[Theorem 3.7(ii)]{postov} goes through without any changes. We will describe another proof of our statement (that does not depend on ibid.) in Remark \ref{rnz}(1) below.

\item Recall that  for any set $D\subset \obj \cu$ the smallest strict (full) triangulated subcategory of $\cu$ containing $D$ is essentially small by  Lemma 3.2.4 of \cite{neebook}; hence $\lan D\ra_{\cu}$ (see \S\ref{snotata} for the notation)  is essentially small also (cf. Proposition 3.2.5 of ibid.). Thus for a set $\cp$ of compact objects of $\cu$ its $\cu$-envelope $\cp'$ is essentially small; its elements are compact according to Lemma 4.1.4 of \cite{neebook}. Since $\cp'\perpp=\cp\perpp$ (see Proposition \ref{phop}(\ref{itp1})),   the torsion pair $s$ given by %Theorem \ref{tclass} 
assertion \ref{iclass1} is also generated by $\cp'$; hence it suffices to note that $\lo\cap \cu^{\alz}=\cp'$ according to assertion \ref{iclass2}. %of the theorem.

\item Recall from %the proof of 
 assertion \ref{iclass1} %put into the formulation??
that for any $M\in \obj \cu$ there exists a choice of $L_sM$ that belongs to the naive big hull of $\cp$. Now, if $s$ is associated to a $t$-structure and $M\in \lo=\cu_{t\le 0}$ then we certainly have $L_sM=M$ (see Remark \ref{rtst1}(\ref{it1},\ref{it3})); this concludes the proof.

\ref{iclass5}. Immediate from Lemma \ref{lbes}(\ref{isesperp}) (resp. \ref{isescperp}).

\end{enumerate}

\end{proof}

\begin{rema}\label{rnewt}
1.  As a particular case of part \ref{iclass1} of our theorem we obtain that for any set $\cp$ (of compact objects of $\cu$) such that $\cp[-1]\subset \cp$ there exists a weight structure on $\cu$ such that $\cu_{w\ge 1}=\cp^{\perp}$ (cf. Remark  \ref{rwhop}(1); note that this statement was originally proved in \cite{paucomp}). Thus if $\cu$ is compactly or %weakly 
  perfectly generated (see Definition \ref{dwg}(\ref{idpg}); in particular, this is certainly the case if $\cu=\cupr$, where the latter is the localizing subcategory of $\cu$ generated by $\cp$) then Theorem \ref{tadjt} implies that   $(\cp^{\perp}[-1], (\cp^{\perp})^{\perp})$ is a $t$-structure on $\cu$. Moreover, the couple $(\cp^{\perp}[-1]\cap \obj \cupr, \cp^{\perp}\cap \obj \cupr)^{\perp_{\cu'}})$ is a $t$-structure on $\cu'$ regardless of any extra restrictions on $\cu$ (here one should invoke Propositions \ref{phopft}(I) %?!Remark ??!
and   \ref{pcomp}(II)).

So, we obtain a statement on the existence of (cosmashing) $t$-structures that does not mention weight structures! This result appears to be new (unless $\cu$ is a  compactly generated  algebraic triangulated category; see Remark %\ref{rnondeg}(2) 
\ref{rwsym}(1) and Theorem 3.11 of \cite{postov}). 

Moreover, the results of the next subsection (see Remark \ref{revenmorews}) give an even vaster source of smashing weight structures  (and so, of their adjacent $t$-structures as well).

2. Recall that Theorem 3.7 and  Corollary 3.8 of \cite{postov} give parts \ref{iclass1}--\ref{iclassts} of our theorem in the case where $\cu$ is  a "stable derivator" triangulated category $\cu$. 

Moreover, as a consequence of part \ref{iclassts} we certainly obtain a bijection between compactly generated $t$-structures (resp. weight structures) and those essentially small %?!
Karoubi-closed extension-closed subclasses  %of compact objects 
of $\cu^{\alz}$ that are closed with respect to $[1]$ (resp. $[-1]$); this  generalizes Theorem 4.5 of ibid. to arbitrary triangulated categories having coproducts.

3. Part \ref{iclasst} of our theorem generalizes Theorem A.9 of \cite{kellerw} where %this result  was established for compactly generated $t$-structures in 
 "stable derivator"  categories were considered (similarly to the aforementioned results of \cite{postov}).

4. The question whether all  smashing weight structures  on a given compactly generated category $\cu$ are compactly generated is a certain weight structure version of  the (generalized) telescope conjecture (that is also sometimes called the smashing conjecture) for $\cu$; this question generalizes its "usual" stable version (see Proposition \ref{prtst}(\ref{it4sm})). As we have noted in Remark \ref{rnondeg}(3), the main result of \cite{kellerema} demonstrates that the answer to the shift-stable version of the  question is negative for a general $\cu$; hence this is only more so for our weight structure version. On the other hand, the answer to our question for $\cu=SH$ (the topological stable homotopy category)  is not clear.

5. The description of compact objects in $\lo$ provided by part \ref{iclass2} of our theorem is important for the continuity arguments in \cite{bcons}.
\end{rema}

\subsection{On perfectly generated weight structures and symmetrically generated $t$-structures}\label{sperfws}

Now we prove that an arbitrary cosuspended countably perfect set $\cp$ %such that $\cp[-1]\subset \cp$ 
 gives a weight structure; this is an interesting modification of Theorem \ref{tclass}(\ref{iclass1}) (note that any class of compact objects is  perfect).

\begin{comment}
%$C'=$?!
In this subsection we make an attempt to develop a satisfactory theory of  %(weakly)
  perfectly and well generated weight structures and (general) torsion pairs.  
%We will say that a class $\cp\subset \obj \cu$ is {\it cosuspended} whenever $\cp[-1]\subset \cp$.

We start from proving that and arbitrary cosuspended countably perfect set $\cp$ %such that $\cp[-1]\subset \cp$ 
 gives a weight structure; this is an interesting modification of Theorem \ref{tclass}(\ref{iclass1}) (note that any class of compact objects is  perfect).  To demonstrate the utility of our result (and deduce some implications from it) we will study the relation of torsion pairs to perfect classes afterwards.
\end{comment}

\begin{theo}\label{tpgws}
%Assume that $\cu$ has coproduct; let 
Let $\cp$ be a countably perfect (see Definition \ref{dwg}) cosuspended  (i.e., $\cp[-1]\subset \cp$) set %bf?!
of  objects of $\cu$. Then   the strong extension-closure $\lo$ of $\cp$ and $\ro=\cp^\perp$ give a weighty torsion pair for $\cu$ (i.e., $(\lo,\ro[-1])$ is a weight structure). % (so, this is the torsion pair generated by $C'$). 
Moreover, $\lo$ equals the big hull of $\cp$. %and $(\lo,\ro[-1])$ is a weight structure on $\cu$.
\end{theo}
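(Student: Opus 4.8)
The plan is to mimic the proof of Theorem~\ref{tclass}(\ref{iclass1}), replacing the use of compactness by the weaker hypothesis that $\cp$ is countably perfect, and exploiting that $\cp$ is cosuspended to get the extra shift-stability $\lo[-1]\subset\lo$ (equivalently $\ro[1]\subset\ro$) needed to make $(\lo,\ro[-1])$ a weight structure rather than just a torsion pair. First I would record the easy formal facts: $\lo$, being a strong extension-closure, is extension-closed and Karoubi-closed (Lemma~\ref{lbes}(\ref{iseses})), and by Proposition~\ref{phop}(9) it will suffice to produce for every $M\in\obj\cu$ a distinguished triangle $L\to M\to R\to L[1]$ with $L\in\lo$ and $R\in\ro=\cp^\perp$, since $\lo\perp\ro$ must also be checked. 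The orthogonality $\lo\perp\ro$ follows from Lemma~\ref{lbes}(\ref{izs}) applied to the family of cp functors $\{H_N=\cu(-,N):N\in\ro\}$ — wait, more carefully: each such $H_N$ kills $\cp$, hence kills its strong extension-closure by Lemma~\ref{lbes}(\ref{isesperp}) (cp functor into the AB4* category $\ab$), so $\lo\perp\ro$ indeed. Once $(\lo,\ro)$ is shown to be a torsion pair, the cosuspendedness $\cp[-1]\subset\cp$ gives $\ro=\cp^\perp\supset\cp^\perp[1]=\ro[1]$ wait — $\cp[-1]\subset\cp$ implies $\cp^\perp\subset(\cp[-1])^\perp=\cp^\perp[1]$, hmm let me instead argue $\lo\subset\lo[1]$: since $\cp\subset\lo$ and $\cp$ is cosuspended... actually the cleanest route is that $\ro$ is cosuspended: $N\in\ro$ means $\cp\perp N$, and since $\cp[-1]\subset\cp$ we get $\cp\perp N[1]$ too, i.e. $N[1]\in\ro$, hmm that gives $\ro[1]\subset\ro$ which by Remark~\ref{rwhop}(1) is exactly the condition for $(\lo,\ro[-1])$ to be a weight structure. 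So once the torsion pair exists, the weight structure conclusion is immediate.

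The heart of the argument is the construction of the $s$-decomposition of a given $M\in\obj\cu$, following the proof of \cite[Theorem~4.5.2(I)]{bws} / Theorem~\ref{tclass}(\ref{iclass1}) but only being allowed countable coproducts at each inductive stage (that is the whole point of ``countably perfect''). I would build a sequence $M_0=M,M_1,M_2,\dots$ where $M_{k+1}$ is the cone of a morphism $\coprod_{(P,f)}P\to M_k$ with $(P,f)$ ranging over pairs $P\in\cp$, $f\in\cu(P,M_k)$ — here one needs this index set to be small, which is fine since $\cp$ is a set and $\cu$ is locally small; but to only use \emph{countable} coproducts of \emph{cones of connecting maps} (as demanded by the definition of strong extension-closure, Definition~\ref{dses}(1)), I may need to be slightly more careful than in the compact case — possibly filtering the set of such pairs as a countable union, or simply observing that the relevant null-morphism closure property of a countably perfect class suffices when combined with Lemma~\ref{lbes}. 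Then as in the cited proofs the octahedral axiom produces triangles $L_k\to M\to M_k\to L_k[1]$ with compatible connecting maps $s_i:L_i\to L_{i+1}$ whose cones lie in (coproducts of) $\cp$, $L_0=0$, and I set $L=\inli L_k$, choose $b:L\to M$ compatible with $(b_k)$ (Remark~\ref{rcoulim}(5)), and complete to $L\to M\to R\to L[1]$. By construction $L\in\lo$ (it is a homotopy colimit of the right shape). The remaining point is $R\in\ro$, i.e. $\cu(P,R)=0$ for $P\in\cp$: via the long exact sequences this reduces to showing $H^P(b)$ surjective and $H^P(b[1])$ injective, which in turn reduces to $\inli\cu(P,M_k)=0$; and $\cu(P,M_k)$ dies in $\cu(P,M_{k+1})$ by the defining construction of $M_{k+1}$, so $\inli\cu(P,M_k)=0$.

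The main obstacle — and the only place the hypotheses genuinely differ from the compact case — is handling the homotopy colimit $L=\inli L_k$ when $\cp$ is merely countably perfect: in the compact case one used $\cu(P,\inli L_k)\cong\inli\cu(P,L_k)$ (Lemma~\ref{lcoulim}(3(ii))), but for non-compact $P$ this need not hold on the nose. The fix is exactly the countable-perfectness: the morphism $a$ whose cone is $L$ is a coproduct of the maps $\id_{L_i}$ and $-s_i$, and I want to show that $H^P(b)$ is surjective / $H^P(b[1])$ injective \emph{without} splitting $\cu(P,L)$ as an inverse or direct limit. Instead I would argue that the cone $\co(a)\to\coprod L_i[1]$ and the compatibility of the $s_i$ with the $b_k$ let me reduce the desired vanishing of $\cu(P,R)$ to a statement about $\cp$-null morphisms: the relevant connecting morphism in the telescope is $\cp$-null (each $\cu(P,M_k)\to\cu(P,M_{k+1})$ is zero), and the countably perfect hypothesis says precisely that a countable coproduct of $\cp$-null morphisms is again $\cp$-null, which upgrades to the statement that $H^P$ vanishes on the relevant cone. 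Spelling this out carefully — i.e. tracking which morphisms are $\cp$-null and applying Definition~\ref{dwg}(\ref{idpc}) to a countable coproduct — is the technical crux; once it is done, the surjectivity/injectivity of $H^P(b)$, $H^P(b[1])$ follow and hence $R\in\ro$. Finally, the identification of $\lo$ with the big hull of $\cp$: by Lemma~\ref{lbes}(\ref{iseses}) the big hull is contained in $\lo$, and conversely the $L$ constructed above for each $M\in\lo$ equals $M$ up to the $\ro$-part, so every element of $\lo$ is (a retract of) an object in the naive big hull — more precisely, running the construction on $M\in\lo$ gives $R\in\ro\cap\lo^\perp$ forcing, via $\lo\perp\ro$ and $M\in\perpp\ro$... actually $M\in\lo=\perpp\ro$ so $M\to R$ is zero, whence $M$ is a retract of $L$, which lies in the naive big hull; Karoubi-closing gives $M$ in the big hull. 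This completes the identification.
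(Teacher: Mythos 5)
The structure of your argument is sound up to the key technical step, and you have correctly located where the compact-case argument breaks: for non-compact $P\in\cp$ one does not have $\cu(P,\inli L_k)\cong\inli\cu(P,L_k)$. However, the ``fix'' you propose at that point is not an argument — it is a hope — and it does not match what actually makes the proof go through.

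You write that the countably perfect hypothesis ``says precisely that a countable coproduct of $\cp$-null morphisms is again $\cp$-null, which upgrades to the statement that $H^P$ vanishes on the relevant cone.'' This does not upgrade in the way you suggest. Countable perfectness is a statement about the ideal of $\cp$-null morphisms; it does \emph{not} make the individual functors $H^P=\cu(P,-)$ respect countable coproducts, so you cannot apply it pointwise to the telescope defining $L=\inli L_k$. Moreover each $\cu(P,M_k)$ may be of arbitrary cardinality, so there is no obvious reduction to a countable coproduct. The paper's resolution is a genuinely different idea (inherited from Krause): one replaces the family $\{H^P\}_{P\in\cp}$ by a single homological functor $H^{\cp}:\cu\to\hatc$ into the abelian category of coherent functors on $\cocp$. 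The point is that $H^{\cp}$ \emph{is} a wcc functor precisely because $\cp$ is countably perfect (this is \cite[Lemma 3]{kraucoh}), and $R\in\ro$ is equivalent to $H^{\cp}(R)=0$. One then applies Lemma~\ref{lcoulim}(3), but since $\hatc$ need not be AB5 one cannot use clause~(ii); instead one must verify the ``eventually split'' clause~(i), and this is done by a diagram chase showing that $H^{\cp}(L_i[2])\cong A_i\oplus A$ compatibly for $i\gg 0$.

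That diagram chase is the second place where cosuspendedness of $\cp$ is used, and your proposal misses it entirely. You invoke $\cp[-1]\subset\cp$ only to deduce $\ro[1]\subset\ro$ (hence $\lo\subset\lo[1]$, hence weighty) at the very end; but in fact the vanishing $H^{\cp}(h_k[1])=0=H^{\cp}(h_k[2])$ — which is what forces the splitting needed for Lemma~\ref{lcoulim}(3(i)) — is obtained by observing that $H^{\cp}\circ[1]$ and $H^{\cp}\circ[2]$ are computed by restricting to $\cocp[-1],\cocp[-2]\subset\cocp$, and this inclusion holds only because $\cp$ is cosuspended. Without that step the homotopy colimit cannot be controlled, and the proof that $R\in\ro$ does not close. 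So there is a real gap: the approach is correct in outline, but the coherent-functor device and the second use of cosuspendedness are not cosmetic — they are the entire content of the non-compact case.
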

\begin{proof}
As we have  noted in Remark \ref{rwhop}(1), %$(\lo,\ro[-1])$ is a weight structure 
$s$ is weighty whenever $s$ is a torsion pair and $\lo\subset \lo[1]$; the latter certainly follows from $\cp[-1]\subset \cp$.

So, we should prove the remaining assertions. Repeating the beginning of the proof of Theorem \ref{tclass}(\ref{iclass1}), %it suffices to verify for any 
 we reduce them to the existence for any $M\in \obj \cu$  an $s$-decomposition $L_sM\to M\to R_sM\to L_sM[1]$ with $L_sM$ belonging to the naive big hull of $\cp$. 

For this purpose we construct a distinguished triangle $L\stackrel{b}{\to} M\stackrel{a}{\to} R\stackrel{f}{\to} L[1]$ using the method described in the proof of Theorem \ref{tclass}; %we certainly have $A\in \ro$.
so, $L$ belongs to the naive big hull of $\cp$ by construction.
To finish the proof we  should check that $R\in \ro$; for this purpose we "mix" the proof of Theorem \ref{tclass}(\ref{iclass1}) with that of \cite[Theorem A]{kraucoh}; cf. also Remark \ref{rigid}(1) below.
The idea is to replace the collection of functors %$\cu(P,-)$ 
$H^P$ for $P\in \cp$ with a single "more complicated" functor (that would be a wcc one in contrast with the functors $H^P$ in the case of  "general" $P$).

We will write $\cocp$ for the full subcategory of $\cu$ formed by the $\coprod$-closure of $\cp$;  %by $\cocp$ and 
following \cite{kraucoh} (see also \cite[Definition 5.1.3]{neebook} and \cite{auscoh}) 
we consider the full subcategory $\hatc\subset  \psv^\z(\underline{\coprod}\cp)=\adfu((\cocp)^{op},\ab)$ (cf. Remark \ref{rdetect}; we will omit the index $\z$ in this notation below) of {\it coherent functors}. % from $\coprod\cp^{op}$ into abelian groups. %We have no need to recall the definition of coherent functors here; we will just recall a collection of their properties instead.
We recall (see \cite{kraucoh}) that a  functor $H\in \obj \psv(\cocp)$  
is said to be coherent whenever there exists a $\psv(\cocp)$-short exact sequence $\cocp(-,X)\to \cocp(-,Y)\to H\to 0$, where $X$ and $Y$ are some objects of $\cocp$ (note that this  is a projective resolution of $H$ in  $\psv(\cocp)$; see \cite[Lemma 5.1.2]{neebook}).

 According to \cite[Lemma 2]{kraucoh}, the category $\hatc$ is abelian; it has coproducts according to Lemma 1 of ibid. Since any morphisms of (coherent) functors is compatible with some morphism of their (arbitrary) projective resolutions, a $\hatc$-morphism is zero (resp. surjective) if and only if it is surjective in $\psv(\cocp)$.

Next, the Yoneda correspondence $\cu\to  \psv(\cocp)$  (sending $M\in \obj \cu$ to the restriction of $\cu(-,M)$ to $\cocp$) gives a  homological functor $H^{\cp}:\cu\to \hatc$  (see Lemma 3 of ibid.). $H^{\cp}$ is a wcc functor since $\cp$ is countably perfect (according to that lemma);  it also respects arbitrary ${\cocp}$-coproducts (very easy; see Lemma 1 of ibid.).
%. Also, the restriction of $H^C$ onto $C$ is a full embedding by Yoneda's lemma; it respects arbitrary coproducts according to
Lastly, our discussion of zero and surjective $\hatc$-morphisms certainly yields that $H^{\cp}(h)$ is zero (resp. surjective) for $h$ being a $\cu$-morphism if and only if $\cu(N,-)(h)=0$ (resp. surjective) for any $N\in \obj {\cocp}$; it is certainly suffices to take $N\in \cp$ in these "criteria" only. 

Now we prove that $R\in \ro$ using the notation introduced in the proof of Theorem \ref{tclass}(\ref{iclass1}) (along with Lemma \ref{ldualt}). 
% (see the formula (\ref{etrproof})). 
As we have just proved, $R\in \ro$ whenever $H^{\cp}(R)=0$. Hence the long exact sequence  $$%\dots  
\to H^{\cp}(L)\stackrel{H^{\cp}(b)} \to  H^{\cp}(M) %\stackrel{H^C(a)}
\to   H^{\cp}(R)\to   H^{\cp}(L[1])\stackrel{H^{\cp}(b[1])} \to  H^{\cp}(M[1])\to%\dots
 $$
reduces the assertion to the epimorphness of $H^{\cp}(b)$ along with the monomorphness of $H^{\cp}(b[1])$.
Since $\cp$ is cosuspended, the morphism $H^{\cp}(a_i[j])$ is epimorphic (essentially by construction) for all $i,g\ge 0$.
Applying this observation in the case $i=j=0$ along with Lemma \ref{ldualt}(5,2) we obtain that $H^{\cp}(b)$ is epimorphic. 

It remains to apply part 3(i) of the lemma to verify that $H^{\cp}(b[1])$ is monomorphic; so we take $H=H^{\cp}\circ [1]$. Thus we should check $H^{\cp}(g_1[1])=0=H^{\cp}(g_1[2])$ and also  that  $H^{\cp}(h_i)=0=H^{\cp}(h_i[1])$ for all $i\ge 0$ (see part 4 of the lemma). 
Thus combining part 5 of the lemma with the aforementioned surjectivity of $H^{\cp}(a_i[j])$  we obtain the result.
\end{proof}

\begin{rema}\label{rigid}  
1. The %author has found the  idea of applying
 author was inspired to apply coherent functors in this context by  \cite{salorio}; 
  yet %he does not understand 
the proof of Theorem 2.2 of ibid. (where coherent functors are applied to the construction of $t$-structures) appears to contain a gap.\footnote{An argument  even more closely related to our one was used in the proof of \cite[Lemma 2.2]{modoi}; yet the assumptions of that lemma appear to require  a correction.}
The author believes that applying arguments of the sort used in the proof of our theorem in the case of a "general" (countably) perfect set $\cp\subset \obj \cu$ such that $\cp\subset \cp[1]$ one can (only) obtain a "semi-$t$-structure" for $\cu$, i.e., for any $M\in \obj \cu$ there exists a distinguished triangle %$L\to M\to R\to L[1]$
 $L\to M\to R\to L[1]$  such that $L$ belongs to the big hull of $\cp$ and $R\in \cp\perpp[1]$.\footnote{Certainly, the  big hull of $\cp$ is contained in its big extension-closure, whereas the latter (for $\cp\subset \cp[1]$) equals the smallest coproductive extension-closed subclass of $\obj \cu$ containing $\cp$; cf. Corollary \ref{cgdb}(\ref{ict}) below.} 
%{postov}?!

The author wonders whether this result can be improved, and also whether semi-$t$-structures can be "useful".\footnote{Note however that {\it weak weight structures} (one replaces the orthogonality axiom in Definition \ref{dwstr}   by $\cu_{w\le 0}\perp\cu_{w\ge 2}$) were essentially considered in \cite{bsosn} (cf. Remark  2.1.2 of ibid.) and in %Theorem 4.1.3 of \cite{sosn}
Theorem 3.1.3(2,3) of \cite{bsnew}, whereas in Proposition 3.17 of \cite{brelmot} it was shown that they are relevant for the study of mixed \'etale $\ql$-adic sheaves over varieties over finite fields (actually, it was demonstrated that the weight filtration for the category $\dbm(X_0,\ql)$ satisfies the somewhat stronger Definition 3.11 of ibid., where $X_0$ is a variety over a finite field of characteristic $\neq l$).} %+ base change functor??
 Note also that in Theorem \ref{tsymt} below we will prove the existence of a $t$-structure generated by a suspended $\cp$ whenever $\cp$ is symmetric to some set $\cp'\subset \obj \cu$; however, the author does not know whether $\cu^{t\le 0}$ equals the %corresponding big hull 
 big hull of $\cp$ whenever $\cp$ satisfies these conditions.

2. The arguments from the proof of our theorem can also be used (and  significantly simplified) if instead of requiring $\cp$ to be cosuspended (and countably perfect) we assume that ${\cocp}$ is {\it rigid}, i.e., ${\cocp}\perp {\cocp}[1]$. Indeed, then the distinguished triangle $P_0\to M\to M_1\to P_0[1]$ (see the proof of Theorem \ref{tclass}(\ref{iclass1})) is easily seen to be an $s$-decomposition of $M$ (and if we proceed as above then this triangle will actually be equal to $L\to M\to R\to L[1]$).

Note also that  ${\cocp}$ is rigid if and only if $\cp\perp {\cocp}[1]$. Moreover, if $\cp$ is perfect then these conditions are equivalent to  $\cp\perp \cp[1]$.

The author does not know whether any formulation of this sort is known.

3. The case $\cp=\cp[1]$ of our theorem (cf. Remark \ref{rtst2}) %was essentially treated in
 is closely related to the proof of \cite[Theorem A]{kraucoh}. 

4. We will say that a weighty  torsion pair and the corresponding weight structure are perfectly generated whenever they can be obtained by means of our theorem.
 %Corollary \ref{cwftw}(3)
Remark \ref{revenmorews}(1)  below will give a "more natural" equivalent of this definition. 
Note also that instead of assuming that $\cp$ is a set in the theorem it certainly suffices to assume that $\cp$ is essentially small.

%Note also that 
 Moreover, Theorem \ref{twgws}(III.2) states that any smashing weight structure on a well generated triangulated category is perfectly generated.
%Not that trivial in the "stable" setting; cf. {krause??}?!
%; see Remark \ref{revenmorews}(1).

5. In Theorem \ref{tsymt} and Corollary \ref{csymt} below we will study examples for Theorem \ref{tpgws} that are constructed using "symmetry"; this will yield some new results on $t$-structures. The idea to relate $t$-structures to symmetric sets and Brown-Comenetz duals comes from \cite{salorio} also; however the author doubts that one can get a "simple description" of a $t$-structure obtained using arguments of this sort (cf. Corollary 2.5 of ibid.). %Yet?! 
\end{rema}

Now we %study (weakly) perfect sets (and classes) of objects and their relation to weight structures. Since some of our results are valid for arbitrary torsion pairs, we prove them in this generality.
%We start from 
 prove a few simple definitions  and statements 
% basic definition  and a few simple   statements %(without claiming any originality)
related to countably perfect classes (without claiming much originality in these results); recall that $\cp$-null morphisms (for $\cp\subset \obj \cu$)
are the ones annihilated by the functors $H^P$ for all $P\in \cp$ 
 %were defined in 
 (see Definition \ref{dhopo}(5)).

\begin{defi}\label{dapprox} Let  $\cp$ be a subclass of  $\obj \cu$, $h\in \cu(M,N)$ (for some $M,N\in \obj\cu$).

1.  We will say that $h$ is a {\it $\cp$-epic} whenever  for any $P\in \cp$ the homomorphism %$\cu(P,-)(h)$ 
$H^P(h)$ is surjective, i.e., if any $g\in \cu(P,N)$ for $P\in \cp$ factors through $h$.
We will say that $h$ is  {\it $\cp$-monic} if all  $H^P(h)$ are injective.\footnote{These definitions (along with the definition of $\cp$-null morphisms) is taken from \cite{christ}.}

2. We will say that $h$ is a {\it $\cp$-approximation} (of $N$) if it is a $\cp$-epic and $M$ belongs to $\obj\cocp$.

3. We will say that $\cp$ is {\it contravariantly finite } (in $\cu$) if for any $N\in \obj \cu$ there exists its $\cp$-approximation.\footnote{Actually, the standard convention is to say that $\cocp$ is contravariantly finite if this condition is fulfilled; yet our version of this term is somewhat more convenient for the purposes of the current paper.}
\end{defi}
 
\begin{lem}\label{lperf}
Let $\cp$ %and $C_i$ 
be  a subclass of $\obj \cu$; denote by $\cpt$ the Karoubi-closure of $\cocp$ in $\cu$. %the class of all coproducts of elements of $C'$.

\begin{enumerate}
\item\label{ipercl} If $\cp'$ is a subclass of $\cpt$ containing $\cp$ then a $\cu$-morphism $h$ is $\cp'$-null (resp. a $\cp'$-epic, resp. a $\cp$-approximation) if and only if it is  $\cp$-null (resp. a $\cp$-epic, resp. a $\cp$-approximation). %Moreover, $\cp$ is perfect (resp. essentially contravariantly finite)

\item\label{iperot} In a $\cu$-distinguished triangle $M\stackrel{h}{\to}N \stackrel{f}{\to}Q\stackrel{g}{\to} M[1]$ the morphism $h$  is a $\cp$-epic if and only if $f$ is $\cp$-null; this is also equivalent to $g$ being $\cp$-monic. 

\item\label{ipereq} The class $\cp$ is (countably) perfect if and only if any (countable) coproduct %(resp. any small coproduct) 
of  $\cp$-epic morphisms is  $\cp$-epic.

Moreover, if this is the case then any (countable) coproduct %(resp. any small coproduct) 
 of $\cp$-approximations is a $\cp$-approximation.

\item\label{ipertest} If $h:M\to N$ is a  $\cp$-approximation of $N$ then  a $\cu$-morphism $g:N\to N'$ is $\cp$-null if and only if $g\circ h=0$.

%Approximation if the composition with any phantom is zero?!

\item\label{ipercrit} Assume that for any (countable) %?!!
collection of  $N_i\in \obj \cu$ the object $\coprod N_i$ possesses a $\cp$-approximation being the coproduct of some $\cp$-approximations of $M_i$. Then $\cp$ is   (countably) perfect. %(resp. strongly perfect).

\item\label{iperfcovarf} If $\cp$ is a set then  it is  contravariantly finite. %any $M\in \obj \cu$ possesses a $C'$-approximation.

\item\label{iperloc}
 Let $F:\du\to \cu$ be an exact functor that possesses a right adjoint $G$ respecting (countable) coproducts.
 Then for any (countably) perfect class $\cp'$ of objects of $\du$ the class  $\cp=F(\cp')$  is (countably) perfect also.
\end{enumerate}
\end{lem}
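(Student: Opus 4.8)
\textbf{Proof proposal for Lemma \ref{lperf}.}

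The plan is to dispatch the items essentially in the order listed, since each later one tends to rest on the earlier ones together with the basic homological properties of the corepresentable functors $H^P = \cu(P,-)$. For part \ref{ipercl}, the key observation is that $H^P$ for $P$ a retract of an object of $\cocp$ is a retract (as a functor) of $H^Q$ for a suitable $Q \in \obj\cocp$, and that $H^Q = \prod_{i} H^{P_i}$ when $Q = \coprod P_i$ with $P_i \in \cp$; hence a morphism $h$ is killed by (resp.\ is epic/monic for) all $H^P$, $P \in \cp'$, if and only if it is killed by (resp.\ epic/monic for) all $H^P$, $P \in \cp$. Part \ref{iperot} is immediate from the long exact sequence $\cdots \to H^P(M) \xrightarrow{H^P(h)} H^P(N) \xrightarrow{H^P(f)} H^P(Q) \xrightarrow{H^P(g)} H^P(M[1]) \to \cdots$: surjectivity of $H^P(h)$, vanishing of $H^P(f)$, and injectivity of $H^P(g)$ are all equivalent for each fixed $P$, so one just quantifies over $P \in \cp$.

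For part \ref{ipereq}, the forward direction: a $\cp$-epic $h_i$ has, by \ref{iperot}, a $\cp$-null cone morphism $f_i$; since $\cu$ has coproducts, $\coprod f_i$ is a cone morphism of $\coprod h_i$ (Proposition \ref{pcoprtriang}), and $\coprod f_i$ is $\cp$-null precisely when $\cp$ is (countably) perfect, so $\coprod h_i$ is $\cp$-epic again by \ref{iperot}. The converse direction runs the same argument backwards: any $\cp$-null morphism $f$ is (by \ref{iperot}) the cone morphism of a $\cp$-epic $h$, so a coproduct of $\cp$-null morphisms is a cone morphism of a coproduct of $\cp$-epics, hence $\cp$-null. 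The "moreover" clause follows because $\obj\cocp$ is closed under the coproducts in question by definition. Part \ref{ipertest}: if $h$ is a $\cp$-approximation then for any $P \in \cp$ the map $H^P(h)$ is surjective, so $H^P(g) = 0$ iff $H^P(g)\circ H^P(h) = 0$ iff $H^P(g\circ h) = 0$; quantifying over $P$ gives the claim (and one should note $g\circ h = 0$ in $\cu$ implies each $H^P(g\circ h) = 0$, while the reverse implication is exactly what is being proved for the relevant $g$). Part \ref{ipercrit} is then a formal consequence of \ref{ipereq} together with \ref{ipertest}: given a coproduct $\coprod N_i$ admitting a $\cp$-approximation of the prescribed split form $\coprod(M_i \to N_i)$, use \ref{ipertest} to test $\cp$-nullity of $\coprod g_i$ against this approximation and reduce componentwise. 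Part \ref{iperfcovarf}: when $\cp$ is a set, form $M = \coprod_{(P,f):\,P\in\cp,\,f\in\cu(P,N)} P$ together with the morphism whose components are the various $f$; this is visibly a $\cp$-epic with source in $\obj\cocp$, hence a $\cp$-approximation — exactly the construction used in the first step of the proofs of Theorems \ref{tclass}(\ref{iclass1}) and \ref{tpgws}.

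Finally, part \ref{iperloc}. Given a (countably) perfect $\cp' \subset \obj\du$ and a (countable) family of $F(\cp)$-null morphisms $k_i$ in $\cu$, I would use the adjunction $F \dashv G$ to transfer the problem to $\du$: a morphism $k$ is $F(\cp')$-null iff $\cu(F(P'),k) = 0$ for all $P' \in \cp'$ iff $\du(P', G(k)) = 0$ for all such $P'$ iff $G(k)$ is $\cp'$-null. Since $G$ respects the relevant coproducts, $G(\coprod k_i) = \coprod G(k_i)$, which is $\cp'$-null by perfectness of $\cp'$; hence $\coprod k_i$ is $F(\cp')$-null, i.e.\ $F(\cp')$ is (countably) perfect. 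I do not expect any single step to be a serious obstacle — this lemma is "soft" — but the point requiring the most care is part \ref{ipereq} (and its reformulation \ref{ipercrit}), where one must be scrupulous about the fact that the class of $\cp$-null morphisms, unlike in the classical projective-class setting, need not be shift-stable (cf.\ the footnote to Definition \ref{dhopo}(5)), so the equivalences in \ref{iperot} must be applied at the single shift level at which they are stated and not propagated across shifts.
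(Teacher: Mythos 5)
Your proof is correct and follows essentially the same route as the paper's own (which is itself very terse: parts 1 and 2 are declared "obvious," part 3 is deduced from part 2 plus Proposition \ref{pcoprtriang}, and the remaining parts are given in a couple of lines each). In particular your treatments of parts 3, 5, 6, and 7 coincide with the paper's arguments. Two small remarks. In part 4 the chain of equivalences you set up ($g$ is $\cp$-null $\iff$ $H^P(g\circ h)=0$ for all $P\in\cp$) is fine, but the parenthetical remark that closes the argument is garbled: the missing step is not "what is being proved for the relevant $g$," it is the elementary observation that a morphism $k$ with source $M\in\obj\cocp$ is $\cp$-null if and only if $k=0$ (write $M=\coprod P_j$ with $P_j\in\cp$ and note that $k$ is determined by the components $k\circ\iota_j=H^{P_j}(k)(\iota_j)$). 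The paper sidesteps this by simply proving the two implications directly rather than reducing to a statement about $g\circ h$. In part 5 you invoke \ref{ipereq} alongside \ref{ipertest}, but only \ref{ipertest} is actually used (the coproduct of approximations is an approximation by the hypothesis of \ref{ipercrit}, not by \ref{ipereq} — using \ref{ipereq} there would be circular). Neither issue affects the correctness of the proof.
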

\begin{proof} All of the assertions are rather easy.

\ref{ipercl}, \ref{iperot}: %, \ref{iperunion}
 obvious.

Assertion \ref{ipereq} follows from assertion \ref{iperot} immediately according to Proposition \ref{pcoprtriang}.

\ref{ipertest}. Since $M$ is a coproduct of elements of $\cp$, the composition of $h$ with any $\cp$-null morphism is zero. Conversely,  since any morphism from $\cp$ into $N$ factors through $h$, if $g\circ h=0$ then $g$ is $\cp$-null. 

\ref{ipercrit}. Let $f_i:N_i\to P_i$ be some $\cp$-null morphisms; choose $\cp$-approximations $h_i:M_i\to N_i$ such that $\coprod h_i$ is a $\cp$-approximation of $\coprod N_i$. Then $\coprod f_i \circ \coprod h_i=\coprod (f_i\circ h_i)=0$. Hence $\coprod f_i$ is $\cp$-null according to assertion \ref{ipertest}.

 \ref{iperfcovarf}. Easy and standard: a $\cp$-approximation of $M\in \obj \cu$ is given by $\coprod_{P,h_P}\stackrel{\bigoplus h_P}{\to}M$, where 
 $P$ runs through all elements of $\cp$ and $h_P$ runs through $\cu(P,M)$.

\ref{iperloc}. The adjunction immediately yields that a $\cu$-morphism $h$ is $\cp$-null if and only if $G(h)$ is $\cp'$-null. It remains to recall that $G$ respects (countable) coproducts. 
\end{proof}

\begin{rema}\label{requivdef} Our definition of perfect classes essentially coincides with the one used in \cite{modoi}. Moreover,  part \ref{ipereq} of the lemma gives the equivalence of our definition (\ref{dcomp}(\ref{idpc})) of countably perfect classes  to %the definition given in   conditions (G1) and 
 the condition (G2)  in the definition of perfect generators in \cite{kraucoh}; hence a category is perfectly generated in the sense of ibid. if and only if it is so in the sense of Definition \ref{dwg}. Similarly, our condition \ref{isym} in Definition \ref{dsym} %?!
 is equivalent to  condition (G3) in \cite[Definition 2]{kraucoh}; hence $\cu$ is symmetrically generated in the terms of loc.cit. whenever it has products and contains a set $\cp$ that  Hom-generates $\cu$ and is symmetric to some set $\cp'\subset \obj \cu$. % and $D$ Hom-generates $\cu$. %one element is sufficient?!

Furthermore, any  class that is $\aleph_1$-perfect in the sense of  \cite[Definition 3.3.1]{neebook} is countably perfect.
%.  Furthermore, our definition of  perfectness is equivalent to  conditions (G1) and (G2') of \cite{kraucoh}, whereas the latter  coincides with condition (G2) of  \cite{krauwg}.
We also obtain  that our definition of $\al$-well generated categories is equivalent to the one given in \cite{krauwg}.
Moreover, recall that the %definition of $\al$-well generated categories given in ibid. 
  %definition of  well generated categories 
	latter definition is equivalent to the definition %of $\al$-well generated categories 
	given in \cite{neebook} %precise?!
 according to Theorem A of ibid.%??!!, whereas the notion of a perfect class is somewhat  more general than the one of $\alz$-perfect class from ibid.; cf. \cite[Lemma 4]{kraucoh}.

\end{rema}

We deduce some immediate consequences from the lemma.

\begin{coro}\label{cwftw}
1. Assume that $\cp$ is a (countably)  perfect  cosuspended set of objects of $\cu$. % such that $C'[-1]\subset C'$. 
Then the weight structure $w$ constructed in Theorem \ref{tpgws} is (countably) smashing. % (resp. of finite type).

2. Assume that $\{\cp_i\}$ is a set of (countably) perfect  %cosuspended 
sets of objects of $\cu$. Then the couple $w=(\cu_{w\le 0}, \cu_{w\ge 0})$ is a (countably) smashing weight structure on $\cu$, where $\cu_{w\le 0}$ is the big hull of $\cup_{j\ge 0,i} \cp_i[-j]$ and $\cu_{w\ge 0}=\cap_{j\ge 1,i} (\cp_i\perpp[-j])$. %Moreover, $w$ is of finite type whenever all $\cp_i$ are strongly perfect.

3. Assume that  $\{w_i\}$ is a set of perfectly generated weight structures on $\cu$, i.e., assume that there exist countably perfect sets $\cp_i\subset \obj \cu$ that generate $w_i$ (see Remark \ref{rwhop}(1)).
Then the couple $w=(\cu_{w\le 0}, \cu_{w\ge 0})$ is a weight structure, where   $\cu_{w\le 0}$ is the big hull of $\cup_i \cu_{w_i\le 0}$ and  $\cu_{w\ge 0}=\cap_i \cu_{w_i\ge 0}$. Moreover, $w$ is perfectly generated in the sense of Remark \ref{rigid}(4);  it is  smashing whenever all $w_i$ are.

\end{coro}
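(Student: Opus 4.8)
\textbf{Proof proposal for Corollary \ref{cwftw}.}

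The plan is to reduce all three assertions to Theorem \ref{tpgws}, using Lemma \ref{lperf} (and Proposition \ref{psym}) to handle the necessary closure and union properties of (countably) perfect classes, and Corollary \ref{cwftw}(1)--(2) to chain the assertions. For assertion 1, note that the weight structure $w$ of Theorem \ref{tpgws} has $\cu_{w\ge 1}=\cp\perpp=\ro$, so by Proposition \ref{psym}(\ref{iperftp}) the associated torsion pair $s=(\lo,\ro)$ is (countably) smashing if and only if $\lo$ is (countably) perfect. Now $\lo$ is the big hull of $\cp$; I would argue that the big hull of a (countably) perfect class stays (countably) perfect. Concretely, $\lo$ is the Karoubi-closure of the class of objects presentable as homotopy colimits $\inli Y_i$ with $Y_0,\co(\phi_i)\in\cocp$; using Lemma \ref{lperf}(\ref{iperot}) and the fact that $H^M$-vanishing passes through homotopy colimits (Lemma \ref{lcoulim}(3)) together with Proposition \ref{psym}(\ref{isymuni}) for unions, one checks that the class of $\lo$-null morphisms is closed under (countable) coproducts. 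Alternatively --- and this is likely cleaner --- I would invoke Remark \ref{revenmorews}(1) (or directly Proposition \ref{psym}(\ref{iperftp}) applied to the torsion pair already produced by Theorem \ref{tpgws}, combined with the observation that $\lo$ being strongly extension-closed and the $\cp$-null class being closed under countable coproducts forces $\lo$-nullity to be as well, via Lemma \ref{lbes}).

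For assertion 2, set $\cp=\cup_{j\ge 0,\,i}\cp_i[-j]$. Each $\cp_i[-j]$ is (countably) perfect since shifting by $[-j]$ does not affect which morphisms are $\cp_i$-null up to shifting, and more precisely $h$ is $\cp_i[-j]$-null iff $h[j]$ is $\cp_i$-null, so the $\cp_i[-j]$-null class is a shift of the $\cp_i$-null one, hence still closed under (countable) coproducts. By Proposition \ref{psym}(\ref{isymuni}), the union $\cp$ is (countably) perfect. Moreover $\cp$ is cosuspended by construction: $\cp[-1]=\cup_{j\ge 1,\,i}\cp_i[-j]\subset\cp$. Thus Theorem \ref{tpgws} applies and yields a weighty torsion pair with $\lo$ equal to the big hull of $\cp$ and $\ro=\cp\perpp=\cap_{j\ge 1,\,i}(\cp_i\perpp[-j])$; identifying $(\lo,\ro[-1])$ as the asserted weight structure and applying assertion 1 gives the (countable) smashing property. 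One also needs $\cp$ to be essentially small, which holds since it is a set (a countable union, indexed by the set of pairs, of sets).

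For assertion 3, I would take $\cp_i\subset\obj\cu$ to be a countably perfect set generating $w_i$, so that $\cu_{w_i\ge 1}=\cp_i\perpp$ and hence $\cu_{w_i\ge 0}=\cp_i\perpp[-1]=\cap_{j\ge 1}\cp_i\perpp[-j]$ (using that $\cu_{w_i\ge 0}$ is cosuspended, i.e.\ $\cu_{w_i\ge 0}[-1]\subset\cu_{w_i\ge 0}$, which follows from the axioms). Without loss of generality I may replace $\cp_i$ by $\cup_{j\ge 0}\cp_i[-j]$, making it cosuspended; this does not change $\cp_i\perpp$ by the computation above, and keeps it countably perfect by the argument in assertion 2. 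Then $\cu_{w_i\le 0}=\lo_i$ is the big hull of $\cp_i$ by Theorem \ref{tpgws}. Now apply assertion 2 to the family $\{\cp_i\}$: since each $\cp_i$ is already cosuspended, the class $\cup_{j\ge 0,\,i}\cp_i[-j]=\cup_i\cp_i$, and I claim the big hull of $\cup_i\cp_i$ equals the big hull of $\cup_i\cu_{w_i\le 0}=\cup_i(\text{big hull of }\cp_i)$ --- this is a monotonicity-plus-idempotency property of the big-hull operator that I would verify from Definition \ref{dses} (the big hull of a union of classes each already equal to its own big hull coincides with the big hull of the union of generators). Likewise $\cap_i\cu_{w_i\ge 0}=\cap_{j\ge 1,\,i}(\cp_i\perpp[-j])$. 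Thus $w$ coincides with the weight structure produced in assertion 2, which is perfectly generated by $\cup_i\cp_i$ in the sense of Remark \ref{rigid}(4) and is smashing whenever all the $\cp_i$ are perfect, i.e.\ whenever all $w_i$ are smashing (by Proposition \ref{psym}(\ref{iperftp}) applied to each $w_i$).

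The main obstacle I anticipate is the bookkeeping around the big-hull operator: verifying that the big hull of $\cup_i\cp_i$ agrees with the big hull of $\cup_i(\text{big hull of }\cp_i)$, and that big hulls of (countably) perfect classes remain (countably) perfect. Both should follow from the strong-extension-closure machinery of \S\ref{scoulim} (Lemma \ref{lbes}, especially parts \ref{iseses} and \ref{izs}, together with the homotopy-colimit behaviour of cp/wcc functors in Lemma \ref{lcoulim}), but getting the quantifiers over homotopy colimits and over the index $i$ to interact correctly requires some care. Everything else is a direct citation of Theorem \ref{tpgws}, Proposition \ref{psym}(\ref{isymuni},\ref{iperftp}), and the shift-invariance of the $\cp$-null condition.
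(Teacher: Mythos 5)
Your overall plan — reduce assertions 2 and 3 to assertion 1 and to Theorem \ref{tpgws}, using Proposition \ref{psym}(\ref{isymuni}) for unions and shift-invariance of perfectness — matches the paper's, and your treatment of assertion 2 is essentially identical to the paper's proof.

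For assertion 1, however, you take an unnecessarily indirect route and the sketch you offer for closing it has a gap. You try to apply Proposition \ref{psym}(\ref{iperftp}), which would require showing that $\lo$ (the big hull of $\cp$) is (countably) perfect, and you propose to prove that the big hull of a perfect class stays perfect. But $\lo$-nullity is a strictly stronger condition on morphisms than $\cp$-nullity (since $\cp\subsetneq\lo$ in general), so closure of $\cp$-null morphisms under coproducts does not obviously yield closure of $\lo$-null morphisms — and the colimit arguments you gesture at from Lemma \ref{lcoulim}(3) don't patch this, because $\lo$-nullity is a condition indexed over all of $\lo$, not over a fixed homotopy colimit diagram. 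The clean observation (and the paper's route) is to verify directly that $\ro=\cu_{w\ge 1}=\cp\perpp$ is closed under (countable) coproducts: by Proposition \ref{psymb}(I.\ref{iwsmor}), $M\in\cp\perpp$ iff $\id_M$ is $\cp$-null, and coproducts of identities are identities, so perfectness of $\cp$ gives the coproductivity of $\ro$ at once (this is what the paper's reference to Proposition \ref{psym}(\ref{isymeu}) amounts to). Once $\ro$ is coproductive, $s$ is (countably) smashing by Definition \ref{dhopo}(1), with no need to pass through $\lo$-perfectness at all. In fact, the only way I see to establish $\lo$-perfectness is to first show $\ro$ coproductive and then invoke Proposition \ref{psym}(\ref{iperftp}) — which makes your route circular. (Also, you cite Remark \ref{revenmorews}(1), which comes after this corollary and partly depends on it, so it cannot be used here.)

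For assertion 3, your argument is correct but takes a different route from the paper's. You invoke idempotency of the big-hull operator, which does hold here — but only because Theorem \ref{tpgws} identifies the big hull of the (set) $\cup_i\cp_i$ with its strong extension-closure, and strong extension-closure is idempotent by definition. It is not a consequence of Definition \ref{dses} alone as you suggest, and you should cite Theorem \ref{tpgws} for it. The paper instead does a sandwich: it shows $\cu_{w\ge 0}=\cu_{w'\ge 0}$ directly, then uses Lemma \ref{lbes}(\ref{isesperp}) to show $\cu_{w\le 0}\perp\cu_{w\ge 0}[1]$, which gives $\cu_{w\le 0}\subset\perpp\cu_{w'\ge 1}=\cu_{w'\le 0}$; combined with the trivial inclusion $\cu_{w'\le 0}\subset\cu_{w\le 0}$ this gives equality without invoking idempotency. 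Both routes are valid; the paper's avoids the big-hull bookkeeping you flag as a concern. One further small slip: you write that $\cu_{w_i\ge 0}$ is \emph{cosuspended}; by the axioms of Definition \ref{dwstr}(ii) it is \emph{suspended} ($\cu_{w\ge 0}[1]\subset\cu_{w\ge 0}$) in the paper's terminology — your conclusion $\cu_{w_i\ge 0}=\cap_{j\ge 1}\cp_i\perpp[-j]$ is nonetheless correct, since the intersection over $j\ge 1$ of the increasing chain $\cu_{w_i\ge 1-j}$ is its smallest member $\cu_{w_i\ge 0}$.
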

\begin{proof}
1. Recall that we should check whether $\cu_{w\ge 0}=\cp\perpp[-1]$ is closed with respect to small (resp. countable) $\cu$-coproducts. Hence the statement follows immediately from  Proposition \ref{psym}(\ref{isymeu}). %Lemma \ref{lperf}(\ref{iperp}).

2. $\cup_{j\ge 0;i} \cp_i[-j]$ is a  (countably) perfect set according to  Proposition \ref{psym}(\ref{isymuni}); %Lemma \ref{lperf}(\ref{iperunion});
 it is certainly cosuspended. Hence $w$ is a weight structure according to Theorem \ref{tpgws}. Lastly, the smashing property  statements follow immediately from the previous assertion.

3. %As we have just proved, 
According to the previous assertion, the couple $(\cu_{w'\le 0}, \cu_{w'\ge 0})$ is a weight structure on $\cu$, where $\cu_{w'\le 0}$ is the big hull of $\cup_{j\ge 0,i} \cp_i[-j]$ and $\cu_{w'\ge 0}=\cap_{j\ge 1;i} (\cp_i\perpp[-j])$. %Thus to prove the assertion it suffices to verify  that $w'=w$.

Now we compare $w$ with $w'$. Since $\cp_i$ generate $w_i$, we certainly have $\cu_{w\ge 0}=\cu_{w'\ge 0}$. Next,  $\cu_{w\le 0}\perp \cu_{w\ge 0}[1]$ according to Lemma \ref{lbes}(\ref{isesperp}). Since $\cu_{w\le 0}$ contains $\cu_{w'\le 0}$, these classes are equal. % (also).

Thus $w$ is a perfectly generated weight structure. It is smashing if all $w_i$ are; indeed,   $\cu_{w\ge 0}$ is coproductive as being the intersection of coproductive classes.
\end{proof}

\begin{rema}\label{revenmorews} %here??!
1.  In particular, we obtain that $w$ is perfectly generated in the sense of Remark \ref{rigid}(4) if and only if it is generated by a %(not necessarily cosuspended)  
  countably perfect set (i.e., if $\ro=\cu_{w\le -1}$ equals $\cp^\perp$ for some countably perfect set $\cp$). %Certainly, strongly perfect weight structures may be defined similarly.
Note here  that we do not have to assume that $\cp$ is cosuspended, since $\cup_{i\le 0}\cp[i]$ generates $w$ whenever $\cp$ does.

Moreover, part 3 of the corollary  gives a certain "join" operation on perfectly generated $t$-structures (and so, we obtain a monoid). %if pgws form a set?!
 Note also that the join of any set  of  smashing  weight structures is smashing also.

2. Thus our corollary gives a vast source of smashing weight structures. %(we will say more on this matter below). 
Now, the results of \S\ref{sadjt} allow to construct "new" $t$-structures that are right adjacent to these weight structures (cf. Remark \ref{rnewt}(1)) and also describe their hearts. %Join operation: Theorem A of {avertstr}?! https://arxiv.org/abs/1208.5691 The effect on adjacent t-structures?! Operations on t-structures and perverse coherent sheaves; ask Bondal?!

Note also that for $t_i$ being right adjacent to (perfectly generated) $w_i$ (for $i\in I$) and the corresponding $w$ and $t$ we obviously have $\cu^{t\ge 0}=\cap_{i\in I} \cu^{t_i\ge 0}$, whereas $\cu^{t\le 0}$ is the big hull of $\cup_{i\in I} \cu^{t_i\le 0}$.

\end{rema}
%Lattice?! 

Now we prove that {\bf suspended} symmetric sets generate $t$-structures. %We have

\begin{theo}\label{tsymt}
Assume that $\cu$ (also) has products; for a class $\cp\subset \obj \cu$ assume that $\cp$ is {\it suspended} (i.e.,  $\cp[1]\subset \cp$) and $\cp$ is symmetric (see Definition \ref{dsym}(\ref{isym})) to a {\bf set} $\cp'$ (of objects of $\cu$).

Then the following statements are valid.

1. There exists a $t$-structure $t$ on $\cu$ such that $\cu^{t\ge 1}=\cp\perpp$ (i.e., $t$ is generated by $\cp$) %?!
 and also a cosmashing  weight structure $w$  that is right adjacent to $t$. 

2. $\hrt$ has %enough injective objects and 
an injective cogenerator and satisfies the AB3* axiom. Moreover, $\hrt$ is naturally anti-equivalent to the subcategory of  $\adfu(\hw,\ab)$ consisting of those functors that respect products; $\hw$ is  naturally equivalent to the subcategory of  injective objects of $\hrt$.

%Furthermore, 
3. If  %$\cp'[1]\subset \cp'$ 
 $\cp'$ is suspended as well then $\cu_{w\ge 0}$ equals the big hull of $\cp'$ in $\cu\opp$. % (we will explain this in more detail in the proof). 
%Moreover?!!! Automatic by adjacency?!
\end{theo}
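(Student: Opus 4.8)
The strategy is to dualize the results we have already established about perfectly generated weight structures, using the symmetry hypothesis to move between $\cu$ and $\cu\opp$. First I would observe that since $\cp$ is symmetric to $\cp'$, Proposition \ref{psymb}(I.\ref{iwsym1}) tells us that $\cp$ is weakly symmetric to $\cp'$ and (since $\cu$ has coproducts) $\cp$ is perfect; more importantly, applying Proposition \ref{psymb}(I.\ref{iws2}) the set $\cp'$ is symmetric to $\cp$ in the category $\cu\opp$, hence $\cp'$ is a perfect set in $\cu\opp$. Since $\cp$ is suspended in $\cu$, the set $\cp'$ is cosuspended in $\cu\opp$ (that is, $\cp'[-1]_{\cu\opp}\subset \cp'$, where $[-1]_{\cu\opp}=[1]_{\cu}$). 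Thus Theorem \ref{tpgws}, applied in $\cu\opp$, produces a weighty torsion pair on $\cu\opp$ whose left class is the big hull of $\cp'$ (computed in $\cu\opp$) and whose right class is $\cp'\perp_{\cu\opp}=\perpp_{\cu}\cp'$. The weight structure $w'$ on $\cu\opp$ associated with this torsion pair is, by Corollary \ref{cwftw}(1), smashing (as $\cp'$ is perfect in $\cu\opp$); its opposite weight structure on $\cu$ — call it $w$ — is then cosmashing, and by construction $\cu_{w\ge 0}=\cu\opp_{w'\le 0}$ is the big hull of $\cp'$ in $\cu\opp$.

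Next I would identify the $t$-structure. By Remark \ref{rwhop}(1) (applied in $\cu\opp$), the weight structure $w'$ is generated by $\cp'$ there, so $\cu\opp_{w'\ge 1}=\cp'\perp_{\cu\opp}$, i.e.\ $\cu_{w\le -1}=\perpp_{\cu}\cp'$. Since $\cp$ is weakly symmetric to $\cp'$ we have $\cp\perpp=\perpp\cp'=\cu_{w\le -1}$. Now I want to produce $t$ right adjacent to $w$, i.e.\ with $\cu^{t\ge 1}=\cu_{w\le -1}[1]$... but more directly: $w$ is a cosmashing weight structure on $\cu$, and $\cu$ satisfies the dual Brown representability property — this needs to be checked, but it follows from Proposition \ref{psymb}(I.\ref{isym4}) in the case $\cupr=\cu$ (which holds precisely because $\cp$ is suspended and symmetric to $\cp'$, so Theorem \ref{tpgws} applied in $\cu\opp$ uses all of $\cu\opp$ once $\cp'$ Hom-generates; here one must be a little careful and instead argue via Proposition \ref{psym}(\ref{isymbr}) applied to the perfect set $\cp'$ in $\cu\opp$, giving that $\cu\opp$ is perfectly generated and hence has Brown representability, i.e.\ $\cu$ has the dual Brown representability property). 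Then Corollary \ref{cdualt}(1), applied to $w$, yields a $t$-structure $t$ left adjacent to $w$: $\cu^{t\le 0}=\cu_{w\ge 0}$... wait, I need to recheck the variance; the adjacency I actually want is $\cu^{t\ge 1}=\cp\perpp$, so I would set things up so that $t$ is the $t$-structure whose associated torsion pair is $(\perpp\cu_{w\ge 0}, \cu_{w\ge 0})$ and verify via Remark \ref{rtst1}(\ref{it1}) that this is a $t$-structure because $\cu_{w\ge 0}$ is suspended (as $\cp'$ suspended implies its big hull in $\cu\opp$ is suspended there, i.e.\ cosuspended in $\cu$... this sign-chase is the delicate bookkeeping). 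The cleanest route is: $w$ is cosmashing, so by Corollary \ref{cdualt} there is a left adjacent $t$, and then $w$ is right adjacent to $t$, giving part 1; the description of $\hrt$ and $\hw$ in part 2 is then exactly Corollary \ref{cdualt}(2) together with the dual of Proposition \ref{phadj}, while the injective cogenerator and AB3* claims follow from the dual of Theorem \ref{tgroth}-type reasoning — concretely, $\hw$ having products (it is the heart of a cosmashing weight structure, so by the dual of Proposition \ref{ppcoprws}(\ref{icoprhw}) it is closed under $\cu$-products) lets one take $\Phi$-type functors to build the cogenerator.

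For part 2 in detail: applying Corollary \ref{cdualt}(2), $\hrt$ is anti-equivalent to the full subcategory of $\adfu(\hw,\ab)$ of product-respecting functors. The dual of Proposition \ref{phadj} identifies $\hw$ with $P_t$-dual objects and gives $\hw\simeq$ injective objects of $\hrt$ (the dual of the statement $\cu_{w=0}=P_t$ and $H^P\cong\hrt(H_0^t(P),H_0^t(-))$ becomes a statement about injectives). That $\hrt$ has an injective cogenerator: take a cogenerator $Q$ of $\hw$ (which exists by the dual of Corollary \ref{cvttbrown}(I.2), since $\cu\opp$ is perfectly generated, hence generated by a set as its own localizing subcategory); then the corresponding injective object of $\hrt$ cogenerates. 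The AB3* (existence of products) in $\hrt$ follows since $\hrt$ is closed under $\cu$-products (dual of Proposition \ref{phop}(3), as $t$ is cosmashing... actually $t$ right-adjacent data: $w$ cosmashing $\Rightarrow$ the associated torsion pair is cosmashing $\Rightarrow$ so is $t$'s, giving $\hrt$ closed under products and AB3*).

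\textbf{Part 3 and the main obstacle.} For part 3, the extra hypothesis that $\cp'$ is suspended in $\cu$ means $\cp'$ is cosuspended in $\cu\opp$; but we already used that $\cp'$ is cosuspended in $\cu\opp$ (since $\cp$ suspended in $\cu$ is symmetric to $\cp'$, which forces... no: symmetry does not transfer suspendedness, so this is a genuinely new hypothesis). Wait — re-examining: Theorem \ref{tpgws} requires its input set to be cosuspended, and we are applying it in $\cu\opp$ to $\cp'$, so we need $\cp'$ cosuspended in $\cu\opp$, i.e.\ $\cp'$ suspended in $\cu$. So actually part 3's hypothesis is what makes Theorem \ref{tpgws} directly applicable! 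For parts 1 and 2 without assuming $\cp'$ suspended, one must instead pass to $\cup_{i\le 0}\cp'[i]_{\cu\opp}=\cup_{i\ge 0}\cp'[i]_{\cu}$, which is still a perfect set in $\cu\opp$ (Proposition \ref{psym}(\ref{isymuni})) and cosuspended there, and which generates the same weight structure. Then in part 3, when $\cp'$ is already suspended in $\cu$ (cosuspended in $\cu\opp$), this enlargement is unnecessary, $\cp'$ itself is the generating set, and Theorem \ref{tpgws} directly gives that $\cu\opp_{w'\le 0}=\cu_{w\ge 0}$ is the big hull of $\cp'$ in $\cu\opp$, which is the claim. The main obstacle throughout is the relentless sign/variance bookkeeping — tracking which of $[1]$, $[-1]$, "suspended", "cosuspended", "left adjacent", "right adjacent" applies in $\cu$ versus $\cu\opp$, and making sure the enlargement $\cp'\rightsquigarrow\cup_{i\ge 0}\cp'[i]$ is the right one and does not disturb the Hom-generation or symmetry used to get dual Brown representability. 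A secondary technical point is verifying $\cupr=\cu$ (so that dual Brown representability holds for all of $\cu$, not a subcategory): this requires that $\cp'$ Hom-generates $\cu\opp$, equivalently $\cp$ Hom-generates $\cu$ via the weak symmetry $\cp\perpp=\perpp\cp'$ — but this need not hold, so one should instead only claim dual Brown representability for the relevant localizing subcategory and run Corollary \ref{cdualt} there, or observe (as in Remark \ref{rwsym}(3), last paragraph) that for Theorem \ref{tsymt} we only need $\cp'$ perfect in $\cu\opp$, not Hom-generating, and handle the orthogonal complement separately via Proposition \ref{phopft}(I).
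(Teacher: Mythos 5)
Your argument for the case where $\cp$ Hom-generates $\cu$ matches the paper's first half essentially step for step: enlarge $\cp'$ to $\cup_{i\ge 0}\cp'[i]$ so that it is cosuspended in $\cu\opp$ (a reduction the paper also makes at the outset via Proposition \ref{psymb}(I.\ref{iws1})), apply Theorem \ref{tpgws} in $\cu\opp$ to get a cosmashing $w$ on $\cu$, invoke Corollary \ref{cdualt} to produce the left adjacent $t$, and extract the injective cogenerator from a cogenerator of $\hw$. You also correctly observe that part 3's extra hypothesis is exactly what lets one apply Theorem \ref{tpgws} to $\cp'$ itself rather than its shift-closure.

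The genuine gap is the general case $\cupr\neq\cu$, which you flag at the end but do not resolve. When $\cp$ does not Hom-generate $\cu$ you cannot get dual Brown representability for all of $\cu$, so Corollary \ref{cdualt} is not directly available; your closing sentence ("only claim dual Brown representability for the relevant localizing subcategory and run Corollary \ref{cdualt} there, or ... handle the orthogonal complement separately via Proposition \ref{phopft}(I)") names the right kind of fix but does not carry it out, and the citation is off (the paper uses Proposition \ref{phopft}(II.1), not part I). The paper's resolution is a concrete four-step construction that is entirely absent from your proposal: (i) pass to the colocalizing subcategory $\du'={}^{\perp}\du$ whose opposite is perfectly generated by $\cp'$, apply Theorem \ref{tpgws} and Corollary \ref{cdualt} there to get $w_{\du'}$ and $t_{\du'}$; (ii) extend $w_{\du'}$ to a cosmashing $w_{\cp'}$ on all of $\cu$ via the dual of Remark \ref{rwhop}(2), using the left adjoint to $\du'\hookrightarrow\cu$; (iii) transport $t_{\du'}$ along the equivalence $j:\du'\to\cu'$ and then extend to $\cu$ via Proposition \ref{phopft}(II.1), using the right adjoint to $\cu'\hookrightarrow\cu$; (iv) verify $(\cu'^{t_{\cu'}\le 0})^{\perp}=\cp^{\perp}$ by reducing to the comparison inside $\cu'$ and applying Proposition \ref{pwsym}(\ref{iws4}) there. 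None of (ii)–(iv) is trivial — in particular the compatibility check in (iv) requires the two adjoints and the symmetry of $\cp$ with $j(\cp')$ — so a proof that omits them is incomplete. You would need to carry out this extension argument (or supply an equivalent one) before the theorem is proved.
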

\begin{proof}
Firstly we note that $\cp$ is also symmetric to $\cup_{i\ge 0}\cp'[i]$ (see Proposition \ref{psymb}(I.\ref{iws1})); hence it suffices to consider the case where %$\cp'[1]\subset \cp'$. 
$\cp'$ is suspended.

Now we adopt the notation of Proposition \ref{psymb}(I.\ref{isym4}). % (including $\du$ and $\du'$).

Assume first that  $\du=\ns$ (in the notation of the proposition), and so, $\cu'=\cu$. According to (parts I.\ref{iwsym1}  and I.\ref{iws2} of)  the aforementioned proposition, the set $\cp'$ is perfect in the category $\cu\opp$. % (that has coproducts). 
 Since $\cp'$ is certainly cosuspended in $\cu\opp$,  Theorem \ref{tpgws} yields a smashing weight structure $w_{\cp'}^{op}$  that is $\cu\opp$-generated by $\cp'$ with $\cu\opp_{w^{op}\le 0}$ being the  $\cu\opp$-big hull of $\cp'$. %; $\cu\opp_{w^{op}}\le 0$ also equals . 
 The corresponding weight structure $w_{\cp'}$ on $\cu$ (see Proposition \ref{pbw}(\ref{idual}))  will be our candidate for $w$.  $w_{\cp'}$ is certainly  cosmashing; hence $\hw_{\cp'}$ is %Karoubian (easy from Remark \ref{rcoulim}(4)) and?????!! 
 closed with respect to $\cu$-products. Moreover,  $\hw_{\cp'}$ has a {\it cogenerator}, i.e., any its object is a retract of a product of (copies of) some $M\in \cu_{\hw_{\cp'}=0}$; here we apply (the dual to) Corollary \ref{cvttbrown}(I.2) %(along with; cf. 
(see Theorem 2.4.1(3) and Remark 2.4.2(3) of \cite{bpws} for a more detailed argument). %since $\cu\opp$ has a generator???!

Next we apply Corollary \ref{cdualt} to obtain that there exists a $t$-structure $t_{\cp'}$ on $\cu$ that is left adjacent to $w_{\cp'}$. Moreover, $\hrt_{\cp'}$  is anti-equivalent to the subcategory of  $\adfu(\hw_{\cp'},\ab)$ consisting of those functors that respect products (according to the corollary); $\hw_{\cp'}$ is equivalent to the subcategory of  injective objects of $\hrt_{\cp'}$.  Hence $\hrt_{\cp'}$ %has enough injectives according to Proposition \ref{pconstrwfromt}(I). 
has an injective cogenerator; it satisfies the AB3* axiom since $\hw_{\cp'}$ has products.

So to finish the proof in this case it remains to note that $t_{\cp'}$ is precisely the $t$-structure  generated by $\cp$ according to  Proposition \ref{pwsym}(\ref{iws4}). 

Now we proceed to the general case of our setting  (using Proposition \ref{psymb}(I.\ref{isym4}) also). We recall that the corresponding category $\du'$ is closed with respect to $\cu$-products and $\du'^{op}$ is perfectly generated by $\cp'$. Thus (by Theorem \ref{tpgws}) there exists a weight structure $w_{\du'}$ on $\du'$ with $\du'_{w_{\du'}\le 0}$ %being equal both 
 equal to the $\du'^{op}$-big hull of $\cp'$ and $\du'_{w_{\du'}\ge 0}=({}^{\perp_{\du'}} \cp')[1]$. Once again, we apply Corollary \ref{cdualt} to obtain a $t$-structure $t_{\du'}$ on $\du$ that is left adjacent to $w_{\du'}$. We also obtain %the relation between  
 that $\hrt_{\du'}$ and $\hw_{\du'}$ are related similarly to assertion 2.

Now we "extend" $t_{\du'}$ and $\hw_{\du'}$ to $\cu$. Recall that the embedding $\du'\to \cu$ respects products and possesses a left adjoint; hence (the dual to) %Proposition \ref{phopft}(II) 
 Remark \ref{rwhop}(2) gives the existence of a weight structure $w_{\cp'}$ as above. %?!
 
Next, we consider the equivalence $j:\du'\to \cu'$ induced by the functor $G$ (that is right adjoint to $i:\cupr\to\cu$) and denote by $t_{\cupr}$ the $t$-structure on $\cupr$ obtained from $t_{\du'}$ via % this equivalence. 
$j$. Since the embedding $i:\cupr\to \cu$ possesses an (exact) right adjoint, we obtain (using Proposition \ref{phopft}(II.1))  that $(\cupr^{t_{\cupr}\le 0},(\cupr^{t_{\cupr}\le 0})\perpp[1])$ is a $t$-structure $t$ on $\cu$. Note also that $\hrt$ equals $j(\hrt_{\du'})$,  $\hw_{\cp'}=\hw_{\du'}$, and $\cu_{w_{\cp'}\ge 0}=\du'_{w_{\du'}\le 0}$; hence assertions 2 and 3 follow from assertion 1, and 
%Arguing as above, we reduce the (general case of) our theorem to the fact that this $t$-structure $t$ is left adjacent to $w_{\cp'}$, and 
 to prove the latter it suffices to verify that $(\cupr^{t_{\cupr}\le 0})\perpp=\cp\perpp$. Since $i$ possesses a right adjoint, for the latter purpose one should compare  $\cp\perpp\cap \obj \cupr$ with $(\cupr^{t_{\cupr}\le 0})\perpp\cap \obj \cupr=\cupr^{t_{\cupr}\ge 1}=j(\cp')^{\perp_{\cupr}}$ (here we apply  Remark \ref{rwhop}(2)). It remains to recall that $\cp$ is symmetric to $j(\cp')$ in $\cupr$ and apply  %Proposition \ref{psym}(\ref{isym1})?!! 
 Proposition \ref{pwsym}(\ref{iws4}) once again.
\end{proof}

To demonstrate the relevance of our theorem, we apply it to the study of compactly generated $t$-structures.

\begin{coro}\label{csymt}
Let $\cq$ be a set of compact objects of $\cu$. 

1. Then $t=(\cu^{t\le 0},\cu^{t\ge 0})$ is a smashing $t$-structure on $\cu$, where $\cu^{t\le 0}$ is the %big hull of 
smallest coproductive extension-closed subclass of $\obj \cu$ containing $\cup_{i\ge 0}\cq[i]$ and $\cu^{t\ge 0}=\cap_{i\ge 1} \cq\perpp[i]$.\footnote{Certainly, $t$ is generated by the set $\cup_{i\ge 0}\cq[i]$. Hence Theorem \ref{tclass}(\ref{iclass1},\ref{iclasst}) %(cf. also  Remark \ref{rnewt}(3)) 
 gives a "more precise" description of the class $\cu^{t\le 0}$. }
Moreover, $\hrt$ has an injective cogenerator and satisfies the AB3* axiom; it is anti-equivalent to the %category of those functors $I\opp\to \ab$ % the category $\hrt$ is equivalent to the 
subcategory of $\adfu((\injh)\opp,\ab)$ consisting of those functors that send $\injh$-products into products of abelian groups.\footnote{This relation of $\hrt$ with its subcategory $\injh$ of  injective objects does not mention weight structures; yet it appears to follow from the existence of an injective cogenerator along with the AB4 property.}% AB3??

2.  Assume in addition that $\cu$ satisfies the Brown representability condition. Then there exists a weight structure $w$ that is right adjacent to $t$ with $\hw\cong \injh$, and $\cu_{w\ge 0}$ equals the $\cu\opp$-big hull of $\{\hat{Q}[i]_{\cu}:\ Q\in \cq, i\ge 0\}$, where $\hat{Q}$ is the Brown-Comenetz dual of $Q$ (that represents the functor $M\mapsto \ab(\cu(Q,M),\q/\z)$).\footnote{Recall here that $\cu\opp$ has coproducts according Proposition \ref{pcomp}(II.2).}

3. Assume that the $t$-structure $t$ mentioned in assertion 1 is non-degenerate. Then $\hrt$ is Grothendieck abelian.

%Corollary: compact and cocompact case?! 
\end{coro}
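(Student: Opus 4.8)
The plan is to deduce this corollary from the preceding results together with the machinery of \cite{humavit}. We are in the situation of Corollary \ref{csymt}(1), so $t=(\cu^{t\le 0},\cu^{t\ge 0})$ is a compactly generated smashing $t$-structure on $\cu$; by Corollary \ref{csymt}(1) its heart $\hrt$ already has an injective cogenerator and satisfies AB3* (it is also AB4 since $t$ is smashing, by Proposition \ref{prtst}(\ref{it2})). What remains is to produce the AB5 property, which (combined with the existence of a generator) is exactly Grothendieck abelianness. The idea is that the existence of an injective cogenerator plus AB3* plus AB4 plus non-degeneracy of $t$ should force AB5 via the results of \cite{humavit}.

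First I would recall from Corollary \ref{csymt}(1) (equivalently Theorem \ref{tclass}(\ref{iclass1})) that $\cu^{t\le 0}$ is the smallest coproductive extension-closed subclass of $\obj\cu$ containing $\cup_{i\ge 0}\cq[i]$, and that the compact objects of $\cu^{t\le 0}$ form an essentially small class (Theorem \ref{tclass}(\ref{iclass2})); write $\cp_0$ for the $\cu$-envelope of $\cup_{i\ge 0}\cq[i]$. Then the objects $\{H_0^t(P):\ P\in \cp_0\}$ Hom-generate $\hrt$ by Remark \ref{rcompgen}, and since $t$ is smashing their coproduct is a (single) generator of the abelian category $\hrt$ (again Remark \ref{rcompgen}). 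So $\hrt$ is an AB4 abelian category with a generator and an injective cogenerator satisfying AB3*. The one remaining point is exactness of filtered colimits.

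For this I would invoke \cite{humavit}: Theorem 3.7 of \cite{parrasao} already gives that countable colimits in $\hrt$ are exact for any compactly generated $t$, and the results of \cite{humavit} upgrade this — under non-degeneracy of $t$ — to the statement that $\hrt$ is Grothendieck abelian. Concretely, the plan is to cite the relevant theorem of \cite{humavit} which says that for a compactly generated $t$-structure on a triangulated category with coproducts whose heart has an injective cogenerator (this is precisely what Corollary \ref{csymt}(1) supplies, via the right adjacent weight structure when $\cu$ satisfies Brown representability, but in fact the injective cogenerator is available unconditionally from part 1), non-degeneracy of $t$ implies AB5 for $\hrt$. Combining AB5 with the generator produced above yields that $\hrt$ is Grothendieck.

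The main obstacle I expect is bookkeeping about exactly which hypotheses of the cited theorem of \cite{humavit} are in force: one must check that the "injective cogenerator" and "AB3*" inputs are genuinely available from Corollary \ref{csymt}(1) alone (i.e. without the extra Brown representability assumption of part 2), and that the non-degeneracy hypothesis of the present statement matches the non-degeneracy hypothesis needed there (degeneracy "on one side only" may or may not suffice — here we have full non-degeneracy, which is safe). Once the hypotheses are matched, the conclusion is immediate: $\hrt$ is AB5, has a generator, hence is Grothendieck abelian. I would also remark, as the paper does in the introduction after Theorem \ref{tgroth}, that this answers Question 3.8 of \cite{parrasao} in the non-degenerate case.
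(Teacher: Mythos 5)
Your overall strategy (reduce to the machinery of \cite{humavit}) matches the paper, but the execution has real gaps at exactly the places where the work actually happens.

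First, you skip a pivotal use of the non-degeneracy hypothesis. Since $\cap_{n\in\z}\cu^{t\ge n}=\cap_{i\in\z}\cq\perpp[i]$, right non-degeneracy of $t$ forces $\cq$ to Hom-generate $\cu$, so $\cu$ is compactly generated and Brown representability holds automatically. This is what makes part~2 of the corollary applicable and yields the right adjacent weight structure $w$. You instead argue that "the injective cogenerator is available unconditionally from part~1," as though Brown representability could be bypassed — but the argument from \cite{humavit} does not run on an injective cogenerator alone; it needs the full cosuspended TTF triple built from $\cu^{t\le 0}$, $\cu^{t\ge 0}=\cu_{w\le 0}$, and $\cu_{w\ge 0}$ (their Definition~2.3), which requires $w$ to exist.

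Second, there is no theorem in \cite{humavit} of the shape you invoke ("injective cogenerator plus AB3* plus AB4 plus non-degenerate implies Grothendieck"). The paper is explicit that one must \emph{repeat the argument} of \cite[Corollary~4.9]{humavit} rather than cite it, precisely because that corollary assumes $\cu$ is algebraic — the very hypothesis the present statement is designed to drop. The actual chain of steps is: (i) observe that $\cu_{w\le 0}$ is definable, being the zero class of the set of coherent functors $\cu(Q[i],-)$ for $Q\in\cq$, $i>0$ (cf.\ Lemma~\ref{lbes}(\ref{izs})); (ii) apply Theorem~4.8 of \cite{humavit} — this is where non-degeneracy is used again — to produce a pure-injective cosilting object $I$ with $\cu^{t\le 0}=\perpp\{I[j]:\ j<0\}$; (iii) conclude by Theorem~3.6 of \cite{humavit} that $\hrt$ is Grothendieck. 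Your proposal gestures at "the relevant theorem" without identifying these intermediate objects (definable class, pure-injective cosilting object), so it cannot be compiled into an actual proof.

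Finally, your generator aside is misordered: Remark~\ref{rcompgen} only gives Hom-generation of $\hrt$, which is strictly weaker than having a generator in the sense needed for Grothendieck categories; the paper's actual generator for $\hrt$ comes from Theorem~\ref{tab5}(1), whose proof presupposes AB5. Theorem~3.6 of \cite{humavit} delivers the Grothendieck property outright, so the generator should not be supplied separately in advance.
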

\begin{proof}
1. $t$ is a $t$-structure on $\cu$ according to Theorem A.1. of \cite{talosa}; % (cf. also Remark \ref{rnewt}(3)); 
 it is certainly smashing. %explain?!
Next we take $\cu'$ to be the localizing subcategory of $\cu$ generated by $\cq$.  %$\cu'$ certainly contains $\cu^{t\le 0}$, and 
 Then $\cq$ is symmetric to the set $\cq'_{\cupr}$ of $\cu'$-Brown-Comenetz duals of elements of $\cq$ according to Proposition \ref{psymb}(II.\ref{iws23}). % certainly form a set that  
Hence we can apply Theorem \ref{tsymt} to the category $\cu'$ with the corresponding $\cp$ and $\cp'$ being equal to $\cup_{i\ge 0}\cq[i]$ and to $\cup_{i\ge 0}\cq'_{\cupr}[i]$, respectively. This yields the result since for the corresponding $t_{\cu'}$ we have $\cu^{t\le 0}=\cu'^{t_{\cu'}\le 0}$ and $\cu^{t\ge 0}\cap \obj \cu'=\cu'^{t_{\cu'}\ge 0}$ (here we note that the embedding $\cu'\to \cu$ has a right adjoint according to Proposition \ref{pcomp}(II.2), and apply %Proposition \ref{phopft}(II)). 
 Remark \ref{rwhop}(2)).

2. Once again, it suffices to combine Proposition \ref{psymb}(II.\ref{iws23}) with Theorem \ref{tsymt}.

3. Since $t$ is non-degenerate, the set $\cq$ Hom-generates $\cu$. Hence the previous assertion gives the existence of $w$ that is right adjacent to $t$.

Now to prove the result it suffices to repeat the argument used in the proof of \cite[Corollary 4.9]{humavit}. We do so briefly here (without recalling the corresponding definitions).

Firstly, the corresponding shifts of the classes $\cu^{t\le 0}$, $\cu^{t\ge 0}=\cu_{w\le 0}$, and  $\cu_{w\ge 0}$ give  a {\it cosuspended TTF triple}; see Definition 2.3 of ibid.\footnote{Note that in Definition 2.1 of ibid. $t$-structures and co-$t$-structures (i.e., weight structures) were defined as the corresponding types of torsion pairs; so our definition differs from loc. cit. by shifts of the corresponding $\ro$ (cf. Remarks \ref{rtst1}(\ref{it1}) and \ref{rwhop}(1)).} Next, $\cu_{w\le 0}$ is definable in the sense of Definition 4.1 (since it is the zero class of the set $\{\cu(Q[i],-),\ Q\in \cq,\ i>0\}$ of coherent functors; see Lemma \ref{lbes}(\ref{izs}) for the definition of zero classes). Applying Theorem 4.8 of ibid. we obtain that  $\cu^{t\le 0}=\perpp\{I[j],\ j< 0\}$ for some {\it pure-injective cosilting object} $I$ of $\cu$ (this is where we use the non-degeneracy assumption on $t$). Thus it remains to apply Theorem 3.6 of ibid. 
\end{proof}

\begin{rema}\label{rsymt}
\begin{enumerate}
\item\label{irsymt2}
% Our corollary establishes the existence of a weight structure right adjacent to a (given) compactly generated one; 
The author does not know how to deduce %this statement 
 the existence of $w$ (in part 2 of our corollary) from the results of \S\ref{sadjw}. Another interesting fact related to this statement is Theorem 3.11 of \cite{postov} (where algebraic triangulated categories were considered); cf. Remark \ref{rwsym}(1) above.\footnote{Note also  that the reasoning of Pospisil and  \v{S}\v{t}ov\'\i\v{c}ek in the proof of loc. cit. works for arbitrary torsion pairs; this is certainly not the case for our arguments.} So, we generalize "the $t$-structure case" of loc. cit.; this immediately yields the corresponding generalization of \cite[Corollary 4.9]{humavit} (in part 3 of our corollary). %Note also that the cogenerator of $w$ is also {\it cosilting} (see 

Recall also that $\cu^{t\le 0}=\perpp\{I[j],\ j<0\}$ for some pure-injective cosilting object $I$ under the assumptions of our corollary (see its proof). It is easily seen that 
any cogenerator of $\hw$ (see the proof of Theorem \ref{tsymt}) %???
can be taken for $I$ in this statement.

%Note also that in 
\item\label{irsymtab5}
In Theorem \ref{tab5} below we prove under certain restrictions on $\cu$ (and without assuming that $t$ is non-degenerate) that $\hrt$ is an AB5 category; this argument is completely independent from ibid. (and gives some interesting additional information on $\hrt$).  On the other hand, the generators of $\hrt$ given by part 1 of that theorem are %essentially that 
 the same as the ones given by the proof of  \cite[Theorem 3.6]{humavit} (yet they were not specified explicitly in loc. cit.). 

\item\label{irsymt1}
It appears to be quite difficult to produce perfect and symmetric sets "out of nothing"; note that the existing literature on this subject mostly concentrates on the search of shift-stable sets of perfect generators of triangulated categories. So, the  weight structures opposite to those given by Corollary \ref{csymt}(2) appear to be (essentially) the only known "type" %of examples" 
 of perfectly generated weight structures that are not compactly generated (yet cf. Remark \ref{rnondeg}(3)). 

Let us prove that the corresponding weight structure $w^{op}$ is not compactly generated if the set $\cq$ Hom-generates $\cu$ (certainly, this condition is fulfilled if and only if $\cq$ generates $\cu$ as its own localizing subcategory). Indeed, in this case the symmetric  set  $\hat{Q}$ Hom-generates $\cu^{op}$. Then Remark \ref{rwhop}(8) says that   $w^{op}$ is left non-degenerate; %. It follows that 
 thus any class generating $w^{op}$ also Hom-generates $\cu^{op}$. On the other hand, $\cu^{op}$ is  not compactly generated %indeed 
according to Corollary E.1.3 (combined with Remark 6.4.5) of \cite{neebook}. 

Certainly, one can also consider the direct sum of an example of this sort with a "compactly generated" one.  

\item\label{irsymt4}
 Probably the most interesting case of  Theorem \ref{tsymt} and Corollary \ref{csymt} is the one where $\cp$ (resp. $\cq$) Hom-generates $\cu$; note that in this case $\cu$ has products and satisfies the Brown representability condition automatically. Note also that $\cu$ is Hom-generated by the corresponding set if and only if $t$ is right non-degenerate (see Definition \ref{dtstr}(\ref{ito3}); this is certainly equivalent to the right non-degeneracy of $w$). %Fine????

\item\label{irsymt5} It is easily seen that %one %can formally generalize some of  the argument used for the proof of our corollary by considering the
the Brown-Comenetz duals of any family $\{F_i\}$ of cc functors $\cu\to \ab$ that are also pp ones form a perfect class in $\cu\opp$. Yet this observation can scarcely give any "new" weight structures % (at least, in "interesting" cases) 
 since all "known" functors satisfying these conditions appear to be corepresented by compact objects of $\cu$ (cf. \cite[Proposition 2.9]{krause}). Moreover, when we pass from the weight structure $w$ to its left adjacent $t$ we apply the dual Brown representability condition, whereas the latter says that all pp functors are corepresentable.
\end{enumerate}
\end{rema}

\subsection{On well generated weight structures and torsion pairs}\label{swgws}

Now we study the %construction of perfect classes generating torsion pairs.
relation of (countably) perfect classes to torsion pairs and (especially) to weight structures. In particular, we obtain a complete "description" of smashing weight structures on well generated triangulated categories (see Theorem \ref{twgws}(III)). %(see Remark \ref{rtkrau}(1) below)!

We will need some new definitions to deal with well generated categories. Most of them are simple variations of the notions described above; we  also recall the notion of $\be$-compact objects.

\begin{defi}\label{dbecomp}
 Let %$\cu$ be a triangulated category having coproducts; let 
 $\be$ be a regular infinite cardinal.
\begin{enumerate}
\item\label{idclass} We will say that a class $\cpt$ of objects of $\cu$ is {\it $\be$-coproductive} if it is closed with respect to $\cu$-coproducts of less than $\be$ objects.

\item\label{idchop} We will say that a torsion pair $s=(\lo',\ro')$ for a full triangulated subcategory $\cu'$ of $\cu$ is {\it $\be$-coproductive} if both $\obj \cu'$ and $\ro'$ are $\be$-coproductive.
%\item\label{idc1}

\item\label{idcomp} We will say that an object $M$ of $\cu$ is {\it $\be$-compact} if it belongs to the maximal   perfect class of $\be$-small objects of $\cu$ (whose existence is given by Proposition \ref{psym}(\ref{isymuni})). We will write $\cu^{\be}$ for the full subcategory of $\cu$ formed by $\be$-compact objects.
\end{enumerate}
\end{defi}

\begin{rema}\label{rbecomp}
1. Our definition of $\be$-compact objects is equivalent to the one used in \cite{krauwg}.
Indeed, coproducts of less than $\be$ of  $\be$-small objects are obviously $\be$-small; thus $\obj \cu^\be$ is $\be$-coproductive. Hence the equivalence of definitions follows from Lemma 4 of ibid. %{neebook}?!
Furthermore, Lemma 6 of ibid. states  that %the definition of $\be$-compact objects in ibid. 
(both of) these definitions are equivalent to Definition 4.2.7 of \cite{neebook} if we assume in addition that $\cu^{\be}$ is essentially small.

2. Now we recall some more basic properties of $\be$-compact objects in an $\al$-well generated category $\cu$ assuming that $\be\ge \al$.

Theorem A of \cite{krauwg} yields immediately that $\cu^\be$ is an essentially  small triangulated subcategory of $\cu$. 

Moreover, the union of  $\cu^{\gamma}$ for $\gamma$ running through all regular cardinals ($\ge \al$) equals $\cu$ (see the Corollary in loc. cit. or Proposition 8.4.2 of \cite{neebook}). %Obvious; certainly?! Thus  Lemma 5 of  \cite{krauwg} implies that  $\cu^\be$ generates $\cu$ as its own localizing subcategory.

3. Lastly, we recall a part of \cite[Lemma 4]{krauwg}. %Note that this equivalence results contains the following statement (as one of the implications): 
For any $\be$-coproductive essentially small  perfect class $\cp$ of $\be$-small objects of a triangulated category $\cu$ (that has coproducts) it says the following:
for any $P\in \cp$ and any set of  $N_i\in \obj \cu$ any morphism $P\to \coprod N_i$ factors through the coproduct of some $\cu$-morphisms $M_i\to N_i$ with $M_i\in \cp$. % Cf.  Lemma 4.2.1 of \cite{neebook}?! 

\end{rema}

\begin{theo}\label{twgws}
Let $s=(\lo,\ro)$ be a torsion pair for $\cu$ that is countably smashing, $\cp\subset \obj \cu$. %;  %below???   we will  assume that $h$ belongs to $\cu(M,N)$ for some $M,N\in \obj \cu$.

I. Consider the class $J$ of $\cu$-morphisms characterized by the following condition: $h\in \cu(M,N)$ (for $M,N\in \obj \cu$) belongs to $J$ whenever for any chain  of morphisms $ L_sP\stackrel{a_P}{\to} P\stackrel{g}{\to}M \stackrel{h}{\to}N$ its composition is zero if $P\in \cp$ and $a_P$ is an $s$-decomposition morphism.

Then the following statements are valid.

\begin{enumerate}
\item\label{indep}  The class $J$ will not change if we will fix $a_P$ for any $P\in \cp$ in this definition.

\item\label{icontraf} Assume that $\cp$ is  contravariantly finite and $s$ is smashing. Then $h$ %as above 
belongs to $J$ if and only if there exists a
$\cp$-approximation morphism $AM\stackrel{g}{\to} M$ and an $s$-decomposition morphism $a_{AM}: L_sAM\to AM$ such that %the corresponding composition morphism in zero in $\cu(L_sAM,N)$.
$h\circ g \circ a_{AM}=0$. Moreover, the latter is equivalent to the vanishing of all compositions of this sort.

\item\label{icoprcl}
 Assume that $\cp$ is  contravariantly finite and  (countably) perfect, %(resp. strongly perfect) 
and $s$ is smashing. Then the class $J$ is closed with respect to (countable) coproducts.

\item\label{ilscp}
 Assume that for any $P\in \cp$ there exists a choice of $ L_sP\in \cp$; denote the class of these choices by $\lscp$. Then $J$ coincides with the class of $\lscp$-null morphisms.

\item\label{ilscper} Assume in addition (to the previous assumption) that  $\cp$ is a (countably) perfect   contravariantly finite  class %of objects of $\cu$, 
and $s$ is smashing. Then $\lscp$ is a (countably) perfect   contravariantly finite class also.

\item\label{ilscperw}
 Assume in addition that $s$ is weighty; suppose that the class $\cp$ is essentially small, contains $\cp[-1]$,   and Hom-generates $\cu$. Then  the class $L_s\cp$  generates $s$ and $\lo$ is the big hull of $L_s\cp$; thus $s$ is perfectly generated in the sense of Remark \ref{rigid}(4).

\end{enumerate}

II. For a regular cardinal $\be$ let  $s'=(\lo',\ro')$ be a $\be$-coproductive torsion pair for a full triangulated category $\cupr$ of $\cu$ such that $\obj \cupr$ is a  perfect essentially small %contravariantly finite?!
class of  $\be$-small objects.  Then $\lo'$ is  perfect also. 

Moreover, if $s'$ is weighty in $\cupr$ then $\lo'$ generates a weighty smashing torsion pair  for $\cu$.

III. Assume in addition that $\cu$ is $\al$-well generated for some regular cardinal $\al$, and that  $s$ is smashing. %; let $\be\ge\al$ be a regular cardinal.

1. Assume that $s$ restricts (see Definition \ref{dhopo}(4)) to $\cu^{\be}$  for a regular cardinal $\be \ge \al$. Then $\lo\cap \obj \cu^\be$ is an essentially small  perfect class.
%; moreover, it is closed with respect to coproducts of less than $\be$ elements.

2. If $s$ is weighty then it restricts to $\cu^{\be}$ for all large enough regular $\be\ge \al$. Moreover, the class $\lo\cap \obj \cu^\be$ perfectly generates $s$ for these $\be$. % (see Remark \ref{rigid}(4)).
\end{theo}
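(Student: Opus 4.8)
\textbf{Proof plan for Theorem \ref{twgws}.}

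The plan is to prove the three parts in order, using the machinery of $s$-decomposition morphisms, $\cp$-approximations, and the countable-homotopy-colimit constructions of \S\ref{scoulim}. For part I.\ref{indep}, I would invoke Proposition \ref{phop}(7) (weak functoriality of $s$-decompositions): any two choices of $a_P$ differ by a morphism of $s$-decomposition triangles, so if the composition vanishes for one choice of $a_P$ it vanishes for all. For part I.\ref{icontraf}, the key point is that a $\cp$-approximation $g\colon AM\to M$ is built as a coproduct over all $(P,f)$ with $P\in\cp$, $f\in\cu(P,M)$; applying the smashing hypothesis and Proposition \ref{phop}(4), a coproduct of $s$-decompositions of the summands is an $s$-decomposition of $AM$, and every composite $L_sP\to P\to M$ factors through $L_sAM\to AM$. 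Hence $h\in J$ iff $h\circ g\circ a_{AM}=0$; the "all compositions" equivalence follows from part \ref{indep}. Part I.\ref{icoprcl} then follows by combining part \ref{icontraf} with Lemma \ref{lperf}(\ref{ipereq},\ref{ipercrit}): a (countable) coproduct of $\cp$-approximations is a $\cp$-approximation (by perfectness), and the corresponding composites with $s$-decomposition morphisms can be taken compatibly, so their coproduct still vanishes.

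For part I.\ref{ilscp}, under the hypothesis that $L_sP\in\cp$ can be chosen for each $P\in\cp$, observe that for $P'\in\lscp$, i.e.\ $P'=L_sP$ for some $P\in\cp$, we have $P'\in\lo$ (so $P'$ is a retract of $L_sP'$ by Proposition \ref{phop}(7)) and the relevant composites reduce to $H^{P'}(h)=0$; conversely, for any $P\in\cp$ the chain $L_sP\to P\to M\to N$ factors through $L_sP\in\lscp$. Part I.\ref{ilscper} is then immediate from parts \ref{ilscp}, \ref{icoprcl}, and Lemma \ref{lperf}(\ref{iperfcovarf}) once we know $\lscp$ is essentially small, which follows because $\cp$ is. Part I.\ref{ilscperw}: since $s$ is weighty, $\cp[-1]\subset\cp$ forces $\cu_{w\le 0}=\lo$ to be closed under $[1]$, and the $\cp$-null morphisms coincide with the $L_s\cp$-null ones (part \ref{ilscp}); since $\cp$ Hom-generates $\cu$ one checks $\ro=\cp^\perp=(L_s\cp)^\perp$, so $L_s\cp$ generates $s$, and because $L_s\cp$ is (countably) perfect and cosuspended, Theorem \ref{tpgws} together with Remark \ref{rigid}(4) identifies $\lo$ with the big hull of $L_s\cp$ and shows $s$ is perfectly generated.

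Part II: given the $\be$-coproductive weighty torsion pair $s'$ on $\cupr$ with $\obj\cupr$ a perfect essentially small class of $\be$-small objects, I would use Remark \ref{rbecomp}(3) (the cited Lemma 4 of \cite{krauwg}) to show that for $P\in\lo'$ any morphism $P\to\coprod N_i$ factors through a coproduct of morphisms from objects of $\cupr$, whence a diagram chase with $s'$-decompositions (and Proposition \ref{phop}(4) inside $\cupr$, which has coproducts of $<\be$ objects) shows $\lo'$ is perfect in $\cu$; then Theorem \ref{tpgws} applied to $\cp=\lo'$ (cosuspended since $s'$ is weighty) produces the smashing weighty torsion pair on $\cu$ generated by it. Part III.1 follows by applying part II to $\cupr=\cu^\be$ with $s'$ the restriction of $s$, noting $\cu^\be$ is essentially small with $\be$-compact objects $\be$-small and perfect (Remark \ref{rbecomp}(1,2)), so $\lo\cap\obj\cu^\be=\lo'$ is perfect; essential smallness is inherited from $\cu^\be$. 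Part III.2 is the crux and I expect it to be the main obstacle: one must show that a smashing weighty $s$ on a well generated $\cu$ restricts to $\cu^\be$ for all large enough regular $\be$. The plan is to take a perfect set $\cp_0$ of $\al$-small Hom-generators, note that by Theorem \ref{tclass}-type constructions (or directly by the weight-decomposition construction in the proof of Theorem \ref{tpgws}) every object's weight truncations can be built from $\cp_0$ via homotopy colimits, then choose $\be$ large enough that $\cp_0\subset\obj\cu^\be$, that $\cu^\be$ is closed under the relevant $s$-decompositions (using Proposition \ref{ppcoprws}(\ref{icopr7p}) on $\be$-small coproducts and the fact that a regular $\be$ can be taken above the "size" of the generating data and of $L_s\cp_0$), so that $(\lo\cap\obj\cu^\be,\ro\cap\obj\cu^\be)$ is a torsion pair on $\cu^\be$; then part I.\ref{ilscperw} applied with $\cp=\obj\cu^\be\cap\cu_{w\le 0}$ (essentially small, Hom-generating once $\be\ge$ the cardinal bounding $\cp_0$, closed under $[-1]$) gives that $L_s\cp=\lo\cap\obj\cu^\be$ perfectly generates $s$.
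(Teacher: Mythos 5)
Your arguments for parts I.\ref{indep}--\ref{icoprcl}, I.\ref{ilscp}, and II coincide with the paper's (weak functoriality via Proposition \ref{phop}(7), the coproduct-of-approximations machinery via Proposition \ref{phop}(4) and Lemma \ref{lperf}(\ref{ipereq}), and the Krause-type factoring for II), and your substitution of part II for the paper's appeal to I.\ref{ilscper} in the proof of III.1 is a legitimate alternative route. The two places that contain genuine gaps are I.\ref{ilscperw} and III.2.

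In I.\ref{ilscperw} you assert that ``since $\cp$ Hom-generates $\cu$ one checks $\ro=\cp^\perp=(L_s\cp)^\perp$.'' The first equality is simply false in general: $\cp$ is not assumed to lie in $\lo$, so for $N\in\ro$ and $P\in\cp$ there is no reason for $\cu(P,N)$ to vanish. More importantly, to identify $\lo$ with the big hull of $L_s\cp$ via Theorem \ref{tpgws} you must already know that $L_s\cp$ generates $s$, i.e., that $(L_s\cp)^\perp\subset\ro$; this is precisely the non-trivial content and your sketch does not supply it. The paper argues in the opposite order: first apply Theorem \ref{tpgws} to $\cp$ itself (since $\cp$ Hom-generates, $\cp^\perp=\ns$, so every object lies in the naive big hull of $\cp$); then invoke Lemma \ref{lbes}(\ref{ilwd}) to produce, for any $P'$ in the naive big hull of $\cp$, an $s$-decomposition whose left-hand term $L'$ lies in the naive big hull of $\lscp$; finally use Proposition \ref{phop}(7) to exhibit any $P\in\lo$ (a retract of some such $P'$) as a retract of $L'$. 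That gives $\lo\subset$ big hull of $L_s\cp$ directly, and generation is then a consequence (via Lemma \ref{lbes}(\ref{isesperp})), not a prerequisite.

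In III.2 you apply I.\ref{ilscperw} with $\cp=\obj\cu^\be\cap\cu_{w\le 0}$ and claim this class Hom-generates $\cu$. It need not: if $N\neq 0$ is left $w$-degenerate (i.e., $N\in\bigcap_{i\in\z}\cu_{w\ge i}$) then $\cu(\cp[i],N)=\ns$ for every $i$ by orthogonality, so the Hom-generation hypothesis of I.\ref{ilscperw} fails, and no size condition on $\be$ can repair this. The paper instead takes $\cp=\obj\cu^\be$, which is shift-closed and Hom-generates $\cu$ because $\cu$ is $\al$-well generated and $\be\ge\al$; the hypothesis that $s$ restricts to $\cu^\be$ furnishes choices $L_sP\in\obj\cu^\be$ for all $P\in\cp$, whence $L_s\cp=\lo\cap\obj\cu^\be$ and I.\ref{ilscperw} applies. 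Your appeal to Proposition \ref{ppcoprws}(\ref{icopr7p}) for the restriction claim is the right tool, but it should be applied with the $D_l$ ranging over (a skeleton of) $\cu^\al$, not merely over a generating set $\cp_0$, since every $M\in\obj\cu^\be$ must acquire a weight decomposition inside $\cu^\be$. A smaller mismatch occurs in I.\ref{ilscper}: the hypothesis there is that $\cp$ is contravariantly finite, not essentially small, so Lemma \ref{lperf}(\ref{iperfcovarf}) is not directly applicable; a $\lscp$-approximation of $M$ should instead be obtained by composing a $\cp$-approximation $AM\to M$ with an $s$-decomposition morphism $L_sAM\to AM$, as in the proof of I.\ref{icontraf}.
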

\begin{proof}

I.\ref{indep}. It suffices to note that any $s$-decomposition morphism for $M$ factors through any other one according to Proposition \ref{phop}(\ref{itp7}).

\ref{icontraf}. We fix $h$ (along with $M$ and $N$).  

The definition of approximations along with Proposition \ref{phop}(\ref{itp7}) implies that any composition  $ L_sP\stackrel{a_P}{\to} P\stackrel{g}{\to}M$ as in the definition of $J$ factors through the composition morphism  $L_sAM\to M$. Hence if the composition $L_sAM\to N$ is zero then $h\in J$.

Conversely, assume that $h\in J$.
Since  $\cp$ is  contravariantly finite, we can choose  a $\cp$-approximation morphism $g\in \cu(AM, M)$. %; choose some $s$-decomposition morphism $L_sAM\to AM$.
Present $AM$ as a coproduct of some $P_i\in \cp$; choose some $s$-decomposition morphisms $L_sP_i\stackrel{a_{P_i}}\to P_i$. Since $s$ is smashing,  the morphism $a_{AM}^0=\coprod a_{P_i}$ is an $s$-decomposition one also according to Proposition \ref{phop}(\ref{itp4}). Since $h \circ g\circ  a_{P_i}=0$ for all $i$, we also have $h \circ g\circ  a_{AM}^0=0$. Lastly,  any other choice of $a_{AM}$ factors through $a_{AM}^0$ (by Proposition \ref{phop}(\ref{itp7}); cf. the proof of assertion I.\ref{indep}); this gives the "moreover" part of our assertion.

\ref{icoprcl}. This is an easy consequence of the previous assertion. Indeed, to prove that $\coprod h_i\in J$ for a small  (resp. countable) collection of $h_i\in J\cap \cu(M,N)$  note that for any choices of $\cp$-approximations $AM_i\to M_i$ their coproduct is a  $\cp$-approximation of $\coprod M_i$ (by Lemma \ref{lperf}(\ref{ipereq})). The assertion follows easily since the coproduct of any choices of $L_sAM_i\to AM_i$ of $s$-decomposition morphisms is an  $s$-decomposition morphism also (according to Proposition \ref{phop}(\ref{itp4})); thus it remains to apply assertion I.\ref{icontraf}.

\ref{ilscp}. %Obviously, any 
 Assertion I.\ref{indep} certainly implies that any $\lscp$-null morphism belongs to $J$. The converse implication is immediate from $\lscp\subset \cp$.

\ref{ilscper}. This is an obvious combination of the previous two assertions.

\ref{ilscperw}. Since $\lo$ contains $\lscp$, it also contains its big hull (see Lemma \ref{lbes}(\ref{iseses}, \ref{isesperp})). Thus it suffices to verify the converse inclusion. %formula?!

Now, since $\cp$ is essentially small, countably perfect, and $\cp = \cp[1]$, the big hull of $\cp$ along with $\cp^\perp$ is a (weighty) torsion pair according to Theorem \ref{tpgws}. %cf. {krausecoh}?! 
   Since $\cp$ Hom-generates $\cu$,  we obtain %$\cpt^\perp=\ns$
$\cp^\perp=\ns$; % we obtain that 
 thus any object of $\cu$ belongs to this big hull.  

Now let $P$ belong $\lo$. As we have just proved, it is a retract of some $P'$ belonging to the naive big hull of $\cp$. We present $P'$ as $\inli Y_i$ so that 
$Y_0$ and cones of the connecting morphisms $\phi_i$ belong to %\obj??
$\cocp$. %\cpt?!
 Thus for $Z_i$ being as in Lemma \ref{lbes}(\ref{ilwd}) we can choose $L_sZ_i\in {\underline{\coprod} \lscp}$. Applying the lemma we obtain the existence of an $s$-decomposition triangle $L'\to P'\stackrel{n_{P'}}{\to} R'\to L[1]$ %just the morphism?!
with $L'$ belonging to the naive big hull of $ {\underline{\coprod} \lscp}$. Thus applying Proposition \ref{phop}(\ref{itp7p}) we obtain that $P$ belongs to the  big hull of $ {\underline{\coprod} \lscp}$.

%Since $P'\in \lo$, we have $n_{P'}=0$. Hence $P'$ is a retract of $L$; hence $P'$ (and so also $P$)
%Now, %we consider some choice of %(\ref{swd}).
%an $s$-decomposition triangle the distinguished triangle  $P\to P\to 0\to P[1]$ is an $s$-decomposition of $P$.
%Since $\id_P$ can be factored through $P'$, applying Proposition \ref{phop}(\ref{itp7}) to the corresponding morphisms $P\to P'\to P$ we obtain that $\id_P$ can be factored through $L'$, i.e., $P$ belongs to the  big hull of $ {\underline{\coprod} \lscp}$. 

II. Let $f_i\in \cu(N_i,Q_i)$ for $i\in J$ be a set of $\lo'$-null morphisms; for $N=\coprod N_i$, $f=\coprod f_i$, and $P\in \lo'$ we should check that the composition of any $e\in \cu(P,N)$ with $f$ vanishes. The $\be$-smallness of $P$ allows us to assume that $J$ contains less than $\be$ elements.
%reverse?!

Next, Remark \ref{rbecomp}(3) gives a factorization of $e$ through the coproduct of some $h_i\in \cu(M_i,N_i)$ with $M_i\in \obj \cupr$. We choose some $s'$-decompositions $L_i\to M_i\to R_i\to L_i[1]$ of $M_i$. Our assumptions easily imply that $\coprod L_i\to \coprod M_i\to \coprod R_i$ is an $s'$-decomposition of $\coprod M_i$ (cf. Proposition \ref{phop}(\ref{itp4})). Hence part  \ref{itp7} of the proposition implies that $e$ factors through the coproduct $g$ of the corresponding morphisms $L_i\to N_i$. Now, since $f_i$ are $\lo'$-null and $L_i\in \lo'$ then $f\circ g=0$; hence $f\circ e=0$ as well.

Lastly, if $s'$ is weighty then $\lo'$ is cosuspended. %Hence it generates a finite type weight structure for  
  Since it is also essentially small it remains to apply Theorem \ref{tpgws} along with Corollary \ref{cwftw}(1).

III. For $\be\ge \al$ being a regular cardinal we take $\cp=  \obj \cu^\be$. This is certainly a perfect essentially small class that Hom-generates $\cu$; % (see Remark  \ref{rbecomp}(2)); 
 we also have $\cp=\cp[1]$. %nafig?

 To prove assertion III.1 it suffices to note that $\lo\cap \obj \cu^\be$ is a possible choice of $L_s\cp$ (in the notation of assertion I) and apply assertion I.\ref{ilscper}. %We also obtain the assertion 

Next, assertion I.\ref{ilscperw} (combined with Remark \ref{revenmorews}(1)) implies that to prove assertion III.2 it suffices to verify that $s$ restricts to $\cu^{\be}$ for all large enough regular $\be\ge \al$.

Now we choose some $L_sM$ for all $M\in \obj \cu^\al$, and and take   a regular cardinal $\al'$  such that all elements of $L_s\cp$ belong to $\cu^{\al'}$ (see Remark \ref{rbecomp}(2)). Then for any regular $\be\ge {\al'}$ the pair $s$ restricts to $\cu^{\be}$, since the corresponding weight decompositions exist according to the "furthermore" part of Proposition \ref{ppcoprws}(\ref{icopr7p}). 
\end{proof}

\begin{rema}\label{rtkrau}
1. Our theorem suggests that it makes sense to define (at least) two distinct notions of $\be$-well generatedness for smashing torsion pairs and weight structures  in an $\al$-well generated category $\cu$. One may say that $s$ is {\it weakly $\be$-well generated} for some regular $\be\ge \al$ if it is generated by a perfect set %es class?!
 of $\be$-compact objects.
%$\lo\cap \obj \cu^\be$. 
$s$ is {\it strongly  $\be$-well generated} if in addition to this condition, $s$ restricts to $\cu^\be$.

Certainly, compactly generated torsion pairs (see Definition \ref{dhopo}(3)) % treated in Theorem \ref{tclass} (i.e., the compactly generated ones in the terms of Remark \ref{rnewt}(2)) 
 are precisely the weakly $\alz$-well generated ones (since any set of compact objects is perfect; cf. Proposition \ref{psym}(\ref{isymcomp})). Hence  our two notions of $\be$-well generatedness are not equivalent (already) in the case $\al=\be=\alz$; this claim follows from \cite[Theorems  4.15, 5.5]{postov} (cf. also Corollary 5.6 of ibid.) where (both weakly and strongly $\alz$-well generated) weight structures on $\cu=D(\modd-R)$ were considered in detail.

Moreover, for $k$ being a field of cardinality $\gamma$ the main subject of \cite{bgn} gives the following example: the opposite (see Proposition \ref{pbw}(\ref{idual})) to (any version of) the Gersten weight structure over $k$ (on the category $\cu$ that is opposite  to the corresponding category of {\it motivic pro-spectra}; note that $\cu$ is compactly generated) is %compactly generated?! essentially?!
weakly  $\alz$-well generated (by definition) and it does not restrict to the subcategory of $\be$-compact objects for any $\be\le \gamma$. On the other hand, this example is "as bad is possible" %???
  for weakly  $\alz$-well generated weight structures in the following sense: combining the arguments used the proof of part III.2 of our theorem with that for Theorem \ref{tclass} one can easily verify  that any $\alz$-well generated weight structure is $\al$-well generated whenever the set of  (all) isomorphism classes of morphisms %explain?!
in the subcategory $\cu^\alz$ of compact objects of $\cu$ is of cardinality less than $\al$. 

2. Obviously the join (see Remark \ref{revenmorews}(1) and Corollary \ref{cwftw}(3)) of any set of  weakly $\be$-well generated weight structures  is    weakly $\be$-well generated; so, we obtain a filtration (respected by joins) on the "join monoid" of weight structures. %union of small monoids in the wg case?!
 The natural analogue of this fact  for strongly $\be$-well generated weight structures is probably wrong. Indeed, it is rather difficult to believe
that for a general compactly generated category $\cu$ the class of weight structures on the subcategory $\cu^{\alz}$ of compact objects (cf. Proposition \ref{psym}(\ref{isymcomp}))   would be closed with respect to joins; note that  joining compactly generated weight structures $w_i$ on $\cu$ corresponds to intersecting the classes $\cu_{w_i\ge 0}\cap \obj \cu^{\alz}$.

On the other hand, Corollary 4.7 of \cite{krause}  suggests that the filtration of   the class of smashing weight structures  by the sets of weakly $\be$-well generated ones (for $\be$ running through regular cardinals) may be "quite short".  

3. According to part III.2 of our theorem, any weight structure on a well generated $\cu$ is strongly $\be$-well generated for $\be$ being large enough.  Combining this part of the theorem with its part II %of the theorem 
%with part III %of the theorem?! can one fix $\be$ here???????????????!!!!!!!!!!!!!!!!!!!!! 
 we also obtain a bijection between strongly  $\be$-well generated weight structures on $\cu$ and $\be$-coproductive weight structures on $\cu^\be$. 
Note that (even) the restrictions of these results to compactly generated categories appear to be quite interesting. %Telescope conjecture?!

4. Now assume that a weight structure $w$ on $\cu$ is strongly $\alz$-well generated; this certainly means %equivalent to?!
 that  $\cu$ is compactly generated (see Proposition \ref{psym}(\ref{isymcomp})) and $w$ restricts to its subcategory $\cu^\alz$ of compact objects. Then Proposition \ref{ppcoprws}(\ref{icopr7p}) implies that $\cu_{w\ge 0}$ is the smallest coproductive extension-closed subclass of $\obj \cu$ that contains $\cu_{w\ge 0}\cap \obj \cu^\alz$ %(for $\ro=\cu_{w\ge 1}$; note that a similar argument was used in 
 (cf. the proof of Theorem \ref{twgws}(III.2)). Thus the $t$-structure right adjacent to $w$ is generated by the essentially small class $\cu_{w\ge 0}\cap \obj \cu^\alz$;
%it exists by?!
so it is compactly generated (and hence smashing).

5. For $\cu$ as above and 
 a weakly $\be$-well generated weight structure $w$ on it one can easily establish a natural weight structure analogue of \cite[Theorem B]{krauwg} that will %bound the 
"estimate the size" of an element $M$ of $\cu_{w\le 0}$ in terms of the cardinalities of $\cu(P,M)$ for $P$ running through $\be$-compact elements of  $\cu_{w\le 0}$ (modifying the proof of loc. cit. that is closely related to our proof of  Theorem \ref{tpgws}). Moreover, this result should generalize loc. cit. Note also that there is a "uniform" estimate of this sort that only depends on $\cu$ (and does not depend on $w$).
%Combining this result with the argument used in the proof of Theorem \ref{tpgws} one should be able to prove that
 This argument should also yield that a weakly $\be$-well generated weight structure is always strongly $\be'$-well generated for a regular cardinal $\be'$ that can be described explicitly.

Moreover, similar arguments can possibly yield that any smashing weight structure on a perfectly generated triangulated category $\cu$ is perfectly generated (cf. Theorem \ref{twgws}(III.2)).

6. Our understanding of "general" well generated torsion pairs is much worse than the one of (well generated) weight structures. In particular, the author does not know which properties of weight structures proved in this section can be carried over to $t$-structures.
\end{rema}

\section{On torsion pairs orthogonal with respect to dualities}\label{skan}

In this section we study dualities between triangulated categories (generalizing the bifunctor $\cu(-,-)$ along with its restrictions to pairs of triangulated subcategories of $\cu$). Our main construction tool are Kan extensions of homological functors from triangulated subcategories of $\cu$ (to $\cu$; we call the resulting functors {\it coextensions}); these are interesting for themselves.

In \S\ref{scoext} we study %Kan extensions of 
coextensions of homological functors (into an AB5  category $\au$) from a triangulated subcategory  $\cuz$   to $\cu$ following \cite{krause}. If $\cu$ is compactly generated and $\cuz$ is its  subcategory of compact objects then the {\it coextended} functors are precisely the cc ones.  As an application we demonstrate that for any compactly generated torsion pair  $s=(\lo,\ro)$ there exists an object $N$ of $\cu$ such that the functor $H^N$ kills precisely those compact objects of $\cu$ that (also) belong to $\lo$.

In \S\ref{sdual1} we recall (from \cite{bger}) the definition of a duality $\Phi:\cu\opp\times \cupr \to \au$ (we are mostly interested in the case $\cu=\ab$). The corresponding notion of {$\Phi$-orthogonal}  torsion pairs generalizes the notion of adjacent ones. In the  case where $\cu\subset \cupr$ and $\Phi$ is just the restriction of $\cupr(-,-)$  to $\cu\opp\times \cupr$ we are able to prove two natural generalizations of Proposition \ref{psatur}; so we prove (assuming some additional  conditions) % along with some additional ones)  
 that for any weight structure $w$  on $\cu$ (resp. on $\cu'$) there exists a $t$-structure on $\cupr$ (resp. on $\cu$) that is right (resp. left) $\Phi$-orthogonal to $w$. The results of \cite{neesat} and \cite{roq} demonstrate that these results can be applied for $\cu$ being the derived category of perfect complexes %of coherent sheaves 
 and $\cu'$ being the bounded derived  category of coherent sheaves on a scheme proper over the spectrum of a noetherian ring. %projective variety. 

%Lastly
In \S\ref{sdual2} we study in detail the case where $\cu$ and $\cupr$ contain a common triangulated subcategory $\cuz$ whose objects are cocompact in $\cu$ and compact in $\cupr$, and $\Phi$ is a certain "bicontinuous biextension" of the bi-functor $\cuz(-,-)$. We obtain that any set $\cp$ of objects of $\cuz$ gives a couple of $\Phi$-orthogonal torsion pairs (in $\cu$ and $\cu'$, respectively).

For %$\cp\subset\cp[-1]\subset 
 a suspended $\cp\subset \obj \cuz$  this gives (in \S\ref{sgengroth}) a $t$-structure $t$ on $\cupr$ that is generated by $\cp$ and a weight structure $w$ on  $\cu$ that is cogenerated by $\cp$ and is (left) $\Phi$-orthogonal to $t$.  A particular case of this setting is considered in  \cite{bgn} (where our results are applied to the study of  %weight 
 various motivic homotopy categories, homotopy $t$-structures, and coniveau spectral sequences). 
We also  apply this result  to the study of compactly generated $t$-structures % (this corresponds to the case of a suspended $\cp$). 
The elements of the heart of $w$ give a faithful family of exact %of conservative
  functors $\hrt\to \ab$ that respect coproducts; hence $\hrt$ is an AB5 category. Next an easy argument yields the  existence of  a generator for $\hrt$ (so,  it is a Grothendieck abelian category). %Next we prove that all elements of $I_t$ (see Definition \ref{dpt}) are pure-injective; using an argument inspired by \cite{humavit} we describe that $\hrt$ has generators (and so, it is a Grothendieck abelian category).

In \S\ref{sprospectra} we prove (using the results of \cite{tmodel}) that taking $\cupr$ to be the homotopy category of a proper simplicial stable model category $\gm$ and $\cu$ to be the homotopy category of the category $\gmp$ of (filtered) pro-objects of $\gm$ %, and $\cuz$ to be a small category whose objects are compact in $\cupr$ 
 one obtains an example of the aforementioned setting. Thus we obtain that $\hrt$ is an Grothendieck abelian category for any compactly generated $t$-structure on a "topological" triangulated category.
%Moreover, $\cu$ may be replaced by its colocalizing subcategory cogenerated by $\cuz$; the corresponding restriction of $\Phi$ is "less degenerate".
Moreover, this pro-object construction is used in \cite{bgn} for the description of various triangulated categories of {\it motivic pro-spectra} (and {\it comotives}) and for the study of their relation to the corresponding motivic stable homotopy categories. Another example of the couple $(\cu,\cupr)$ %satisfying these conditions is the pair 
 that may be obtained this way  is $(\operatorname{SH}^{op},\operatorname{SH})$; this observation is closely related to the main subject of \cite{prospect}.
%topological spectra??

In \S\ref{slocoeff} we recall a few results on "localizing coefficients" in (compactly generated) triangulated categories and the relate this matter to torsion pairs and their orthogonality. We also study (from a similar perspective) decompositions of triangulated categories as direct sums of their subcategories. The section is included here for the purpose of using it in \cite{bgn}; still some of its results may be interesting for themselves.

\subsection{On Kan extensions of homological functors }\label{scoext}

Now we will study a method of extending of homological functors from a triangulated subcategory of $\cu$; this is a version of left Kan extensions that was studied in detail in \cite[\S2]{krause}. In particular, we obtain a description of all cc functors from $\cu$ (into an AB5 abelian category) if it %\cu?!
 is compactly generated. %Next
Since the construction  is dual to the one applied in \cite{bger} and \cite{bgn} (and it "usually respects coproducts") we will call the resulting functors {\it coextended} ones; we will explain that they are actually Kan extensions below. 

To formulate some of the properties of the construction we will use (a few times) the following definition. %(yet we will use it just a few times).

\begin{defi}\label{drelim} %nafig; consider resolutions instead?!

Let %$\cp$ be a subclass of objects??
$\cuz$ be a %(full) %?!
subcategory of $\cu$ (or just a class of objects), $M\in \obj \cu$; let $L$ be a small (index) category and fix a functor $L\to \cu: l\mapsto N_l$. 

1. Let $M$ be a co-cone of  %a functor $N:L\to \cu$ (i.e., $N$ is equipped with compatible morphisms from $N(l)$ for $l\in L$)
this functor (i.e., $M$ is equipped with compatible morphisms from $N_l$ for $l\in \obj L$).
Then we will say that $M$ is a {\it $\cuz$-colimit} of $(N_l)$  %such that the latter system 
if the restriction $H_M$ of the functor $\cu(-,M)$ to $\cuz$ equals the colimit of %(the similarly defined)
  $ H_{N_l}$ (in $\psv(\cuz)$, i.e., if for any $Y\in \obj \cuz$ the connecting morphisms induce an isomorphism $\cu(Y,N)\cong \inli \cu(Y,N_l)$.

2.  If $M$ is a cone of $N$ then we will say that  $M$ is a {\it $\cuz$-limit} of $(N_l)$ if   $M$ is a $\cuz^{op}$-colimit of $(N_l)$ in $\cu^{op}$.
\end{defi}

\begin{rema}\label{rlim}
We will not need much of this definition in the current paper. Moreover, it appears that the most "useful" case of part 1 (resp. 2) of the definition is the filtered one, i.e., the one where $(N_l)$ is an inductive (resp. projective) system.
\end{rema}

Now let $\cuz$ be an essentially small triangulated subcategory of $\cu$; we will also assume that $\cu$  has coproducts.  We  consider the category $\psv(\cuz)=\adfu(\cu_0^{op},\ab)$  (cf. the proof of Theorem \ref{tpgws}); %(cf. Remark \ref{rdetect}; below we will omit the index $\z$ in this notation); %this is certainly 
 recall that this is a locally small  abelian category. %?! (see \S\ref{snotata}). %?! moreover, the Hom-sets in this category are small. 
 %with chains of morphisms being exact whenever they yield exact sequences of abelian groups when applied to any object of $\cuz$; 
It it easily seen that any  $H\in \obj  \psvcuz$ possesses a  (projective) resolution 
$ \coprod\cu_0(-, C_i)\to \coprod \cu_0(-,C_j)\to H\to 0 $
 %in the category $\adfu (\cu_0^op,\ab)$, 
where $\{C^i\}$ and $\{C^j\}$ are some %small?!
 families of objects of $\cu_0$; cf. Lemma 2.2 of \cite{krause}. % (and one can assume that $i$ and $j$ run through $\obj \cu_0$). 

\begin{pr}\label{pkrause}
Let $H_0:\cu_0\to \au$ be a homological functor, where $\au$ is an AB5 abelian category; fix some $N\in \obj \cu$. For any $M\in \obj \cu$ we fix a resolution (as above)
%(see \ref{ekrause}) 
\begin{equation}\label{ekrause}
 \coprod_{i\in I}%\cu_0(-, C_M^i)
H_{C_M^i}\to \coprod_{j\in J} %\cu_0(-,C_M^j)
H_{C_M^j}\to H_M\to 0,\end{equation} where  we use the notation $H_M$ for the restriction of the functor $\cu(-,M)$ to $\cuz$.

Then for the association $H:M\mapsto \cok(\coprod H_0(C_M^i)\to  \coprod H_0(C_M^j))$  the following statements are valid.

 \begin{enumerate}
\item\label{ikr1} $H$ is a homological functor $\cu\to \au$. %?? that is essentially independent on the choice of resolutions. In particular, the restriction of $H$ to $\cuz$ is canonically isomorphic to $H_0$. 

\item\label{ikrchar} For any $\psv(\cuz)$-resolution $ \coprod H_{C'{}_M^{i}}\to \coprod H_{C'{}_M^{j}} \to H_M\to 0$  of $H_M$ with $C'{}_M^{i}$ and $C'{}_M^{j}$ being some objects of $\cuz$, the object $\cok(\coprod H_0(C'{}_M^{i})\to  \coprod H_0(C'{}_M^{j}))$ is canonically isomorphic to $H(M)$. %??Moreover, $H$ is canonically characterized by this condition along with its restriction to $\cuz$.

In particular, the restriction of $H$ to $\cuz$ is canonically isomorphic to $H_0$. 
We will call $H$ the {\it coextension} of $H_0$ to $\cu$, and say that it is  a {\it coextended} (from $\cuz$) functor.

\item\label{ikres} %Requires functoriality of projective resolutions?!
More generally, we have $H(M)\cong \cok(\coprod H(C'{}_{M^i})\to  \coprod H(C'{}_{M^j}))$ (also) if  the $\psvcuz$-sequence  $ \coprod H_{C'{}_M^{i}}\to \coprod H_{C'{}_M^{j}} \to H_M\to 0$  is exact for some objects $C'{}_M^{i}$ and $C'{}_M^{j}$ of $\cu$. 

\item\label{ikrtriv} If $H_0:\cuz\to \ab$ is corepresented by some $M_0\in \obj \cuz$ then $H$ is $\cu$-corepresented by $M_0$ also.

\item\label{ikr7} For any exact sequence $\coprod H^i_0\to \coprod H^j_0\to H_0\to 0$ the corresponding $\psv(\cu)$-sequence of coextensions  $\coprod H^i\to \coprod H^j\to H\to 0$ is exact also. In particular, the coextension of $\coprod H^j_0$ equals  $\coprod H^j$.

\item\label{ikr2} $H(N)$ for $N\in \obj \cu$ only depends on the restriction of $\cu(-,N)$ to $\cuz$.

\item\label{ikradj} Let $\eu$ be a full triangulated subcategory of $\cu$ that contains $\cuz$ %and is closed with respect to $\cu$-coproducts;
and  assume that there exists a right adjoint $G$ to the embedding $\eu\to \cu$ (so, $\eu$ has coproducts). 
Then we have $H\cong H^{\eu}\circ G$,%for the functor 
where the functor $H^{\eu}:\eu\to \au$ is defined on $\eu$ using the same construction as the one used for the definition of $\cu$.

\item\label{ikr3} %  If the functor $H_N$ is an (arbitrary small) colimit in  $\psvcuz$ of $H(N_l)$ for some $N_l\in \obj \cu$ then this isomorphism naturally yields $H(N)\cong \inli H(N_l)$.
If $N$ is a $\cuz$-colimit of some %inductive system  
$(N_l)$ (see Definition \ref{drelim}(1)), then $H(N)\cong \inli H(N_l)$.

Moreover, such a set of $(X_l,f_l)$ exists for any $X\in \obj \cu$.

\item\label{ikrtransf}  For $H'_0$ being another homological functor $\cuz\to \au$ and the corresponding coextension $H'$ we have the following:
the restriction of  natural transformations $H\implies H'$ to the subcategory $\cuz$ gives a one-to-one correspondence between them and the transformations $H_0\implies H'_0$.

\item\label{ikr6} Let $H_0\stackrel{f_0}{\to} H'_0 \stackrel{g_0}{\to} H''_0$
be a (three-term) complex of  homological functors $\cuz\to \ab$ %in the abelian category $\adfu(\cuz,\ab)$ 
 that is exact in the term $H'_0$. % (see \S\ref{snotata}). 
 Then %there is a naturally defined  
 the complex $H\stackrel{f}{\to} H' \stackrel{g}{\to} H''$ (here $H,H',H'',f,g$ are the corresponding coextensions) is exact  in the middle also.

\item\label{ikr8}  Assume that all objects of $\cuz$ are compact.
Then  $H$ %respects coproducts, and 
is determined (up to a canonical isomorphism) by the following conditions: it respects coproducts, and its restriction to $\cuz$ equals $H_0$, and it kills $\cuz^\perp$.
%this implies {ikr9}?!! Remark?!
\begin{comment}
\item\label{ikr9} Suppose %in addition to the previous assumption
 that $\cu$ is generated by $\obj \cuz$ as its own localizing subcategory. Then 
%\item\label{ikr5} 4.6.1. %equivalence of categories?! %-2?!
the restriction of functors to $\cuz$ yields an equivalence of the category of %coextended (homological)
 cc functors $\cu\to \au$ %with morphisms being transformations respecting coproducts?! 
 to the category of all homological functors $\cuz\to \au$. %?!!
\end{comment}
\end{enumerate}

\end{pr}
\begin{proof}
\ref{ikr1}, \ref{ikrchar}. Immediate from \cite[Lemma 2.2]{krause} (see also Proposition 2.3 of ibid.).

%\ref{ikrchar}
%\ref{ikrtriv}%\ref{ikr2}? \ref{ikrtiv}: just substitute $M_0$ in the definition of a resolution?! --\ref{ikr4}.
\ref{ikres}--\ref{ikr3}. %\ref{ikradj}.
%\ref{ikrobv}. 
The proofs are straightforward (and very easy); cf. also Remark \ref{rkrause}(I.1) below.

Assertion \ref{ikrtransf} is easy also. The injectivity of this restriction correspondence follows easily from the previous assertion and the surjectivity is immediate from the naturality of the coextension construction in $H_0$. %if we fix resolutions?!

\ref{ikr6}. We should check that the sequence  %$H_0(M)\stackrel{f_0(M)}{\to} H'_0(M) \stackrel{g_0(M)}{\to} H''_0(M)$ 
$H(M)\stackrel{f(M)}{\to} H'(M) \stackrel{g(M)}{\to} H''(M)$ of abelian groups  is exact (in the middle term) for any $M\in \obj \cu$. Certainly, the functoriality of coextensions gives $g(M)\circ f(M)=0$.

Our exactness assertion is obviously valid if $M\in \obj \cuz$. We reduce the general case of this statement to this one.

We start from analysing the sequence (\ref{ekrause}).  The Yoneda lemma immediately implies that the natural transformations in it are given by certain $\cu$-morphisms
 $f_j: C_M^j  \to M$ for all
$j\in L$ and $g_{ij}:C_M^i\to C_M^j $ for all $i\in I$, $j\in J$; moreover,  for any $i\in I$ almost all $g_{ij}$ are $0$ %; for any $j\in J$ almost all $g_{lj}$ is $0$; 
 and  we have $\sum_{j\in J} f_j\circ g_{ij}=0$.

We should check that  if for $a\in H'(M)$ we have $g(M)(a)=0$,  then $a=f(M)(x)$ for some $x\in H(M)$.

Using the additivity of $\cuz$  we can gather finite sets of $C_M^i$ and $C_M^i$ in (\ref{ekrause}) into single objects. Hence we can assume
that the following assumptions are fulfilled: $a=H'_0(f_{j_0})(a_0)$ for some $j_0\in J$ and $a_0\in H'_0(C_M^{j_0} )$, $g_0(C_M^{j_0})(a_0)=H''_0(g_{i_0j_0}) (b_0)$ for  some $i_0\in I$ and $b_0\in H''_0(C_M^{i_0} )$ (recall that $H''(M)$ is defined as the corresponding cokernel!). Moreover, we can assume that $g_{i_0j}=0$ for any $j\neq j_0$; thus $f_{j_0}\circ g_{i_0j_0}=0$. 
Complete $g_{i_0j_0}$ to a distinguished triangle $C_M^{i_0}%\stackrel{ g_{i_0j_0}} Denote $\co( g_{i_0j_0})$ by $Y$
\to C_M^{j_0}\stackrel{\al}{\to} Y$; then we can assume %strictness?!!
$Y\in \obj \cuz$, and the equality $f_{j_0}\circ g_{i_0j_0}=0$ implies that $f_{j0}$ can be decomposed as $\be \circ \al$ for some $\be \in \cu(Y,M)$.

Since $H''_0$ is homological, $H''(\al)(g_0(C_M^{j_0})(a_0))=0$. Applying the exact sequence $H_0\to H_0'\to H_0''$ to $Y$ we obtain that $H'_0(\al)(a_0)\in H'(Y)$ can be presented as $f_0(Y)(x_Y)$ for some $x_Y\in H_0(Y)=H(Y)$. Hence $a=f(M)(x)$ for $x=H(\be)\in H(M)$; see the commutative diagram
$$\begin{CD}
 H_0(C_M^{j_0}) @>{H_0(\al)}>> H_0(Y)=H(Y)@>{H(\be)}>>H(M)\\
@VV{f_0(C_M^{j_0})}V@VV{f_0(Y)=f(Y)}V@VV{f(M)}V \\
 H'_0(C_M^{j_0})@>{H'_0(\al)}>>H'_0(Y)=H'(Y) @>{H'(\be)}>>H'(M)\\
@VV{g_0(C_M^{j_0})}V@VV{g_0(Y)=g(Y)}V@VV{g(M)}V \\
 H''_0(C_M^{j_0})@>{H''_0(\al)}>>H''_0(Y)=H''(Y)@>{H''(\be)}>>H''(M)
\end{CD}$$

\ref{ikr8}. %It is easily seen that $H$ satisfies all of these properties 
In the case where $\cuz$ generates $\cu$ as its own localizing category the assertion is given by Proposition 2.3 of \cite{krause}. 
Now recall that (in the general case) the embedding of the  localizing category generated by $\cuz$ into $\cu$ possesses an (exact) right adjoint $G$ that respects coproducts (according to Proposition \ref{pcomp}(II)). Hence the general case of the assertion reduces to loc. cit. if we apply assertion \ref{ikradj}.

\end{proof}

\begin{rema}\label{rkrause}
I.1. Now we explain that %the coextension of $H$ onto $\cu$
$H$ is actually the left Kan extension of $H_0$ along the inclusion $\cuz\to \cu$.

Indeed, we can certainly assume that $\cuz$ is small. Then the standard pointwise construction of the left Kan extension is easily seen to correspond to "the most obvious" resolutions of the functors $H_M$ in $\psvcuz$.

This observation certainly gives an alternative proof of part \ref{ikrtransf} of the proposition.

Below we will also mention extensions of cohomological functors $\cu\to \au$  obtained via applying the coextension construction to the corresponding (homological) functors $\cu\opp\to \au$. So they can also be described via right Kan extensions of the opposite homological functors $\cu\to \au\opp$. 

%dual: extended functors?!

2. It appears that  it is not necessary to assume that $\cuz$ is triangulated to construct $H$ from $H_0$. 

So, assume that $\cuz$ is an essentially small additive subcategory of $\cu$ that satisfies the following condition: for any $\cu$-distinguished triangle 
\begin{equation}\label{edi}
Z[-1]\to X\to Y\to Z 
\end{equation}
the object $X$ belongs to $\obj\cuz$ whenever $Y$ and $Z$ do. Note that any $\cuz$ satisfying this condition is cosuspended, and the extension-closure of any cosuspended subcategory of $\cu$ satisfies this condition. 

Now, our restriction on $\cuz$ certainly implies that it has {\it weak kernels} (for all morphisms); hence the subcategory of coherent functors inside $\psv(\cuz)$ is abelian (see \S1.2 of \cite{krause}). Take an additive functor $H_0:\cuz\to \au$ for an AB5 category $\au$ such that for any triangle (\ref{edi}) with $Y,Z\in \obj \cuz$ the  sequence $H(X)\to H(Y)\to H(Z)$ is exact (in the middle term). Using \cite[Lemmas 2.1, 2.2]{krause} (cf. also the proof of Proposition 2.3 of ibid.) one can easily prove that  the ("pointwise Kan") recipe described in Proposition \ref{pkrause} gives a homological functor $H:\cu\to \au$ whose restriction to $\cuz$ is isomorphic to $H_0$.

Assume in addition that $\cuz$ consists of compact objects of $\cu$. Then the functor $H$ (coming from any $H_0$ as above)   is certainly a cc functor. Moreover, $\obj \cuz$ generates a weight structure $w$ (see  Remark \ref{rnewt}(1))  %Theorem \ref{tclass}(\ref{iclass1})) %gives a weight structure $w$ 
 and  $H$ obviously annihilates $\ro=\cu_{w\ge 1}$. 

%So the author conjectures that
 Now we prove that the functors $H$ that can be obtained using coextensions of this sort are precisely the cc functors satisfying this vanishing condition (similarly to  Definition \ref{drange} one may say that these functors are of weight range $\le 0$). It follows that for $H_0$ being the restriction to $\cuz$ of a cc functor $H':\cu\to \au$ we have $H\cong \tau^{\ge 0}H'$ (see Remark \ref{rwrange}(4)).

We start from noting that for any $H'$ as above the definition of $H$ gives a canonical transformation $\Psi: H\to H'$. 
Now assume that $H'$  annihilates $\ro$. First we  prove that $\Psi(M_0)$ is injective for any $M_0\in \obj \cu$. Considering a $w$-decomposition $(w_{\ge 1}M_0)[1]\to w_{\le 0}M_0\to M\to w_{\ge 1}M_0$ we obtain that $H'(M_0)\cong  H'(w_{\le 0}M_0)$ and $H(M_0)\cong  H(w_{\le 0}M_0)$. Hence it suffices to verify the injectivity of $\Psi(M)$ for any $M\in \cu_{w\le 0}$. We will apply a certain inductive argument with  the base being the obvious fact that $\Psi(M)$ is  an isomorphism if $M$ is a coproduct of objects of $\cuz$. 

We use the construction (and adopt the notation) used in the proof of Theorem \ref{tclass}(\ref{iclass1}). Since $M$ belongs to $M\in  \cu_{w\le 0}$, it is a retract of the corresponding $L$ (according to Proposition \ref{phop}(\ref{itp7})), whereas the latter is a homotopy colimit of $L_i$  with cones of the connecting morphisms belonging to $\cocp$.

So, for all $k\ge 0$ we have commutative diagrams  
$$\begin{CD}
 H(P_k[-1])@>{}>>H(L_k)@>{}>>H(L_{k+1})@>{}>>H(P_k) \\
@VV{\cong }V@VV{}V@VV{}V @VV{\cong}V\\
H'(P_k[-1])@>{}>>H'(L_k)@>{}>>H'(L_{k+1})@>{}>>H'(P_k)
\end{CD}$$
whose rows are exact.  It easily follows by induction (starting from $\Psi(L_0)=\Psi(0)=0$) that $\Psi(L_k)$ is injective for any $k\ge 0$. Passing to the colimit (see Lemma \ref{lcoulim}(3(ii))) we obtain that $\Psi(L)$ is monomorphic; hence $\Psi(M)$ is monomorphic also. %Lemma \ref{lbes}(\ref{isescperp}) 

Next we prove that $\Psi(M)$ is also epimorphic (for any $M\in \obj \cu$). We set $H''(M)= \cok(\Psi(M))$. Since $\Psi(M)$ is monomorphic for all $M\in \obj\cu$, we obtain that $H''(M)$ is a homological functor ($\cu\to\au$); %explain? Cohomology of the cone LES?!
certainly it is a cc functor. So we should prove that $H''$ is zero.  For any $M$, $L$,  $L_k$, and $P_k$ as above  we certainly have $H''(P_k)=H''(P_k[-1])=0$ for any $k\ge 0$; hence obvious induction (similar to that used in the proof of the injectivity of $\Psi(M)$) gives the vanishing of  $H''(L_k)$ for $k\ge 0$. Passing to the colimit once again we obtain $H''(L)=H''(M)=0$ (for any $M\in \cu_{w\le 0}$). Since $H''$ also annihilates $\ro$, we obtain that it is zero.

Possibly, the author will write these arguments in more detail in a new version of this paper to obtain a  set of generators of a (compactly generated) $t$-structure $t$ under the assumptions of Theorem \ref{tab5}(2) below %gen}(II) below 
(this  set of generators would be much smaller than the one given by part 1 of this theorem; see Remark \ref{rab5}(\ref{ismgen})). 
 
II. Now assume (once again) that all objects of $\cuz$ are compact in $\cu$. %and generate>!

 1. Then part \ref{ikr8} of our proposition easily implies that %these categories are 
$\adfu(\cuz,\au)$ is equivalent to the category of those cc functors $\cu\to \au$ that kill  $\cuz^\perp$. % (if all objects of $\cuz$ are compact in $\cu$).

2. Let $w$ be a smashing weight structure  for $\cu$ that restricts to $\cuz^\perp$. Since the corresponding virtual $t$-truncations of cc functors are cc ones according to Proposition \ref{ppcoprws}(\ref{icopr6}), we also obtain that virtual $t$-truncations of coextended functors are coextended. %+ a restriction on the orthogonal?!

It certainly follows that (in this case) virtual $t$-truncations of coextended functors satisfy the "continuity" property described in part  \ref{ikr3} of our proposition. Recall also that  we have a similar continuity for natural transformations between coextended functors %. % (along with their virtual $t$-truncations) 
 according to part \ref{ikrtransf} of the proposition. 

3. Now we describe an interesting application of these observations. We recall that for any (co)homological functor $J:\cu\to \au$ and any $N\in \obj \cu$ a certain {\it weight spectral sequence} $T_w(J,N)$ for $J_*(N)$ is defined. We will not need its definition here. We will only recall (see Theorem 2.3.2 of \cite{bws}) that this spectral sequence is defined starting from the $E_1$-page;  $T_w(J,N)$ becomes independent from any choices (and functorial in $N$)  starting from $E_2$ (and we will write $T^{\ge 2}_w(J,N)$ for  this "part" of $T_w(J,N)$). Moreover, Theorem 2.4.2(II) of \cite{bger} immediately implies the following: %all the pages of $T_w(J,N)$ starting from $E_2$
 $T^{\ge 2}_w(J,N)$  can be $N$-functorially described in terms of certain %?! double?!
virtual $t$-truncations of $J$ along with canonical transformations between them (and the transformations of the second level essentially come from (\ref{evtt}); note that this statement follows from the description of the derived exact couple $(E_2^{**},D_2^{**})$ for the exact couple $(E_1^{**},D_1^{**})$ that gives  $T_w(J,N)$). Hence we obtain the following: if $J$ is a coextended functor (and so, $\au$ is an AB5 category) and
$N$ is a %{\bf filtered} 
$\cuz$-colimit of an {\bf %directed??
inductive  system} $(N_l)$ then  %all the pages of $T_w(J,N)$ starting from $E_2$ are the direct limits of the corresponding pages for $N_l$. 
$T^{\ge 2}_w(J,N)$ is the direct limit of $T^{\ge 2}_w(J,N_l)$. 

The author has applied (the dual to) this statement in \cite{bgn} (where weight spectral sequences were described and studied in much more detail). It was applied to extended (see part I of this remark) cohomological functors from a cocompactly cogenerated category $\eu$, where $\eu$ is a certain category of   {\it motivic pro-spectra} (or of  {\it comotives}). This allows to compute %(generalized)?!
 ("generalized") coniveau spectral sequences for the corresponding cohomology of a projective %?? pro-affine %smooth (inverse filtered) 
 limits of smooth varieties $X_l$ (over the base field $k$ that is perfect). %Cf ? of {bgern}?! 
%=exact couples?! $E_1$?! All natural cohomology theories?!

4.  Now we give one more proof of the fact that %the levels of  $T_w(J,N)$ starting from $E_2$ are the inductive limits of the corresponding levels for $N_l$
  $\inli T^{\ge 2}_w(J,N_l)\cong  T^{\ge 2}_w(J,N) $ (if $N$ is a $\cuz$-colimit of %a directed 
	an inductive system $(N_l)$ and $J$ is extended); this argument avoids the consideration of $D_*^{**}$.

The functoriality of %$T_w(J,-)$ starting from $E_2$ 
 $T^{\ge 2}_w(J,-)$ yields canonical compatible morphisms between $T^{\ge 2}_w(J,N_l)$ and from them into $T^{\ge 2}_w(J,N)$. Now, since $(N_l)$ form %a directed 
 an inductive system, the inductive limit $T'^{\ge 2}_w(J,N)$ of $T^{\ge 2}_w(J,N_l)$ is a spectral sequence (that starts from $E_2$); we also have a canonical morphism $T'^{\ge 2}_w(J,N)\to T^{\ge 2}_w(J,N)$. Thus it remains to verify that this morphism is an isomorphism at $E_2$, i.e., that $\inli E_2^{**}T_w(J,N_l)\cong E_2^{**}T(J,N_l)$. 
 
Now, Theorem 2.3.2 of \cite{bws} implies immediately that the $E_2$-terms of $T_w(J,-)$ are given by the pure functors $H^{A_{J\circ [i]}}\circ [j]$ for $i,j\in \z$
(see  Proposition \ref{ppure}). It is easily seen that these functors kill $\cuz^\perp$, and they are cc according to Proposition \ref{ppcoprws}(\ref{icopr5}). Hence they are extended according to Proposition \ref{pkrause}(\ref{ikr8})\footnote{This fact can also be proved by noting that $H^{A_J}$ can be obtained from $J$ "by means" of the corresponding virtual $t$-truncations; see Theorem 2.4.2(II) of \cite{bger}.}, and it remains to apply  Proposition \ref{pkrause}(\ref{ikr3}).
\end{rema}

Now we combine the results of this section with the ones of \cite{bsnull}; this gives an alternative proof of the "classification" of compactly generated torsion pairs given by Theorem \ref{tclass}. %\ref?? Remark \ref{rnewt}(2).

\begin{rema}\label{rnz} %{rnewt}
1. So, we want to give %another 
one more proof of the existence of a one-to-one correspondence between compactly generated torsion pairs for $\cu$ and essentially small Karoubi-closed extension-closed subclasses of the class $\cu^{\alz}$ of compact objects of $\cu$; the argument should not depend on Theorem \ref{tclass}(\ref{iclass2}).

Recall that any  essentially small subclass  $\cp$ of $\cu^{\alz}$ generates a compactly generated torsion pair $s=(\lo,\ro)$ for $\cu$ according to  Theorem \ref{tclass}(\ref{iclass1}) (this statement is also given by %Proposition 3.5 of \cite{postov}
 the easier Theorem 4.3 of \cite{aiya}). Since $\cu^{\alz}$ gives a triangulated subcategory of $\cu$ (see Lemma 4.1.4 of \cite{neebook}), the class $\cu^{\alz}\cap \lo$ contains the envelope of $\cp$. Next (cf. Theorem \ref{tclass}(\ref{iclasst}) or its proof)
%(as  we have explained in  Remark \ref{rnewt}(2)), %the latter 
 this envelope is essentially small. Thus we should prove that %this intersection 
 $\cu^{\alz}\cap \lo$ equals $\cp$ whenever the generating class $\cp$ is essentially small, Karoubi-closed, and extension-closed in $\cu$.  %For the latter purpose it suffices to prove the existence of $I\in \ro=\cp\perpp$ such that $\perpp I\cap \cu^{\alz}=\cp$. 
%For this purpose we fix some $O\in \cu^{\alz}\setminus \cp$ and prove that $O\notin \lo$.

Now, in \cite{bsnull} zero classes (see Lemma \ref{lbes}(\ref{izs})) of (co)homological functors were studied. Corollary 3.11 of ibid. (applied to the category $\cuz\opp$) gives the following remarkable statement: if $\cuz$ is a small triangulated category then a set $\cp_0$ of its objects is the zero class of some "detecting" homological functor $H_0:\cuz\to \ab$ if and only if $\cp_0$ is extension-closed and Karoubi-closed in $\cuz$. The author believes that this statement will become an important tool of studying (compactly generated) torsion pairs.

%Now 
We take $\cuz$ to be  a small skeleton of the category $\lan \cp\ra$, $\cp_0=\obj \cuz\cap \cp$, and take  $H_0$ to be the corresponding "detector functor".
Since all objects of $\cuz$ are compact in $\cu$,  the coextension $H$ of the aforementioned functor $H_0$ to $\cu$ %$\cupr$?!
is a cc functor according to Proposition \ref{pkrause}(\ref{ikr8}). Next we consider the Brown-Comenetz dual functor $\hdu$ 
(see Definition \ref{dsym}(\ref{ibcomf}); recall that $\hdu: M\mapsto \ab(H(M),\q/\z)$  is  a cp functor from $\cu$ into $\ab$ whose zero class coincides with that of $H$.

We take $\cu'$ to be localizing subcategory of $\cu$ generated by $\cp$. Certainly, this subcategory contains $\lo$ (see the previous assertion). Moreover, Lemma 4.4.5 of \cite{neebook} implies that that the class $\cu'{}^{\alz}$ essentially equals $\obj\cuz$; hence the class  $\obj \cu'\cap  \cu^{\alz}$ %\supset \lo\cap \obj \cu^{\alz}$
 essentially equals $\obj \cuz$ as well.

Since $\cu'$ is generated by a set of compact objects as its own localizing subcategory, $\cu'$ satisfies the Brown representability condition according to Proposition \ref{pcomp}(II.1). Thus the restriction $\hdu'$ of the functor $\hdu$  to $\cu'$ is $\cu'$-representable by some $I\in \obj\cu'$.  Since the zero class of $\hdu'$  contains $\cp$, we have $I\in \ro$. Hence for $M\in  \obj \cu'\cap  \cu^{\alz}$ we have $M\in \lo$ if and only if $M\in \cp$.

 %Next,  $\hdu'(O)\neq 0$ according to the definition of $\hdu'$; thus we conclude that $O\notin \lo$.

Note also that the existence of a "detector object" $I$  is a new result. Moreover, if 
the class $\cu^{\alz}$ is essentially small itself then we can take $\cu'=\cu$ in this argument.
%also appears to be a new and interesting statement.

2. The aforementioned result of \cite{bsnull} (along with some of its variations also proved in ibid.) is a sort of Nullenstellensatz  for (co)homological functors from (small) triangulated categories (whence the name of the paper). It would certainly be interesting to obtain some analogue of this statement for cc and cp functors; %Proposition \ref{phopft}(III)  and?!!
 Theorem \ref{tclass}(\ref{iclass5}) is certainly related to this matter. Note however that a priori the intersection of zero classes of all  those smashing %explain?!
virtual $t$-truncations of representable functors that contain a given $\cp\subset \cu$ (for $\cu$ having coproducts) may be bigger than the strong extension-closure of  $\cp$. %This gives a slight improvement in ??? belob?!!!!!

%one may consider Theorem \ref{tglass}(?) as steps. %(as well as for the "halves" of torsion pairs).
\end{rema}

\subsection{Dualities between triangulated categories and orthogonal torsion pairs; applications to categories of coherent sheaves}\label{sdual1}

%Now we define dualities of 
Now we study certain pairings between triangulated categories and define the notion of orthogonality of torsion pairs (as well as of weight and $t$-structures) generalizing the one of adjacent structures.
%We will not use the notion of niceness in this paper 
We also define the notion of a nice duality; yet we will not use it in the current paper (anywhere except in Proposition \ref{pnice}). 

\begin{defi}\label{dort}
Let $\cu'$ be a triangulated category. 
\begin{enumerate}
\item\label{idu}
 We will call a (covariant) bi-functor
  $\Phi:\cu^{op}\times\cu'\to \au$ a {\it duality} if  it is %bi-additive, 
	 homological with respect
to both arguments, and is equipped with a (bi)natural %bi-additive?????
 transformation $\Phi(A,Y)\cong \Phi (A[1],Y[1])$.

\item\label{iorthop} Suppose that  $\cu$ is endowed with a torsion pair $s=(\lo,\ro)$ and $\cu'$ is endowed with a torsion pair $s'=(\lo',\ro')$. Then we will say that $s$
 is (left) {\it orthogonal} to $s'$ with respect to $\Phi$ (or just {\it left $\Phi$-orthogonal} to it) if the following {\it orthogonality condition} is fulfilled:
 %\begin{equation}\label{edort}
 $\Phi (X,Y)=0$ whenever $X\in \lo$ and $Y\in \lo'$ or if $X\in \ro$ and $Y\in \ro'$.

\item\label{iorthtw} Suppose that  $\cu$ is endowed with a weight structure $w$,
 $\cu'$ is endowed with a $t$-structure $t$. Then we will say that $w$
 is (left) {\it orthogonal} to $t$ with respect to $\Phi$  if the following condition is fulfilled:
 %\begin{equation}\label{edort}
 $\Phi (X,Y)=0$ if $X\in \cu_{w\ge 0}$
and $Y\in \cu'^{t \ge 1}$ or if $X\in \cu_{w\le 0}$ and $Y\in \cu'^{t \le -1}$.

\item \label{ini} We will say that $\Phi$ is {\it nice} if for any distinguished
triangles $T=A\stackrel{l}{\to} B \stackrel{m}{\to} C\stackrel{n}{\to} A[1]$ in $\cu$ and $X\stackrel{f}{\to} Y\stackrel{g}{\to} Z\stackrel{h}{\to}X[1]$ in $\cu'$ 
we have the following:
 the natural morphism $p$:
%\begin{equation}\label{ecompat} ?????????????!!!!!!!!!!!!!!!!!!!
$$ \begin{gathered} %{\small
 \ke (\Phi(A,X)\bigoplus \Phi(B,Y) \bigoplus \Phi(C,Z))\\
\xrightarrow{\begin{%small
pmatrix}
\Phi(A,-)(f) & -\Phi(-,Y)(l) &0  \\
0& g(B) &-\Phi(-,Z)(m)  \\
- \Phi(-,X)([-1](n)) & 0 &\Phi(C,-)(h)
\end{%small
pmatrix}}
%{\Longto}
\\ (\Phi(A,Y) \bigoplus \Phi(B,Z) \bigoplus \Phi(C[-1],X))
 \stackrel{p}{\to} \ke ((\Phi(A,X)\bigoplus \Phi(B,Y))\\ \xrightarrow{\Phi(A,-)(f)\oplus - \Phi(-,Y)(l)}
 %{\Longto}
 \Phi(A,Y)) %}
 \end{gathered}$$
%\end{equation}
is epimorphic.
\end{enumerate}

\end{defi}

\begin{rema}\label{rort}
1. For $\cu'=\cu$ and $\Phi=\cu(-,-)$ the definition of orthogonal torsion pairs restricts to the one of adjacent ones. Indeed, in this case the $\Phi$-orthogonality of $s$ and $s'$ means that $\lo\perp_{\cu} \lo'$ and $\ro\perp_{\cu} \ro'$; these inclusions are certainly equivalent to $\lo'\subset \ro$  and $\ro\subset \lo'$, respectively. 

2. Similarly to the notions of adjacent (weight and $t$-) structures and torsion pairs, part \ref{iorthtw} of Definition \ref{dort} is essentially a particular case of part \ref{iorthop} if one takes $w$ and $t$  associated with $s$ and $s'$, respectively, and shifts one of them; see Remarks \ref{rstws}(4),  \ref{rwhop}(1), and \ref{rtst1}(\ref{it1}). % (cf. also Definition \ref{dwso}(\ref{idadj})).
\end{rema}

Now we give some definitions

\begin{defi}\label{dortt}
1. We will say that a weight structure $w$ on $\cupr$ is {\it right $\Phi$-orthogonal} to a $t$-structure $t$ on $\cu$ whenever $w$ is left orthogonal to $t$ with respect to the (obviously defined) duality "opposite to $\Phi$". %transpose?!

2. For an $R$-linear triangulated category $\cu$ (where $R$ is an associative commutative unital ring) we will say that an $R$-linear cohomological functor $H$ from $\cu$ into $R-\modd$ is  {\it almost of $R$-finite type} whenever for any $M\in \obj\cu$ the $R$-module $H(M)$ is finitely generated and $H(M[i])=\ns$ for  $i\ll 0$ (cf. Definition \ref{dsatur} and Definition 0.1 of \cite{neesat}).
%Now we will need the following modification of Definition \ref{dsatur} (cf. Definition 0.1 of \cite{neesat}):  We will also use the following notation: $R-\mmodd$ will denote the category of finitely generated $R$-modules; for $R$-linear additive categories $C,D$ %with $C$ being skeletally small %?!!
\end{defi}

Now we prove a generalization of Proposition \ref{psatur}(II) that enables constructing $t$-structures orthogonal to certain weight ones; its formulation is motivated by related results of  \cite{neesat} and \cite{roq}.

%Now we prove that the results of \cite{neesat} and \cite{roq} enable constructing $t$-structures orthogonal to certain weight ones. The following statement generalizes Proposition \ref{psatur}(II), and its formulation is motivated by aforementioned papers.
%is a generalization of Proposition \ref{psatur}(II.2).

\begin{pr}\label{psaturdu}
Assume  that  $\cu\subset \cupr$ are $R$-linear categories (for $R$ being a unital commutative ring; see Definition \ref{dsatur} for some of our notation). %\subset \du$ be %$k$-linear categories, where $k$ is a field. where all objects of $\cu$ are compact a
Denote by $\Phi$ the restriction of the bifunctor $\cupr(-,-)$ to $\cu\opp\times \cupr$.

1. Assume that functors $\cupr\to R-\modd$ that are corepresented by objects of $\cu$ are precisely those ones that are of $R$-finite type as functors from $\cu'\opp$. Then for any bounded weight structure $w'$ on $\cupr$ the couple $t=(t_1,t_2) =(\perpp(\cupr_{w'\ge 1})\cap \obj \cu, \perpp (\cupr_{w'\le -1})\cap \obj \cu)$ is a $t$-structure on $\cu$. %Notation?!!

 Moreover, $w'$ is right $\Phi$-orthogonal (see %the previous remark)
 Definition \ref{dortt}) to this $t$-structure.  

2. Assume that $\cu$ is essentially small % and $R$ is a field. Assume also  that 
 and there exists a triangulated category $\du$ that satisfies the following conditions: it has coproducts, $\cupr\subset \du$, all objects of $\cu$ are compact in $\du$, and for $N\in \obj\du $ the  restriction of the functor $\du(-,N)$ to $\cu$ is of $R$-finite type (resp. almost of $R$-finite type) if and only if $N\in \obj \cupr$. Then for any  bounded weight structure $w$ on $\cu$ the couple $t'=(t_3,t_4) =((\cu_{w\ge 1})^{\perp_{\cupr}},  (\cu_{w\le -1})^{\perp_{\cupr}})$ is a $t$-structure on $\cupr$. 

Moreover, $w$ is left  $\Phi$-orthogonal  to this $t'$. %$t$-structure. 

Furthermore, if $R$ is noetherian and  the correspondence $N\mapsto H_N$ is a full  functor from $\cupr$ into $\adfur(\cu\opp,R-\mmodd)$ (resp. gives an equivalence of $\cupr$ with the subcategory of $\adfur(\cu\opp,R-\mmodd)$ consisting of functors of $R$-finite type) then the obvious functor from $\hrt'$ into $\adfur(\hw\opp,R-\mmodd)$ is full (resp.  gives an equivalence of $\hrt'$ with  the category $\adfur(\hw, R-\mmodd)$).
%of $R$-linear functors from $\hw\opp$ into the category of finitely generated $R$-modules). 
\end{pr}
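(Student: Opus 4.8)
\textbf{Plan for the proof of Proposition \ref{psaturdu}.}

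The plan is to treat parts 1 and 2 in parallel, since part 2 is essentially the result of applying part 1 inside the larger category $\du$ and then combining it with a compactness/representability argument; I will write out part 2 in full and obtain part 1 as the "simpler half" of the same reasoning. First I would verify the $t$-structure axioms for $t'=(t_3,t_4)$. Axioms (i) (strictness) and (ii) (the shift inclusions $\cupr^{t'\le 0}\subset \cupr^{t'\le 0}[1]$ etc.) are immediate from the corresponding properties of $w$ and of the operators $\perpp(-)$ and $(-)\perpp$; axiom (iii), the orthogonality $t_3[1]\perp t_4$, follows formally from $\cu_{w\le -1}\subset \cu_{w\ge 1}\perpp$ together with Proposition \ref{pbw}(\ref{iort}). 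The one nontrivial axiom is (iv), the existence of $t'$-decompositions. Here the key step is the same device used in Proposition \ref{psatur}: given $M\in\obj\cupr$, apply the hypothesis to produce objects of $\cupr$ representing suitable virtual $t$-truncations of the functor $H_M=\cu(-,M)$. Concretely, since $w$ is bounded, for each $i\in\z$ the functor $\tau^{\le -i}(H_M)$ restricted to $\cu$ is of $R$-finite type by Proposition \ref{psatur}(I) (applied to the functor $H_M\circ[i]$ restricted to the subcategory $\cu$), so by the "$R$-finite type $\Rightarrow$ corepresentable by an object of $\cupr$" hypothesis — wait, more precisely: the hypothesis says functors $\du(-,N)|_{\cu}$ of ($R$-finite type, resp. almost $R$-finite type) are exactly the $N\in\obj\cupr$; so the object of $\du$ corepresenting the coextension of the $R$-finite-type functor $\tau^{\le -i}(H_M)$ lies in $\cupr$. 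Invoking Proposition \ref{padjt}(\ref{ile1}--\ref{ile3}) (with the roles of $\cu$ and its weight structure played by $(\cu,w)$ and the "ambient" category by $\du$, noting all objects of $\cu$ are compact in $\du$ so Proposition \ref{pkrause}(\ref{ikr8}) identifies the coextension with the cc extension) yields a distinguished triangle $M^{\le 0}\to M\to M^{\ge 1}\to M^{\le 0}[1]$ in $\du$ with $M^{\le 0}\in t_3=(\cu_{w\ge 1})^{\perp_{\cupr}}$ and $M^{\ge 1}\in t_4$ — and both lie in $\cupr$, giving the required $t'$-decomposition. For part 1 the argument is entirely internal to $\cupr$ (one directly uses the $R$-saturatedness-type hypothesis and Proposition \ref{psatur}), so no category $\du$ is needed.

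Next I would check the $\Phi$-orthogonality claims. For part 2: "$w$ is left $\Phi$-orthogonal to $t'$" unwinds, via Definition \ref{dort}(\ref{iorthtw}) and Definition \ref{dortt}(1), to the two vanishing statements $\Phi(X,Y)=0$ for $X\in\cu_{w\ge 0}$, $Y\in\cupr^{t'\ge 1}=t_4[-1]=(\cu_{w\le -1})^{\perp_{\cupr}}[-1]$, and for $X\in\cu_{w\le 0}$, $Y\in\cupr^{t'\le -1}=t_3[1]$. Since $\Phi(X,Y)=\cupr(X,Y)$, these are immediate from the defining orthogonality relations of $t_3,t_4$ together with Proposition \ref{pbw}(\ref{iort}) (which lets one pass between $\cu_{w\ge 0}$ and $\cu_{w\le -1}\perpp$-type conditions). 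The orthogonality claim in part 1 is dual and equally formal.

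Finally, for the "furthermore" clause I would analyse the heart $\hrt'$. By construction $\cupr^{t'=0}=t_3\cap t_4[1]=(\cu_{w\ge 1})^{\perp_{\cupr}}\cap(\cu_{w\le -1})^{\perp_{\cupr}}$, which is exactly the class of $N\in\obj\cupr$ with $\cupr(-,N)|_{\cu}$ killing $\cu_{w\ge 1}\cup\cu_{w\le -1}$, i.e. (by Proposition \ref{pwrange}(\ref{iwrcrit}), using boundedness of $w$) the class of $N$ such that $H_N|_{\cu}$ is of weight range $[0,0]$. Restricting such a functor to $\hw$ gives an object of $\adfur(\hw\opp,R-\mmodd)$, and this is the functor $\hrt'\to\adfur(\hw\opp,R-\mmodd)$ in question. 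Fullness is inherited from the fullness of $N\mapsto H_N$ on $\cupr$: a morphism $H_N|_{\hw}\to H_{N'}|_{\hw}$ extends, via Remark \ref{rwrange}(2) (equivalence of pure functors with additive functors on $\hw$), to $H_N|_{\cu}\to H_{N'}|_{\cu}$, hence comes from a $\cupr$-morphism. For essential surjectivity when $R$ is noetherian and $N\mapsto H_N$ is an equivalence onto the $R$-finite-type functors: given $A\in\adfur(\hw,R-\mmodd)$ — here I should be careful about variance; the statement writes $\adfur(\hw,R-\mmodd)$, matching the $\cu'$-side hypothesis — form the pure cohomological functor $H_\ca$ of Definition \ref{drange}(2), check it is of $R$-finite type as a functor from $\cu$ (as in the proof of Proposition \ref{psatur}(II.3), using noetherianness and boundedness of $w$), so it is represented by some $N\in\cupr$; since $H_\ca$ is of weight range $[0,0]$ (Proposition \ref{pwrange}(\ref{iwrpure})), $N\in\cupr^{t'=0}$, and its image in $\adfur(\hw,R-\mmodd)$ is $A$. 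The main obstacle I anticipate is purely bookkeeping: getting the variances, the shifts (the $[-m]$ / $\pm1$ discrepancies flagged throughout the paper between the homological weight convention and the cohomological $t$-convention), and the two parallel "$R$-finite type" vs "almost $R$-finite type" cases to line up correctly with Propositions \ref{padjt}, \ref{pkrause}, and \ref{psatur} — the mathematical content of each step is light, but the interface between the internal ($\cupr$) description and the external ($\du$) corepresentability statement requires care, especially in confirming that the objects produced inside $\du$ genuinely land in $\cupr$ and that compactness of $\obj\cu$ in $\du$ is what licenses the use of Proposition \ref{pkrause}(\ref{ikr8}).
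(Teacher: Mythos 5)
Your overall plan is on the right track: the boundedness of the weight structure, the stability of ($R$-)finite type under virtual $t$-truncations (Proposition \ref{psatur}(I)), and the heart identification via Proposition \ref{phadj} are exactly the ingredients the paper uses. For part 1 your route is essentially the paper's, and the "furthermore" discussion also matches. However, your construction of $t'$-decompositions in part 2 has a genuine gap in the middle.

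The difficulty is that you try to run the argument of Proposition \ref{padjt}, but that proposition is stated in a setting where the weight structure $w$ and the object $M$ whose decomposition is sought both live in the same category. Here $w$ lives on $\cu$, while $M\in\obj\cupr\subset\du$; so "$\tau^{\le 0}(H_M)$ is representable and $\co(f)\in(\cu_{w\ge 0})^\perp$" (the key content of Proposition \ref{padjt}(\ref{ile1}--\ref{ile3})) is not literally what is at hand and does not follow from that proposition. What is needed is the strictly more general statement about $\Phi$-orthogonal structures, namely \cite[Proposition 2.5.4(1)]{bger}, which the paper invokes. Likewise, Proposition \ref{pkrause}(\ref{ikr8}) only characterizes the (co)extension as the unique cc functor with a certain vanishing; it does not give representability of $\tau^{\le 0}(H_M)$ by an object of $\du$. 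The paper sidesteps both issues at once: after reducing (via Remark \ref{rwhop}(2)) to the case where $\du$ is generated by $\obj\cu$ as its own localizing subcategory, it applies \cite[Theorem 4.5.2(I)]{bws} -- using that bounded $w$ makes $\cu$ densely generated by the negative subcategory $\cu_{w=0}$, all of whose objects are compact in $\du$ -- to obtain a $t$-structure $t_{\du}$ on $\du$ outright, and only then uses \cite[Proposition 2.5.4(1)]{bger} to identify $\du(-,N^{t_{\du}\le 0})|_{\cu}$ with $\tau^{\le 0}(H_N)$ and conclude (via the $R$-finite-type criterion) that $t_{\du}$ restricts to $\cupr$. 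You will have a complete argument if you replace your appeal to Propositions \ref{padjt} and \ref{pkrause}(\ref{ikr8}) by this combination; as things stand, the existence of the triangle $M^{\le 0}\to M\to M^{\ge 1}\to M^{\le 0}[1]$ with the required orthogonality property is asserted rather than proved.
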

\begin{proof}
In both assertions the corresponding $\Phi$-orthogonalities are automatic; so we should only check that the corresponding couples are $t$-structures indeed, and study $\hrt'$ in assertion 2.

1. This statement is rather similar to (the dual to) Proposition \ref{psatur}. 

Axioms (i) and (ii) of Definition \ref{dtstr}  are obvious for $t$.

Next,  ${}^{\perp_{\cu'}}(\cupr_{w'\ge 1})=\cu'_{w'\le 0}$; thus $t_1[1]\perp t_2$. 

It remains to verify the existence of $t$-decompositions. For $M\in \obj \cu$ we note that all the virtual $t$-truncations of the functors $\cupr(M,-)$ (see Remark \ref{rwrange}(4)) are of $R$-finite type as considered as cohomological functors from $\cu'\opp$ %; cf. the proof of 
according to Proposition \ref{psatur}(I). Hence they are representable by objects of $\cu$ according to our assumptions. Thus  arguments similar to that used for the proof of Proposition \ref{padjt}(\ref{ile4}) easily allow us to conclude the proof (note here that similarly to Proposition \ref{padjt}(\ref{ile2}) we can apply the Yoneda lemma here since $\cu\subset \cu'$).

2. %Our $R$-finiteness  assumptions easily imply that
 Since $w$ is bounded, the category $\cu$ is densely generated by $\cu_{w=0}$ (by Proposition \ref{pbw}(\ref{igenlm}); cf. Remark \ref{rsatur}(2)) and $\hw$ is negative in it. Moreover, we can assume that $\du$ is generated by $\obj \cu$ as its own localizing subcategory (see %Proposition \ref{phopft}(II)
  Remark \ref{rwhop}(2)); thus $\obj \cu^{\perp_{\du}}=(\cup_{i\in \z}\cu_{w=i})^{\perp_{\du}}=\ns$. Hence we can apply \cite[Theorem 4.5.2(I)]{bws} %+?!! mention {bspure} and earlier results on the existence of adjacent $t$?!!!!!
	to obtain that $t_{\du}=((\cu_{w\ge 1})^{\perp_{\du}},  (\cu_{w\le -1})^{\perp_{\du}})$ is a $t$-structure on $\du$.\footnote{One can also prove this statement using  Remark \ref{rtkrau}(4);   note  that is actually not necessary to assume  that $\obj \cu^{\perp_{\du}}=\ns$ to get $t_{\du}$. Note also that Theorem 1.3 of \cite{hoshimi} is essentially an important particular case of \cite[Theorem 4.5.2(I.1)]{bws}.}

Next, 	$w$ is certainly left orthogonal to $t_{\du}$ with respect to the restriction of $\du(-,-)$ to $\cu\opp\times \du$. %; thus it is also left orthogonal to $t$ (if $t$ is a $t$-structure). %According to \cite[Proposition 2.5.4(1)]{bger}
 Thus Proposition 2.5.4(1) of \cite {bger}  (cf. Proposition \ref{pwfil}(4)) implies that for $N\in \obj \du$ the restriction of $\du(-,N^{t_{\du}\le 0})$ to $\cu$ is isomorphic to  $ \tau^{\le 0 }(H_N)$ (where $H_N$ is the restriction of $\du(-,N)$ to $\cu$). Now, if $H_N$ is (almost) of   $R$-finite type then this virtual $t$-truncation is so as well according to (the corresponding obvious modification of) Proposition \ref{psatur}(I). Hence $N^{t_{\du}\le 0}$ is an object of  $ \cupr$ whenever $N$ is. It obviously follows that $t_{\du}$ restricts to $\cu'$, i.e., $t'=(t_3,t_4)$ is a $t$-structure on $\cu'$.

The proof of the "furthermore" part of the assertion is an easy application of Proposition \ref{phadj}; see the proof of Proposition \ref{psatur}(3) where a particular case of this fact is established.
\end{proof}

\begin{rema}\label{roq}
1.  %Let $R$ be a commutative unital  noetherian ring; assume that $X$ is a scheme proper over $\spe R$. Then we 
Let $X$ be a scheme, and take $\cu$ to be the triangulated category of perfect complexes on $X$, $\cu_1'=D^b(X)$ (the bounded derived category  of coherent sheaves on $X$),  $\cu_2'=D^-(X)$ (the bounded above category), $\du=D(QCoh)(X)$ (the unbounded derived category of quasi-coherent sheaves on $X$). Certainly, $\cu\subset \cu_1'\subset \cu_2'\subset \du$, and objects of $\cu$ compactly generate $\du$.

% Moreover, the properties of the correspondence  $N\mapsto H_N$ allow us to apply this  part 2  ), 
Next, assume that $X$ is proper over $\spe R$, where $R$ is a commutative unital  noetherian ring. %Moreover, 
Then Corollary 0.5 of \cite{neesat} says that  for $N\in \obj \du$   the  restriction $H_N$ of the functor $\du(-,N)$ to $\cu$ is of $R$-finite type (resp. almost of $R$-finite type) if and only if $N\in \obj \cu'_1$ (resp. $N\in \obj \cu'_2$). Thus we obtain the existence of the corresponding $t$-structures $t'_i$ both on $\cu_1'$ and $\cu_2'$; certainly, these $t$-structures are the restrictions of the %corresponding
 $t$-structure $t_{\du}$ on $\du$ (to $\cu_1'$ and $\cu_2'$, respectively; see the proof of part 2 of  our proposition).

Moreover, the properties of the correspondence  $N\mapsto H_N$ allow us to apply this part 2 to obtain a full functor from $\hrt'_2$ into $\adfur(\hw, R-\mmodd)$ that restricts to an equivalence of $\hrt'_1$ with the latter category. It easily follows that we have $\hrt'_1=\hrt'_2$ in this case.

%to obtain a full functor from $\hrt'_2$ into $\adfur(\hw, R-\mmodd)$ that restricts to an equivalence of $\hrt'_1$ with the latter category. It easily follows that we have $\hrt'_1=\hrt'_2$ in this case.

2. Assume in addition that $R$ is a field and $X$ is projective over $\spe R$. Then %Corollary 7.51, Lemma 7.46, and 
Lemma 7.49 and Corollary 7.51(ii) of \cite{roq} (cf. also \cite[Theorem A.1]{bvdb}) also enable us to apply Proposition \ref{psaturdu}(1) for $\cu'=\cu'_1$.

 %and $X$ be a projective variety over $R$. Take $\cu$ being the triangulated category of perfect complexes on $X$,  $\cupr=D^b(X)$ (the bounded derived category  of coherent sheaves on $X$), and $\du=D(QCoh)$  being the (unbounded) derived category of quasi-coherent sheaves on $X$. Then these categories satisfy the assumptions of both of our assertions according to Corollary 7.51, Lemma 7.46, and Lemma 7.49 of \cite{roq} (cf. also \cite{bvdb}[Theorem A.1]).

%Let $R$ be a field and $X$ be a projective variety over $R$. Take $\cu$ being the triangulated category of perfect complexes on $X$,  $\cupr=D^b(X)$ (the bounded derived category  of coherent sheaves on $X$), and $\du=D(QCoh)$  being the (unbounded) derived category of quasi-coherent sheaves on $X$. Then these categories satisfy the assumptions of both of our assertions according to Corollary 7.51, Lemma 7.46, and Lemma 7.49 of \cite{roq} (cf. also \cite{bvdb}[Theorem A.1]).

3. %The author wonders whether one can apply any interesting results
So, to prove the existence of orthogonal $t$-structures when there is a duality $\Phi:\cu\opp\times \cu'\to \ab$  using our methods one needs some version of the Brown representability condition or its dual; % (distinct from the $R$-saturatedness one considered in our proposition). 
 i.e., one should have a  description of the functors $\Phi(M,-)$ for $M\in \obj \cu$ (resp.  of $\Phi(-,N)$ for $N\in \obj \cu'$) that should be respected by the corresponding virtual $t$-truncations (cf. Remark \ref{rkrause}(II)). %\ref{rcudu}(\ref{ir4}) below).
% so, one should assume that the functors $\Phi(M,-)$ for $M\in \obj \cu$ (resp.  $\Phi(-,N)$ for $N\in \obj \cu'$) are precisely all the pp functors $\cupr\to \ab$ (resp., all the cp functors from )??!
\end{rema}

Now we give two simple recipes for constructing nice dualities.

\begin{pr}\label{pnice}
1. %If $\au=\ab$ and 
If  $F:\cu\to \du$ and $F':\cu'\to \du$ are exact functors, % of tcategories?!
 then  $\Phi(X,Y)=\du(F(X),F'(Y)):\cu\opp\times\cu\to \ab$ is a nice duality.

2. For triangulated categories $\cu$, $\cuz\subset \cu'$, $\cuz$ is skeletally small,  %as above, 
%and $\au$ satisfying AB5,
let $\Phi_0:\cu^{op}\times \cuz\to \au$ be a duality.
For any $P\in \obj \cu'$ 
 denote by  $H^P$  the coextension (see Proposition \ref{pkrause}) to $\cupr$ of the functor $\Phi_0(-,Y)$;
%the restriction to $\cuz$ of the functor $\cu(O,-)$; 
denote by $\Phi$ the  pairing $\cu^{op}\times \cupr\to \ab:\ \Phi(P,Y)=H^P(Y)$. Then $\Phi$ is a duality ($\cu^{op}\times \cu'\to \au$); it is nice whenever $\Phi_0$ is.
%\au?!!!!!!!!!
\end{pr}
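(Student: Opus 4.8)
The plan is to verify each clause of Proposition \ref{pnice} directly from the definitions, treating the two parts largely independently but using part 1 (or the proof technique behind it) as a guide for the ``niceness'' check in part 2.

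For part 1, I would first observe that $\Phi(X,Y)=\du(F(X),F'(Y))$ is manifestly bi-additive. Homological behaviour in each argument is immediate: $F$ and $F'$ are exact, so they send distinguished triangles to distinguished triangles, and $\du(-,-)$ is homological in each variable (as a triangulated category's Hom-bifunctor), so composing gives a long exact sequence in each argument. The natural isomorphism $\Phi(A,Y)\cong\Phi(A[1],Y[1])$ comes from the canonical identification $\du(F(A),F'(Y))=\du(F(A)[1],F'(Y)[1])=\du(F(A[1]),F'(Y[1]))$, using the natural isomorphisms $F\circ[1]\cong[1]\circ F$ and similarly for $F'$ supplied by exactness; one checks (bi)naturality, which is routine. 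For the niceness condition, the key point is that the large diagram in Definition \ref{dort}(\ref{ini}) is, after applying $F$ to the triangle $T$ and $F'$ to the triangle $X\to Y\to Z\to X[1]$, precisely an instance of the corresponding statement for the bifunctor $\du(-,-)$ on the category $\du$ itself --- i.e., it suffices to prove niceness in the ``tautological'' case $\cu=\cu'=\du$, $\Phi=\du(-,-)$. That reduction is the substantive content; the tautological case is a standard diagram chase in a triangulated category (applying $\du(-,?)$ and $\du(?,-)$ to two distinguished triangles and using the $3\times3$-type lemma / Verdier's axiom to produce the required epimorphism $p$), and I would either cite it or carry it out carefully once.

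For part 2, I first need $\Phi$ to be well defined: by hypothesis $\cuz$ is skeletally small and (implicitly, for Proposition \ref{pkrause} to apply) $\cu'$ has coproducts and $\au$ is AB5, so for each fixed $Y\in\obj\cuz$ the functor $\Phi_0(-,Y):\cu\opp\to\au$... wait --- here the coextension is taken in the \emph{second} variable. Let me restate: for fixed $P\in\obj\cu$, $\Phi_0(P,-):\cuz\to\au$ is homological, and $H^P$ is its coextension to $\cu'$ via Proposition \ref{pkrause}; set $\Phi(P,Y)=H^P(Y)$. Then $\Phi$ is homological in $Y$ because a coextended functor is homological (Proposition \ref{pkrause}(\ref{ikr1})). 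Homological behaviour in $P$: for a distinguished triangle $A\to B\to C\to A[1]$ in $\cu$ I need the sequence $H^C\to H^B\to H^A\to\dots$ of coextended functors to be exact; this follows from Proposition \ref{pkrause}(\ref{ikr6}) (exactness of coextension in the middle) applied to the long exact sequences one gets from $\Phi_0$ being homological in its first argument, together with Proposition \ref{pkrause}(\ref{ikrtransf}) to get the connecting transformations. The shift isomorphism $\Phi(P,Y)\cong\Phi(P[1],Y[1])$ is obtained by noting that $H^{P[1]}$ and the coextension of $\Phi_0(P,-[1])$ agree (both are coextensions whose restrictions to $\cuz$ are canonically identified via the shift isomorphism of $\Phi_0$, so Proposition \ref{pkrause}(\ref{ikr1})'s uniqueness applies), and $\Phi_0(P,Y[1])\cong\Phi_0(P[1],Y[1])$ on $\cuz$ extends. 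Naturality is again routine. Finally, niceness: if $\Phi_0$ is nice, I want $\Phi$ nice. The diagram in Definition \ref{dort}(\ref{ini}) for $\Phi$ and a pair of triangles (one in $\cu$, one in $\cu'$) can be analysed pointwise after resolving the second triangle's objects by objects of $\cuz$ in the manner of the coextension construction (cf. the proof of Proposition \ref{pkrause}(\ref{ikr6})): the epimorphism $p$ for $\Phi$ is built from the epimorphisms $p$ for $\Phi_0$ applied to the $\cuz$-pieces by passing to cokernels of the defining presentations, and the AB5 property of $\au$ ensures exactness/epimorphisms are preserved under the relevant (filtered) colimits.

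The main obstacle I expect is the niceness verification --- in both parts, but especially in part 2. In part 1 the reduction to the tautological case is clean, but the tautological diagram chase itself is delicate and must be done with care about sign conventions (note the signs already present in the matrix in Definition \ref{dort}(\ref{ini})). In part 2 the difficulty compounds: one must check that the somewhat rigid ``pointwise Kan extension via presentations'' description of the coextension is compatible with the three-variable diagram defining niceness, which means re-running an argument in the spirit of the proof of Proposition \ref{pkrause}(\ref{ikr6}) but now with the extra bookkeeping of the two triangles and the $3\times3$ diagram. I would handle this by first isolating exactly which exactness/epimorphism statements about $\Phi_0$-values on $\cuz$ are needed, then showing each is preserved by cokernel-of-presentation together with AB5-colimits, rather than attempting a single monolithic chase.
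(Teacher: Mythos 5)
Your treatment of part~1 is correct and is essentially the paper's argument: exactness of $F$ and $F'$ sends the two given triangles to distinguished triangles in $\du$, so the niceness diagram for $\Phi$ is verbatim the niceness diagram for $\du(-,-)$. One point you could streamline: there is no need for a $3\times 3$-lemma or a delicate sign chase in the tautological case. Unwinding the matrix in Definition~\ref{dort}(\ref{ini}) for $\Phi=\du(-,-)$, an element of the target kernel is exactly a commutative square between the two triangles, an element of the source kernel is exactly a morphism of the two triangles, and $p$ is the map that forgets the third component; so the required surjectivity of $p$ is literally axiom (TR3). This is precisely the observation the paper makes.

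For part~2 the paper gives no internal argument at all; it simply cites the categorical dual of Proposition~2.5.6(3) of \cite{bger}, so there is no proof in this text to compare against. Your verification that $\Phi$ is a duality --- bi-additivity, homological behaviour in the second variable from the coextension being homological, in the first variable from Proposition~\ref{pkrause}(\ref{ikr6},\ref{ikrtransf}), and the shift isomorphism from essential uniqueness of coextensions --- is sound, and you are right that the written statement of the proposition has a typo ($P\in\obj\cu$ and $\Phi_0(P,-)$ is what is meant). But the niceness verification is where your sketch has a genuine gap. The proposal that ``the epimorphism $p$ for $\Phi$ is built from the epimorphisms $p$ for $\Phi_0$ applied to the $\cuz$-pieces by passing to cokernels of presentations'' does not obviously go through: the $\cuz$-resolution terms of $X$, $Y$, $Z$ do not themselves assemble into a distinguished triangle in $\cuz$, and the niceness condition is intrinsically a statement about a pair of triangles, not something that reduces pointwise to presentations in a single variable. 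You correctly flag this as the main obstacle and propose to re-run the argument of Proposition~\ref{pkrause}(\ref{ikr6}) ``with extra bookkeeping,'' but as written this is a plan rather than a proof, and it would need substantial work to flesh out; simply citing the external result (or reproducing its proof) would be the way to close the gap.
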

\begin{proof}
1. Easy; it suffices to note that  the niceness restriction is a generalization  of the axiom (TR3) of triangulated categories
(any commutative square can be completed to a morphism of distinguished triangles) to the setting of dualities of triangulated categories. %change sign when shift??!!

2. This is the categorical dual to Proposition 2.5.6(3) of \cite{bger}.
\end{proof}

 \subsection{On "bicontinuous" dualities} %and the AB5 condition for %\texorpdfstring{${\hrt}$}{{\underline{Ht}}} {\underline{Ht}}}
\label{sdual2}

Now we describe a interesting type of % couples of triangulated categories endowed with 
(nice) dualities of triangulated categories and orthogonal torsion pairs in them.
We will apply it (below and in \cite{bgn}) in the case where $\cu$ is a certain category of pro-objects; so it is no wonder that we will consider co-compact objects in it. 

So we will say that a cohomological functor $\cu\to \au$ ($\au$ is an abelian category) is pc one if it converts $\cu$-products into $\au$-coproducts; recall that an   object $M$ of  $\cu$  is said to be cocompact if the functor $\cu(-,M)$ is a pc one (cf. Remark \ref{rwhop}(5)).

\begin{pr}\label{porthop}
Let $\cuz$ be an essentially small common subcategory of (triangulated categories) $\cu$ and $\cupr$
whose objects are compact in $\cupr$ and cocompact in $\cu$ (and so, $\cu$ has products and $\cupr$ has coproducts);
 let $\cp$ be a %class??!! 
 set of objects of $\cuz$, $M\in \obj \cu$, $N\in \obj \cupr$. %  such that $C[1]\subset C$, $M\in \obj \cu$, $N\in \obj \du$.
%(exact functor/s instead of a common subcategory??)?

For each $P\in \obj \cu$ denote by  $H^P$  the coextension (see Proposition \ref{pkrause}) to $\cupr$ of the restriction to $\cuz$ of the functor $\cu(P,-)$; 
denote by $\Phi$ the  pairing $\cu^{op}\times \cupr\to \ab:\ \Phi(P,Y)=H^P(Y)$.

Then the following statements are valid.

\begin{enumerate}
\item\label{icudupa} 
%Then this pairing 
 $\Phi$ is a nice duality of triangulated categories. %bicontinuity?!

\item\label{icuduadj}
Denote by  $\eu$ the colocalizing triangulated subcategory of $\cu$ cogenerated by $\cuz$. Then there exists a  left adjoint $L$ %to the embedding $i:\du\to \cu$. 
to the embedding $\eu\to \cu$ and $\Phi(-,-)\cong \Phi^{\eu}(L(-),-)$, where $\Phi^{\eu}$ is the  restriction of $\Phi$ to $\eu\opp\times \cupr$. 
%duality  $\eu\opp\times \cupr$ obtained using the same construction as the one used for the definition of  $\Phi$.

\item\label{icontcu} %Dually?! Symmetry in the definition of $\Phi$?!
Assume that $M\in \obj \cu$ is a %filtered %?!!
$\cuz$-limit of %a projective system a system 
 some $(M_l)$ %\in \obj \cu$ 
 (see Definition \ref{drelim}(2)). 
 Then $\Phi(M,N)\cong \inli \Phi(M_l,N)$. In particular,  the functor  $\Phi(-,N)$ %converts all $\cu$-products into coproducts.
is a pc one.

\item\label{icontdu}
Assume that $N\in \obj \cu$ is  a %filtered %?!!
$\cuz$-colimit of %an inductive system a system $N_l\in \obj \cu'$ 
 some $(N_l)$ (see Definition \ref{drelim}(1)).  Then  $\Phi(M,N)\cong \inli \Phi(M,N_l)$. %specify the maps?!

In particular, %if all objects of $\cuz$ are compact in $\cu'$ then  
the functor $\Phi(M,-):\cupr\to \ab$ %converts all (small) $\cu'$-coproducts into coproducts.
is a cc one.

%\item\label{idetphi}
\item\label{ibiext}
%Assume that all objects of $\cuz$ are compact in $\cu'$  and  are cocompact in $\cu$.
%Then the 
 The functor $\Phi$ is  determined (up to a canonical isomorphism) by the following conditions: it converts $\cu$-products and $\cupr$-coproducts into direct sums of abelian groups, it restriction to $\cuz\opp\times \cuz $ equals $\cuz(-,-)$, and it annihilates both ${}^{\perp_{\cu}}\cuz \times \obj \cupr$ and $\obj\cu \times \cuz^{\perp_{\cupr}}$. So, in this case we will say that $\Phi$ is the {\it biextension} of $\cuz(-,-)$ to $\cu^{op}\times \cupr$.

\item\label{icupr} %Assume that all elements of $\cp$ are compact in $\cu'$ (and so, $\cupr$ has coproducts). Then there
 There exists a smashing torsion pair $s'=(\lo',\ro')$ for $\cu'$ such that  $\lo'$ is the $\cupr$-strong extension-closure of $\cp$ and $\ro'=\cp^{\perp} $.

\item\label{icup} %Assume that all elements of $\cp$ are cocompact in $\cu$ (and so, $\cu$ has products). Then there 
 There exists a cosmashing torsion pair $s=(\lo,\ro)$ for $\cu$ such that  $\lo=\perpp \cp$  and $\ro$ is the strong extension-closure of $\cp$ in $\cu\opp$. %?!

Moreover, $s$ restricts to a torsion pair $s_{\eu}$ for $\eu$ (see assertion \ref{icuduadj} for the latter notation). %, where $\eu$ is the  (colocalizing) subcategory of $\cu$ cogenerated by $\cp$.

\item\label{ihoport} 
% Then  %$s$ is $\Phi$-orthogonal to
 The torsion pairs  $s$ and $s_{\eu}$ mentioned in the previous  assertion are (respectively)  $\Phi$-orthogonal and 
$\Phi^{\eu}$-orthogonal  to the  torsion pair $s'$ from assertion \ref{icupr}. %, respectively. % are orthogonal with respect to the  duality $\Phi$.
Moreover, $\lo={}^{\perp_{\Phi}}(\lo')$ and $\lo_{\eu}={}^{\perp_{\Phi^{\eu}}}(\lo')$.

\item\label{inters} Both $\lo'\cap \obj \cuz$ and $\ro\cap \obj \cuz$ are equal to the $\cuz$-envelope of $\cp$.
\end{enumerate}
\end{pr}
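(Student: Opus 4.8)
\textbf{Proof plan for Proposition \ref{porthop}.} The statement bundles together nine assertions; the plan is to prove them in roughly the order listed, since each later item leans on the earlier ones together with the results on coextensions (Proposition \ref{pkrause}) and on torsion pairs generated by symmetric/perfect sets (Theorems \ref{tclass}, \ref{tpgws} and their opposites).

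First, for (\ref{icudupa}) I would observe that $\Phi(P,-)=H^P$ is homological in the second argument by construction (Proposition \ref{pkrause}(\ref{ikr1})), while homological dependence on the first argument together with the compatibility transformation $\Phi(A,Y)\cong\Phi(A[1],Y[1])$ follows from the functoriality of the coextension construction in $H_0$ (Proposition \ref{pkrause}(\ref{ikrtransf})) applied to the exact sequences of corepresentable functors $\cu(-[1],-)\to\cu(-,-)$ arising from distinguished triangles in $\cu$, restricted to $\cuz$. Niceness is then the dual of Proposition \ref{pnice}(2) (equivalently Proposition 2.5.6(3) of \cite{bger}): the coextension of the "nice" duality $\cuz(-,-)$ on $\cu\opp\times\cuz$ stays nice. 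For (\ref{icuduadj}) I would apply the categorical dual of Proposition \ref{pkrause}(\ref{ikradj}): since $\cuz$ consists of cocompact objects of $\cu$, the colocalizing subcategory $\eu$ it cogenerates admits a left adjoint $L$ to its embedding (dual Brown-type argument), and the coextension computed from $\cuz$ inside $\cu$ factors as $\Phi^\eu\circ(L(-),-)$.

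Next, the continuity statements (\ref{icontcu}) and (\ref{icontdu}): assertion (\ref{icontdu}) is exactly Proposition \ref{pkrause}(\ref{ikr3}) in the second variable (a coextended functor sends $\cuz$-colimits to colimits), and the "in particular" that $\Phi(M,-)$ is a cc functor follows because $\cuz$ generates a weight/torsion structure as in Remark \ref{rnewt}(1) and coextended functors respect coproducts (Proposition \ref{pkrause}(\ref{ikr8})). Assertion (\ref{icontcu}) is its categorical dual, using that objects of $\cuz$ are cocompact in $\cu$ and that, by Proposition \ref{pkrause}(\ref{ikr2}), $\Phi(M,N)$ depends only on the restriction of $\cu(-,N)$... dually, $\Phi(-,N)$ sends $\cuz$-limits to colimits of abelian groups; hence it is pc. Assertion (\ref{ibiext}) (the characterization as "biextension") then follows by combining the dual of Proposition \ref{pkrause}(\ref{ikr8}) in the first variable with Proposition \ref{pkrause}(\ref{ikr8}) in the second: the stated vanishing and restriction conditions pin $\Phi$ down uniquely on $\cuz\times\obj\cupr$ and then on $\obj\cu\times\obj\cupr$.

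For (\ref{icupr}) I would invoke Theorem \ref{tclass}(\ref{iclass1}) directly in $\cupr$: $\cp$ is a set of compact objects of $\cupr$, so its strong extension-closure $\lo'$ together with $\ro'=\cp\perpp$ is a smashing torsion pair. Assertion (\ref{icup}) is the dual statement applied in $\cu\opp$ (where the objects of $\cuz$ are compact), giving a cosmashing $s=(\lo,\ro)$ with $\ro$ the $\cu\opp$-strong extension-closure of $\cp$ and $\lo=\perpp\cp$; the restriction to $\eu$ follows since $\eu$ is obtained by cogenerating from $\cuz\supset\cp$ and $\lo=\perpp\cp$ is already "inside" the construction — one applies Proposition \ref{phopft}(II.1) (dually) via the left adjoint $L$ from (\ref{icuduadj}). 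Then (\ref{ihoport}): the $\Phi$-orthogonality $\Phi(\lo,\lo')=\Phi(\ro,\ro')=0$ reduces, using bilinearity and the definitions of $\lo,\ro,\lo',\ro'$ as (co)strong extension-closures, to the two facts $\Phi(\perpp\cp,\cp)=0$ and $\Phi(\cu,\cp\perpp)=0$; the first holds because $\Phi(M,P)=\cu(M,P)$ for $P\in\cp\subset\obj\cuz$ by the restriction property in (\ref{ibiext}), and the second because $\Phi(M,-)$ kills $\cuz\perpp\supset\cp\perpp$ again by (\ref{ibiext}) (Proposition \ref{pkrause}(\ref{ikr8})); strong-extension-closedness of zero classes of cc and pc functors (Lemma \ref{lbes}(\ref{izs}) and its dual) then propagates this. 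The equality $\lo={}^{\perp_\Phi}(\lo')$ needs in addition that anything $\Phi$-orthogonal to all of $\lo'$ is already in $\perpp\cp$, which is immediate since $\cp\subset\lo'$ and $\Phi(-,P)=\cu(-,P)$ on that part. Finally (\ref{inters}): the $\cuz$-envelope $\cp'$ of $\cp$ satisfies $\cp'\perpp=\cp\perpp$ (Proposition \ref{phop}(1)) and its elements are compact in $\cupr$ (Lemma 4.1.4 of \cite{neebook}), so by Theorem \ref{tclass}(\ref{iclass2}) applied in $\cupr$ we get $\lo'\cap\obj\cuz=\cp'$; the identity $\ro\cap\obj\cuz=\cp'$ is the dual statement in $\cu\opp$.

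The main obstacle I anticipate is assertion (\ref{icontcu}) together with the dual half of (\ref{ibiext}): making the "dualization" fully rigorous requires checking that the dual Brown-representability machinery genuinely applies to $\cu$ (so that $\eu$ has a left adjoint and cocompactness behaves as expected), and that the coextension construction of Proposition \ref{pkrause}, which is phrased for a category with coproducts and a subcategory of *compact* objects, transports cleanly to $\cu\opp$; this is where one must be careful that the relevant resolutions in $\psv(\cuz)$ and their images under $\Phi(-,N)$ interact correctly with products in $\cu$. Everything else is a bookkeeping exercise in assembling previously established results.
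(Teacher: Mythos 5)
Your treatment of items (\ref{icudupa})--(\ref{icup}) and (\ref{inters}) follows essentially the same chain of references as the paper's proof (reduction to Proposition \ref{pnice}, Proposition \ref{pkrause}, Proposition \ref{pcomp}(II), and Theorem \ref{tclass} together with its duals), so no substantive comment is needed there; the slight imprecision in (\ref{icontcu}), where you invoke Proposition \ref{pkrause}(\ref{ikr2}) and a "categorical dual", should simply be the remark that the Kan-extension construction $H_0\mapsto H$ preserves colimits in $\psv(\cuz)$ (cf.\ Proposition \ref{pkrause}(\ref{ikr7})), applied to $\cu(M,-)|_{\cuz}=\inli\cu(M_l,-)|_{\cuz}$.

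The argument for (\ref{ihoport}), however, contains a real error. You reduce the orthogonality $\Phi(\ro,\ro')=0$ to the claim "$\Phi(\cu,\cp^{\perp_{\cupr}})=0$" and justify it by asserting that $\Phi(M,-)$ kills $\cuz^{\perp_{\cupr}}\supset\cp^{\perp_{\cupr}}$. That inclusion is backwards: since $\cp\subset\obj\cuz$ one has $\cp^{\perp_{\cupr}}\supset\cuz^{\perp_{\cupr}}$, and the vanishing of $\Phi(M,-)$ on $\cuz^{\perp_{\cupr}}$ says nothing about its vanishing on the (in general strictly larger) class $\cp^{\perp_{\cupr}}$; the statement $\Phi(\cu,\cp^{\perp_{\cupr}})=0$ is false in general. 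What you actually need is the weaker vanishing $\Phi(\cp,\cp^{\perp_{\cupr}})=0$, and the reduction to it is not via the first variable of $\Phi$ being arbitrary: fix $Y\in\ro'=\cp^{\perp_{\cupr}}$; since $\Phi(-,Y)$ is a pc functor by item (\ref{icontcu}) and $\ro$ is the strong extension-closure of $\cp$ in $\cu\opp$, the dual of Theorem \ref{tclass}(\ref{iclass5}) reduces $\Phi(\ro,Y)=0$ to $\Phi(\cp,Y)=0$. The latter then follows from Proposition \ref{pkrause}(\ref{ikrtriv}): for $P\in\cp\subset\obj\cuz$ the restriction $\cu(P,-)|_{\cuz}$ is corepresented by $P$, so its coextension $\Phi(P,-)$ is isomorphic to $\cupr(P,-)$, which visibly vanishes on $\cp^{\perp_{\cupr}}$. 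With this repaired, the rest of your argument for (\ref{ihoport}), including $\lo={}^{\perp_\Phi}(\lo')$, goes through.
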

\begin{proof}
\begin{enumerate}
\item Immediate from Proposition \ref{pnice}.

\item The existence of $L$ is immediate from Proposition \ref{pcomp}(II). The rest of the assertion  follows from the adjunction property of $L$ immediately.  
%Easy, cf. Proposition \ref{pkrause}(\ref{ikr2}, \ref{ikradj}).

\item It  suffices to note  that the coextension construction respects %filtered 
 colimits;  %of functors $\cuz\to \ab$?! 
 this is obvious (cf. also Proposition \ref{pkrause}(\ref{ikr7})).

\item Immediate from Proposition \ref{pkrause}(\ref{ikres}).

\item  The previous assertions easily imply that $\Phi$ satisfies all the properties desired.

Next, Proposition \ref{pkrause}(\ref{ikr8}) implies that $\Phi$ is determined by its restriction to $\cuz\opp\times \cupr$ along with the conditions that it respects $\cupr$-coproducts and annihilates $\obj\cu \times \cuz^{\perp_{\cupr}}$. Thus one can easily conclude the proof by applying the categorical dual of  the aforementioned statement. %to ...??!!

\item This is just Theorem \ref{tclass}(\ref{iclass1}).

\item The first part of the assertion is the dual of   assertion \ref{icupr}. 

To prove the "moreover" statement we note that the embedding $\du\to \cu$ admits an (exact) left adjoint whose kernel is $\obj \du^{\perp_{\cu}}=\cp^{\perp_{\cu}}$ (by the dual to Proposition \ref{pcomp}(II.2)). %+Proposition \ref{pbouloc}(\ref{ibou})??!
 Thus it remains to apply (the dual to) Proposition \ref{phopft}(II.1).

%\ref{ihoport}. 
\item To verify the first part of the assertion it certainly suffices to prove that $s\perp_{\Phi}s'$.

Now recall that all the functors of the type $\Phi(M,-):\cupr\to \ab$ are cc ones and functors of the type $\Phi(-,N):\cu\opp\to \ab$ are pc-ones
(see assertions \ref{icontcu} and \ref{icontdu}).
Thus  %Lemma \ref{lbes}(\ref{isescperp})  (and 
Theorem \ref{tclass}(\ref{iclass5}) (along with its dual) %) being essentially its particular case) 
 reduces the $\Phi$-orthogonality checks to the following ones: $\Phi(X,Y)=0$ if either $X\in \cp\subset \obj\cu$ and $Y\in \cp^{\perp_{\cu}}$ or if  $X\in {}^{\perp_{\du}}\cp$ and $Y\in \cp\subset \obj\du$. Thus it suffices to note that $\Phi(A,B)$ is isomorphic to $\cupr(A,B)$ if $A\in \obj \cuz (\subset \obj\cu)$ and is isomorphic to  $\cu(A,B)$ if $B\in \obj \cuz (\subset \obj\cupr)$. The latter statements are immediate from Proposition \ref{pkrause} (parts I.\ref{ikrchar} and I.\ref{ikrtriv}, respectively). 

Now, to prove the "moreover" statement it remains to note that ${}^{\perp_{\Phi^{\eu}}}(\lo')\subset \lo$ %, which is valid 
 since $\lo'$ contains $\cp$.

\item Immediate from Theorem \ref{tclass}(\ref{iclass2}).
%\ref{iduloc}. 
\end{enumerate}
\end{proof}

\begin{rema}\label{rcudu} %Parts ?? of this remark will not be used in the current paper?!!
\begin{enumerate}
\item\label{ircocomp} %{icontdu}+{icontcu}: "exotic"?! 
One may say that all objects of $\cu$ are "compact with respect to $\Phi$" and objects of $\cupr$ are "cocompact with respect to $\Phi$"; see  Proposition \ref{porthop}(\ref{icontcu}, \ref{icontdu}).
Note that both of these properties fail for the duality $\cu(-,):\cu\opp\times \cu\to \ab$ (and $\cupr=\cu$); so one may say that $\Phi$ is a "regularized Hom bifunctor" that is "bicontinuous".

\item\label{ibcf} Now assume that $\Phi:\cu\opp\times \cupr\to \ab$ is any duality satisfying these bicontinuity conditions and $\cupr$ satisfies the Brown representability condition (this is certainly the case whenever $\cupr$ is compactly generated). Then one can define a curious functor $\cu\to \cupr$ as follows.

We note that for any $P\in \obj \cu$ there exists its "$\Phi$-Brown-Comenetz dual" $\bc(P)\in \obj\cupr$ that $\cupr$-represents the functor $\ab(\Phi(P,-), \q/\z)$ (since this functor is obviously a cp one). Moreover, the correspondence  $\bc$ is easily seen to be exact; it also respects products.  

Next, for the torsion pairs $s$ and $s'$ as above we obviously have $\bc(\lo)\subset \ro'\perpp$ and $\bc(\ro)\subset \lo'\perpp=\ro'$ (see Proposition \ref{psymb}(I.\ref{isbcd}). Lastly, if  $\cp$ is suspended then for the corresponding $w$ and $t$ (see Corollary \ref{cwt} below) and for the weight structure $w'$ right adjacent to  $t$ (whose existence is given by Corollary \ref{csymt}(2)) we obtain that $\bc$ is weight-exact (with respect to $w$ and $w'$; see Definition \ref{dwso}(\ref{idwe})). Note however that the existence of $\Phi$ is a stronger assumption (in this case) then the existence of a weight-exact functor $\bc$ that respects products since the former condition implies that the image of $\bc$ consists of "Brown-Comenetz duals" only. 

We will demonstrate the utility of dualities in Theorem \ref{tab5} below.

\item\label{ir1} For any $\cu$ and $\cupr$ as in our proposition and their small subcategories $\cu_0$ and $\cu_0'$ respectively, one can extend any (nice) duality $\cu_0\opp\times \cu_0'\to \ab$ %$\au$?!
first to a duality $\cu\opp\times \cu_0'\to \ab$ using the dual to Proposition \ref{pkrause} and next proceed as above to obtain $\Phi: \cu\opp\times \cu'\to \ab$. It is easily seen that %the natural analogues of the 
 some of the statements proved above %can be generalized to this setting without any difficulty. %?!
 have natural (and easy to prove) analogues for this setting.\footnote{Moreover, it would be interesting to prove some version of part \ref{ihoport} of the proposition without assuming that objects of $\cuz$ are compact in $\cupr$.} 

Note also that a nice duality $\cu_0\opp\times \cu_0'\to \ab$ can be obtained from any exact functors $\cu_0\to \du$ and $\cu'_0\to \du$ (see Proposition \ref{pnice}(1)).

\item\label{ir2} Dually, % (to part \ref{ir1} of this remark),
  one may start from "coextending" a duality  $\cu_0\opp\times \cu_0'\to \ab$  to a duality  $\cuz\opp\times \cu'\to \ab$ and next extend the result to a duality $\cu\opp\times \cu'\to \ab$ (using the dual to  Proposition \ref{pkrause}). %; cf. Remark \ref{rkrause}(I)). 
	 Combining part \ref{ikrchar} and \ref{ikr7} of  this proposition we obtain  the duality obtained this way is isomorphic to the one described above (and obtained in the "reverse order"). 
Note also that  this statement easily follows from the corresponding generalization of  Proposition \ref{porthop}(\ref{ibiext})  % (as indicated in the previous part of this remark) 
whenever all objects of $\cuz'$ are compact in $\cupr$ and objects of $\cuz$ are cocompact in $\cu$.

\item\label{ir3} If one applies the "reverse biextension method" of part \ref{ir2} of this remark to the duality $\cuz(-,-)$ (that was the "starting one" in our proposition) then the "intermediate" duality  $\cuz\opp\times \cu'\to \ab$ would be the restriction of the bi-functor $\cupr(-,-)$ to $\cuz\opp\times \cu'$; see Proposition \ref{pkrause}(\ref{ikrtriv}).

\item\label{ikrausl} An interesting family of examples for our proposition can easily be constructed using Theorem 4.9 of \cite{krauslender}.
\end{enumerate}
\end{rema}

\subsection{An application: hearts of compactly generated $t$-structures are "usually" Grothendieck abelian}\label{sgengroth}

First  we list the consequences of  Proposition \ref{porthop} (essentially) in the case where $s$ is a  weight structure and $s'$ is a $t$-structure.

\begin{coro}\label{cwt}
In the setting of the previous proposition (and adopting its notation including the one of  its part \ref{icuduadj}) assume that %the (triangulated) subcategory $\duz$ of compact objects of $\du$ is essentially small
%$\cuz$ is an essentially small subcategory of $\cupr$ consisting of compact objects; let
  $\cp$ is a  suspended subset of $\obj \cuz$ (i.e., $\cp[1]\subset \cp$). 

Then the following statements are valid.

\begin{enumerate}

%\item\label{ic0}
\item\label{ic1} For any $N'\in \obj \cupr$ denote by $H^{N'}$ the extension to $\cu$  of the functor %represented by $M$ on t
$H_0^{N'}:\cuz\opp\to \ab$ (that is the restriction of $\cupr(-,N')$ to $\cuz$)  obtained via the  dual to  Proposition \ref{pkrause}. Then the bi-functor $\Phi:\cu\opp\times \cupr\to \ab: (M,N')\mapsto H^{N'}(M)$ is a nice duality that is naturally isomorphic to the one given by Proposition \ref{porthop}(\ref{icudupa}).

\item\label{iccc} For any $M\in \obj \cu$ the functor $\Phi(M,-):\cupr\to \ab$ is a cc one.

\item\label{ic2} The %re exists a  left adjoint $L$ to the embedding $i:\eu\to \cu$; 
 functor $L:\cu\to\eu$ (left adjoint to the embedding $\eu\to \cu$) respects products, $\cuz$-limits, and is identical on $\obj \eu$. %explain?! 

%and %for the restriction $\Phi^{\eu}$ of $\Phi$ to $\eu\opp\times \cupr$ 
\item\label{ic15}   The restriction $\Phi^{\eu}$ of $\Phi$ to $\eu\opp\times \cupr$ is a nice duality also, and we have $\Phi(-,-)\cong \Phi^{\eu}(L(-),-)$. 

\item\label{ic25} %For any projective system $X_i$ in $\gmp$ and $N\in \obj \cuz$ we have 
For any set of $M_i\in \obj \cu$ and $N\in \obj \cupr$ we have $\Phi(\prod M_i,N)\cong \bigoplus \Phi (M_i,N)$.

\item\label{ict} There exists a smashing $t$-structure $t$  on $\cupr$ such that $\cupr^{t\le 0}$ is the %big hull of 
smallest coproductive extension-closed subclass of $\obj \cupr$ containing $\cp$ and $\cupr^{t\ge 0}=\cp\perpp[1]$. Moreover,  $\cupr^{t\le 0}$ also equals the big hull of $\cp$ in $\cupr$.

\item\label{icw} There exists a cosmashing weight structure $w$  on $\cu$ such that $\cu_{w\ge 0}$ is the big hull of $\cp$ in $\cu\opp$ and $\cu_{w\le 0}=({}^{\perp_{\cu}}\cp)[1]$. Moreover,  $\cu_{w\le 0}$ is the extension-closure of $({}^{\perp_{\eu}}\cp)[1]\cup \perpp(\cup_{i\in \z}\cp)$.

\item\label{icwe} The couple $w_{\eu}= (({}^{\perp_{\eu}}\cp)[1], \cu_{w\ge 0})$ is a cosmashing weight structure  on $\eu$, $L$ is weight-exact (with respect to $w$ and $w_{\eu}$, respectively), and  $\cu_{w=0}=\eu_{w_{\eu}=0}$. Moreover, if $\cp$ densely generates $\cuz$ then $w_{\eu}$ is right non-degenerate.

\item\label{icort} $w$ and $w_{\eu}$ are orthogonal to $t$ with respect to $\Phi$ and $\Phi^{\eu}$, respectively. Moreover, $\cu_{w\le 0}={}^{\perp_{\Phi}}\cupr^{t\le -1}$ and $\eu_{w_{\eu}\le 0}={}^{\perp_{\Phi^{\eu}}}\cupr^{t\le -1}$.

\item\label{iwheart} Choose some $w_{\eu}$-weight complexes for elements of $\cp$; denote their terms by $P^k_l$.  Then the object $I=\prod P^k_l$ cogenerates 
$\hw$  (cf. Corollary \ref{cvttbrown}(I.2)), i.e., any object of $\hw$ is a retract of   a product of copies of $I$. %; being more precise, the elements of $\eu_{w^{\eu}=0}=\cu_{w=0}$ are (all the) retracts of products of families of $P^k_l$). 

\item\label{icint} The $\cuz$-envelope of $\cp$ equals both $\cupr^{t\le 0}\cap \obj \cuz$ and $\cu_{w\ge 0}\cap \obj \cuz$.

\item\label{icvirt}% For $H$  as in assertion \ref{icf} all its 
If $\au$ is an AB5 category then for the extension (obtained via the  dual to  Proposition \ref{pkrause})  $H:\cu\opp\to \au$ of a cohomological functor $H_0$ from $\cuz $ into $\au$  all its  virtual $t$-truncations are extended functors (in the same sense as $H$ is).
\end{enumerate}
\end{coro}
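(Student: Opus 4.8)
\textbf{Proof proposal for Corollary \ref{cwt}(\ref{icvirt}).}

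The plan is to reduce the statement to the key structural result proved earlier, namely Remark \ref{rkrause}(II.2), which asserts that virtual $t$-truncations of coextended functors are coextended whenever there is a smashing weight structure on $\cu$ restricting to $\cuz^\perp$. Of course here the roles of $\cu$ and $\cupr$ are swapped: we are dealing with \emph{extended} functors $H:\cu\opp\to\au$ coming (via the dual of Proposition \ref{pkrause}) from homological functors $H_0$ on $\cuz$, and $\cuz$ consists of cocompact objects of $\cu$, i.e. of compact objects of $\cu\opp$. So the first step is simply to pass to $\cu\opp$: an extended functor on $\cu\opp$ in the sense of the corollary is precisely a coextended functor on $(\cu\opp)$ from the (triangulated, essentially small) subcategory $\cuz\opp$, all of whose objects are compact in $\cu\opp$. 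Virtual $t$-truncations are self-dual up to reversing arrows (see Remark \ref{rwrange}(4)), so the statement to be proved is exactly the $\cu\opp$-version of Remark \ref{rkrause}(II.2).

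Next I would supply the missing hypothesis of that remark: the existence of a smashing weight structure on $\cu\opp$ that restricts to $(\cuz\opp)^{\perp_{\cu\opp}}$. This is provided by the weight structure $w$ of part \ref{icw}: indeed $w$ is cosmashing on $\cu$, hence $w\opp$ is smashing on $\cu\opp$ (Proposition \ref{pbw}(\ref{idual})), and by the "moreover" clause of \ref{icw} the class $\cu_{w\le 0}$ contains $\perpp(\cup_{i\in\z}\cp)$, while $\cu_{w\le 0}=({}^{\perp_\cu}\cp)[1]$. Translating to $\cu\opp$, the subcategory $\eu\opp$ (localizing in $\cu\opp$, generated by $\cuz\opp$) has object class $\perpp$-complement equal to $(\cuz\opp)^{\perp_{\cu\opp}}={}^{\perp_\cu}\obj\cuz$, and one checks directly from \ref{icw} and \ref{icwe} that $w\opp$ restricts to this subcategory: its intersection with $\cu_{w\ge 0}[i]$ and $\cu_{w\le 0}[i]$ gives, after applying Proposition \ref{pbw}(\ref{iwdext}) together with Proposition \ref{ppcoprws}(\ref{icopr1}) dually, the required weight decompositions. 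Concretely, $({}^{\perp_\cu}\obj\cuz,\ \cu)$-restriction of $w$ is a weight structure because $\cu_{w\le 0}$ already contains ${}^{\perp_\cu}\obj\cuz$ and the orthogonal complement is closed under products.

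With these hypotheses in place, the argument of Remark \ref{rkrause}(II.2) applies verbatim (dualized): Proposition \ref{ppcoprws}(\ref{icopr6}) gives that virtual $t$-truncations of cc functors are cc, and since extended functors are exactly the cc functors on $\cu\opp$ that kill $(\cuz\opp)^\perp$ and are identified by their restriction to $\cuz\opp$ (Proposition \ref{pkrause}(\ref{ikr8}), dualized), it suffices to know that $\tau^{\ge i}(H)$ and $\tau^{\le i}(H)$ still kill that orthogonal complement. This is immediate from the formulas in Definition \ref{dvtt} together with the fact that the weight decompositions of an object in $(\cuz\opp)^\perp$ can be chosen inside that class (the restriction of $w\opp$ just established). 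The main obstacle I anticipate is precisely this verification that $w$ restricts to $\eu$ in the way needed — i.e. bookkeeping the various $\perpp$-classes and shifts correctly so that Remark \ref{rkrause}(II.2) genuinely applies, rather than some near-miss variant; everything after that is a formal dualization of material already in the text. I would therefore present the proof as: dualize to $\cu\opp$, invoke \ref{icwe} (and \ref{icw}) to get the restricting smashing weight structure, then cite Remark \ref{rkrause}(II.2) together with Proposition \ref{ppcoprws}(\ref{icopr6}) and Proposition \ref{pkrause}(\ref{ikr8}).
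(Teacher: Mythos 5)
Your proof is correct and follows essentially the same route as the paper, which dispatches part \ref{icvirt} with the single line ``Immediate from Remark \ref{rkrause}(II.2).'' You merely spell out (dually) why that remark applies, namely that $w\opp$ restricts to $(\cuz\opp)^{\perp_{\cu\opp}}={}^{\perp_\cu}\obj\cuz$; this is indeed automatic since ${}^{\perp_\cu}\obj\cuz$ is shift-stable and contained in ${}^{\perp_\cu}\cp\subset\cu_{w\le 0}[-1]$, so every object of that class is right-$w$-degenerate and the restricted weight structure is trivial — you do not actually need Proposition \ref{pbw}(\ref{iwdext}) or Proposition \ref{ppcoprws}(\ref{icopr1}) for this step.
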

\begin{proof}
\ref{ic1}. Easy from Proposition \ref{porthop}(\ref{ibiext});  see Remark \ref{rcudu} (\ref{ir2},\ref{ir3}).

\ref{iccc}. Immediate from the previous assertion combined with  Proposition \ref{porthop}(\ref{icontdu}).

\ref{ic2}. %The existence of $L$ is immediate from (the dual to) Proposition \ref{pcomp}(II). The second part of the assertion is obvious.
Obviously,   $L$ respects $\cuz$-limits  and is identical on $\obj \eu$. Dualizing Proposition \ref{prtst}(\ref{it4sm})
we %easily 
 obtain that $L$ respects products. 

\ref{ic15}. Certainly, (arbitrary) restrictions of nice dualities are nice dualities also. It remains to apply Proposition \ref{porthop}(\ref{icuduadj}).

\ref{ic25}. Immediate from Proposition \ref{porthop}(\ref{icontcu}).

\ref{ict}. Certainly, $\cp$ generates a smashing torsion pair $s'=(\lo',\ro')$ for $\cupr$ with $\ro'=\cupr^{t\ge 0}$ and $\lo'$ being the big hull of $\cp[1]$; see  Proposition \ref{porthop}(\ref{icupr}). Hence the assertion follows from Corollary \ref{csymt}(1) easily.

\ref{icw}.  The first part of the assertion is immediate from Proposition \ref{porthop}(\ref{icup}); see Remark \ref{rwhop}(1). The second part is immediate from Proposition \ref{phopft}(II.1).

\ref{icwe}. %This proposition also
Proposition \ref{porthop}(\ref{icup})  gives the %first part of  assertion \ref{icwe}
existence of $w_{\eu}$, which certainly implies the weight-exactness of $L$ and $\cu_{w=0}=\eu_{w_{\eu}=0}$.
%whereas to 
 To prove the "moreover" part  one should note that ${}^{\perp_{\eu}}\obj \lan \cp\ra ={}^{\perp_{\eu}}\obj \cuz=\ns$ according to (the dual to) Proposition \ref{pcomp}(I.1); it remains to apply (the dual to) Remark \ref{rwhop}(8).

\ref{icort}. Immediate from Proposition \ref{porthop}(\ref{ihoport}) (see Remark \ref{rort}(2)). 

\ref{iwheart}.  Immediate from (the dual to) Proposition \ref{ppcoprws}(\ref{icopr7}) (cf. Corollary \ref{cvttbrown}(I.2)).

\ref{icint}. This is just a particular case of  Proposition \ref{porthop}(\ref{inters}).

\ref{icvirt}. Immediate from Remark \ref{rkrause}(II.2).
\end{proof}

Now we apply this corollary to the study of compactly generated $t$-structures. First we study the AB5 condition.

\begin{theo}\label{tab5}
Let $\du$ be a triangulated category having coproducts and $t$  be a smashing $t$-structure on it. 

Then the following statements are valid.

1. Assume moreover that  $\hrt$ is an AB5 category  and $\du$ is compactly generated. Then the (essentially small) class $H_0^t(\du^{\alz})$ generates $\hrt$ (and so, the coproduct of any small skeleton of $H_0^t(\du^{\alz})$ generates it also).\footnote{Recall that $\du^{\alz}$ is the (essentially small) class of compact objects of $\du$. Following Remark 4.2.11 of \cite{bondegl}, one may call elements of  $H_0^t(\du^{\alz})$ {\it strongly constructible} objects of $\hrt$.}

2. Assume that $t$ is generated by a suspended set $\cp\subset \obj \du$  of compact objects.  Denote by $\duz$ the subcategory of $\du$ that is densely generated (see \S\ref{snotata}) by $\cp$ and assume that there exists a triangulated category $\du'$ that contains $\du_0^{op}$ as a full subcategory of compact objects. %, and assume that $t$   is generated by $\cp$ (cf. Corollary \ref{csymt}(1)). 
 Then $\hrt$ is  a Grothendieck abelian %an AB5 abelian  
 category, and there exists a %conservative 
 faithful exact functor $\ec:\hrt\to \ab$  %(i.e., $\ec$ is exact and does not kill non-zero objects of $\hrt$) 
 that respects coproducts.
\end{theo}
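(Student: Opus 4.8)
\textbf{Proof strategy for Theorem \ref{tab5}.}

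The plan is to derive both parts from the "bicontinuous duality" machinery of \S\ref{sdual2} together with the already-established facts about compactly generated $t$-structures (Corollary \ref{csymt}) and about orthogonal weight structures (Corollary \ref{cwt}). For part 1, I would first recall that since $t$ is smashing on the compactly generated $\du$, the class $\du^{\alz}$ of compact objects is essentially small, so $H_0^t(\du^{\alz})$ is essentially small as well. The key point is Remark \ref{rcompgen}: if $t$ is generated by a class of the form $\cup_{i\ge 0}\cp[i]$, then $\cu^{t=0}\cap(\{H_0^t(P):P\in\cp\}\perpp)=\ns$, i.e.\ $\{H_0^t(P)\}$ Hom-generates $\hrt$. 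Here one may take $\cp$ to be (a skeleton of) $\du^{\alz}$ itself, since a compactly generated $t$-structure is generated by a set of compact objects and one can enlarge this set to all of $\du^{\alz}$ without changing $\cu^{t\le 0}$ (because $\cu^{t\le 0}$ only depends on the generated torsion pair, and any compact object in $\cu^{t\le 0}$ can be added to the generating set). So $H_0^t(\du^{\alz})$ Hom-generates $\hrt$. Then I would invoke the standard fact that in an AB5 abelian category a Hom-generating \emph{set} is automatically a set of generators in the usual (epimorphism-detecting) sense: given a non-split monomorphism (equivalently a proper subobject), the quotient is non-zero, so some $H_0^t(P)$ maps to it non-trivially, which via AB5 and the directed union of images of all maps from generators forces the subobject to be proper — hence the coproduct of a small skeleton is a generator.

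For part 2, I would set $\cu=\du'$, $\cu'=\du$, and $\cuz=\duz$ inside the setting of Proposition \ref{porthop}: by hypothesis the objects of $\duz$ are compact in $\du$, and $\duz\opp$ embeds into $\du'$ as a subcategory of compact objects, which (reading Proposition \ref{porthop} in $\cu\opp,\cu'\opp$, or directly) means the objects of $\duz$ are cocompact in $\cu=\du'$. Applying Corollary \ref{cwt} with the suspended set $\cp$ then yields: a cosmashing weight structure $w$ on $\cu=\du'$ orthogonal (via the bicontinuous duality $\Phi:\cu\opp\times\du\to\ab$) to the $t$-structure $t$ on $\du$ generated by $\cp$ — and this $t$ coincides with the given one. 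Crucially, by Corollary \ref{cwt}(\ref{iccc}), for each $M\in\obj\cu$ the functor $\Phi(M,-):\du\to\ab$ is a cc functor, i.e.\ homological and coproduct-preserving. Since it is of weight range $[0,0]$ shifted appropriately (it is orthogonal to $t$, so it kills $\du^{t\ge 1}$ and $\du^{t\le -1}$), restricting along $\hrt\hookrightarrow\du$ — more precisely composing with $H_0^t$ — gives an exact, coproduct-respecting functor $\hrt\to\ab$ for each $M\in\cu_{w=0}=\obj\hw$. The family of these as $M$ ranges over $\hw$ is \emph{faithful} on $\hrt$: if $N\in\cu^{t=0}$ is non-zero then $N\notin\cp\perpp$ (as $\cp$ generates $t$ and $N\neq 0$ forces $N\notin\du^{t\ge 1}$), so some $P\in\cp$ has $\Phi(P,N)\ne 0$ after passing to the weight-zero part of $P$, i.e.\ $\Phi(P',N)\neq 0$ for $P'\in\hw$ a weight-zero truncation. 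Taking $\ec=\Phi(I,-)\circ(\text{incl})$ where $I$ is the \emph{cogenerator} of $\hw$ supplied by Corollary \ref{cwt}(\ref{iwheart}) (note $\hw$ has products since $w$ is cosmashing, and every object of $\hw$ is a retract of a product of copies of $I$) assembles these into a single faithful exact coproduct-preserving functor $\ec:\hrt\to\ab$.

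Finally, to conclude $\hrt$ is Grothendieck abelian I would combine: (i) $\hrt$ is cocomplete and AB3 (it is the heart of a smashing $t$-structure, closed under $\du$-coproducts by Proposition \ref{prtst}(\ref{it2}), which also gives AB4); (ii) the existence of the faithful exact coproduct-preserving $\ec$, which by a standard argument (a faithful exact functor to $\ab$ preserving colimits reflects exactness of filtered colimits) upgrades AB4 to AB5, since filtered colimits in $\ab$ are exact and $\ec$ both preserves them and reflects monomorphisms; (iii) the existence of a generator, which follows from part 1 once we know AB5 — indeed the hypotheses of part 2 imply $\du$ is compactly generated (it is generated as a localizing subcategory by the compact set $\cp$, or one passes to $\cu'=\cup_i\du^{\gamma_i}$; in any case one applies part 1 to the localizing subcategory generated by $\cp$, on which $t$ restricts, cf.\ the proof of Corollary \ref{csymt}(1)). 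The main obstacle I anticipate is the bookkeeping in matching the coordinates of Proposition \ref{porthop} (which of $\cu,\cu'$ plays which role, and checking the compactness/cocompactness hypotheses against "$\duz\opp$ is a subcategory of compact objects of $\du'$") and verifying carefully that the $t$-structure produced by Corollary \ref{cwt}(\ref{ict}) genuinely coincides with the given $t$ rather than merely a shift of it — once that identification is secure, the faithfulness and AB5 arguments are routine.
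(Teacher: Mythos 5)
Your proof of part 2 is essentially the paper's: reducing to the localizing subcategory generated by $\duz$, applying Corollary \ref{cwt} with $\cu=\du'^{op}$ and $\cu'=\du$ to produce a cosmashing $w$ that is $\Phi$-orthogonal to $t$, observing that the stalk functors $\Phi(P,-)$ for $P\in\cu_{w=0}$ are exact, cc, and kill $\cu^{t\le -1}\cup\cu^{t\ge 1}$, proving their joint conservativity via the purity of $\Phi_N$ and the Hom-generation of $\du$ by $\duz$, assembling a single $\ec=\Phi(I,-)$ from the cogenerator $I$ of $\hw$, and then reducing to part 1 for the generator of $\hrt$. The bookkeeping concerns you raise there (matching coordinates, $t$ coinciding with the given one) are real but do check out as you suspect.

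The genuine gap is in part 1. You appeal to "the standard fact that in an AB5 abelian category a Hom-generating set is automatically a set of generators in the usual (epimorphism-detecting) sense." This is \emph{false}: Hom-generation (no nonzero object is right-orthogonal to the set) is strictly weaker than generation (every object is a quotient of a coproduct from the set) even in Grothendieck categories. A counterexample: take $\au=\z/4\text{-}\operatorname{Mod}$ (a module category, hence Grothendieck) and $G=\z/2$. Every nonzero $\z/4$-module contains a nonzero element killed by $2$, so $\homm(\z/2,M)\neq 0$ for all nonzero $M$; thus $\z/2$ Hom-generates. But the image of any map from a coproduct of copies of $\z/2$ to $\z/4$ lands in $2\,\z/4$, so $\z/2$ is not a generator. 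Your attempted patch (take a proper subobject, observe the quotient is nonzero, map some $H_0^t(P)$ nontrivially into it) fails for exactly this reason: a nonzero map $H_0^t(P)\to M/N$ need not lift to $M$, and AB5 gives no such lifting.

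The paper's argument avoids this entirely by using the coextension machinery rather than Hom-generation. Since $t$ is smashing and $\hrt$ is AB5, the functor $H_0^t:\du\to\hrt$ is a cc homological functor into an AB5 category that (trivially) kills $(\du^{\alz})^\perp=\ns$; by Proposition \ref{pkrause}(\ref{ikr8}) it is therefore the coextension of its restriction $H_0$ to $\du^{\alz}$. The coextension is computed from a projective resolution (\ref{ekrause}) of $H_M$ in $\psv(\du^{\alz})$ as $H_0^t(M)=\cok\bigl(\coprod H_0(C_M^i)\to\coprod H_0(C_M^j)\bigr)$ with all $C_M^i,C_M^j$ compact, so $H_0^t(M)$ is directly exhibited as a quotient of $\coprod H_0^t(C_M^j)$. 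Applied to $M\in\du^{t=0}$ this gives the generator statement in the strong sense. You should substitute this argument for your Hom-generation step; since part 2 calls part 1 for the generator, the patch also repairs part 2.
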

\begin{proof}
%Recall that $\hrt$ has a generator according to Proposition \ref{pgen}(3); hence it suffices to verify that $\hrt$ is an AB5 category.
1. Proposition \ref{pkrause}(\ref{ikr8}) implies that the functor $H_0^t:\du\to \hrt$ is coextended (from the subcategory $\du'$ of $\du$ whose object class equals $\du^{\alz}$). Hence for any $M\in \obj \du$ and a  family $C_M^j\in \du^{\alz}$ as in (\ref{ekrause}) we obtain that $H_0^t(M)$ is a $\hrt$-quotient of  $\coprod H_0^t(C_M^j)$. Thus the class $H_0^t(\du^{\alz})$ generates $\hrt$ indeed. %Well-known?! Grothendieck?!

2. The embedding into $\du$ of its localizing subcategory generated by $\duz$ possesses an exact right adjoint (by Proposition \ref{pcomp}(II.2)); hence Remark \ref{rwhop}(2) allows us to assume  that $\du$ equals this subcategory. Thus $\du$ is %also Hom-generated by $\duz$; see Proposition \ref{pcomp}(I.1).
  compactly generated.  Hence assertion 1 says that it suffices to verify whether $\hrt$ is an AB5 category.

Next, $\hrt$ is an AB4 category according to Proposition \ref{prtst}(\ref{it2}). Hence it is cocomplete; thus to check that it is AB5  we should prove that filtered colimits of $\hrt$-monomorphisms are $\hrt$-monomorphic. Thus it suffices to construct $\ec$ (since small colimits can be expressed in terms of coproducts). 
Note also that an exact functor (of abelian categories) is faithful if and only if it is conservative (and this is equivalent to the assumption that $\ec$ does not kill non-zero objects). 

Next we note that exact functors $\hrt\to \ab$ are precisely the restrictions to $\hrt$ of those homological functors $\cu\to \ab$ that annihilate $\cu^{t\le -1}\cup \cu^{t\ge 1}$. Hence it suffices to find a cc functor $\ec^{\du}:\du\to \ab$ that kills $\cu^{t\le -1}\cup \cu^{t\ge 1}$
%family of cc functors %$\du\to \ab$
  and whose restriction to $\hrt$ is conservative.

We start with constructing a "big" conservative family of  cc functors  $\du\to \ab$ satisfying the vanishing condition above; we call them    {\it stalk} ones for the reason that will be explained in Remark \ref{rsheaves}(1) below.\footnote{Note also that our argument has inspired the proof of  \cite[Corollary 4.2.4]{bondegl}.}
 For this purpose we apply the previous corollary for $\cu'$ equal to $\du$ and $\cu=\du'^{op}$. We obtain the existence of a duality $\Phi:\cu\opp\times \du\to \ab$ and a weight structure $w$ on $\cu$ that is (left) $\Phi$-orthogonal to $t$.

Our stalk functors are   the functors  $\Phi(P,-):\du\to \ab$ for $P$ running through $\cu_{w=0}$. %; we call these functors {\it stalk} ones (for the reason that will be explained in Remark \ref{rsheaves}(1) below). 
The stalk functors are certainly exact (by the definition of a duality);  they annihilate $\cu^{t\le -1}\cup \cu^{t\ge 1}$ since $w\perp_{\Phi}t$. %new notation??!
 The stalk functors are cc according to Corollary \ref{cwt}(\ref{iccc}). 

Let us  verify the conservativity of our family. Let $N$ be a non-zero element of $\du^{t=0}$; we should verify that the restriction $A$ of the functor  $\Phi_N=\Phi(-, N):\du\opp\to \ab$ to $\hw\opp$ is not zero. %Since $\duz$ Hom-generates $\du$, there exists $M_0\in \obj \duz$ such that $\du(M_0,N)\cong \Phi(M_0, N)\neq\ns$. 
Now, the $\Phi$-orthogonality of   $w$ to $t$ along with Proposition \ref{pwrange}(\ref{iwrpure}) implies that $\Phi_N$ is  a pure functor; hence it equals $H_\ca$ in the notation of  Definition \ref{drange}(2). Thus to check that $A\neq 0$ it suffices to prove that  $\Phi_N\neq 0$. %For this purpose it suffices to recall that 
Now, the class $\obj\duz$ Hom-generates $\du$; hence there exists $M_0\in \obj \duz$ such that $\du(M_0,N)\cong \Phi(M_0, N)\neq\ns$. 

Lastly, we recall that $\hw$  has a cogenerator (see Corollary \ref{cwt}(\ref{iwheart})). Thus according to Corollary \ref{cwt}(\ref{ic25})  one can take $\ec^{\du}$ to be the stalk functor corresponding to this cogenerator.
%$\ec$ being (the restriction to $\hrt$ of) the stalk functor corresponding to this cogenerator.
\end{proof}

\begin{rema}\label{rab5}
\begin{enumerate}
\item\label{irrel}
Note that in the case where $\duz\opp$ embeds into the subcategory of compact objects of $\du$ (in particular, this is the case if the latter category is anti-isomorphic to itself) one can take $\du'=\du$. A toy example of this situation is $\du$ being the derived category %$D(L-\vvect)\cong Gr^{\z} L-\vvect$, where $L$ is a field, 
$D(R-\modd)$, where $R$ is a commutative ring; %equipped with the usual tensor product; 
$\duz$ is the category of perfect complexes. 
%On the other hand, 
 
More generally, for $\du=R-\modd$,  %for a not (necessarily) commutative ring $R$ 
 where $R$ is a not (necessarily) commutative ring, one may take $\du'$ to be the derived category of right $R$-modules; this example has natural "differential graded" (see \S5.2 of \cite{bger}) and probably "spectral" (see \cite{schwmod}) generalizations.

On the other hand, Corollary \ref{cgdb}(\ref{iab5}) below demonstrates that our theorem can be applied for $\du$ being the homotopy category of an arbitrary proper simplicial stable (Quillen) model category.\footnote{Possibly, a somewhat more general statement of this sort may be obtained by using stable $\infty$-categories; see Denis Nardin's answer at \url{http://mathoverflow.net/q/255440}; %http://mathoverflow.net/questions/255439/which-triangulated-categories-are-subcategories-of-compact-objects-somewhere/255440
 yet the author does not know much about these matters. Respectively, the author is not sure that the construction of $\du'$ in \S\ref{sprospectra} is "optimal" for the purposes of Theorem \ref{tab5}; yet this argument enables certain "computations" that are important for \cite{bgn}.}\ Hence our theorem gives a positive answer to %(the wide open) %previously?!!
  Question 3.8 of  \cite{parrasao} for a really wide range of triangulated categories. Recall also that Theorem 3.7 of loc. cit.
says that {\bf  countable} colimits in $\hrt$ are exact  for any compactly generated $t$.\footnote{The author wonders whether some version of our argument can work for arbitrary $\du$. For this purpose it seems necessary to "get rid of $\du'$" in the proof. One may try to construct certain version of the stalk functors $\Phi(P,-)$ %for $P\in \cu_{w=0}$
  "directly". It appears that a minor modification of  the reasoning used in the proof of \cite[Proposition 2.1]{bsnull} gives the result whenever $\duz$ is {\it countable} (i.e., $\mo \duz$ is a countable set). It is not clear whether the result can be extended to the general case (possibly, using the arguments of \S3.1 of ibid.).}

Thus our theorem demonstrates once again that "weight structures can shed some light" on $t$-structures; cf. Corollary \ref{csymt}.  % and Remark \ref{rsymt}(3).
%Yet the 
 The author wonders whether one can mimic the argument above (and so, obtain certain stalk functors) using the "naive" category $\proo-\duz$ instead of $\du$ (note that $\proo-\duz$ is a {\it pro-triangulated} category that is "very rarely" triangulated).

\item\label{ismgen} The author also wonders whether one can describe a set of generators for $\hrt$ that is smaller than that described in part 1 of our theorem. If $t$ is %compactly 
 generated by a set $\cp$ of compact objects then a natural candidate here  is the (essentially small) class of $H_0^t(P)$ for $P$ running through elements of $\du^{t\le 0}\cap \du^{\alz}$. %Compactness ?! Note that would give another proof of vanishing of $\cu$-phantom morphisms in $\hrt$?!

Note that to prove that this class generates $\hrt$ is %for $\du$ as in part II of the theorem 
 %if we were able to prove the existence of 
 suffices to describe a generating family of stalk functors that could be presented as  colimits of functors corepresented by elements of $\du^{t\le 0}\cap \du^{\alz}$ (see Proposition \ref{pgen}(\ref{ipgen1})).  In particular, the proof of \cite[Proposition 4.2.10]{bondegl} is essentially an argument of this sort. 

Moreover,  %writing in detail 
 the argument described in Remark \ref{rkrause}(I.2) above %should yield 
 yields that $\hrt$ is generated by %any class of the form 
 the (even smaller) class $H_0^t(\cq)$, % generates $\hrt$, 
where $Q$ is the smallest subclass of $\obj \cu$ containing $\cp$ such that for any triangle $Z[-1]\to X\to Y\to Z$ with $Y,Z\in \cq$ the object $X$ belongs to $\cq$ also. 

\item\label{irpostov} Proposition \ref{porthop}(\ref{inters}) (cf. also  Remark \ref{rnewt}(2)) certainly gives a one-to-one correspondence between weight structures (resp. $t$-structures) generated by %sets of objects of 
	subsets of $\obj\duz$ in $\du$ and $t$-structures (resp. weight structures) generated by these sets in $\du'$. 
	%This follows immediately  from Remark \ref{rnewt}(2), and this 
	This  observation was very successively applied in \cite[\S4]{postov}. However, (the proof of) our theorem (cf. also the next part of this remark) demonstrates that introducing a duality between $\du'^{op}$ and $\du$ can give information that can hardly be obtained if $\du$ and $\du'$ are considered "separately" only. %??? 

\item\label{iremb} %Since $\hw$ is "cogenerated"
Proposition 6.2.1 of \cite{bger} %(cf. also \S5.5 of \cite{bgern}) 
 suggests the following conjecture:  $\hrt$ is equivalent to the %(locally small)?!
category of those functors $\hw\opp\to \ab$ that respect coproducts. The %virtue of this conjecture would certainly be stronger 
conjecture certainly contains more information on $\hrt$ than Theorem \ref{tab5} (along with its proof); however, this description of $\hrt$ may be not that "useful" (since $\hw$ can be rather "complicated").

\item\label{ir4} An interesting question (that is closely related to the aforementioned conjecture) is
which  homological functors from $\du$ (resp. from $\du$) are {\it $\Phi$-corepresentable}, i.e., have the form $\Phi(-,N)$ (resp. $\Phi(M,-)$) for some $N\in \obj \du$ (resp. $M\in \obj \du'$). %Potentially (co)representable functors from $\cuz$?! 
%Certainly, in the most interesting case where  all objects of $\cuz'$ are compact in $\cupr$ and all objects of $\cuz$ are cocompact in $\cu$ these functors are the extensions to $\cu$ (via the dual of the functors $\cupr$-represented on $\cu$
Now, the functors of    the form $\Phi(-,N):\du'\to \ab$
%Dually to part \ref{ir3} of this remark (cf. also part \ref{ir2}), if %one applies the "reverse biextension" method  as described above or 
 %the objects of $\cuz$ are compact in $\cupr$ %?!!and  cocompact in $\cu$  then $\Phi$-representable functors 
 are precisely the "extensions" (obtained using the dual to  Proposition \ref{pkrause}) of the functors that are $\du$-represented on $\duz$. %$Now, in 
 In the case where $\duz$ is {\it countable} (i.e., its object class is essentially countable and all morphism sets are at most countable) all cohomological functors from $\duz$ into $\ab$ are represented by objects of $\du$ according to Theorem  5.1 of \cite{neebrh}; so, we obtain a complete description of $\Phi$-representable functors in this case. Unfortunately, this argument cannot be extended to the case of a general $\duz$. %well/known; explain; reference?!
 %Does not depend on $\cupr$ if we also demand BRM?!

\item\label{istalks} %One may easily note that 
The stalk functors $\Phi(P,-)$ for $P\in \du'^{op}_{w=0}$ %are certain substitute 
in the proof of Theorem \ref{tab5} essentially play the role of functors corepresented by $t$-projective objects of $\cu$ (see Definition \ref{dpt}). Note however we cannot have "enough" of the latter unless $\hrt$ has enough projectives (see Proposition \ref{pgen}(\ref{ipgen25})). If we assume in addition  %whereas 
the existence of (a set of) "compact generators" for $P_t$ (that are necessary to obtain enough cc functors) %appears to be an even more restrictive condition. %We will demonstrate in 
then it would imply that $\hrt$ is isomorphic to $\adfu (\bu,\ab)$ for $\bu$ being the corresponding  small additive category. Certainly, $\hrt$  "rarely" can be presented in this form;  (cf. Remark \ref{rsheaves}(1) below). % for some more detail on this matter).  
Hence %considering an
 constructing a duality of $\du$ with an  "auxiliary" category $\du'$ is necessary for  the proof of Theorem \ref{tab5}.

\item\label%{irotherd}
{irotimes}  In  Proposition \ref{porthop} the "starting duality" was $\cuz(-,-)$. It is an interesting question whether Theorem \ref{tab5} can be generalized by treating (biextensions of) other dualities (cf. Remark \ref{rcudu}(\ref{ir1},\ref{ir2})). 

Anyway, it appears that one can construct "interesting" dualities using various tensor products. %If 
 Assume that we are given triangulated categories $\cu$ and $\cupr$ as above, a  triangulated category  $\eu$ having coproducts,
%, triangulated categories $\du$ and $\eu$ having coproducts, exact functors $F:\cu\to \du$ and $F':\cupr\to \du$ respecting coproducts, a
a  bi-exact functor $\otimes: \cu\opp\times \cupr\to \eu$ that respects coproducts when any of the arguments is fixed, and a %homological 
 cc functor $H:\eu\to \au$  (for some abelian $\au$). Then $\Phi(-,-)=H(-\otimes -)$ is a duality $\cu\opp\times \cupr\to \au$ that converts $\cu$-products and $\cupr$-coproducts into $\au$-coproducts. Certainly, one may take $H$ to be the functor corepresented by a compact object of $\eu$ (and $\au=\ab$).
Note also that $\Phi$ is canonically characterized by its "coproductivity properties" along with its restriction $\Phi_0$ to $\cu'_0{}\opp\times \cuz$ whenever $\cuz$  and $\cu'_0{}^{op}$  are categories of compact objects in $\cu\opp$ and $\cu'$ (respectively) that Hom-generate these categories (and so, generate them as their own localizing subcategories). %?!; more generally, the natural analogue of Proposition \ref{porthop}(\ref{ibiext}) is valid.

The author believes that dualities of this sort may be useful for "computing" dualities of the type treated in  Proposition \ref{porthop} in the case where $\cu\opp=\cupr$, $\cuz$ is self-dual with respect to the tensor product on $\cupr$ (cf. part \ref{irrel} of this remark) and Hom-generates $\cupr$, and the unit object $\oo_{\cupr}$ is compact in $\cupr$. %explain?!
Note that all these condition are fulfilled for $\cupr$ being the stable homotopy category of ("topological") spectra; we will say more on this setting (that was essentially treated in \cite{prospect})  in Remark \ref{rsheaves}(4) below.

\item\label{krauseideals} Recall that Corollary 4.7 of \cite{krause} gives a description of all smashing (see Remark \ref{rtst2}(\ref{ismashs})) subcategories of a compactly generated triangulated category $\du$ in terms of certain ideals of morphisms %?!
 in its subcategory $\duz$ of compact objects. Along with Theorem 4.9 of ibid. and the example from \cite{kellerema} this appears to yield an example of a not compactly generated smashing triangulated subcategory $L$ of $\du$ %that possesses a 
such that the corresponding "shift-stable $t$-structure" %in $\cu$ 
 (see   Remark \ref{rtst2}(\ref{ismashs}) again) possesses a (left) $\Phi$-orthogonal shift-stable weight structure in the corresponding $\du'$. The authors wonders whether one can also obtain similar non-shift-stable   examples. % using a similar argument.
\end{enumerate}
\end{rema}

\subsection{Relation to triangulated %and stable model 
categories of pro-objects}\label{sprospectra} %prospectra?! examples?!

%no way to obtain w and t from a general family of compact objects: Remark?!

Now %?!
we describe a method for constructing a vast family  of examples for Proposition \ref{porthop} (and so, also of Corollary \ref{cwt}) along with  Theorem \ref{tab5}. %weight and $t$-structures that are orthogonal with respect to a "non-trivial" pairing. %possesses interesting properties?! Bi-compactness remark: below?!
Its main ingredient is a construction of a triangulated category of "homotopy pro-objects" for a  stable model category that is a straightforward application of the results of \cite{tmodel}.  %Treating pro-objects forces us to recall the following dual to Definition \ref{dcomp}(2).

So let $\gm$ be a proper simplicial stable (Quillen) model category;  denote its homotopy category by $\cupr$. 
We construct another model category $\gmp$ whose underlying category is the category of (filtered) pro-objects of $\gm$ (cf. \S5 of \cite{tmodel}).

\begin{rema}\label{rproobj}
In \cite{tmodel} pro-objects were defined via filtered diagrams, i.e., via contravariant functors from filtered small categories. However the author prefers (for the reasons of minor notational convenience)  %and compatibility with \cite{bger})??
  to consider inverse limits indexed by filtered sets instead. The latter notion is certainly somewhat more restrictive formally; however, Theorem 1.5 of \cite{adr}  says that any "categorical" filtered colimit can be presented as a certain "set-theoretic" one. It easily follows that it is no difference between the usage of these two notions (for the purposes of the current papers as well as for that of \cite{bgn}); cf.  the discussion in \S2 of \cite{isalim}. % (cf. also the corollary from loc. cit.)  
\end{rema}

We endow $\gmp$ with the {\it strict} model structure; see \S5.1 of ibid. (so, weak equivalences and cofibrations are {\it essential levelwise} weak equivalences and cofibrations of pro-objects).
  An important observation here is that this model structure is a particular case of a {\it $t$-model structure} in the sense of \S6 of ibid. if one takes the following "totally degenerate"  $t$-structure $t_{deg}$ on $\cupr$: $\cupr^{t_{deg}\ge 0}=\obj \cupr$, $\cupr^{t_{deg}\le 0}=\ns$; see Remark 6.4 of ibid.
	%?????????????!!!! This gives  a $t$-model structure in the sense of Definition 4.1?!!! of ibid. 
	Indeed, %this $t$-structure 
  one can  take the following  functorial factorization of morphisms: for  $f\in \gm(X,Y)$ (for $X,Y\in \obj \gm$)  we can present it as $f\circ \id_X$; note that $\id_X$ is an {\it $n$-equivalence} and $f$ is a  {\it co-$n$-equivalence} in the sense of Definition 3.2 of ibid. for any $n\in \z$ (pay attention to Remark \ref{rstws}(3)!). 
	
	Now let us describe some basic properties of $\gmp$ and its homotopy category $\cu$ (we will apply some of them below, whereas other ones are important for \cite{bgn}). The pro-object corresponding to a  projective system %define??
	$M_i$ for $i\in I$ where $I$ is %a directed 
	 an inductive set and $M_i\in \obj \gm$, will be denoted by $(M_i)$. Note that $(M_i)$ is precisely the (inverse) limit of the system $M_i$ in $\gmp$ (by the definition of morphisms in this category).

\begin{pr}\label{pgdb}

Let  $X_i,Y_i,Z_i\ i\in I$, be projective systems in $\gm$. Then the following statements are valid.

1. $\gmp$ is a proper stable simplicial model category; hence $\cu$ is triangulated. % (possibly, without functorial factorizations for morphisms). %functorial factorizations??! Cf. with \cite{hirsh}??!

2.  %We have:
If  some morphisms $X_i\to Y_i$ for all $i\in I$  yield a compatible system of cofibrations (resp. of weak equivalences; resp. some couples of morphisms 
$X_i\to Y_i\to Z_i$ yield a compatible system of %cofibration
   homotopy cofibre sequences) then the corresponding  morphism $(X_i)\to (Y_i)$ is a cofibration also (resp. a weak equivalence; resp. the couple of morphisms $(X_i)\to (Y_i)\to (Z_i)$ is a %cofibration??
 homotopy cofibre sequence). %limits of sequences that yield distinguished triangles yield distinguished triangles; ok if compositions (of replacements??) factorize through $0$??!
 
3. The natural embedding $c:\gm\to \gmp$ is a left  Quillen functor; it also respects weak equivalences and fibrations.

4. For any $N\in \obj \gm$ %and $\cu=\ho(\gmp)$ 
we have $\cu((X_i),c(N))\cong \inli \cupr(X_i,N)$.

In particular, the %natural  
 homotopy functor $\ho(c):\cupr\to \cu$ %=\ho(\gmp)$ on the homotopy categories 
 is a full embedding, and $(X_i)$ is a $\cupr$-limit of $X_i$ in $\cu$ (with respect to this embedding; see Definition \ref{drelim}(2)).

5. More generally, for any projective system %$I$, 
$\{M_i\}$ in $ \gmp$ and any $N\in \obj \cupr$ the inverse limit of $M_i$ exists in $\gmp$ and 
 we have $\cu(\prli M_i,c(N))\cong \inli_{i\in I} \cu (M_i,c(N))$. % (here we take the limit of $M_i$ in $\gmp$ and send it to $\cu$). 

6. $\cu$ has products and all objects of $\ho(c)(\cupr)$  %$\sh$ (embedded into $\gdb$ by $\ho(c)$) 
are cocompact in $\cu$. %Allready proved??! Dualize Theorem 7.4.3 of \cite{hovey} and generalize it to finitely class-fibrantly cogenerated model categories??! Not fibrantly cogenerated??! Modify the argument; use Proposition 5.5 of \cite{strict}. 

7. The class $\ho(c)(\obj \cupr)$  cogenerates $\cu$.

\end{pr}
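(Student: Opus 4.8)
\textbf{Proof proposal for Proposition \ref{pgdb}.}

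The plan is to deduce everything from the results of \cite{tmodel}, as promised in the text, organizing the argument so that the structural statements (1--3) come first, the $\mathrm{Hom}$-computation (4--5) next, and the categorical consequences (6--7) last. First I would verify assertion 1: the strict model structure on $\gmp$ is a $t$-model structure for the totally degenerate $t$-structure $t_{deg}$ on $\cupr$ (as already indicated in the excerpt via the functorial factorization $f = f\circ\id_X$, with $\id_X$ an $n$-equivalence and $f$ a co-$n$-equivalence for every $n$), and properness, stability and the simplicial enrichment are transported from $\gm$ by the corresponding theorems of \cite{tmodel} (Theorem 5.1, together with the properness and simpliciality statements in \S5--6 of loc. cit.). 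Stability then gives that $\cu = \ho(\gmp)$ is triangulated. Assertion 2 is the content of the description of cofibrations and weak equivalences in the strict structure (essential levelwise cofibrations / weak equivalences), plus the fact that in a stable model category a homotopy cofibre sequence is detected levelwise; the pro-object $(X_i)$ being the $\gmp$-limit of the system is immediate from the definition of morphisms of pro-objects. Assertion 3 is straightforward: $c$ sends cofibrations to (essential levelwise, hence strict) cofibrations and acyclic cofibrations likewise, so it is left Quillen; that it preserves weak equivalences and fibrations is a direct check from the definitions.

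Next I would prove assertion 4, which is the key computational input. For a constant pro-object $c(N)$ the derived mapping space out of $(X_i)$ is computed, after cofibrant-fibrant replacement levelwise, as the filtered colimit $\inli_i \mathrm{Map}_{\gm}(X_i, N)$; passing to $\pi_0$ (and using that filtered colimits of simplicial sets commute with $\pi_0$) yields $\cu((X_i), c(N)) \cong \inli_i \cupr(X_i, N)$. This is essentially the content of the relevant lemma of \cite{tmodel} on mapping spaces into constant pro-objects; I would cite it rather than reprove it. Fullness and faithfulness of $\ho(c)$ then follow by taking $(X_i)$ itself constant, and the statement that $(X_i)$ is a $\cupr$-limit of the $X_i$ in $\cu$ (in the sense of Definition \ref{drelim}(2)) is just a reformulation of the displayed isomorphism, since by definition a $\cupr$-limit is a $\cupr^{op}$-colimit in $\cu^{op}$ and here the test objects run over $\ho(c)(\cupr)$. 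Assertion 5 is the evident generalization: an arbitrary projective system $\{M_i\}$ in $\gmp$ has a limit in $\gmp$ (pro-objects are closed under cofiltered limits), and the same mapping-space computation, now with the $M_i$ themselves pro-objects and $N$ constant, gives $\cu(\prli M_i, c(N)) \cong \inli_i \cu(M_i, c(N))$.

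Finally, assertions 6 and 7 are formal consequences. That $\cu$ has products: the underlying category of pro-objects has small products, and the strict model structure makes them homotopy products (products of fibrant objects are fibrant, products of weak equivalences between fibrant objects are weak equivalences by the levelwise description). Cocompactness of the objects of $\ho(c)(\cupr)$ is exactly assertion 5 applied to a product diagram: $\cu(-, c(N))$ converts $\cu$-products into coproducts of abelian groups because $\prli$ over a product indexing turns into the relevant colimit, i.e. $c(N)$ is cocompact (recall the terminology of Remark \ref{rwhop}(5)). For assertion 7, the class $\ho(c)(\obj\cupr)$ cogenerates $\cu$ because if $M\in\obj\cu$ satisfies $\cu(M, c(N)) = 0$ for all $N\in\obj\cupr$, then in particular, writing $M$ up to isomorphism as $(M_i)$ with the $M_i$ fibrant-cofibrant, assertion 4 gives $\inli_i \cupr(M_i[k], N) = 0$ for all $N$ and all shifts $k$; taking $N = M_i[k]$ and chasing the identity shows each $M_i$ is contractible, hence $M \cong 0$. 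I expect the main obstacle to be assertion 4: one must be careful that the filtered-colimit formula for the derived mapping space into a constant pro-object is applied with the correct (levelwise fibrant-cofibrant) replacements and that $\pi_0$ genuinely commutes with the relevant filtered colimit — this is where the properness and simpliciality hypotheses on $\gm$, and the precise form of the strict model structure, are actually used, and it is the step that cannot simply be waved through.
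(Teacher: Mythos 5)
Your handling of assertions 1--6 is in the same spirit as the paper's proof, but you gloss over two points the paper is careful about: for assertion 1, Theorems 6.3 and 6.13 of \cite{tmodel} do not supply the functorial factorizations required by a model structure, so the paper additionally invokes Theorem 1.3 of \cite{fufa}; and for assertion 5, the existence of the limit $\prli M_i$ in $\gmp$ and the reduction of its mapping sets to assertion 4 are both non-trivial and are taken from Theorem 4.1 of \cite{isalim} (which expresses filtered limits in $\gmp$ via limits in $\gm$). Your phrase \emph{"the same mapping-space computation, now with the $M_i$ themselves pro-objects"} hides exactly that reduction step.

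For assertion 7 there is a genuine gap, and your approach diverges from the paper's in a way that does not close it. The paper does not argue by Hom-vanishing at all: it invokes Theorem 6.1 of \cite{chor} to get a (non-functorial) generalized cosmall object argument for $\gmp$ with respect to the maps $c(f)$, and then dualizes the proof of \cite[Theorem 7.3.1]{hovey}. Your direct argument is incorrect as written: from $\inli_i \cupr(M_i[k],M_{i_0}[k])=0$ one can only conclude that the transition map $M_j\to M_{i_0}$ is null-homotopic for some $j\ge i_0$ (the class of $\id_{M_{i_0}}$ dies in the colimit), not that $M_{i_0}$ is contractible. And the corrected conclusion --- transitions eventually null in $\cupr$ --- says only that $(M_i)$ is pro-trivial in $\proo-\cupr$; it does not obviously imply that $(M_i)\cong 0$ in $\cu=\ho(\gmp)$, since strict weak equivalences are essential \emph{levelwise} equivalences and one cannot in general replace $(M_i)$ by a pro-isomorphic system with contractible levels just from null transitions. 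Passing from "trivial against constant pro-objects" to "zero in $\cu$" is precisely the content of the cosmall-object argument the paper uses, so your argument is circular at this step.
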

\begin{proof}
1. Theorems 6.3 and 6.13 of %ibid. 
 \cite{tmodel} yield everything except the existence of functorial factorizations for morphisms in $\gmp$. %?? (see of \cite{hovey}).
  The existence of functorial factorizations is given by Theorem 1.3 of \cite{fufa} (see the text following Remark 1.5 of ibid.).
	One can also deduce this statement from \cite[Remark 4.5]{prospect}.

2. The first two parts of the assertion  are contained in the definition of the strict model structure. The last part follows from the previous ones immediately
(recall that pushouts can be computed levelwisely);  this fact is also mentioned in the proof of \cite[Proposition 9.4]{tmodel}.
 %(since $\gmp$ is proper). %??!

3. The first part of the assertion is given by Lemma 8.1 (an also by \S5.1) of \cite{tmodel}. %(and also mentioned in
The second part is immediate from the description of weak equivalences in $\gmp$ given in loc. cit. 

4. The first of the statements is immediate from Corollary 8.7 of ibid.; the other ones are its obvious consequences.

5. The first part of the assertion is provided by  Theorem 4.1 of \cite{isalim}. Since loc. cit. also  (roughly) says that filtered limits in $\gmp$ can be naturally expressed in terms of  limits in $\gm$, combining this statement with assertion 4 we obtain the second part of assertion 5 immediately. 

6. %Certainly, it suffices to 
$\cu$ has products since it is the homotopy category of a model category (see Example 1.3.11 of \cite{hovey}). Hence we should verify that $\cu(\prod_{i\in I} Y^i,X)=\bigoplus_{i\in I} \cu(Y^i,X)$ for $Y^i$ being fibrant objects of $\gmp$, $X\in \ho(c)(\obj \cupr)$. Now (see loc. cit.) the product of $Y^i$ in $\cu$ comes from their product in $\gmp$. Certainly, if $Y^i=(Y_{j}^i)$ then the product of $Y^i$ in $\gmp$ %is given 
 can be presented by the projective system of all $\prod_{i\in J} Y^i_{j_i}$ for finite $J\subset I$. %(i.e., we take at most  one $Y_{ij}$ for each $i\in I$ and consider the corresponding index category; note here that products are particular cases of inverse limits). 
 Hence the statement follows %immediately                         %mention stability?????!!!!!!!!!!!!!!!
from the previous assertion (recall  that products are particular cases of inverse limits).\footnote{%Delete?!!
Alternatively, one can combine the argument dual to the one in the proof of Theorem 7.4.3 of \cite{hovey} %together 
with the %description of a class of generating fibrations  
fact that $c(f)$ %'constant' fibrations yield a set of generating fibrations (that we will discuss now). 
for $f$ running through all fibrations in $\gm$ yield a set of %co??!
generating fibrations for $\gmp$ (see Theorem 6.1 of \cite{chor}).}

7. %Loc. cit. also ???!!
Theorem 6.1 of \cite{chor} implies that $\gmp$ admits a non-functorial version of the generalized cosmall object argument with respect to $c(f)$. Hence we can apply the dual of the argument used in the proof of Theorem 7.3.1 of \cite{hovey}.
\end{proof}

We deduce some  consequences from this statement (mostly) using Proposition \ref{porthop}. We will consider $\cupr$ as a full subcategory of $\cu$ (via the embedding $c$ that we will not mention) in this corollary.

\begin{coro}\label{cgdb}

In the setting of the previous proposition assume that
 %the (triangulated) subcategory $\duz$ of compact objects of $\du$ is essentially small
$\cuz$ is an essentially small subcategory of $\cupr$ consisting of compact objects; let
  $\cp$ be  a suspended subset of  $\obj \cuz$. Denote by $\eu$ the colocalizing subcategory of $\cu$ cogenerated by $\obj \cuz$; 
	let $X_i$ be a projective system  in $\gmp$ and $N\in \obj \cupr$.

		Then the following statements are valid.
	
	\begin{enumerate}
\item\label{iprev} One can apply Corollary \ref{cwt} to this setting. %?!

\item\label{iab5}  $\hrt$ is a Grothendieck abelian category and there exists a faithful exact  functor $\ec:\hrt\to \ab$ that respects coproducts.

\item\label{ic3} %For any projective system $X_i$ in $\gmp$ and $N\in \obj \cuz$ we have 
$\Phi^{\eu}(L(\prli X_i),N)\cong \inli \Phi^{\eu} (L(X_i),N)$.

\item\label{icf} More generally, if $\au$ is an AB5 category and $H$ from $\cu$ into $\au$ is an extended functor (i.e., it is obtained via the  dual to  Proposition \ref{pkrause} from a  cohomological functor $H_0$ from $\cuz $ into $\au$)  then we have $H(\prli X_i)\cong H(L(\prli X_i))\cong \inli H(L(X_i))\cong \inli H(X_i)$. 
%More generally, if $\au$ is an AB5 category then for the extension (obtained via the  dual to  Proposition \ref{pkrause})  $H:\cu\opp\to \au$ of a cohomological functor $H_0$ from $\cuz $ into $\au$   we have $H(\prli X_i)\cong H(L(\prli X_i))\cong \inli H(L(X_i))$. 

\item\label{icfig} %Assume that $\cuz$ generates $\cu'$ as its own localizing subcategory. %necessary??!!Then 
 $\Phi^{\eu}$ is isomorphic to  the restriction to $\eu\opp\times \cu'$ of $\cu(-,-)$. %\ho(c)(-)???!!!

\item\label{icharw} 
We have  $\eu_{w_{\eu}\le 0}=({}^{\perp_{\cu}}\cupr^{t\le -1})\cap \obj \eu $.

\end{enumerate}
\end{coro}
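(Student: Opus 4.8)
\textbf{Proof proposal for Corollary \ref{cgdb}.}

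The plan is to verify the hypotheses of Proposition \ref{porthop} and of Corollary \ref{cwt} for the pair $(\cu,\cupr)$ coming from the pro-object construction, and then to read off each numbered assertion from the corresponding item of those results (together with Theorem \ref{tab5}). First I would observe that by Proposition \ref{pgdb}(1,6,7) the category $\cu$ is triangulated, has products, every object of $\cuz$ (being compact in $\cupr$, hence in $\ho(c)(\cupr)$) is cocompact in $\cu$, and $\ho(c)(\obj\cupr)$ cogenerates $\cu$; moreover all objects of $\cuz$ are compact in $\cupr$ by hypothesis. Thus $\cuz$ is an essentially small common subcategory of $\cu$ and $\cupr$ whose objects are compact in $\cupr$ and cocompact in $\cu$, which is exactly the input needed for Proposition \ref{porthop}. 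Since $\cp$ is a suspended subset of $\obj\cuz$, Corollary \ref{cwt} applies verbatim, giving assertion \eqref{iprev}. Note that $\eu$ here is precisely the colocalizing subcategory from Corollary \ref{cwt} (its part \eqref{icuduadj}), so the notation is consistent.

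For \eqref{iab5} I would invoke Theorem \ref{tab5}(2): the relevant $\du$ is $\cu'$, the $t$-structure $t$ is the one produced by Corollary \ref{cwt}\eqref{ict} (which is smashing and generated by the suspended set $\cp$ of compact objects of $\cupr$), and the required "auxiliary" triangulated category containing $\du_0\opp=\cuz\opp$ as a full subcategory of compact objects is $\cu$ itself — indeed Proposition \ref{pgdb}(6) says the objects of $\cuz$ (viewed in $\cu$) are cocompact in $\cu$, i.e.\ compact in $\cu\opp$, and $\cuz\opp\subset\cu\opp$. Hence $\hrt$ is Grothendieck abelian and the faithful coproduct-respecting exact functor $\ec:\hrt\to\ab$ exists. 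Assertion \eqref{ic3} is the combination of Corollary \ref{cwt}\eqref{ic2} (that $L$ respects $\cuz$-limits, and $\prli X_i$ is the inverse limit in $\gmp$, which by Proposition \ref{pgdb}(5) is a $\cupr$-limit of the $X_i$ in $\cu$) with Proposition \ref{porthop}\eqref{icontcu} (continuity of $\Phi$ in the first variable on $\cuz$-limits), passing through $\Phi(-,-)\cong\Phi^{\eu}(L(-),-)$ from Corollary \ref{cwt}\eqref{ic15}. Assertion \eqref{icf} is the same argument applied to a general extended functor $H$: by Remark \ref{rkrause}(I) $H$ is a left Kan extension, so $H\cong H^{\eu}\circ L$ where $H^{\eu}$ is the corresponding extension on $\eu$ (this uses Proposition \ref{pkrause}\eqref{ikradj} with $\eu$ playing the role there), and then Proposition \ref{pgdb}(5) together with the continuity of extensions on $\cuz$-colimits/limits (dual to Proposition \ref{pkrause}\eqref{ikr3}, cf.\ Corollary \ref{cwt}\eqref{icvirt}) gives the chain of isomorphisms; the identification $H(X_i)\cong H(L(X_i))$ for the individual terms holds since $L$ is identical on $\obj\eu$ when the $X_i$ already lie in $\eu$, and in general because $H$ factors through $L$.

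For \eqref{icfig} the point is that, on $\cuz$, the functor $\Phi^{\eu}$ restricts to $\cuz(-,-)$ (Proposition \ref{pkrause}\eqref{ikr1}, or Proposition \ref{porthop}\eqref{ibiext}), that $\Phi^{\eu}(-,N)$ and $\cu(-,N)$ are both pc functors on $\eu\opp$ — the former by Corollary \ref{cwt}\eqref{iccc} (it is cc in the second variable, pc in the first by Proposition \ref{porthop}\eqref{icontcu}), the latter because objects of $\cupr$ viewed in $\cu$ are cocompact, hence $\cu(-,N)$ is pc, and it stays pc after restriction to $\eu$ — and that both annihilate $\cuz^{\perp_{\eu}}\times\obj\cupr$; wait, more precisely one uses the categorical dual of Proposition \ref{pkrause}\eqref{ikr8} on $\eu$ to see that a pc functor on $\eu$ is determined by its restriction to $\cuz$ together with the vanishing on the $\Phi^{\eu}$-relevant orthogonal class, and $\cu(-,N)|_{\eu}$ satisfies exactly these, so it agrees with $\Phi^{\eu}(-,N)$. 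Finally \eqref{icharw} follows by combining Corollary \ref{cwt}\eqref{icort} (which gives $\eu_{w_{\eu}\le 0}={}^{\perp_{\Phi^{\eu}}}\cupr^{t\le -1}$) with \eqref{icfig}: under the identification $\Phi^{\eu}=\cu(-,-)|_{\eu\opp\times\cu'}$, the class ${}^{\perp_{\Phi^{\eu}}}\cupr^{t\le -1}$ is exactly $({}^{\perp_{\cu}}\cupr^{t\le -1})\cap\obj\eu$. I expect the main obstacle to be the careful bookkeeping in \eqref{icfig}–\eqref{icharw}: one must make sure that the restriction of the "big" duality $\Phi$ to $\eu$ really does coincide with the honest Hom bifunctor of $\cu$ restricted there — this is where the cocompactness of the objects of $\cuz$ in $\cu$ (Proposition \ref{pgdb}(6)) and the characterization via the dual of Proposition \ref{pkrause}\eqref{ikr8} are essential, and a sloppy argument could conflate $\Phi$ with $\cu(-,-)$ on all of $\cu\opp\times\cu'$, which is false in general (see Remark \ref{rcudu}\eqref{ircocomp}).
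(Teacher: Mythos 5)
Your proposal is correct and follows essentially the same route as the paper's own proof: verify the pro-object input hypotheses via Proposition \ref{pgdb}, apply Corollary \ref{cwt} and Theorem \ref{tab5}(2) (with $\du'=\cu\opp$), and then use the uniqueness/continuity clauses of Proposition \ref{pkrause} to identify $\Phi^{\eu}$ with the restriction of $\cu(-,-)$, from which \eqref{icharw} drops out via Corollary \ref{cwt}\eqref{icort}. The one genuine (if minor) divergence is in \eqref{icfig}: you fix $N$ and compare $\Phi^{\eu}(-,N)$ with $\cu(-,N)$ as pc functors on $\eu\opp$ via the \emph{dual} of Proposition \ref{pkrause}\eqref{ikr8}, whereas the paper fixes $M\in\obj\eu$ and argues that $\cu(M,-)|_{\cupr}$ is a cc functor on $\cupr$ by a colocalizing induction (the class of such $M$ contains $\obj\cuz$, is shift- and extension-closed, and is closed under products by the cocompactness in Proposition \ref{pgdb}(6)); your dual formulation has the small advantage that the vanishing condition in the dual of Proposition \ref{pkrause}\eqref{ikr8} is on ${}^{\perp_{\eu}}\cuz$, which is $\ns$ because $\cuz$ cogenerates $\eu$, whereas the paper's direction tacitly needs that $\cu(M,-)|_{\cupr}$ kills $\cuz^{\perp_{\cupr}}$ for $M\in\obj\eu$. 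Both approaches are valid and of comparable length.
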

\begin{proof}
\ref{iprev}. Immediate from the previous proposition.

\ref{iab5}. According to the previous assertion, we can apply Theorem \ref{tab5} to our setting.

Assertion \ref{ic3} is a particular case of  assertion \ref{icf} indeed (by the definition of $\Phi$).  Next,  since $L$ respects $\cuz$-limits, %explain??
 assertion \ref{icf} follows from Proposition \ref{pgdb}(6) according to (the dual to) Proposition \ref{pkrause}(\ref{ikres}).

\ref{icfig}. According to Proposition \ref{pkrause}(\ref{ikr8}) (cf. also Proposition \ref{porthop}(\ref{ibiext}))  %look for a more precise formulation: in remarks??!!
%Proposition \ref{porthop}(\ref{ibiext})) 
it suffices to verify that for any %$N\in \obj \cu'$ the functor $\cu(-,N)$ converts $\eu$-products into coproducts. Now, this condition is %certainly 
%fulfilled if $N\in \obj \cuz$ (see ) (see Proposition \ref{pgdb}(7)). Next,   the class of objects of $\cupr$ satisfying this condition is shift-stable; hence it also closed with respect to extensions. Thus it remains to verify that    
$M\in \obj \eu$ the functor $\cu(M,-):\cu\to \ab$ is a cc one. % converts $\eu$-products into coproducts. 
Now, this condition is certainly fulfilled if $M\in \obj \cuz$. Next,   the class of objects of $\eu$ satisfying this condition is shift-stable; hence it also closed with respect to extensions. Thus it remains to verify for a set of $M_i\in \obj \eu$ that the functor   $\cu(\prod M_i,-):\cupr\to \ab$ is a cc one if all $\cu(M_i,-)$ are.
The latter implication follows immediately from Proposition \ref{pgdb}(6).

\ref{icharw}. %So, for  $M\in \obj \eu$ we should prove that $M\in \eu_{w_{\eu}\le 0}$ if and only if $M\perp_{\cu} \cupr^{t\ge 1}$. According to the previous assertion, this orthogonality relation is equivalent to $M\perp_{\Phi} \cupr^{t\ge 1}$.  
The previous assertion certainly implies that $({}^{\perp_{\cu}}\cupr^{t\le -1})\cap \obj \eu ={}^{\perp_{\Phi^{\eu}}}\cupr^{t\le -1}$. Hence it remains to apply Corollary \ref{cwt} (\ref{icort}).

\end{proof}

This proposition is %essentially 
applied in \cite{bgn} to various motivic homotopy categories. %cf. below?!!
The corresponding $t$ is a (version of) the Voevodsky-Morel homotopy $t$-structure, whereas $w$ is called (a version of) the {\it Gersten} weight structure; the corresponding weight filtrations and weight spectral sequences generalize coniveau ones.

\begin{rema}\label{rsheaves}

1. Certainly, these methods can  be applied for $\cupr$ being some (other) triangulated category "constructed from sheaves"; one can use  Proposition 8.16 (and other results) of \cite{jardloc} to present $\cupr$ as the homotopy category of a proper stable model category. Note also that a category $\cupr$ of this sort is "usually" compactly generated (still cf. \cite{neeshman} and Remark \ref{rwg}(2)  above); % and \ref{revenmorews}(4) above); 
in this case there exist plenty of possible $\cp$.

On the other hand, the heart of $t$ is "quite rarely" of the form $\adfu(\bu,\ab)$ for these examples; this justifies the claim made in Remark \ref{rab5}(\ref{istalks}). Note also that any stalk functor in this case should  
% if $P$ belongs to the corresponding $P_t$ (see Definition \ref{dpt}) then it should 
 send a complex of sheaves into a certain "stalk" of the zeroth (co)homology sheaf of this complex;  whence the name. %term?!!
 Moreover, for the motivic examples considered in \cite{bgn} the stalk functors are (retracts of coproducts of) certain "twists" of "actual stalks" (cf.  also \cite[Theorem 3.3.1]{bondegl} for a certain "relative" version of this observation). On the other hand, for other triples $(\du,t,\du')$ as in Theorem \ref{tab5} the stalk functors may give a new (and non-trivial) object of study. In particular, we obtain stalk functors for arbitrary compactly generated $t$-structures on arbitrary motivic homotopy categories (and  on their localizing subcategories); these may have quite non-trivial "geometrical meaning" (and do not require any resolution of singularities assumptions in contrast to loc. cit.). 
	
	2. Part \ref{iab5} of our corollary generalizes Corollary 4.9 of \cite{humavit} (where $\cu$ was assumed to be {\it algebraic} and $t$ was assumed to be non-degenerate; cf. Corollary \ref{csymt}(3)  and Remark \ref{rsymt}(\ref{irsymt2}) above). Note that both of these conditions are rather restrictive if one studies motivic homotopy categories.

3. Since all objects of $\cupr$ are cocompact in $\cu$, the class of cocompact objects of $\cu$ is not essentially small (in contrast to that for $\eu$). 

4. Recall %lastly
  that %the %colocalizing category $\eu$ mentioned above 
 $\eu$ can be presented as the (Verdier) localization of $\cu$ by the subcategory $\perpp\eu$. In \cite{prospect} the case $\cupr=SH$ (the topological stable homotopy category) was considered, and the corresponding $\eu$ was constructed as the homotopy category of  a certain right Bousfield localization of $\cu$. Moreover, %it was proved that 
an exact equivalence $F:\eu\to \cupr\opp$ was constructed (in this case). 

Furthermore, Remark 6.9 of ibid. appears to imply that the duality  $\Phi^{\eu}$ in this case is isomorphic to the bifunctor  $SH(%\oo_{SH}
S^0,F(-)\otimes -)$ ($S^0=\oo_{SH}$ is the sphere spectrum); note that it suffices to construct the restriction of this isomorphism to finite spectra %. %?!! 
(see Remark \ref{rab5}(\ref{irotimes})). 
Next, if we take $\cp=\{S^0[i]: i\ge 0\}$ then the corresponding $t$ is certainly the Postnikov $t$-structure for $SH$, whereas $w_{\eu}$ is easily seen to be the opposite (see Proposition \ref{pbw}(\ref{idual})) to the {\it spherical} weight structure on $\eu\opp\cong SH$ (see %\cite[\S4.6]{bws} 
\S4.2 of \cite{bwcp} and \S4.2 of \cite{bkwn}).

Moreover, the author hopes that applying Remark \ref{rab5}(\ref{irotimes}, \ref{krauseideals}) in this context may shed some light on the seminal telescope conjecture.

5. We conjecture that %the existence of 
 an  isomorphism $F:\eu\to \cupr\opp$  %may be generalized to 
 (in the setting of this section)  exists for a wide range of stable monoidal model categories such that $\cuz$ is self-dual with respect to $\otimes$ (and it generates $\cu'$ as its own localizing subcategory). %; cf. Remark \ref{rcudu}(\ref{irotimes}). 
Note that in the aforementioned particular case $\cupr=SH$ the existence of $F$ may be deduced from Theorem 5.3 of \cite{schwemarg};\footnote{For this purpose one should  use the fact that $\eu$ has a model; however the Margolis' uniqueness conjecture (see \cite[\S3]{schwemarg}) predicts  (in particular) that this conditions is fulfilled automatically.} cf. also  \cite[\S6.4]{bger} for a certain motivic observation related to our conjecture. 
 Note also that in this case the conjecture stated in Remark \ref{rab5}(\ref{iremb}) is easily seen to be fulfilled also.

6. Part \ref{icharw} of our corollary can easily be "axiomatized" somewhat similarly to Corollary \ref{cwt}. However, the existence of %("big") 
a category $\cu\supset \cu'$ such that all objects of $\cu'$ are cocompact in it seems to be rather "exotic".
\end{rema}

\subsection{On localizations of coefficients and "splittings" for triangulated categories}\label{slocoeff}

 In this subsection we gather a few results related to localizations of coefficients and decomposing triangulated categories into direct summands for the purpose of applying these statements in \cite{bgn}.

First we recall the "naive" method of localizing coefficients (for a triangulated category).

\begin{rema}\label{rcgws}
 1. Let %$R$ be a commutative (unitial associative) flat %= torsion-free $\z$-algebra.
$S\subset \p$ be a set of prime numbers; denote the ring  $\z[S\ob]$ by $\lam$.

Then for any triangulated $\cu$ one can consider the category $\cu\otimes \lam$ with the same object class and $\cu\otimes \lam(M,N)=\cu(M,N)\otimes_\z \lam$ for all $M,N\in \obj \cu$. %One can easily see that 
 The category $\cu\otimes \lam$ has a natural structure of a triangulated category; see Proposition A.2.3 of \cite{kellyth}. Next, if $s$ is a torsion pair  for $\cu$ then   
 the Karoubi-closures  of the classes $(\lo,\ro)$ in $\cu\otimes \lam$ give  a torsion pair for $\cupr$ according to Proposition \ref{phop}(\ref{itp9}).

2. Certainly, for $\cu$ being an $R$-linear category (see Remark \ref{rsatur}(4)) we can also localize (using any of the methods described in this section) by any multiplicative subset of $R$.
%use {phop}(7)!!
\end{rema}

However, this method of "localizing coefficients" of a triangulated category does not seem to be "appropriate" if $\cu$ is has coproducts.
So we  describe an alternative construction that "works fine" for compactly generated categories. 

\begin{pr}\label{plocoeff} %PG is sufficient?! Does one have to assume it to ensure the existence of $l$??!
Assume that  $\cu$ is compactly generated; denote its subcategory of compact objects by $\cu^c$. 

For a set $S$  % of prime numbers 
as above denote by %$\cu_{S-tors}$
$\eu$ the localizing subcategory of $\cu$ generated by cones of %$M\stackrel{\times s}{\to} M$ 
 $p\id_M$ for $M\in \obj \cu,\ p\in S$; denote by $\du$ the full subcategory of $\cu$ whose object class is $\obj \eu\perpp$. 

I. Then the following statements are valid.

\begin{enumerate}
\item\label{icge} $\eu$ is compactly generated by cones of %$$N\stackrel{\times s}{\to} N$ 
$p\id_N$ for $N$ running through %compact objects of $\cu$ 
 $\obj \cu^c$ (and $p\in S$).

\item\label{icgd} $\obj\du$  equals the class of $\lam$-linear objects of $\cu$, i.e.,  of those $M\in \obj \cu$  such that $p\id_M$ is an automorphism for any $p\in S$; so, it is closed with respect to $\cu$-coproducts. 

\item\label{icgadj} The embedding $i:\du\to \cu$ %respects coproducts and?? automatic??
 possesses an exact  left adjoint $l_S$ that gives an equivalence $\cu/\eu\to \du$.

\item\label{icg3} For any $M\in \obj \cu$ and %compact  $N\in \obj \cu$ 
 $N\in \obj \cu^c$ we have $\du(l(N),l(M))\cong \cu(N,M)\otimes_\z \lam$; thus $\du$ contains $\cu^c\otimes\lam$ as a full subcategory.

\item\label{icgls}
 $l_S$ respects coproducts and converts compact objects into $\du$-compact ones. Moreover,   $\du$ is generated by $l_S(\obj \cu^c)$ as its own localizing subcategory, and the class of compact objects of $\du$ equals $\obj \lan l_S (\obj \cu^c\ra)\ra=\obj \kar_{\du} (\cu^c\otimes \lam)$. 
 
\item\label{icgtr} For any $M\in \obj \cu$  there exists a distinguished triangle 
\begin{equation}\label{ecdec}
N\to M\to i\circ l_S(M)\to N[1]
\end{equation}
 for some $N\in \obj \eu$; this triangle  is unique up to a canonical isomorphism.

\item\label{icgfun} Let $H:\cu\to \ab$ be a cc-functor. Then for any $M\in \obj \cu$ we have $H( i\circ l_S(M))\cong H(M)\otimes_\z\lam$.

\item\label{icgdi} Assume that $\cu'$ is also a compactly generated  (triangulated) category; define $\du'$, $i'$ and $l'_S$ as the $\cu'$-versions of $\du$, $i$, and $l_S$, respectively. Then  any functor $F:\cu\to \cu'$ that respects coproducts can be %essentially???
  canonically completed to a diagram
\begin{equation}\label{ediaf} 
\begin{CD}
 \cu@>{l_S}>>\du @>{i}>>\cu\\
@VV{F}V@VV{G}V@VV{F}V \\
\cu'@>{l'_S}>>\du' @>{i'}>>\cu'
\end{CD}
\end{equation}
where $G$ is a certain exact functor respecting coproducts. %and the compactness of objects?!
\end{enumerate}
II. Assume in addition that $\cp\subset \obj \cu$ %C_0? C'?
is a  class of compact objects. %, $C$ is the full subcategory of $\cu$ whose object class equals $\cp$. %essentially small class??
Denote by $s=(\lo,\ro)$ the torsion pair generated by $\cp$ (whose existence is given by Theorem \ref{tclass}(\ref{iclass1}); note that $\cp$ is essentially small). 

1. Then %for $s=(\lo,\ro)$ being the torsion pair generated by $\cp$ (whose existence is given by Theorem \ref{tclass}(\ref{iclass1}); note that $\cp$ is essentially small)  
the couple $s[S\ob]=(\kar_{\du}(l_S(\lo)),\kar_{\du}(l_S(\ro)))$ gives  the torsion pair generated by $l_S(\cp)$ in $\du$. 

2. If $s$ is weighty then $s[S\ob]$ also is and the functor $l_S$ is weight-exact with respect to the corresponding weight structures $w$ and $w[S\ob]$, respectively (and so $l_S(\cu_{w=0})\subset \du_{w[S\ob]=0}$).

3. If $s$ is associated to a $t$-structure $t$ then $s[S\ob]$ also is. Moreover,  both $l_S$ and $i$ are $t$-exact (with respect to the corresponding $t$-structures), and so $\hrt[S\ob]\subset \hrt$. Furthermore, $\obj \du\cap \cu^{t\le 0}=\du^{t[S\ob]\le 0}$ and $\obj \du\cap \cu^{t\ge 0}=\du^{t[S\ob]\ge 0}$.

4. If $N$ is a $\cp$-colimit of some $(N_j)$ (see Definition \ref{drelim}) then $l_S(N)$ is a $l_S(\cp)$-colimit of the corresponding   $(l_S(N_j))$.

III. %\begin{enumerate}\item\label{iduloc} 
 Adopt the notation and conventions of Proposition \ref{porthop}. Then for $\du'\subset \cu'$ (resp. for $\du\subset \cu$) %notation?! %obtained from $\cupr$ 
being the corresponding  subcategories of $\lam$-linear object the restriction of $\Phi(-,-)$ to $\du\opp\times \du'$ is a nice duality $\Phi[S\ob]:\du\opp\times \du'\to \ab$. Moreover,  one can obtain it by "bi-extending" (see Remark \ref{rcudu}(\ref{ir1})) the duality $\du_0(-,-)$, where $\du_0=l_S(\cuz)\subset \du'$ is isomorphic to  the "naive" $S$-localization of $\cuz$ as described in Remark \ref{rcgws}(1). Furthermore,  the corresponding $w[S\ob]$ and $t[S\ob]$ are $\Phi[S\ob]$-orthogonal.
%\end{enumerate}
\end{pr}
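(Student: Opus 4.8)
\textbf{Proof plan for Proposition \ref{plocoeff}(III).} The plan is to reduce everything to the already-established structural results: Proposition \ref{pnice}, Proposition \ref{porthop}, Corollary \ref{cwt}, the uniqueness characterization of biextensions in Proposition \ref{porthop}(\ref{ibiext}), and the $S$-localization machinery developed in parts I and II of the present proposition. First I would observe that a restriction of a nice duality to full triangulated subcategories is again a nice duality; this is immediate from Definition \ref{dort}(\ref{ini}) since the defining epimorphism condition only involves objects and morphisms already present in the subcategories, so the restriction $\Phi[S\ob]=\Phi|_{\du\opp\times\du'}$ is automatically a nice duality once we know $\du\subset\cu$ and $\du'\subset\cu'$ are triangulated subcategories (which is given by part I.\ref{icgd}, via Proposition \ref{pbouloc}(II), applied in $\cu$ and in $\cu'$). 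The $\z[S\ob]$-linearity of its values follows because for any $M\in\obj\du$ the functor $\Phi(M,-)$ is a cc functor on $\cu'$ (Corollary \ref{cwt}(\ref{iccc})), so by part I.\ref{icgfun} it sends $i'\circ l'_S$-images to $\lam$-tensored groups; but on $\du'$ itself $i'\circ l'_S$ is (isomorphic to) the identity, hence $\Phi(M,N')$ is already a $\lam$-module for $N'\in\obj\du'$; symmetrically in the first variable.

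Next I would identify the small subcategory $\du_0:=l_S(\cuz)$. By part I.\ref{icg3} (applied with $\cu'$ in place of $\cu$, since $\cuz$ consists of compact objects of $\cu'$) the functor $l'_S$ restricted to $\cuz$ gives a full subcategory of $\du'$ whose Hom-groups are $\cuz(-,-)\otimes_\z\lam$; that is exactly the "naive" $S$-localization of $\cuz$ of Remark \ref{rcgws}(1). One must also check that the objects of $\du_0$ are compact in $\du'$ and cocompact in $\du$: compactness in $\du'$ is part I.\ref{icgls}; cocompactness of $l_S(\cuz)\subset\du$ should follow by dualizing — the objects of $\cuz$ are cocompact in $\cu$ by hypothesis, and the right adjoint $i$ of $l_S$ together with the "coextension respects limits" argument (or rather its dual) transports this, just as compactness is transported by the left adjoint $l'_S$. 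Concretely I would argue that $\Phi(-,N)$ is pc for all $N$ (Proposition \ref{porthop}(\ref{icontcu})), restrict to $\du'$, and use that $\du\opp$ is (a localization of) $\cu\opp$ whose compact objects are the images of compact objects of $\cu\opp$. Having $\du_0$ sitting inside $\du$ (cocompactly) and inside $\du'$ (compactly), Proposition \ref{porthop} applies verbatim to the triple $(\du,\du',\du_0)$ and produces the biextension $\Psi$ of $\du_0(-,-)=\cuz(-,-)\otimes\lam$ to $\du\opp\times\du'$.

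It then remains to identify $\Psi$ with $\Phi[S\ob]$. Here I would invoke the uniqueness characterization Proposition \ref{porthop}(\ref{ibiext}): a duality on $\du\opp\times\du'$ is the biextension of $\du_0(-,-)$ precisely when it converts $\du$-products and $\du'$-coproducts into direct sums, restricts correctly on $\du_0\opp\times\du_0$, and annihilates ${}^{\perp_{\du}}\du_0\times\obj\du'$ and $\obj\du\times\du_0{}^{\perp_{\du'}}$. For $\Phi[S\ob]$ the coproductivity/productivity conditions follow from the cc and pc properties recalled above (together with the fact that products in $\du$ and coproducts in $\du'$ are computed from those in $\cu$ and $\cu'$, since $i$ has a left adjoint and $i'$ preserves coproducts). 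The restriction to $\du_0\opp\times\du_0$ is $\cuz(-,-)\otimes\lam$ because on objects of $\cuz$ the original $\Phi$ agrees with $\cu(-,-)$ resp. $\cu'(-,-)$ by Proposition \ref{pkrause}(\ref{ikr1},\ref{ikrtriv}), and $S$-localizing a $\lam$-linear group changes nothing; and the two annihilation conditions follow by comparing ${}^{\perp_\du}\du_0$ with (the image in $\du$ of) ${}^{\perp_\cu}\cuz$ via $l_S$, using that $l_S$ is essentially surjective and Proposition \ref{phop}(10)-type bookkeeping. Finally, for the $\Phi[S\ob]$-orthogonality of $w[S\ob]$ and $t[S\ob]$: by part II.2 the weighty torsion pair $s$ of $\cu$ localizes to the weighty $s[S\ob]$ on $\du$ with $l_S$ weight-exact, and by part II.3 the same for $t$ on $\cu'$; since $w\perp_\Phi t$ (Corollary \ref{cwt}(\ref{icort})) and $l_S$, $l'_S$ carry the relevant classes into the localized ones, the vanishing $\Phi(X,Y)=0$ for $X\in\cu_{w\ge 0}$, $Y\in\cu'^{t\ge 1}$ (and the other case) descends to $\Phi[S\ob]$; one has to be slightly careful that $\du_{w[S\ob]\ge 0}=\kar_\du(l_S(\cu_{w\ge 0}))$ and similarly for $t[S\ob]$, which is exactly what parts II.2–3 provide.

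\textbf{Main obstacle.} The routine part is the orthogonality and the $\lam$-linearity; the genuinely delicate step is establishing that the objects of $\du_0=l_S(\cuz)$ are cocompact in $\du$, since cocompactness does not localize as transparently as compactness (there is no "colocalizing subcategory generated by cones" construction dual to part I.\ref{icge} that is handed to us directly). I expect to have to run the dual of the adjunction argument of part I carefully — using that $i\colon\du\to\cu$ is a right adjoint and hence preserves products, and that $\Phi(-,N)$ being pc on $\cu$ restricts to a pc functor on $\du$ — rather than citing an off-the-shelf statement, and to double-check that no essential-smallness hypothesis on the class of cocompact objects is silently needed (Remark \ref{rsheaves}(3) warns that the class of cocompact objects of $\cu$ need not be essentially small, so one must stay within $\du_0$ and not appeal to a "subcategory of all cocompact objects").
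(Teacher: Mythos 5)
Your proposal follows the paper's proof of part III essentially step for step: restriction of a nice duality is nice; $\du_0 = l_S(\cuz)$ is identified via I.(\ref{icg3}); the biextension identity is confirmed using the uniqueness characterization of Proposition \ref{porthop}(\ref{ibiext}) (checking (co)product preservation and the two annihilation conditions); and the orthogonality of $w[S\ob]$ and $t[S\ob]$ is deduced at the end. The one cosmetic difference is the last step: the paper simply applies Proposition \ref{porthop}(\ref{ihoport}) to the triple $(\du,\du',\du_0)$ after establishing the biextension identity, whereas you descend the vanishing $\Phi(X,Y)=0$ from Corollary \ref{cwt}(\ref{icort}) using parts II.1--3 to identify the localized classes; both routes are valid and draw on the same ingredients. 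Your concern about cocompactness of $\du_0$ in $\du$ is a fair reading of the paper's terse appeal to ``duals'' of I.(\ref{icgd})--(\ref{icgadj}); the intended resolution is that the $S$-localization of part I dualizes to $\eu\opp$ (the opposite of the colocalizing subcategory cogenerated by $\cuz$), which is compactly generated by $\cuz\opp$, so I.(\ref{icgls}) applies there and yields the required cocompactness of $l_S(\cuz)$ in $\du$ (with $\du$-products computed through $\cu$-products since $\du$ is productive by Proposition \ref{phop}(2)).
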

\begin{proof}

I. This construction was described in detail in Appendix A.2 of \cite{kellyth}.  %It was proved that 
So %Proposition A.2.8 of?? Lemma A.2.10 Corollary A.2.13?? 
ibid. would yield our assertions \ref{icgd}--\ref{icgls} if we replace $\eu$ in it by the %localizing subcategory 
subcategory $\eu'$ defined by means of assertion \ref{icge}. Now, we certainly have $\eu'\subset \eu$, and the converse inclusion follows from ibid. since $l_S$ is easily seen to kill all %cones $M\stackrel{\times s}{\to} M$ 
 $\co(p\id_M)$ (for $M\in \obj \cu,\ p\in S$).  Hence $\eu=\eu'$ and we obtain assertions \ref{icge}--\ref{icgadj}.

Assertion \ref{icgtr} easily follows from Proposition \ref{pbouloc}(III.\ref{ibou1}).

\ref{icgfun}. First we note that $H(\obj \du)$ consists $\lam$-linear objects of $\ab$ (i.e., of $\lam$-modules) immediately from assertion \ref{icgd}. 

Next, the group %$H(M\stackrel{\times s}{\to} M)$ 
$H(\co(p\id_M))$ is certainly killed by the multiplication by $p^2$ (for any $M\in \obj \cu,\ p\in S$). Since the subcategory $\au$ of $\ab$ consisting of groups all of whose elements are $S$-torsion (i.e., annihilated by multiplying by some products of elements of $S$) is a Serre subcategory closed with respect to coproducts, and $H$ is a cc functor, we obtain $H(\obj \du)\subset \obj \au$. 

Now, for any $M\in \obj \cu)$ we apply $H$ to the triangle (\ref{ecdec}) %to  the previous assertion 
to obtain %(for any $M\in \obj \cu)$) 
 an exact sequence  $H(N)\to H(M)\to H(i\circ l_S(M))\to H(N[1])$ (for some $N\in \obj \eu$).
Since $\lam$ is a flat $\z$-module, we can tensor this sequence by $\lam$ to obtain the exact sequence $H(N)\otimes_{\z}\lam \to H(M)\otimes_{\z}\lam \to H(i\circ l_S(M)) \otimes_{\z}\lam \to H(N[1]) \otimes_{\z}\lam$. Since $H(N)$ and $H(N[1])$ belong to  $\obj \au$, they are annihilated by $-\otimes_{\z}\lam$. Lastly, since $H(i\circ l_S(M))$ is $\lam$-linear, we obtain  $H(M)\otimes_{\z}\lam \cong  H(i\circ l_S(M)) \otimes_{\z}\lam\cong  H(i\circ l_S(M))$.

\ref{icgdi}. Certainly, $F$ maps $\lam$-linear objects of $\cu$ into $\lam$-linear ones; hence we can define $G$ as the corresponding restriction of $F$.
It remains to verify that the left hand square in (\ref{ediaf}) is essentially commutative. Applying assertion I.\ref{icgtr} we obtain the following: it suffices to check that for any $M\in \obj \cu$  the functor $F$ maps the distinguished triangle (\ref{ecdec}) into the corresponding triangle for $F(M)$. Hence it remains to note that $F(N)$ belongs to the corresponding localizing subcategory $\eu'$ of $\cu$ since $F$ respects coproducts. %by definition??

II.1. For any $M\in \obj \cu$ the functor $H_{M,S}: N\mapsto \du(l_S(N),l_S(M))$ is a cp functor from $\cu$ into $\ab$. Assertion \ref{icg3} implies that for any $M\in \ro$ the functor  $H_{M,S}$ kills $\cp$; thus it kills $\lo$ also (by Theorem \ref{tclass}(\ref{iclass5}). %; one can also apply Proposition \ref{phopft}(III) here). 
  Hence  $\kar_{\du}(l_S(\lo))\perp_{\du} \kar_{\du}(l_S(\ro))$. Next, objects of $\du$ certainly possess $s[S\ob]$-decompositions (since all objects come from $\cu$ and one can apply $l_S$ to $s$-decompositions). 

According to  Proposition \ref{phop}(\ref{itp9}), it remains to prove that $l_S(\ro)$ is Karoubi-closed in $\du$.\footnote{Note that $l_S(\lo)$ does not necessarily have this property.} %Though it does if $s$ is associated to a $t$-structure or to a weight structure?! 
This certainly reduces to $l_S(\ro)=l_S(\cp)^{\perp_{\du}}$ since $l_S$ respects the compactness of objects (see assertion I.\ref{icgls}). Now, $l_S(\cp)^{\perp_{\du}}\subset \ro$ by assertion I.\ref{icg3}, and it remains to note that $l_S$ maps objects of $\du$ into isomorphic ones.  %themselves? %the converse inclusion if obvious??

2. We certainly have $\lo[S\ob]\subset \lo[S\ob][1]$; thus $s[S\ob]$ is weighty indeed (see Remark \ref{rwhop}(1)). The remaining parts of the assertion follow immediately.

3. Similarly to the previous assertion, Remark \ref{rtst1}(\ref{it1}) implies that $s[S\ob]$ is associated to a $t$-structure. Thus $l$ is $t$-exact. 

According to assertion I.\ref{icgd}, to check the $t$-exactness of $i$ we should prove that for any $\lam$-linear object $M$ of $\cu$ its $t$-truncations are $\lam$-linear also. Now, for any $p\in S$ the functoriality of the $t$-decomposition triangle $L\to M\to R\to L[1]$  implies that $\cu(L,L)$ contains a morphism $1/p\id_L$ inverse to $p\id_M$ and  $\cu(R,R)$ contains a morphism $1/p\id_R$ inverse to $p\id_R$. Thus $L$ and $R$ are $\lam$-linear indeed.

It certainly follows that $\hrt[S\ob]\subset \hrt$. Lastly, $\du^{t[S\ob]\le 0}\subset \obj \du\cap \cu^{t\le 0}$ and $\du^{t[S\ob]\ge 0}\subset \obj \du\cap \cu^{t\ge 0}$. The converse implication is immediate from the $t$-exactness of $l_S$ (along with the fact that $l_S$ maps objects of $\eu$ into isomorphic ones). %$\cu$-isomorphic??!

4. Immediate from assertion I.\ref{icg3}. 

III. $\Phi[S\ob]$ is a nice duality since it is a restriction of  a nice duality. Moreover, assertion I.\ref{icg3} gives the "description" of $\du_0$ in question.

Now, $\du$ has products, $\du'$ has coproducts, and the objects of  $\du_0$ are compact in $\du'$ and cocompact in $\du$ according to  assertions I.\ref{icgd}--\ref{icgadj} along with their duals.
So we will prove that $\Phi[S\ob]$ is isomorphic to the corresponding biextension of $\du_0(-,-)$ using  Proposition \ref{porthop}(\ref{ibiext}).
Since the embeddings $\du'\to \cu'$ and $\du\opp\to \cu\opp$ respect coproducts (see assertion I.\ref{icgd}), we obtain that $\Phi[S\ob]$ respects  $\du'$-coproducts as well as $\du\opp$-ones. Next,  assertion I.\ref{icg3} (along with assertion I.\ref{icge} and the duals of these two) implies that $\Phi[S\ob]$ annihilates both ${}^{\perp_{\du}}\duz \times \obj \du'$ and $\obj\du \times \duz^{\perp_{\du'}}$.

 Hence %it remains to apply  
Proposition \ref{porthop}(\ref{ibiext}) implies the "moreover" part of the assertion. It remains to apply part \ref{ihoport}  of that proposition to obtain the orthogonality result in question.
\end{proof}

\begin{rema}\label{rloc} 
1. Combining part II.4 of our proposition (and adopting its notation) with Proposition \ref{pkrause}(\ref{ikr3}) we obtain the following: if $\cuz$ is a full triangulated subcategory of $\cu$, $\obj \cuz\subset \cp$, and $H:\du\to \au$ is an exact functor coextended from $l_S(\cuz)$ then $H(l_S(N))\cong \inli H(l_S(N_j))$.

%In particular??
Since $i$ is a full embedding, we also obtain that $H'(i(l_S(N)))\cong \inli H'(i(l_S(N_j)))$ for any functor exact functor $H':\cu\to \au$ coextended from $\cuz$.

2. One can easily generalize part I.\ref{icgfun} of our proposition to cc functors into arbitrary AB5 categories. %Homotopy colimit description of $l$; {cdet}?!  Can 

3. Certainly, %our proposition and the previous parts of this remark %the results 
those results of of this subsection that concern compactly generated categories
 can easily be dualized. % (to treat cocompactly generated categories and extended functors).
\end{rema}

Now we study certain "splittings" of triangulated categories.

\begin{pr}\label{psplit}
I. Assume that  $\cu$ is compactly generated, $\cuz$ is %a small skeleton of?? 
its subcategory of compact objects, and that decomposes into the direct sum of two triangulated subcategories $\cuz^1$ and $\cuz^2$. Denote by $\cu^1$ the localizing subcategory of  $\cu$  generated by $\cuz^1$.
 
Then the following statements are valid.

\begin{enumerate}
\item\label{idec}
$\cu^1$ is a direct summand of the category $\cu$; so, there exists an exact functor $l$ projecting $\cu$ onto $\cu^1$.
Moreover, $l$ is  essentially the only exact projector functor that respects coproducts and restricts to the projection $\cuz\to \cuz^1$. Furthermore, if $N\in \obj \cu$ is a $C$-colimit of some $(N_j)$ (for some $C\in \obj \cu$)  then $l(N)$ is a $l(C)$-colimit of the corresponding   $(l(N_j))$.

\item\label{isplcg} $l$ respects the compactness of objects, $\cu^1$ is compactly generated, and $\cuz^1$ is its subcategory of compact objects.
%$\cuz^1$ is its generating subcategory of compact objects.

\item\label{isplitp} Assume that $s=(\lo,\ro)$ is a torsion pair generated by a class $\cp\subset \obj \cu$. Then $l(\lo)=\lo\cap \obj \cu^1$, $l(\ro)=\ro\cap \obj \cu^1$, and the couple $l(s)=(l(\lo),l(\ro))$ is a torsion pair for $\cu^1$. 
In particular, if $s$ is weighty then the functor $l$ is weight-exact with respect to the corresponding weight structures.
%for $t$-structures???!!

Moreover, the pair $l(s)$  is smashing whenever $s$ is.

%\item\label{ie3} \item\label{ie4}
\end{enumerate}
II. Adopt the notation and conventions of Proposition \ref{porthop};  assume  in addition that $\cuz$ generates $\cu'$ and cogenerates $\cu$, and that $\cuz\cong \cuz^1\bigoplus \cuz^2$. Denote by $\cu'^1$ (resp. $\cu^1$)  the subcategory of  $\cu'$ (resp. of $\cu$) that is (co)generated by $\cuz^1$. Then the restriction $\Phi^1$  of $\Phi(-,-)$ to  $\cu^1{}\opp\times\cu'^1$ is a nice duality, and one can obtain it by "bi-extending" (see Remark \ref{rcudu}(\ref{ir1})) the duality $\cu^1_0(-,-)$.
Moreover, the corresponding $w^1$ is $\Phi^1$-orthogonal to $t^1$.
\end{pr}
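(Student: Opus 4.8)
\textbf{Proof proposal for Proposition \ref{psplit}.}

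The plan is to treat part I first, reducing everything to the Bousfield localization machinery of Proposition \ref{pbouloc} together with the perfectness/compactness results already established. For assertion \ref{idec}, the decomposition $\cuz\cong\cuz^1\bigoplus\cuz^2$ gives $\obj\cuz^1\perp\obj\cuz^2$ in $\cuz$, hence (since these are compact) $\obj\cuz^1\perp\obj\cu^2$ for $\cu^2$ the localizing subcategory generated by $\cuz^2$; I would verify that $\cu=\cu^1\bigoplus\cu^2$ by checking that the embedding $\cu^1\to\cu$ has both a right and a left adjoint, which follows from Proposition \ref{pcomp}(II.2) (Brown representability for $\cu$, guaranteed by compact generation) applied to the inclusion functors, and that the composite $\cu^1\to\cu\to\cu^1$ (via the adjoint) is the identity because $\obj\cu^1\perp\obj\cu^2$ forces the connecting object in the triangle (\ref{ebou}) to vanish. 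The projector $l$ is then $i\circ A$ in the notation of Proposition \ref{pbouloc}(III); it respects coproducts since both adjoints do (here one uses that $\obj\cu^2{}\perpp$ is coproductive because $\cu^2$ is compactly, hence perfectly, generated — cf. Proposition \ref{psym}(\ref{isymbr}) and Proposition \ref{prtst}(\ref{it4sm})). Uniqueness of $l$ among coproduct-preserving exact projectors restricting to $\cuz\to\cuz^1$ follows from Proposition \ref{pkrause}(\ref{ikr8}): such a projector is the coextension of its restriction to $\cuz$, which is prescribed. The "$C$-colimit" statement is then immediate from Proposition \ref{pkrause}(\ref{ikr3}) once one notes that a coproduct-preserving exact functor sending $C$-colimit diagrams appropriately can be computed via the coextension formula; alternatively it follows directly because $l$ has a right adjoint (the inclusion $\cu^1\to\cu$) and left adjoints preserve colimits.

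For assertion \ref{isplcg}: $l$ respects compactness because it is a Verdier quotient (equivalently a Bousfield localization) whose kernel $\cu^2$ is generated by compact objects, so one applies the standard result (Lemma 4.4.5 or Theorem 2.1 of Neeman's localization theory, available via Proposition \ref{pbouloc}) that the compact objects of $\cu^1$ are the thick closure of the images of compact objects of $\cu$; since $\cu$ is generated by $\cuz$ and $l$ kills $\cuz^2$, the compact objects of $\cu^1$ form the thick closure of $\cuz^1$, which is $\cuz^1$ itself as it is already thick. Compact generation of $\cu^1$ by $\cuz^1$ then follows since $\cuz^1$ Hom-generates $\cu^1$ (any $M\in\obj\cu^1$ with $M\in\cuz^1{}\perpp[\z]$ satisfies $M\in\cuz{}\perpp[\z]$, hence $M=0$). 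For assertion \ref{isplitp}: $l(\lo)=\lo\cap\obj\cu^1$ and $l(\ro)=\ro\cap\obj\cu^1$ follow from applying $l$ to an $s$-decomposition of $M\in\obj\cu^1$ (using $l(M)\cong M$) together with the facts that $l(\lo)\subset\lo$ and $l(\ro)\subset\ro$ — the first because $l$ respects coproducts, extensions, shifts and $\cp\subset\obj\cu^1$ forces $l(P)\cong P$ giving $l$ of the big hull of $\cp$ inside the big hull again (Theorem \ref{tclass}(\ref{iclass1}), Lemma \ref{lbes}(\ref{isescperp})); the second because $l(\ro)\perp l(\lo)$ as $l$ respects the relevant Hom-vanishing and $l$ restricted to $\cu^1$ is the identity. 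That $(l(\lo),l(\ro))$ is a torsion pair on $\cu^1$ then follows from Proposition \ref{phop}(10) applied to the exact essentially surjective functor $l:\cu\to\cu^1$, after noting $l(\lo)$ and $l(\ro)$ are already Karoubi-closed (being intersections of Karoubi-closed classes with $\obj\cu^1$). Weight-exactness when $s$ is weighty, and the smashing statement, are then formal: $l$ preserves the shift inclusion and coproducts.

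For part II, the strategy mirrors the proof of Proposition \ref{psplit}(III) in the localization-of-coefficients case and of Proposition \ref{porthop}(\ref{ibiext}). I would first record that $\cu^1$ inherits products from $\cu$ and $\cu'^1$ inherits coproducts from $\cu'$ (both because the relevant (co)localizing inclusions have the appropriate adjoints by part I and its dual), and that the objects of $\cuz^1$ remain compact in $\cu'^1$ and cocompact in $\cu^1$ — the former is assertion \ref{isplcg}, the latter its dual. Then $\Phi^1$ is a nice duality simply as a restriction of the nice duality $\Phi$. To identify it with the biextension of $\cu^1_0(-,-)$ (where $\cu^1_0=\cuz^1$ viewed inside both $\cu^1$ and $\cu'^1$), I would invoke the uniqueness clause of Proposition \ref{porthop}(\ref{ibiext}): it suffices to check that $\Phi^1$ converts $\cu^1$-products and $\cu'^1$-coproducts into direct sums (inherited from the corresponding properties of $\Phi$ via Proposition \ref{porthop}(\ref{icontcu},\ref{icontdu}), since the inclusions $\cu^1\to\cu$, $\cu'^1\to\cu'$ respect products resp. coproducts), that its restriction to $\cuz^1{}\opp\times\cuz^1$ is $\cuz^1(-,-)$ (clear from Proposition \ref{pkrause}(\ref{ikr1})), and that it annihilates ${}^{\perp_{\cu^1}}\cuz^1\times\obj\cu'^1$ and $\obj\cu^1\times\cuz^1{}^{\perp_{\cu'^1}}$ (this uses that ${}^{\perp_{\cu^1}}\cuz^1={}^{\perp_{\cu}}\cuz^1\cap\obj\cu^1$, which holds since $\cuz^1\perp\obj\cu^2$, and similarly on the other side). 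Finally the $\Phi^1$-orthogonality of $w^1$ and $t^1$ follows from Proposition \ref{porthop}(\ref{ihoport}) applied within the pair $(\cu^1,\cu'^1)$, once one checks that the torsion pairs there are exactly the images under $l$ (resp. its $\cu'$-analogue) of $s$ and $s'$ — which is assertion \ref{isplitp} and its dual.

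The main obstacle I anticipate is the bookkeeping in part I.\ref{isplitp} showing $l(\lo)=\lo\cap\obj\cu^1$ rather than merely $l(\lo)\subset\lo\cap\obj\cu^1$: one must be careful that applying $l$ to an $s$-decomposition of an object already in $\cu^1$ genuinely lands inside $\lo$ and $\ro$ and recovers a decomposition, which requires knowing $l$ restricted to $\cu^1$ is (naturally isomorphic to) the identity — itself needing the clean splitting $\cu=\cu^1\bigoplus\cu^2$ with $\obj\cu^1\perp\obj\cu^2$ in \emph{both} directions, and that is precisely where the compactness of the generators $\cuz^1,\cuz^2$ (hence perfectness, hence the needed adjoint on the orthogonal side) is used. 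Everything else is routine transport of structure along adjoint functors.
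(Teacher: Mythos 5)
Your overall strategy matches the paper's: establish the direct-sum decomposition $\cu\cong\cu^1\bigoplus\cu^2$ via compactness and adjoints, and transport everything along the projection $l$. The paper's own proof is essentially a sequence of "immediate"s, so the detail you supply is welcome. However, there are a few genuine wobbles in part I that you should repair.

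First, for the furthermore clause of I.\ref{idec}: your alternative argument ``$l$ has a right adjoint, and left adjoints preserve colimits'' does not apply as stated, because a $C$-colimit in the sense of Definition \ref{drelim} is not a categorical colimit in $\cu$ — it is a purely Hom-theoretic condition tested against a single object. The clean argument is the one the paper alludes to: for any $M$ one has $M\cong l(M)\bigoplus l_2(M)$ with $\cu(l(C),l_2(M))=0=\cu(l_2(C),l(M))$, so $\cu(C,N)\cong\cu^1(l(C),l(N))\bigoplus\cu^2(l_2(C),l_2(N))$ and the connecting maps respect this splitting; hence an isomorphism onto $\inli\cu(C,N_j)$ forces an isomorphism on each summand.

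Second, and more seriously, in I.\ref{isplitp} your argument for $l(\lo)\subset\lo$ invokes ``$\cp\subset\obj\cu^1$'' and Theorem \ref{tclass}(\ref{iclass1}), but neither is available: the hypothesis only gives $\cp\subset\obj\cu$ (an arbitrary generating class, not necessarily compact, not necessarily inside $\cu^1$), so $\lo$ need not be the big hull of $\cp$, and $l$ need not fix $\cp$. The correct argument is much simpler and does not use the generating class at all: for $M\in\lo$ the object $l(M)$ is a retract of $M\cong l(M)\bigoplus l_2(M)$, and $\lo$ is Karoubi-closed by Proposition \ref{phop}(1); hence $l(\lo)\subset\lo$, and since $l$ fixes $\obj\cu^1$ one gets $l(\lo)=\lo\cap\obj\cu^1$. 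The same works for $\ro$.

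Third, you appeal to Proposition \ref{phop}(10) to conclude that $(l(\lo),l(\ro))$ is a torsion pair, but that proposition \emph{presupposes} a torsion pair on the target category and merely identifies it — using it here is circular. The statement you want is Proposition \ref{phop}(9): check $l(\lo)\perp l(\ro)$ (inherited from $\lo\perp\ro$ since $\cu^1$ is full in $\cu$), check that $s$-decompositions of $M\in\obj\cu^1$ yield, after applying $l$, decomposition triangles with extremes in $l(\lo)$ and $l(\ro)$, and note that $l(\lo)=\lo\cap\obj\cu^1$ and $l(\ro)=\ro\cap\obj\cu^1$ are already Karoubi-closed. Part II of your argument is fine and faithfully mirrors the proof of Proposition \ref{plocoeff}(III) as the paper intends.
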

\begin{proof}
I.\ref{idec}. Since $\obj \cp^1\perp \obj \cp^2$ and vice versa,  our compactness assumptions easily imply that the natural functor $\cu^1\bigoplus \cu^2\to \cu$ is an equivalence, where  $\cu^2$ is the localizing subcategory of $\cu$ generated by  $\cp^2$. So, we obtain the existence of $l$. Next, $l$ is essentially unique since it should be identical on $\cu^1$ and should kill $\obj\cu^1{}^\perp=\obj \cu^2$. The "furthermore" part of the assertion follows immediately. %cf. {plocoeff}?

\ref{isplcg}. Immediate from the previous assertion. %since $l$ respects coproducts??

\ref{isplitp}. Obvious from the fact that $l$ is a projection functor. %explain??!

II. The proof is rather similar to that of Proposition \ref{plocoeff}(III).

So, $\Phi^1$ is a nice duality since it is a restriction of  a nice duality. 

Assertion I.\ref{idec} easily implies that  $\Phi^1$ is isomorphic to the corresponding biextension of $\cu_0^1(-,-)$.
$\du_0(-,-)$ using  Proposition \ref{porthop}(\ref{ibiext}).
It remains to apply part \ref{ihoport}  of that proposition to obtain that $w_1\perp_{\Phi^1}t_1$. %the orthogonality result in question.
\end{proof}
 %R: we ignore models and BL??

\end{document}